\numberwithin{equation}{subsection}
\newtheorem{theorem}{\color{blue}Théorème}[subsection]
\theoremstyle{definition}
\newtheorem{definition}[theorem]{\color{blue}Définition}
\theoremstyle{definition}  
\newtheorem{remark}[theorem]{\color{blue}Remarque}
\theoremstyle{plain} 
\newtheorem{lemma}[theorem]{\color{blue}Lemme}
\newtheorem{proposition}[theorem]{\color{blue}Proposition}
\newtheorem{corollary}[theorem]{\color{blue}Corollaire}
\newcommand{\z}{\mathbb{Z}}
\title{Revêtements du demi-plan de Drinfeld et Langlands $p$-adique catégorique }
\author{Yang Pei}
\begin{document}

\date{\today}
	\maketitle
 
	\selectlanguage{french} 
\begin{abstract} Nous généralisons à tous les étages de la tour de revêtements du demi-plan de Drinfeld une suite exacte établie par Lue Pan pour le premier revêtement. Par ailleurs, nous introduisons deux foncteurs, inspirés par la catégorification de la correspondance de Langlands locale $p$-adique, en versions de Banach et localement analytique respectivement. Nous calculons ensuite les faisceaux associés aux représentations intervenant dans notre suite. Comme application, nous montrons que tous les quotients propres du complété unitaire universel d'une représentation supercuspidale sont de longueur finie.
\end{abstract}
\selectlanguage{english} 
\begin{abstract} 
We generalize to all levels of the tower of coverings of the Drinfeld upper plane an exact sequence established by Lue Pan for the first covering. Furthermore, we introduce two functors, inspired by the categorification of the $p$-adic local Langlands correspondence, in Banach and locally analytic versions respectively. We then compute the sheaves associated with the representations appearing in our sequence. As an application, we show that all proper quotients of the universal unitary completion of a supercuspidal representation have finite length.
\end{abstract}
\thispagestyle{fancy}
	\tableofcontents
	\section{Introduction}
\subsection{Correspondance de Langlands locale $p$-adique}
En 2010, Colmez a construit \cite{colmez2010representations}, dans une série de travaux fondés sur la théorie des $(\varphi,\Gamma)$-modules de Fontaine, et en combinaison avec les travaux de Breuil, Kisin et d'autres, un foncteur, parfois appelé \emph{le foncteur de Montréal}, de la catégorie des représentations continues de $\mathrm{Gal}_{\mathbb{Q}_p}$ vers celle des représentations unitaires admissibles de $\mathrm{GL}_2(\mathbb{Q}_p)$, ainsi qu'un foncteur dans l'autre sens, qui réalisent la correspondance de Langlands locale $p$-adique. Voir aussi \cite{berger2011la}, \cite{breuil2012correspondance} et \cite{egh2023an} pour des présentations expositives. Cette correspondance a été complétée en 2016 par Colmez, Dospinescu et Pa\v{s}k\={u}nas \cite{cdp2014the}, qui ont démontré que les deux foncteurs induisent une équivalence entre la catégorie des représentations unitaires admissibles absolument irréductibles et non ordinaires de $\mathrm{GL}_2(\mathbb{Q}_p)$, et celle des représentations absolument irréductibles continues de dimension $2$ de $\mathrm{Gal}_{\mathbb{Q}_p}$. 

La correspondance de Langlands locale $p$-adique, raffinant la correspondance classique au sens de \cite[Theorem 1.3]{cdp2014the}, s'inscrit également dans un cadre plus large grâce à ses compatibilités essentielles. D'une part, elle est compatible avec la correspondance de Langlands locale mod $p$. D'autre part, les représentations de Banach obtenues par cette correspondance apparaissent naturellement dans la cohomologie complétée des tours de courbes modulaires, établissant ainsi un lien profond avec la correspondance globale.

Les constructions des foncteurs de Colmez sont purement algébriques et ne fonctionnent que pour $\mathrm{GL}_2(\mathbb{Q}_p)$. Même pour $\mathrm{GL}_2(K)$ où $2\leq [K: \mathbb{Q}_p]<\infty$, le problème devient beaucoup plus compliqué. En fait, pour $\mathrm{GL}_2(\mathbb{Q}_p)$, nous avons les résultats de classification des représentations lisses admissibles irréducibles modulo $p$ de $\mathrm{GL}_2(\mathbb{Q}_p)$ d'après Barthel-livné \cite{blirreducible1994} et Breuil \cite{breuilsur2003}, et il n'y a qu'un nombre fini de classes d'isomorphismes de représentations supersingulières. Cependant, pour $\mathrm{GL}_2(K)$, une classification similaire fait défaut, et les travaux de Breuil et Pa\v{s}k\={u}nas \cite{bp2012towards} montrent que la situation est bien plus complexe. Par exemple, ils ont démontré, en utilisant la théorie des diagrammes, qu'il existe un nombre infini de classes d'isomorphismes de représentations supersingulières dans ce cas. Cela révèle que les méthodes efficaces pour $\mathrm{GL}_2(\mathbb{Q}_p)$ ne se généralisent pas directement à des extensions finies non triviales de $\mathbb{Q}_p$. La structure des représentations de $\mathrm{GL}_2(K)$ requiert donc de nouvelles techniques pour surmonter l'absence de finitude des représentations supersingulières.

La correspondance de Langlands $p$-adique a de nombreuses applications. Par exemple, Kisin \cite{kisin2009the} et Emerton \cite{emerton2011local} ont prouvé la conjecture de Fontaine-Mazur dans la plupart des cas en utilisant les résultats de Colmez. Il est à noter que Lue Pan \cite{pan2022on, pan2022locally} a récemment obtenu les mêmes résultats sans recourir à la correspondance de Langlands $p$-adique. De plus, supposons que $\Pi$ est une représentation de Banach unitaire, résiduellement de longueur finie, admettant un caractère central de $\mathrm{GL}_2(\mathbb{Q}_p)$, alors Colmez et Dospinescu \cite{cd2014completes} ont montré, en utilisant encore la correspondance de Langlands $p$-adique, que $\Pi$ est le complété unitaire universel de $\Pi^{\mathrm{an}}$, qui est le sous-espace des vecteurs localement analytiques de $\Pi$.

Il existe maintenant au moins trois généralisations du foncteur de Colmez. Deux de ces généralisations \cite{caraiani2016patching, breuil2024conjectures, breuil2025multivariable} utilisent le patching à la Taylor-Wiles-Kisin comme outil global. En revanche, Colmez, Dospinescu et Nizio\l{} \cite{cdn2020cohomologie} ont construit un foncteur de la catégorie des représentations de Banach de $\mathrm{GL}_2(K)$ vers la catégorie des représentations galoisiennes en utilisant la cohomologie du demi-plan de Drinfeld, une approche qui est entièrement locale.
\subsection{Généralisation de la suite exacte de Lue Pan}
Soient $L$ une extension finie de $\mathbb{Q}_p$ et $M$ un $L$-$(\varphi, N, \mathcal{G}_{\mathbb{Q}_p})$-module supercuspidal, de pente $1/2$ et rang $2$ sur $L \otimes \mathbb{Q}_p^{\mathrm{nr}}$. On note $M_{\mathrm{dR}}:= (\overline{\mathbb{Q}}_p\otimes_{\mathbb{Q}_p^{\mathrm{nr}}}M)^{\mathcal{G}_{\mathbb{Q}_p}}$, qui un $L$-module de rang $2$, où $\mathcal{G}_{\mathbb{Q}_p}$ est le groupe de Galois absolu de $\mathbb{Q}_p$, alors toutes les représentations $V$ potentiellement semi-stables de $\mathcal{G}_{\mathbb{Q}_p}$ de dimension $2$ à poids de Hodge-Tate $0, 1$ telles que $D_{\mathrm{pst}}(V) = M$ sont classifiées par l'espace $\mathbf{P}^1=\mathbf{P}(M_{\mathrm{dR}})$. Soit $\mathscr{L}\in\mathbf{P}^1$, alors nous avons une représentation galoisienne $V_{M, \mathscr{L}}$ et une représentation de Banach $\Pi_{M, \mathscr{L}} := \mathbf{\Pi}(V_{M, \mathscr{L}})$, où $\mathbf{\Pi}$ est le foncteur de Colmez. Notons $\widehat{\mathrm{LL}(M)}$ le complété unitaire universel de la représentation supercuspidale $\mathrm{LL}(M)$ associée à $M$, et $\Omega^1[M]^{\mathrm{b}, *}$ le dual de l'espace des vecteurs $\mathrm{GL}_2(\mathbb{Q}_p)$-bornés dans la $\mathrm{JL}(M)$-partie de l'espace des formes différentielles $\Omega^1$ sur le $n$-ième revêtement $\Sigma_n$ du demi-plan de Drinfeld. Pour des définitions plus concrètes, voir la section \ref{section2.1}. Dans \cite{pan2017first}, Pan a prouvé, pour toute $\mathscr{L}$, l'exactitude de la suite suivante pour $\Sigma_1$ 
$$0\to\widehat{\mathrm{LL}(M)}\to \Omega^1[M]^{\mathrm{b}, *}\to\Pi_{M, \mathscr{L}}\to0.$$

De plus, Pan a montré plusieurs résultats pour le premier revêtement $\Sigma_1$ en construisant un modèle explicite, mais il est difficile d'adapter sa méthode aux niveaux supérieurs. En utilisant les travaux de Dospinescu et Le Bras \cite{dl2017revetements}, on peut montrer que la plupart des résultats de Pan restent vrais pour tous les revêtements. Cependant, comme l'indique la \cite[Remarque 1.13]{pan2017first}, la suite exacte mentionnée ci-dessus ne semble pas en découler directement. Dans la première partie de cette thèse, nous déduisons cette suite exacte courte pour tous les revêtements en utilisant les résultats de \cite{dl2017revetements} au lieu de construire un modèle explicite.

Notre premier résultat principal est le théorème suivant, ce qui fournit une construction géométrique de la représentation $\Pi_{M, \mathscr{L}}$.
\begin{theorem}[Théorème \ref{3.7.2}]\label{0.0.1} Pour tout $\Sigma_n$, le choix d'une $\mathscr{L}\in\mathbf{P}^1$ induit une suite exacte non scindée de représentations de Banach de $\mathrm{GL}_2(\mathbb{Q}_p)$ $$0\to\widehat{\mathrm{LL}(M)}\to \Omega^1[M]^{\mathrm{b}, *}\to\Pi_{M, \mathscr{L}}\to0.$$
\end{theorem}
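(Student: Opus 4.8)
The plan is to leverage the results of Dospinescu--Le Bras \cite{dl2017revetements} on the cohomology of the Drinfeld tower, together with the compatibility of the Colmez functor with the geometry, rather than constructing an explicit model as Pan did for $\Sigma_1$. The key input is that for each $n$, the $\mathrm{JL}(M)$-isotypic part of $H^1_{\mathrm{dR}}$ (or the relevant de Rham / pro-étale cohomology) of $\Sigma_n$ fits into an exact sequence relating the space of differential forms $\Omega^1[M]$ to a Galois-stable lattice, and that Fontaine theory (via $(\varphi,N)$-modules) identifies the resulting extension class with a point $\mathscr{L}\in\mathbf{P}^1=\mathbf{P}(M_{\mathrm{dR}})$. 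Concretely, I would first recall from \cite{dl2017revetements} the description of $\Omega^1[M]^{\mathrm{b}}$ (the $\mathrm{GL}_2(\mathbb{Q}_p)$-bounded vectors) as an extension, dualize, and then identify the two outer terms.

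The first step is to produce the maps. The injection $\widehat{\mathrm{LL}(M)}\hookrightarrow\Omega^1[M]^{\mathrm{b},*}$ should come from the universal property of the universal unitary completion: one exhibits a $\mathrm{GL}_2(\mathbb{Q}_p)$-equivariant map $\mathrm{LL}(M)\to\Omega^1[M]^{\mathrm{b},*}$ with bounded image (coming from the locally algebraic vectors sitting inside the cohomology of $\Sigma_n$ by the computations of Dospinescu--Le Bras generalizing those used for $\Sigma_1$), and then checks it extends to the completion and remains injective. The surjection onto $\Pi_{M,\mathscr{L}}$ is where the choice of $\mathscr{L}$ enters: by the work of Colmez--Dospinescu--Nizio\l{} \cite{cdn2020cohomologie} and \cite{dl2017revetements}, the pro-étale cohomology of $\Sigma_n$ in degree one, localized at $M$, surjects (after choosing a $\mathcal{G}_{\mathbb{Q}_p}$-equivariant splitting datum, equivalently a line $\mathscr{L}$) onto $V_{M,\mathscr{L}}\otimes\Pi_{M,\mathscr{L}}$-type data, and taking the appropriate quotient of $\Omega^1[M]^{\mathrm{b},*}$ yields $\Pi_{M,\mathscr{L}}=\mathbf{\Pi}(V_{M,\mathscr{L}})$ via the fundamental exact sequence of $p$-adic Hodge theory that computes $\mathbf{\Pi}(V)$ from $D_{\mathrm{dR}}$ and $D_{\mathrm{pst}}$.

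The main obstacle, flagged already in \cite[Remarque 1.13]{pan2017first}, is that exactness in the middle does not follow formally from the level-$n$ generalizations of Pan's other results: one knows the two outer terms and has candidate maps, but one must show the sequence is exact at $\Omega^1[M]^{\mathrm{b},*}$ and that there are no extra terms appearing at higher level. I expect to handle this by a dimension/length count on locally algebraic (or locally analytic) vectors: the space $\Omega^1[M]^{\mathrm{b},*}$ has a known supersingular locally algebraic part $\mathrm{LL}(M)$ (by Jacquet--Langlands and the computation of differentials on $\Sigma_n$), its quotient by $\widehat{\mathrm{LL}(M)}$ is a unitary Banach representation whose locally analytic vectors match those of $\Pi_{M,\mathscr{L}}$, and then one invokes the result of Colmez--Dospinescu \cite{cd2014completes} that such a $\Pi$ is the universal unitary completion of its locally analytic vectors to conclude the quotient is exactly $\Pi_{M,\mathscr{L}}$. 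Non-splitness is then immediate: a splitting would force $\Pi_{M,\mathscr{L}}$ to contain $\widehat{\mathrm{LL}(M)}$ as a direct summand, contradicting the irreducibility of $V_{M,\mathscr{L}}$ under the Colmez correspondence (equivalently, the topological irreducibility of $\Pi_{M,\mathscr{L}}$ established in \cite{cdp2014the}). A secondary technical point will be checking that passing from $\Sigma_1$ to $\Sigma_n$ does not introduce convergence issues in the bounded-vectors functor $(-)^{\mathrm{b}}$; this should follow from the fact that the transition maps in the tower are finite étale and Galois, so $\Omega^1[M]$ at level $n$ is built from level-one data by induction along the covering group.
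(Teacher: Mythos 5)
Your general strategy (start from the Dospinescu--Le Bras sequence of \autoref{2.1.3} and pass to universal unitary completions rather than building an explicit model) is indeed the paper's starting point, but the two places where the real work happens are not carried out correctly. The central issue, which is exactly the one raised in \cite[Remarque 1.13]{pan2017first}, is exactness in the middle, and your proposed mechanism for it does not work as stated. You want to compute the locally analytic vectors of the quotient $\Omega^1[M]^{\mathrm{b},*}/\widehat{\mathrm{LL}(M)}$, match them with $\Pi_{M,\mathscr{L}}^{\mathrm{an}}$, and then invoke \autoref{1.3.4} (Colmez--Dospinescu). But (a) $\widehat{\mathrm{LL}(M)}$ and $\Omega^1[M]^{\mathrm{b},*}$ are \emph{not} admissibles, so the functor des vecteurs localement analytiques is not exact here and you have no handle on $(\Omega^1[M]^{\mathrm{b},*}/\widehat{\mathrm{LL}(M)})^{\mathrm{an}}$; (b) \autoref{1.3.4} requires the Banach representation to be résiduellement de longueur finie, which is precisely what is unknown for this quotient (its kernel side is even résiduellement de longueur infinie); and (c) you must first know that the image of $\widehat{\mathrm{LL}(M)}$ is closed to speak of a Banach quotient at all. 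The paper resolves this by a completely different device: $\mathrm{Ker}(\Omega^1[M]^{\mathrm{b},*}\to\Pi_{M,\mathscr{L}})$ is résiduellement de présentation finie (\autoref{3.6.1}, \autoref{3.6.2}, \autoref{3.1.3}, resting on the coherence of the augmented Iwasawa algebra of $\mathrm{GL}_2(\mathbb{Q}_p)$ due to Timmins), and then the density given by \autoref{1.3.3} is upgraded to surjectivity by \autoref{2.2.2}. Similarly, you assert that the map $\widehat{\mathrm{LL}(M)}\to\Omega^1[M]^{\mathrm{b},*}$ ``remains injective'' without any argument; in the paper this is a genuine step, requiring \autoref{3.7.1} together with the embedding $\widehat{\mathrm{LL}(M)}\hookrightarrow\prod_{\mathscr{L}'\neq\mathscr{L}}\Pi_{M,\mathscr{L}'}$ of \autoref{3.3.6}, which itself needs the $\mathfrak{B}$-adic completion machinery of \cite{cdn2023correspondance}.

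Your non-splitness argument is also flawed. A splitting of the sequence would give $\Omega^1[M]^{\mathrm{b},*}\cong\widehat{\mathrm{LL}(M)}\oplus\Pi_{M,\mathscr{L}}$; it does not force $\Pi_{M,\mathscr{L}}$ to contain $\widehat{\mathrm{LL}(M)}$, and the topological (or absolute) irreducibility of $\Pi_{M,\mathscr{L}}$ is perfectly compatible with its being a direct summand, so no contradiction follows from irreducibility alone. One needs an actual vanishing of an intertwining space: the paper deduces from a retraction $\Omega^1[M]^{\mathrm{b},*}\to\widehat{\mathrm{LL}(M)}$, via $\widehat{\mathrm{LL}(M)}^{\mathrm{an}}=\mathrm{LL}(M)$ (\autoref{3.2.3}) and $\widehat{\Omega^1[M]^*}^{\mathrm{an}}=\Omega^1[M]^*$, a nonzero element of $\mathrm{Hom}_G(\Omega^1[M]^*,\mathrm{LL}(M))$, which vanishes by \autoref{2.3.4} (ii) — a statement that ultimately relies on Ding's computation of $\mathrm{Ext}^1_{G,\psi}(\mathscr{O}[M]^*,\mathrm{LL}(M))$. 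So both the middle exactness and the non-splitness need inputs (coherence/finite presentation, the product embedding, the identification of analytic vectors of the completions, Ding's theorem) that your outline neither supplies nor replaces.
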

\subsection{Foncteurs des représentations vers les faisceaux}
Inspiré par la catégorification de la correspondance de Langlands locale $p$-adique \cite{egh2023an}, nous construisons deux foncteurs des représentations vers les faisceaux en versions de Banach et localement analytique respectivement. 

Concernant les représentations localement analytiques, nous construisons une suite de foncteurs $\mathbf{m}^i$ de la catégorie des modules topologiques sur l'algèbre des distributions localement analytiques de $\mathrm{GL}_2(\mathbb{Q}_p)$ vers la catégorie des faisceaux sur $\mathbf{P}^1$. Nous calculons ensuite certains entrelacements entre les représentations localement analytiques provenant de $\Sigma_n$, ce qui nous permet de déterminer les faisceaux associés à ces représentations. Le résultat suivant résume nos calculs
\begin{theorem}[Proposition \ref{4.1.4}, Proposition \ref{4.1.5}, Proposition \ref{4.1.6} et Proposition \ref{4.1.7}] On a 
	
	(i) $\mathbf{m}^0(\mathrm{LL}(M))=\mathscr{O}_{\mathbf{P}^1}(-1)$.
	
	(ii) $\mathbf{m}^0(\Omega^1[M]^*)=\mathscr{O}_{\mathbf{P}^1}$.
	
	(iii) $\mathbf{m}^0(\mathscr{O}[M]^*)=0$.
	
	(iv) $\mathbf{m}^0(\Pi_ {M, \mathscr{L}, j}^{\mathrm{an}})$ est le faisceau gratte-ciel $\mathscr{O}_{\mathbf{P}^1, \mathscr{L}}/\mathfrak{m}_{\mathscr{L}}^j$ concentré en le point correspondant à $\mathscr{L}$.
\end{theorem}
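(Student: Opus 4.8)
Le plan est de traiter les quatre assertions comme un tout : (i) et (ii) sont les calculs véritablement nouveaux, tandis que (iii) et (iv) s'en déduiront à l'aide des suites exactes géométriques déjà disponibles. J'utiliserai systématiquement les propriétés formelles des foncteurs $\mathbf{m}^\bullet$ établies à la section précédente --- en particulier qu'une suite exacte courte de modules topologiques convenables (sur l'algèbre des distributions localement analytiques de $\mathrm{GL}_2(\mathbb{Q}_p)$) fournit une suite exacte longue des $\mathbf{m}^i$, que $\mathbf{m}^0$ est exact à gauche, que $\mathbf{m}^\bullet$ est compatible au passage à la $M$-partie isotypique, et que $\mathbf{m}^i$ s'annule pour $i>0$ sur les objets élémentaires rencontrés (point qu'il faudra vérifier au fil des dévissages).

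Je commencerais par (i). Puisque $\mathrm{LL}(M)$ est lisse supercuspidale, les données (action de $\mathfrak{n}$, entrelacements) qui interviennent dans la définition de $\mathbf{m}^\bullet$ y sont triviales ou explicitement contrôlées, de sorte que le complexe calculant $\mathbf{m}^\bullet(\mathrm{LL}(M))$ se concentre en degré $0$ et produit un faisceau cohérent de rang $1$ sur $\mathbf{P}^1=\mathbf{P}(M_{\mathrm{dR}})$, dont la formation est compatible à la filtration tautologique de $M_{\mathrm{dR}}\otimes_L\mathscr{O}_{\mathbf{P}^1}$. En déroulant la construction, ce faisceau s'identifie à l'un des deux crans tautologiques, et il s'agit de voir que c'est le sous-fibré en droites universel, soit $\mathscr{O}_{\mathbf{P}^1}(-1)$. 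Le seul point délicat --- et c'est là le principal obstacle --- est la normalisation précise de la torsion, que je fixerais en confrontant la construction à la famille galoisienne universelle $\mathcal{V}_M$ sur $\mathbf{P}^1$, de fibre $V_{M,\mathscr{L}}$ en $\mathscr{L}$, via le fait que la filtration de Hodge de $V_{M,\mathscr{L}}$ est exactement la droite $\mathscr{L}\subset M_{\mathrm{dR}}$, en surveillant au passage le décalage par $\rho$ et les poids de Hodge--Tate $\{0,1\}$. Le même argument de concentration fournit $\mathbf{m}^i(\mathrm{LL}(M))=0$ pour $i>0$.

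Pour (iii) puis (ii), j'utiliserais la suite de de Rham $0\to\mathscr{O}(\Sigma_n)[M]\xrightarrow{\ d\ }\Omega^1(\Sigma_n)[M]\to H^1_{\mathrm{dR}}(\Sigma_n)[M]\to0$, exacte après projection sur la $M$-partie isotypique puisque $H^0_{\mathrm{dR}}(\Sigma_n)[M]=0$, jointe à la description de $H^1_{\mathrm{dR}}(\Sigma_n)[M]$ due à Dospinescu--Le Bras \cite{dl2017revetements}. L'annulation (iii) provient de ce que $\mathscr{O}(\Sigma_n)[M]$ s'identifie, via $d$, à l'espace des différentielles exactes : il ne porte aucune classe de cohomologie de de Rham non triviale, et l'on en déduit que le module sous-jacent à $\mathbf{m}^0(\mathscr{O}[M]^*)$ n'admet aucun morphisme non nul vers $\mathcal{V}_M$, d'où $\mathbf{m}^0(\mathscr{O}[M]^*)=0$. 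Dualisant la suite ci-dessus en $0\to H^1_{\mathrm{dR}}[M]^*\to\Omega^1[M]^*\to\mathscr{O}[M]^*\to0$ et appliquant $\mathbf{m}^\bullet$, l'exactitude à gauche de $\mathbf{m}^0$ et (iii) donnent $\mathbf{m}^0(\Omega^1[M]^*)\cong\mathbf{m}^0(H^1_{\mathrm{dR}}[M]^*)$, et ce dernier faisceau vaut $\mathscr{O}_{\mathbf{P}^1}$ --- soit par un calcul direct analogue à celui de (i), soit par rigidité, en observant que d'après (i) et la version localement analytique de la suite du Théorème~\ref{0.0.1} ce faisceau admet $\mathscr{O}_{\mathbf{P}^1}(-1)$ comme sous-objet de quotient un gratte-ciel de longueur $1$ pour tout $\mathscr{L}$.

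Enfin (iv), par dévissage. La représentation $\Pi_{M,\mathscr{L},j}^{\mathrm{an}}$ est attachée au changement de base de $V_{M,\mathscr{L}}$ le long de $\kappa(\mathscr{L})\hookrightarrow\mathscr{O}_{\mathbf{P}^1,\mathscr{L}}/\mathfrak{m}_{\mathscr{L}}^{j}$, donc les suites $0\to\mathfrak{m}_{\mathscr{L}}^{j-1}/\mathfrak{m}_{\mathscr{L}}^{j}\to\mathscr{O}_{\mathbf{P}^1,\mathscr{L}}/\mathfrak{m}_{\mathscr{L}}^{j}\to\mathscr{O}_{\mathbf{P}^1,\mathscr{L}}/\mathfrak{m}_{\mathscr{L}}^{j-1}\to0$, avec $\mathfrak{m}_{\mathscr{L}}^{j-1}/\mathfrak{m}_{\mathscr{L}}^{j}\cong\kappa(\mathscr{L})$ ($\mathbf{P}^1$ étant une courbe lisse), se relèvent en $0\to\Pi_{M,\mathscr{L},1}^{\mathrm{an}}\to\Pi_{M,\mathscr{L},j}^{\mathrm{an}}\to\Pi_{M,\mathscr{L},j-1}^{\mathrm{an}}\to0$. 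Pour $j=1$, on applique $\mathbf{m}^\bullet$ à la suite $0\to\mathrm{LL}(M)\to\Omega^1[M]^{*,\mathrm{an}}\to\Pi_{M,\mathscr{L}}^{\mathrm{an}}\to0$ : par (i) et (ii) elle devient $0\to\mathscr{O}_{\mathbf{P}^1}(-1)\xrightarrow{\,s_{\mathscr{L}}\,}\mathscr{O}_{\mathbf{P}^1}\to\mathbf{m}^0(\Pi_{M,\mathscr{L}}^{\mathrm{an}})\to\mathbf{m}^1(\mathrm{LL}(M))=0$, où $s_{\mathscr{L}}\in H^0(\mathbf{P}^1,\mathscr{O}_{\mathbf{P}^1}(1))$ est la section associée au plongement $\mathrm{LL}(M)\hookrightarrow\Omega^1[M]^{*,\mathrm{an}}$ déterminé par $\mathscr{L}$ ; via l'identification canonique $\mathbf{P}\big(H^0(\mathbf{P}^1,\mathscr{O}_{\mathbf{P}^1}(1))\big)=\mathbf{P}^1$, cette section s'annule exactement en $\mathscr{L}$, donc le conoyau est le faisceau gratte-ciel $\mathscr{O}_{\mathbf{P}^1,\mathscr{L}}/\mathfrak{m}_{\mathscr{L}}$. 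Une récurrence sur $j$, utilisant $\mathbf{m}^1(\Pi_{M,\mathscr{L},1}^{\mathrm{an}})=0$ (conséquence de la même suite et de l'annulation de $\mathbf{m}^{>0}$ sur $\mathrm{LL}(M)$ et $\Omega^1[M]^*$) pour conserver l'exactitude de $\mathbf{m}^0$, donne alors $\mathbf{m}^0(\Pi_{M,\mathscr{L},j}^{\mathrm{an}})=\mathscr{O}_{\mathbf{P}^1,\mathscr{L}}/\mathfrak{m}_{\mathscr{L}}^{j}$. Je m'attends à ce que toute la difficulté reste concentrée dans (i) : obtenir la torsion $\mathscr{O}_{\mathbf{P}^1}(-1)$ exige de contrôler très finement la normalisation de $\mathbf{m}^\bullet$ et sa confrontation à $\mathcal{V}_M$.
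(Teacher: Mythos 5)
Votre proposition ne s'appuie jamais sur la définition effective du foncteur, $\mathbf{m}^i(\pi)(U)=\mathrm{Ext}^i_{G,\psi}(\pi,\underline{\Omega}(U)^*)^*$, mais sur des propriétés formelles dont l'une est fausse : $\mathbf{m}^0$ est covariant et exact à \emph{droite} (c'est le dual du foncteur contravariant exact à gauche $\mathrm{Hom}_G(-,\underline{\Omega}(U)^*)$), et non exact à gauche. Cela fait échouer votre preuve de (ii) : de la suite $0\to H^1_{\mathrm{dR}}[M]^*\to\Omega^1[M]^*\to\mathscr{O}[M]^*\to0$ on tire seulement que $\mathbf{m}^0(\Omega^1[M]^*)$ est un \emph{quotient} de $\mathbf{m}^0(H^1_{\mathrm{dR}}[M]^*)$ ; de plus $H^1_{\mathrm{dR}}[M]^*\cong M_{\mathrm{dR}}\otimes_L\mathrm{LL}(M)$ (Théorème \ref{2.1.2}), donc d'après votre propre (i) ce dernier faisceau vaut $\mathscr{O}_{\mathbf{P}^1}(-1)^{\oplus2}$ et non $\mathscr{O}_{\mathbf{P}^1}$ : l'isomorphisme que vous affirmez est impossible (c'est précisément la suite d'Euler $0\to\mathscr{O}_{\mathbf{P}^1}(-2)\to\mathscr{O}_{\mathbf{P}^1}(-1)^{2}\to\mathscr{O}_{\mathbf{P}^1}\to0$ qui apparaît ici, cf. la remarque suivant le \autoref{4.2.6}). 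La preuve de (ii) exige de vrais ingrédients appliqués à la suite (\ref{equation2}) : $\mathrm{Hom}_G(\Omega^1[M]^*,\mathscr{O}[M]^*)=L$, $\mathrm{Hom}_G(\Omega^1[M]^*,\mathrm{LL}(M))=0$ et surtout $\mathrm{Ext}^1_{G,\psi}(\Omega^1[M]^*,\mathrm{LL}(M))=0$ (\autoref{2.3.5}, qui repose sur le théorème de Ding). De même pour (i), la normalisation que vous renvoyez à une confrontation avec une famille galoisienne universelle --- objet qui n'intervient pas dans la construction de $\mathbf{m}^\bullet$ --- est en fait le c\oe ur de l'argument, et elle se fait directement : $\mathrm{Hom}_G(\mathrm{LL}(M),\mathscr{O}[M]^*)=0$ donne $\mathbf{m}^0(\mathrm{LL}(M))(U)=\mathscr{L}(U)^\perp$, puis le recollement explicite $e_1+ze_2=z(z^{-1}e_1+e_2)$ identifie ce faisceau à $\mathscr{O}_{\mathbf{P}^1}(-1)$.

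Les annulations supérieures que vous invoquez pour (iv) --- $\mathbf{m}^{i>0}(\mathrm{LL}(M))=0$, $\mathbf{m}^{i>0}(\Omega^1[M]^*)=0$, puis $\mathbf{m}^1(\Pi^{\mathrm{an}}_{M,\mathscr{L},1})=0$ --- ne sont établies nulle part et sont douteuses : par exemple $\mathrm{Ext}^1_{G,\psi}(\mathrm{LL}(M),\mathscr{O}[M]^*)$ est de dimension $2$ (résultat de Su rappelé après le \autoref{2.3.3}), ce qui contredit votre argument de concentration en degré $0$ ; votre suite à quatre termes place en outre $\mathbf{m}^1(\mathrm{LL}(M))$ au mauvais bout (le terme de liaison est $\mathbf{m}^1(\Pi^{\mathrm{an}}_{M,\mathscr{L}})\to\mathbf{m}^0(\mathrm{LL}(M))$, l'exactitude à droite étant automatique). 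L'article n'a pas besoin de ces annulations : le support et la longueur de $\mathbf{m}^0(\Pi^{\mathrm{an}}_{M,\mathscr{L},j})$ s'obtiennent via les suites de multiplication par $(z-z(\mathscr{L}))^j$ sur $\underline{\Omega}(U)^*$ (\autoref{nouveau2}, \autoref{analytique}, \autoref{gratte1}) et via $\mathrm{Hom}_G(\Pi^{\mathrm{an}}_{M,\mathscr{L},j},\Pi^{\mathrm{an}}_{M,\mathscr{L},n})=L[T]/T^{\min(n,j)}$ (\autoref{2.3.4}). L'assertion (iii) n'est pas davantage justifiée : le fait que les différentielles exactes ne portent pas de classe de de Rham ne dit rien sur $\mathrm{Hom}_G(\mathscr{O}[M]^*,\underline{\Omega}(U)^*)$, et comme $\mathrm{Hom}_G(\mathscr{O}[M]^*,\mathscr{O}_{\mathbf{P}^1}(U)^*\hat{\otimes}_L\mathscr{O}[M]^*)\neq0$, une simple exactitude à gauche ne suffirait de toute façon pas ; l'article déduit (iii) de (i), (iv) et de l'exactitude à droite par un argument de dichotomie. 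Notez enfin la circularité de votre plan : vous déduisez (iv) de (i) et (ii), mais votre justification de repli pour (ii) utilise le quotient gratte-ciel de longueur $1$, c'est-à-dire (iv) pour $j=1$.
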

Concernant les représentations de Banach, nous construisons une suite de foncteurs $\hat{\mathbf{m}}^i$ de la catégorie des modules topologiques sur l'algèbre d'Iwasawa augmentée de $\mathrm{GL}_2(\mathbb{Q}_p)$ vers la catégorie des faisceaux sur $\mathbf{P}^1$. Ensuite, pour les représentations de Banach, nous calculons quelques entrelacements entre les représentations de Banach qui apparaissent dans le Théorème \ref{0.0.1}. Parmi ces résultats, le résultat suivant joue un rôle central
\begin{theorem}[Corollaire \ref{3.7.6}, Proposition \ref{3.8.1}]\label{0.0.2} 
	
	(i) $\mathrm{End}_G(\widehat{\mathrm{LL}(M)})=L$.
	
	(ii) $\mathrm{Ext}^1_{G, \psi}(\widehat{\mathrm{LL}(M)}, \widehat{\mathrm{LL}(M)})=0$.
\end{theorem}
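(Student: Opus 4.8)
On part de la suite exacte courte du Théorème~\ref{0.0.1}, que l'on exploite via la suite exacte longue de $\mathrm{Hom}_G(\widehat{\mathrm{LL}(M)}, -)$, le calcul des entrelacements entre les termes extrêmes et le terme central étant ramené à la cohomologie cohérente de $\mathbf{P}^1 = \mathbf{P}(M_{\mathrm{dR}})$ au moyen des foncteurs $\hat{\mathbf{m}}^i$. Le dictionnaire attendu est $\hat{\mathbf{m}}^0(\widehat{\mathrm{LL}(M)}) = \mathscr{O}_{\mathbf{P}^1}(-1)$, $\hat{\mathbf{m}}^0(\Omega^1[M]^{\mathrm{b},*}) = \mathscr{O}_{\mathbf{P}^1}$ et $\hat{\mathbf{m}}^0(\Pi_{M,\mathscr{L}}) = \mathscr{O}_{\mathbf{P}^1,\mathscr{L}}/\mathfrak{m}_{\mathscr{L}}$, de sorte que la suite du Théorème~\ref{0.0.1} relève la suite structurale $0 \to \mathscr{O}_{\mathbf{P}^1}(-1) \to \mathscr{O}_{\mathbf{P}^1} \to \mathscr{O}_{\mathbf{P}^1,\mathscr{L}}/\mathfrak{m}_{\mathscr{L}} \to 0$ du point $\mathscr{L}$; sous ce dictionnaire, (i) et (ii) correspondent respectivement à $\mathrm{End}_{\mathscr{O}_{\mathbf{P}^1}}(\mathscr{O}_{\mathbf{P}^1}(-1)) = H^0(\mathbf{P}^1, \mathscr{O}_{\mathbf{P}^1}) = L$ et $\mathrm{Ext}^1_{\mathscr{O}_{\mathbf{P}^1}}(\mathscr{O}_{\mathbf{P}^1}(-1), \mathscr{O}_{\mathbf{P}^1}(-1)) = H^1(\mathbf{P}^1, \mathscr{O}_{\mathbf{P}^1}) = 0$.

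Pour (i), on applique $\mathrm{Hom}_G(\widehat{\mathrm{LL}(M)}, -)$ à la suite du Théorème~\ref{0.0.1}. On établit d'abord, via $\hat{\mathbf{m}}^i$, que $\mathrm{Hom}_G(\widehat{\mathrm{LL}(M)}, \Omega^1[M]^{\mathrm{b},*}) \cong \mathrm{Hom}_{\mathscr{O}_{\mathbf{P}^1}}(\mathscr{O}_{\mathbf{P}^1}(-1), \mathscr{O}_{\mathbf{P}^1}) = H^0(\mathbf{P}^1, \mathscr{O}_{\mathbf{P}^1}(1))$ est de dimension $2$, isomorphe à $M_{\mathrm{dR}}$, chaque droite $\mathscr{L} \in \mathbf{P}^1$ y correspondant à un plongement $f_{\mathscr{L}}$ de conoyau $\Pi_{M,\mathscr{L}}$, et que $\mathrm{Hom}_G(\widehat{\mathrm{LL}(M)}, \Pi_{M,\mathscr{L}}) = L$. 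La suite exacte longue identifie alors $\mathrm{End}_G(\widehat{\mathrm{LL}(M)})$ au noyau de l'application $M_{\mathrm{dR}} \to \mathrm{Hom}_G(\widehat{\mathrm{LL}(M)}, \Pi_{M,\mathscr{L}})$, $w \mapsto q_{\mathscr{L}} \circ f_w$, où $q_{\mathscr{L}}$ désigne la projection de $\Omega^1[M]^{\mathrm{b},*}$ sur $\Pi_{M,\mathscr{L}}$. Ce noyau est un sous-espace strict de $M_{\mathrm{dR}}$ : si $w$ n'est pas colinéaire au vecteur définissant $\mathscr{L}$ et si $q_{\mathscr{L}} \circ f_w = 0$, alors $f_w(\widehat{\mathrm{LL}(M)}) \subseteq \ker q_{\mathscr{L}} = f_{\mathscr{L}}(\widehat{\mathrm{LL}(M)})$; comme $\Pi_{M,[w]} = \Omega^1[M]^{\mathrm{b},*}/f_w(\widehat{\mathrm{LL}(M)})$ est topologiquement irréductible (Colmez) et non nul, le sous-objet fermé $f_{\mathscr{L}}(\widehat{\mathrm{LL}(M)})/f_w(\widehat{\mathrm{LL}(M)})$ y est nul, d'où $f_w(\widehat{\mathrm{LL}(M)}) = f_{\mathscr{L}}(\widehat{\mathrm{LL}(M)})$ et $\Pi_{M,\mathscr{L}} \cong \Pi_{M,[w]}$, en contradiction avec le fait que les $V_{M,\mathscr{L}'}$ sont deux à deux non isomorphes (les automorphismes du module supercuspidal $M$ étant scalaires, donc triviaux sur $\mathbf{P}(M_{\mathrm{dR}})$). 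Puisque $\mathrm{End}_G(\widehat{\mathrm{LL}(M)})$ contient les scalaires et est un sous-espace strict d'un espace de dimension $2$, il est de dimension $1$, ce qui donne (i). On peut aussi raisonner plus directement en montrant que les vecteurs lisses de $\widehat{\mathrm{LL}(M)}$ coïncident avec $\mathrm{LL}(M)$ — à l'aide de la description de $(\Omega^1[M]^{\mathrm{b},*})^{\mathrm{sm}}$ issue de~\cite{dl2017revetements} et de la suite exacte — puis en concluant par le lemme de Schur pour la représentation lisse irréductible $\mathrm{LL}(M)$ et par densité de $\mathrm{LL}(M)$ dans $\widehat{\mathrm{LL}(M)}$.

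Pour (ii), on poursuit la même suite exacte longue. L'application $M_{\mathrm{dR}} \to \mathrm{Hom}_G(\widehat{\mathrm{LL}(M)}, \Pi_{M,\mathscr{L}}) = L$ étant surjective (d'après (i), son image est de dimension $1$, donc égale à l'espace d'arrivée), l'application de connexion $\mathrm{Hom}_G(\widehat{\mathrm{LL}(M)}, \Pi_{M,\mathscr{L}}) \to \mathrm{Ext}^1_{G,\psi}(\widehat{\mathrm{LL}(M)}, \widehat{\mathrm{LL}(M)})$ est nulle; par exactitude, le morphisme naturel $\mathrm{Ext}^1_{G,\psi}(\widehat{\mathrm{LL}(M)}, \widehat{\mathrm{LL}(M)}) \to \mathrm{Ext}^1_{G,\psi}(\widehat{\mathrm{LL}(M)}, \Omega^1[M]^{\mathrm{b},*})$ est alors injectif. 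Il suffit donc d'établir $\mathrm{Ext}^1_{G,\psi}(\widehat{\mathrm{LL}(M)}, \Omega^1[M]^{\mathrm{b},*}) = 0$, ce que l'on obtient en identifiant, via $\hat{\mathbf{m}}^i$, cet espace à un sous-espace de $\mathrm{Ext}^1_{\mathscr{O}_{\mathbf{P}^1}}(\mathscr{O}_{\mathbf{P}^1}(-1), \mathscr{O}_{\mathbf{P}^1}) = H^1(\mathbf{P}^1, \mathscr{O}_{\mathbf{P}^1}(1)) = 0$.

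Le point véritablement délicat sera d'établir les calculs de faisceaux $\hat{\mathbf{m}}^i(\widehat{\mathrm{LL}(M)})$, $\hat{\mathbf{m}}^i(\Omega^1[M]^{\mathrm{b},*})$, $\hat{\mathbf{m}}^i(\Pi_{M,\mathscr{L}})$, et surtout de montrer que $\hat{\mathbf{m}}^i$ calcule fidèlement les groupes $\mathrm{Hom}_G$ et $\mathrm{Ext}^1_{G,\psi}$ en jeu, c'est-à-dire une forme restreinte de pleine fidélité dans l'esprit de la catégorification de~\cite{egh2023an}, limitée au bloc supercuspidal paramétré par $\mathbf{P}^1$. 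Cela reposera sur la description explicite de $\Omega^1$ sur la tour $\Sigma_n$ obtenue dans~\cite{dl2017revetements}, sur la géométrie de la cohomologie complétée de la tour, et sur les propriétés du foncteur de Colmez. Une fois ces ingrédients acquis, (i) et (ii) résulteront des arguments esquissés ci-dessus et des annulations $H^1(\mathbf{P}^1, \mathscr{O}_{\mathbf{P}^1}(d)) = 0$ pour $d \geq -1$.
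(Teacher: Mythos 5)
Votre schéma repose, pour l'essentiel, sur un dictionnaire non établi : vous admettez que les foncteurs $\hat{\mathbf{m}}^i$ calculent fidèlement les $\mathrm{Hom}_G$ et $\mathrm{Ext}^1_{G,\psi}$ en jeu, afin de ramener (i) et (ii) à $H^0(\mathbf{P}^1,\mathscr{O}_{\mathbf{P}^1})=L$ et $H^1(\mathbf{P}^1,\mathscr{O}_{\mathbf{P}^1}(1))=0$. C'est précisément le point manquant, et il ne peut pas être comblé dans ce sens sans circularité : dans l'article, les calculs de faisceaux que vous invoquez sont en aval du présent théorème — la preuve de $\hat{\mathbf{m}}^0(\widehat{\mathrm{LL}(M)})=\mathscr{O}_{\mathbf{P}^1}(-1)$ (\autoref{4.2.4}) utilise l'annulation $\mathrm{Ext}^1_{G,\psi}(\widehat{\mathrm{LL}(M)},\widehat{\mathrm{LL}(M)})=0$ (\autoref{3.7.6}), et celle de $\hat{\mathbf{m}}^0(\Pi_{M,\mathscr{L},j})$ (\autoref{4.2.5}) repose sur le \autoref{3.8.10}, lui-même conséquence de la \autoref{3.7.5}. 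De plus, $\hat{\mathbf{m}}^i$ n'est qu'un \emph{ersatz} défini par $\hat{\mathbf{m}}^i(\Pi)(U)=\mathrm{Ext}^i_{G,\psi}(\Pi,\,\cdot\,)^*$ : aucune pleine fidélité, même restreinte, n'est démontrée dans l'article (les représentations $\widehat{\mathrm{LL}(M)}$ et $\Omega^1[M]^{\mathrm{b},*}$ ne sont pas admissibles, aucune théorie générale ne s'applique), et l'annulation de $\mathrm{Ext}^1_{G,\psi}(\widehat{\mathrm{LL}(M)},\Omega^1[M]^{\mathrm{b},*})$, à laquelle votre réduction homologique (correcte en elle-même) ramène (ii), est exactement aussi difficile que l'énoncé de départ. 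Le c{\oe}ur de (ii) reste donc non démontré. Même remarque pour les entrées $\mathrm{Hom}_G(\widehat{\mathrm{LL}(M)},\Omega^1[M]^{\mathrm{b},*})=M_{\mathrm{dR}}$ et $\mathrm{Hom}_G(\widehat{\mathrm{LL}(M)},\Pi_{M,\mathscr{L}})=L$ de votre suite exacte longue : elles s'obtiennent non pas via $\hat{\mathbf{m}}^i$, mais par la propriété universelle du complété et le calcul des vecteurs lisses (\autoref{3.2.2}, \autoref{3.5.6}).

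L'article procède tout autrement. Pour (i), c'est exactement votre variante dite plus directe : la propriété universelle donne $\mathrm{End}_G(\widehat{\mathrm{LL}(M)})=\mathrm{Hom}_G(\mathrm{LL}(M),\widehat{\mathrm{LL}(M)}^{\mathrm{lisse}})$, et l'égalité $\widehat{\mathrm{LL}(M)}^{\mathrm{lisse}}=\mathrm{LL}(M)$ (\autoref{3.2.2}, empruntée à \cite{cdn2023correspondance}) permet de conclure par Schur — nul besoin de la suite du Théorème \ref{0.0.1} ni d'un argument de densité. Pour (ii), la preuve est un calcul de cohomologie de groupes autonome (\autoref{3.7.5}) : par réciprocité de Frobenius on se ramène à $\mathrm{Ext}^1_{KZ,\psi}(\sigma_M^{0,*},\mathrm{ind}_{KZ}^{G}\sigma_M^{0,*})$, on décompose l'induite selon les doubles classes $KZ\gamma_nKZ$, puis Mackey, Shapiro et inflation-restriction ramènent le tout à des $H^1$ de sous-groupes ouverts de $\mathrm{SL}_2(\z_p)$ à coefficients dans un module sans torsion ; la finitude de l'abélianisé de ces sous-groupes (\autoref{C.0.7}) montre que ces $H^1$ sont tués par une puissance de $p$ indépendante de $n$, donc l'Ext entier est de torsion et l'Ext sur $L$ est nul (\autoref{3.7.6}). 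Si vous tenez à votre stratégie faisceautique, il faudrait donc d'abord établir (ii) par une telle voie directe, les calculs de faisceaux n'intervenant qu'ensuite.
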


La démonstration de l'assertion (i) repose sur le fait que la partie lisse de $\widehat{\mathrm{LL}(M)}$ est $\mathrm{LL}(M)$ \cite[Proposition 3.1]{cdn2023correspondance}, et nous renforçons ce résultat sous la forme
\begin{theorem}[Proposition \ref{3.2.3}]\label{0.3.4} Le morphisme naturel de $\mathrm{LL}(M)$ vers la partie localement analytique $\widehat{\mathrm{LL}(M)}^{\mathrm{an}}$ de $\widehat{\mathrm{LL}(M)}$ est un isomorphisme.
\end{theorem}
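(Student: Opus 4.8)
Write $\pi=\mathrm{LL}(M)$, $\widehat\pi=\widehat{\mathrm{LL}(M)}$ and $W=\widehat\pi^{\mathrm{an}}$. Since $\pi$ is admissible irreducible, $\pi\to\widehat\pi$ is injective with dense image, and $\widehat\pi^{\mathrm{sm}}=\pi$ by \cite[Proposition~3.1]{cdn2023correspondance}; smooth vectors being locally analytic, $W^{\mathrm{sm}}=\widehat\pi^{\mathrm{sm}}=\pi$, so $\pi\subseteq W$ and we must show equality. As $\widehat\pi$ is an admissible Banach representation, $W$ is an admissible locally analytic representation and $\pi=W^{\mathrm{sm}}$ is closed in it (being the common kernel of the continuous operators coming from the augmentation ideal of $U(\mathfrak{gl}_2)$); hence it suffices to prove that the admissible locally analytic representation $W/\pi$ vanishes.

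The plan is to control $W$ via a locally analytic refinement of Pan's exact sequence. The sequence of \cite{pan2017first} for $\Sigma_1$ identifies $\widehat\pi$ with the kernel of a $\mathrm{GL}_2(\mathbb{Q}_p)$-equivariant surjection $\Omega^1[M]^{\mathrm{b},*}\xrightarrow{\,f_{\mathscr{L}}\,}\Pi_{M,\mathscr{L}}$; applying the left exact locally analytic vectors functor gives
$$W=\ker\!\Big((\Omega^1[M]^{\mathrm{b},*})^{\mathrm{an}}\xrightarrow{\ f_{\mathscr{L}}\ }\Pi_{M,\mathscr{L}}^{\mathrm{an}}\Big),$$
so one needs a description of $(\Omega^1[M]^{\mathrm{b},*})^{\mathrm{an}}$ and of $f_{\mathscr{L}}$ fine enough to see that this kernel equals $\mathrm{LL}(M)$. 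Concretely, I would (i) use Pan's explicit model for $\Sigma_1$ and the results of \cite{dl2017revetements} to identify $(\Omega^1[M]^{\mathrm{b},*})^{\mathrm{an}}$ with the strong dual $\Omega^1[M]^{*}$ of the Fréchet space of $M$-isotypic global differential forms, and to fit it into a locally analytic fundamental exact sequence $0\to\mathrm{LL}(M)\to\Omega^1[M]^{*}\to C_{\mathscr{L}}\to0$ in which $C_{\mathscr{L}}$ maps injectively into $\Pi_{M,\mathscr{L}}^{\mathrm{an}}$; and (ii) invoke Colmez's description of $\Pi(V_{M,\mathscr{L}})^{\mathrm{an}}$ — $V_{M,\mathscr{L}}$ being potentially crystalline, of consecutive Hodge--Tate weights and non-trianguline — to know that $\mathrm{LL}(M)$ is its unique smooth irreducible subquotient, with multiplicity one, and that it has infinitesimal character $\theta_0$ (that of the trivial representation, twisted by the central character of $\pi$). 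Granting (i) and (ii), $W=\ker f_{\mathscr{L}}=\mathrm{LL}(M)$.

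Two soft arguments organise the proof and isolate the real difficulty. First, $W/\pi$ has no nonzero smooth subrepresentation: the universal property of $\widehat\pi$ applied to $\mathrm{LL}(M)=\Pi_{M,\mathscr{L}}^{\mathrm{sm}}\hookrightarrow\Pi_{M,\mathscr{L}}$ gives a surjection $\widehat\pi\twoheadrightarrow\Pi_{M,\mathscr{L}}$ and hence a map $W/\pi\to Q_{\mathscr{L}}:=\Pi_{M,\mathscr{L}}^{\mathrm{an}}/\mathrm{LL}(M)$; by (ii), $Q_{\mathscr{L}}$ has no smooth subrepresentation, so a smooth $\bar\tau\subseteq W/\pi$, lifted to an extension $\widetilde\tau\subseteq W$ of $\bar\tau$ by $\pi$, would have $\widetilde\tau\to\Pi_{M,\mathscr{L}}^{\mathrm{an}}$ factor through $\mathrm{LL}(M)$, splitting the extension, whence $\bar\tau\subseteq W^{\mathrm{sm}}=\pi$, i.e.\ $\bar\tau=0$. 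Second, since $W\subseteq(\Omega^1[M]^{\mathrm{b},*})^{\mathrm{an}}$ and the Drinfeld coverings are étale over the $p$-adic upper half-plane, the $\mathfrak{gl}_2$-action on $M$-isotypic differential forms has the infinitesimal character of weight-$2$ forms on the half-plane, namely $\theta_0$; hence every irreducible subquotient of $W/\pi$ is genuinely locally analytic of infinitesimal character $\theta_0$.

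The hard part is then to exclude these last, genuinely locally analytic, constituents — equivalently, to establish the locally analytic fundamental exact sequence of step (i), with the stated injectivity $C_{\mathscr{L}}\hookrightarrow\Pi_{M,\mathscr{L}}^{\mathrm{an}}$ — since everything above reduces the question to that. (Should a direct analysis of $(\Omega^1[M]^{\mathrm{b},*})^{\mathrm{an}}$ prove too delicate, an alternative is to show that $W/\pi$ embeds into $\prod_{\mathscr{L}}Q_{\mathscr{L}}$ and to combine this with the fact that, the $V_{M,\mathscr{L}}$ being pairwise non-isomorphic, a fixed genuinely locally analytic representation of infinitesimal character $\theta_0$ occurs in $Q_{\mathscr{L}}$ for only finitely many $\mathscr{L}$, whence $W/\pi$ contains no irreducible subrepresentation at all.) The remaining ingredients — the reduction to $W/\pi=0$, the infinitesimal-character bookkeeping, and the splitting argument — are routine.
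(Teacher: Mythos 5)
Your proposal takes a genuinely different — and in the end incomplete — route. The paper's own proof is a short, self-contained computation: it reduces to $\mathrm{ind}_{KZ}^G\sigma_M$, decomposes it along the double cosets $X_n=KZ\begin{psmallmatrix}p^n&0\\0&1\end{psmallmatrix}KZ$, writes a locally analytic vector $w$ of the completion as a convergent Taylor series $g\cdot w=\sum_{\mathbf k}\mathbf c(g)^{\mathbf k}w_{\mathbf k}$, and uses the truncations $R_n$ (which are fixed by small subgroups $G^n$) to force $w_{\mathbf k}=0$ for $\mathbf k\neq0$; hence $w$ is smooth, and one concludes by $\widehat{\mathrm{LL}(M)}^{\mathrm{lisse}}=\mathrm{LL}(M)$ (\cite[Proposition 3.1]{cdn2023correspondance}). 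No geometry of $\Sigma_n$ enters at all.

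Your route has two genuine gaps. First, you assert that $\widehat{\mathrm{LL}(M)}$ is an admissible Banach representation, so that $W=\widehat{\mathrm{LL}(M)}^{\mathrm{an}}$ is admissible locally analytic; this is false — by Emerton's result (\autoref{3.2.1} in the paper) the universal unitary completion of a supercuspidal representation is \emph{not} admissible — and with it collapses the "soft" bookkeeping with irreducible constituents and infinitesimal characters of $W/\pi$. Second, the step you yourself flag as "the hard part", namely identifying $(\Omega^1[M]^{\mathrm{b},*})^{\mathrm{an}}$ with $\Omega^1[M]^*$ compatibly with the map to $\Pi_{M,\mathscr{L}}^{\mathrm{an}}$, is precisely the content one would have to prove (in the paper this is \autoref{3.4.4} together with the Dospinescu--Le Bras sequence \autoref{2.1.3}), and your proposal does not prove it. Moreover you invoke Pan's sequence and his explicit model, which exist only for the first covering $\Sigma_1$, whereas the statement concerns an arbitrary supercuspidal $M$; for higher level the exact sequence is exactly the paper's \autoref{3.7.2}, whose non-splitness is itself deduced from \autoref{3.2.3}, so at the very least one must check carefully that only the exactness (not the non-splitness) is used to avoid a circularity. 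As written, the proposal reduces the theorem to unproven inputs that are at least as hard as the theorem itself, while the paper's argument settles it directly.
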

\begin{remark} (i) Le Théorème \ref{0.3.4} est un peu surprenant, car $\Pi_{M, \mathscr{L}, j}$ est un quotient de $\widehat{\mathrm{LL}(M)}$ \cite[Lemme 5.3]{cdn2023correspondance} et $\Pi_{M, \mathscr{L}, j}^{\mathrm{an}}$ est beaucoup plus grosse que $\mathrm{LL}(M)$. 
	
	(ii) Soit $\pi$ une représentation localement analytique admissible de $\mathrm{GL}_2(\mathbb{Q}_p)$ admettant un complété unitaire universel $\hat{\pi}$, la sous-représentation $\hat{\pi}^{\mathrm{an}}$ n'est pas forcément égale à $\pi$. Voir la \cite[Remarque 0.3 (ii)]{cd2014completes} pour plus de détails.
\end{remark}
Le Théorème \ref{0.0.2} nous permet ensuite d'identifier les faisceaux associés aux représentations considérées, comme énoncé dans le théorème suivant
\begin{theorem}[Proposition \ref{4.2.4}, Proposition \ref{4.2.5} et Corollaire \ref{4.2.6}]\label{0.0.3} On a 
	
	(i) $\hat{\mathbf{m}}^0(\widehat{\mathrm{LL}(M)})=\mathscr{O}_{\mathbf{P}^1}(-1)$.
	
	(ii) $\hat{\mathbf{m}}^0(\Omega^1[M]^{b, *})=\mathscr{O}_{\mathbf{P}^1}$.
	
	(iii) $\hat{\mathbf{m}}^0(\Pi_{M, \mathscr{L}, j})$ est isomorphe au faisceau gratte-ciel $\mathscr{O}_{\mathbf{P}^1, \mathscr{L}}/\mathfrak{m}_{\mathscr{L}}^j$ concentré en le point correspondant à $\mathscr{L}$.
\end{theorem}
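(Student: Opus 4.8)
The strategy is to transport the short exact sequence of Theorem \ref{0.0.1} through the functor $\hat{\mathbf{m}}^0$ and to combine this with the computations for the two outer terms, so the real content is the vanishing/exactness control coming from Theorem \ref{0.0.2}. First I would establish (i): starting from the geometric description of $\widehat{\mathrm{LL}(M)}$ and the definition of $\hat{\mathbf{m}}^i$ via the augmented Iwasawa algebra, I would reduce the computation to a local statement on each standard affinoid of $\mathbf{P}^1 = \mathbf{P}(M_{\mathrm{dR}})$, using the $\mathrm{GL}_2(\mathbb{Q}_p)$-equivariance to pin down the sheaf up to a twist. The key input is that $\mathrm{End}_G(\widehat{\mathrm{LL}(M)}) = L$ (Theorem \ref{0.0.2}(i)): this forces the associated sheaf to be a \emph{line bundle} on $\mathbf{P}^1$ (any nontrivial idempotent or endomorphism would produce a nontrivial global endomorphism of the representation), and then the weight/central-character bookkeeping — together with the locally analytic computation $\mathbf{m}^0(\mathrm{LL}(M)) = \mathscr{O}_{\mathbf{P}^1}(-1)$ and the compatibility of $\hat{\mathbf{m}}^0$ with $\mathbf{m}^0$ on the locally analytic vectors (here Theorem \ref{0.3.4} is what guarantees that passing to $\widehat{\mathrm{LL}(M)}^{\mathrm{an}}$ loses nothing) — identifies the twist as $\mathscr{O}_{\mathbf{P}^1}(-1)$.

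For (ii), I would argue that $\Omega^1[M]^{\mathrm{b},*}$ sits in the middle of the non-split sequence $0 \to \widehat{\mathrm{LL}(M)} \to \Omega^1[M]^{\mathrm{b},*} \to \Pi_{M,\mathscr{L}} \to 0$ and apply $\hat{\mathbf{m}}^\bullet$. Since $\hat{\mathbf{m}}^0$ is left exact by construction, I get $0 \to \mathscr{O}_{\mathbf{P}^1}(-1) \to \hat{\mathbf{m}}^0(\Omega^1[M]^{\mathrm{b},*}) \to \hat{\mathbf{m}}^0(\Pi_{M,\mathscr{L}})$, and by (iii) the last term is a skyscraper; the connecting map into $\hat{\mathbf{m}}^1(\widehat{\mathrm{LL}(M)})$ is controlled by $\mathrm{Ext}^1_{G,\psi}(\widehat{\mathrm{LL}(M)}, -)$-type vanishing from Theorem \ref{0.0.2}(ii). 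Thus $\hat{\mathbf{m}}^0(\Omega^1[M]^{\mathrm{b},*})$ is an extension of a torsion sheaf (or $0$) by $\mathscr{O}_{\mathbf{P}^1}(-1)$; to upgrade this to the clean answer $\mathscr{O}_{\mathbf{P}^1}$ I would use the independence of the sequence on $\mathscr{L}$: the middle term does not depend on $\mathscr{L}$, so $\hat{\mathbf{m}}^0(\Omega^1[M]^{\mathrm{b},*})$ is a fixed sheaf whose restriction to $\mathbf{P}^1 \setminus \{\mathscr{L}\}$ agrees with $\mathscr{O}_{\mathbf{P}^1}(-1)|_{\mathbf{P}^1\setminus\{\mathscr{L}\}}$ glued to $\mathscr{O}_{\mathbf{P}^1,\mathscr{L}}/\mathfrak{m}_{\mathscr{L}}^j$ at $\mathscr{L}$ for every $\mathscr{L}$ and every $j$; the only line bundle (necessarily it must be a line bundle, being independent of $\mathscr{L}$ while sandwiched between $\mathscr{O}(-1)$ and skyscrapers at every point) realizing such an extension at every point simultaneously is $\mathscr{O}_{\mathbf{P}^1}$, of degree one more than $\mathscr{O}(-1)$. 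Alternatively, and more robustly, I would invoke the locally analytic computation $\mathbf{m}^0(\Omega^1[M]^*) = \mathscr{O}_{\mathbf{P}^1}$ together with the comparison between $\hat{\mathbf{m}}^0$ on a Banach representation and $\mathbf{m}^0$ on its locally analytic vectors.

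For (iii), I would proceed directly from the definition of $\Pi_{M,\mathscr{L},j}$: these are the "infinitesimal neighborhoods" of the point $\mathscr{L} \in \mathbf{P}^1$, i.e. the representations attached (via the Colmez functor) to the successive quotients of the universal deformation in the $\mathscr{L}$-direction, so their galoisian/local parameters are concentrated at $\mathscr{L}$. Feeding this through $\hat{\mathbf{m}}^0$ — whose formation is local on $\mathbf{P}^1$ and which by construction reads off the "support" of a representation in terms of its $\mathrm{JL}(M)$-isotypic structure on the coverings $\Sigma_n$ — yields a sheaf supported at the single point $\mathscr{L}$, with length-$j$ stalk, hence $\mathscr{O}_{\mathbf{P}^1,\mathscr{L}}/\mathfrak{m}_{\mathscr{L}}^j$; the comparison with the already-established locally analytic statement $\mathbf{m}^0(\Pi_{M,\mathscr{L},j}^{\mathrm{an}}) = \mathscr{O}_{\mathbf{P}^1,\mathscr{L}}/\mathfrak{m}_{\mathscr{L}}^j$ pins down the module structure exactly.

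\emph{Main obstacle.} The delicate point is the comparison between the Banach-valued functor $\hat{\mathbf{m}}^0$ and the locally analytic functor $\mathbf{m}^0$: a priori $\hat{\mathbf{m}}^0(\Pi)$ need not equal $\mathbf{m}^0(\Pi^{\mathrm{an}})$, and for a general admissible Banach representation the passage to locally analytic vectors can drastically change the associated sheaf. What saves the argument here is precisely Theorem \ref{0.3.4}: for $\widehat{\mathrm{LL}(M)}$ the locally analytic vectors are just $\mathrm{LL}(M)$, so no spurious contributions appear, and the exactness properties of $\hat{\mathbf{m}}^\bullet$ plus the $\mathrm{Ext}^1$-vanishing of Theorem \ref{0.0.2}(ii) let me propagate the control along the short exact sequence to $\Omega^1[M]^{\mathrm{b},*}$ and to the quotients $\Pi_{M,\mathscr{L},j}$. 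I expect the bulk of the work to be checking that $\hat{\mathbf{m}}^i$ is well-defined on these specific (non-coadmissible in the usual sense) Banach modules and that it commutes with the relevant base changes on $\mathbf{P}^1$, rather than in the final bookkeeping of twists.
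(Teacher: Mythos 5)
Votre proposition repère certains bons ingrédients (la suite du Théorème \ref{3.7.2}, les calculs d'entrelacements de la section 3.8, le caractère gratte-ciel de (iii)), mais plusieurs pas essentiels sont soit faux, soit non justifiés. D'abord, $\hat{\mathbf{m}}^0$ n'est pas exact à gauche mais à droite : par définition $\hat{\mathbf{m}}^0(\Pi)(U)=\mathrm{Hom}_{G}(\Pi,\,\cdot\,)^*$ est le dual d'un foncteur contravariant exact à gauche. Appliqué à $0\to\widehat{\mathrm{LL}(M)}\to\Omega^1[M]^{\mathrm{b},*}\to\Pi_{M,\mathscr{L}}\to0$, on obtient donc l'exactitude à droite, et c'est précisément l'injectivité à gauche qui demande un argument — dans le texte elle s'obtient en remarquant que $\hat{\mathbf{m}}^0(\widehat{\Omega^1[M]^*})$ se surjecte sur des gratte-ciels en une infinité de points (\autoref{4.2.6}), puis on conclut par non-scindage; votre suite $0\to\mathscr{O}(-1)\to\hat{\mathbf{m}}^0(\Omega^1[M]^{\mathrm{b},*})\to\cdots$ et le contrôle du « connecting map » par le Théorème \ref{3.7.6} partent donc dans le mauvais sens. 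Ensuite, l'implication « $\mathrm{End}_G(\widehat{\mathrm{LL}(M)})=L$ donc le faisceau est un fibré en droites » est un non sequitur : $\Pi_{M,\mathscr{L}}$ a aussi $\mathrm{End}_G=L$ et son faisceau est un gratte-ciel (\autoref{4.2.5}). La preuve de (i) dans le texte est un calcul direct : on applique $\mathrm{Hom}_G(\widehat{\mathrm{LL}(M)},-)$ à la suite (\ref{equation3}), on injecte $\mathrm{Hom}_G(\widehat{\mathrm{LL}(M)},\widehat{\mathrm{LL}(M)})=L$, $\mathrm{Hom}_G(\widehat{\mathrm{LL}(M)},\widehat{\Omega^1[M]^*})=M_{\mathrm{dR}}$ (\autoref{3.8.1}) et l'annulation du \autoref{3.7.6}, ce qui donne $\hat{\mathbf{m}}^0(\widehat{\mathrm{LL}(M)})(U)=\mathscr{L}(U)^\perp$, puis on recolle en $\mathscr{O}_{\mathbf{P}^1}(-1)$.

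Enfin, votre ressort principal — une « compatibilité » $\hat{\mathbf{m}}^0(\Pi)\cong\mathbf{m}^0(\Pi^{\mathrm{an}})$ garantie par le Théorème \ref{3.2.3} — n'existe pas dans le texte et n'en découle pas formellement : la \autoref{3.2.3} (et le \autoref{3.4.4}) ne fournissent qu'une injection $\underline{\Omega}(U)^*\hookrightarrow(\mathscr{O}_{\mathbf{P}^1}(U)^*\hat{\otimes}_{\mathscr{O}_{\mathbf{P}^1}(U)}\widehat{\underline{\Omega}(U)^\diamond})^{\mathrm{an}}$, utilisée ponctuellement, et non un isomorphisme de foncteurs. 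C'est pourquoi le texte redémontre des analogues de Banach des lemmes localement analytiques : le \autoref{nouveau1} et le \autoref{Banach} identifient $(\mathscr{O}_{\mathbf{P}^1}(U)^*\hat{\otimes}_{\mathscr{O}_{\mathbf{P}^1}(U)}\widehat{\underline{\Omega}(U)^\diamond})[\mathfrak{m}_{\mathscr{L}}^j]$ avec $\Pi_{M,\mathscr{L},j}$ (l'unicité, c'est-à-dire le fait que ce n'est pas une somme directe, exige un argument d'induction utilisant le \autoref{split}), le \autoref{supporte2} donne le support, le \autoref{3.8.10} donne la finitude de $\mathrm{Ext}^1_{G,\psi}(\Pi_{M,\mathscr{L}},\widehat{\mathrm{LL}(M)})$ nécessaire pour contrôler $\hat{\mathbf{m}}^0(\Pi_{M,\mathscr{L}})(U)$ comme quotient de $\mathscr{O}_{\mathbf{P}^1}(U)^{\oplus d}$, et la longueur exacte $k=j$ du gratte-ciel se lit en combinant les deux suites de multiplication par $(z-z(\mathscr{L}))^n$ avec $\mathrm{Hom}_G(\Pi_{M,\mathscr{L},j},\Pi_{M,\mathscr{L},n})=L[T]/T^{\min(n,j)}$ (\autoref{finalbanach}). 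Dans votre (iii), l'affirmation que le foncteur « lit le support » et que la fibre est cyclique de longueur $j$ est exactement ce qu'il faut démontrer; telle quelle, elle est admise sans preuve.
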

\subsection{Quotients propres de $\widehat{\mathrm{LL}(M)}$}
Une méthode principale pour analyser les représentations admissibles de Banach de $\mathrm{GL}_2(\mathbb{Q}_p)$ repose sur l'utilisation de la théorie des $(\varphi, \Gamma)$-modules. Cependant, les représentations $\widehat{\mathrm{LL}(M)}$ et $\Omega^1[M]^{b, *}$ ne sont pas admissibles, ce qui signifie que le foncteur de Colmez n'est pas défini pour ces représentations. Néanmoins, nous pouvons utiliser notre foncteur en version de Banach. Comme application, on montre le théorème ci-dessous.
\begin{theorem}[Corollaire \ref{5.2.7}] Tous les quotients propres de $\widehat{\mathrm{LL}(M)}$ sont de longueur finie.
\end{theorem}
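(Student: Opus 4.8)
The plan is to transfer the problem to coherent sheaves on the curve $\mathbf{P}^1=\mathbf{P}(M_{\mathrm{dR}})$ via the Banach version functor $\hat{\mathbf{m}}^{\bullet}$ and then to invoke the elementary fact that a line bundle on a smooth projective curve has only finite length (torsion) proper quotients. Write a proper quotient as $0\to K\to\widehat{\mathrm{LL}(M)}\xrightarrow{\,p\,}Q\to 0$ with $K\neq 0$; we may assume $Q\neq 0$. By Théorème \ref{0.3.4} the space $\mathrm{LL}(M)$ is the space of (smooth, hence) locally analytic vectors of $\widehat{\mathrm{LL}(M)}$, is irreducible, and is dense. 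Hence $K\cap\mathrm{LL}(M)$ is a smooth subrepresentation of $\mathrm{LL}(M)$, so it is $0$ or $\mathrm{LL}(M)$; the latter would force $K\supseteq\overline{\mathrm{LL}(M)}=\widehat{\mathrm{LL}(M)}$, i.e. $Q=0$. So $p$ restricts to an embedding $\mathrm{LL}(M)\hookrightarrow Q$ with dense image, and $Q$ is again a unitary completion of the supercuspidal $\mathrm{LL}(M)$.

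Next I would apply $\hat{\mathbf{m}}^{\bullet}$ to this short exact sequence. By Théorème \ref{0.0.3}(i) one has $\hat{\mathbf{m}}^0(\widehat{\mathrm{LL}(M)})=\mathscr{O}_{\mathbf{P}^1}(-1)$, a line bundle on the smooth projective curve $\mathbf{P}^1$. Using the long exact sequence of the $\delta$-functor $\hat{\mathbf{m}}^{\bullet}$ together with the acyclicity/rigidity of $\widehat{\mathrm{LL}(M)}$ --- here the vanishing $\mathrm{Ext}^1_{G,\psi}(\widehat{\mathrm{LL}(M)},\widehat{\mathrm{LL}(M)})=0$ of Théorème \ref{0.0.2}(ii) is crucial --- the image $\mathscr{F}$ of $\hat{\mathbf{m}}^0(\widehat{\mathrm{LL}(M)})$ inside $\hat{\mathbf{m}}^0(Q)$ is a \emph{proper} quotient of $\mathscr{O}_{\mathbf{P}^1}(-1)$, properness being precisely the reflection on sheaves of $K\neq 0$. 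Since every nonzero subsheaf of the line bundle $\mathscr{O}_{\mathbf{P}^1}(-1)$ has the form $\mathscr{O}_{\mathbf{P}^1}(d)$ with $d\le -1$, the sheaf $\mathscr{F}$ is torsion of length at most $-1-d<\infty$; controlling the remaining part $\hat{\mathbf{m}}^0(Q)/\mathscr{F}$ by the higher $\hat{\mathbf{m}}^i(K)$ then yields that $\hat{\mathbf{m}}^0(Q)$ is a coherent torsion sheaf on $\mathbf{P}^1$, supported at finitely many closed points with finite total length.

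Then I would translate back using that, by Théorèmes \ref{0.0.2} and \ref{0.0.3}, $\hat{\mathbf{m}}^0$ is conservative and multiplicity-detecting on the relevant subcategory: the closed points of $\mathbf{P}^1$ in the support of $\hat{\mathbf{m}}^0(Q)$ record exactly the $\mathscr{L}$ that occur as composition factors (recall $\hat{\mathbf{m}}^0(\Pi_{M,\mathscr{L},j})\cong\mathscr{O}_{\mathbf{P}^1,\mathscr{L}}/\mathfrak{m}_{\mathscr{L}}^j$), and the length of the stalk at $\mathscr{L}$ bounds the associated index $j$. As $\hat{\mathbf{m}}^0(Q)$ has finite length, only finitely many $\mathscr{L}$ occur and each to bounded order; combined with the description of the irreducible unitary Banach quotients of $\widehat{\mathrm{LL}(M)}$ as the $\Pi_{M,\mathscr{L}}$ (Théorème \ref{0.0.1} and the universal property of $\widehat{\mathrm{LL}(M)}$), this forces $Q$ to have a finite filtration with graded pieces among the $\Pi_{M,\mathscr{L}}$. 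Hence $Q$ has finite length.

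The main obstacle lies in the second step: one must extract from the construction of $\hat{\mathbf{m}}^{\bullet}$ both that a nonzero kernel $K$ is always detected --- so that $\mathscr{F}$ is genuinely a proper quotient of $\mathscr{O}_{\mathbf{P}^1}(-1)$ rather than all of it --- and that the higher terms $\hat{\mathbf{m}}^i(K)$ cannot reintroduce a non-torsion or infinite length contribution into $\hat{\mathbf{m}}^0(Q)$. It is exactly the $\mathrm{End}$ and $\mathrm{Ext}^1$ computations of Théorème \ref{0.0.2} that make $\widehat{\mathrm{LL}(M)}$ rigid enough for both to hold, and the input $\widehat{\mathrm{LL}(M)}^{\mathrm{an}}=\mathrm{LL}(M)$ of Théorème \ref{0.3.4} that pins the smooth part down at the start.
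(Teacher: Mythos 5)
Your overall strategy --- push the quotient through $\hat{\mathbf{m}}^0$, use $\hat{\mathbf{m}}^0(\widehat{\mathrm{LL}(M)})=\mathscr{O}_{\mathbf{P}^1}(-1)$ and the skyscraper computation $\hat{\mathbf{m}}^0(\Pi_{M,\mathscr{L},j})=\mathscr{O}_{\mathbf{P}^1,\mathscr{L}}/\mathfrak{m}_{\mathscr{L}}^j$, and exploit that a proper quotient of a line bundle on $\mathbf{P}^1$ is torsion with finite support --- is indeed the skeleton of the paper's argument (\autoref{5.2.2}, \autoref{5.2.3}). But the two ``detection'' steps that you assert are precisely where the work lies, and your justifications do not fill them. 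For the first: the claim that $\hat{\mathbf{m}}^0(Q)$ is a \emph{proper} quotient of $\mathscr{O}_{\mathbf{P}^1}(-1)$ whenever $K\neq 0$ does not follow from $\mathrm{Ext}^1_{G,\psi}(\widehat{\mathrm{LL}(M)},\widehat{\mathrm{LL}(M)})=0$; rigidity under self-extensions says nothing about whether the right-exact functor $\hat{\mathbf{m}}^0$ sees the kernel (note also that by right-exactness the higher $\hat{\mathbf{m}}^i(K)$ never enter $\hat{\mathbf{m}}^0(Q)$, so that part of your discussion is moot). What is actually needed is a morphism from $\widehat{\mathrm{LL}(M)}$ into the sheaf-defining object that does not factor through $Q$; the paper produces one by choosing $\mathscr{L}$ with $K\not\subseteq N_{\mathscr{L},1}$, which is possible only because $\bigcap_{\mathscr{L}}N_{\mathscr{L},1}=0$, i.e.\ the embedding $\widehat{\mathrm{LL}(M)}\hookrightarrow\prod_{\mathscr{L}'}\Pi_{M,\mathscr{L}'}$ of \autoref{3.3.6}, itself a consequence of the block/complétion $\mathfrak{B}$-adique analysis and not of any Ext computation.

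The more serious gap is the converse translation: ``$\hat{\mathbf{m}}^0(Q)$ has finite length, hence $Q$ has finite length.'' Your appeal to $\hat{\mathbf{m}}^0$ being ``conservative and multiplicity-detecting on the relevant subcategory'' begs the question: $Q$ is a priori a non-admissible Banach representation, and nothing in the computations of $\hat{\mathbf{m}}^0$ on $\widehat{\mathrm{LL}(M)}$, $\Omega^1[M]^{\mathrm{b},*}$ and the $\Pi_{M,\mathscr{L},j}$ tells you that the subquotients of $Q$ are among the $\Pi_{M,\mathscr{L}}$, nor rules out pieces of $Q$ that $\hat{\mathbf{m}}^0$ fails to see. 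The paper supplies this missing input by a separate, representation-theoretic argument: using Pa\v{s}k\={u}nas blocks and the complétions $\mathfrak{B}$-adiques of la section 5.1 (\autoref{5.1.2}, \autoref{5.1.3}, relying on the structure of $R_{M,\mathfrak{B}}$ from \cite{cdn2023correspondance}), every proper quotient $\Pi$ embeds into a product $\prod_i\prod_j\Pi_{M,\mathscr{L}_i,j}$ with all projections surjective; then finiteness of the support of $\hat{\mathbf{m}}^0(\Pi)$ excludes infinitely many distinct $\mathscr{L}_i$ (\autoref{5.2.5}), \autoref{intersection} excludes a single $\mathscr{L}$ occurring with unbounded $j$ (otherwise $\widehat{\mathrm{LL}(M)}\hookrightarrow\prod_j\Pi_{M,\mathscr{L},j}$ would force $K=0$), and \autoref{5.2.4} together with \autoref{3.4.0} upgrades the embedding into a closed embedding into a finite direct sum, whence finite length. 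Without this embedding step --- which is independent of the sheaf formalism --- your argument does not close.
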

On note $\Pi_{M, \mathscr{L}, j}$ la représentation unique, à isomorphisme près, non scindée, de de Rham, de caractère central $\psi$, et de longueur $j$ dont les facteurs de Jordan-Hölder sont tous isomorphes à $\Pi_{M, \mathscr{L}}$ \cite[Section 4.2]{cdn2023correspondance}, alors on a le théorème suivant de classification des quotients de $\widehat{\mathrm{LL}(M)}$.
\begin{theorem}[Corollaire \ref{5.2.8}]\label{0.0.5} Soit $\Pi$ un quotient propre de $\widehat{\mathrm{LL}(M)}$, alors on a 
	$$\Pi\cong\oplus_{i\in I}\Pi_{M, \mathscr{L}_i, j_i}$$
	où $I$ est un ensemble d'indices fini.
\end{theorem}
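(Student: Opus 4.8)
\emph{Esquisse de démonstration.} La stratégie est de transporter le problème du côté des faisceaux cohérents sur la courbe $\mathbf{P}^1$ via le foncteur $\hat{\mathbf{m}}^0$, d'y effectuer la décomposition (élémentaire sur une courbe lisse), puis de la relever grâce à une propriété de pleine fidélité de $\hat{\mathbf{m}}^0$. D'après le Théorème~\ref{5.2.7}, $\Pi$ est de longueur finie; en le décomposant en somme directe de sous-représentations indécomposables (chacune étant un facteur direct d'un quotient de $\widehat{\mathrm{LL}(M)}$, donc encore un quotient propre de $\widehat{\mathrm{LL}(M)}$), on se ramène au cas où $\Pi$ est indécomposable. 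Notons $\mathcal{C}$ la catégorie des quotients propres de $\widehat{\mathrm{LL}(M)}$, les morphismes étant les applications $G$-équivariantes continues; par le Théorème~\ref{5.2.7}, les objets de $\mathcal{C}$ sont de longueur finie. L'ingrédient central est la \emph{pleine fidélité} de la restriction de $\hat{\mathbf{m}}^0$ à $\mathcal{C}$. Pour l'obtenir, on calcule les espaces de morphismes dans $\mathcal{C}$ : la propriété universelle de la complétion unitaire universelle donne, pour $\Pi_i = \widehat{\mathrm{LL}(M)}/K_i$ dans $\mathcal{C}$, l'égalité $\mathrm{Hom}_G(\Pi_1, \Pi_2) = \{ f \in \mathrm{Hom}_G(\widehat{\mathrm{LL}(M)}, \Pi_2) : f|_{K_1} = 0 \}$, et l'on compare ce terme --- à l'aide des identités $\hat{\mathbf{m}}^0(\widehat{\mathrm{LL}(M)}) = \mathscr{O}_{\mathbf{P}^1}(-1)$, $\hat{\mathbf{m}}^0(\Omega^1[M]^{\mathrm{b}, *}) = \mathscr{O}_{\mathbf{P}^1}$ (Théorème~\ref{0.0.3}) et de la suite exacte géométrique du Théorème~\ref{0.0.1} --- au $\mathrm{Hom}$ correspondant entre faisceaux cohérents sur $\mathbf{P}^1$. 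Les résultats de rigidité $\mathrm{End}_G(\widehat{\mathrm{LL}(M)}) = L$ et $\mathrm{Ext}^1_{G, \psi}(\widehat{\mathrm{LL}(M)}, \widehat{\mathrm{LL}(M)}) = 0$ (Théorème~\ref{0.0.2}) y jouent un rôle essentiel.

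Une fois la pleine fidélité acquise, le reste est formel. D'abord $\hat{\mathbf{m}}^0$ est conservatif sur $\mathcal{C}$ : si $\hat{\mathbf{m}}^0(\Pi') = 0$, alors $\mathrm{End}_G(\Pi') \cong \mathrm{End}_{\mathscr{O}_{\mathbf{P}^1}}(0) = 0$, donc $\mathrm{id}_{\Pi'} = 0$ et $\Pi' = 0$; en particulier $\hat{\mathbf{m}}^0(\Pi) \neq 0$. Ensuite, $\Pi$ étant indécomposable de longueur finie, $\mathrm{End}_G(\Pi)$ est un anneau local; il en va donc de même de $\mathrm{End}_{\mathscr{O}_{\mathbf{P}^1}}(\hat{\mathbf{m}}^0(\Pi))$, ce qui force le faisceau cohérent non nul $\hat{\mathbf{m}}^0(\Pi)$ à être indécomposable. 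Par ailleurs, tout sous-quotient irréductible de $\Pi$ est isomorphe à un $\Pi_{M, \mathscr{L}}$ --- cela découle de l'égalité $\widehat{\mathrm{LL}(M)}^{\mathrm{sm}} = \mathrm{LL}(M)$ (Théorème~\ref{0.3.4}) et de la compatibilité de la correspondance de Colmez avec la correspondance de Langlands classique --- de sorte que, par le calcul $\hat{\mathbf{m}}^0(\Pi_{M, \mathscr{L}}) = \mathscr{O}_{\mathbf{P}^1, \mathscr{L}}/\mathfrak{m}_{\mathscr{L}}$ (cas $j = 1$ du Théorème~\ref{0.0.3}), l'exactitude à gauche de $\hat{\mathbf{m}}^0$ et un dévissage le long d'une filtration de Jordan-Hölder de $\Pi$, le faisceau $\hat{\mathbf{m}}^0(\Pi)$ est de torsion et de longueur finie sur $\mathbf{P}^1$. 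Or, sur la courbe lisse $\mathbf{P}^1$, un faisceau de torsion indécomposable est isomorphe à $\mathscr{O}_{\mathbf{P}^1, \mathscr{L}}/\mathfrak{m}_{\mathscr{L}}^{j}$ pour un unique couple $(\mathscr{L}, j)$.

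Enfin, $\hat{\mathbf{m}}^0(\Pi_{M, \mathscr{L}, j}) = \mathscr{O}_{\mathbf{P}^1, \mathscr{L}}/\mathfrak{m}_{\mathscr{L}}^{j}$ (Théorème~\ref{0.0.3}) et $\Pi_{M, \mathscr{L}, j}$ appartient à $\mathcal{C}$ d'après \cite[Lemme 5.3]{cdn2023correspondance}; comme un foncteur pleinement fidèle réfléchit les isomorphismes, l'isomorphisme $\hat{\mathbf{m}}^0(\Pi) \cong \hat{\mathbf{m}}^0(\Pi_{M, \mathscr{L}, j})$ provient d'un isomorphisme $\Pi \cong \Pi_{M, \mathscr{L}, j}$. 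En recollant sur les facteurs indécomposables de la décomposition initiale, on obtient $\Pi \cong \bigoplus_{i \in I} \Pi_{M, \mathscr{L}_i, j_i}$ avec $I$ fini. Le point dur est la pleine fidélité de $\hat{\mathbf{m}}^0$ sur $\mathcal{C}$ : toute la difficulté s'y concentre, car il faut une description suffisamment précise du foncteur $\hat{\mathbf{m}}^0$ sur les quotients de longueur finie de $\widehat{\mathrm{LL}(M)}$, obtenue en faisant jouer simultanément la suite exacte géométrique (Théorème~\ref{0.0.1}), les calculs de faisceaux (Théorème~\ref{0.0.3}) et les propriétés de rigidité de $\widehat{\mathrm{LL}(M)}$ (Théorème~\ref{0.0.2}).
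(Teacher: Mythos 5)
Votre stratégie repose sur deux affirmations qui ne sont ni démontrées dans votre esquisse ni disponibles dans l'article, et dont la première est essentiellement circulaire. (1) La \emph{pleine fidélité} de $\hat{\mathbf{m}}^0$ sur la catégorie des quotients propres de $\widehat{\mathrm{LL}(M)}$ est précisément le point dur, et vous ne faites qu'indiquer qu'elle devrait résulter d'une comparaison de $\mathrm{Hom}$ utilisant les Théorèmes \ref{0.0.1}, \ref{0.0.2} et \ref{0.0.3}; or l'article ne l'établit nulle part et l'évite délibérément. Pire, elle est a priori douteuse: la dernière proposition de la section 4.2 montre qu'une extension non de Rham $\Pi$ de $\Pi_{M,\mathscr{L}}$ par $\Pi_{M,\mathscr{L}}$ vérifie $\hat{\mathbf{m}}^0(\Pi)=\mathscr{O}_{\mathbf{P}^1,\mathscr{L}}/\mathfrak{m}_{\mathscr{L}}$, donc $\hat{\mathbf{m}}^0$ ne « réfléchit » pas les isomorphismes sur une catégorie contenant de telles extensions. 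Pour restreindre la pleine fidélité aux quotients de $\widehat{\mathrm{LL}(M)}$, il faut déjà savoir que ces extensions non de Rham n'apparaissent pas comme quotients — ce qui est essentiellement le contenu du \autoref{5.2.8} que vous cherchez à prouver. (2) L'affirmation que tout sous-quotient irréductible d'un quotient propre de $\widehat{\mathrm{LL}(M)}$ est un $\Pi_{M,\mathscr{L}}$ ne découle pas de $\widehat{\mathrm{LL}(M)}^{\mathrm{lisse}}=\mathrm{LL}(M)$ et d'une « compatibilité » : un sous-quotient irréductible d'un quotient de Banach n'a aucune raison apparente d'avoir des vecteurs lisses contrôlables, et rien n'exclut a priori des facteurs ordinaires ou de série principale. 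Dans l'article, ce point est obtenu par la machinerie des complétions $\mathfrak{B}$-adiques (section 5.1, \autoref{5.1.2}, \autoref{5.1.3}), qui utilise la théorie des blocs de Pa\v{s}k\={u}nas, les anneaux $R_{M,\mathfrak{B}}$ et le \cite[Théorème 4.1]{cdn2023correspondance}. Signalons aussi que $\hat{\mathbf{m}}^0$ est exact à droite (et non à gauche, comme vous l'écrivez), ce qui fragilise votre dévissage le long d'une filtration de Jordan--Hölder tel que formulé.

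La preuve de l'article suit une route différente qui contourne toute propriété de fidélité : les complétions $\mathfrak{B}$-adiques fournissent une injection $\Pi\hookrightarrow\prod_{i}\prod_{j}\Pi_{M,\mathscr{L}_i,j}$ dont chaque projection est surjective (\autoref{5.1.3}); le foncteur $\hat{\mathbf{m}}^0$ ne sert qu'à des énoncés de finitude — $I$ est fini par le \autoref{5.2.7}, chaque $J(i)$ est fini par le \autoref{intersection} — puis le \autoref{5.2.4} donne la fermeture de $\Pi$ dans la somme directe, et le \autoref{3.4.0} (un lemme élémentaire sur les sous-représentations fermées d'une somme directe à projections surjectives) conclut. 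Votre réduction au cas indécomposable et l'argument de faisceau de torsion indécomposable sur $\mathbf{P}^1$ seraient de jolis raccourcis si la pleine fidélité était acquise, mais en l'état la difficulté est entièrement reportée sur un énoncé non prouvé et vraisemblablement aussi difficile que le théorème lui-même.
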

Il suit directement du Théorème \ref{0.0.5} que tous les quotients propres de $\widehat{\mathrm{LL}(M)}$ sont admissibles.
\newpage
\section*{Notations}
Dans tout le texte, $L$ désigne une extension finie de $\mathbb{Q}_p$, avec l'anneau d'entiers $\mathscr{O}_L$ et le corps résiduel $\kappa_L$. On note $\pi_L$ une uniformisante de $L$. Notons $G$ le groupe de Lie $p$-adique $\mathrm{GL}_2(\mathbb{Q}_p)$ et $\breve{G}$ le groupe des unités de l'unique algèbre des quaternions non déployée de centre $\mathbb{Q}_p$. Pour tout $i \geq 1$, notons $G_i := 1 + p^i M_2(\mathbb{Z}_p)$ le sous-groupe compact de $G$. Notons $B$ le groupe de Borel de $G$, $K=\mathrm{GL}_2(\z_p)$ et $Z\cong\mathbb{Q}_p^*$ le centre de $G$. Notons $\mathcal{G}_{\mathbb{Q}_p}$ le groupe de Galois absolu de $\mathbb{Q}_p$ et $W_{\mathbb{Q}_p}$ son groupe de Weil. 

Soit $H$ un sous-groupe fermé de $G$, on note $\mathrm{ind}_H^G \pi$ l'induite à support compact d'une représentation $\pi$ de $H$. L'espace sous-jacent est constitué des fonctions $f: G\to\pi$ telles que $f(gh)=h^{-1}\cdot f(g)$ pour tout $g\in G, h\in H$, et que l'image de $\mathrm{supp}(f):=\{g\in G\mid f(g)\neq0\}$ dans $H\backslash G$ est compacte. L'action de $G$ est donnée par $(g\cdot f)(x):=f(xg)$ pour tout $g, x\in G$.

Définissons $\Lambda(G):= L[G] \otimes_{L[H]} \Lambda(H) = \mathrm{ind}_H^G L[[H]]$ l'algèbre d'Iwasawa augmentée, comme décrite dans \cite{kohlhaase2017smooth}, où $H$ est un sous-groupe ouvert compact de $G$ et $\Lambda(H):=L[[H]]$ est l'algèbre d'Iwasawa usuelle définie par Lazard \cite{lazard1965groupes}. Il convient de noter que la définition de $\Lambda(G)$ ne dépend pas du choix de $H$. Toutes les représentations de Banach disposent d'une action de $\Lambda(G)$ qui étend l'action de $L[G]$. 

Soit $H$ un sous-groupe ouvert de $G$. Nous notons $D(H)$ l'algèbre des distributions localement analytiques, comme définie dans \cite{st2003algebras}, qui est le dual fort de l'espace $C^{\mathrm{la}}(H, L)$ des fonctions localement analytiques sur $H$ à valeurs dans $L$. Si, de plus, $H$ est compact, alors l'algèbre de Fréchet-Stein $D(H)$ est munie d'une norme sous-multiplicative $||\cdot||_r$ pour chaque $\frac{1}{r} \leq p < 1$. Nous notons $D_r(H)$ la complétion de $D(H)$ par rapport à la norme $||\cdot||_r$. Par conséquent, nous avons $D(H)=\varprojlim_r D_r(H)$. Toutes les représentations localement analytiques de $G$ disposent d'une action de $D(G)$ par le morphisme d'intégration qui étend l'action de $L[G]$, comme indiqué dans le \cite[Théorème 2.2]{st2002locally}. 

Soit $\Pi$ une représentation de Banach de $G$. Notons $\Pi^{\mathrm{an}}$ l'ensemble des vecteurs localement analytiques dans $\Pi$. Nous munissons cet espace de la topologie induite par le morphisme $\Pi^{\mathrm{an}} \to C^{\mathrm{an}}(G, \Pi)$, qui est plus forte que la topologie héritée de $\Pi$. Ainsi, $\Pi^{\mathrm{an}}$ devient une représentation localement analytique de $G$. La représentation $\Pi^{\mathrm{an}}$ dispose d'une sous-représentation fermée $\Pi^{\mathrm{lisse}}$, qui est constituée des vecteurs lisses de $\Pi$. 

Enfin, soit $\pi$ une $L$-représentation de $G$, nous notons $\hat{\pi}$ le complété unitaire universel de $\pi$ au sens d'Emerton \cite[Definition 1.1]{emerton2005p-adic}, s'il existe. La définition et les propriétés associées seront rappelées dans la section \ref{section1.3}.
	\section*{Remerciements} Cet article est issu de ma thèse que j'ai réalisée sous la direction de Pierre Colmez et Christophe Cornut. Je remercie chaleureusement Pierre Colmez pour m'avoir proposé ce sujet et pour avoir généreusement partagé avec moi ses idées et ses connaissances. Son soutien constant m'a permis de surmonter de nombreuses difficultés et d'avancer dans mes réflexions. Je suis également très reconnaissant envers Christophe Cornut pour son aide précieuse tout au long de ce travail, ainsi que pour le temps et l'énergie qu'il m’a consacrés.

Je souhaiterais aussi remercier chaleureusement Yiwen Ding, Yongquan Hu, Stefano Morra, Vytautas Pa\v sk\=unas, Benchao Su, Arnaud Vanhaecke, Marie-France Vignéras, et Zhixiang Wu pour les discussions éclairantes, qui ont grandement contribué à l'enrichissement de ce travail. En outre, j'aimerais remercier Gabriel Dospinescu et Ramla Abdellatif, examinateur et examinatrice de ma thèse, pour le temps et l'attention qu'ils ont accordés à mon travail.

Une partie de ce travail a été réalisée lors d'un séjour, à l'invitation de Yongquan Hu, au Morningside Center of Mathematics, Chinese Academy of Sciences. Je remercie vivement l'institut pour son accueil chaleureux et son soutien.
\section{Préliminaires} 
\subsection{Théorie des blocs d'après Pa\v{s}k\={u}nas}
\subsubsection{Représentations modulo $p$}
Fixons un caractère continu $\psi: \mathbb{Q}_p^\times\to\mathscr{O}_L^\times$. Soit $\mathrm{Mod}_{G, \psi}^{\mathrm{sm}}(\mathscr{O}_L)$ la catégorie des $\mathscr{O}_L$-modules de torsion munis d'une action continue de $G$ pour la topologie discrète sur les modules à caractère central $\psi$. Soit $\mathrm{Mod}_{G, \psi}^{\mathrm{l.adm}}(\mathscr{O}_L)$ la sous-catégorie pleine formée des représentations localement admissibles, alors $\mathrm{Mod}_{G, \psi}^{\mathrm{l.adm}}(\mathscr{O}_L)$ est une catégorie abélienne \cite[Proposition 2.2.18]{emerton2010ordinary}. Notons $\mathrm{Mod}_{G, \psi}^{\mathrm{l.fin}}(\mathscr{O}_L)$ la sous-catégorie pleine de $\mathrm{Mod}_{G, \psi}^{\mathrm{sm}}(\mathscr{O}_L)$ formée des représentations localement de longueur finie. Il suit du \cite[Theorem 2.3.8]{emerton2010ordinary} qu'une représentation lisse de $G$ à caractère central est localement admissible si et seulement si elle est localement de longueur finie, on a donc $\mathrm{Mod}_{G, \psi}^{\mathrm{l.adm}}(\mathscr{O}_L)=\mathrm{Mod}_{G, \psi}^{\mathrm{l.fin}}(\mathscr{O}_L)$.

Soit $\mathrm{Irr}_G^{\mathrm{adm}}$ l'ensemble des représentations irréductibles dans $\mathrm{Mod}_{G, \psi}^{\mathrm{l.adm}}(\mathscr{O}_L)$, on définit une relation d'équivalence $\sim$ sur $\mathrm{Mod}_G^{\mathrm{l.adm}}(\mathscr{O}_L)$ par: $\pi\sim\tau$ s'il y a une suite de représentations admissibles irréductibles $\pi=\pi_1, \pi_2,..., \pi_n=\tau$ telles que pour chaque $i$, on a $\pi_i\cong\pi_{i+1}$, ou $\mathrm{Ext}^1(\pi_i, \pi_{i+1})\neq0$ ou $\mathrm{Ext}^1(\pi_{i+1}, \pi_i)\neq0$. Comme une représentation admissible lisse de type fini admettant un caractère central de $G$ est de longueur finie \cite[2.3.8]{emerton2010ordinary}, la catégorie $\mathrm{Mod}_{G, \psi}^{\mathrm{l.adm}}(\mathscr{O}_L)$ est localement finie. Il résulte de \cite{gabriel1962des} que toute catégorie localement finie se décompose en blocs. 
\begin{theorem}[{\cite{paskunas2013the}}] On a $$\mathrm{Mod}_{G, \psi}^{\mathrm{l.adm}}(\mathscr{O}_L)\cong\prod_{\mathfrak{B}\in\mathrm{Irr}_G^{\mathrm{adm}}/\sim}\mathrm{Mod}_{G, \psi}^{\mathrm{l.adm}}(\mathscr{O}_L)[\mathfrak{B}],$$
	où $\mathrm{Mod}_{G, \psi}^{\mathrm{l.adm}}(\mathscr{O}_L)[\mathfrak{B}]$ est la sous-catégorie pleine de $\mathrm{Mod}_{G, \psi}^{\mathrm{l.adm}}(\mathscr{O}_L)$ constituée des représentations avec tous sous-quotients irréductibles dans $\mathfrak{B}$.
\end{theorem}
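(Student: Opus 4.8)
The plan is to deduce the statement from the general decomposition of a locally finite abelian category into blocks (Gabriel \cite{gabriel1962des}), applied to $\mathcal{C} := \mathrm{Mod}_{G, \psi}^{\mathrm{l.adm}}(\mathscr{O}_L)$: we have just noted that this category is locally finite, since every finitely generated object is admissible with central character, hence of finite length by \cite[2.3.8]{emerton2010ordinary}, and every object is the filtered union of its finite-length subobjects.

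For each equivalence class $\mathfrak{B} \in \mathrm{Irr}_G^{\mathrm{adm}}/\sim$ let $\mathcal{C}[\mathfrak{B}] \subseteq \mathcal{C}$ be the full subcategory of objects whose every irreducible subquotient lies in $\mathfrak{B}$; this is a Serre subcategory, stable under subobjects, quotients and extensions (the composition factors of an extension being those of its sub and its quotient). The one piece of genuine input is that $\mathrm{Ext}^1_{\mathcal{C}}(S, T) = 0$ whenever $S, T$ are irreducible and lie in distinct blocks — this is the definition of $\sim$ taken in contrapositive form — and hence, by dévissage in both variables, $\mathrm{Ext}^1_{\mathcal{C}}(M, N) = 0$ as soon as $M$ and $N$ have finite length with disjoint block-supports.

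I would then argue as follows. For a finite-length object $M$, the sum of all subobjects of $M$ lying in $\mathcal{C}[\mathfrak{B}]$ is again in $\mathcal{C}[\mathfrak{B}]$ (a sum of two such being a quotient of their direct sum), so there is a largest such subobject $M[\mathfrak{B}]$; only finitely many $M[\mathfrak{B}]$ are nonzero, their sum inside $M$ is direct because distinct blocks are disjoint subsets of $\mathrm{Irr}_G^{\mathrm{adm}}$, and the $\mathrm{Ext}^1$-vanishing above lets one check that the resulting monomorphism $\bigoplus_{\mathfrak{B}} M[\mathfrak{B}] \hookrightarrow M$ is an isomorphism. For a general object $M \in \mathcal{C}$, write $M = \varinjlim_\alpha M_\alpha$ over its finite-length subobjects; by uniqueness the decompositions of the $M_\alpha$ are compatible with the transition maps, so setting $M[\mathfrak{B}] := \varinjlim_\alpha M_\alpha[\mathfrak{B}]$ and using that filtered colimits commute with direct sums we get $M = \bigoplus_{\mathfrak{B}} M[\mathfrak{B}]$ with $M[\mathfrak{B}] \in \mathcal{C}[\mathfrak{B}]$. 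Finally, since $\mathrm{Hom}_{\mathcal{C}}(M, N) = 0$ whenever $M, N$ have disjoint block-supports (a nonzero morphism would have nonzero image, whose irreducible subquotients would lie in two disjoint blocks), one gets $\mathrm{Hom}_{\mathcal{C}}(M, N) = \prod_{\mathfrak{B}} \mathrm{Hom}_{\mathcal{C}[\mathfrak{B}]}(M[\mathfrak{B}], N[\mathfrak{B}])$ with composition computed blockwise, which is exactly the asserted product decomposition $\mathcal{C} \cong \prod_{\mathfrak{B}} \mathcal{C}[\mathfrak{B}]$.

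There is no deep obstacle here — the argument is formal once local finiteness is in hand — and the only point needing real care is the finite-length identification $M = \bigoplus_{\mathfrak{B}} M[\mathfrak{B}]$, i.e. running the dévissage that upgrades the defining $\mathrm{Ext}^1$-vanishing between block-disjoint simples to $\mathrm{Ext}^1$-vanishing between block-disjoint finite-length objects and then using it to split off each block summand; everything upstream, notably the local finiteness of $\mathrm{Mod}_{G,\psi}^{\mathrm{l.adm}}(\mathscr{O}_L)$, has already been secured via Emerton's finiteness theorem.
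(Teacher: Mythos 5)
Votre proposition est correcte et suit essentiellement la même démarche que celle du texte (et de Pa\v{s}k\={u}nas) : la finitude locale de $\mathrm{Mod}_{G,\psi}^{\mathrm{l.adm}}(\mathscr{O}_L)$ via \cite[2.3.8]{emerton2010ordinary}, puis la décomposition en blocs d'une catégorie localement finie à la Gabriel. La seule différence est que vous redémontrez à la main cette décomposition formelle (dévissage de l'annulation des $\mathrm{Ext}^1$ entre objets de longueur finie à supports de blocs disjoints, puis passage à la colimite filtrante), là où l'article se contente d'invoquer \cite{gabriel1962des} et \cite{paskunas2013the}; l'étape d'induction sur la longueur donnant $M=\bigoplus_{\mathfrak{B}}M[\mathfrak{B}]$, que vous signalez à juste titre comme le point délicat, est bien standard et fonctionne comme vous l'indiquez.
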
 
Si $\pi\in\mathrm{Irr}_G^{\mathrm{adm}}$, alors $\pi$ est isomorphe à une somme directe finie de représentations absolument irréductibles de $G$. Pour les blocs qui contiennent une représentation absolument irréductible, on a le résultat suivant
\begin{theorem}[{\cite{paskunas2014blocks}}] Soit $\mathfrak{B}$ un bloc qui contient une représentation absolument irréductible, alors $\mathfrak{B}$ est de l'un des six types suivants
	\begin{enumerate}[label=(\roman*)]
		
		\item $\mathfrak{B}=\{\pi\}$ avec $\pi$ supersingulier,
		
		\item $\mathfrak{B}=\{(\mathrm{Ind}_B^G\chi_1\otimes\chi_2\omega^{-1})_{\mathrm{lisse}}, (\mathrm{Ind}_B^G\chi_2\otimes\chi_1\omega^{-1})_{\mathrm{lisse}}\}$ avec $\chi_2\chi_1^{-1}\neq\omega^{\pm1}, 1$,
		
		\item $p>2$ et $\mathfrak{B}=\{(\mathrm{Ind}_B^G\chi\otimes\chi\omega^{-1})_{\mathrm{lisse}}\}$,
		
		\item $p=2$ et $\mathfrak{B}=\{1, \mathrm{Sp}\}\otimes\chi\circ\det$,
		
		\item $p\geq5$ et $\mathfrak{B}=\{1, \mathrm{Sp}, (\mathrm{Ind}_B^G\chi\otimes\chi\omega^{-1})_{\mathrm{lisse}}\}\otimes\chi\circ\det$,
		
		\item $p=3$ et $\mathfrak{B}=\{1, \mathrm{Sp}, \omega\circ\det, \mathrm{Sp}\otimes \omega\circ\det\}\otimes\chi\circ\det$,
	\end{enumerate}
	où $\chi, \chi_1, \chi_2: \mathbb{Q}_p^\times\to \kappa_L$ sont des caractères lisses, $\omega(x)=x|x|\mod\pi_L$ et $\mathrm{Sp}$ désigne la représentation de Steinberg.
\end{theorem}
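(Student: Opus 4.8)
D'après le théorème précédent, la détermination des blocs revient à celle des classes de la relation $\sim$; comme celle-ci est engendrée par les $\mathrm{Ext}^1$ non nuls entre objets irréductibles, considérons le graphe $\Gamma$ dont les sommets sont les classes d'isomorphisme de représentations absolument irréductibles de $\mathrm{Mod}_{G, \psi}^{\mathrm{l.adm}}(\mathscr{O}_L)$, deux sommets $\pi, \tau$ étant reliés dès que $\mathrm{Ext}^1_{G, \psi}(\pi, \tau)\neq0$ ou $\mathrm{Ext}^1_{G, \psi}(\tau, \pi)\neq0$; un bloc contenant une représentation absolument irréductible est alors exactement une composante connexe de $\Gamma$. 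L'ensemble des sommets est fourni par la classification de Barthel--Livné \cite{blirreducible1994} et Breuil \cite{breuilsur2003}: à torsion par un caractère $\chi\circ\det$ près, un sommet est soit le caractère trivial $\mathbf{1}$, soit la Steinberg $\mathrm{Sp}$, soit une série principale irréductible $(\mathrm{Ind}_B^G\chi_1\otimes\chi_2\omega^{-1})_{\mathrm{lisse}}$ avec $\chi_1\neq\chi_2$, soit l'une des représentations supersingulières, lesquelles sont en nombre fini puisque le corps de base est $\mathbb{Q}_p$ --- c'est l'unique endroit où cette finitude intervient. Tout le travail consiste alors à calculer les arêtes de $\Gamma$ (pour deux sommets distincts, les $\mathrm{Ext}^1_{G, \psi}$ à coefficients dans $\mathscr{O}_L$ et dans $\kappa_L$ coïncident, de sorte qu'on peut travailler avec des coefficients dans $\kappa_L$).

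Pour cela, j'utiliserais trois outils. D'abord, l'action de $G$ sur son arbre de Bruhat--Tits présente $G/Z$ comme un amalgame le long d'un sous-groupe de type Iwahori, d'où une suite exacte de type Mayer--Vietoris exprimant $\mathrm{Ext}^\bullet_{G, \psi}(\pi, -)$ à partir des $\mathrm{Ext}^\bullet$ sur des sous-groupes compacts modulo le centre; comme $G_1=1+pM_2(\mathbb{Z}_p)$ est pro-$p$ et agit trivialement sur toute $\kappa_L$-représentation lisse, ces derniers se ramènent aux $\mathrm{Ext}^\bullet$ sur $\mathrm{GL}_2(\mathbb{F}_p)$, qui sont classiques. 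Ensuite, le foncteur des parties ordinaires d'Emerton \cite{emerton2010ordinary}, adjoint à droite de l'induction parabolique, et ses foncteurs dérivés permettent de traiter tous les $\mathrm{Ext}^1$ faisant intervenir une série principale contre $\mathbf{1}$, $\mathrm{Sp}$ ou une autre série principale, en les ramenant à des $\mathrm{Ext}^1$ pour le tore. Enfin, le foncteur de Montréal de Colmez \cite{colmez2010representations}, exact sur $\mathrm{Mod}_{G, \psi}^{\mathrm{l.adm}}(\mathscr{O}_L)$, envoie une représentation supersingulière sur une représentation galoisienne absolument irréductible de dimension $2$ et annule les représentations de dimension finie; en combinant ceci avec la présentation $\pi\cong(\ind_{KZ}^G\sigma)/T$ d'une supersingulière --- où $T$ est l'opérateur de Hecke sphérique --- et avec la réciprocité de Frobenius pour $\ind_{KZ}^G$, on obtient $\mathrm{Ext}^1_{G, \psi}(\pi, \tau)=\mathrm{Ext}^1_{G, \psi}(\tau, \pi)=0$ pour toute irréductible $\tau\not\cong\pi$, ce qui donne le type (i).

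Une fois toutes les arêtes de $\Gamma$ connues, on lit les composantes connexes. Une série principale irréductible et sa tordue $(\mathrm{Ind}_B^G\chi_2\otimes\chi_1\omega^{-1})_{\mathrm{lisse}}$ sont reliées l'une à l'autre, et à aucun autre sommet lorsque $\chi_2\chi_1^{-1}\neq\omega^{\pm1}, 1$: c'est le type (ii); certaines séries principales non génériques sont au contraire isolées, ce qui, pour $p>2$ et $\chi$ générique, donne le type (iii). Quant aux composantes de $\mathbf{1}$ et $\mathrm{Sp}$, on les obtient en déterminant les extensions de ces représentations entre elles, avec les caractères $\omega^k\circ\det$, et avec les séries principales non génériques: on trouve qu'elles comptent $2$, $3$ ou $4$ éléments selon que $p=2$, $p\geq5$ ou $p=3$, les coïncidences supplémentaires entre $\omega$, $\omega^{-1}$ et $\mathbf{1}$ aux petits premiers expliquant la séparation, d'où les types (iv), (v) et (vi). L'obstacle principal est double: d'une part, l'isolement \emph{complet} des supersingulières --- il ne s'agit pas d'annuler les auto-extensions, puisque $\mathrm{Ext}^1_{G, \psi}(\pi, \pi)\neq0$, mais toute extension non scindée avec un autre sommet, dans les deux sens; d'autre part, le suivi minutieux des coïncidences entre les caractères $\chi_1$, $\chi_2$ et $\omega$, qui dépend effectivement de $p$: de là les trois cas de l'énoncé.
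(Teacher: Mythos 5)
Le texte que vous commentez ne démontre pas cet énoncé : il est rappelé tel quel de Paškūnas (c'est une citation, avec pour $p=2,3$ les compléments de ses travaux), si bien qu'il n'y a pas de preuve interne à laquelle comparer la vôtre. Votre schéma général --- blocs $=$ composantes connexes du graphe dont les sommets sont les irréductibles (classification de Barthel--Livné et Breuil) et les arêtes les $\mathrm{Ext}^1$ non nuls, calculés via l'amalgame donné par l'arbre de Bruhat--Tits, les parties ordinaires d'Emerton et le foncteur de Colmez, puis lecture des composantes --- est bien l'architecture de la preuve originale, et la remarque que les $\mathrm{Ext}^1$ sur $\mathscr{O}_L$ et sur $\kappa_L$ coïncident entre irréductibles non isomorphes est correcte.

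En revanche, il y a un vrai trou au point névralgique, l'isolement \emph{complet} des supersingulières, que vous prétendez obtenir en combinant l'exactitude du foncteur $\mathbf{V}$ de Colmez, la présentation $\pi\cong(\mathrm{ind}_{KZ}^G\sigma)/T$ et la réciprocité de Frobenius. Tel quel, cet argument ne conclut pas : $\mathbf{V}$ tue les caractères, donc appliqué à une extension $0\to\pi\to E\to\chi\circ\det\to0$ (ou à l'extension dans l'autre sens) il donne seulement $\mathbf{V}(E)\cong\mathbf{V}(\pi)$ et ne détecte pas le scindage ; et $\mathbf{V}$ n'est ni plein ni fidèle au niveau des $\mathrm{Ext}^1$ sans argument supplémentaire, de sorte que même contre une série principale l'exactitude seule ne suffit pas. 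Quant à la réciprocité de Frobenius, elle ramène $\mathrm{Ext}^1_{G,\psi}(\pi,\tau)$ à des groupes $\mathrm{Ext}^1_{KZ,\psi}(\sigma,\tau|_{KZ})$ et à l'action de $T$, mais ces groupes doivent être effectivement calculés --- c'est précisément le contenu technique des travaux de Paškūnas (avec une analyse séparée, et différente, pour $p=2,3$) --- et votre rédaction les affirme sans les établir ; il en va de même des $\mathrm{Ext}^1$ entre $1$, $\mathrm{Sp}$, $\omega\circ\det$ et les séries principales non génériques qui produisent la trichotomie (iv)--(vi). Enfin, la finitude du nombre de supersingulières, que vous présentez comme l'ingrédient clé, n'intervient en fait pas dans la liste des types de blocs : chaque supersingulière forme un singleton indépendamment de leur nombre. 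Votre texte est donc un plan fidèle de la preuve de Paškūnas, mais les étapes qui en font la difficulté restent à justifier, et le raccourci proposé pour l'isolement des supersingulières échoue en l'état.
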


On peut définir une bijection $\mathfrak{B}\mapsto\rho_\mathfrak{B}$ entre les blocs absolus et $\kappa_L$-représentations $\rho_\mathfrak{B}$ de $\mathcal{G}_{\mathbb{Q}_p}$ semi-simples de dimension $2$. La recette est
\begin{enumerate}[label=(\roman*)]
	\item Si $\mathfrak{B}=\{\pi\}$ avec $\pi$ supersingulier, alors $\rho_\mathfrak{B}$ est la représentation absolument irréductible de dimension $2$ telle que $\pi$ s'envoie vers $\rho_\mathfrak{B}$ sous l'action du foncteur de Colmez.
	
	\item Si $\mathfrak{B}=\{(\mathrm{Ind}_B^G\chi_1\otimes\chi_2\omega^{-1})_{\mathrm{lisse}}, (\mathrm{Ind}_B^G\chi_2\otimes\chi_1\omega^{-1})_{\mathrm{lisse}}\}$, alors $\rho_\mathfrak{B}=\chi_1\oplus\chi_2$.
	
	\item Si $\mathfrak{B}=\{(\mathrm{Ind}_B^G\chi\otimes\chi\omega^{-1})_{\mathrm{lisse}}\}$ ou $\mathfrak{B}=\{1, \mathrm{Sp}\}\otimes\chi\circ\det$, alors $\rho_\mathfrak{B}=\chi\oplus\chi$.
	
	\item Si $\mathfrak{B}=\{1, \mathrm{Sp}, (\mathrm{Ind}_B^G\chi\otimes\chi\omega^{-1})_{\mathrm{lisse}}\}\otimes\chi\circ\det$ ou $\mathfrak{B}=\{1, \mathrm{Sp}, \omega\circ\det, \mathrm{Sp}\otimes \omega\circ\det\}\otimes\chi\circ\det$, alors $\rho_\mathfrak{B}=\chi\oplus\chi\omega$.
\end{enumerate}
\subsubsection{Représentations de Banach}
Soit $\mathfrak{B}$ un bloc absolu. Notons $R^{\mathrm{ps}, \psi\omega}_{\mathrm{tr}\bar{\rho}_{\mathfrak{B}}}$ l'anneau de pseudo-déformations universelles du pseudo-caractère $\mathrm{tr}\bar{\rho}_{\mathfrak{B}}$, de dimension $2$ et de déterminant $\psi\omega$. 

Notons $\mathrm{Ban}_{G, \psi}^{\mathrm{adm}}(L)$ la catégorie des $L$-représentations de Banach admissibles unitaires de $G$ à caractère central $\psi$. Il suit de \cite{st2002banach} et de \cite[6.2.16]{emerton2017locally} que la catégorie $\mathrm{Ban}_{G, \psi}^{\mathrm{adm}}(L)$ est abélienne.
\begin{theorem}[{\cite[Proposition 5.36]{paskunas2013the}}] On a une décomposition de la catégorie $\mathrm{Ban}_{G, \psi}^{\mathrm{adm}}(L)$ ci-dessous
	$$\mathrm{Ban}_{G, \psi}^{\mathrm{adm}}(L)\cong\underset{{\mathfrak{B}\in\mathrm{Irr}_G^{\mathrm{adm}}/\sim}}{\bigoplus}\mathrm{Ban}_{G, \psi}^{\mathrm{adm}}(L)_\mathfrak{B}$$
	où les objets de $\mathrm{Ban}_{G, \psi}^{\mathrm{adm}}(L)_\mathfrak{B}$ sont les représentations $\Pi$ dans $\mathrm{Ban}_{G, \psi}^{\mathrm{adm}}(L)$ telles que pour tout réseau $G$-invariant ouvert borné $\Theta$ de $\Pi$, les sous-quotients irréductibles de $\Theta\otimes_{\mathscr{O}_L}\kappa_L$ sont dans $\mathfrak{B}$.
\end{theorem}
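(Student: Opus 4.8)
Le principe que je suivrais (c'est celui de \cite{paskunas2013the}) consiste à transporter la décomposition en blocs de $\mathrm{Mod}_{G, \psi}^{\mathrm{l.adm}}(\mathscr{O}_L)$ établie ci-dessus le long de la dualité de Schneider--Teitelbaum \cite{st2002banach}, en la relevant de $\kappa_L$ à $\mathscr{O}_L$, puis en inversant $p$. Soit $\Pi\in\mathrm{Ban}_{G, \psi}^{\mathrm{adm}}(L)$. Étant unitaire, $\Pi$ possède un réseau $G$-invariant ouvert borné $\Theta$ (par exemple la boule unité d'une norme invariante), et, étant admissible, son dual $\Theta^\vee=\mathrm{Hom}_{\mathscr{O}_L}^{\mathrm{cont}}(\Theta, \mathscr{O}_L)$ est de type fini sur l'algèbre d'Iwasawa complétée de $K$ à caractère central $\psi$ fixé, qui est noethérienne. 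La réduction $\bar\Theta:=\Theta\otimes_{\mathscr{O}_L}\kappa_L$ est alors une représentation lisse admissible --- donc localement admissible --- de $G$ à caractère central $\psi$, de sorte que la décomposition en blocs de $\mathrm{Mod}_{G, \psi}^{\mathrm{l.adm}}(\mathscr{O}_L)$ fournit $\bar\Theta=\bigoplus_{\mathfrak{B}}\bar\Theta[\mathfrak{B}]$. La noethérianité de l'algèbre d'Iwasawa, jointe à la finitude de $\Theta^\vee$, impose que seul un nombre fini de facteurs soient non nuls : sinon $\bigoplus_{\mathfrak{B}}(\bar\Theta[\mathfrak{B}])^\vee$, qui s'injecte dans $\bar\Theta^\vee$, ne serait pas de type fini alors que $\bar\Theta^\vee$ l'est. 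On attache ainsi à $\Pi$ l'ensemble fini des blocs $\mathfrak{B}$ tels que $\bar\Theta[\mathfrak{B}]\neq 0$.

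Il faut ensuite vérifier que cet ensemble est indépendant du choix de $\Theta$. Si $\Theta'\subseteq\Theta$ sont deux réseaux $G$-invariants ouverts bornés, il existe $n$ tel que $\pi_L^n\Theta\subseteq\Theta'$, et en comparant, via des filtrations $\pi_L$-adiques sur $\Theta/\Theta'$, $\Theta'/\pi_L^n\Theta$ et $\Theta/\pi_L^n\Theta$, on voit que $\bar\Theta$ et $\bar\Theta'$ font intervenir les mêmes blocs (les gradués successifs étant des sous-quotients des réductions des deux réseaux). Cela rend intrinsèque la définition de la sous-catégorie pleine $\mathrm{Ban}_{G, \psi}^{\mathrm{adm}}(L)_{\mathfrak{B}}$ de l'énoncé ; comme un réseau d'une extension se réduit en une extension des réductions de réseaux des termes (la suite des réductions restant exacte par absence de $\pi_L$-torsion), cette sous-catégorie est stable par sous-objets, quotients et extensions dans la catégorie abélienne $\mathrm{Ban}_{G, \psi}^{\mathrm{adm}}(L)$ (\cite{st2002banach}, \cite[6.2.16]{emerton2017locally}), donc est elle-même abélienne.

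Reste à construire la décomposition $\Pi\cong\bigoplus_{\mathfrak{B}}\Pi_{\mathfrak{B}}$ et à établir l'annulation des morphismes entre blocs distincts. La décomposition de $\bar\Theta$ fournit une famille finie d'idempotents orthogonaux agissant sur $\bar\Theta$ ; on les relève le long de la réduction $\mathrm{End}_G(\Theta)\to\mathrm{End}_G(\bar\Theta)$, ce qui est licite car $\mathrm{End}_G(\Theta)$ est séparé et complet pour la topologie $\pi_L$-adique (relèvement des idempotents à la Hensel, en relevant modulo $\pi_L^m$ puis en passant à la limite). On obtient $\Theta=\bigoplus_{\mathfrak{B}}\Theta_{\mathfrak{B}}$, puis, en inversant $p$, la décomposition voulue avec $\Pi_{\mathfrak{B}}\in\mathrm{Ban}_{G, \psi}^{\mathrm{adm}}(L)_{\mathfrak{B}}$. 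Enfin, pour $\mathfrak{B}_1\neq\mathfrak{B}_2$ et $\Pi_i\in\mathrm{Ban}_{G, \psi}^{\mathrm{adm}}(L)_{\mathfrak{B}_i}$, un morphisme $\varphi\colon\Pi_1\to\Pi_2$ envoie, après multiplication par une puissance convenable de $\pi_L$, un réseau $\Theta_1$ dans un réseau $\Theta_2$, et induit pour tout $m$ des morphismes $\Theta_1/\pi_L^m\to\Theta_2/\pi_L^m$ entre extensions itérées d'objets de $\mathfrak{B}_1$ et de $\mathfrak{B}_2$ ; ceux-ci sont nuls, puisque $\mathrm{Hom}_G$ et $\mathrm{Ext}^1_G$ entre objets de blocs distincts de $\mathrm{Mod}_{G, \psi}^{\mathrm{l.adm}}(\mathscr{O}_L)$ s'annulent, d'où $\varphi=0$ en passant à la limite. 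Le point le plus délicat, et le seul endroit où l'admissibilité (et pas seulement l'unitarité) sert réellement, est la finitude de l'ensemble des blocs apparaissant et le relèvement des idempotents associés ; elle repose sur la noethérianité de l'algèbre d'Iwasawa complétée de $K$.
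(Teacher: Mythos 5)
Le texte ne démontre pas cet énoncé : il le cite tel quel d'après Pa\v{s}k\={u}nas \cite[Proposition 5.36]{paskunas2013the}, de sorte que la comparaison ne peut se faire qu'avec la preuve de la source. Votre stratégie est essentiellement la sienne : ramener la décomposition à celle, déjà acquise, de $\mathrm{Mod}_{G,\psi}^{\mathrm{l.adm}}(\mathscr{O}_L)$ au moyen d'un réseau ouvert borné $G$-invariant, l'admissibilité (type fini du dual sur l'algèbre d'Iwasawa noethérienne, donc pas de somme directe infinie non nulle) garantissant qu'un nombre fini de blocs interviennent ; la différence est que Pa\v{s}k\={u}nas passe par la dualité de Schikhof--Iwasawa et la décomposition en produit de la catégorie duale des modules compacts, tandis que vous travaillez directement côté réseau. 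Vos points sur l'indépendance du réseau et sur l'annulation des morphismes entre blocs distincts (la nullité des $\mathrm{Hom}$ au niveau fini suffit d'ailleurs, sans invoquer les $\mathrm{Ext}^1$) sont corrects.

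La seule étape à reformuler est le relèvement des idempotents : tel quel, « relever à la Hensel le long de $\mathrm{End}_G(\Theta)\to\mathrm{End}_G(\bar\Theta)$ » présuppose que l'idempotent de $\mathrm{End}_G(\bar\Theta)$ provient de $\mathrm{End}_G(\Theta)/\pi_L$, ce qui n'est pas donné (la réduction sur les anneaux d'endomorphismes n'a aucune raison d'être surjective). Le bon argument, que votre parenthèse « en relevant modulo $\pi_L^m$ puis en passant à la limite » suggère déjà, est de remarquer que chaque $\Theta/\pi_L^m\Theta$ est un objet lisse admissible de $\mathrm{Mod}_{G,\psi}^{\mathrm{l.adm}}(\mathscr{O}_L)$, donc se décompose canoniquement et fonctoriellement selon les blocs, de façon compatible aux flèches de transition ; comme la somme est finie, on obtient la décomposition de $\Theta=\varprojlim_m\Theta/\pi_L^m\Theta$ en passant à la limite projective, puis celle de $\Pi$ en inversant $p$. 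Avec cette correction, l'esquisse est complète et fidèle à l'argument de la référence.
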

On note $\mathrm{Ban}_{G, \psi}^{\mathrm{adm}}(L)_\mathfrak{B}^{\mathrm{fl}}$ la sous-catégorie pleine de $\mathrm{Ban}_{G, \psi}^{\mathrm{adm}}(L)_\mathfrak{B}$ formée des objets de longueur finie. Soit $\Pi\in\mathrm{Ban}_{G, \psi}^{\mathrm{adm}}(L)_\mathfrak{B}^{\mathrm{fl}}$. Choisissons un réseau borné ouvert $\Pi^+$ dans $\Pi$, alors pour chaque $n\geq1$, $\Pi/\pi_L^n$ est un objet dans $\mathrm{Mod}_{G, \psi}^{\mathrm{l.adm}}(\mathscr{O}_L)[\mathfrak{B}]$. Comme $R^{\mathrm{ps}, \psi\epsilon}_{\mathrm{tr}\bar{\rho}_{\mathfrak{B}}}$ est isomorphe naturellement au centre de la catégorie $\mathrm{Mod}_{G, \psi}^{\mathrm{l.adm}}(\mathscr{O}_L)[\mathfrak{B}]$ \cite[Theorem 1.5]{paskunas2013the}, l'anneau $R^{\mathrm{ps}, \psi\epsilon}_{\mathrm{tr}\bar{\rho}_{\mathfrak{B}}}$ agit sur $\Pi/\pi_L^n$. En passant à la limite et en inversant $p$, on obtient une action de $R^{\mathrm{ps}, \psi\epsilon}_{\mathrm{tr}\bar{\rho}_{\mathfrak{B}}}[\tfrac{1}{p}]$ sur $\Pi$. Soit $\mathbf{m}$ un idéal maximal de $R^{\mathrm{ps}, \psi\epsilon}_{\mathrm{tr}\bar{\rho}_{\mathfrak{B}}}[\tfrac{1}{p}]$, on note $\mathrm{Ban}_{G, \psi}^{\mathrm{adm}}(L)_{\mathfrak{B}, \mathbf{m}}^{\mathrm{fl}}$ la sous-catégorie de $\mathrm{Ban}_{G, \psi}^{\mathrm{adm}}(L)_\mathfrak{B}^{\mathrm{fl}}$ constituée des représentations tuées par une puissance de $\mathbf{m}$.  
\begin{theorem}[{\cite[Theorem 4.36]{paskunas2013the}}] On a une décomposition de catégorie 
	$$\mathrm{Ban}_{G, \psi}^{\mathrm{adm}}(L)_\mathfrak{B}^{\mathrm{fl}}\cong\underset{\mathbf{m}\in\mathrm{Spm}R^{\mathrm{ps}, \psi\omega}_{\mathrm{tr}\bar{\rho}_{\mathfrak{B}}}[\tfrac{1}{p}]}{\bigoplus}\mathrm{Ban}_{G, \psi}^{\mathrm{adm}}(L)_{\mathfrak{B}, \mathbf{m}}^{\mathrm{fl}}.$$
\end{theorem}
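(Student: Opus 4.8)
Pour établir cette décomposition, le plan est de la ramener, via la description de Pa\v{s}k\={u}nas du bloc $\mathfrak{B}$ \cite{paskunas2013the}, à un énoncé d'algèbre commutative sur l'anneau $R:=R^{\mathrm{ps}, \psi\omega}_{\mathrm{tr}\bar{\rho}_{\mathfrak{B}}}$. On commencera par rappeler que, dans la catégorie duale des représentations du bloc, il existe un générateur projectif $\widetilde{P}_{\mathfrak{B}}$ dont l'anneau des endomorphismes $\widetilde{E}_{\mathfrak{B}}$ est une $\mathscr{O}_L$-algèbre noethérienne complète, et que le foncteur $\mathrm{Hom}(\widetilde{P}_{\mathfrak{B}}, -)$, composé avec la dualité de Pontryagin, induit une anti-équivalence entre $\mathrm{Ban}_{G, \psi}^{\mathrm{adm}}(L)_\mathfrak{B}$ et la catégorie des $\widetilde{E}_{\mathfrak{B}}[\tfrac{1}{p}]$-modules de type fini. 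Par exactitude, cette anti-équivalence identifiera $\mathrm{Ban}_{G, \psi}^{\mathrm{adm}}(L)_\mathfrak{B}^{\mathrm{fl}}$ à la sous-catégorie des $\widetilde{E}_{\mathfrak{B}}[\tfrac{1}{p}]$-modules de longueur finie, et $\mathrm{Ban}_{G, \psi}^{\mathrm{adm}}(L)_{\mathfrak{B}, \mathbf{m}}^{\mathrm{fl}}$ à celle des modules tués par une puissance de $\mathbf{m}$ ; il suffira donc de décomposer la catégorie des $\widetilde{E}_{\mathfrak{B}}[\tfrac{1}{p}]$-modules de longueur finie.

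L'étape cruciale consistera à invoquer le calcul du centre $Z(\widetilde{E}_{\mathfrak{B}})\cong R$ \cite{paskunas2013the, paskunas2014blocks} (dans le cas supersingulier, qui est celui pertinent pour cet article, $\widetilde{E}_{\mathfrak{B}}$ est d'ailleurs lui-même commutatif et égal à $R$). Il en résultera que $\widetilde{E}_{\mathfrak{B}}$ est un $R$-module de type fini, donc que $\widetilde{E}_{\mathfrak{B}}[\tfrac{1}{p}]$ est un $R[\tfrac{1}{p}]$-module de type fini et que tout $\widetilde{E}_{\mathfrak{B}}[\tfrac{1}{p}]$-module de longueur finie $N$ est déjà de longueur finie comme $R[\tfrac{1}{p}]$-module.

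On conclura par un argument d'idempotents, essentiellement formel. Comme $R$ est une $\mathscr{O}_L$-algèbre locale noethérienne complète, $R[\tfrac{1}{p}]$ est un anneau de Jacobson dont tout idéal maximal a un corps résiduel fini sur $L$ ; le support d'un module de longueur finie $N$ est donc un ensemble fini $\{\mathbf{m}_1, \dots, \mathbf{m}_r\}$ d'idéaux maximaux de $R[\tfrac{1}{p}]$, et $N$ est tué par $\prod_i\mathbf{m}_i^{k}$ pour un entier $k$ assez grand. Le théorème des restes chinois fournit $R[\tfrac{1}{p}]/\prod_i\mathbf{m}_i^{k}\cong\prod_i R[\tfrac{1}{p}]/\mathbf{m}_i^{k}$, d'où des idempotents orthogonaux dont l'action sur $N$ via la structure de $R[\tfrac{1}{p}]$-module réalise la décomposition canonique $N=\bigoplus_i N[\mathbf{m}_i^\infty]$ ; comme $R[\tfrac{1}{p}]$ est contenu dans le centre de $\widetilde{E}_{\mathfrak{B}}[\tfrac{1}{p}]$, cette décomposition est une décomposition en sous-$\widetilde{E}_{\mathfrak{B}}[\tfrac{1}{p}]$-modules. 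Ces idempotents étant fonctoriels, tout objet et tout morphisme se décomposent, et les $\mathrm{Hom}$ entre morceaux à supports disjoints s'annulent : la catégorie des $\widetilde{E}_{\mathfrak{B}}[\tfrac{1}{p}]$-modules de longueur finie se décompose en la somme directe, sur $\mathbf{m}\in\mathrm{Spm}\,R[\tfrac{1}{p}]$, de ses sous-catégories de modules tués par une puissance de $\mathbf{m}$. On transportera enfin cette décomposition par l'anti-équivalence de la première étape.

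Le point délicat n'est pas cet argument final, mais l'identification du centre $Z(\widetilde{E}_{\mathfrak{B}})\cong R$ : elle repose sur l'analyse explicite, bloc par bloc — avec les distinctions de cas pour $p$ petit et pour les blocs non génériques — de la structure de l'algèbre $\widetilde{E}_{\mathfrak{B}}$, et sur la théorie des (pseudo-)déformations de $\bar{\rho}_{\mathfrak{B}}$. Une fois cet énoncé, ainsi que l'anti-équivalence de la première étape, admis, le reste de la preuve relève de l'algèbre commutative.
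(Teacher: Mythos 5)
Ce théorème n'est pas démontré dans l'article : il y est simplement cité comme \cite[4.36]{paskunas2013the}, et la preuve de référence est celle de Pa\v{s}k\={u}nas. Votre esquisse en est une reconstruction fidèle : anti-équivalence de $\mathrm{Ban}_{G, \psi}^{\mathrm{adm}}(L)_\mathfrak{B}^{\mathrm{fl}}$ avec les modules de longueur finie sur $\widetilde{E}_{\mathfrak{B}}[\tfrac{1}{p}]$ via le générateur projectif et la dualité, identification du centre avec $R^{\mathrm{ps}, \psi\omega}_{\mathrm{tr}\bar{\rho}_{\mathfrak{B}}}$ (et finitude de $\widetilde{E}_{\mathfrak{B}}$ sur ce centre), puis décomposition par le support au moyen des restes chinois et d'idempotents centraux — c'est exactement la structure de l'argument original, et vous situez correctement le contenu profond (calcul du centre, analyse bloc par bloc, restrictions pour $p$ petit) dans les résultats de Pa\v{s}k\={u}nas que vous admettez. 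Rien à redire sur le plan logique ; la seule réserve est que ces énoncés admis ne sont pas des boîtes noires anodines mais constituent l'essentiel de la démonstration citée.
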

Soit $\Pi$ une représentation de $L$-Banach non ordinaire unitaire admissible absolument irréductible de $G$ de caractère central $\psi$, on note $\mathrm{Ban}_{G, \psi}^{\mathrm{adm}}(L)_\Pi^{\mathrm{fl}}$ la sous-catégorie pleine de $\mathrm{Ban}_{G, \psi}^{\mathrm{adm}}(L)$, constituée des représentations de longueur finie, dont tous les sous-quotients irréductibles sont isomorphes à $\Pi$. On note $\mathfrak{B}$ le bloc correspondant à la réduction modulo $p$ de $\mathbf{V}(\Pi)$. D'après la preuve du \cite[Theorem 11.7]{paskunas2013the}, on a $\mathrm{Ban}_{G, \psi}^{\mathrm{adm}}(L)_\Pi^{\mathrm{fl}}=\mathrm{Ban}_{G, \psi}^{\mathrm{adm}}(L)^{\mathrm{fl}}_{\mathfrak{B}, \mathbf{m}}$ pour un idéal maximal $\mathbf{m}\in \mathrm{Spm}R^{\mathrm{ps}, \psi\omega}_{\mathrm{tr}\bar{\rho}_{\mathfrak{B}}}[\tfrac{1}{p}]$ déterminé par $\Pi$.
\begin{theorem}[{\cite[Corollary 6.8]{paskunas2021finiteness}}]\label{1.1.4} Soit $\mathfrak{B}$ un bloc constitué de représentations absolument irréductibles. Soient $\Pi_1\in\mathrm{Ban}_{G, \psi}^{\mathrm{adm}}(L)^{\mathrm{fl}}_{\mathfrak{B}, \mathbf{m}_1}$ et $\Pi_2\in\mathrm{Ban}_{G, \psi}^{\mathrm{adm}}(L)^{\mathrm{fl}}_{\mathfrak{B}, \mathbf{m}_2}$ où $\mathbf{m}_1$ et $\mathbf{m}_2$ sont deux idéaux maximaux distincts de $R^{\mathrm{ps}, \psi\omega}_{\mathrm{tr}\bar{\rho}_{\mathfrak{B}}}[\tfrac{1}{p}]$, alors les groupes d'extensions de Yoneda $\mathrm{Ext}^i_{G, \psi}(\Pi_1, \Pi_2)$ calculés dans $\mathrm{Ban}_{G, \psi}^{\mathrm{adm}}(L)$ sont nuls pour tout $i\geq0$.
\end{theorem}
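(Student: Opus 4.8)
The plan is to convert the statement, via Pa\v{s}k\={u}nas's description of the block $\mathfrak{B}$, into a vanishing statement for $\mathrm{Ext}$ between finitely generated modules with disjoint support over a Noetherian ring, where it becomes elementary. \emph{Reduction to the block.} Since $\Pi_1,\Pi_2\in\mathrm{Ban}_{G,\psi}^{\mathrm{adm}}(L)_{\mathfrak{B}}$, and this is a direct factor of the abelian category $\mathrm{Ban}_{G,\psi}^{\mathrm{adm}}(L)$ by \cite[Proposition 5.36]{paskunas2013the} recalled above, the Yoneda groups $\mathrm{Ext}^i_{G,\psi}(\Pi_1,\Pi_2)$ computed in the ambient category coincide with those computed in $\mathrm{Ban}_{G,\psi}^{\mathrm{adm}}(L)_{\mathfrak{B}}$, so I may work inside the block.

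\emph{Translation to modules.} I would then feed in Pa\v{s}k\={u}nas's structure theorem for the block: there is a projective object $\widetilde{P}$ in the associated category of pseudocompact modules, with endomorphism ring $E$ module-finite over $R:=R^{\mathrm{ps},\psi\omega}_{\mathrm{tr}\bar\rho_{\mathfrak{B}}}$ (so $E$ is Noetherian and $R$ is central in it), together with an anti-equivalence $\Pi\mapsto M(\Pi)$ between $\mathrm{Ban}_{G,\psi}^{\mathrm{adm}}(L)_{\mathfrak{B}}$ and the category of finitely generated $E[1/p]$-modules, under which $\Pi$ lies in $\mathrm{Ban}_{G,\psi}^{\mathrm{adm}}(L)^{\mathrm{fl}}_{\mathfrak{B},\mathfrak{n}}$ precisely when $M(\Pi)$ is annihilated by a power of the maximal ideal $\mathfrak{n}$ of $R[1/p]$. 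Being an equivalence of abelian categories it identifies Yoneda $\mathrm{Ext}$, and over the Noetherian ring $E[1/p]$ Yoneda $\mathrm{Ext}$ between finitely generated modules agrees with derived-functor $\mathrm{Ext}$; hence, writing $M_j:=M(\Pi_j)$, it suffices to prove $\mathrm{Ext}^i_{E[1/p]}(M_2,M_1)=0$ for all $i\ge 0$, knowing that $\mathbf{m}_1^N M_1=0$ and $\mathbf{m}_2^N M_2=0$ for some $N$ and $\mathbf{m}_1\neq\mathbf{m}_2$.

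\emph{The algebra.} Distinct maximal ideals of $R[1/p]$ are comaximal, hence so are $\mathbf{m}_1^N$ and $\mathbf{m}_2^N$; pick $x\in\mathbf{m}_1^N$, $y\in\mathbf{m}_2^N$ with $x+y=1$. Since $x$ is central in $E[1/p]$, multiplication by $x$ on $\mathrm{Ext}^i_{E[1/p]}(M_2,M_1)$ is induced by $x\cdot\mathrm{id}_{M_1}=0$, so $x$ acts by $0$; likewise $y\cdot\mathrm{id}_{M_2}=0$ shows $y$ acts by $0$; therefore $1=x+y$ acts by $0$ and the group vanishes for every $i$. For $i\le 1$ one can argue more cheaply: an extension of the finite-length objects $\Pi_1,\Pi_2$ is again of finite length and lies in $\mathfrak{B}$, so $\mathrm{Ext}^{\le 1}_{G,\psi}$ already reduces to the decomposition $\mathrm{Ban}_{G,\psi}^{\mathrm{adm}}(L)^{\mathrm{fl}}_{\mathfrak{B}}\cong\bigoplus_{\mathfrak{n}}\mathrm{Ban}_{G,\psi}^{\mathrm{adm}}(L)^{\mathrm{fl}}_{\mathfrak{B},\mathfrak{n}}$ of \cite[4.36]{paskunas2013the}; but this shortcut does not reach higher degrees, which is why the module-theoretic route is preferable.

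The step carrying the real weight is the translation to modules: one must make Pa\v{s}k\={u}nas's equivalence explicit enough to be manifestly compatible with Yoneda $\mathrm{Ext}$ in \emph{every} degree, and in particular control the passage between the $\mathscr{O}_L$-coefficient pseudocompact formalism and the $L$-coefficient category — one needs the $\mathscr{O}_L$-coefficient $\mathrm{Ext}$ groups to be finitely generated and $(-)[1/p]$ to be exact, so that one genuinely obtains $\mathrm{Ext}^i_{G,\psi}(\Pi_1,\Pi_2)\cong\mathrm{Ext}^i_{E[1/p]}(M_2,M_1)$ rather than merely up to bounded $p$-power torsion. Granting that dictionary, the remaining steps are formal.
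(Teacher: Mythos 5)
Le texte que vous commentez ne démontre pas cet énoncé : il est importé tel quel de \cite[Corollary 6.8]{paskunas2021finiteness}, sans preuve interne, donc il n'y a pas d'argument du papier auquel comparer le vôtre — votre esquisse reconstruit en fait l'argument de la référence citée. Sur le fond, votre plan est correct : la décomposition en blocs de $\mathrm{Ban}_{G, \psi}^{\mathrm{adm}}(L)$ identifie bien les $\mathrm{Ext}$ de Yoneda en tout degré avec ceux calculés dans le facteur $\mathrm{Ban}_{G, \psi}^{\mathrm{adm}}(L)_{\mathfrak{B}}$ (on projette toute suite exacte longue sur sa composante dans $\mathfrak{B}$), et une fois traduit en modules de type fini sur $E[\tfrac{1}{p}]$, l'argument de comaximalité ($x+y=1$ avec $x\in\mathbf{m}_1^N$, $y\in\mathbf{m}_2^N$, $x$ central tuant $M_1$, $y$ tuant $M_2$) est exactement le bon et vaut aussi en degré $0$. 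Vous identifiez vous-même l'endroit où se concentre la difficulté : l'anti-équivalence entre le bloc tout entier (pas seulement les objets de longueur finie, ce qui est indispensable pour les $\mathrm{Ext}$ de Yoneda de degré $\geq 2$, dont les termes intermédiaires sont quelconques) et les $E[\tfrac{1}{p}]$-modules de type fini, compatible aux $\mathrm{Ext}$ en tout degré, avec $E$ fini sur le centre noethérien $R^{\mathrm{ps}, \psi\omega}_{\mathrm{tr}\bar{\rho}_{\mathfrak{B}}}$ — c'est précisément le contenu non trivial de l'article de Pa\v{s}k\={u}nas cité (et c'est là qu'intervient l'hypothèse que $\mathfrak{B}$ est constitué de représentations absolument irréductibles). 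Votre remarque que le cas $i\leq1$ découle déjà de la décomposition \cite[4.36]{paskunas2013the} est juste ; en acceptant le dictionnaire modulaire comme boîte noire, le reste de votre argument est complet.
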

\begin{theorem}[{\cite[Theorem 1.1]{ds2013endomorphism}}]\label{1.1.7} Soit $\Pi$ une représentation de Banach (resp. localement analytique) admissible topologiquement irréductible de $G$ sur $L$, alors $\Pi$ est absolument irréductible si et seulement si $\mathrm{End}_G(\Pi)=L$.
\end{theorem}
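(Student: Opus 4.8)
The plan is to isolate two structural facts — that $\Delta:=\mathrm{End}_G(\Pi)$ is a division algebra over $L$, and that it is moreover \emph{finite-dimensional} over $L$ — and then to deduce the stated equivalence from them by a formal argument using base change to finite extensions and idempotents. The second fact is the one requiring real work; the rest is bookkeeping.

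For the first fact I would invoke the categorical Schur lemma. In the Banach case the category $\mathrm{Ban}_{G,\psi}^{\mathrm{adm}}(L)$ is abelian and its subobjects are exactly the closed $G$-stable subspaces, so a topologically irreducible $\Pi$ is a simple object; any nonzero $\varphi\in\Delta$ then has trivial kernel and cokernel, hence is an isomorphism (the open mapping theorem is built into the abelian structure), so $\Delta$ is a division $L$-algebra. In the locally analytic case one runs the identical argument in the anti-equivalent abelian category of coadmissible $D(G)$-modules.

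The main obstacle is to prove that $\Delta$ is finite-dimensional over $L$, and this is genuinely needed: otherwise $\Delta$ could a priori be a rational function field $L(t)$ or an infinite algebraic extension of $L$, and then $\Delta\otimes_L L'$ would remain a field for every finite $L'/L$, so $\Pi$ could be absolutely irreducible with $\Delta\ne L$. I would approach this through duality: fixing a uniform pro-$p$ open subgroup $H\le G$, the dual $\Pi^\vee$ is finitely generated over the Noetherian Iwasawa algebra $\Lambda(H)$ (resp., in the locally analytic case, over each Noetherian constituent $D_r(H)$ of the Fréchet–Stein algebra $D(H)$); a finite presentation of $\Pi^\vee$ then shows that $\mathrm{End}_{\Lambda(H)}(\Pi^\vee)$ is finitely generated as a $\Lambda(H)$-module, hence Noetherian and integral over the central subring $\Lambda(H)$, and $\Delta$ embeds into it. So every element of $\Delta$ is a root of a monic polynomial over $\Lambda(H)$, and $\Delta$ is a division subring of a Noetherian $L$-algebra that is module-finite over $\Lambda(H)$. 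Turning this into the bound $\dim_L\Delta<\infty$ requires one more essential ingredient — the action of $\Delta$ on the Banach space $\Pi$ is by \emph{bounded} operators — which is what rules out an element acting like a formal power series or like an infinite tower of algebraic operators, neither of which can satisfy the required norm estimates. Making this rigorous is the technical heart of \cite{ds2013endomorphism}, and is the step I expect to require the most work.

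Granting that $\Delta$ is a finite-dimensional division $L$-algebra, the equivalence is immediate. If $\Pi$ is absolutely irreducible and $\Delta\ne L$, pick $x\in\Delta\setminus L$; it is algebraic over $L$ with $[L(x):L]>1$, so for the Galois closure $L'$ of $L(x)/L$ one has $L(x)\otimes_L L'\cong(L')^{[L(x):L]}$, which contains an idempotent $e\ne0,1$. Viewing $e$ inside $\mathrm{End}_G(\Pi\,\widehat\otimes_L L')\supseteq\Delta\otimes_L L'$, it splits $\Pi\,\widehat\otimes_L L'$ as a direct sum of two nonzero closed $G$-subrepresentations, contradicting absolute irreducibility; hence $\Delta=L$. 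Conversely, assume $\Delta=L$ and let $L'/L$ be finite Galois with group $\Gamma$. Since $\Pi_{L'}:=\Pi\,\widehat\otimes_L L'\cong\Pi^{\oplus[L':L]}$ as an $L$-representation, it is admissible of finite length over $L$, hence over $L'$, and, because $\mathrm{Hom}$ commutes with this finite base change, $\mathrm{End}_G(\Pi_{L'})=\mathrm{End}_G(\Pi)\otimes_L L'=L'$. Suppose $\Pi_{L'}$ were reducible and choose a minimal nonzero closed $G$-subrepresentation $W\subseteq\Pi_{L'}$. The semilinear $\Gamma$-action permutes the conjugates $\sigma(W)$ ($\sigma\in\Gamma$), each again a simple closed $L'$-subrepresentation, so the subrepresentation $\sum_{\sigma\in\Gamma}\sigma(W)$ is $\Gamma$- and $G$-stable; by Galois descent it equals $W_0\,\widehat\otimes_L L'$ for a closed $G$-subrepresentation $W_0\subseteq\Pi$, and since $W_0\ne0$ and $\Pi$ is topologically irreducible, $\sum_{\sigma}\sigma(W)=\Pi_{L'}$. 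Picking a maximal subset $J\subseteq\Gamma$ with $\sum_{\sigma\in J}\sigma(W)$ direct, simplicity of the $\sigma(W)$ forces $\bigoplus_{\sigma\in J}\sigma(W)=\Pi_{L'}$, and reducibility forces $|J|\ge2$; but then the projection onto one summand is an idempotent $\ne0,1$ in $\mathrm{End}_G(\Pi_{L'})=L'$, which is absurd. Hence $\Pi_{L'}$ is topologically irreducible for every finite $L'/L$, i.e.\ $\Pi$ is absolutely irreducible.
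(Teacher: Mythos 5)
Un point de contexte d'abord : le texte ne démontre pas cet énoncé — il est cité tel quel de \cite{ds2013endomorphism} — il n'y a donc pas de preuve interne à laquelle comparer votre proposition ; vous tentez en fait de redémontrer le théorème de Dospinescu--Schraen. Dans votre esquisse, la partie « lemme de Schur » (l'algèbre $\Delta=\mathrm{End}_G(\Pi)$ est un corps gauche, via la catégorie abélienne des représentations admissibles, resp. des modules coadmissibles, et la stricticité des morphismes) est correcte, et la réduction finale — de « $\Delta$ est un corps gauche de dimension finie » à l'équivalence annoncée, par changement de base fini, descente galoisienne, longueur finie de $\Pi\widehat{\otimes}_LL'$ et idempotents — est l'argument standard et est essentiellement juste.

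Le problème est que l'étape centrale, la finitude $\dim_L\Delta<\infty$, n'est pas démontrée, et l'argument partiel que vous proposez ne fonctionne pas. L'algèbre $\Lambda(H)$ est non commutative et ne s'envoie pas dans le centre de $\mathrm{End}_{\Lambda(H)}(\Pi^\vee)$ (son centre est bien plus petit), de sorte que « intègre sur le sous-anneau central $\Lambda(H)$ » n'a pas de sens ; de plus, pour $\Lambda(H)$ non commutatif, la présentation finie de $\Pi^\vee$ ne fait pas de $\mathrm{End}_{\Lambda(H)}(\Pi^\vee)$ un $\Lambda(H)$-module, encore moins un module de type fini, donc l'argument noethérien ne démarre pas. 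Enfin, la bornitude des opérateurs de $\Delta$ ne peut pas être l'ingrédient décisif : le complété d'une extension algébrique infinie de $L$ est un corps de Banach qui agit sur lui-même par des opérateurs bornés de norme $1$, donc aucune estimation de norme seule n'exclut un corps gauche de dimension infinie ; ce qui est réellement nécessaire, c'est l'admissibilité de façon essentielle (passage aux vecteurs localement analytiques et structure de Fréchet--Stein noethérienne de $D(H)$, avec un argument de type Quillen/structure fine des $D_r(H)$-modules de type fini), ce qui est précisément le contenu technique de \cite{ds2013endomorphism} que vous renvoyez à la référence. En l'état, la proposition suppose la moitié difficile du théorème et n'établit que la réduction formelle.
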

\begin{proposition}[{\cite[Corollary 2.26]{paskunas2016on}}]\label{1.1.6} Soit $\Pi\in\mathrm{Ban}_{G, \psi}^{\mathrm{adm}}(L)$ une représentation supersingulière absolument irréductible, alors on a $\dim_L\mathrm{Ext}_{G, \psi}^1(\Pi, \Pi)=3$.
\end{proposition}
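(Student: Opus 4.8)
\emph{Plan.} I would transport the problem to the Galois side, via Colmez's functor $\mathbf{V}$ and Pa\v{s}k\={u}nas's analysis of the blocks of $\mathrm{Ban}_{G,\psi}^{\mathrm{adm}}(L)$ recalled above. Set $V:=\mathbf{V}(\Pi)$, an absolutely irreducible $2$-dimensional $L$-representation of $\mathcal{G}_{\mathbb{Q}_p}$ whose determinant is a fixed twist of $\psi$; its residual representation $\bar\rho$ is then absolutely irreducible, so its block $\mathfrak{B}$ is of type~(i), $\mathfrak{B}=\{\pi\}$ with $\pi$ supersingulier. Any extension of $\Pi$ by $\Pi$ in $\mathrm{Ban}_{G,\psi}^{\mathrm{adm}}(L)$ again lies in $\mathrm{Ban}_{G,\psi}^{\mathrm{adm}}(L)_{\mathfrak{B},\mathbf{m}}^{\mathrm{fl}}$, so $\mathrm{Ext}^1_{G,\psi}(\Pi,\Pi)$ may be computed inside that subcategory. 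For a supersingular block the compact projective generator $\widetilde P$ has endomorphism ring $\mathrm{End}(\widetilde P)\cong R^{\mathrm{ps},\psi\omega}_{\mathrm{tr}\,\bar\rho}$ by Pa\v{s}k\={u}nas's theory \cite{paskunas2013the}, and the block is accordingly equivalent to a category of modules over a localization/completion $R$ of $R^{\mathrm{ps},\psi\omega}_{\mathrm{tr}\,\bar\rho}[\tfrac1p]$ at $\mathbf{m}$, a complete local $L$-algebra with residue field $L$ (after enlarging $L$), with $\Pi$ corresponding to the simple module $L=R/\mathfrak{m}_R$. Hence $\mathrm{Ext}^1_{G,\psi}(\Pi,\Pi)\cong\mathrm{Ext}^1_R(L,L)\cong(\mathfrak{m}_R/\mathfrak{m}_R^2)^{*}$. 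Since $\bar\rho$ is absolutely irreducible, pseudo-deformations of $\mathrm{tr}\,\bar\rho$ are genuine deformations up to isomorphism, so $\mathfrak{m}_R/\mathfrak{m}_R^2$ is the cotangent space at $V$ of the fixed-determinant deformation problem for $\mathcal{G}_{\mathbb{Q}_p}$, i.e.\ dual to $H^1(\mathcal{G}_{\mathbb{Q}_p},\mathrm{ad}^0 V)$, where $\mathrm{ad}^0 V\subset\mathrm{End}(V)$ is the trace-zero part.

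\emph{The Galois cohomology step.} It then remains to prove that $\dim_L H^1(\mathcal{G}_{\mathbb{Q}_p},\mathrm{ad}^0 V)=3$, noting $\dim_L\mathrm{ad}^0 V=3$. By Schur's lemma $\mathrm{End}_{\mathcal{G}_{\mathbb{Q}_p}}(V)=L$, and as $2$ is invertible in $L$ the identity is not trace-zero, so $H^0(\mathcal{G}_{\mathbb{Q}_p},\mathrm{ad}^0 V)=0$. Local Tate duality gives $H^2(\mathcal{G}_{\mathbb{Q}_p},\mathrm{ad}^0 V)\cong H^0(\mathcal{G}_{\mathbb{Q}_p},(\mathrm{ad}^0 V)^{*}(1))^{*}$; the trace form identifies $(\mathrm{ad}^0 V)^{*}\cong\mathrm{ad}^0 V$, so this group embeds into $\mathrm{Hom}_{\mathcal{G}_{\mathbb{Q}_p}}(V,V(1))$, which vanishes: a non-zero such map would force $V\cong V(1)$, hence, comparing determinants, $\epsilon^2=1$, absurd since the cyclotomic character $\epsilon$ has infinite order. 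Thus $H^2(\mathcal{G}_{\mathbb{Q}_p},\mathrm{ad}^0 V)=0$, and the local Euler characteristic formula yields $0-\dim_L H^1(\mathcal{G}_{\mathbb{Q}_p},\mathrm{ad}^0 V)+0=-3$. Therefore $\dim_L\mathrm{Ext}^1_{G,\psi}(\Pi,\Pi)=3$.

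\emph{Main obstacle.} The cohomology step is routine; the weight of the argument lies in the first paragraph, namely in the $\mathrm{Ext}$-compatibility of the $p$-adic Langlands correspondence for $\mathrm{GL}_2(\mathbb{Q}_p)$. Concretely one needs: exactness of $\mathbf{V}$, the identification $\mathrm{End}(\widetilde P)\cong R^{\mathrm{ps},\psi\omega}_{\mathrm{tr}\,\bar\rho}$ for the supersingular block together with enough flatness to turn the module-category description into an honest isomorphism of $\mathrm{Ext}$-groups, and the classical fact that a pseudo-deformation of an absolutely irreducible residual representation lifts uniquely to a deformation. This is exactly Pa\v{s}k\={u}nas's structural machinery. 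Once it is in place the value $3$ is forced: the same computation without the central-character constraint would instead give $\dim_L H^1(\mathcal{G}_{\mathbb{Q}_p},\mathrm{ad}\,V)=5$, the difference $\dim_L H^1(\mathcal{G}_{\mathbb{Q}_p},L)=2$ corresponding to twisting $\Pi$ by characters of $\mathbb{Q}_p^{\times}$.
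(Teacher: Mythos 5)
The paper gives no proof of this proposition --- it is quoted directly from Pa\v{s}k\={u}nas (Corollary 2.26 of the cited article) --- and your argument is essentially the one underlying that reference: extensions of $\Pi$ by $\Pi$ stay in the supersingular block, the block is controlled by $R^{\mathrm{ps},\psi\omega}_{\mathrm{tr}\bar{\rho}}$ (pseudo-deformations of an absolutely irreducible $\bar{\rho}$ being genuine deformations), so $\mathrm{Ext}^1_{G,\psi}(\Pi,\Pi)$ is the tangent space of the fixed-determinant deformation problem at $V=\mathbf{V}(\Pi)$, which local Tate duality and the Euler characteristic formula compute to be $H^1(\mathcal{G}_{\mathbb{Q}_p},\mathrm{ad}^0V)$ of dimension $3$. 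Your proposal is correct and follows the same route as the source the paper relies on, with the genuinely nontrivial inputs (exactness of the block equivalence identifying Yoneda $\mathrm{Ext}$'s, and the identification of the completed local ring of $R^{\mathrm{ps},\psi\omega}_{\mathrm{tr}\bar{\rho}}[\tfrac{1}{p}]$ at $\mathbf{m}$ with the deformation ring of $V$) correctly attributed to Pa\v{s}k\={u}nas's machinery.
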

\begin{remark} Si l'on ne fixe pas le caractère central dans la Proposition \ref{1.1.6}, alors on a $\dim\mathrm{Ext}_G^1(\Pi, \Pi)=5$.
\end{remark}
\subsection{Demi-plan de Drinfeld}
Soit $D$ l'unique algèbre de quaternions ramifiée sur $\mathbb{Q}_p$ à isomorphisme près, $\mathscr{O}_D$ son unique ordre maximal à conjugaison près et soit $\varpi_D$ une uniformisante de $D$. Soit $S$ un $\breve{\z}_p$-schéma, un $\mathscr{O}_D$-module formel spécial sur $S$ est un groupe formel $p$-divisible $X$ sur $S$, de dimension $2$ et de hauteur $4$, muni d'une action de $\mathscr{O}_D$ telle que l'action induite de $\z_{p^2}$ sur l'algèbre de Lie de $X$ fait de celle-ci un $\mathscr{O}_S\otimes_{\z_p}\z_{p^2}$-module localement libre de rang $1$. Il existe une unique classe de $\mathscr{O}_D$-isogénie de $\mathscr{O}_D$-modules formels spéciaux sur $\bar{\mathbb{F}}_p$. Fixons un tel $\mathscr{O}_D$-module formel spécial $\mathbb{X}$. Le foncteur des déformations de $\mathbb{X}$ par quasi-isogénies $\mathscr{O}_D$-équivariantes est représentable \cite{rz1996period} par un schéma formel $p$-adique sur $\breve{\z}_p$. On note $\breve{\mathscr{M}_0}$ la fibre générique rigide de ce schéma formel. Un théorème fondamental de Drinfeld \cite{drinfeld1976coverings} fournit un isomorphisme $\breve{\mathscr{M}_0}\cong\check{\Omega}\times\z$, où $\check{\Omega}=\Omega\hat{\otimes}_{\mathbb{Q}_p}\breve{\mathbb{Q}}_p$ et $\Omega$ est le demi-plan de Drinfeld, un espace rigide sur $\mathbb{Q}_p$ dont les $\mathbb{C}_p$-points sont
$$\Omega(\mathbb{C}_p)=\mathbf{P}^1(\mathbb{C}_p)-\mathbf{P}^1(\mathbb{Q}_p).$$
Il admet une action de $G$ définie par $$g\cdot z=\frac{az+b}{cz+d}, \quad \forall~ g=\begin{psmallmatrix}
	a&b\\
	c&d
\end{psmallmatrix},~ z\in\Omega(\mathbb{C}_p).$$
L'espace $\breve{\mathscr{M}_0}$ est muni d'une action à gauche de $G$ donnée par 
$$g\cdot(X, \rho)=(X, \rho\circ g^{-1}),$$
qui correspond, par l'isomorphisme de Drinfeld, à l'action de $G$ sur le demi-plan et au décalage par $-v_p(\det g)$ sur $\z$, ainsi que d'une donnée de descente à la Weil, qui correspond via l'isomorphisme de Drinfeld au composé de la donnée de descente canonique et du décalage par $1$. Elle n'est donc pas effective, mais pour tout entier $t>0$, cette donnée de descente sur le quotient $p^{t\z}\backslash\breve{\mathscr{M}_0}$ de $\breve{\mathscr{M}_0}$ par l'action de l'élément $p^t$ du centre de $G$ devient effective. En prenant $t=1$, on obtient un modèle $\Sigma_0$ de $p^\z\backslash\breve{\mathscr{M}_0}$ sur $\mathbb{Q}_p$.

Soit $X^{\mathrm{un}}$ le groupe $p$-divisible rigide universel sur $\breve{\mathscr{M}_0}$. Drinfeld a défini \cite{drinfeld1976coverings} une tour de revêtements $\breve{\mathscr{M}_n}$ de $\breve{\mathscr{M}_0}$ par
$$\breve{\mathscr{M}_n}=X^{\mathrm{un}}[p^n]-X^{\mathrm{un}}[\varpi_D^{2n-1}]$$
pour $n\in\mathbb{N}^*$. Pour chaque $n\in\mathbb{N}^*$, l'espace $\breve{\mathscr{M}_n}$ est un revêtement étale galoisien de $\breve{\mathscr{M}_0}$ de groupe de Galois $\mathscr{O}_D^*/(1+p^n\mathscr{O}_D)$. Les revêtements $\breve{\mathscr{M}}_n$ sont définis sur $\breve{\mathbb{Q}}_p:=\widehat{\mathbb{Q}_p^{\mathrm{nr}}}$ et munis d'une action de $W_{\mathbb{Q}_p}$ compatible avec l'action naturelle sur $\breve{\mathbb{Q}}_p$. De plus, les revêtements $\breve{\mathscr{M}_n}$ sont munis des actions de $G$ et de $\check{G}$ commutant entre elles ainsi qu'avec l'action de $W_{\mathbb{Q}_p}$. Les flèches de transition $\breve{\mathscr{M}}_{n+1}\to\breve{\mathscr{M}}_n\to\Omega$ sont $W_{\mathbb{Q}_p}, \check{G}$ et $G$-équivariantes, où l'action de $\check{G}$ sur $\Omega$ est triviale. Les espaces $p^\z\backslash\breve{\mathscr{M}}_n$ admettent les modèles sur $\mathbb{Q}_p$ que l'on note $\Sigma_n$. Pour chaque $n\geq0$, on note $\mathscr{O}(\Sigma_n)$ l'anneau des fonctions analytiques sur $\Sigma_n$. Le but de la conjecture de Breuil et Strauch est de décrire les espaces $\mathscr{O}(\Sigma_n)$ en tant que représentations de $G\times\check{G}$.

Puisque $\Sigma_0$ est un espace de Stein, il en est de même pour $\Sigma_n$ pour tout $n\geq0$ puisque $\Sigma_n$ est un revêtement étale fini de $\Sigma_0$. Fixons $n\geq0$, il résulte de la \cite[Annexe A]{cdn2020cohomologie} que l'espace $\Sigma_n$ possède un modèle formel $\mathfrak{X}$ semi-stable $G\times\check{G}\times\mathcal{G}_{\mathbb{Q}_p}$-équivariant sur une extension finie de $\mathbb{Q}_p$. Notons $\tau_n$ la composée du morphisme $\Sigma_n\to\Sigma_0$ et de la rétraction de $\Sigma_0$ sur l'arbre de Bruhat-Tits. On fixe l'origine en le réseau standard et $B_i$ la boule fermée centrée en l'origine de rayon $i$ dans l'arbre. Alors la famille des affinoïdes $U_i:=\tau_n^{-1}(B_i)$ forme un recouvrement de Stein de $\Sigma_n$. De plus, $U_i$ est stable sous l'action de $K$ pour tout $i\geq1$. On a également un recouvrement $\{\mathfrak{X}_i\}$ de $\mathfrak{X}$, où la fibre générique associée à chaque $\mathfrak{X}_i$ est $U_i$.

On a $\Omega^1(\Sigma_n)=\varprojlim_i(\Omega^1(\mathfrak{X}_i)[\frac{1}{p}])$. L'espace $\Omega^1(\mathfrak{X}_i)[\frac{1}{p}]$ est un Banach et la norme $||\cdot||_{\mathfrak{X}_i}$ est induite par le réseau $\Omega^1(\mathfrak{X}_i)$. Cela induit une structure de Fréchet sur $\Omega^1(\Sigma_n)$. 

\begin{definition} Une forme différentielle $\omega\in\Omega^1(\Sigma_n)$ est dite bornée si $\omega\in\Omega^1(\mathfrak{X})[\frac{1}{p}]$.
\end{definition}
\begin{definition} Soient $V$ un $L$-espace localement convexe et $W$ un sous-ensemble de $V$, alors $W$ est dit borné dans $V$ si, pour tout voisinage ouvert $U$ de $0$, il existe $\lambda\in L$ tel que $\lambda W\subseteq U$. 
\end{definition}
Si $V=\varprojlim V_i$ est de plus un espace de Fréchet, alors $W$ est borné si et seulement si la projection de $W$ dans $V_i$ est bornée pour tout $i$.
\begin{definition} Une forme différentielle $\omega\in\Omega^1(\Sigma_n)$ est dite $G$-bornée si le sous-ensemble $\{g\cdot \omega\mid\forall g\in G\}$ est borné dans $\Omega^1(\Sigma_n)$.
\end{definition}
\begin{lemma}\label{1.2.6} Soit $\omega\in\Omega^1(\Sigma_n)$ une forme différentielle, alors $\omega$ est bornée si et seulement si $\omega$ est $G$-bornée. 
\end{lemma}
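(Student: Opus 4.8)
The strategy is to recast both conditions as statements about the Banach norms $\lVert\cdot\rVert_{\mathfrak{X}_i}$ attached to the semi-stable model $\mathfrak{X}$, and then to play ``uniformity in $i$'' against ``uniformity in $g$''. Since $\Omega^1(\mathfrak{X})=\varprojlim_i\Omega^1(\mathfrak{X}_i)$ and $\lVert\cdot\rVert_{\mathfrak{X}_i}$ is the norm defined by the lattice $\Omega^1(\mathfrak{X}_i)$, a form $\omega\in\Omega^1(\Sigma_n)$ lies in $\Omega^1(\mathfrak{X})[\tfrac{1}{p}]$ — i.e. is bounded — if and only if $\sup_i\lVert\omega\rVert_{\mathfrak{X}_i}<\infty$. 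On the other hand $\Omega^1(\Sigma_n)=\varprojlim_i\Omega^1(\mathfrak{X}_i)[\tfrac{1}{p}]$ is a Fréchet space, so by the criterion recalled just after the definition of a bounded subset, $\omega$ is $G$-bounded if and only if $\sup_{g\in G}\lVert g\cdot\omega\rVert_{\mathfrak{X}_i}<\infty$ for every $i$.

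For the implication bounded $\Rightarrow$ $G$-bounded I would argue directly from the $G$-equivariance of $\mathfrak{X}$: each $g\in G$ acts by an automorphism of the formal scheme $\mathfrak{X}$, hence stabilises the integral sheaf of differentials and therefore the lattice $\Omega^1(\mathfrak{X})\subseteq\Omega^1(\Sigma_n)$. If $\omega$ is bounded then $p^N\omega\in\Omega^1(\mathfrak{X})$ for some $N$, so $p^N(g\cdot\omega)\in\Omega^1(\mathfrak{X})\subseteq\Omega^1(\mathfrak{X}_i)$ for all $g$ and all $i$, whence $\lVert g\cdot\omega\rVert_{\mathfrak{X}_i}\le p^N$ uniformly in $g$ and $i$; in particular the $G$-orbit of $\omega$ is bounded in each $\Omega^1(\mathfrak{X}_i)[\tfrac{1}{p}]$, hence in the Fréchet space $\Omega^1(\Sigma_n)$.

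For the converse the idea is to control $\lVert\omega\rVert_{\mathfrak{X}_i}$ for every $i$ by $G$-translates of $\lVert\cdot\rVert_{\mathfrak{X}_1}$, using that $G$ acts transitively on the vertices of the Bruhat--Tits tree and that $\tau_n$ is $G$-equivariant. Fix $i\ge 1$. The closed ball $B_i$ contains only finitely many vertices, and $B_i\subseteq\bigcup_{v}B_1(v)$, the union running over the vertices $v$ of $B_i$ and $B_1(v)$ denoting the radius-one ball around $v$. Choosing $g_v\in G$ with $g_v\cdot\mathrm{o}=v$, where $\mathrm{o}$ is the standard vertex, $G$-equivariance of $\tau_n$ gives $g_v\cdot U_1=\tau_n^{-1}(B_1(v))$, hence $U_i\subseteq\bigcup_v g_v\cdot U_1$ and a covering of $\mathfrak{X}_i$ by the finitely many formal opens $g_v\cdot\mathfrak{X}_1\cap\mathfrak{X}_i$. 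Since the relevant sheaf of differentials of $\mathfrak{X}$ is torsion-free and satisfies the sheaf axiom, $\lVert\cdot\rVert_{\mathfrak{X}_i}$ is the maximum of the norms along any formal-open covering, and since $g_v$ induces an integral isomorphism $(\mathfrak{X}_1,\Omega^1(\mathfrak{X}_1))\xrightarrow{\ \sim\ }(g_v\cdot\mathfrak{X}_1,\Omega^1(g_v\cdot\mathfrak{X}_1))$ under which $\omega$ corresponds to $g_v^{-1}\cdot\omega$, we obtain
$$\lVert\omega\rVert_{\mathfrak{X}_i}\ \le\ \max_v\lVert\omega\rVert_{g_v\cdot\mathfrak{X}_1}\ =\ \max_v\lVert g_v^{-1}\cdot\omega\rVert_{\mathfrak{X}_1}\ \le\ \sup_{g\in G}\lVert g\cdot\omega\rVert_{\mathfrak{X}_1}.$$
If $\omega$ is $G$-bounded the right-hand side is finite — this is precisely $G$-boundedness at level $i=1$ — and it does not depend on $i$; hence $\sup_i\lVert\omega\rVert_{\mathfrak{X}_i}<\infty$, i.e. $\omega$ is bounded.

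The point I expect to be delicate, and which the write-up will have to treat carefully, is the locality of the norm $\lVert\cdot\rVert_{\mathfrak{X}_i}$ used in the last display: that for a covering of $\mathfrak{X}_i$ by formal opens the module of global integral differentials is the intersection of the local ones inside $\Omega^1(U_i)$. This rests on the good properties of the semi-stable model $\mathfrak{X}$ (flatness over the base, reducedness of the special fibre, and the precise choice of the differentials sheaf) provided by \cite[Annexe A]{cdn2020cohomologie}. The remaining ingredients — the reformulation in terms of the $\lVert\cdot\rVert_{\mathfrak{X}_i}$, the $G$-stability of $\Omega^1(\mathfrak{X})$, the $G$-equivariance of $\tau_n$, and the transitivity of $G$ on the vertices of the tree — are standard.
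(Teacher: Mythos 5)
Votre preuve est correcte et suit essentiellement l'argument du papier : les deux directions reposent sur la $G$-équivariance du modèle semi-stable, sur le fait que les translatés par $G$ des $\mathfrak{X}_i$ recouvrent (transitivité de $G$ sur les sommets de l'arbre) et sur la localité de l'intégralité des formes différentielles, en jouant l'uniformité en $g$ contre l'uniformité en $i$. La seule différence est cosmétique : le papier recouvre $\mathfrak{X}$ par les translatés $g^{-1}\mathfrak{X}_i$ d'un seul $\mathfrak{X}_i$ et conclut directement que $p^N\omega\in\Omega^1(\mathfrak{X})$, tandis que vous recouvrez chaque $\mathfrak{X}_i$ par un nombre fini de translatés $g_v\mathfrak{X}_1$ et concluez niveau par niveau.
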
 
\begin{proof}[Preuve] Fixons $i$. Comme les $g^{-1}\mathfrak{X}_i$ pour $g\in G$ forment un recouvrement de $\mathfrak{X}$, on a une injection $$\Omega^1(\mathfrak{X})\hookrightarrow\prod_{g\in G}\Omega^1(g^{-1}\mathfrak{X}_i).$$
	Supposons que $\omega$ est bornée, alors il existe $N>0$, qui est indépendant de $i$, tel que $||p^N\omega|_{g^{-1}\mathfrak{X}_i}||_{g^{-1}\mathfrak{X}_i}\leq 1$ pour tout $g\in G$. On en déduit que
	$$||p^N (g\cdot\omega|_{\mathfrak{X}_i})||_{\mathfrak{X}_i}\leq 1$$
	pour tout $g\in G$. Cela implique que $\{g\cdot \omega| g\in G\}$ est bornée.
	
	Réciproquement, fixons $i$ et supposons que $\omega\in\Omega^1(\Sigma_n)$ est $G$-bornée, alors il existe $N$, qui dépend de $i$, tel que $||p^N(g\cdot \omega|_{\mathfrak{X}_i})||_{\mathfrak{X}_i}\leq 1$ pour tout $g\in G$. Cela implique que $||(p^N\omega|_{g^{-1}\mathfrak{X}_i})||_{g^{-1}\mathfrak{X}_i}\leq1$ pour tout $g\in G$. Par conséquent, $p^N\omega\in\Omega^1(g^{-1}\mathfrak{X}_i)$. Comme les $g^{-1}\mathfrak{X}_i$ forment un recouvrement de $\mathfrak{X}$, on en déduit que $p^N\omega\in\Omega^1(\mathfrak{X})$, donc $\omega\in p^{-N}\Omega^1(\mathfrak{X})$.
\end{proof}
\begin{proposition}\label{1.2.4} On a $$H^1_{\mathrm{dR}}(\Sigma_n)=\varprojlim_i(H^1_{\mathrm{dR}}(\mathfrak{X}_i)[\tfrac{1}{p}]).$$
\end{proposition}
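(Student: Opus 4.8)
The plan is to compare $H^1_{\mathrm{dR}}(\Sigma_n)$ with the de Rham cohomologies of the affinoid pieces $U_i$ (the rigid generic fibres of the $\mathfrak{X}_i$) by means of the exact sequence relating an inverse limit to its terms. Write $d_i\colon \mathscr{O}(\mathfrak{X}_i)[\tfrac{1}{p}]\to\Omega^1(\mathfrak{X}_i)[\tfrac{1}{p}]$ for the differential, so that by definition $H^1_{\mathrm{dR}}(\mathfrak{X}_i)[\tfrac{1}{p}]=\operatorname{coker} d_i$. First I would introduce the two short exact sequences of inverse systems of $L$-Banach spaces
\begin{align*}
0 &\to (\ker d_i)_i \to \bigl(\mathscr{O}(\mathfrak{X}_i)[\tfrac{1}{p}]\bigr)_i \xrightarrow{\ d\ } (\operatorname{im} d_i)_i \to 0, \\
0 &\to (\operatorname{im} d_i)_i \to \bigl(\Omega^1(\mathfrak{X}_i)[\tfrac{1}{p}]\bigr)_i \to \bigl(H^1_{\mathrm{dR}}(\mathfrak{X}_i)[\tfrac{1}{p}]\bigr)_i \to 0,
\end{align*}
and then pass to the six-term exact sequence of $\varprojlim$ and $\varprojlim^1$ attached to each.

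Next I would record the two vanishing statements that drive the argument. Since $\Sigma_n$ is Stein and $\{U_i\}$ is a Stein exhaustion (with, in particular, the transition maps of sections of coherent sheaves being compact), the systems $\bigl(\mathscr{O}(\mathfrak{X}_i)[\tfrac{1}{p}]\bigr)_i$ and $\bigl(\Omega^1(\mathfrak{X}_i)[\tfrac{1}{p}]\bigr)_i$ satisfy the topological Mittag-Leffler condition, hence have vanishing $\varprojlim^1$, with inverse limits $\mathscr{O}(\Sigma_n)$ and $\Omega^1(\Sigma_n)$ respectively (the latter being already recorded before the statement). Secondly, $\ker d_i$ is the space of locally constant functions on $U_i$, which is finite-dimensional because the affinoid $U_i$ has only finitely many connected components; therefore $(\ker d_i)_i$ is an inverse system of finite-dimensional $L$-vector spaces and $\varprojlim^1(\ker d_i)_i=0$.

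Now I would feed this into the two exact sequences. From the first one, the vanishing of $\varprojlim^1$ of $\bigl(\mathscr{O}(\mathfrak{X}_i)[\tfrac{1}{p}]\bigr)_i$ and of $(\ker d_i)_i$ gives on the one hand $\varprojlim^1(\operatorname{im} d_i)_i=0$, and on the other hand, by left exactness of $\varprojlim$,
\[
\varprojlim_i(\operatorname{im} d_i) = \mathscr{O}(\Sigma_n)\big/\varprojlim_i\ker d_i = \mathscr{O}(\Sigma_n)\big/\ker\!\bigl(d\colon\mathscr{O}(\Sigma_n)\to\Omega^1(\Sigma_n)\bigr) = d\mathscr{O}(\Sigma_n),
\]
the resulting embedding into $\Omega^1(\Sigma_n)$ coming from the second sequence being the tautological inclusion. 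Plugging $\varprojlim^1(\operatorname{im} d_i)_i=0$ into the six-term sequence of the second short exact sequence then yields
\[
0 \to d\mathscr{O}(\Sigma_n) \to \Omega^1(\Sigma_n) \to \varprojlim_i H^1_{\mathrm{dR}}(\mathfrak{X}_i)[\tfrac{1}{p}] \to 0,
\]
that is, the natural comparison map $H^1_{\mathrm{dR}}(\Sigma_n)=\Omega^1(\Sigma_n)/d\mathscr{O}(\Sigma_n)\to\varprojlim_i H^1_{\mathrm{dR}}(\mathfrak{X}_i)[\tfrac{1}{p}]$ is an isomorphism, as desired.

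The only genuinely non-formal input is the vanishing of $\varprojlim^1$ for sections of coherent sheaves along the exhaustion $\{U_i\}$, which is where one uses that $\Sigma_n$ is Stein and that the $U_i=\tau_n^{-1}(B_i)$ form an increasing family of $K$-stable affinoids covering $\Sigma_n$ with compact transition maps, as set up above; everything else is bookkeeping with the exact sequence of an inverse limit together with the elementary remark about locally constant functions. I expect the only point requiring care to be the verification that $\{U_i\}$ really is such a Stein exhaustion, so that both the $\varprojlim^1$-vanishing and the identifications $\varprojlim_i\mathscr{O}(\mathfrak{X}_i)[\tfrac{1}{p}]=\mathscr{O}(\Sigma_n)$, $\varprojlim_i\Omega^1(\mathfrak{X}_i)[\tfrac{1}{p}]=\Omega^1(\Sigma_n)$ apply; note in particular that finite-dimensionality of $H^1_{\mathrm{dR}}(\mathfrak{X}_i)[\tfrac{1}{p}]$ is not needed for this argument.
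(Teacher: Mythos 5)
Your proposal is correct and takes essentially the same route as the paper: the paper simply invokes the Milnor exact sequence $0\to R^1\varprojlim_i H^0_{\mathrm{dR}}(\mathfrak{X}_i)[\tfrac{1}{p}]\to H^1_{\mathrm{dR}}(\Sigma_n)\to\varprojlim_i H^1_{\mathrm{dR}}(\mathfrak{X}_i)[\tfrac{1}{p}]\to 0$ and kills the $R^1\varprojlim$ term by Mittag--Leffler, using surjectivity of the transition maps on $H^0_{\mathrm{dR}}$, whereas you re-derive that sequence by hand from the two short exact sequences of inverse systems (using $\varprojlim^1$-vanishing for sections of coherent sheaves along the Stein exhaustion) and kill the same $\varprojlim^1$ term via finite-dimensionality of $H^0_{\mathrm{dR}}(\mathfrak{X}_i)[\tfrac{1}{p}]$. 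The two ways of getting Mittag--Leffler for the $H^0$-system are interchangeable here, so the difference is only in how much of the Milnor formalism is unpacked.
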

\begin{proof}[Preuve] On considère la suite exacte courte de Milnor suivante
	$$0\to R^1\varprojlim_i(H^0_{\mathrm{dR}}(\mathfrak{X}_i)[\tfrac{1}{p}])\to H^1_{\mathrm{dR}}(\Sigma_n)\to\varprojlim_i (H^1_{\mathrm{dR}}(\mathfrak{X}_i)[\tfrac{1}{p}])\to0.$$
	Comme le morphisme $H^0_{\mathrm{dR}}(\mathfrak{X}_{i+1})[\tfrac{1}{p}]\to H^0_{\mathrm{dR}}(\mathfrak{X}_i)[\tfrac{1}{p}]$ est surjectif pour chaque $i$, il suit du lemme de Mittag-Leffler que $R^1\varprojlim_i(H^0_{\mathrm{dR}}(\mathfrak{X}_i)[\tfrac{1}{p}])=0$. Par conséquent, on a $H^1_{\mathrm{dR}}(\Sigma_n)=\varprojlim_i(H^1_{\mathrm{dR}}(\mathfrak{X}_i)[\tfrac{1}{p}])$, d'où le résultat.
\end{proof}
\begin{proposition}\label{1.2.5} Soit $[\omega]\in H^1_{\mathrm{dR}}(\Sigma_n)$, alors $[\omega]\in H^1_{\mathrm{dR}}(\mathfrak{X})[\tfrac{1}{p}]$ si et seulement si $[\omega]$ est $G$-bornée.
\end{proposition}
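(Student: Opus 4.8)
Le plan est de transporter le Lemme~\ref{1.2.6}, qui porte sur les formes différentielles, un cran cohomologique plus haut, en utilisant la Proposition~\ref{1.2.4} pour contrôler l'écart entre « forme bornée » et « classe bornée », c'est-à-dire la contribution des fonctions. Commençons par une réduction. Comme $\Sigma_n$ est Stein de dimension $1$, on a $H^1_{\mathrm{dR}}(\Sigma_n)=\Omega^1(\Sigma_n)/d\mathscr{O}(\Sigma_n)$, et d'après la Proposition~\ref{1.2.4} cet espace de Fréchet s'écrit $\varprojlim_i H^1_{\mathrm{dR}}(\mathfrak{X}_i)[\tfrac1p]$, chaque $H^1_{\mathrm{dR}}(\mathfrak{X}_i)[\tfrac1p]$ étant un espace de Banach, quotient de $\Omega^1(\mathfrak{X}_i)[\tfrac1p]$ (les $U_i$ étant des affinoïdes de dimension $1$), et la projection $q\colon\Omega^1(\Sigma_n)\to H^1_{\mathrm{dR}}(\Sigma_n)$ est $G$-équivariante, continue et stricte. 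Un second point, qui résulte de \cite[Annexe A]{cdn2020cohomologie} et d'un argument de suite de Milnor comme dans la Proposition~\ref{1.2.4} (la contribution de $H^1(\mathfrak{X},\mathscr{O}_{\mathfrak{X}})$ disparaissant après inversion de $p$, la fibre générique étant Stein), est que $H^1_{\mathrm{dR}}(\mathfrak{X})[\tfrac1p]$ s'injecte dans $H^1_{\mathrm{dR}}(\Sigma_n)$ et que son image est exactement l'ensemble des classes admettant un représentant borné, i.e. dans $\Omega^1(\mathfrak{X})[\tfrac1p]$. Il suffit donc de démontrer : $[\omega]$ admet un représentant borné si et seulement si $[\omega]$ est $G$-bornée.

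L'implication directe est immédiate et calquée sur la première moitié de la preuve du Lemme~\ref{1.2.6}. Quitte à changer de représentant, supposons $\omega\in\Omega^1(\mathfrak{X})[\tfrac1p]$. Alors pour tout $g\in G$ et tout $i$, comme $g^{-1}\mathfrak{X}_i\subseteq\mathfrak{X}$, on a $\|g\cdot[\omega]\|_{\mathfrak{X}_i,\mathrm{dR}}\le\|(g\cdot\omega)|_{\mathfrak{X}_i}\|_{\mathfrak{X}_i}=\|\omega|_{g^{-1}\mathfrak{X}_i}\|_{g^{-1}\mathfrak{X}_i}\le\|\omega\|_{\mathfrak{X}}$, borne indépendante de $g$ et de $i$. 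Ainsi $\{g\cdot[\omega]\mid g\in G\}$ est borné dans chaque $H^1_{\mathrm{dR}}(\mathfrak{X}_i)[\tfrac1p]$, donc dans l'espace de Fréchet $H^1_{\mathrm{dR}}(\Sigma_n)$ : $[\omega]$ est $G$-bornée.

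Pour la réciproque, supposons $[\omega]$ $G$-bornée et fixons $i\ge1$, de sorte que $\{g^{-1}\mathfrak{X}_i\mid g\in G\}$ recouvre $\mathfrak{X}$. La $G$-bornitude et la caractérisation des bornés d'une limite projective d'espaces de Fréchet fournissent un entier $N$, dépendant de $i$, tel que $\|g\cdot[\omega]\|_{\mathfrak{X}_i,\mathrm{dR}}\le p^N$ pour tout $g$ ; en transportant par $g^{-1}$, cela signifie que $p^N[\omega]|_{g^{-1}\mathfrak{X}_i}$ est entière, i.e. dans $H^1_{\mathrm{dR}}(g^{-1}\mathfrak{X}_i)$, et ce uniformément en $g$. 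Il reste à recoller cette famille cohérente de classes entières locales en l'énoncé $p^N[\omega]\in p^{-M}H^1_{\mathrm{dR}}(\mathfrak{X})$ pour un $M$ ne dépendant que de $i$, ce qui donnera $[\omega]\in H^1_{\mathrm{dR}}(\mathfrak{X})[\tfrac1p]$. Contrairement à $\Omega^1$, pour lequel l'axiome de faisceau donne $\Omega^1(\mathfrak{X})=\{(s_g)\in\textstyle\prod_g\Omega^1(g^{-1}\mathfrak{X}_i)\mid s_g=s_{g'}\text{ sur les intersections}\}$ (un noyau, d'où le Lemme~\ref{1.2.6} sans effort), le foncteur $H^1_{\mathrm{dR}}$ n'est pas un faisceau : le défaut d'injectivité de $H^1_{\mathrm{dR}}(\mathfrak{X})\to\prod_g H^1_{\mathrm{dR}}(g^{-1}\mathfrak{X}_i)$ et le recollement des familles cohérentes sont régis par la cohomologie de \v{C}ech du recouvrement à coefficients constants, donc par la cohomologie de son nerf. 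Or ce recouvrement provient, via la rétraction $\tau_n$, du recouvrement de l'arbre de Bruhat--Tits par les boules $\{g^{-1}B_i\}$ : ce sont des sous-arbres, les arbres vérifient la propriété de Helly, donc le lemme du nerf donne la contractibilité du nerf. La suite spectrale de \v{C}ech vers la cohomologie de de Rham dégénère alors, et — c'est ici qu'intervient la Proposition~\ref{1.2.4}, sous la forme de l'annulation du $R^1\varprojlim$ des $H^0_{\mathrm{dR}}$, pour traiter la part « fonctions » au niveau entier — $H^1_{\mathrm{dR}}(\mathfrak{X})[\tfrac1p]$ s'identifie à l'égalisateur $\{(c_g)\in\prod_g H^1_{\mathrm{dR}}(g^{-1}\mathfrak{X}_i)[\tfrac1p]\mid c_g=c_{g'}\}$ des familles bornées cohérentes, avec un contrôle intégral à une puissance bornée de $p$ près, indépendante du sous-recouvrement fini choisi. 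Appliqué à la famille $(p^N[\omega]|_{g^{-1}\mathfrak{X}_i})_g$, ceci conclut.

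L'obstacle principal est précisément cette dernière étape de dégénérescence et de recollement au niveau entier : il faut s'assurer que, le nerf étant contractile, l'écart entre « classe de de Rham bornée sur $\mathfrak{X}$ » et « famille cohérente de classes bornées sur les $g^{-1}\mathfrak{X}_i$ » est mesuré par des groupes de \v{C}ech qui s'annulent avec des bornes uniformes ; cela combine la structure d'arbre (propriété de Helly, lemme du nerf) et l'acyclicité fournie par la Proposition~\ref{1.2.4} pour la part des fonctions. Tout le reste est formel, ou bien l'analogue exact, un degré plus haut, de l'argument du Lemme~\ref{1.2.6}.
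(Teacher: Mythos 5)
Votre stratégie générale (restreindre la classe aux translatés d'une pièce, utiliser la bornitude pour obtenir une intégralité locale uniforme, puis recoller grâce à la structure d'arbre) est dans le bon esprit, mais la rédaction comporte deux défauts sérieux. Le premier est la réduction initiale : vous affirmez que l'image de $H^1_{\mathrm{dR}}(\mathfrak{X})[\tfrac{1}{p}]$ dans $H^1_{\mathrm{dR}}(\Sigma_n)$ est exactement l'ensemble des classes admettant un représentant borné dans $\Omega^1(\mathfrak{X})[\tfrac{1}{p}]$, et vous ramenez la proposition à l'énoncé « représentant borné $\Leftrightarrow$ classe $G$-bornée ». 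Or la suite exacte du \autoref{A.0.7} (ii), $0\to\Omega^1(\mathfrak{X})\to H^1_{\mathrm{dR}}(\mathfrak{X})\to H^1(\mathfrak{X}, \mathscr{O})\to0$, montre que l'écart entre « appartenir à $H^1_{\mathrm{dR}}(\mathfrak{X})[\tfrac{1}{p}]$ » et « provenir de $\Omega^1(\mathfrak{X})[\tfrac{1}{p}]$ » est mesuré par $H^1(\mathfrak{X}, \mathscr{O})[\tfrac{1}{p}]$, qui est non nul : sa partie $M$-isotypique s'identifie, par le \autoref{2.6.3}, au noyau non nul $N_{\mathscr{L}, 1}$ de $\widehat{\mathrm{LL}(M)}\to\Pi_{M, \mathscr{L}}$, et le \autoref{1.3.6} repose précisément sur le fait que l'espace des classes bornées est strictement plus gros que celui des classes de formes bornées. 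Votre implication directe ne couvre donc que le sous-espace image de $\Omega^1(\mathfrak{X})[\tfrac{1}{p}]$ (la preuve du texte, elle, borne directement les restrictions de la classe à l'aide des réseaux images de $H^1_{\mathrm{dR}}(\mathfrak{X})$, sans choisir de représentant global), et votre réciproque vise un but trop fort, inatteignable en général.

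Le second défaut est que le cœur de la réciproque — le recollement « au niveau entier » — n'est pas démontré : vous l'identifiez vous-même comme l'obstacle principal, puis vous invoquez une suite spectrale de \v{C}ech, la propriété de Helly et le lemme du nerf « avec un contrôle intégral à une puissance bornée de $p$ près », ce qui est exactement le contenu de la proposition et n'est établi nulle part (le nerf contractile contrôle la cohomologie à coefficients constants, pas les termes en $H^0_{\mathrm{dR}}$ et $H^1(\mathscr{O})$ des pièces avec bornes entières uniformes). De plus, vos objets locaux sont mal adaptés : il n'est ni clair que $H^1_{\mathrm{dR}}(\mathfrak{X}_i)[\tfrac{1}{p}]$ soit séparé, ni que vos $\|\cdot\|_{\mathfrak{X}_i, \mathrm{dR}}$ soient des normes plutôt que des semi-normes, la cohomologie de de Rham des affinoïdes étant notoirement mal comportée (déjà pour le disque fermé, les formes exactes sont denses). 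La preuve du texte contourne précisément ces deux écueils : elle remplace le recouvrement affinoïde par les ouverts « wide open » $Y=\tau_n^{-1}(U)$, pour lesquels \cite[Theorem 4.2]{coleman1989} garantit que $H^1_{\mathrm{dR}}(gY)$ est séparé et de dimension finie, de sorte que l'image de $H^1_{\mathrm{dR}}(\mathfrak{X})$ y est un vrai réseau définissant une norme, puis utilise la structure d'arbre du graphe de réduction pour obtenir l'injection $H^1_{\mathrm{dR}}(\Sigma_n)\hookrightarrow\prod_g H^1_{\mathrm{dR}}(gY)$ ; les deux implications suivent alors du même comptage de normes que dans le Lemme~\ref{1.2.6}. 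Pour sauver votre approche, il faudrait soit démontrer effectivement votre énoncé d'égalisateur à bornes uniformes pour votre recouvrement, soit passer au recouvrement par les wide opens, où la finitude et l'injectivité nécessaires sont disponibles.
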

\begin{proof}[Preuve] Considérons le morphisme $\tau_n$ qui est la composée du morphisme $\Sigma_n\to\Sigma_0$ et de la rétraction de $\Sigma_0$ sur l'arbre de Bruhat-Tits, alors il suit de \cite{coleman1982} que la préimage $Y:=\tau_n^{-1}(U)$ d'une boule ouverte de l'arbre est un sous-ensemble "wide open". D'après le \cite[Theorem 4.2]{coleman1989}, $\mathrm{H}^1_{\mathrm{dR}}(Y)$ est séparé et de dimension finie. Si le rayon de $U$ est suffisamment grand, alors les $gY$ pour $g\in G$ forment un recouvrement de $\Sigma_n$, et on a une injection $H^1_{\mathrm{dR}}(\Sigma_n)\hookrightarrow\prod_gH^1_{\mathrm{dR}}(gY)$ car le noyau est isomorphe à $H^1(\Gamma, L)=0$ où $\Gamma$ est l'arbre associé à $\Sigma_n$ d'après la construction dans \cite[Section 0.2.1]{cdn2022cohomologie}. Pour tout $g$, l'image de $H^1_{\mathrm{dR}}(\mathfrak{X})\subseteq H^1_{\mathrm{dR}}(\Sigma_n)$ par l'injection $H^1_{\mathrm{dR}}(\Sigma_n)\hookrightarrow\prod_gH^1_{\mathrm{dR}}(gY)$ est un réseau de $H^1_{\mathrm{dR}}(gY)$, ce qui induit une norme $||\cdot||_{gY}$ sur $H^1_{\mathrm{dR}}(gY)$.
	
	Supposons que $[\omega]$ est $G$-bornée, alors il existe $N$ tel que l'on a $||p^N(g\cdot[\omega]|_Y)||_Y\leq1$ pour tout $g\in G$, ce qui implique que $||p^N[\omega]|_{g^{-1}Y}||_{g^{-1}Y}\leq1$ pour tout $g\in G$. Par conséquent, $p^N[\omega]\in\mathrm{Im}(H^1_{\mathrm{dR}}(\mathfrak{X})\to H^1_{\mathrm{dR}}(g^{-1}Y))$ pour tout $g\in G$. Donc $p^N[\omega]\in H^1_{\mathrm{dR}}(\mathfrak{X})$. 
	
	Réciproquement, supposons que $[\omega]\in H^1_{\mathrm{dR}}(\mathfrak{X})[\frac{1}{p}]$, alors il existe $N>0$, qui est indépendant de $i$, tel que $||p^N[\omega]|_{g^{-1}Y}||_{g^{-1}Y}\leq1$ pour tout $g\in G$. On en déduit que
	$||p^N(g\cdot[\omega]|_Y)||_Y\leq 1$ pour tout $g\in G$. Cela implique que $\{g\cdot [\omega]| g\in G\}$ est bornée. Cela permet de conclure. 
\end{proof} 
\subsection{Complétés unitaires universels}\label{section1.3}
Étant donnée une représentation de Banach de $G$, on peut obtenir une représentation localement analytique en considérant les vecteurs localement analytiques. Dans l'autre sens, à partir d'une représentation localement analytique de $G$, une méthode naturelle pour obtenir une représentation de Banach est de prendre son complété unitaire universel. Rappelons la définition du complété unitaire universel
\begin{definition} Soit $\pi$ une représentation de $G$, le complété unitaire universel $\hat{\pi}$ (unique à isomorphisme près) de $\pi$ est une $L$-représentation de Banach unitaire de $G$, munie d'une application $L$-linéaire continue, $G$-équivariante $\iota: \pi\to\hat{\pi}$, qui est universelle au sens suivant: pour toute $L$-représentation de Banach unitaire $\Pi$ de $G$, tout morphisme continu $\pi\to\Pi$ se factorise de manière unique à travers $\iota$.
\end{definition}
D'après \cite{schneider2002nonarchimedean}, un réseau dans un espace vectoriel sur $L$ est un sous-$\mathscr{O}_L$-module engendrant. Notons qu'avec cette définition, un réseau n’est pas nécessairement séparé.
\begin{lemma}[{\cite[Lemma 1.3]{emerton2005p-adic}}] La $G$-représentation $\pi$ admet un complété unitaire universel si et seulement si l'ensemble des classes de commensurabilité des réseaux ouverts $G$-invariants dans $\pi$, ordonné par inclusion, contient un élément minimal.
\end{lemma}
\begin{remark} (i) Le complété unitaire universel de $\pi$ n'existe pas nécessairement. Même s'il existe, $\hat{\pi}$ peut être nul, voir le Lemme \ref{2.2.4} par exemple. De plus, si $\pi$ admet un réseau $\pi^+$ de type fini, alors la complétion de $\pi^+$ est la boule unité du complété unitaire universel de $\pi$, voir la \cite[Proposition 1.17]{emerton2005p-adic}.
	
	(ii) Supposons que $\pi$ est une représentation lisse (ou localement analytique) admissible et $\hat{\pi}$ existe, alors $\hat{\pi}$ peut être non admissible (Proposition \ref{3.2.1}). 
	
	(iii) Le foncteur de passage au complété unitaire universel n'est ni exact à gauche, ni à droite en général. Donc on ne peut pas déduire le Théorème \ref{0.0.1} directement de la suite de Dospinescu et Le Bras (Corollaire \ref{2.1.3}) en prenant les complétés unitaires universels. Cependant, pour l'exactitude de ce foncteur, on a le résultat de Colmez et Dospinescu ci-dessous.
\end{remark}
\begin{proposition}[{\cite[Proposition VII.8, Corollaire VII.9]{cd2014completes}}]\label{1.3.3} Soit $0\to\Pi_1\to\Pi\to\Pi_2\to0$ une suite exacte stricte de représentations de $G$ sur des $L$-espaces vectoriels localement convexes. Si $\hat{\Pi}$ existe, alors
	
	(i) $\hat{\Pi}_2$ existe aussi, et le morphisme $\hat{\Pi}\to\hat{\Pi}_2$ induit par $\Pi\to\Pi_2$ est surjectif.
	
	(ii) Si de plus $\hat{\Pi}_1$ existe, alors $\mathrm{Im}(\hat{\Pi}_1\to\hat{\Pi})$ est dense dans $\mathrm{Ker}(\hat{\Pi}\to\hat{\Pi}_2)$.
	
	(iii) Supposons de plus que $\hat{\Pi}_1$ est admissible, alors on a une suite exacte de $L$-espaces vectoriels $$\hat{\Pi}_1\to\hat{\Pi}\to\hat{\Pi}_2\to0.$$
\end{proposition}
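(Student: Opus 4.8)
The plan is to construct $\hat\Pi_2$ by hand as a quotient of $\hat\Pi$ and to read off all three statements from that description, the only non-formal input being the duality theory of admissible unitary Banach representations. Let $\iota\colon\Pi\to\hat\Pi$ be the canonical map and set $C_1:=\overline{\iota(\Pi_1)}$, a closed $G$-invariant subspace, so that $\hat\Pi/C_1$ is again a unitary $L$-Banach representation of $G$ (a $G$-invariant norm passes to the quotient by a closed invariant subspace, and the image of a bounded open invariant lattice is one). Since the sequence is strict, $\Pi_2=\Pi/\Pi_1$ carries the quotient topology, and as $\iota(\Pi_1)\subseteq C_1$ the map $\Pi\to\hat\Pi/C_1$ factors as $\Pi\twoheadrightarrow\Pi_2\xrightarrow{\iota_2}\hat\Pi/C_1$ with dense image. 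I would then check that $(\hat\Pi/C_1,\iota_2)$ is a universal unitary completion of $\Pi_2$: given a unitary Banach $\Pi'$ and a continuous $G$-morphism $\varphi\colon\Pi_2\to\Pi'$, precomposing with $\Pi\to\Pi_2$ and using the universal property of $\hat\Pi$ gives $\tilde\varphi\colon\hat\Pi\to\Pi'$; it kills $\iota(\Pi_1)$, hence $C_1$, so it descends to $\overline\varphi\colon\hat\Pi/C_1\to\Pi'$ with $\overline\varphi\circ\iota_2=\varphi$, and this factorization is unique since $\iota_2$ has dense image. This proves (i), with $\hat\Pi_2=\hat\Pi/C_1$ and $\hat\Pi\to\hat\Pi_2$ the surjective quotient map, and in particular $\mathrm{Ker}(\hat\Pi\to\hat\Pi_2)=C_1$.

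For (ii), assume $\hat\Pi_1$ exists; the universal property of $\hat\Pi_1$ applied to $\Pi_1\to\Pi\xrightarrow{\iota}\hat\Pi$ yields a continuous $G$-morphism $f\colon\hat\Pi_1\to\hat\Pi$ extending $\iota|_{\Pi_1}$. Then $\iota(\Pi_1)\subseteq\mathrm{Im}(f)$, while density of $\Pi_1$ in $\hat\Pi_1$ forces $\mathrm{Im}(f)\subseteq\overline{\iota(\Pi_1)}=C_1$, so $\overline{\mathrm{Im}(f)}=C_1=\mathrm{Ker}(\hat\Pi\to\hat\Pi_2)$; that is, $\mathrm{Im}(\hat\Pi_1\to\hat\Pi)$ is dense in $\mathrm{Ker}(\hat\Pi\to\hat\Pi_2)$.

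For (iii), by (i) and (ii) it remains, when $\hat\Pi_1$ is admissible, to upgrade ``dense'' to ``equal'', i.e. to show $\mathrm{Im}(f)$ is closed; then $\mathrm{Im}(f)=C_1=\mathrm{Ker}(\hat\Pi\to\hat\Pi_2)$ and, together with surjectivity of $\hat\Pi\to\hat\Pi_2$, the sequence $\hat\Pi_1\to\hat\Pi\to\hat\Pi_2\to0$ is exact as $L$-vector spaces. Writing $K:=C_1$, a closed (hence unitary Banach) $G$-subrepresentation of $\hat\Pi$, $f$ factors as $\hat\Pi_1\to K\hookrightarrow\hat\Pi$ with $\hat\Pi_1\to K$ of dense image; dualizing gives an injection of Iwasawa-algebra modules $K^\vee\hookrightarrow\hat\Pi_1^\vee$. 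Since $\hat\Pi_1$ is admissible, Schneider--Teitelbaum duality makes $\hat\Pi_1^\vee$ finitely generated over the Noetherian Iwasawa algebra $\mathcal O_L[[H]][\tfrac{1}{p}]$ ($H$ compact open in $G$), so its submodule $K^\vee$ is finitely generated too, whence $K$ is itself admissible; then $\hat\Pi_1\to K$ is a dense-image morphism in the abelian category of admissible unitary Banach representations, hence surjective, and $\mathrm{Im}(f)=K$ is closed. I expect this last step to be the genuine obstacle: one cannot simply invoke abelianness of the admissible category, since $\hat\Pi$ and $\hat\Pi_2$ need not be admissible (in this paper $\widehat{\mathrm{LL}(M)}$ and $\Omega^1[M]^{\mathrm{b},*}$ are not), so the admissibility of the intermediate $K$ has to be produced from the finiteness of $\hat\Pi_1^\vee$ over a Noetherian Iwasawa algebra; everything else is formal manipulation of the universal property.
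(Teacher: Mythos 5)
Votre preuve est correcte ; le papier ne donne d'ailleurs aucune démonstration propre de cet énoncé, qui est simplement cité de Colmez--Dospinescu (Prop.\ VII.8 et Cor.\ VII.9), et votre argument — construire $\hat{\Pi}_2$ comme $\hat{\Pi}/\overline{\iota(\Pi_1)}$ et vérifier la propriété universelle, puis, pour (iii), utiliser la dualité de Schneider--Teitelbaum et la noethérianité de l'algèbre d'Iwasawa pour montrer que l'image de $\hat{\Pi}_1$ (admissible) est fermée — est essentiellement celui de la source citée. Aucune lacune à signaler.
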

En général, il est difficile de décrire le complété unitaire universel d'une représentation de $G$. Heureusement, on a le résultat suivant, dont la preuve repose sur la correspondance de Langlands $p$-adique pour $G$.
\begin{theorem}[{\cite[Théorème 0.2]{cd2014completes}}]\label{1.3.4} Soit $\Pi$ une représentation de $G$ unitaire, résiduellement de longueur finie, admettant un caractère central, alors $\Pi$ est le complété unitaire universel de $\Pi^{\mathrm{an}}$.
\end{theorem}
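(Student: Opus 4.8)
The plan is to show that the canonical map $f\colon\widehat{\Pi^{\mathrm{an}}}\to\Pi$ is an isomorphism. It exists by the universal property of $\widehat{\Pi^{\mathrm{an}}}$ applied to the continuous $G$-equivariant inclusion $\Pi^{\mathrm{an}}\hookrightarrow\Pi$ into a unitary Banach representation (the universal completion exists, $\Pi^{\mathrm{an}}$ being admissible locally analytic). Surjectivity of $f$ is the soft part: locally analytic vectors are dense in any unitary Banach representation of $G$, since one approximates a vector $v$ by convolving its continuous orbit map $g\mapsto g\cdot v$ with locally analytic approximations of the Dirac distribution on a compact open subgroup (which exist by Amice--Mahler theory); hence $f$ has dense image, and this upgrades to surjectivity by a $\pi_L$-adic completeness argument on invariant lattices. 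So the whole point is the injectivity of $f$, equivalently the assertion that every continuous $G$-map from $\Pi^{\mathrm{an}}$ to a unitary Banach representation already factors through the coarser Banach topology inherited from $\Pi$.

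First I would note that the hypotheses force $\Pi$ to be admissible: if $\Theta\subset\Pi$ is a $G$-invariant bounded open lattice, then $\Theta/\pi_L\Theta$ has finite length in $\mathrm{Mod}^{\mathrm{l.adm}}_{G,\psi}(\mathscr{O}_L)$, hence is admissible, hence $\Theta^{G_n}$ is finitely generated over $\mathscr{O}_L$ for every $n$, hence $\Pi^{G_n}$ is finite-dimensional. Then, using Pa\v{s}k\={u}nas's block decomposition of $\mathrm{Ban}^{\mathrm{adm}}_{G,\psi}(L)$ together with a d\'evissage, one reduces to the case of a topologically irreducible $\Pi$ (the residually-finite-length but infinite-length case following from the finite-length case by a limit argument compatible with both $\Pi\mapsto\Pi^{\mathrm{an}}$ and $\pi\mapsto\hat\pi$). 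If $\Pi$ is a subquotient of a unitary principal series (the ordinary and exceptional blocks), one computes $\Pi^{\mathrm{an}}$ as a locally analytic parabolic induction and checks directly that its universal completion is the corresponding continuous induction, namely $\Pi$.

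The substantial case is $\Pi$ non-ordinary. Here $\Pi=\mathbf{\Pi}(V)$ for a $2$-dimensional $L$-representation $V$ of $\mathcal{G}_{\mathbb{Q}_p}$ via the $p$-adic Langlands correspondence, and Colmez's description of $\Pi^{\mathrm{an}}$ in terms of the $(\varphi,\Gamma)$-module $D_{\mathrm{rig}}(V)$ yields a strict exact sequence $0\to\Pi_1\to\Pi^{\mathrm{an}}\to Q\to 0$, where $\Pi_1$ is the (nonzero) locally algebraic part of $\Pi$ when $V$ is de Rham with regular weights, and an appropriate ``bounded'' sub-object attached to the inclusion of the \'etale model $D(V)\subset D_{\mathrm{rig}}(V)$ in general. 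Two inputs then close the argument: (a) a form of the $p$-adic local Langlands correspondence for $\mathrm{GL}_2(\mathbb{Q}_p)$ identifies the universal completion of this sub-object with $\Pi$, i.e.\ $\widehat{\Pi_1}\xrightarrow{\sim}\Pi$ (so in particular $\widehat{\Pi_1}$ is admissible); and (b) the quotient $Q$ has trivial universal completion, $\widehat{Q}=0$. Granting (a) and (b), Proposition~\ref{1.3.3}(iii) applied to $0\to\Pi_1\to\Pi^{\mathrm{an}}\to Q\to 0$ shows that $\widehat{\Pi_1}\to\widehat{\Pi^{\mathrm{an}}}$ is surjective; since its composite with $f$ is the isomorphism $\widehat{\Pi_1}\xrightarrow{\sim}\Pi$ of (a), this surjection is also injective, hence an isomorphism, and therefore $f$ is an isomorphism.

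The main obstacle is the vanishing $\widehat{Q}=0$: one must show that the ``extra'' locally analytic directions of $\Pi^{\mathrm{an}}$ carry no $G$-invariant bounded lattice, equivalently admit no nonzero continuous $G$-map to a unitary Banach representation. There is no formal reason for this; it is exactly where the fine structure of $\Pi^{\mathrm{an}}$ coming from $p$-adic Hodge theory and the $(\varphi,\Gamma)$-module dictionary — rather than soft functional analysis — is indispensable, and this is why the proof genuinely rests on the $p$-adic Langlands correspondence.
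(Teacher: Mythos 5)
First, note that the paper does not prove this statement at all: it is quoted verbatim from Colmez--Dospinescu \cite{cd2014completes}, so there is no internal argument to compare yours with, and your sketch has to stand on its own. As written it has a genuine gap at its central step. Your input (a) --- that $\widehat{\Pi_1}\xrightarrow{\ \sim\ }\Pi$ where $\Pi_1$ is the locally algebraic (in weight $(0,1)$: smooth) part of $\Pi^{\mathrm{an}}$ --- is false precisely in the supercuspidal de Rham case, which is the case this paper cares about. There $\Pi=\Pi_{M,\mathscr{L}}$, $\Pi_1=\mathscr{L}\otimes_L\mathrm{LL}(M)$ and $Q=\mathscr{O}[M]^*$; by \autoref{3.2.1} the completion $\widehat{\mathrm{LL}(M)}$ is not even admissible, by \autoref{3.7.2} and \autoref{2.6.3} it surjects onto $\Pi_{M,\mathscr{L}'}$ for \emph{every} $\mathscr{L}'$ with the huge kernel $N_{\mathscr{L}',1}\cong H^1(\mathfrak{X},\mathscr{O})[\tfrac{1}{p}][M]^*$, and it is independent of $\mathscr{L}$ while $\Pi_{M,\mathscr{L}}$ is not; so $\widehat{\Pi_1}\not\cong\Pi$. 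Moreover \autoref{1.3.3}\,(iii), which you invoke, requires $\widehat{\Pi_1}$ to be admissible, which fails here. Keeping only (b) (indeed $\widehat{\mathscr{O}[M]^*}=0$, the paper's \autoref{2.2.4}) and density, all you get is that $\widehat{\Pi^{\mathrm{an}}}$ is a quotient of $\widehat{\mathrm{LL}(M)}$ with dense image --- but \emph{which} quotient, i.e.\ the injectivity of $f$, is exactly the content of the theorem and is left untouched; your proof displaces the whole difficulty into the false statement (a). (In the trianguline case (a) is Berger--Breuil and is fine; for non-de Rham $V$ your ``bounded sub-object attached to $D(V)\subset D_{\mathrm{rig}}(V)$'' is not a defined object with a known completion.) The actual Colmez--Dospinescu proof does rest on $p$-adic Langlands, but through the $(\varphi,\Gamma)$-module models $D\boxtimes\mathbf{P}^1$ and $D_{\mathrm{rig}}\boxtimes\mathbf{P}^1$ of the pair $(\Pi,\Pi^{\mathrm{an}})$, not through the universal completion of the locally algebraic vectors.

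A secondary but real error: ``locally analytic vectors are dense in any unitary Banach representation of $G$'' is false --- the remark following \autoref{3.2.3} in this very paper exhibits $N_{\mathscr{L},j}$ with $N_{\mathscr{L},j}^{\mathrm{an}}=0$ --- and the convolution argument you give has no $p$-adic analogue: there is no bounded Haar measure, and elements of $D(G)$ pair with locally analytic, not merely continuous, orbit maps. What you need (and what is true) is the Schneider--Teitelbaum density theorem for \emph{admissible} Banach representations, which applies after your correct observation that residual finite length forces admissibility; the dense-image-implies-surjective step is then the paper's \autoref{2.2.2}. Finally, the reduction to topologically irreducible $\Pi$ cannot be dismissed as ``a limit argument'': the universal completion functor is neither left nor right exact, so this dévissage also needs a genuine argument.
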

 		\section{Représentations localement analytiques de $G$} 
\subsection{Conjecture de Breuil-Strauch d'après Dospinescu-Le Bras}
\label{section2.1}
Rappelons qu'un $L-(\varphi, N, \mathcal{G}_{\mathbb{Q}_p})$-module est un $L\otimes\mathbb{Q}_p^{\mathrm{nr}}$-module muni d'un Frobenius semi-linéaire $\varphi$, d'un opérateur $N$ tel que $N\varphi=p\varphi N$ et d'une action semi-linéaire lisse de $\mathcal{G}_{\mathbb{Q}_p}$ commutant à $N$ et $\varphi$. Soit $M$ un $L-(\varphi, N, \mathcal{G}_{\mathbb{Q}_p})$-module irréductible de rang $2$. La recette de Fontaine \cite{fontaine1994representations} nous permet d'associer une représentation de Weil-Deligne $\mathrm{WD}(M)$ à $M$ qui est irréductible puisque $M$ l'est. En utilisant le foncteur de Langlands local, on obtient une $L$-représentation supercuspidale lisse irréductible $\mathrm{LL}(M):=\mathrm{LL}(\mathrm{WD}(M))$. 

D'après la classification des représentations supercuspidales de $G$ (voir le Corollaire \ref{classsuper}), il existe une représentation irréductible $\sigma_M$ de $KZ$ de dimension finie, telle que $$\mathrm{ind}_{KZ}^G\sigma_M=\mathrm{LL}(M)$$   
ou bien $$\mathrm{ind}_{KZ}^G\sigma_M=\mathrm{LL}(M)\oplus(\mathrm{LL}(M)\otimes\mu_{-1}),$$ 
où, pour $\lambda\in\mathbb{Q}_p^*$, $\mu_\lambda$ désigne le caractère de $\mathbb{Q}_p^*$ défini par $x\mapsto\lambda^{v_p(x)}$. 

Ensuite, on note $\mathrm{JL}(M):=\mathrm{JL}(\mathrm{LL}(M))$ la $L$-représentation lisse irréductible de dimension finie de $\breve{G}$ qui est attachée à $\mathrm{LL}(M)$ par la correspondance de Jacquet-Langlands locale \cite{jl1970automorphic}. Posons
$$\mathscr{O}[M]=\mathrm{Hom}_{\breve{G}}(\mathrm{JL}(M), L\otimes_{\mathbb{Q}_p}\mathscr{O}(\Sigma_n)),$$
$$\Omega^1[M]=\mathrm{Hom}_{\breve{G}}(\mathrm{JL}(M), L\otimes_{\mathbb{Q}_p}\Omega^1(\Sigma_n)),$$
$$H_{\mathrm{dR}}^1[M]=\mathrm{Hom}_{\breve{G}}(\mathrm{JL}(M), L\otimes_{\mathbb{Q}_p}H^1_{\mathrm{dR}}(\Sigma_n)).$$
L'espace $\Sigma_n$ est une courbe de Stein, ce qui fournit une suite exacte d'espaces de Fréchet avec action de $G\times\breve{G}$
$$0\to H_{\mathrm{dR}}^0(\Sigma_n)\to\mathscr{O}(\Sigma_n)\to\Omega^1(\Sigma_n)\to H_{\mathrm{dR}}^1(\Sigma_n)\to0.$$
Comme $\dim_L\mathrm{JL}(M)>1$, il résulte de \cite{strauch2008deformation} et \cite{faltings2002a, fgl2008le} que l'on a $\mathrm{Hom}_{\breve{G}}(\mathrm{JL}(M), L\otimes_{\mathbb{Q}_p}H_{\mathrm{dR}}^0(\Sigma_n))=0$. Donc en passant aux composantes $\mathrm{JL}(M)$-isotypiques, on obtient la suite exacte de $G$-représentations suivante
$$0\to\mathscr{O}[M]\to\Omega^1[M]\to H_{\mathrm{dR}}^1[M]\to0.$$ 

On note $\mathscr{O}(k)(\Sigma_n)$ la représentation de $G$ sur $\mathscr{O}(\Sigma_n)$ définie par 
$$\begin{psmallmatrix}
	a&b\\
	c&d
\end{psmallmatrix}*_kf=(a-cz)^{-k}\cdot(\begin{psmallmatrix}
	a&b\\
	c&d
\end{psmallmatrix}\cdot f),$$
où $z$ est la coordonnée sur le demi-plan de Drinfeld. Puisque $\Sigma_n$ est étale sur $\Sigma_0$, on a une trivialisation $\Omega^1(\Sigma_n)\cong\mathscr{O}(\Sigma_n)dz$, qui induit un isomorphisme de $G$-représentations $\Omega^1(\Sigma_n)\cong\mathscr{O}(2)(\Sigma_n)\otimes\det$.

D'après le \cite[Théorème 3.2]{dl2017revetements}, les représentations $\mathscr{O}[M]^*$ et $\Omega^1[M]^*$ sont localement analytique. En appliquant le \cite[Lemma 3.6]{st2003algebras} et le \cite[Théorème 8.8]{dl2017revetements}, on obtient que la représentation $\mathscr{O}[M]^*$ est admissible. D'après la \cite[Proposition 2.2]{st2001u}, la représentation lisse $\mathrm{LL}(M)$ est admissible en tant que représentation localement analytique. Comme la catégorie des représentations localement analytiques admissibles est abélienne \cite[Proposition 6.4]{st2003algebras}, la représentation localement analytique $\Omega^1[M]^*$ est admissible. De plus, Colmez a montré que $\mathscr{O}[M]^*$ est topologiquement absolument irréductible dans \cite{colmez2019correspondance}.

Comme $\mathrm{WD}(M)$ est irréductible, les pentes de $\varphi$ sont toutes égales à un même nombre rationnel appelé la pente de $M$. On dit que $M$ est supercuspidal si $\mathrm{WD}(M)$ est irréductible et de pente $\frac{1}{2}$. Supposons maintenant que $M$ est supercuspidal. On définit un $L$-module de rang $2$ 
$$M_{\mathrm{dR}}:= (\overline{\mathbb{Q}}_p\otimes_{\mathbb{Q}_p^{\mathrm{nr}}}M)^{\mathcal{G}_{\mathbb{Q}_p}}.$$
On note $\Pi(M)\subseteq\mathrm{Ban}_{G, \psi}^{\mathrm{adm}}(L)$ l'ensemble des représentations admissibles absolument irréductibles $\Pi$ de $G$ telles que $\Pi^{\mathrm{lisse}}=\mathrm{LL}(M)$. On note $\mathcal{V}(M)$ l'ensemble des $L$-représentations absolument irréductibles $V$ de dimension $2$ de $\mathcal{G}_{\mathbb{Q}_p}$, potentiellement semi-stables à poids de Hodge-Tate $0, 1$ et telles que $D_{\mathrm{pst}}(V)=M$. Un des résultats principaux de \cite{cdp2014the} montre que le foncteur de Colmez \cite{colmez2010representations} induit une bijection $\Pi\mapsto V(\Pi)$ entre $\Pi(M)$ et $\mathcal{V}(M)$.
\begin{proposition}\label{2.1.1} Il y a une bijection naturelle entre $\Pi(M)$ et $\mathbf{P}(M_\mathrm{dR})$. 
\end{proposition}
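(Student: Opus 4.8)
The plan is to realise the required bijection as a composite $\Pi(M)\xrightarrow{\sim}\mathcal{V}(M)\xrightarrow{\sim}\mathbf{P}(M_{\mathrm{dR}})$. The first arrow is the bijection $\Pi\mapsto V(\Pi)$ given by the Colmez functor, recalled just above following \cite{cdp2014the}; so the only thing to do is to construct a natural map $\mathcal{V}(M)\to\mathbf{P}(M_{\mathrm{dR}})$ and check it is bijective. This map will come entirely from the de Rham dictionary of $p$-adic Hodge theory.

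To build it, I would take $V\in\mathcal{V}(M)$. Since $V$ is potentially semistable with $D_{\mathrm{pst}}(V)=M$, the filtered $(\varphi,N,\mathcal{G}_{\mathbb{Q}_p})$-module attached to $V$ has underlying module $M$ together with a decreasing, exhaustive, separated filtration on $D_{\mathrm{dR}}(V)=(\overline{\mathbb{Q}}_p\otimes_{\mathbb{Q}_p^{\mathrm{nr}}}D_{\mathrm{pst}}(V))^{\mathcal{G}_{\mathbb{Q}_p}}$, which by $D_{\mathrm{pst}}(V)=M$ is canonically $M_{\mathrm{dR}}$. The hypothesis that the Hodge--Tate weights of $V$ are $0$ and $1$ says exactly that $\mathrm{Fil}^0 M_{\mathrm{dR}}=M_{\mathrm{dR}}$, $\mathrm{Fil}^2 M_{\mathrm{dR}}=0$ and $\mathrm{Fil}^1 M_{\mathrm{dR}}$ is a line; hence the filtration is encoded by the point $\mathscr{L}_V:=[\mathrm{Fil}^1 M_{\mathrm{dR}}]\in\mathbf{P}(M_{\mathrm{dR}})$, and I set $V\mapsto\mathscr{L}_V$. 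Functoriality of the de Rham comparison makes this independent of choices.

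For injectivity, if $V,V'\in\mathcal{V}(M)$ satisfy $\mathscr{L}_V=\mathscr{L}_{V'}$ then they carry the same associated filtered $(\varphi,N,\mathcal{G}_{\mathbb{Q}_p})$-module, hence are isomorphic because $D_{\mathrm{pst}}$ is fully faithful on potentially semistable representations (Colmez--Fontaine). For surjectivity, given $\mathscr{L}\in\mathbf{P}(M_{\mathrm{dR}})$, I would equip $M$ with the filtration $\mathrm{Fil}^1 M_{\mathrm{dR}}=\mathscr{L}$ (and $\mathrm{Fil}^0=M_{\mathrm{dR}}$, $\mathrm{Fil}^2=0$) and check that the resulting filtered module is weakly admissible: globally $t_H(M)=0+1=1$ and $t_N(M)=2\cdot\tfrac12=1$, since the irreducibility of $\mathrm{WD}(M)$ forces all Frobenius slopes to equal the slope $\tfrac12$ of $M$ and $M$ has rank $2$; so $t_H(M)=t_N(M)$, and there are no proper nonzero sub-$(\varphi,N,\mathcal{G}_{\mathbb{Q}_p})$-modules of $M$ to impose further inequalities, $M$ being irreducible. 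The equivalence of Colmez--Fontaine then yields a potentially semistable $V$ whose associated filtered module is this one; so $D_{\mathrm{pst}}(V)=M$, the Hodge--Tate weights of $V$ are $0,1$, and $\mathscr{L}_V=\mathscr{L}$. Finally $V$ is absolutely irreducible, for a proper nonzero subrepresentation of $V$ (over $L$ or over $\overline{L}$) is again potentially semistable and would give a proper nonzero sub-$(\varphi,N,\mathcal{G}_{\mathbb{Q}_p})$-module of $M$, which is excluded. Hence $V\in\mathcal{V}(M)$ maps to $\mathscr{L}$, and composing with the Colmez bijection gives the asserted natural bijection $\Pi(M)\xrightarrow{\sim}\mathbf{P}(M_{\mathrm{dR}})$.

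The one step that will require care is the bookkeeping with the $L$-coefficients: weak admissibility concerns the underlying $\mathbb{Q}_p^{\mathrm{nr}}$-module structure of $M$, whereas the irreducibility I use is that of $M$ as an $L\otimes_{\mathbb{Q}_p}\mathbb{Q}_p^{\mathrm{nr}}$-module with $\varphi$, $N$ and the Galois action. I would either argue that the latter still precludes a $\mathbb{Q}_p^{\mathrm{nr}}$-sub-$(\varphi,N)$-module destabilising the filtration, or simply invoke the coefficient version of the Colmez--Fontaine equivalence directly, and likewise for the absolute irreducibility of $V$. This is routine but is the only subtle point; everything else is the standard $p$-adic Hodge theory dictionary together with the theorem of \cite{cdp2014the} recalled above.
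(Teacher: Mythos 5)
Your proposal is correct and follows essentially the same route as the paper: reduce via the Colmez bijection of \cite{cdp2014the} to a bijection $\mathcal{V}(M)\leftrightarrow\mathbf{P}(M_{\mathrm{dR}})$, read the line off the Hodge filtration on $D_{\mathrm{dR}}(V)\cong M_{\mathrm{dR}}$, and go back via weak admissibility and Colmez--Fontaine. The only difference is that the paper simply cites \cite[Theorem 5.2]{bs2007first} for the weak admissibility of $\mathrm{Fil}_{\mathscr{L}}$ (and writes $V_{M,\mathscr{L}}$ explicitly as a kernel), whereas you argue it directly by the slope-$\tfrac12$ computation and irreducibility, correctly flagging the coefficient subtlety ($\mathbb{Q}_p^{\mathrm{nr}}$-subobjects versus $L$-stable ones) that this direct argument must address.
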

\begin{proof}[Preuve] Compte tenu de ce qui précède, il suffit de montrer qu'il y a une bijection naturelle entre $\mathcal{V}(M)$ et $\mathbf{P}(M_\mathrm{dR})$. Soit $V\in\mathcal{V}(M)$, alors $D_{\mathrm{pst}}(V)=M$, ce qui induit un isomorphisme de $L$-espaces vectoriels 
	$$\mathrm{D}_{\mathrm{dR}}(V)\cong(\overline{\mathbb{Q}}_p\otimes_{\mathbb{Q}_p^{\mathrm{nr}}}D_{\mathrm{pst}}(V))^{\mathcal{G}_{\mathbb{Q}_p}}\cong M_\mathrm{dR}.$$
	La filtration de Hodge $\mathrm{Fil}^0(\mathrm{D}_{\mathrm{dR}}(V))$ nous donne une droite dans $M_\mathrm{dR}$ puisque $V$ est à poids $0$ et $1$.
	
	Réciproquement, étant donné une droite $\mathscr{L}$, la filtration correspondante $\mathrm{Fil}_{\mathscr{L}}$ sur $M_\mathrm{dR}$ est faiblement admissible car la représentation de Weil-Deligne attachée à $M$ est irréductible \cite[Theorem 5.2]{bs2007first}. Le théorème de Colmez-Fontaine \cite{cf2000construction} permet donc de construire une $L$-représentation 
	$$V_{M, \mathscr{L}}=\mathrm{Ker}((B^+_{\mathrm{cris}}\otimes_{\mathbb{Q}_p^{\mathrm{nr}}}M)^{\varphi=p}\to\mathbb{C}_p\otimes_{\mathbb{Q}_p}(M_{\mathrm{dR}}/\mathscr{L}))$$
	dans $\mathcal{V}(M)$. 
	
	Les deux constructions sont inverses l'une de l'autre. Cela permet de conclure.
\end{proof}
Notons $\Pi_{M, \mathscr{L}}:=\boldsymbol{\Pi}(V_{M, \mathscr{L}})$ où $\boldsymbol{\Pi}$ est le foncteur de Langlands $p$-adique défini par Colmez \cite{colmez2010representations}. Les représentations de Banach $\Pi_{M, \mathscr{L}}$ sont topologiquement irréductibles et on a $\Pi_{M, \mathscr{L}}\ncong\Pi_{M, \mathscr{L}'}$ pour $\mathscr{L}\neq\mathscr{L}'$.
\begin{theorem}[{\cite[Théorème 1.4]{dl2017revetements}}]\label{2.1.2} Il existe un isomorphisme de $G$-modules topologiques
	$$H_{\mathrm{dR}}^1[M]\cong M_{\mathrm{dR}}^*\otimes_L\mathrm{LL}(M)^*$$
	De plus, la préimage de $\mathscr{L}^\perp\otimes_L\mathrm{LL}(M)^*$ dans $\Omega^1[M]$ est isomorphe à $\Pi_{M, \mathscr{L}}^{\mathrm{an},*}$.
\end{theorem}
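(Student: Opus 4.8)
\textbf{Étape 1 : le $G$-module $H^1_{\mathrm{dR}}[M]$.} Le plan est de déduire le premier isomorphisme de la structure de la cohomologie de de Rham de la tour de Drinfeld, puis de ramener le second énoncé à une comparaison entre le morphisme de périodes de Rapoport-Zink et la théorie de Hodge $p$-adique. Pour cette première partie, on utiliserait le modèle formel semi-stable $\mathfrak{X}$ de $\Sigma_n$ et le théorème de comparaison $p$-adique : la cohomologie $H^1_{\mathrm{dR}}(\Sigma_n, L(1))$ se calcule à partir de la cohomologie de Hyodo-Kato de $\mathfrak{X}$, munie d'un Frobenius $\varphi$, d'un opérateur de monodromie $N$ et d'une action de $\mathcal{G}_{\mathbb{Q}_p}$ provenant de la donnée de descente, toutes compatibles, ainsi que de la filtration de Hodge. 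La réalisation des correspondances de Jacquet-Langlands et de Langlands locales dans la cohomologie de la tour de Drinfeld, connue aux niveaux finis et compatible aux morphismes de transition, fournirait un isomorphisme $G\times\breve{G}\times(\varphi, N, \mathcal{G}_{\mathbb{Q}_p})$-équivariant entre la composante $\mathrm{JL}(M)$-isotypique de la cohomologie de Hyodo-Kato et $M^*\otimes_L\mathrm{LL}(M)^*$, le $(\varphi, N, \mathcal{G}_{\mathbb{Q}_p})$-module $M^*$ tenant lieu d'espace de multiplicités (on se sert de ce que les $\mathrm{JL}(M')$ sont des $\breve{G}$-représentations irréductibles deux à deux non isomorphes, de sorte que seul le terme d'indice $M$ survit après passage aux parties $\mathrm{JL}(M)$-isotypiques). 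En étendant les scalaires à $\overline{\mathbb{Q}}_p$ et en prenant les $\mathcal{G}_{\mathbb{Q}_p}$-invariants, on obtiendrait l'isomorphisme de $G$-modules topologiques $H^1_{\mathrm{dR}}[M]\cong M_{\mathrm{dR}}^*\otimes_L\mathrm{LL}(M)^*$ (on néglige les torsions par le déterminant, qui n'affectent pas l'argument), la filtration de Hodge déterminant alors sur $M_{\mathrm{dR}}$ une droite dépendant du point de la tour.

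\textbf{Étape 2 : dévissage des sous-objets de $\Omega^1[M]$.} On part de la suite exacte $0\to\mathscr{O}[M]\to\Omega^1[M]\to H^1_{\mathrm{dR}}[M]\to 0$ déjà établie et, pour $\mathscr{L}\in\mathbf{P}(M_{\mathrm{dR}})$, de la préimage $X_{\mathscr{L}}$ de $\mathscr{L}^{\perp}\otimes_L\mathrm{LL}(M)^*\subseteq M_{\mathrm{dR}}^*\otimes_L\mathrm{LL}(M)^*$. Comme toutes les représentations en jeu sont localement analytiques admissibles ou duales de telles représentations, la dualité de Schneider-Teitelbaum, qui est une anti-équivalence exacte, permet de dualiser $0\to\mathscr{O}[M]\to X_{\mathscr{L}}\to\mathscr{L}^{\perp}\otimes_L\mathrm{LL}(M)^*\to 0$ en une extension
$$0\to\mathrm{LL}(M)\to X_{\mathscr{L}}^*\to\mathscr{O}[M]^*\to 0,$$
de longueur $2$ (on utilise que $\mathscr{L}$ est une droite), dont le quotient $\mathscr{O}[M]^*$ est topologiquement absolument irréductible d'après Colmez. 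Du côté de Langlands $p$-adique, $V_{M,\mathscr{L}}$ est de de Rham, absolument irréductible, de dimension $2$, et l'on sait (Colmez) que $\Pi_{M,\mathscr{L}}^{\mathrm{an}}$ est elle aussi de longueur $2$, de socle lisse $\mathrm{LL}(M)$. Il s'agit donc de montrer que les représentations localement analytiques admissibles $X_{\mathscr{L}}^*$ et $\Pi_{M,\mathscr{L}}^{\mathrm{an}}$ sont isomorphes.

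\textbf{Étape 3 : identification des deux représentations (point le plus délicat).} On passerait par le complété unitaire universel $\widehat{X_{\mathscr{L}}^*}$ : on vérifie qu'il existe, qu'il est non nul, que l'application $X_{\mathscr{L}}^*\to\widehat{X_{\mathscr{L}}^*}$ est injective (car $\mathrm{LL}(M)$ est le socle de $X_{\mathscr{L}}^*$ et $\widehat{\mathrm{LL}(M)}\neq 0$), et qu'il est admissible, résiduellement de longueur finie, de caractère central $\psi$, de partie lisse $\mathrm{LL}(M)$. Le point central est alors d'établir que $\widehat{X_{\mathscr{L}}^*}\cong\Pi_{M,\mathscr{L}}$ : via le foncteur de Colmez et l'équivalence de \cite{cdp2014the}, cela revient à calculer $\mathbf{V}(\widehat{X_{\mathscr{L}}^*})$ et à l'identifier à $V_{M,\mathscr{L}}$, donc à comparer la droite géométrique de $M_{\mathrm{dR}}$ (celle découpée par la filtration de Hodge sur $H^1_{\mathrm{dR}}[M]$, qui gouverne la position de $X_{\mathscr{L}}$) avec la droite $\mathrm{Fil}^0 D_{\mathrm{dR}}(V_{M,\mathscr{L}})$. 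C'est la comparaison entre le morphisme de périodes de Rapoport-Zink pour la tour de Drinfeld et celui issu de la théorie de Hodge $p$-adique, et c'est ici qu'interviennent réellement la géométrie fine de $\Sigma_n$ (son modèle semi-stable) et les théorèmes de comparaison $p$-adiques entiers ; c'est le point où se concentre la difficulté. Une fois acquise l'égalité $\widehat{X_{\mathscr{L}}^*}=\Pi_{M,\mathscr{L}}$, le \autoref{1.3.4} donne $\Pi_{M,\mathscr{L}}^{\mathrm{an}}=(\widehat{X_{\mathscr{L}}^*})^{\mathrm{an}}$, qui contient $X_{\mathscr{L}}^*$ (tout vecteur de $X_{\mathscr{L}}^*$ reste localement analytique dans le complété) ; cette inclusion entre représentations de longueur $2$ est donc un isomorphisme, d'où $X_{\mathscr{L}}\cong\Pi_{M,\mathscr{L}}^{\mathrm{an},*}$ par bidualité. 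Une variante consisterait à comparer directement, dans $\mathrm{Ext}^1_{G,\psi}(\mathscr{O}[M]^*,\mathrm{LL}(M))$ calculé grâce à l'extension universelle $\Omega^1[M]^*$, les classes des deux extensions de $\mathscr{O}[M]^*$ par $\mathrm{LL}(M)$ obtenues ci-dessus, ce qui requiert la même comparaison de périodes.
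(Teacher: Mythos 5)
Le texte que vous commentez ne démontre pas cet énoncé~: le Théorème \ref{2.1.2} est importé tel quel de \cite[Théorème 1.4]{dl2017revetements}, si bien qu'il n'y a pas de preuve interne à comparer~; c'est donc à l'argument de Dospinescu--Le Bras qu'il faut confronter votre esquisse.

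Votre Étape 1 résume correctement la façon dont la partie de Rham s'obtient (réalisation de $\mathrm{LL}$ et $\mathrm{JL}$ dans la cohomologie de la tour, comparaison Hyodo--Kato/de Rham, passage aux invariants galoisiens), et l'Étape 2 est un dévissage exact. Le problème est que l'Étape 3, c'est-à-dire l'identification $X_{\mathscr{L}}^*\cong\Pi_{M,\mathscr{L}}^{\mathrm{an}}$, \emph{est} le théorème~: tout revient à montrer que la droite de $M_{\mathrm{dR}}$ découpée par la géométrie coïncide avec $\mathrm{Fil}^0D_{\mathrm{dR}}(V_{M,\mathscr{L}})$, et votre texte se contente de nommer cette difficulté («~comparaison des morphismes de périodes~») sans donner d'argument. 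Les étapes intermédiaires censées y mener sont elles-mêmes non justifiées~: l'injectivité de $X_{\mathscr{L}}^*\to\widehat{X_{\mathscr{L}}^*}$, la non-nullité, la partie lisse égale à $\mathrm{LL}(M)$, et surtout l'admissibilité et la finitude résiduelle de $\widehat{X_{\mathscr{L}}^*}$ --- affirmation qu'on ne peut pas «~vérifier~» a priori, puisque les complétés unitaires universels qui interviennent ici sont typiquement non admissibles ($\widehat{\mathrm{LL}(M)}$, $\widehat{\Omega^1[M]^*}$)~; affirmer que $\widehat{X_{\mathscr{L}}^*}$ est admissible et que $\mathbf{V}(\widehat{X_{\mathscr{L}}^*})=V_{M,\mathscr{L}}$ équivaut à la conclusion. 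La variante par comparaison de classes dans $\mathrm{Ext}^1_{G,\psi}(\mathscr{O}[M]^*,\mathrm{LL}(M))$ souffre du même défaut, et de plus le fait que $\Omega^1[M]^*$ soit l'extension universelle (Théorème \ref{2.3.3}, dû à Ding) est postérieur à \cite{dl2017revetements} et s'appuie dessus~: on ne peut pas s'en servir pour le démontrer. Chez Dospinescu--Le Bras, ce point crucial n'est pas un dévissage formel mais passe par des arguments d'une toute autre nature (uniformisation $p$-adique de Cerednik--Drinfeld des courbes de Shimura, cohomologie complétée et compatibilité local-global), dont rien n'apparaît dans votre proposition~; celle-ci reste donc un plan plausible, mais pas une preuve.
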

\begin{remark} On va dire un mot sur la preuve du Théorème \ref{2.1.2}. Tout d'abord, les résultats \cite[Théorème 5.1, Proposition 11.6]{dl2017revetements}, dont les preuves utilisent la compatibilité locale-globale d'Emerton et l'uniformisation de Cerednik-Drinfeld, montrent qu'il existe un plongement $G$-équivariant de $\Pi_{M \mathscr{L}}^{\mathrm{an}, *}$ dans $\Omega^1[M]$, qui fait de $\Pi_{M \mathscr{L}}^{\mathrm{an}, *}$ un sous-espace $G$-stable de $\Omega^1[M]$ contenant l'image de $\mathscr{O}[M]\hookrightarrow\Omega^1[M]$. En utilisant la théorie du modèle de Kirillov, on peut identifier $H_{\mathrm{dR}}^1[M]$ à $M_{\mathrm{dR}}^*\otimes_L\mathrm{LL}(M)^*$, tandis que le quotient de $(\Pi_{M \mathscr{L}}^{\mathrm{an}})^*$ par $\mathscr{O}[M]$ s'identifie à $\mathscr{L}^\perp\otimes_L\mathrm{LL}(M)^*$.
\end{remark}
En passant au dual, on a un diagramme commutatif à lignes exactes de représentations localement analytiques
\begin{equation}\label{equation4}\xymatrix@R=5mm@C=4mm{
		0 \ar[r] & M_{\mathrm{dR}}\otimes_L\mathrm{LL}(M)\ar[r] \ar[d] &\Omega^1[M]^*\ar[r] \ar[d]&\mathscr{O}[M]^*\ar[r] \ar@{=}[d]&0\\
		0\ar[r]&(M_{\mathrm{dR}}/\mathscr{L})\otimes_L\mathrm{LL}(M)\ar[r] &\Pi_{M, \mathscr{L}}^\mathrm{an}\ar[r] & \mathscr{O}[M]^*\ar[r] &0,
	}
\end{equation}
d'où on déduit le résultat suivant
\begin{corollary}\label{2.1.3} On a une suite exacte courte
	$$0\to\mathscr{L}\otimes_L\mathrm{LL}(M)\to\Omega^1[M]^*\to\Pi_{M, \mathscr{L}}^{\mathrm{an}}\to0.$$
\end{corollary}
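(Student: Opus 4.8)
The plan is to read off the short exact sequence directly from the commutative diagram~(\ref{equation4}) by a snake-lemma argument, carried out inside the abelian category of admissible locally analytic representations of $G$ over $L$; this category is abelian by \cite[Proposition 6.4]{st2003algebras}, so kernels, cokernels and exactness are computed naively and every morphism is automatically strict. All the objects occurring in~(\ref{equation4}) lie in this category: $\mathrm{LL}(M)$ is admissible by \cite[Proposition 2.2]{st2001u}, the representations $\mathscr{O}[M]^*$ and $\Omega^1[M]^*$ are admissible by the discussion preceding Proposition~\ref{2.1.1}, the objects $M_{\mathrm{dR}}\otimes_L\mathrm{LL}(M)$ and $(M_{\mathrm{dR}}/\mathscr{L})\otimes_L\mathrm{LL}(M)$ are finite direct sums of copies of $\mathrm{LL}(M)$, and $\Pi_{M, \mathscr{L}}^{\mathrm{an}}$ is admissible because $\Pi_{M, \mathscr{L}}$ is an admissible Banach representation.

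\textbf{Key steps.} First I would pin down the three vertical arrows of~(\ref{equation4}). The rightmost one is, by construction, the identity of $\mathscr{O}[M]^*$. The middle one, $\Omega^1[M]^*\to\Pi_{M, \mathscr{L}}^{\mathrm{an}}$, is the transpose of the closed embedding $\Pi_{M, \mathscr{L}}^{\mathrm{an},*}\hookrightarrow\Omega^1[M]$ provided by Theorem~\ref{2.1.2} (recall $\Pi_{M, \mathscr{L}}^{\mathrm{an},*}$ is the preimage of $\mathscr{L}^{\perp}\otimes_L\mathrm{LL}(M)^*$ under the surjection $\Omega^1[M]\to H^1_{\mathrm{dR}}[M]\cong M_{\mathrm{dR}}^*\otimes_L\mathrm{LL}(M)^*$). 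Commutativity of the right-hand square, whose right edge is the identity, then forces the leftmost arrow to be the transpose of the inclusion $\mathscr{L}^{\perp}\otimes_L\mathrm{LL}(M)^*\hookrightarrow M_{\mathrm{dR}}^*\otimes_L\mathrm{LL}(M)^*$, that is, the map $q\otimes\mathrm{id}_{\mathrm{LL}(M)}$ induced by the projection $q\colon M_{\mathrm{dR}}\twoheadrightarrow M_{\mathrm{dR}}/\mathscr{L}$ --- here one uses $(\mathscr{L}^{\perp})^*\cong M_{\mathrm{dR}}/\mathscr{L}$, which holds since $\mathscr{L}$ is a line in the $2$-dimensional space $M_{\mathrm{dR}}$. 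Second, as $L$ is a field, tensoring the exact sequence $0\to\mathscr{L}\to M_{\mathrm{dR}}\to M_{\mathrm{dR}}/\mathscr{L}\to 0$ over $L$ with $\mathrm{LL}(M)$ stays exact, so the leftmost vertical arrow is surjective with kernel $\mathscr{L}\otimes_L\mathrm{LL}(M)$, whereas the rightmost arrow is an isomorphism. Third, the snake lemma applied to~(\ref{equation4}) then collapses to an isomorphism $\mathscr{L}\otimes_L\mathrm{LL}(M)\xrightarrow{\ \sim\ }\ker\bigl(\Omega^1[M]^*\to\Pi_{M, \mathscr{L}}^{\mathrm{an}}\bigr)$ together with the vanishing $\operatorname{coker}\bigl(\Omega^1[M]^*\to\Pi_{M, \mathscr{L}}^{\mathrm{an}}\bigr)=0$; this is exactly the asserted exact sequence $0\to\mathscr{L}\otimes_L\mathrm{LL}(M)\to\Omega^1[M]^*\to\Pi_{M, \mathscr{L}}^{\mathrm{an}}\to 0$. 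Equivalently, one may note that~(\ref{equation4}) exhibits $\Pi_{M, \mathscr{L}}^{\mathrm{an}}$ as the pushout of its upper row along the left vertical map, which makes the claim transparent.

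\textbf{Main obstacle.} Honestly there is no genuine obstacle once~(\ref{equation4}) is available --- the whole mathematical substance is packaged into Theorem~\ref{2.1.2} and the construction of that diagram. The only things worth a sentence are the bookkeeping identification of the vertical maps carried out above, the (trivial) exactness of $-\otimes_L\mathrm{LL}(M)$ over the field $L$, and the remark that both the snake lemma and the naive reading of kernels and cokernels are legitimate precisely because the entire computation takes place inside the abelian category of admissible locally analytic representations of $G$.
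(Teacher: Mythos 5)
Your proposal is correct and coincides with the paper's own (largely implicit) argument: the paper also deduces the corollary directly from the dualized Dospinescu--Le Bras diagram~(\ref{equation4}), with the right vertical arrow the identity and the left one the surjection $M_{\mathrm{dR}}\otimes_L\mathrm{LL}(M)\to(M_{\mathrm{dR}}/\mathscr{L})\otimes_L\mathrm{LL}(M)$ with kernel $\mathscr{L}\otimes_L\mathrm{LL}(M)$, the snake lemma (equivalently the pushout description) then giving the short exact sequence. Your explicit identification of the vertical maps and the remark that everything takes place in the abelian category of admissible locally analytic representations are exactly the details the paper leaves to the reader.
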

De plus, on va montrer que la suite exacte du Corollaire \ref{2.1.3} est non scindée (Corollaire \ref{2.3.10}).
\subsection{Complétés unitaires universels de $\mathrm{LL}(M)$ et $\mathscr{O}[M]^*$}\label{section2.2}
Tout d'abord, on a besoin du lemme suivant, qui généralise la \cite[Proposition 3.2]{cdn2023correspondance}.
\begin{lemma}\label{2.2.2} Soit $f: \Pi_1\to\Pi_2$ un morphisme d'image dense de $L$-représentations unitaires de $G$. Supposons que $\Pi_2^+/\pi_L$ est de type fini sur $\kappa_L[G]$ où $\Pi_2^+$ est la boule unité de $\Pi_2$, alors $f$ est surjectif.
\end{lemma}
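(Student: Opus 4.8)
The plan is to reduce the statement to a purely module-theoretic fact about the action of the augmented Iwasawa algebra $\Lambda(G)$ on the unit balls, using the compactness of a suitable open subgroup together with Nakayama's lemma over $\Lambda(H)$. First I would fix a compact open subgroup $H$ of $G$ and work with the Iwasawa algebra $\Lambda(H) = \mathscr{O}_L[[H]]$, which is a (noncommutative) complete Noetherian local ring with maximal ideal $\mathfrak{m}_H$ containing $\pi_L$. The hypothesis that $\Pi_2^+/\pi_L\Pi_2^+$ is of finite type over $\kappa_L[G]$ should be leveraged to show that $\Pi_2^+$ is of finite type over $\Lambda(G)$: choose finitely many elements of $\Pi_2^+$ whose images generate $\Pi_2^+/\pi_L$ over $\kappa_L[G]$, let $N \subseteq \Pi_2^+$ be the $\Lambda(G)$-submodule they generate, and observe that $N + \pi_L\Pi_2^+ = \Pi_2^+$; since $\Pi_2^+$ is $\pi_L$-adically complete and separated (being the unit ball of a Banach space), the topological Nakayama lemma gives $N = \Pi_2^+$.

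Next I would transfer the density hypothesis into a statement modulo $\pi_L$. The image of $f$ is dense in $\Pi_2$, and $f$ maps $\Pi_1^+$ into $\Pi_2^+$ (after rescaling, using that $f$ is continuous and both representations are unitary, i.e. have $G$-invariant bounded lattices — here one uses $\|f\|\le 1$ up to replacing $f$ by $\pi_L^k f$, which does not affect surjectivity). Density of $f(\Pi_1^+)$ in $\Pi_2^+$ for the $\pi_L$-adic topology means precisely that $f(\Pi_1^+) + \pi_L\Pi_2^+ = \Pi_2^+$, hence the induced map $\bar f\colon \Pi_1^+/\pi_L \to \Pi_2^+/\pi_L$ is surjective. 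Now the key point: $\Pi_2^+/\pi_L$ is finitely generated over $\Lambda(G)/\pi_L\Lambda(G)$, equivalently finitely generated over $\Lambda(H)/\pi_L$ as an $\mathscr{O}_L[H]$-module after restriction, because $G/H$ decomposes $\Lambda(G)$ into finitely many—well, into a free $\Lambda(H)$-module on coset representatives, but one must be careful since $G/H$ is infinite; this is exactly where the augmented algebra $\Lambda(G) = \mathrm{ind}_H^G \Lambda(H)$ enters, and finite type over $\kappa_L[G]$ of the smooth-by-nature quotient does force finite type over $\Lambda(H)/\pi_L$ when one also uses that $\Pi_2$ being a Banach representation means $\Pi_2^+$ is $\mathfrak{m}_H$-adically complete.

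With $\bar f$ surjective and $\Pi_2^+/\pi_L$ finitely generated over the complete local ring $\Lambda(H)/\pi_L$, I would lift generators: pick $x_1,\dots,x_r \in \Pi_1^+$ whose images under $\bar f$ generate $\Pi_2^+/\pi_L$, let $M = \sum \Lambda(G) f(x_i) \subseteq \Pi_2^+$, so that $M + \pi_L\Pi_2^+ = \Pi_2^+$; topological Nakayama over $\Lambda(H)$ (valid because $\Pi_2^+$ is $\pi_L$-adically, hence $\mathfrak m_H$-adically after combining with the completeness statement, complete and separated and finitely generated) yields $M = \Pi_2^+$, so $f$ is surjective on unit balls, hence surjective. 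The main obstacle I anticipate is the passage from ``finite type over $\kappa_L[G]$'' (a condition about the \emph{smooth} group ring) to ``finite type over $\Lambda(G)/\pi_L$'' in a way compatible with Nakayama — one needs that a finitely generated $\kappa_L[G]$-module which is also $\mathfrak m_H$-adically complete is automatically finitely generated over $\Lambda(H)$, and this requires invoking the structure of $\Lambda(H)$ as a Noetherian local ring together with the specific topology on $\Pi_2^+$; handling the noncommutativity and the non-compactness of $G$ (so that $\Lambda(G)$ is not itself Noetherian) carefully is the delicate part, and I would phrase everything after restriction to $H$ to stay inside the Noetherian world.
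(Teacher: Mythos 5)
Your overall strategy --- lift generators of $\Pi_2^+/\pi_L$ along $f$ and conclude by $\pi_L$-adic successive approximation --- is in the same spirit as the paper's proof, but your implementation passes through a claim that is genuinely false. The pivot of your plan is that finite generation of $\Pi_2^+/\pi_L$ over $\kappa_L[G]$, combined with completeness of $\Pi_2^+$, should force finite generation over $\Lambda(H)/\pi_L$ for a compact open subgroup $H$, so that Nakayama over the Noetherian local ring $\Lambda(H)$ applies. This fails in exactly the situations where the lemma is used: $\Pi_2^+/\pi_L$ is a \emph{smooth} $\kappa_L$-representation, and a smooth representation that is finitely generated over $\Lambda(H)=\mathscr{O}_L[[H]]$ is automatically finite-dimensional (each generator is fixed by an open subgroup $H'\subseteq H$, so $\Lambda(H)$ acts on the module it generates through the finite quotient ring $\mathscr{O}_L[H/H']$); the reductions occurring here (e.g. those of $\Pi_{M,\mathscr{L},j}$ or of quotients of $\widehat{\mathrm{LL}(M)}$) are infinite-dimensional. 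Restriction from $\kappa_L[G]$ to $\Lambda(H)$ simply does not preserve finite type when $G/H$ is infinite, and no completeness hypothesis on $\Pi_2^+$ can repair this, since $\Pi_2^+/\pi_L$ is killed by $\pi_L$ and hence discrete. Your first invocation of ``topological Nakayama'' over $\Lambda(G)$ has a related defect: $\Lambda(G)$ is neither local nor $\pi_L$-adically complete (its elements have finite support on $G/H$), so the series you construct only show that your chosen elements generate a \emph{dense} submodule, not the whole of $\Pi_2^+$. There is also a smaller slip in the transfer of density: density of $f(\Pi_1)$ in $\Pi_2$ gives $\Pi_2^+=\big(f(\Pi_1)\cap\Pi_2^+\big)+\pi_L\Pi_2^+$, but the preimages one chooses need not lie in $\Pi_1^+$, so one should not argue with $f(\Pi_1^+)$.

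The repair is to drop the Iwasawa algebras altogether, which is what the paper does: by density and the finiteness hypothesis, choose $w_1,\dots,w_r\in\Pi_1$ with $f(w_i)\in\Pi_2^+$ whose images mod $\pi_L$ generate $\Pi_2^+/\pi_L$ over $\kappa_L[G]$, and let $W$ be the \emph{closed} $\mathscr{O}_L[G]$-submodule of $\Pi_1$ generated by the $w_i$. Because the action is unitary, $W$ is bounded, and being closed in the Banach space $\Pi_1$ it is $\pi_L$-adically complete; by continuity $f(W)\subseteq\Pi_2^+$, and by construction $W\to\Pi_2^+/\pi_L$ is onto. Successive approximation, with the approximating series summed inside $W$ and using completeness of $\Pi_2^+$, then gives $f(W)=\Pi_2^+$, hence surjectivity --- no Noetherianness, no locality, and no $\Lambda(H)$-finiteness are needed.
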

\begin{proof}[Preuve] Par la densité de $f(\Pi_1)$ dans $\Pi_2$ et la finitude de $\Pi_2^+/\pi_L$, il existe $w_1,..., w_r\in\Pi_1$ tels que $f(w_1),..., f(w_r)\in\Pi_2^+$ et que les $f(w_i)\mod\pi_L$ engendrent $\Pi_2^+/\pi_L$ sur $\kappa_L[G]$. On note $W$ le sous-$\mathscr{O}_L[G]$-module fermé de $\Pi_1$ engendré par $w_1,..., w_r$. Par la continuité, $f$ envoie $W$ dans $\Pi_2^+$, et par la construction de $W$, induit une surjection modulo $\pi_L$. On en déduit, puisque $W$ et $\Pi_2^+$ sont complets pour la topologie $\pi_L$-adique, que $f|_W: W\to\Pi_2^+$ est surjectif. Cela permet de conclure.
\end{proof}
On note $\Pi_{M, \mathscr{L}, j}$ la représentation unique, à isomorphisme près, non scindée, de de Rham, de caractère central $\psi$, et de longueur $j$ dont tous les facteurs de Jordan-Hölder sont isomorphes à $\Pi_{M, \mathscr{L}}$. L'existence et l'unicité d'une telle représentation sont données par la \cite[Section 4.2]{cdn2023correspondance}. Notons $N_{\mathscr{L}, j}$ le noyau de la surjection $\widehat{\mathrm{LL}(M)}\to\Pi_{M, \mathscr{L}, j}$. D'après la \cite[Section 4.2]{cdn2023correspondance}, la représentation $\Pi_{M, \mathscr{L}, j}$ est un module sur $L[T]/T^j$, et les sous-représentations fermées de $\Pi_{M, \mathscr{L}, j}$ sont les $T^k\Pi_{M, \mathscr{L}, j}=\Pi_{M, \mathscr{L}, j-k}$ pour $1\leq k\leq j-1$. On en déduit le résultat suivant.
\begin{proposition} \label{finalbanach} Soient $j, n$ deux entiers positifs, alors on a $$\mathrm{Hom}_G(\Pi_{M, \mathscr{L}, j}, \Pi_{M, \mathscr{L}, n})=L[T]/T^{\min(n, j)}.$$
\end{proposition}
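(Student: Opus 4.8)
The plan is to exploit the explicit module-theoretic description of the representations $\Pi_{M,\mathscr{L},j}$ that was recalled just before the statement: each $\Pi_{M,\mathscr{L},j}$ is a module over $L[T]/T^j$ whose subrepresentations are exactly the $T^k\Pi_{M,\mathscr{L},j}=\Pi_{M,\mathscr{L},j-k}$ for $0\le k\le j$, and whose Jordan--Hölder factors are all isomorphic to the topologically irreducible Banach representation $\Pi_{M,\mathscr{L}}$. First I would record the key input from Théorème \ref{1.1.7}: since $\Pi_{M,\mathscr{L}}$ is topologically irreducible and (by the results recalled in Section \ref{section2.1}) absolutely irreducible, we have $\mathrm{End}_G(\Pi_{M,\mathscr{L}})=L$, and in particular every nonzero $G$-equivariant endomorphism of $\Pi_{M,\mathscr{L}}$ is an isomorphism.

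Next I would analyze an arbitrary $\varphi\in\mathrm{Hom}_G(\Pi_{M,\mathscr{L},j},\Pi_{M,\mathscr{L},n})$. Set $m:=\min(n,j)$. The image $\varphi(\Pi_{M,\mathscr{L},j})$ is a subrepresentation of $\Pi_{M,\mathscr{L},n}$, hence equals $T^k\Pi_{M,\mathscr{L},n}=\Pi_{M,\mathscr{L},n-k}$ for some $k$; and the kernel is a subrepresentation of $\Pi_{M,\mathscr{L},j}$, hence equals $T^\ell\Pi_{M,\mathscr{L},j}=\Pi_{M,\mathscr{L},j-\ell}$ for some $\ell$. Counting Jordan--Hölder lengths forces $n-k=j-\ell$, so $\varphi$ induces an isomorphism $\Pi_{M,\mathscr{L},j-\ell}\xrightarrow{\sim}\Pi_{M,\mathscr{L},n-k}$ with $j-\ell\le\min(n,j)=m$. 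Conversely, for each $0\le r\le m$ one produces a morphism of "length $r$" as the composite $\Pi_{M,\mathscr{L},j}\twoheadrightarrow\Pi_{M,\mathscr{L},r}\hookrightarrow\Pi_{M,\mathscr{L},n}$ (projection onto a quotient of length $r$, then inclusion as the subrepresentation $T^{\,n-r}\Pi_{M,\mathscr{L},n}$), and this is precisely the action of $T^{\,n-r}$ read through the $L[T]/T^j$- and $L[T]/T^n$-structures. I would then check that the map $L[T]/T^m\to\mathrm{Hom}_G(\Pi_{M,\mathscr{L},j},\Pi_{M,\mathscr{L},n})$ sending a polynomial to the corresponding composite of $T$-multiplications (well-defined since $T^m$ kills both source and target in the relevant sense) is a ring, resp. $L$-module, homomorphism.

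The main point — and the step I expect to require the most care — is the \emph{injectivity and surjectivity} of this comparison map, i.e. showing that $\mathrm{Hom}_G$ is exhausted by the $T$-powers and that distinct powers give distinct morphisms. Injectivity: if $T^a$ and $T^b$ with $a<b<m$ induced the same morphism, their difference would be a morphism with image $\Pi_{M,\mathscr{L},n-a}$ of length $n-a$ but also factoring through a quotient of length $\le n-b<n-a$, a contradiction; so one must track lengths carefully. Surjectivity is the content of the classification of subrepresentations above, combined with the rigidity $\mathrm{End}_G(\Pi_{M,\mathscr{L}})=L$ to promote an abstract isomorphism $\Pi_{M,\mathscr{L},j-\ell}\xrightarrow{\sim}\Pi_{M,\mathscr{L},n-k}$ to the assertion that $\varphi$ differs from the standard $T$-power morphism by an automorphism, hence lies in $L[T]/T^m\cdot(\text{standard morphism})$; here one uses that $\mathrm{End}_G(\Pi_{M,\mathscr{L},r})=L[T]/T^r$, which itself follows by the same length argument applied to $n=j=r$. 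Assembling these observations gives the stated isomorphism $\mathrm{Hom}_G(\Pi_{M,\mathscr{L},j},\Pi_{M,\mathscr{L},n})=L[T]/T^{\min(n,j)}$.
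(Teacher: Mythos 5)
Your proposal is correct and follows essentially the same route as the paper, which simply deduces the statement from the structure recalled from \cite[Section 4.2]{cdn2023correspondance} (the $L[T]/T^j$-module structure, the chain of subrepresentations $T^k\Pi_{M,\mathscr{L},j}$) together with $\mathrm{End}_G(\Pi_{M,\mathscr{L}})=L$; your kernel/image classification plus the length count and the inductive use of $\mathrm{End}_G(\Pi_{M,\mathscr{L},r})=L[T]/T^r$ is exactly this uniserial-module argument. Only note the harmless indexing slip: with the paper's convention the kernel $T^\ell\Pi_{M,\mathscr{L},j}$ has length $j-\ell$, so the induced isomorphism is $\Pi_{M,\mathscr{L},\ell}\xrightarrow{\sim}\Pi_{M,\mathscr{L},n-k}$, i.e. $\ell=n-k$ rather than $n-k=j-\ell$.
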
 
Par ailleurs, il découle de la \cite[Remarque 4.7 (i)]{cdn2023correspondance} qu'il existe un isomorphisme 
$$\Pi_{M, \mathscr{L}, j}^{\mathrm{lisse}}=(L[T]/T^j)\otimes_L\mathrm{LL}(M).$$
Comme la composée $1\otimes\mathrm{LL}(M)\hookrightarrow\Pi_{M, \mathscr{L}, j}^{\mathrm{lisse}}\hookrightarrow\Pi_{M, \mathscr{L}, j}$ est d'image dense \cite[Remarque 4.7 (ii)]{cdn2023correspondance}, on obtient un morphisme $\widehat{\mathrm{LL}(M)}\to\Pi_{M, \mathscr{L}, j}$ qui est d'image dense pour tout $j$. 
\begin{corollary}[{\cite[Remarque 4.7]{cdn2023correspondance}}] \label{2.2.3} Le morphisme construit ci-dessous $\widehat{\mathrm{LL}(M)}\to\Pi_{M, \mathscr{L}, j}$ est surjectif pour tout $j$.
\end{corollary}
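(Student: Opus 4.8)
The plan is to apply \autoref{2.2.2} to a morphism $f\colon\widehat{\mathrm{LL}(M)}\to\Pi_{M,\mathscr{L},j}$: it suffices to produce such an $f$ with dense image and to check that $\Pi_{M,\mathscr{L},j}^{+}/\pi_L$ is of finite type over $\kappa_L[G]$, where $\Pi_{M,\mathscr{L},j}^{+}$ denotes the unit ball of $\Pi_{M,\mathscr{L},j}$. Granting both, \autoref{2.2.2} yields the surjectivity of $f$, which is the claim.

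First I would produce $f$. By the universal property of the universal unitary completion, a continuous $G$-equivariant map $f\colon\widehat{\mathrm{LL}(M)}\to\Pi_{M,\mathscr{L},j}$ is the same datum as a continuous $G$-equivariant map $g\colon\mathrm{LL}(M)\to\Pi_{M,\mathscr{L},j}$; since $\mathrm{LL}(M)$ has dense image in $\widehat{\mathrm{LL}(M)}$ (\cite[Proposition 1.17]{emerton2005p-adic}), $f$ has dense image exactly when $g$ does. For $j=1$ this is automatic: $\Pi_{M,\mathscr{L}}$ is topologically irreducible and $\mathrm{LL}(M)=\Pi_{M,\mathscr{L}}^{\mathrm{lisse}}$ is a nonzero closed subrepresentation, hence dense, so any nonzero $g$ works and \autoref{2.2.2} already applies. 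For $j\geq 2$ the representation $\Pi_{M,\mathscr{L},j}$ is uniserial in $\mathrm{Ban}_{G,\psi}^{\mathrm{adm}}(L)$ --- this is what the uniqueness in its definition reflects --- with all graded pieces isomorphic to the topologically irreducible $\Pi_{M,\mathscr{L}}$. Hence the closure of $\mathrm{im}(g)$ is one of the finitely many closed subrepresentations of $\Pi_{M,\mathscr{L},j}$, and it is the whole space precisely when the composite of $g$ with the projection onto the cosocle $\Pi_{M,\mathscr{L}}$ is nonzero, i.e.\ when $\mathrm{im}(g)$ is not contained in the maximal proper closed subrepresentation. That a $g$ with this last property exists is exactly \cite[Lemme 5.3]{cdn2023correspondance}, whose construction I would import; this is the only genuinely external ingredient.

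Next I would verify the finiteness hypothesis. The representation $\Pi_{M,\mathscr{L},j}$ is admissible and carries a finite filtration by closed subrepresentations with all graded pieces isomorphic to $\Pi_{M,\mathscr{L}}=\boldsymbol{\Pi}(V_{M,\mathscr{L}})$. By the $p$-adic Langlands correspondence, $\Pi_{M,\mathscr{L}}$ admits a $G$-invariant bounded open lattice whose reduction modulo $\pi_L$ is of finite length (indeed with at most two Jordan--Hölder factors, determined by $\overline{V_{M,\mathscr{L}}}$). Intersecting $\Pi_{M,\mathscr{L},j}^{+}$ with the terms of the filtration and reducing modulo $\pi_L$ then exhibits $\Pi_{M,\mathscr{L},j}^{+}/\pi_L$ as a finite successive extension of quotients of lattices-mod-$\pi_L$ of $\Pi_{M,\mathscr{L}}$, hence of finite length over $\kappa_L$, a fortiori of finite type over $\kappa_L[G]$. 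Feeding this and the previous paragraph into \autoref{2.2.2} gives the surjectivity of $f$ for every $j$.

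The main obstacle is the density in the second paragraph: for $j\geq 2$ one must know that some $G$-map out of $\mathrm{LL}(M)$ into $\Pi_{M,\mathscr{L},j}$ is not absorbed by the maximal proper closed subrepresentation, which is where the construction of \cite[Lemme 5.3]{cdn2023correspondance} is genuinely used. Everything else --- the passage through the universal property, the bookkeeping of closures in the uniserial object, and the residual finiteness of $\Pi_{M,\mathscr{L},j}$ --- is formal given the block-theoretic setup of Section~1. An alternative route, induction on $j$ starting from the surjection $\widehat{\mathrm{LL}(M)}\twoheadrightarrow\Pi_{M,\mathscr{L}}$ of the case $j=1$ and lifting it through $0\to\Pi_{M,\mathscr{L}}\to\Pi_{M,\mathscr{L},j}\to\Pi_{M,\mathscr{L},j-1}\to 0$, runs into the same difficulty, the lifting obstruction living in an $\mathrm{Ext}^{1}$ one would have to control directly; so I do not expect it to be shorter than simply invoking \cite[Lemme 5.3]{cdn2023correspondance}.
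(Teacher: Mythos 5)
Your overall strategy coincides with the paper's: obtain a morphism $\widehat{\mathrm{LL}(M)}\to\Pi_{M,\mathscr{L},j}$ of dense image through the universal property of the completion, check that $\Pi_{M,\mathscr{L},j}^{+}/\pi_L$ is of finite type over $\kappa_L[G]$ (it is residually of finite length, since $\Pi_{M,\mathscr{L},j}$ is admissible of finite length with factors $\Pi_{M,\mathscr{L}}$), and conclude by \autoref{2.2.2}. The reduction of density to the non-vanishing of the composite with the cosocle projection, via the chain of closed subrepresentations $T^k\Pi_{M,\mathscr{L},j}$, is also correct, and you rightly note that for $j\geq2$ not every nonzero map is surjective, so the point is to hit the cosocle.

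The problem is the external input you choose for that last step. You invoke \cite[Lemme 5.3]{cdn2023correspondance}, but that lemma states precisely that $\Pi_{M,\mathscr{L},j}$ is a quotient of $\widehat{\mathrm{LL}(M)}$ --- i.e.\ the statement of the corollary itself --- so importing it (or its construction) turns your argument into a restatement of the reference rather than a deduction, and the rest of your bookkeeping becomes superfluous. The paper imports only the strictly weaker facts of \cite[Remarque 4.7]{cdn2023correspondance}: the identification $\Pi_{M,\mathscr{L},j}^{\mathrm{lisse}}=(L[T]/T^j)\otimes_L\mathrm{LL}(M)$ and the density of $1\otimes\mathrm{LL}(M)$ in $\Pi_{M,\mathscr{L},j}$. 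This produces the specific morphism the corollary speaks of (the one ``construit ci-dessous''), with dense image, and then \autoref{2.2.2} upgrades density to surjectivity --- that upgrade is the actual content here. To repair your write-up, replace the appeal to Lemme 5.3 by Remarque 4.7 (i)--(ii), or equivalently take $g$ to be the inclusion of $1\otimes\mathrm{LL}(M)$ and check, using the $L[T]/T^j$-module structure, that it is not contained in $T\Pi_{M,\mathscr{L},j}$. (Two minor slips: $\mathrm{LL}(M)=\Pi_{M,\mathscr{L}}^{\mathrm{lisse}}$ is not a \emph{closed} subrepresentation --- the correct phrasing is that its closure is a nonzero closed subrepresentation, hence everything by topological irreducibility; and the reduction of $\Pi_{M,\mathscr{L}}$ need not have at most two Jordan--H\"older factors, though only finiteness is needed.)
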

\begin{proof}[Preuve] Le résultat se déduit directement du Lemme \ref{2.2.2}.
\end{proof}	
\begin{lemma}\label{2.2.4} Le complété unitaire universel $\widehat{\mathscr{O}[M]^*}$ de $\mathscr{O}[M]^*$ est nul. 
\end{lemma}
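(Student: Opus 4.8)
The plan is to realise $\mathscr{O}[M]^*$ as a quotient of the locally analytic representation $\Pi_{M,\mathscr{L}}^{\mathrm{an}}$, whose universal unitary completion is already understood, and then to combine the right-exactness of the completion functor (\autoref{1.3.3}) with the density of the smooth vectors $\mathrm{LL}(M)$ inside $\Pi_{M,\mathscr{L}}$.

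Concretely, I would fix $\mathscr{L}\in\mathbf{P}^1$ and use the lower row of the diagram \eqref{equation4}, which after the identification $(M_{\mathrm{dR}}/\mathscr{L})\otimes_L\mathrm{LL}(M)\cong\mathrm{LL}(M)$ reads
$$0\longrightarrow\mathrm{LL}(M)\longrightarrow\Pi_{M,\mathscr{L}}^{\mathrm{an}}\overset{q}{\longrightarrow}\mathscr{O}[M]^*\longrightarrow0,$$
a strict exact sequence of admissible locally analytic representations (strictness being automatic since all three terms are admissible, exactly as in the derivation of \autoref{2.1.3}). Since $\Pi_{M,\mathscr{L}}$ is a unitary admissible Banach representation that is topologically irreducible, residually of finite length, and has central character $\psi$, \autoref{1.3.4} identifies $\widehat{\Pi_{M,\mathscr{L}}^{\mathrm{an}}}$ with $\Pi_{M,\mathscr{L}}$ itself. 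Applying \autoref{1.3.3}(i) to the displayed sequence then shows that $\widehat{\mathscr{O}[M]^*}$ exists and that the induced map $\widehat q\colon\Pi_{M,\mathscr{L}}=\widehat{\Pi_{M,\mathscr{L}}^{\mathrm{an}}}\to\widehat{\mathscr{O}[M]^*}$ is surjective.

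The remaining point is to see that $\widehat q=0$. By functoriality of the universal unitary completion, the composite of $\widehat q$ with the canonical map $\mathrm{can}\colon\Pi_{M,\mathscr{L}}^{\mathrm{an}}\to\Pi_{M,\mathscr{L}}$ factors through $q$, hence vanishes on $\ker q=\mathrm{LL}(M)$. As $\ker q$ is smooth and $\mathrm{can}$ is injective and $G$-equivariant, $\mathrm{can}(\ker q)$ is a nonzero subrepresentation of $\Pi_{M,\mathscr{L}}^{\mathrm{lisse}}=\mathrm{LL}(M)$ (using $\Pi_{M,\mathscr{L}}\in\Pi(M)$), hence equals $\mathrm{LL}(M)$, which is dense in $\Pi_{M,\mathscr{L}}$ by \cite[Remarque 4.7 (ii)]{cdn2023correspondance}. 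Thus $\widehat q$ is a continuous $G$-equivariant map vanishing on a dense subspace, so $\widehat q=0$; combined with its surjectivity this forces $\widehat{\mathscr{O}[M]^*}=0$.

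This argument is quite short, so I do not expect a genuinely hard step: the only points requiring a little care are checking that the lower row of \eqref{equation4} is a \emph{strict} exact sequence so that \autoref{1.3.3} applies, and the identification $\mathrm{can}(\ker q)=\mathrm{LL}(M)$ together with its density in $\Pi_{M,\mathscr{L}}$ — the latter being precisely the density input (case $j=1$) already used in the proof of \autoref{2.2.3}. An essentially equivalent route would instead start from the upper row $0\to M_{\mathrm{dR}}\otimes_L\mathrm{LL}(M)\to\Omega^1[M]^*\to\mathscr{O}[M]^*\to0$, but this would require knowing the universal unitary completion of $\Omega^1[M]^*$, which is only established later in the paper, so the route through $\Pi_{M,\mathscr{L}}^{\mathrm{an}}$ is the natural one at this stage.
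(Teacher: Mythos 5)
Your proof is correct, and its first half is exactly the paper's: both apply the \autoref{1.3.3} (i) to the lower row of (\ref{equation4}) together with the \autoref{1.3.4} to produce a surjection $\Pi_{M,\mathscr{L}}\twoheadrightarrow\widehat{\mathscr{O}[M]^*}$. Where you diverge is the endgame. The paper fixes nothing: it observes that this surjection exists for \emph{every} $\mathscr{L}$, and since $\Pi_{M,\mathscr{L}}$ is topologiquement irréductible a nonzero such quotient would force $\widehat{\mathscr{O}[M]^*}\cong\Pi_{M,\mathscr{L}}$ simultaneously for distinct $\mathscr{L}$, contradicting $\Pi_{M,\mathscr{L}}\ncong\Pi_{M,\mathscr{L}'}$. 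You instead fix a single $\mathscr{L}$ and show the induced map $\widehat q$ is itself zero, via the compatibility $\widehat q\circ\mathrm{can}=\iota\circ q$: it kills $\mathrm{can}(\ker q)$, a nonzero smooth subrepresentation of $\Pi_{M,\mathscr{L}}^{\mathrm{lisse}}=\mathrm{LL}(M)$, hence all of $\mathrm{LL}(M)$, which is dense (this density needs no citation: the closure of a nonzero $G$-stable subspace of the topologically irreducible $\Pi_{M,\mathscr{L}}$ is everything), so $\widehat q=0$ by continuity and surjectivity gives the result. Your variant buys a slightly stronger statement (the canonical map $\Pi_{M,\mathscr{L}}\to\widehat{\mathscr{O}[M]^*}$ vanishes) and avoids invoking the non-isomorphism $\Pi_{M,\mathscr{L}}\ncong\Pi_{M,\mathscr{L}'}$, at the cost of using $\Pi_{M,\mathscr{L}}\in\Pi(M)$ (i.e.\ $\Pi_{M,\mathscr{L}}^{\mathrm{lisse}}=\mathrm{LL}(M)$) and the factorization through the universal property; the paper's conclusion is shorter given that the distinctness of the $\Pi_{M,\mathscr{L}}$ has already been recorded. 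Your remark on strictness and your reason for preferring the lower row over the upper one are both apt.
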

\begin{proof}[Preuve] En appliquant la Proposition \ref{1.3.3} (i) à la deuxième ligne du diagramme (\ref{equation4}), et en utilisant le Théorème \ref{1.3.4}, on obtient une surjection $\Pi_{M, \mathscr{L}}\twoheadrightarrow\widehat{\mathscr{O}[M]^*}$ pour toute $\mathscr{L}$. Le résultat voulu en découle car on a $\Pi_{M, \mathscr{L}}\ncong\Pi_{M, \mathscr{L}'}$ si $\mathscr{L}\neq\mathscr{L}'$.
\end{proof}
On peut aussi montrer le Lemme \ref{2.2.4} plus directement. Comme $\widehat{\mathscr{O}[M]^*}=\mathscr{O}[M]^{b, *}$, cela découle du lemme suivant
\begin{lemma} Il n'y a pas de vecteurs $G$-bornés non nuls dans $\mathscr{O}[M]$, autrement dit, on a $\mathscr{O}[M]^b=0$.
\end{lemma}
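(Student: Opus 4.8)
The plan is to show that a $G$-bounded vector of $\mathscr{O}[M]$ is automatically a \emph{bounded} vector, i.e.\ comes from a globally bounded analytic function on $\Sigma_n$; that such a function is locally constant; and finally to invoke the vanishing $\mathrm{Hom}_{\breve G}(\mathrm{JL}(M), H^0_{\mathrm{dR}}(\Sigma_n))=0$ recalled above.

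\emph{Step 1 ($G$-borné $\Rightarrow$ borné).} An element $\phi\in\mathscr{O}[M]^b$ is a $\breve G$-equivariant map $\mathrm{JL}(M)\to L\otimes\mathscr{O}(\Sigma_n)$, hence given, after choosing a basis of $\mathrm{JL}(M)$, by finitely many functions $f_1,\dots,f_d\in L\otimes\mathscr{O}(\Sigma_n)$ whose joint $G$-orbit is bounded in the Fréchet topology, so bounded for each seminorm $||\cdot||_{\mathfrak X_i}$. Since $||g\cdot f||_{\mathfrak X_i}=||f||_{g^{-1}\mathfrak X_i}$ and the $g^{-1}\mathfrak X_i$, $g\in G$, cover $\mathfrak X$ — the same geometry as in the proof of \autoref{1.2.6} — one gets $\sup_g||f_k||_{g^{-1}\mathfrak X_i}<\infty$, so each $f_k$ lies in $L\otimes\mathscr{O}(\mathfrak X)[\tfrac1p]$; conversely any map landing in $L\otimes\mathscr{O}(\mathfrak X)[\tfrac1p]$ is $G$-bounded. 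Hence $\mathscr{O}[M]^b=\mathrm{Hom}_{\breve G}(\mathrm{JL}(M),L\otimes\mathscr{O}(\mathfrak X)[\tfrac1p])$, and it suffices to prove this Hom vanishes.

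\emph{Step 2 (bounded functions on $\Sigma_n$ are locally constant).} I claim $\mathscr{O}(\mathfrak X)[\tfrac1p]$ consists of the locally constant functions on $\Sigma_n$. Base changing to $\mathbb C_p$ (the statement and its conclusion descend), the special fibre $\bar{\mathfrak X}$ is a reduced normal-crossings curve over $\bar{\mathbb F}_p$ whose irreducible components are proper — this is built into the semistable model $\mathfrak X$ of \cite{cdn2020cohomologie} — and whose connected components correspond, via specialization, to those of $\Sigma_n$. For a connected such curve $H^0(\bar{\mathfrak X},\mathcal O)=\bar{\mathbb F}_p$ (the standard Mayer–Vietoris argument on finite sub-curves, passing to the inverse limit for the infinite union). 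Thus $\mathscr{O}(\mathfrak X)/p$ is componentwise $\bar{\mathbb F}_p$; as $\mathscr{O}(\mathfrak X)$ is $p$-adically complete and flat, Hensel's lemma forces it to be componentwise $\mathcal O_{\mathbb C_p}$, i.e.\ the locally constant functions. Descending, $\mathscr{O}(\mathfrak X)[\tfrac1p]$ is componentwise the base field, so it coincides with $H^0_{\mathrm{dR}}(\Sigma_n)\otimes$(base field). (Alternatively: over $\breve{\mathbb Q}_p$ one has $\breve{\mathscr M}_0\cong\check\Omega\times\mathbb Z$ whose reduction is a forest of trees, so its bounded functions over $\mathbb C_p$ are locally constant; then a bounded $f$ on $\breve{\mathscr M}_n$ is integral over $\mathscr{O}(\breve{\mathscr M}_0)^b$ via its characteristic polynomial for the finite étale cover, hence algebraic over $\mathbb C_p$ on a reduced space, hence locally constant, and one descends to $\Sigma_n$.)

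\emph{Step 3 (conclusion).} Combining Steps 1 and 2, $\mathscr{O}[M]^b=\mathrm{Hom}_{\breve G}(\mathrm{JL}(M),L\otimes H^0_{\mathrm{dR}}(\Sigma_n))$, which is $0$ by the result of \cite{strauch2008deformation}, \cite{faltings2002a, fgl2008le} recalled above, since $\dim_L\mathrm{JL}(M)>1$. Hence $\mathscr{O}[M]^b=0$, and therefore $\widehat{\mathscr{O}[M]^*}=\mathscr{O}[M]^{b,*}=0$, recovering \autoref{2.2.4}. The main obstacle is Step 2: one must rule out bounded analytic functions on $\Sigma_n$ beyond the locally constant ones, and here one cannot argue directly from the dual graph of the reduction, since that graph acquires loops on the covers ($H^1_{\mathrm{dR}}(\Sigma_n)\neq0$); the clean route is the $H^0$-computation on the semistable special fibre, which requires controlling the infinite union by a limit argument and using that the components of $\bar{\mathfrak X}$ are proper. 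Steps 1 and 3 are routine and only quote inputs already available.
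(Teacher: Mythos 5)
Votre démonstration est correcte et suit pour l'essentiel la même stratégie que celle du texte : on se ramène à montrer que les vecteurs $G$-bornés (équivalemment bornés, par l'argument du \autoref{1.2.6} adapté à $\mathscr{O}$) donnent des fonctions localement constantes, puis on tue la partie $\mathrm{JL}(M)$-isotypique des fonctions localement constantes. La seule différence, de répartition des citations plutôt que de fond, est que le texte cite \cite[Remarque A.2]{cdn2020cohomologie} ou \cite[Corollaire 2.10]{cdn2022cohomologie} pour « borné $\Rightarrow$ localement constant » et redémontre l'annulation finale en utilisant que $\breve{G}$ agit sur les composantes connexes de $\Sigma_n$ via la norme réduite (Strauch, Faltings--Fargues) et l'irréductibilité de $\mathrm{JL}(M)$, tandis que vous établissez la première étape à partir du modèle semi-stable (dans l'esprit de l'annexe A) et citez l'annulation de $\mathrm{Hom}_{\breve{G}}(\mathrm{JL}(M), L\otimes H^0_{\mathrm{dR}}(\Sigma_n))$ déjà rappelée à la section \ref{section2.1}.
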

\begin{proof}[Preuve] Soit $0\neq f\in\mathscr{O}[M]=\mathrm{Hom}_{\breve{G}}(\mathrm{JL}(M), L\otimes_{\mathbb{Q}_p}\mathscr{O}(\Sigma_n))$. D'une part, il suit du \cite[Remarque A.2]{cdn2020cohomologie} ou du \cite[Corollaire 2.10]{cdn2022cohomologie} que $f(v)$ est constante sur chaque composante connexe de $\Sigma_n$ pour tout $v\in\mathrm{JL}(M)$. D'autre part, il suit des résultats de Strauch \cite{strauch2008geometrically} pour la tour de Lubin-Tate et de l'isomorphisme de Faltings-Fargues \cite{faltings2002a, fgl2008le} que $\check{G}$ agit par la norme réduite $N: \check{G}\to\mathbb{Q}_p^\times$ sur l'ensemble des composantes connexes de $\Sigma_n$. Il en résulte que $f(g\cdot v)=g\cdot f(v)=N(g)f(v)=f(N(g)\cdot v)$ pour tout $v\in \mathrm{JL}(M)$ et $g\in\check{G}$. Comme $\mathrm{JL}(M)$ est irréductible, $f$ est injectif, donc on a $g\cdot v=N(g)\cdot v$ pour tout $v\in \mathrm{JL}(M)$, ce qui est absurde car $\mathrm{JL}(M)$ est irréductible. On en déduit que $f=0$. Cela permet de conclure.
\end{proof}
\begin{corollary}\label{2.2.6} La suite exacte des représentations localement analytiques
	$$0\to\mathscr{L}\otimes_L\mathrm{LL}(M)\to\Pi_{M, \mathscr{L}}^\mathrm{an}\to\mathscr{O}[M]^*\to0$$
	est non scindée.
\end{corollary}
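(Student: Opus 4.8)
The plan is to argue by contradiction, and the only serious input needed is the vanishing $\widehat{\mathscr{O}[M]^*}=0$ from \autoref{2.2.4} (equivalently $\mathscr{O}[M]^b=0$). First I would suppose that the displayed short exact sequence of locally analytic $G$-representations splits, and fix a continuous $G$-equivariant section $s\colon\mathscr{O}[M]^*\to\Pi_{M,\mathscr{L}}^{\mathrm{an}}$ of the surjection; since $s$ is left-invertible it is injective. Next I would compose $s$ with the canonical map $\Pi_{M,\mathscr{L}}^{\mathrm{an}}\hookrightarrow\Pi_{M,\mathscr{L}}$, which is continuous and $G$-equivariant because the topology on the locally analytic vectors is finer than the one inherited from $\Pi_{M,\mathscr{L}}$; this produces an injective, in particular nonzero, continuous $G$-equivariant map $f\colon\mathscr{O}[M]^*\to\Pi_{M,\mathscr{L}}$.

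The point is then that $\Pi_{M,\mathscr{L}}$ is a unitary $L$-Banach representation of $G$, so by the universal property of the universal unitary completion $f$ must factor through $\iota\colon\mathscr{O}[M]^*\to\widehat{\mathscr{O}[M]^*}$. By \autoref{2.2.4} the target is $0$, so $f=0$, which contradicts the injectivity of $f$ together with $\mathscr{O}[M]^*\neq0$ (it is topologically absolutely irreducible). Hence the sequence admits no continuous $G$-equivariant splitting, which is exactly the assertion.

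For completeness I would also record a slightly heavier alternative: applying the universal unitary completion functor to the hypothetically split sequence. Using that this functor carries finite direct sums to finite direct sums (immediate from its universal property, once existence of the relevant completions is guaranteed by \autoref{1.3.3}(i)) and that $\widehat{\Pi_{M,\mathscr{L}}^{\mathrm{an}}}=\Pi_{M,\mathscr{L}}$ by \autoref{1.3.4}, a splitting would give $\Pi_{M,\mathscr{L}}\cong\widehat{\mathrm{LL}(M)}\oplus\widehat{\mathscr{O}[M]^*}=\widehat{\mathrm{LL}(M)}$; but $\widehat{\mathrm{LL}(M)}$ surjects onto $\Pi_{M,\mathscr{L},2}$ by \autoref{2.2.3}, a representation of length $2$, contradicting the topological irreducibility of $\Pi_{M,\mathscr{L}}$. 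In either approach there is no real obstacle beyond \autoref{2.2.4}: everything else is the elementary observation that a $G$-equivariant section would embed $\mathscr{O}[M]^*$ into a unitary Banach representation, which $\mathscr{O}[M]^b=0$ forbids. I therefore expect the write-up to be short, the subtlest point being merely to note that a splitting of a sequence of locally analytic $G$-representations is by definition continuous and $G$-equivariant, so that the universal property indeed applies.
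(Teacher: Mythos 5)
Your main argument is exactly the paper's proof: a splitting would give a nonzero continuous $G$-equivariant map $\mathscr{O}[M]^*\to\Pi_{M,\mathscr{L}}$, which by the universal property must factor through $\widehat{\mathscr{O}[M]^*}=0$ (\autoref{2.2.4}), a contradiction. The alternative route via completing the split sequence is a correct bonus, but the core reasoning coincides with the paper's.
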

\begin{proof}[Preuve] Supposons qu'il y a une section non nulle $\mathscr{O}[M]^*\to\Pi_{M, \mathscr{L}}^\mathrm{an}$, ce qui induit un morphisme non nul $\mathscr{O}[M]^*\to\Pi_{M, \mathscr{L}}$. D'après la propriété universelle du complété unitaire universel, ce morphisme se factorise par $\mathscr{O}[M]^*\to\widehat{\mathscr{O}[M]^*}$, qui est un morphisme nul par le Lemme \ref{2.2.4}, donc on a une contradiction. Cela permet de conclure.
\end{proof}
\subsection{Entrelacements entre $\mathrm{LL}(M)$, $\Pi_{M, \mathscr{L}}^{\mathrm{an}}$, $\Omega^1[M]^*$ et $\mathscr{O}[M]^*$}
On conserve la notation précédente: $M$ désigne un $L-(\varphi, N, \mathcal{G}_{\mathbb{Q}_p})$-module irréductible de rang $2$. Il découle de la construction de $\mathrm{LL}(M)$ que le caractère central $\psi$ de $\mathrm{LL}(M)$ est $\det\mathrm{WD}(M)\cdot|\ |$, vu comme caractère de $W_{\mathbb{Q}_p}^{\mathrm{ab}}\cong\mathbb{Q}_p^\times$ par l'application de réciprocité locale normalisée de telle sorte que le frobenius arithmétique s'envoie vers $p$. Rappelons que la catégorie des représentations localement analytiques admissibles est abélienne \cite[Proposition 6.4]{st2003algebras}. Les représentations $\mathrm{LL}(M)$, $\Pi_{M, \mathscr{L}}^{\mathrm{an}}$, $\Omega^1[M]^*$ et $\mathscr{O}[M]^*$ sont toutes localement analytiques admissibles, et on va calculer leurs entrelacements dans cette catégorie. Notons $\mathrm{Ext}^1_{G, \psi}$ le groupe d'extensions localement analytique à caractère central $\psi$ fixé.
\subsubsection{Résultat de Ding}
D'après le Théorème \ref{2.1.2}, la représentation $\Omega^1[M]^*$ est une extension de $\mathscr{O}[M]^*$ par $M_{\mathrm{dR}}\otimes_L\mathrm{LL}(M)$. Grâce au produit cup
$$\mathrm{Ext}^1_{G, \psi}(\mathscr{O}[M]^*, M_{\mathrm{dR}}\otimes_L\mathrm{LL}(M))\times M_{\mathrm{dR}}^*\to\mathrm{Ext}^1_{G, \psi}(\mathscr{O}[M]^*, \mathrm{LL}(M)),$$
la représentation $\Omega^1[M]^*$ induit un morphisme
$$M_{\mathrm{dR}}^*\to\mathrm{Ext}^1_{G, \psi}(\mathscr{O}[M]^*, \mathrm{LL}(M)),$$
qui envoie $\mathscr{L}^\perp=(M_{\mathrm{dR}}/\mathscr{L})^*\hookrightarrow M_{\mathrm{dR}}^*$ vers $L[\Pi_{\mathscr{L}}^{\mathrm{an}}]$ en vertu du Théorème \ref{2.1.2}. Ding a démontré que ce morphisme est un isomorphisme, ce qui était une conjecture de Breuil dans \cite{breuil2019ext}.
\begin{theorem}[{\cite[Theorem 2.5]{ding2022locally}}]\label{2.3.3} Le morphisme ci-dessus $$M_{\mathrm{dR}}^*\xrightarrow{\sim}\mathrm{Ext}^1_{G, \psi}(\mathscr{O}[M]^*, \mathrm{LL}(M))$$
	est un isomorphisme. En particulier, on a $\dim_L\mathrm{Ext}^1_{G, \psi}(\mathscr{O}[M]^*, \mathrm{LL}(M))=2$.
\end{theorem}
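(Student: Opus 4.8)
Le morphisme de l'énoncé est le produit cup avec la classe fixée de $\Omega^1[M]^*$ dans $\mathrm{Ext}^1_{G, \psi}(\mathscr{O}[M]^*, M_{\mathrm{dR}}\otimes_L\mathrm{LL}(M))$, donc est $L$-linéaire. On le traite en deux temps: injectivité, puis majoration $\dim_L\mathrm{Ext}^1_{G, \psi}(\mathscr{O}[M]^*, \mathrm{LL}(M))\leq 2$; réunis, ces deux points fournissent l'isomorphisme et l'égalité $\dim_L\mathrm{Ext}^1_{G, \psi}(\mathscr{O}[M]^*, \mathrm{LL}(M))=2$.

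Pour l'injectivité, on remarque que toute droite de $M_{\mathrm{dR}}^*$ s'écrit de façon unique $\mathscr{L}^\perp=(M_{\mathrm{dR}}/\mathscr{L})^*$ pour une droite $\mathscr{L}\subseteq M_{\mathrm{dR}}$, l'application $\mathscr{L}\mapsto\mathscr{L}^\perp$ étant une bijection $\mathbf{P}(M_{\mathrm{dR}})\xrightarrow{\sim}\mathbf{P}(M_{\mathrm{dR}}^*)$. D'après le rappel précédant l'énoncé, le morphisme envoie la droite $\mathscr{L}^\perp$ sur $L[\Pi_{M, \mathscr{L}}^{\mathrm{an}}]$, qui est non nulle puisque la suite exacte $0\to\mathscr{L}\otimes_L\mathrm{LL}(M)\to\Pi_{M, \mathscr{L}}^{\mathrm{an}}\to\mathscr{O}[M]^*\to0$ est non scindée (\autoref{2.2.6}). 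Le morphisme est donc non nul en restriction à chaque droite de $M_{\mathrm{dR}}^*$, de sorte que son noyau, qui est un sous-espace ne contenant aucune droite, est nul.

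Pour la majoration, on applique $\mathrm{Ext}^\bullet_{G, \psi}(-, \mathrm{LL}(M))$ à la suite exacte $0\to\mathscr{L}\otimes_L\mathrm{LL}(M)\to\Pi_{M, \mathscr{L}}^{\mathrm{an}}\to\mathscr{O}[M]^*\to0$. Comme $\mathscr{O}[M]^*$ est topologiquement irréductible mais non lisse, tout $G$-morphisme $\mathscr{O}[M]^*\to\mathrm{LL}(M)$ se factorise par le quotient lisse maximal de $\mathscr{O}[M]^*$, qui est nul; un argument analogue, combiné à la non-scindabilité de \autoref{2.2.6}, donne $\mathrm{Hom}_G(\Pi_{M, \mathscr{L}}^{\mathrm{an}}, \mathrm{LL}(M))=0$. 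Comme $\mathrm{End}_G(\mathrm{LL}(M))=L$ (lemme de Schur), la suite exacte longue débute par
$$0\to L\to\mathrm{Ext}^1_{G, \psi}(\mathscr{O}[M]^*, \mathrm{LL}(M))\to\mathrm{Ext}^1_{G, \psi}(\Pi_{M, \mathscr{L}}^{\mathrm{an}}, \mathrm{LL}(M)),$$
et il reste à montrer $\dim_L\mathrm{Ext}^1_{G, \psi}(\Pi_{M, \mathscr{L}}^{\mathrm{an}}, \mathrm{LL}(M))\leq 1$. Pour cela, on utilise la description explicite de la représentation localement analytique $\Pi_{M, \mathscr{L}}^{\mathrm{an}}$ (Colmez, Dospinescu-Le Bras), reposant sur la correspondance de Langlands $p$-adique et la théorie des $(\varphi, \Gamma)$-modules, ainsi que l'analyse des vecteurs lisses d'une extension de $\Pi_{M, \mathscr{L}}^{\mathrm{an}}$ par $\mathrm{LL}(M)=\Pi_{M, \mathscr{L}}^{\mathrm{lisse}}$; une variante passe par une présentation de $\mathscr{O}[M]^*$ et $\Omega^1[M]^*$ par des représentations localement analytiques convenables et l'adjonction correspondante, la supercuspidalité de $\mathrm{LL}(M)$ faisant disparaître les termes issus de son module de Jacquet.

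Le principal obstacle est entièrement concentré dans cette majoration: il faut comprendre de manière effective la structure de représentation localement analytique de $\mathscr{O}[M]^*$ et de $\Pi_{M, \mathscr{L}}^{\mathrm{an}}$, c'est-à-dire leurs constituants de Jordan-Hölder et les adjonctions pertinentes, car un dévissage naïf ne produit que des bornes supérieures a priori trop grandes; faire coïncider la borne obtenue avec la borne inférieure $2$ fournie par l'injectivité nécessite l'apport du foncteur de Colmez et des $(\varphi, \Gamma)$-modules (ou, de façon équivalente, une description géométrique fine de $\Pi_{M, \mathscr{L}}^{\mathrm{an}}$ via les revêtements $\Sigma_n$).
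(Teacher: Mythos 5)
Sur ce point précis, l'article ne donne aucune démonstration: le Théorème \ref{2.3.3} y est importé tel quel du travail de Ding \cite{ding2022locally} (c'était une conjecture de Breuil \cite{breuil2019ext}, redémontrée ensuite par Su \cite{su2025on} via le wall-crossing). Votre plan n'en constitue pas non plus une démonstration. La partie « injectivité » est correcte et peut se rédiger avec les seuls ingrédients du texte: toute droite de $M_{\mathrm{dR}}^*$ est un $\mathscr{L}^\perp$, son image est engendrée par la classe de $\Pi_{M,\mathscr{L}}^{\mathrm{an}}$ (Théorème \ref{2.1.2}), laquelle est non nulle par la non-scindabilité (\autoref{2.2.6}, qui repose sur \autoref{2.2.4} et non sur le théorème de Ding, donc sans circularité); de même votre réduction par la suite exacte longue est licite, y compris l'annulation de $\mathrm{Hom}_G(\Pi_{M,\mathscr{L}}^{\mathrm{an}},\mathrm{LL}(M))$ que l'on peut obtenir directement à partir de \autoref{2.2.6} et du \autoref{2.3.1} (ii), sans passer par le \autoref{2.3.4} (ii) du texte (dont la preuve utilise déjà le Théorème \ref{2.3.3}).

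En revanche, toute la difficulté est concentrée dans la majoration $\dim_L\mathrm{Ext}^1_{G,\psi}(\Pi_{M,\mathscr{L}}^{\mathrm{an}},\mathrm{LL}(M))\leq 1$, et c'est précisément là que votre proposition s'arrête: vous renvoyez à « la description explicite » de $\Pi_{M,\mathscr{L}}^{\mathrm{an}}$, aux $(\varphi,\Gamma)$-modules ou à une « présentation » de $\mathscr{O}[M]^*$, sans donner d'argument. Vous ne pouvez pas invoquer le \autoref{2.3.5} (ii) de l'article, puisque celui-ci est déduit du Théorème \ref{2.3.3}: ce serait circulaire. Un dévissage naïf par les constituants de $\Pi_{M,\mathscr{L}}^{\mathrm{an}}$ ne donne pas cette borne (il faut contrôler $\mathrm{Ext}^1_{G,\psi}(\mathscr{O}[M]^*,\mathrm{LL}(M))$ lui-même, c'est-à-dire l'énoncé à démontrer), et les preuves existantes exigent un appareil substantiel — calculs d'extensions localement analytiques à la Ding, ou foncteurs de wall-crossing à la Su — que votre plan nomme mais n'exécute pas. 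Il y a donc une lacune réelle au cœur de l'argument: telle quelle, votre proposition établit l'injectivité et ramène la surjectivité à une borne de dimension qu'elle ne prouve pas.
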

\begin{remark} Le Théorème \ref{2.3.3} implique que toute extension de $\mathrm{LL}(M)$ par $\mathscr{O}[M]^*$ est isomorphe à $\Pi_{M, \mathscr{L}}^{\mathrm{an}}$ pour une certaine $\mathscr{L}$ en prenant le push-out de
	$$\xymatrix@R=5mm@C=4mm{
		0\ar[r]&M_{\mathrm{dR}}\otimes_L\mathrm{LL}(M)\ar[r] \ar[d] &\Omega^1[M]^*\ar[r]&\mathscr{O}[M]^*\ar[r]&0.\\&
		(M_{\mathrm{dR}}/\mathscr{L})\otimes_L\mathrm{LL}(M)&&
	}$$
	Autrement dit, $\Omega^1[M]^*$ est l'extension universelle de $M_{\mathrm{dR}}\otimes_L\mathrm{LL}(M)$ par $\mathscr{O}[M]^*$.
\end{remark}
\begin{remark} On remarque que Su \cite{su2025on} a également montré le Théorème \ref{2.3.3} en utilisant le foncteur de wall-crossing. De plus, ce foncteur lui permet aussi montrer que $\dim_L\mathrm{Ext}^1_{G, \psi}(\mathrm{LL}(M), \mathscr{O}[M]^*)=2$.
\end{remark}
\subsubsection{Autres entrelacements}
On note 
$$a^+=\begin{psmallmatrix}
	1&0\\
	0&0
\end{psmallmatrix},\quad a^-=\begin{psmallmatrix}
	0&0\\
	0&1
\end{psmallmatrix},\quad u^+=\begin{psmallmatrix}
	0&1\\
	0&0
\end{psmallmatrix},\quad u^-=\begin{psmallmatrix}
	0&0\\
	1&0
\end{psmallmatrix}$$
une base de $\mathfrak{g}=\mathrm{Lie}(G)$, alors on a le lemme suivant
\begin{lemma}\label{2.3.1} On a 
	
	(i) $\mathrm{Hom}_G(\mathrm{LL}(M), \mathscr{O}[M]^*)=0$.
	
	(ii) $\mathrm{Hom}_G(\mathscr{O}[M]^*, \mathrm{LL}(M))=0$.
	
	(iii) $\mathrm{Hom}_G(\mathscr{O}[M]^*, \Pi_{M, \mathscr{L}}^{\mathrm{an}})=0$.
	
	(iv) $\mathrm{Hom}_G(\mathscr{O}[M]^*, \Omega^1[M]^*)=0$.
	
	(v) $\mathrm{Hom}_G(\Omega^1[M]^*, \mathscr{O}[M]^*)=L$.
\end{lemma}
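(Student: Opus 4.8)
The five statements fall into the four vanishing assertions (i)--(iv) and the identification (v), and the plan is to reduce all of them to three ingredients: $\mathscr{O}[M]^*$ is topologically \emph{absolutely} irreducible, hence $\mathrm{End}_G(\mathscr{O}[M]^*)=L$ by \autoref{1.1.7}, and it is \emph{not} a smooth representation (this last point is the crux, discussed in the final paragraph); $\mathrm{LL}(M)$ is smooth and irreducible; and we have the exact sequences $0\to M_{\mathrm{dR}}\otimes_L\mathrm{LL}(M)\to\Omega^1[M]^*\to\mathscr{O}[M]^*\to0$ (top row of diagram~\eqref{equation4}), $0\to\mathscr{L}\otimes_L\mathrm{LL}(M)\to\Omega^1[M]^*\to\Pi_{M,\mathscr{L}}^{\mathrm{an}}\to0$ of \autoref{2.1.3}, and $0\to\mathscr{L}\otimes_L\mathrm{LL}(M)\to\Pi_{M,\mathscr{L}}^{\mathrm{an}}\to\mathscr{O}[M]^*\to0$, which is non-split by \autoref{2.2.6}.

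For (ii): a nonzero $G$-morphism $\mathscr{O}[M]^*\to\mathrm{LL}(M)$ has closed kernel, hence is injective by topological irreducibility, which would realize $\mathscr{O}[M]^*$ as a $G$-subspace of a smooth representation; since subspaces of smooth representations are smooth, this contradicts non-smoothness. (One can also avoid invoking non-smoothness here: by reflexivity $\mathrm{Hom}_G(\mathscr{O}[M]^*,\mathrm{LL}(M))\cong\mathrm{Hom}_G(\mathrm{LL}(M)^{\vee},\mathscr{O}[M])$, and any such map kills the dense subspace of smooth vectors of the full dual $\mathrm{LL}(M)^{\vee}$ because $\mathscr{O}[M]$ has no nonzero smooth vectors --- this last fact being exactly the argument already used for $\mathscr{O}[M]^b=0$: a $G$-smooth element of $\mathscr{O}[M]$ takes values that are locally constant, hence constant on the connected components of $\Sigma_n$, on which $\breve{G}$ acts through the reduced norm, so that $\mathrm{Hom}_{\breve{G}}(\mathrm{JL}(M),-)$ annihilates it.) For (i): any $G$-morphism $\mathrm{LL}(M)\to\mathscr{O}[M]^*$ has image in the space $(\mathscr{O}[M]^*)^{\mathrm{lisse}}$ of smooth vectors, so it suffices to show $(\mathscr{O}[M]^*)^{\mathrm{lisse}}=0$; its closure is $0$ or $\mathscr{O}[M]^*$ by irreducibility, and in the second case continuity of the $\mathfrak{g}$-action (which is zero on smooth vectors) would force $\mathfrak{g}$ to act trivially on $\mathscr{O}[M]^*$, i.e. make it smooth, again a contradiction; hence $(\mathscr{O}[M]^*)^{\mathrm{lisse}}=0$.

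The remaining three are then formal. For (iii): composing $\phi\colon\mathscr{O}[M]^*\to\Pi_{M,\mathscr{L}}^{\mathrm{an}}$ with the projection onto $\mathscr{O}[M]^*$ gives a scalar $\lambda\in\mathrm{End}_G(\mathscr{O}[M]^*)=L$; if $\lambda\neq0$ then $\lambda^{-1}\phi$ is a section of the sequence of \autoref{2.2.6}, contradicting its non-splitting, while if $\lambda=0$ then $\phi$ factors through $\mathscr{L}\otimes_L\mathrm{LL}(M)\cong\mathrm{LL}(M)$ and vanishes by (ii). For (iv): composing a morphism $\mathscr{O}[M]^*\to\Omega^1[M]^*$ with the surjection $\Omega^1[M]^*\twoheadrightarrow\Pi_{M,\mathscr{L}}^{\mathrm{an}}$ of \autoref{2.1.3} gives an element of $\mathrm{Hom}_G(\mathscr{O}[M]^*,\Pi_{M,\mathscr{L}}^{\mathrm{an}})=0$ by (iii), so the image of the original morphism lies in the kernel $\mathscr{L}\otimes_L\mathrm{LL}(M)\cong\mathrm{LL}(M)$ and vanishes by (ii). For (v): restricting a morphism $\Omega^1[M]^*\to\mathscr{O}[M]^*$ to $M_{\mathrm{dR}}\otimes_L\mathrm{LL}(M)\cong\mathrm{LL}(M)^{\oplus2}$ gives an element of $\mathrm{Hom}_L(M_{\mathrm{dR}},\mathrm{Hom}_G(\mathrm{LL}(M),\mathscr{O}[M]^*))$, which is $0$ by (i); hence the morphism factors through $\Omega^1[M]^*/(M_{\mathrm{dR}}\otimes_L\mathrm{LL}(M))=\mathscr{O}[M]^*$ and so comes from $\mathrm{End}_G(\mathscr{O}[M]^*)=L$. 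Since the quotient map $\Omega^1[M]^*\twoheadrightarrow\mathscr{O}[M]^*$ is itself a nonzero such morphism, $\mathrm{Hom}_G(\Omega^1[M]^*,\mathscr{O}[M]^*)=L$.

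The one genuinely non-formal point --- and the main obstacle --- is the fact used in (i) and (ii) that $\mathscr{O}[M]^*$ is not smooth, equivalently that its smooth vectors vanish. I would justify this via Colmez's study of $\mathscr{O}[M]^*$ in \cite{colmez2019correspondance} (where its topological irreducibility is established and from whose description it is genuinely locally analytic); alternatively one can argue that a nonzero embedding $\mathrm{LL}(M)\hookrightarrow\mathscr{O}[M]^*$, pulled back along the non-split sequence of \autoref{2.1.3}, would produce an extension of $\mathrm{LL}(M)$ by $\mathscr{L}\otimes_L\mathrm{LL}(M)$ inside $\Pi_{M,\mathscr{L}}^{\mathrm{an}}$, which splits since $\mathrm{Ext}^1_{G,\psi}(\mathrm{LL}(M),\mathrm{LL}(M))=0$ in the locally analytic category (supercuspidality of $\mathrm{LL}(M)$ together with $H^1(\mathfrak{sl}_2,L)=0$), so that $\Pi_{M,\mathscr{L}}^{\mathrm{an}}$ would contain a copy of $\mathrm{LL}(M)^{\oplus2}$ among its smooth vectors, contradicting $(\Pi_{M,\mathscr{L}}^{\mathrm{an}})^{\mathrm{lisse}}=\Pi_{M,\mathscr{L}}^{\mathrm{lisse}}=\mathrm{LL}(M)$. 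Either way, once $\mathscr{O}[M]^*$ is known to be non-smooth, the rest is the bookkeeping above with Schur's lemma and the three exact sequences.
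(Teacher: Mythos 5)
Your (iv) and (v) are the paper's arguments verbatim, and your (iii) is a correct variant: where the paper kills a map $\mathscr{O}[M]^*\to\Pi_{M,\mathscr{L}}^{\mathrm{an}}$ by composing with $\Pi_{M,\mathscr{L}}^{\mathrm{an}}\hookrightarrow\Pi_{M,\mathscr{L}}$ and invoking $\widehat{\mathscr{O}[M]^*}=0$ (\autoref{2.2.4}), you use $\mathrm{End}_G(\mathscr{O}[M]^*)=L$ together with the non-splitting of the suite de \autoref{2.2.6}; both routes work, and yours is legitimate since \autoref{2.2.4} and \autoref{2.2.6} precede the lemma.

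The genuine gap is exactly at the point you flag as the crux: the non-smoothness of $\mathscr{O}[M]^*$, equivalently $\mathscr{O}[M]^{*,\mathrm{lisse}}=0$, on which your (i) and (ii) rest. The paper does not import this from the literature: for (i) it uses the relation $a^+=\partial u^+-1$ on $\mathscr{O}[M]^*$ from \cite[Lemme 3.5]{dl2017revetements}, so a vector killed by both $a^+$ and $u^+$ is zero; and for (ii) it again composes with $\mathrm{LL}(M)\hookrightarrow\widehat{\mathrm{LL}(M)}$ and uses $\widehat{\mathscr{O}[M]^*}=0$, with no irreducibility or smoothness input at all. Your first justification (an appeal to \cite{colmez2019correspondance}, ``from whose description it is genuinely locally analytic'') is not an argument. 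Your Ext-based alternative is a real argument, but notice what it proves: pulling back along the sequence of \autoref{2.2.6} (not \autoref{2.1.3}, which has $\Omega^1[M]^*$ in the middle) and using $\mathrm{Ext}^1_{G,\psi}(\mathrm{LL}(M),\mathrm{LL}(M))=0$ and $(\Pi_{M,\mathscr{L}}^{\mathrm{an}})^{\mathrm{lisse}}=\mathrm{LL}(M)$, it shows that $\mathrm{LL}(M)$ does not embed into $\mathscr{O}[M]^*$, i.e.\ statement (i) directly; it does not yield the statement your (ii) actually invokes, namely that $\mathscr{O}[M]^*$ is not smooth, because a nonzero smooth representation need not contain an irreducible subrepresentation. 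So under that alternative, (ii) as written is unproved (it can be repaired: a nonzero $\phi:\mathscr{O}[M]^*\to\mathrm{LL}(M)$ is injective by topological irreducibility, its image is a $G$-stable subspace of the algebraically irreducible $\mathrm{LL}(M)$, hence everything, and the inverse is continuous since $\mathrm{LL}(M)$ carries the finest locally convex topology, contradicting (i) — but this needs to be said, or one should simply use \autoref{2.2.4} as the paper does). Finally, your parenthetical claim that ``$\mathscr{O}[M]$ has no nonzero smooth vectors'' is ``exactly the argument already used for $\mathscr{O}[M]^b=0$'' is inaccurate: that lemma is about $G$-bounded vectors and rests on bounded functions being constant on connected components; smoothness of $f$ does not by itself make $f(v)$ locally constant — you would need the explicit $\mathfrak{g}$-action on functions (e.g.\ $u^+$ acting essentially as $\partial_z$), which is precisely the Dospinescu--Le Bras input the paper's proof of (i) uses.
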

\begin{proof}[Preuve] (i) Comme $\mathrm{LL}(M)$ est lisse, il suffit de montrer que $\mathscr{O}[M]^{*, \mathrm{lisse}}=0$. Sur $\mathscr{O}[M]^*$, on a $a^+=\partial u^+-1$ par le \cite[Lemme 3.5]{dl2017revetements}, donc les actions de $a^+$ et $u^+$ ne peuvent pas être nulles simultanément, ce qui permet de conclure.
	
	(ii) Soit $0\neq\phi\in\mathrm{Hom}_G(\mathscr{O}[M]^*, \mathrm{LL}(M))$. La composée de $\phi$ et de l'injection naturelle $\mathrm{LL}(M)\hookrightarrow\widehat{\mathrm{LL}(M)}$ est non nul, mais cette composée se factorise par le morphisme $\mathscr{O}[M]^*\to\widehat{\mathscr{O}[M]^*}$ d'après la propriété universelle de $\widehat{\mathscr{O}[M]^*}$, qui est nul en vertu du Lemme \ref{2.2.4}, donc on a une contradiction.  
	
	(iii) Soit $0\neq\phi\in\mathrm{Hom}_G(\mathscr{O}[M]^*, \Pi_{M, \mathscr{L}}^{\mathrm{an}})$. La composée de $\phi$ et de l'injection naturelle $\Pi_{M, \mathscr{L}}^{\mathrm{an}}\hookrightarrow\Pi_{M, \mathscr{L}}$ est non nul, mais cette composée se factorise par le morphisme $\mathscr{O}[M]^*\to\widehat{\mathscr{O}[M]^*}$ d'après la propriété universelle de $\widehat{\mathscr{O}[M]^*}$, qui est nul en vertu du Lemme \ref{2.2.4}, donc on a une contradiction.  
	
	(iv) Soit $0\neq \phi\in\mathrm{Hom}_G(\mathscr{O}[M]^*, \Omega^1[M]^*)$. La composée de $\phi$ et de la surjection naturelle $\Omega^1[M]^*\twoheadrightarrow\Pi_{M, \mathscr{L}}^{\mathrm{an}}$ du Corollaire \ref{2.1.3} est nulle en vertu de (iii), ce qui induit un morphisme non nul $\mathscr{O}[M]^*\to\mathscr{L}\otimes_L\mathrm{LL}(M)$. Or, cela contredit l'assertion (ii).
	
	(v) En utilisant le foncteur $\mathrm{Hom}_G(-, \mathscr{O}[M]^*)$, on a une suite exacte 
	$$0\to\mathrm{Hom}_G(\mathscr{O}[M]^*, \mathscr{O}[M]^*)\to\mathrm{Hom}_G(\Omega^1[M]^*, \mathscr{O}[M]^*)\to\mathrm{Hom}_G(M_{\mathrm{dR}}\otimes_L\mathrm{LL}(M), \mathscr{O}[M]^*).$$
	Le résultat découle de (i) et du Théorème \ref{1.1.7}. Cela permet de conclure.
\end{proof}
\begin{lemma}\label{2.3.2} On a $$\mathrm{Ext}^1_{G, \psi}(\mathrm{LL}(M), \mathrm{LL}(M))=0.$$
\end{lemma}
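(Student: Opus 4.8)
The plan is to show that there are no non-split \emph{locally analytic} self-extensions of $\mathrm{LL}(M)$ with central character $\psi$ by reducing to the corresponding statement for \emph{smooth} representations, where it is classical.

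First I would take such an extension $0\to\mathrm{LL}(M)\to E\to\mathrm{LL}(M)\to0$, write $E'$ for the subobject and $\bar E$ for the quotient (both identified with $\mathrm{LL}(M)$), and use that $E$, being locally analytic, carries an action of $\mathfrak g=\mathfrak{gl}_2$. Since $\mathrm{LL}(M)$ is smooth, $\mathfrak g$ annihilates $E'$ and $\bar E$; hence for $X\in\mathfrak g$ the operator $X\colon E\to E$ takes values in $E'$ and factors through $\bar E$, giving a continuous $L$-linear map $\phi\colon\mathfrak g\otimes_L\mathrm{LL}(M)\to\mathrm{LL}(M)$. The relation $g\cdot(Xe)=(\mathrm{Ad}(g)X)\cdot(ge)$ shows that $\phi$ is $G$-equivariant for the adjoint action on $\mathfrak g$, and since $\psi$ is smooth one has $\mathrm{d}\psi=0$, so $\phi$ kills $\mathfrak z\otimes_L\mathrm{LL}(M)$ and descends to a $G$-equivariant map $(\mathfrak g/\mathfrak z)\otimes_L\mathrm{LL}(M)\to\mathrm{LL}(M)$.

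The heart of the argument is that this map vanishes. The representation $\mathfrak g/\mathfrak z\cong\mathfrak{sl}_2\cong\mathrm{Sym}^2\otimes\det^{-1}$ is an irreducible non-trivial algebraic representation of $G$; hence $(\mathfrak g/\mathfrak z)\otimes_L\mathrm{LL}(M)$ is locally algebraic, and on it the Casimir element of $U(\mathfrak{sl}_2)$ acts through the algebraic factor (because $\mathfrak g$ already acts through that factor, $\mathrm{LL}(M)$ being smooth), hence by a non-zero scalar, whereas it acts by $0$ on $\mathrm{LL}(M)$. Since any continuous $G$-morphism of locally analytic representations is automatically $U(\mathfrak g)$-linear, the map must be zero. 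Therefore $\mathfrak g$ acts by $0$ on $E$, and a locally analytic representation with trivial $\mathfrak g$-action is smooth, so $E$ is smooth; conversely a smooth self-extension that splits as locally analytic representations already splits as smooth ones. This identifies $\mathrm{Ext}^1_{G,\psi}(\mathrm{LL}(M),\mathrm{LL}(M))$ with the same $\mathrm{Ext}^1$ computed in the category of smooth $L$-representations of $G$ with central character $\psi$, and the latter vanishes because the supercuspidal representation $\mathrm{LL}(M)$ is compact modulo the centre, hence a projective object of that category, so it has no non-split self-extension.

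The step I expect to be the main obstacle is the vanishing in the third paragraph: one must check carefully that $\mathfrak g/\mathfrak z$ is non-trivial and irreducible with infinitesimal character distinct from the trivial one, that $U(\mathfrak g)$ acts on the locally algebraic tensor product only through the algebraic factor, and that continuous $G$-morphisms between locally analytic representations are $\mathfrak g$-equivariant. An alternative route, avoiding this reduction, would be to write $\mathrm{LL}(M)=\mathrm{ind}_{KZ}^G\sigma_M$ (up to a direct summand) and use a Shapiro-type isomorphism $\mathrm{Ext}^\bullet_G(\mathrm{LL}(M),\mathrm{LL}(M))\cong\mathrm{Ext}^\bullet_{KZ}(\sigma_M,\mathrm{LL}(M)|_{KZ})$, but making the locally analytic homological algebra behind such an isomorphism precise is itself delicate.
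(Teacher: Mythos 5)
Your proof is correct, and its overall skeleton is the same as the paper's: show that any locally analytic self-extension of $\mathrm{LL}(M)$ with central character $\psi$ is in fact smooth, then conclude by the classical fact that a supercuspidal representation is projective (and injective) in the category of smooth representations with fixed central character. The only genuine difference is the device used to kill the $\mathfrak g$-action on the extension $E$. The paper argues more economically: since $\mathfrak{sl}_2$ annihilates both the sub and the quotient, the composite of any two elements of $\mathfrak{sl}_2$ acts by zero on $E$, hence so does $[\mathfrak{sl}_2,\mathfrak{sl}_2]=\mathfrak{sl}_2$, and smoothness of $E$ follows at once (the centre being handled by the smoothness of $\psi$, as you also note). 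You instead package the $\mathfrak g$-action into a $G$-equivariant map $(\mathfrak g/\mathfrak z)\otimes_L\mathrm{LL}(M)\to\mathrm{LL}(M)$ and kill it by comparing Casimir eigenvalues on the locally algebraic source (non-zero, through the adjoint factor $\mathrm{Sym}^2\otimes\det^{-1}$) and the smooth target (zero), using that continuous $G$-maps of locally analytic representations are $U(\mathfrak g)$-linear. This works and is a nice illustration of infinitesimal-character separation, but it is heavier than needed here: the commutator identity $[\mathfrak{sl}_2,\mathfrak{sl}_2]=\mathfrak{sl}_2$ makes the step you flag as the main obstacle essentially trivial, and it avoids any discussion of Casimir eigenvalues or local algebraicity. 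Your alternative Shapiro-type route is indeed the one to avoid, for the reason you give.
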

\begin{proof}[Preuve] Supposons que l'on a une suite exacte des représentations localement analytiques
	$$0\to\mathrm{LL}(M)\to \pi\to\mathrm{LL}(M)\to0.$$
	Comme $\mathrm{LL}(M)$ est lisse, l'action de $\mathfrak{sl}_2$ sur $\mathrm{LL}(M)$ est nulle, ce qui implique que $\pi$ est tué par $[\mathfrak{sl}_2, \mathfrak{sl}_2]=\mathfrak{sl}_2$. On en déduit que $\pi$ est lisse. Donc on peut calculer $\mathrm{Ext}^1_{G, \psi}(\mathrm{LL}(M), \mathrm{LL}(M))$ dans la catégorie des représentations lisses à caractère central fixé. Il est classique que $\mathrm{LL}(M)$ est à la fois injective et projective dans cette catégorie \cite{v1996rep}, ce qui permet de conclure.
\end{proof}
\begin{lemma}\label{2.3.4} On a 
	
	(i) $\mathrm{Hom}_G(\Pi_{M, \mathscr{L}}^{\mathrm{an}}, \Pi_{M, \mathscr{L}'}^{\mathrm{an}})=0$ pour $\mathscr{L}\neq\mathscr{L}'$.
	
	(ii) $\mathrm{Hom}_G(\Omega^1[M]^*, \mathrm{LL}(M))=0$.
	
	(iii) $\mathrm{Hom}_G(\mathrm{LL}(M), \Omega^1[M]^*)=M_{\mathrm{dR}}$.
	
	(iv) $\mathrm{Hom}_G(\Pi_{M, \mathscr{L}}^{\mathrm{an}}, \mathrm{LL}(M))=0$.
	
	(v) $\mathrm{Hom}_G(\Pi_{M, \mathscr{L}, j}^{\mathrm{an}}, \Pi_{M, \mathscr{L}, n}^{\mathrm{an}})=L[T]/T^{\min(n, j)}$.
\end{lemma}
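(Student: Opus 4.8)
The plan is to obtain items (ii)--(iv) by feeding the two short exact sequences of locally analytic representations from \autoref{2.1.2} and diagram (\ref{equation4}),
$$0\to M_{\mathrm{dR}}\otimes_L\mathrm{LL}(M)\to\Omega^1[M]^*\to\mathscr{O}[M]^*\to0,\qquad 0\to(M_{\mathrm{dR}}/\mathscr{L})\otimes_L\mathrm{LL}(M)\to\Pi_{M,\mathscr{L}}^{\mathrm{an}}\to\mathscr{O}[M]^*\to0,$$
into the functors $\mathrm{Hom}_G(\mathrm{LL}(M),-)$ and $\mathrm{Hom}_G(-,\mathrm{LL}(M))$, and to obtain (i) and (v) by passing to universal unitary completions. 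Throughout I use the vanishing statements of \autoref{2.3.1}, the identity $\mathrm{End}_G(\mathrm{LL}(M))=L$ (any $G$-endomorphism of the smooth absolutely irreducible representation $\mathrm{LL}(M)$ is scalar; cf.\ also \autoref{1.1.7}), and Ding's isomorphism $M_{\mathrm{dR}}^*\xrightarrow{\sim}\mathrm{Ext}^1_{G,\psi}(\mathscr{O}[M]^*,\mathrm{LL}(M))$ from \autoref{2.3.3}.

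For (iii): applying $\mathrm{Hom}_G(\mathrm{LL}(M),-)$ to the first sequence and using $\mathrm{Hom}_G(\mathrm{LL}(M),\mathscr{O}[M]^*)=0$ (\autoref{2.3.1} (i)) gives $\mathrm{Hom}_G(\mathrm{LL}(M),\Omega^1[M]^*)=\mathrm{Hom}_G(\mathrm{LL}(M),M_{\mathrm{dR}}\otimes_L\mathrm{LL}(M))=M_{\mathrm{dR}}\otimes_L\mathrm{End}_G(\mathrm{LL}(M))=M_{\mathrm{dR}}$. For (ii): applying $\mathrm{Hom}_G(-,\mathrm{LL}(M))$ to the first sequence and using $\mathrm{Hom}_G(\mathscr{O}[M]^*,\mathrm{LL}(M))=0$ (\autoref{2.3.1} (ii)), the restriction map $\mathrm{Hom}_G(\Omega^1[M]^*,\mathrm{LL}(M))\hookrightarrow\mathrm{Hom}_G(M_{\mathrm{dR}}\otimes_L\mathrm{LL}(M),\mathrm{LL}(M))=M_{\mathrm{dR}}^*$ is injective with image $\ker\partial$, where $\partial\colon M_{\mathrm{dR}}^*\to\mathrm{Ext}^1_{G,\psi}(\mathscr{O}[M]^*,\mathrm{LL}(M))$, $\ell\mapsto\ell_*[\Omega^1[M]^*]$, is the connecting homomorphism; since $\partial$ is exactly Ding's isomorphism, $\ker\partial=0$, so $\mathrm{Hom}_G(\Omega^1[M]^*,\mathrm{LL}(M))=0$. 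For (iv): similarly, $\mathrm{Hom}_G(-,\mathrm{LL}(M))$ applied to the second sequence yields an injection $\mathrm{Hom}_G(\Pi_{M,\mathscr{L}}^{\mathrm{an}},\mathrm{LL}(M))\hookrightarrow(M_{\mathrm{dR}}/\mathscr{L})^*=\mathscr{L}^\perp$ with image $\ker\partial'$, $\partial'(\ell')=\ell'_*[\Pi_{M,\mathscr{L}}^{\mathrm{an}}]$; because $[\Pi_{M,\mathscr{L}}^{\mathrm{an}}]$ is the pushforward of $[\Omega^1[M]^*]$ along $M_{\mathrm{dR}}\otimes_L\mathrm{LL}(M)\twoheadrightarrow(M_{\mathrm{dR}}/\mathscr{L})\otimes_L\mathrm{LL}(M)$ (diagram (\ref{equation4})), $\partial'$ is the restriction of Ding's isomorphism to the line $\mathscr{L}^\perp$, hence injective, and $\mathrm{Hom}_G(\Pi_{M,\mathscr{L}}^{\mathrm{an}},\mathrm{LL}(M))=0$.

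For (i) and (v), I exploit that the universal unitary completion of $\Pi_{M,\mathscr{L},j}^{\mathrm{an}}$ is $\Pi_{M,\mathscr{L},j}$: indeed $\Pi_{M,\mathscr{L},j}$ is unitary, admissible, of finite length (hence residually of finite length) and has central character $\psi$, so \autoref{1.3.4} applies. A morphism $\phi\colon\Pi_{M,\mathscr{L}}^{\mathrm{an}}\to\Pi_{M,\mathscr{L}'}^{\mathrm{an}}$ (resp.\ $\Pi_{M,\mathscr{L},j}^{\mathrm{an}}\to\Pi_{M,\mathscr{L},n}^{\mathrm{an}}$), composed with the continuous injection into the ambient Banach space, is a continuous $G$-map into a unitary Banach representation, hence factors uniquely through $\Pi_{M,\mathscr{L}}$ (resp.\ $\Pi_{M,\mathscr{L},j}$); conversely a continuous $G$-map of admissible Banach representations restricts to a continuous $G$-map on locally analytic vectors, and since $\Pi^{\mathrm{an}}\hookrightarrow\Pi$ is injective these two operations are mutually inverse. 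Therefore
$$\mathrm{Hom}_G(\Pi_{M,\mathscr{L}}^{\mathrm{an}},\Pi_{M,\mathscr{L}'}^{\mathrm{an}})\cong\mathrm{Hom}_G(\Pi_{M,\mathscr{L}},\Pi_{M,\mathscr{L}'}),\qquad\mathrm{Hom}_G(\Pi_{M,\mathscr{L},j}^{\mathrm{an}},\Pi_{M,\mathscr{L},n}^{\mathrm{an}})\cong\mathrm{Hom}_G(\Pi_{M,\mathscr{L},j},\Pi_{M,\mathscr{L},n}).$$
The first right-hand side vanishes since $\Pi_{M,\mathscr{L}}$ and $\Pi_{M,\mathscr{L}'}$ are non-isomorphic simple objects of the abelian category $\mathrm{Ban}_{G,\psi}^{\mathrm{adm}}(L)$ (topologically irreducible, admissible, with central character $\psi$, and $\Pi_{M,\mathscr{L}}\ncong\Pi_{M,\mathscr{L}'}$); the second equals $L[T]/T^{\min(n,j)}$ by \autoref{finalbanach}.

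The step I expect to be most delicate is the identification in (ii) and (iv) of the connecting homomorphism $\partial$ (resp.\ $\partial'$) with (the restriction of) Ding's isomorphism: one must verify that pushing the class $[\Omega^1[M]^*]$ — after, in (iv), first rewriting $[\Pi_{M,\mathscr{L}}^{\mathrm{an}}]$ as a pushforward of $[\Omega^1[M]^*]$ via diagram (\ref{equation4}) — along a functional $\ell$ gives precisely the element $\ell_*[\Omega^1[M]^*]$ that \autoref{2.3.3} declares to be an isomorphism; this is formal from the naturality of pushouts of extensions but should be spelled out, together with the fact that $\Omega^1[M]^*$ is the universal extension. A lesser point is to record that $\Pi_{M,\mathscr{L},j}$ meets the hypotheses of \autoref{1.3.4} and that the passage to locally analytic vectors is functorial and compatible with $\Pi^{\mathrm{an}}\hookrightarrow\Pi$, which legitimizes the displayed isomorphisms used for (i) and (v).
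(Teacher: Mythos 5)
Your proposal is correct and takes essentially the same route as the paper: (iii) and (ii) via the long exact sequences from diagram (\ref{equation4}) together with \autoref{2.3.1} and the identification of the connecting map with Ding's isomorphism (\autoref{2.3.3}), and (i), (v) via the functorial passage between $\Pi^{\mathrm{an}}$ and its universal unitary completion $\Pi$ (Théorème \ref{1.3.4}) followed by \autoref{2.1.1} and \autoref{finalbanach}. The only cosmetic difference is in (iv): you apply $\mathrm{Hom}_G(-,\mathrm{LL}(M))$ directly to the second row of (\ref{equation4}) and observe that the connecting map is the restriction of Ding's isomorphism to $\mathscr{L}^\perp$, whereas the paper deduces (iv) from (ii) using the sequence of \autoref{2.1.3}; both arguments rest on exactly the same inputs.
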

\begin{proof}[Preuve] (i) Tout d'abord, la représentation $\Pi_{M, \mathscr{L}}^{\mathrm{an}}$ est localement analytique, donc l'image d'un morphisme $\Pi_{M, \mathscr{L}}^{\mathrm{an}}\to\Pi_{M, \mathscr{L}'}$ est contenu dans $\Pi_{M, \mathscr{L}'}^{\mathrm{an}}$. Cela implique que 
	$$\mathrm{Hom}_G(\Pi_{M, \mathscr{L}}^{\mathrm{an}}, \Pi_{M, \mathscr{L}'}^{\mathrm{an}})=\mathrm{Hom}_G(\Pi_{M, \mathscr{L}}^{\mathrm{an}}, \Pi_{M, \mathscr{L}'}).$$
	Ensuite, en utilisant la propriété universelle du complété universel et le fait que $\widehat{\Pi_{M, \mathscr{L}}^{\mathrm{an}}}=\Pi_{M, \mathscr{L}}$ (Théorème \ref{1.3.4}), on a
	$$\mathrm{Hom}_G(\Pi_{M, \mathscr{L}}^{\mathrm{an}}, \Pi_{M, \mathscr{L}'})=\mathrm{Hom}_G(\Pi_{M, \mathscr{L}}, \Pi_{M, \mathscr{L}'}).$$
	Enfin, la Proposition \ref{2.1.1} implique que $$\mathrm{Hom}_G(\Pi_{M, \mathscr{L}}, \Pi_{M, \mathscr{L}'})=0.$$
	Le résultat s'en déduit en combinant les isomorphismes ci-dessus.
	
	(ii) En utilisant le foncteur $\mathrm{Hom}_G(-, \mathrm{LL}(M))$ à la première ligne du diagramme (\ref{equation4}), on a une suite exacte 
	\begin{align*}0&\to\mathrm{Hom}_G(\mathscr{O}[M]^*, \mathrm{LL}(M))\to\mathrm{Hom}_G(\Omega^1[M]^*, \mathrm{LL}(M))\to\mathrm{Hom}_G(M_{\mathrm{dR}}\otimes_L\mathrm{LL}(M), \mathrm{LL}(M))\\&\xrightarrow{\delta}\mathrm{Ext}^1_{G, \psi}(\mathscr{O}[M]^*, \mathrm{LL}(M)).
	\end{align*}
	Il découle de la définition du morphisme de connection $\delta$ (par push-out) et du paragraphe précédant le Théorème \ref{2.3.3} que, le morphisme du Théorème \ref{2.3.3} coïncide avec $\delta$ $$\mathrm{Hom}_G(M_{\mathrm{dR}}\otimes_L\mathrm{LL}(M), \mathrm{LL}(M))\cong M_{\mathrm{dR}}^*\to\mathrm{Ext}^1_{G, \psi}(\mathscr{O}[M]^*, \mathrm{LL}(M)),$$
	qui sont tous définis par le même push-out. En particulier, $\delta$ est un isomorphisme. Par conséquent, d'après le Lemme \ref{2.3.1} (ii), on a $$\mathrm{Hom}_G(\Omega^1[M]^*, \mathrm{LL}(M))=\mathrm{Hom}_G(\mathscr{O}[M]^*, \mathrm{LL}(M))=0,$$
	ce que l'on voulait.
	
	(iii) En appliquant le foncteur $\mathrm{Hom}_G(\mathrm{LL}(M), -)$ à la première ligne du diagramme (\ref{equation4}), on a une suite exacte
	$$0\to\mathrm{Hom}_G(\mathrm{LL}(M), M_{\mathrm{dR}}\otimes_L\mathrm{LL}(M))\to\mathrm{Hom}_G(\mathrm{LL}(M), \Omega^1[M]^*)\to\mathrm{Hom}_G(\mathrm{LL}(M), \mathscr{O}[M]^*).$$
	Il suit du Lemme \ref{2.3.1} (i) que cette suite induit une suite exacte
	$$0\to M_{\mathrm{dR}}\to\mathrm{Hom}_G(\mathrm{LL}(M), \Omega^1[M]^*)\to0,$$
	d'où le résultat.
	
	(iv) En appliquant le foncteur $\mathrm{Hom}_G(-, \mathrm{LL}(M))$ à la suite du Corollaire \ref{2.1.3}, on obtient une suite exacte
	$$0\to\mathrm{Hom}_G(\Pi_{M, \mathscr{L}}^{\mathrm{an}}, \mathrm{LL}(M))\to\mathrm{Hom}_G(\Omega^1[M]^*, \mathrm{LL}(M))\to\mathrm{Hom}_G(\mathscr{L}\otimes_L\mathrm{LL}(M), \mathrm{LL}(M)).$$
	Alors le résultat découle de (ii). 
	
	(v) La preuve est similaire à celle de l'assertion (i). On a 
	$$\mathrm{Hom}_G(\Pi_{M, \mathscr{L}, j}^{\mathrm{an}}, \Pi_{M, \mathscr{L}, n}^{\mathrm{an}})=\mathrm{Hom}_G(\Pi_{M, \mathscr{L}, j}^{\mathrm{an}}, \Pi_{M, \mathscr{L}, n})=\mathrm{Hom}_G(\widehat{\Pi_{M, \mathscr{L}, j}^{\mathrm{an}}}, \Pi_{M, \mathscr{L}, n})=\mathrm{Hom}_G(\Pi_{M, \mathscr{L}, j}, \Pi_{M, \mathscr{L}, n}),$$
	alors le résultat suit de la Proposition \ref{finalbanach}. Cela permet de conclure.
\end{proof}
\begin{corollary}\label{2.3.5} On a 
	
	(i) $\mathrm{Ext}^1_{G, \psi}(\Omega^1[M]^*, \mathrm{LL}(M))=0$.
	
	(ii) $\mathrm{Ext}^1_{G, \psi}(\Pi_{M, \mathscr{L}}^{\mathrm{an}}, \mathrm{LL}(M))=L$.
\end{corollary}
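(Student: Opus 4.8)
The plan is to deduce both statements from the long exact $\mathrm{Ext}_{G,\psi}$-sequences attached to the two short exact sequences already at our disposal: the top row of the diagram (\ref{equation4}), namely $0\to M_{\mathrm{dR}}\otimes_L\mathrm{LL}(M)\to\Omega^1[M]^*\to\mathscr{O}[M]^*\to0$, and the sequence of \autoref{2.1.3}, namely $0\to\mathscr{L}\otimes_L\mathrm{LL}(M)\to\Omega^1[M]^*\to\Pi_{M,\mathscr{L}}^{\mathrm{an}}\to0$. All the objects occurring are admissible locally analytic representations with central character $\psi$, so applying $\mathrm{Hom}_G(-,\mathrm{LL}(M))$ produces honest long exact sequences of Yoneda $\mathrm{Ext}$-groups in this abelian category.

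For (i), I would apply $\mathrm{Hom}_G(-,\mathrm{LL}(M))$ to the first sequence and concentrate on the four-term stretch $\mathrm{Hom}_G(M_{\mathrm{dR}}\otimes_L\mathrm{LL}(M),\mathrm{LL}(M))\xrightarrow{\delta}\mathrm{Ext}^1_{G,\psi}(\mathscr{O}[M]^*,\mathrm{LL}(M))\to\mathrm{Ext}^1_{G,\psi}(\Omega^1[M]^*,\mathrm{LL}(M))\to\mathrm{Ext}^1_{G,\psi}(M_{\mathrm{dR}}\otimes_L\mathrm{LL}(M),\mathrm{LL}(M))$. Since $M_{\mathrm{dR}}$ is a finite-dimensional $L$-vector space, the right-hand term is $M_{\mathrm{dR}}^*\otimes_L\mathrm{Ext}^1_{G,\psi}(\mathrm{LL}(M),\mathrm{LL}(M))$, which vanishes by \autoref{2.3.2}; and the connecting map $\delta$ is precisely Ding's isomorphism from Theorem \ref{2.3.3}, this identification (via the common push-out description) having already been carried out in the proof of \autoref{2.3.4}~(ii). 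In particular $\delta$ is surjective, so exactness forces $\mathrm{Ext}^1_{G,\psi}(\Omega^1[M]^*,\mathrm{LL}(M))=0$.

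For (ii), I would apply $\mathrm{Hom}_G(-,\mathrm{LL}(M))$ to the sequence of \autoref{2.1.3} and extract the six-term exact sequence running from $\mathrm{Hom}_G(\Pi_{M,\mathscr{L}}^{\mathrm{an}},\mathrm{LL}(M))$ to $\mathrm{Ext}^1_{G,\psi}(\mathscr{L}\otimes_L\mathrm{LL}(M),\mathrm{LL}(M))$. Three of the flanking terms are already known to vanish: $\mathrm{Hom}_G(\Pi_{M,\mathscr{L}}^{\mathrm{an}},\mathrm{LL}(M))=0$ by \autoref{2.3.4}~(iv), $\mathrm{Hom}_G(\Omega^1[M]^*,\mathrm{LL}(M))=0$ by \autoref{2.3.4}~(ii), and $\mathrm{Ext}^1_{G,\psi}(\Omega^1[M]^*,\mathrm{LL}(M))=0$ by part (i). Because $\mathscr{L}$ is a line, $\mathrm{Hom}_G(\mathscr{L}\otimes_L\mathrm{LL}(M),\mathrm{LL}(M))\cong\mathrm{End}_G(\mathrm{LL}(M))$, which equals $L$ by the absolute irreducibility of $\mathrm{LL}(M)$ together with Theorem \ref{1.1.7}, while $\mathrm{Ext}^1_{G,\psi}(\mathscr{L}\otimes_L\mathrm{LL}(M),\mathrm{LL}(M))\cong\mathrm{Ext}^1_{G,\psi}(\mathrm{LL}(M),\mathrm{LL}(M))=0$ by \autoref{2.3.2}. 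The surviving fragment is then $0\to L\to\mathrm{Ext}^1_{G,\psi}(\Pi_{M,\mathscr{L}}^{\mathrm{an}},\mathrm{LL}(M))\to0$, giving the desired isomorphism $\mathrm{Ext}^1_{G,\psi}(\Pi_{M,\mathscr{L}}^{\mathrm{an}},\mathrm{LL}(M))\cong L$.

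I do not anticipate a genuine obstacle: once Theorem \ref{2.3.3} and \autoref{2.3.2} are granted, the argument is pure diagram bookkeeping. The one point calling for a little care is the identification, in (i), of the connecting homomorphism with Ding's isomorphism; rather than redo it, I would simply cite the push-out computation already made in the proof of \autoref{2.3.4}~(ii). A second, minor point worth recording explicitly is that tensoring $\mathrm{LL}(M)$ by the finite-dimensional spaces $\mathscr{L}$ and $M_{\mathrm{dR}}$ commutes with $\mathrm{Hom}_G(-,\mathrm{LL}(M))$ and $\mathrm{Ext}^1_{G,\psi}(-,\mathrm{LL}(M))$, which is what reduces every relevant group to the $\mathrm{LL}(M)$-against-$\mathrm{LL}(M)$ case handled by \autoref{2.3.2}.
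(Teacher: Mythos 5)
Your proof is correct. Part (i) is essentially the paper's argument: the paper applies $\mathrm{Hom}_G(-,\mathrm{LL}(M))$ to the same top row of (\ref{equation4}) and concludes by a dimension count ($\mathrm{Hom}_G(\Omega^1[M]^*,\mathrm{LL}(M))=0$ makes the boundary map into the $2$-dimensional space $\mathrm{Ext}^1_{G,\psi}(\mathscr{O}[M]^*,\mathrm{LL}(M))$ injective, hence bijective), which is just another way of phrasing what you cite directly, namely that $\delta$ coincides with Ding's isomorphism as recorded in the proof of \autoref{2.3.4}~(ii). For (ii) you take a genuinely different, though closely related, route: you apply $\mathrm{Hom}_G(-,\mathrm{LL}(M))$ to the sequence of \autoref{2.1.3}, $0\to\mathscr{L}\otimes_L\mathrm{LL}(M)\to\Omega^1[M]^*\to\Pi_{M,\mathscr{L}}^{\mathrm{an}}\to0$, and use part (i) together with \autoref{2.3.4}~(ii) and (iv) to isolate the fragment $0\to L\to\mathrm{Ext}^1_{G,\psi}(\Pi_{M,\mathscr{L}}^{\mathrm{an}},\mathrm{LL}(M))\to0$; the paper instead uses the second row of (\ref{equation4}), $0\to(M_{\mathrm{dR}}/\mathscr{L})\otimes_L\mathrm{LL}(M)\to\Pi_{M,\mathscr{L}}^{\mathrm{an}}\to\mathscr{O}[M]^*\to0$, and a dimension count $2-1+0=1$, which needs only Theorem \ref{2.3.3}, \autoref{2.3.2} and \autoref{2.3.4}~(iv) and in particular does not use part (i). Your version makes (ii) logically dependent on (i) but has the small bonus of exhibiting the generator of $\mathrm{Ext}^1_{G,\psi}(\Pi_{M,\mathscr{L}}^{\mathrm{an}},\mathrm{LL}(M))$ explicitly as the class of the non-split extension of \autoref{2.1.3} itself, whereas the paper's choice keeps the two assertions independent. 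One cosmetic point: for $\mathrm{End}_G(\mathrm{LL}(M))=L$ it is more natural to invoke Schur's lemma for the smooth absolutely irreducible supercuspidal $\mathrm{LL}(M)$ than Theorem \ref{1.1.7}, which is stated for admissible Banach and locally analytic representations; this does not affect the argument.
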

\begin{proof}[Preuve] (i) En appliquant le foncteur $\mathrm{Hom}_G(-, \mathrm{LL}(M))$ à la première ligne du diagramme (\ref{equation4}), on obtient une suite exacte 
	\begin{align*}\mathrm{Hom}_G(\Omega^1[M]^*, \mathrm{LL}(M))&\to\mathrm{Hom}_G(M_{\mathrm{dR}}\otimes_L\mathrm{LL}(M), \mathrm{LL}(M))\to\mathrm{Ext}^1_{G, \psi}(\mathscr{O}[M]^*, \mathrm{LL}(M))\\&\to\mathrm{Ext}_{G, \psi}^1(\Omega^1[M]^*, \mathrm{LL}(M))\to\mathrm{Ext}_{G, \psi}^1(M_{\mathrm{dR}}\otimes_L\mathrm{LL}(M), \mathrm{LL}(M)).
	\end{align*}
	D'après le Lemme \ref{2.3.2}, le Théorème \ref{2.3.3} et le Lemme \ref{2.3.4} (ii), $\mathrm{Hom}_G(\Omega^1[M]^*, \mathrm{LL}(M))$ et $\mathrm{Ext}_{G, \psi}^1(M_{\mathrm{dR}}\otimes_L\mathrm{LL}(M), \mathrm{LL}(M))$ sont nuls, $\mathrm{Hom}_G(M_{\mathrm{dR}}\otimes_L\mathrm{LL}(M), \mathrm{LL}(M))$ et $\mathrm{Ext}^1_{G, \psi}(\mathscr{O}[M]^*, \mathrm{LL}(M))$ sont de dimension $2$, alors le résultat se déduit en comparant les dimensions.
	
	(ii) La démonstration est tout à fait analogue à celle de (i). Plus précisément, en appliquant le foncteur $\mathrm{Hom}_G(-, \mathrm{LL}(M))$ à la deuxième ligne du diagramme (\ref{equation4}), on obtient une suite exacte
	\begin{align*}\mathrm{Hom}_G(\Pi_{M, \mathscr{L}}^\mathrm{an}, \mathrm{LL}(M))&\to\mathrm{End}_G(\mathrm{LL}(M))\to\mathrm{Ext}^1_{G, \psi}(\mathscr{O}[M]^*, \mathrm{LL}(M))\\ &\to\mathrm{Ext}^1_{G, \psi}(\Pi_{M, \mathscr{L}}^\mathrm{an}, \mathrm{LL}(M))\to\mathrm{Ext}^1_{G, \psi}(\mathrm{LL}(M), \mathrm{LL}(M)).
	\end{align*}
	D'après le Lemme \ref{2.3.2}, le Théorème \ref{2.3.3} et le Lemme \ref{2.3.4} (iv), les espaces $\mathrm{Hom}_G(\Pi_{M, \mathscr{L}}^\mathrm{an}, \mathrm{LL}(M))$ et $\mathrm{Ext}^1_{G, \psi}(\mathrm{LL}(M), \mathrm{LL}(M))$ sont nuls, $\mathrm{Ext}^1_{G, \psi}(\mathscr{O}[M]^*, \mathrm{LL}(M))$ est de dimension $2$. Comme $\mathrm{End}_G(\mathrm{LL}(M))$ est de dimension $1$, le résultat se déduit en comparant les dimensions.
\end{proof}
\begin{proposition}\label{2.3.6} On a $$\mathrm{Hom}_G(\Pi_{M, \mathscr{L}}^{\mathrm{an}}, \Omega^1[M]^*)=0.$$
\end{proposition}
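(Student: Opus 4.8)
Le plan est de montrer directement que tout morphisme $G$-équivariant $\phi\colon\Pi_{M, \mathscr{L}}^{\mathrm{an}}\to\Omega^1[M]^*$ est nul, en le composant avec la surjection du \autoref{2.1.3} et en exploitant le fait que les endomorphismes de $\Pi_{M, \mathscr{L}}^{\mathrm{an}}$ sont scalaires.

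On partirait de la suite exacte courte du \autoref{2.1.3}
$$0\to\mathscr{L}\otimes_L\mathrm{LL}(M)\to\Omega^1[M]^*\xrightarrow{\ p\ }\Pi_{M, \mathscr{L}}^{\mathrm{an}}\to0,$$
en remarquant que $\mathscr{L}\otimes_L\mathrm{LL}(M)\cong\mathrm{LL}(M)$ comme $G$-représentations, puisque $\mathscr{L}$ est une droite (sans action de $G$). Étant donné $\phi\in\mathrm{Hom}_G(\Pi_{M, \mathscr{L}}^{\mathrm{an}}, \Omega^1[M]^*)$, on considère $p\circ\phi\in\mathrm{End}_G(\Pi_{M, \mathscr{L}}^{\mathrm{an}})$. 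Or $\mathrm{End}_G(\Pi_{M, \mathscr{L}}^{\mathrm{an}})=L$ : cela découle du \autoref{2.3.4} (v) avec $j=n=1$ (ou encore de l'argument du \autoref{2.3.4} (i), soit $\mathrm{End}_G(\Pi_{M, \mathscr{L}}^{\mathrm{an}})=\mathrm{End}_G(\Pi_{M, \mathscr{L}})=L$ car $\Pi_{M, \mathscr{L}}$ est absolument irréductible, via le \autoref{1.1.7}). On a donc $p\circ\phi=c\cdot\mathrm{id}$ pour un certain $c\in L$.

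On distinguerait ensuite deux cas. Si $c=0$, alors $\phi$ se factorise par $\ker p=\mathscr{L}\otimes_L\mathrm{LL}(M)\cong\mathrm{LL}(M)$, donc $\phi\in\mathrm{Hom}_G(\Pi_{M, \mathscr{L}}^{\mathrm{an}}, \mathrm{LL}(M))=0$ d'après le \autoref{2.3.4} (iv). Si $c\neq0$, alors $c^{-1}p$ est une rétraction $G$-équivariante de $\phi$, de sorte que $e:=\phi\circ c^{-1}p$ est un idempotent continu $G$-équivariant de $\Omega^1[M]^*$ ; il fournit une décomposition en somme directe topologique $\Omega^1[M]^*=\phi(\Pi_{M, \mathscr{L}}^{\mathrm{an}})\oplus\ker p$ avec $\ker p\cong\mathrm{LL}(M)$ non nul, et la projection sur ce second facteur est un morphisme non nul $\Omega^1[M]^*\to\mathrm{LL}(M)$, ce qui contredit le \autoref{2.3.4} (ii). Dans tous les cas $\phi=0$, d'où le résultat.

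Je ne vois pas de véritable obstacle ici : l'énoncé est essentiellement une reformulation de la non-scindabilité de la suite du \autoref{2.1.3} (une section $s$ donnerait $p\circ s=\mathrm{id}$, soit $c=1$, relevant du second cas), et les seuls ingrédients non formels sont le \autoref{2.3.4} (ii), (iv) et l'égalité $\mathrm{End}_G(\Pi_{M, \mathscr{L}}^{\mathrm{an}})=L$, tous déjà disponibles. Comme cet argument redémontre au passage que la suite du \autoref{2.1.3} est non scindée, le point à surveiller est simplement de le rédiger sans invoquer cette non-scindabilité (pour éviter toute circularité avec l'énoncé annoncé plus loin).
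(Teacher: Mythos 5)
Votre preuve est correcte, et elle suit une route réellement différente de celle du texte. Le texte compare les deux extensions de $\mathscr{O}[M]^*$ (celle définissant $\Pi_{M,\mathscr{L}}^{\mathrm{an}}$ et celle définissant $\Omega^1[M]^*$) : un morphisme $f$ induit, grâce au \autoref{2.3.1} (i), un diagramme commutatif à lignes exactes, puis on élimine quatre cas par le lemme du serpent en utilisant le \autoref{2.3.1} (ii), (iii) et le \autoref{2.3.4} (ii). Vous partez au contraire de la présentation de $\Pi_{M,\mathscr{L}}^{\mathrm{an}}$ comme quotient de $\Omega^1[M]^*$ (\autoref{2.1.3}) : la composée $p\circ\phi$ est scalaire puisque $\mathrm{End}_G(\Pi_{M,\mathscr{L}}^{\mathrm{an}})=L$ (cas $n=j=1$ du \autoref{2.3.4} (v), dont la preuve ne dépend ni de \autoref{2.3.6} ni de \autoref{2.3.10}, donc pas de circularité), le cas $c=0$ est réglé par le \autoref{2.3.4} (iv), et le cas $c\neq0$ produit via l'opérateur $\mathrm{id}-\phi\circ c^{-1}p$ (votre idempotent) un morphisme non nul $\Omega^1[M]^*\to\ker p\cong\mathrm{LL}(M)$, exclu par le \autoref{2.3.4} (ii). Votre argument est plus court, n'utilise que le \autoref{2.3.4} (ii), (iv), (v) au lieu du jeu de cas du serpent, et redonne directement la non-scindabilité du \autoref{2.1.3} (c'est d'ailleurs exactement ainsi que le texte déduit ensuite le \autoref{2.3.10} de \autoref{2.3.6}) ; la preuve du texte, elle, a l'avantage de ne pas faire appel à $\mathrm{End}_G(\Pi_{M,\mathscr{L}}^{\mathrm{an}})=L$, c'est-à-dire à la théorie du complété unitaire universel de $\Pi_{M,\mathscr{L}}^{\mathrm{an}}$, mais seulement aux entrelacements déjà établis entre $\mathrm{LL}(M)$, $\mathscr{O}[M]^*$ et $\Omega^1[M]^*$. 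Seul point de rédaction : précisez que la décomposition donnée par l'idempotent est topologique (l'opérateur $\mathrm{id}-e$ est continu et $G$-équivariant, donc sa corestriction à $\ker p$ est bien un morphisme dans la catégorie considérée), ce qui est immédiat mais mérite une ligne.
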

\begin{proof}[Preuve] D'après (\ref{equation4}), on a deux suites exactes
	$$0\to\mathrm{LL}(M)\to\Pi_{M, \mathscr{L}}^{\mathrm{an}}\to\mathscr{O}[M]^*\to0,
	$$
	$$0\to\mathrm{LL}(M)\otimes_L M_{\mathrm{dR}}\to\Omega^1[M]^*\to\mathscr{O}[M]^*\to0.
	$$
	Soit $0\neq f\in\mathrm{Hom}_G(\Pi_{M, \mathscr{L}}^{\mathrm{an}}, \Omega^1[M]^*)$. Comme
	$\mathrm{Hom}_G(\mathrm{LL}(M), \mathscr{O}[M]^*)=0$ (Lemme \ref{2.3.1} (i)),  on a un diagramme commutatif à lignes exactes
	$$ 
	\xymatrix@R=5mm@C=4mm{
		0\ar[r]& \mathrm{LL}(M)\ar[d]_i \ar[r]& \Pi_{M, \mathscr{L}}^{\mathrm{an}}\ar[r]\ar[d]_f&\mathscr{O}[M]^*\ar[r]\ar[d]_j&0\\
		0\ar[r]&\mathrm{LL}(M)\otimes_L M_{\mathrm{dR}}\ar[r]& \Omega^1[M]^*\ar[r]&\mathscr{O}[M]^*\ar[r]&0.
	}$$
	
	(i) Supposons que $i=0$ et $j=0$. Il découle du lemme du serpent que l'on a la suite exacte suivante
	$$0\to\mathrm{LL}(M)\to\mathrm{Ker}f\to\mathscr{O}[M]^*\to\mathrm{LL}(M)\otimes_L M_{\mathrm{dR}}\to\mathrm{Coker}f\to\mathscr{O}[M]^*\to0.$$
	Comme $\mathrm{Hom}_G(\mathscr{O}[M]^*, \mathrm{LL}(M))=0$ (Lemme \ref{2.3.1} (ii)), on a $\mathrm{Ker}f=\Pi_{M, \mathscr{L}}^{\mathrm{an}}$, donc $f=0$, ce qui contredit notre hypothèse.
	
	(ii) Supposons que $i=0$ et $j\neq0$. Comme $\mathscr{O}[M]^*$ est topologiquement irréductible, $j$ est un isomorphisme. Il découle du lemme du serpent que l'on a la suite exacte suivante
	$$0\to\mathrm{LL}(M)\to\mathrm{Ker}f\to0\to\mathrm{LL}(M)\otimes_L M_{\mathrm{dR}}\to\mathrm{Coker}f\to0\to0.$$
	Donc on a $\mathrm{Coker}f=\mathrm{LL}(M)\otimes_L M_{\mathrm{dR}}$, ce qui contredit le Lemme \ref{2.3.4} (ii).
	
	(iii) Supposons que $i\neq0$ et $j=0$. Il découle du lemme du serpent que l'on a la suite exacte suivante
	$$0\to0\to\mathrm{Ker}f\to\mathscr{O}[M]^*\to\mathrm{LL}(M)\to\mathrm{Coker}f\to\mathscr{O}[M]^*\to0.$$
	Comme $\mathrm{Hom}_G(\mathscr{O}[M]^*, \mathrm{LL}(M))=0$ (Lemme \ref{2.3.1} (ii)), on a $\mathrm{Ker}f=\mathscr{O}[M]^*$, ce qui contredit le Lemme \ref{2.3.1} (iii).
	
	(iv) Supposons que $i\neq0$ et $j\neq0$. Il découle du lemme du serpent que l'on a la suite exacte suivante
	$$0\to0\to\mathrm{Ker}f\to0\to\mathrm{LL}(M)\to\mathrm{Coker}f\to0\to0.$$
	Donc on a $\mathrm{Coker}f\cong\mathrm{LL}(M)$, ce qui contredit le Lemme \ref{2.2.4} (ii). Le résultat s'en déduit.
\end{proof}
Le résultat ci-dessus implique le résultat suivant
\begin{corollary}\label{2.3.10} La suite exacte du Corollaire \ref{2.1.3} est non scindée.
\end{corollary}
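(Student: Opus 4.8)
The plan is to derive the statement at once from \autoref{2.3.6}. I would argue by contradiction. Assume the short exact sequence of locally analytic representations of $G$ furnished by \autoref{2.1.3},
$$0\to\mathscr{L}\otimes_L\mathrm{LL}(M)\to\Omega^1[M]^*\to\Pi_{M,\mathscr{L}}^{\mathrm{an}}\to0,$$
admits a splitting. Such a splitting is, in particular, a nonzero continuous $G$-equivariant section $s\colon\Pi_{M,\mathscr{L}}^{\mathrm{an}}\to\Omega^1[M]^*$, hence a nonzero element of $\mathrm{Hom}_G(\Pi_{M,\mathscr{L}}^{\mathrm{an}},\Omega^1[M]^*)$. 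But this Hom-space vanishes by \autoref{2.3.6}, which is the desired contradiction; therefore the sequence does not split.

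There is essentially nothing else to do, and no real obstacle remains at this stage: the entire difficulty has already been absorbed into \autoref{2.3.6}, whose proof is the four-case analysis via the snake lemma resting on \autoref{2.3.1}, \autoref{2.3.4} and \autoref{2.2.4}. The only point worth recording explicitly is that $\mathrm{LL}(M)$ (hence $\mathscr{L}\otimes_L\mathrm{LL}(M)$), $\Omega^1[M]^*$ and $\Pi_{M,\mathscr{L}}^{\mathrm{an}}$ are all admissible locally analytic representations, so the ambient category is abelian (\cite[Proposition 6.4]{st2003algebras}), ``split'' has its usual meaning there, and the section $s$ genuinely defines an element of the Hom-space computed in \autoref{2.3.6}. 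Granting this, the corollary is immediate.

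It may be worth phrasing the outcome dually, as a sanity check. The extension of \autoref{2.1.3} defines a class in $\mathrm{Ext}^1_{G,\psi}(\Pi_{M,\mathscr{L}}^{\mathrm{an}},\mathscr{L}\otimes_L\mathrm{LL}(M))$, and the corollary asserts that this class is nonzero. Since $\mathscr{L}\otimes_L\mathrm{LL}(M)\cong\mathrm{LL}(M)$ as representations of $G$ and $\mathrm{Ext}^1_{G,\psi}(\Pi_{M,\mathscr{L}}^{\mathrm{an}},\mathrm{LL}(M))$ is one-dimensional by \autoref{2.3.5} (ii), the non-splitness pins down $\Omega^1[M]^*$, up to the obvious scalar ambiguity, as the unique non-trivial such extension — a description parallel to the non-splitness of the companion sequence $0\to\mathscr{L}\otimes_L\mathrm{LL}(M)\to\Pi_{M,\mathscr{L}}^{\mathrm{an}}\to\mathscr{O}[M]^*\to0$ recorded in \autoref{2.2.6}.
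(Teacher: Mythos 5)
Your argument is correct and is exactly the paper's: the paper deduces \autoref{2.3.10} immediately from \autoref{2.3.6} (``Le résultat ci-dessus implique le résultat suivant''), since a splitting would yield a nonzero element of $\mathrm{Hom}_G(\Pi_{M,\mathscr{L}}^{\mathrm{an}},\Omega^1[M]^*)$, which vanishes. Your closing remark via $\mathrm{Ext}^1_{G,\psi}(\Pi_{M,\mathscr{L}}^{\mathrm{an}},\mathrm{LL}(M))=L$ is a harmless consistency check, not a different proof.
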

\begin{proposition}\label{2.3.7} On a $$\mathrm{Hom}_G(\Omega^1[M]^*, \Pi_{M, \mathscr{L}}^{\mathrm{an}})=L.$$
\end{proposition}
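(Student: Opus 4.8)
Le plan est d'appliquer le foncteur $\mathrm{Hom}_G(\Omega^1[M]^*,-)$ à la suite exacte courte de représentations localement analytiques admissibles à caractère central $\psi$ fournie par le \autoref{2.1.3},
$$0\to\mathrm{LL}(M)\to\Pi_{M, \mathscr{L}}^{\mathrm{an}}\to\mathscr{O}[M]^*\to0,$$
puis de lire le résultat sur la suite exacte longue des groupes d'extensions de Yoneda qui en découle. Son début s'écrit
$$0\to\mathrm{Hom}_G(\Omega^1[M]^*, \mathrm{LL}(M))\to\mathrm{Hom}_G(\Omega^1[M]^*, \Pi_{M, \mathscr{L}}^{\mathrm{an}})\to\mathrm{Hom}_G(\Omega^1[M]^*, \mathscr{O}[M]^*)\xrightarrow{\partial}\mathrm{Ext}^1_{G, \psi}(\Omega^1[M]^*, \mathrm{LL}(M)).$$

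D'abord, le \autoref{2.3.4} (ii) donne $\mathrm{Hom}_G(\Omega^1[M]^*, \mathrm{LL}(M))=0$, de sorte que la première flèche fait de $\mathrm{Hom}_G(\Omega^1[M]^*, \Pi_{M, \mathscr{L}}^{\mathrm{an}})$ un sous-espace de $\mathrm{Hom}_G(\Omega^1[M]^*, \mathscr{O}[M]^*)$. Ensuite, le \autoref{2.3.1} (v) identifie cette cible à $L$, et le \autoref{2.3.5} (i) donne $\mathrm{Ext}^1_{G, \psi}(\Omega^1[M]^*, \mathrm{LL}(M))=0$, ce qui force l'injection $\mathrm{Hom}_G(\Omega^1[M]^*, \Pi_{M, \mathscr{L}}^{\mathrm{an}})\hookrightarrow L$ à être surjective. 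On en déduit $\mathrm{Hom}_G(\Omega^1[M]^*, \Pi_{M, \mathscr{L}}^{\mathrm{an}})\cong L$, un générateur étant (un multiple scalaire de) la surjection $\Omega^1[M]^*\twoheadrightarrow\Pi_{M, \mathscr{L}}^{\mathrm{an}}$ du \autoref{2.1.3}.

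Comme toutes les briques sont déjà en place, il n'y a pas ici de réelle difficulté: le seul point à s'assurer est la validité de cette suite exacte longue d'extensions dans la catégorie de travail, ce qui est standard, la catégorie des représentations localement analytiques admissibles de $G$ à caractère central $\psi$ fixé étant abélienne \cite[Proposition 6.4]{st2003algebras}. Il convient aussi de noter que les quatre représentations en jeu ont toutes le même caractère central $\psi$ (ce qui est implicite dans le diagramme (\ref{equation4})), de sorte que tous les $\mathrm{Hom}$ et $\mathrm{Ext}$ ci-dessus sont bien calculés dans la même catégorie abélienne. On pourrait aussi, alternativement, appliquer $\mathrm{Hom}_G(-,\Pi_{M, \mathscr{L}}^{\mathrm{an}})$ à la première ligne du diagramme (\ref{equation4}), mais cette voie ne fournit directement qu'une injection dans $M_{\mathrm{dR}}^*$ et demande un contrôle supplémentaire du morphisme de connexion; la voie décrite ci-dessus est donc préférable.
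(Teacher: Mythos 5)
Votre démonstration est correcte et reprend essentiellement l'argument du texte : même suite exacte courte (celle qui présente $\Pi_{M, \mathscr{L}}^{\mathrm{an}}$ comme extension de $\mathscr{O}[M]^*$ par $\mathrm{LL}(M)$, deuxième ligne du diagramme (\ref{equation4})), même suite exacte longue, et mêmes trois ingrédients (\autoref{2.3.1} (v), \autoref{2.3.4} (ii), \autoref{2.3.5} (i)). Seul détail mineur : cette suite n'est pas celle du \autoref{2.1.3} (où le terme médian est $\Omega^1[M]^*$) mais celle issue du Théorème \ref{2.1.2}, confusion sans aucune conséquence pour l'argument.
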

\begin{proof}[Preuve] En appliquant le foncteur $\mathrm{Hom}_G(\Omega^1[M]^*,-)$ à la suite 
	$$0\to\mathscr{L}\otimes_L\mathrm{LL}(M)\to\Pi_{M, \mathscr{L}}^{\mathrm{an}}\to \mathscr{O}[M]^*\to0,$$
	on obtient une suite exacte 
	\begin{align*}0&\to\mathrm{Hom}_G( \Omega^1[M]^*, \mathscr{L}\otimes_L\mathrm{LL}(M))\to\mathrm{Hom}_G(\Omega^1[M]^*,\Pi_{M, \mathscr{L}}^{\mathrm{an}} )\to\mathrm{Hom}_G(\Omega^1[M]^*, \mathscr{O}[M]^*)\\ &\to  \mathrm{Ext}^1_{G, \psi}( \Omega^1[M]^*, \mathscr{L}\otimes_L\mathrm{LL}(M)).
	\end{align*}
	Alors le résultat se déduit du Lemme \ref{2.3.1} (v), du Lemme \ref{2.3.4} (ii) et du Corollaire \ref{2.3.5} (i).
\end{proof}
\begin{proposition}\label{2.3.8} On a $$\mathrm{End}_{G}(\Omega^1[M]^*)=L.$$
\end{proposition}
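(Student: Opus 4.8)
Le plan est d'appliquer le foncteur $\mathrm{Hom}_G(\Omega^1[M]^*,-)$ à la première ligne du diagramme (\ref{equation4}), c'est-à-dire à la suite exacte courte
$$0\to M_{\mathrm{dR}}\otimes_L\mathrm{LL}(M)\to\Omega^1[M]^*\to\mathscr{O}[M]^*\to0.$$
Par exactitude à gauche du foncteur $\mathrm{Hom}$, on obtient une suite exacte
$$0\to\mathrm{Hom}_G(\Omega^1[M]^*, M_{\mathrm{dR}}\otimes_L\mathrm{LL}(M))\to\mathrm{End}_G(\Omega^1[M]^*)\to\mathrm{Hom}_G(\Omega^1[M]^*, \mathscr{O}[M]^*).$$

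Je commencerais par annuler le premier terme: comme $M_{\mathrm{dR}}$ est un $L$-espace vectoriel de dimension finie, on a un isomorphisme de $G$-représentations $M_{\mathrm{dR}}\otimes_L\mathrm{LL}(M)\cong\mathrm{LL}(M)^{\oplus 2}$, d'où $\mathrm{Hom}_G(\Omega^1[M]^*, M_{\mathrm{dR}}\otimes_L\mathrm{LL}(M))\cong M_{\mathrm{dR}}\otimes_L\mathrm{Hom}_G(\Omega^1[M]^*, \mathrm{LL}(M))=0$ en vertu du \autoref{2.3.4} (ii). Ensuite, le \autoref{2.3.1} (v) fournit $\mathrm{Hom}_G(\Omega^1[M]^*, \mathscr{O}[M]^*)=L$. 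On dispose donc déjà d'une injection $\mathrm{End}_G(\Omega^1[M]^*)\hookrightarrow L$.

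Il restera à montrer que cette injection est surjective: l'image de $\mathrm{id}_{\Omega^1[M]^*}$ par la flèche $\mathrm{End}_G(\Omega^1[M]^*)\to\mathrm{Hom}_G(\Omega^1[M]^*, \mathscr{O}[M]^*)$ est la projection $\Omega^1[M]^*\twoheadrightarrow\mathscr{O}[M]^*$ elle-même, qui est non nulle, donc un générateur de la droite $\mathrm{Hom}_G(\Omega^1[M]^*, \mathscr{O}[M]^*)\cong L$; la flèche est par conséquent un isomorphisme et $\mathrm{End}_G(\Omega^1[M]^*)=L$. Tous les calculs d'entrelacements nécessaires ayant déjà été effectués, je n'attends pas d'obstacle substantiel ici; le seul point à surveiller est de bien identifier, via le \autoref{2.3.1} (v), que le terme de droite est exactement $L$ et pas plus gros, faute de quoi on n'obtiendrait qu'une borne supérieure sur la dimension.
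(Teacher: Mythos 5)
Votre preuve est correcte et repose sur le même principe que celle du texte (exactitude à gauche de $\mathrm{Hom}_G(\Omega^1[M]^*,-)$ appliquée à une présentation de $\Omega^1[M]^*$ comme extension à deux crans), mais avec un dévissage différent : vous utilisez la première ligne du diagramme (\ref{equation4}), c'est-à-dire l'extension de $\mathscr{O}[M]^*$ par $M_{\mathrm{dR}}\otimes_L\mathrm{LL}(M)$, et vous concluez via le \autoref{2.3.4} (ii) et le \autoref{2.3.1} (v), tandis que le texte applique le même foncteur à la suite du \autoref{2.1.3}, $0\to\mathscr{L}\otimes_L\mathrm{LL}(M)\to\Omega^1[M]^*\to\Pi_{M,\mathscr{L}}^{\mathrm{an}}\to0$, et invoque le \autoref{2.3.4} (ii) et la \autoref{2.3.7}. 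Les deux arguments sont équivalents en substance ; le vôtre est même légèrement plus économique, puisque la \autoref{2.3.7} utilisée par le texte se déduit elle-même du \autoref{2.3.1} (v), du \autoref{2.3.4} (ii) et de l'annulation d'un $\mathrm{Ext}^1$ (\autoref{2.3.5} (i)), annulation dont vous n'avez pas besoin ici. Enfin, votre observation finale (l'image de l'identité est la projection $\Omega^1[M]^*\twoheadrightarrow\mathscr{O}[M]^*$, non nulle, donc génératrice de la droite $\mathrm{Hom}_G(\Omega^1[M]^*,\mathscr{O}[M]^*)=L$) remplace l'argument du texte selon lequel $\mathrm{End}_G(\Omega^1[M]^*)$ est de dimension $\geq1$ ; les deux conclusions sont également valables.
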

\begin{proof}[Preuve] En appliquant le foncteur $\mathrm{Hom}_G(\Omega^1[M]^*, -)$ à la suite 
	$$0\to\mathscr{L}\otimes_L\mathrm{LL}(M)\to \Omega^1[M]^*\to \Pi_{M, \mathscr{L}}^{\mathrm{an}}\to0,$$
	on obtient une suite exacte 
	$$0\to\mathrm{Hom}_G(\Omega^1[M]^*, \mathrm{LL}(M) )\to\mathrm{End}_G(\Omega^1[M]^*)\to\mathrm{Hom}_G(\Omega^1[M]^*, \Pi_{M, \mathscr{L}}^{\mathrm{an}} ).$$
	Le résultat suit du Lemme \ref{2.3.4} (ii) et de la Proposition \ref{2.3.7} car $\mathrm{End}_G(\Omega^1[M]^*)$ est de dimension $\geq1$. 
\end{proof}
 \section{Représentations de Banach de $G$}
\subsection{Finitude résiduelle pour les représentations de Banach de $G$}
Rappelons qu'une représentation $\pi$ de $G$ sur $\kappa_L$ est dite de présentation finie si l'on a une suite exacte
$$\mathrm{ind}_{K_1}^G\sigma_1\to\mathrm{ind}_{K_2}^G\sigma_2\to\pi\to0,$$
où $\sigma_i$ est une représentation lisse de dimension finie d'un sous-groupe compact $K_i$ de $G$ pour $i=1, 2$.

D'après \cite{shotton2020the}, la définition ci-dessus est équivalente à ce que $\pi$ est de présentation finie en tant que $\kappa_L[G]$-module. 
\begin{theorem}[{\cite[Corollary 1.2]{timmins2023coherence}}]\label{3.1.1} Soit $K$ une extension finie de $\mathbb{Q}_p$, alors l'algèbre d'Iwasawa augmenté de $\mathrm{GL}_n(K)$ est cohérent si et seulement si $n\leq2$.
\end{theorem}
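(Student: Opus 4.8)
Le résultat est une équivalence, je prouverais donc séparément les deux implications, la seconde --- la non-cohérence pour $n\geq3$ --- étant de loin la plus délicate.

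\emph{Cas $n\leq2$.} Pour $n=1$, on a $\mathrm{GL}_1(K)=K^\times\cong\varpi_K^{\mathbb{Z}}\times\mathscr{O}_K^\times$ avec $\mathscr{O}_K^\times$ compact, donc $\Lambda(\mathrm{GL}_1(K))\cong\Lambda(\mathscr{O}_K^\times)[t,t^{-1}]$: une algèbre de polynômes de Laurent sur l'anneau noethérien $\Lambda(\mathscr{O}_K^\times)$ (Lazard), donc noethérienne, a fortiori cohérente. Pour $n=2$, j'utiliserais l'action de $\mathrm{GL}_2(K)$ sur l'arbre de Bruhat--Tits $\mathcal{T}$ de $\mathrm{PGL}_2$: les stabilisateurs de sommets et d'arêtes sont de la forme $Z\cdot K_0$ (avec $K_0$ compact maximal) et $Z\cdot I$ (avec $I$ un Iwahori), et il n'y a qu'une orbite d'arêtes; la théorie de Bass--Serre décrit alors $\mathrm{GL}_2(K)$ comme groupe fondamental d'un graphe de groupes fini dont les groupes de sommets et d'arêtes sont de ce type. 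Le point clé est que chacun de ces groupes est, à indice fini près, un produit $(\text{compact})\times\mathbb{Z}$ --- par exemple $Z\cdot K_0\cong\varpi_K^{\mathbb{Z}}\times K_0$ --- de sorte que son algèbre d'Iwasawa augmentée est de la forme $\Lambda(\text{compact})[t^{\pm1}]$, qui est \emph{noethérienne}. En passant aux algèbres d'Iwasawa, le graphe de groupes devient un graphe d'anneaux et $\Lambda(\mathrm{GL}_2(K))$ en est l'anneau fondamental (un coproduit amalgamé, éventuellement tordu par une extension HNN ou un produit croisé par $\mathbb{Z}/2$ pour tenir compte de l'inversion d'arête dans $\mathrm{GL}_2$), les morphismes de structure $\Lambda(ZI)\hookrightarrow\Lambda(ZK_i)$ étant libres de rang fini, donc fidèlement plats. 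J'invoquerais ensuite l'analogue anneau-théorique, dû à Dicks, du fait qu'un amalgame de groupes cohérents au-dessus d'un groupe noethérien est cohérent: un graphe d'anneaux à anneaux d'arêtes noethériens, anneaux de sommets cohérents et morphismes de structure fidèlement plats a un anneau fondamental cohérent. D'où la cohérence de $\Lambda(\mathrm{GL}_2(K))$ (qui n'est évidemment pas noethérien).

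\emph{Cas $n\geq3$.} L'immeuble de $\mathrm{GL}_n$ est ici de dimension $n-1\geq2$: il n'y a plus d'arbre, et je montrerais que la cohérence tombe effectivement en défaut. Je me ramènerais d'abord à $n=3$: le plongement par blocs $\mathrm{GL}_3(K)\hookrightarrow\mathrm{GL}_n(K)$ fait de $\Lambda(\mathrm{GL}_n(K))$ un module fidèlement plat sur $\Lambda(\mathrm{GL}_3(K))$ --- c'est l'analogue, pour les algèbres d'Iwasawa augmentées, de la platitude de $L[G]$ sur $L[H]$ --- et la cohérence descend le long d'un morphisme fidèlement plat (la présentation finie d'un idéal de type fini se teste après un tel changement de base). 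Il suffit donc de voir que $\Lambda(\mathrm{GL}_3(K))$ n'est pas cohérent. Soit $P\subset\mathrm{GL}_3(K)$ le parabolique maximal de Levi $\mathrm{GL}_2(K)\times\mathrm{GL}_1(K)$, de radical unipotent $U\cong K^2$; par la même platitude, je me ramène encore à $\Lambda(U\rtimes\varpi_0^{\mathbb{Z}})$, où $\varpi_0=\mathrm{diag}(\varpi_K,\varpi_K,1)$ agit sur $U$ par multiplication par $\varpi_K$. Or $\Lambda(U)=\Lambda(K^2)$ est la limite inductive filtrée de copies de l'anneau de séries formelles $\kappa_L[[x,y]]$ le long des morphismes de transition de type Frobenius: c'est un anneau cohérent, mais très loin d'être noethérien (il contient l'idéal $\bigcup_k(x^{1/p^k})$, qui n'est pas de type fini), et $\varpi_0$ y agit par l'automorphisme $\sigma$ réalisant ce décalage. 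Je produirais alors un idéal à gauche de type fini explicite de $\Lambda(U)[t^{\pm1};\sigma]$, construit à partir de la coordonnée $x$ et de $t-1$, de façon qu'en propageant la relation $tx=\sigma(x)\,t$ à travers la tour infinie le module des relations soit contraint d'absorber l'idéal $\bigcup_k(x^{1/p^k})$, puis je vérifierais qu'il n'est pas de présentation finie.

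\emph{Principale difficulté.} Deux points concentrent le contenu. D'une part les énoncés de platitude --- que $\Lambda(G)$ soit fidèlement plat sur $\Lambda(H)$ pour les sous-groupes fermés (non ouverts) $H$ intervenant ci-dessus --- indispensables pour faire descendre la cohérence. D'autre part, et c'est le vrai cœur de la preuve pour $n\geq3$, la construction de l'idéal non cohérent explicite dans l'anneau de Laurent tordu $\Lambda(K^2)[t^{\pm1};\sigma]$: la cohérence est notoirement instable par extension polynomiale ou polynomiale tordue d'un anneau cohérent non noethérien (Soublin), et tout l'enjeu du théorème pour $n\geq3$ est de localiser un endroit précis où cette instabilité se manifeste à l'intérieur de $\mathrm{GL}_n$. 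Je m'attends à ce que le cas $n\leq2$ soit essentiellement formel une fois disposé de la description de Bass--Serre et du théorème de cohérence de Dicks pour les graphes d'anneaux.
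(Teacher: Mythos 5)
Notez d'abord que l'article ne démontre pas cet énoncé : c'est une citation du résultat de Timmins (\emph{Coherence of augmented Iwasawa algebras}, Corollary 1.2), utilisé comme boîte noire. Il n'y a donc pas de « preuve du papier » à laquelle comparer la vôtre pas à pas ; votre texte doit être jugé comme une preuve autonome, et en l'état il n'en est pas une. La direction $n\leq2$ est un squelette raisonnable et conforme à la stratégie connue (arbre de Bruhat--Tits, stabilisateurs compacts-modulo-centre d'algèbre d'Iwasawa augmentée noethérienne, puis un théorème de cohérence à la Åberg/Dicks pour les produits amalgamés d'anneaux au-dessus d'une base noethérienne avec morphismes plats) ; il faudrait toutefois énoncer précisément le théorème d'amalgamation invoqué et vérifier ses hypothèses (ici les morphismes $\Lambda(ZI)\to\Lambda(ZK_0)$, $\Lambda(ZI)\to\Lambda(ZN(I))$ sont libres de rang fini, donc cela passe), ce que vous ne faites qu'affirmer.

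La lacune sérieuse est dans la direction $n\geq3$, qui est le cœur du théorème. Votre réduction (descente de la cohérence le long d'un morphisme fidèlement plat, de $\mathrm{GL}_n$ à $\mathrm{GL}_3$, puis au parabolique, puis à $K^2\rtimes\varpi_0^{\mathbb{Z}}$) est correcte \emph{conditionnellement} à la platitude fidèle de $\Lambda(G)$ sur $\Lambda(H)$ pour des sous-groupes fermés \emph{non ouverts} $H$ ; c'est un énoncé non trivial que vous admettez sans preuve (pour un sous-groupe ouvert c'est facile, pour un sous-groupe fermé il faut une décomposition en doubles classes et un argument d'induction, qui est précisément un des ingrédients techniques du travail cité). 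Surtout, l'étape décisive --- exhiber un idéal à gauche de type fini de $\Lambda(K^2)[t^{\pm1};\sigma]$ qui n'est pas de présentation finie et le vérifier --- est simplement annoncée (« je produirais\dots puis je vérifierais ») : aucun idéal n'est écrit, aucun module de syzygies n'est calculé. Invoquer l'instabilité générale de la cohérence par extensions polynomiales (Soublin) ne remplace pas cette construction : la cohérence \emph{peut} très bien être préservée par une extension de Laurent tordue particulière (c'est exactement ce qui se passe pour $n\leq2$, où $\Lambda(ZK_0)\cong\Lambda(K_0)[t^{\pm1}]$ reste noethérienne), et tout le contenu du théorème pour $n\geq3$ est de montrer qu'elle ne l'est pas ici. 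Sans cette construction et sa vérification, ainsi que sans les énoncés de platitude, la proposition reste un plan plausible mais pas une démonstration.
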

\begin{theorem}[{\cite[Theorem 7.21]{timmins2023coherence}}]\label{3.1.2} Soit $G$ un groupe de Lie $p$-adique. Supposons que l'algèbre d'Iwasawa augmenté est cohérent, alors la catégorie des représentations lisses de présentation finie modulo $p$ de $G$ est abélienne.
\end{theorem}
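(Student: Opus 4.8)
The plan is to identify the category $\mathcal{C}$ of finitely presented smooth $\kappa_L$-representations of $G$ with a full subcategory of the category $\mathrm{coh}(\Lambda(G))$ of coherent modules over the augmented Iwasawa algebra, and then to transfer the abelian structure. First I would recall that every smooth $\kappa_L$-representation $\pi$ of $G$ is automatically a $\Lambda(G)$-module extending the $\kappa_L[G]$-action: for a compact open subgroup $K\le G$ the algebra $\Lambda(K)=\kappa_L[[K]]=\varprojlim_{K'}\kappa_L[K/K']$ acts on $\pi=\varinjlim_{K'}\pi^{K'}$ compatibly, and $\Lambda(G)=\mathrm{ind}_K^G\Lambda(K)$ then acts on $\pi$; moreover $\Lambda(K)$ is Noetherian, $\Lambda(G)$ is free — hence flat — over $\Lambda(K)$ on a system of coset representatives, and for any finite-dimensional smooth $K$-representation $\sigma$ one has a natural isomorphism $\mathrm{ind}_K^G\sigma\cong\Lambda(G)\otimes_{\Lambda(K)}\sigma$. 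Since $\sigma$ is finitely presented over $\Lambda(K)$, the module $\mathrm{ind}_K^G\sigma$ is finitely presented, a fortiori coherent, over $\Lambda(G)$; applying this to an exhibiting presentation shows every object of $\mathcal{C}$ lies in $\mathrm{coh}(\Lambda(G))$.

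Conversely, I would show that a $\Lambda(G)$-module $M$ that is coherent and smooth as a representation already lies in $\mathcal{C}$. Choosing a finite set of smooth generators, fixed by compact open subgroups $K_i$, gives a surjection $P:=\bigoplus_i\mathrm{ind}_{K_i}^G\mathbf{1}\twoheadrightarrow M$ from a finitely presented smooth representation; as $\Lambda(G)$ is coherent, $P$ and $M$ are coherent $\Lambda(G)$-modules, so $\ker(P\to M)$ is again coherent, in particular finitely generated over $\Lambda(G)$, and it is smooth being a subrepresentation of $P$. Running the same construction once more presents this kernel by a finitely presented smooth representation $Q$, and after absorbing the finitely many $K_i$ into a single compact open subgroup one reaches a presentation of the required shape $\mathrm{ind}_{K_1}^G\sigma_1\to\mathrm{ind}_{K_2}^G\sigma_2\to M\to 0$. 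Thus $\mathcal{C}$ is exactly the class of coherent $\Lambda(G)$-modules that are smooth as $G$-representations; the equivalence "finitely presented smooth representation $\Leftrightarrow$ finitely presented $\kappa_L[G]$-module" of \cite{shotton2020the} is the analogous dictionary over $\kappa_L[G]$ and can be invoked to streamline these steps.

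With the dictionary in place the argument is formal. By hypothesis — and, when $G=\mathrm{GL}_n(K)$ with $n\le 2$, by \autoref{3.1.1} — the ring $\Lambda(G)$ is coherent, so $\mathrm{coh}(\Lambda(G))$ is an abelian category. The full subcategory of smooth $\Lambda(G)$-modules is closed under subobjects and quotients, hence $\mathcal{C}=\mathrm{coh}(\Lambda(G))\cap\{\text{smooth modules}\}$ is closed under the kernels and cokernels formed in $\mathrm{coh}(\Lambda(G))$: a kernel, resp. cokernel, of a morphism between objects of $\mathcal{C}$ is coherent over $\Lambda(G)$ (the ambient category being abelian) and smooth (a subrepresentation, resp. quotient, of a smooth representation), so it lies in $\mathcal{C}$. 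As $\mathcal{C}$ is additive and, for any $f$ in $\mathcal{C}$, the canonical morphism $\operatorname{coim} f\to\operatorname{im} f$ — formed inside $\mathcal{C}$ — is an isomorphism because it already is in $\mathrm{coh}(\Lambda(G))$, the category $\mathcal{C}$ is abelian, with exact structure inherited from $\mathrm{coh}(\Lambda(G))$.

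I expect the main obstacle to be the module-theoretic dictionary of the first two paragraphs, namely the careful verification that "finitely presented in the representation-theoretic sense of the statement'' coincides with "coherent as a $\Lambda(G)$-module and smooth''. The delicate direction is recovering a presentation by compact inductions $\mathrm{ind}_K^G\sigma$ of finite-dimensional representations of compact open subgroups from a coherent-module presentation; this rests on $\Lambda(G)$ being free over each $\Lambda(K)$, on the modules $\mathrm{ind}_K^G\mathbf{1}$ forming a generating family of coherent objects, and on the Noetherianity of $\Lambda(K)$, which makes each finite-dimensional $\sigma$ finitely presented over $\Lambda(K)$. Once this is settled, the rest is a routine transfer of abelianness along a fully faithful embedding closed under kernels and cokernels.
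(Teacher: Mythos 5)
Le texte ne démontre pas cet énoncé : il est simplement cité de \cite{timmins2023coherence}, il n'y a donc pas de preuve interne à comparer. Votre argument est correct et correspond à la démonstration standard (et, en substance, à celle de la référence) : on identifie les représentations lisses de présentation finie aux $\Lambda(G)$-modules cohérents qui sont lisses, via $\mathrm{ind}_K^G\sigma\cong\Lambda(G)\otimes_{\Lambda(K)}\sigma$ et la noethérianité de $\Lambda(K)$, puis on transfère l'abélianité de la catégorie des modules cohérents (qui utilise exactement l'hypothèse de cohérence pour avoir ``présentation finie $=$ cohérent''), la lissité étant stable par sous-objet et quotient. Deux points, tous deux de routine, méritent d'être explicités dans votre rédaction : (a) dans la direction réciproque, un module lisse de type fini sur $\Lambda(G)$ est de type fini sur $\kappa_L[G]$, car le $\Lambda(K)$-module engendré par un vecteur lisse est de dimension finie (c'est ce qui légitime la surjection depuis $\bigoplus_i\mathrm{ind}_{K_i}^G\mathbf{1}$) ; (b) le passage d'une présentation par sommes directes finies d'induites compactes à la forme exacte $\mathrm{ind}_{K_1}^G\sigma_1\to\mathrm{ind}_{K_2}^G\sigma_2\to\pi\to0$ demande la surjection $\mathrm{ind}_K^G(\mathrm{Res}_K\sigma_i)\twoheadrightarrow\mathrm{ind}_{K_i}^G\sigma_i$ pour $K\subseteq\bigcap_iK_i$, combinée au fait que le noyau de toute surjection d'un module de type fini sur un module de présentation finie est de type fini ; votre renvoi à l'équivalence de \cite{shotton2020the} (présentation finie comme représentation $\Leftrightarrow$ comme $\kappa_L[G]$-module), déjà utilisée dans le texte, suffit effectivement à couvrir ce point.
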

En combinant le Théorème \ref{3.1.1} et le Théorème \ref{3.1.2}, on déduit le résultat suivant
\begin{corollary}\label{3.1.3} La catégorie des représentations lisses de présentation finie de $G$ est abélienne.
\end{corollary}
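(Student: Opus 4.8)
The plan is to obtain the statement as a purely formal consequence of the two theorems of Timmins recalled just above, specialised to $G=\mathrm{GL}_2(\mathbb{Q}_p)$. First I would check that Theorem \ref{3.1.1} applies in our situation: here $G=\mathrm{GL}_n(K)$ with $K=\mathbb{Q}_p$ and $n=2$, and since $n\leq 2$ that theorem tells us that the augmented Iwasawa algebra of $G$ is coherent. Moreover $G$ is visibly a $p$-adic Lie group, so all the hypotheses of Theorem \ref{3.1.2} are met.

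I would then simply invoke Theorem \ref{3.1.2}: for a $p$-adic Lie group whose augmented Iwasawa algebra is coherent, the category of finitely presented smooth mod $p$ representations is abelian. Applied to our $G$, this is exactly the assertion of the corollary. Concretely, the coherence of the augmented Iwasawa algebra is what guarantees that the kernel of a morphism between two finitely presented smooth representations is again finitely presented; stability under cokernels, on the other hand, is essentially built into the elementary definition, since a quotient of a finitely presented representation by a finitely generated subrepresentation is again finitely presented.

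Since the substance of the argument — both the coherence criterion and the deduction of the abelian property from coherence — is carried out entirely in \cite{timmins2023coherence}, there is no real obstacle on my side. The only things to verify are the numerical input ($n=2$, $K=\mathbb{Q}_p$) and, to be scrupulous, the translation recalled right after the definition of finite presentation, between the elementary notion (a cokernel of a map $\mathrm{ind}_{K_1}^G\sigma_1\to\mathrm{ind}_{K_2}^G\sigma_2$) and finite presentation as a $\kappa_L[G]$-module, which is the content of \cite{shotton2020the}. Once these are matched up, the corollary is immediate.
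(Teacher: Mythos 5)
Votre démonstration est correcte et suit exactement la même voie que l'article : on applique le Théorème \ref{3.1.1} avec $n=2$ et $K=\mathbb{Q}_p$ pour obtenir la cohérence de l'algèbre d'Iwasawa augmentée, puis le Théorème \ref{3.1.2} donne directement le caractère abélien de la catégorie des représentations lisses de présentation finie (modulo $p$), la traduction avec la notion de présentation finie comme $\kappa_L[G]$-module étant fournie par \cite{shotton2020the}. Rien à redire.
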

Maintenant on passe aux représentations de Banach de $G$.
\begin{definition} Une représentation de Banach $\Pi$ de $G$ est dite résiduellement de type fini si la réduction de la boule unité $\Pi^+$ de $\Pi$ est de type fini.
\end{definition}
\begin{definition} Une représentation de Banach $\Pi$ de $G$ est dite résiduellement de présentation finie si la réduction de la boule unité $\Pi^+$ de $\Pi$ est de présentation finie.
\end{definition}
On note $\mathrm{Ban}_{\mathrm{f.p.}}(G)$ la catégorie dont les objets sont les $L$-représentations de Banach résiduellement de présentation finie et dont les morphismes sont les morphismes $G$-équivariants. La proposition ci-dessous implique que tout morphisme dans cette catégorie est strict.
\begin{proposition} Soit $f: \Pi_1\to\Pi_2$ un morphisme de $L$-représentations unitaires de $G$. Supposons que $\Pi_2^+/\pi_L$ est de type fini où $\Pi_2^+$ est la boule unité de $\Pi_2$, alors $f$ est strict.
\end{proposition}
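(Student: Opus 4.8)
Le plan est de se ramener, via le théorème de l'application ouverte, à montrer que l'image de $f$ est fermée, puis d'appliquer le Lemme~\ref{2.2.2}.

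Tout d'abord, on peut supposer $f\neq 0$ et, quitte à remplacer $f$ par $\lambda f$ pour un $\lambda\in L^\times$ convenable — ce qui ne change ni le noyau, ni l'image, ni le caractère strict de $f$ —, on peut supposer $\|f\|\leq 1$, c'est-à-dire $f(\Pi_1^+)\subseteq\Pi_2^+$, les normes sur $\Pi_1$ et $\Pi_2$ étant choisies $G$-invariantes. Comme $\mathrm{Ker}\,f$ est fermé, $\Pi_1/\mathrm{Ker}\,f$ est un espace de Banach et $f$ se factorise en $\Pi_1\twoheadrightarrow\Pi_1/\mathrm{Ker}\,f\xrightarrow{\bar f}\Pi_2$ avec $\bar f$ injective continue. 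Or $f$ est strict si et seulement si $\bar f$ est un plongement topologique, ce qui, les deux espaces étant des Banach, équivaut à ce que $\mathrm{Im}\,f$ soit complète, donc fermée dans $\Pi_2$: si $\mathrm{Im}\,f$ est fermée c'est un Banach, et $\bar f\colon\Pi_1/\mathrm{Ker}\,f\to\mathrm{Im}\,f$ est alors une bijection continue d'espaces de Banach, donc un isomorphisme topologique par le théorème de l'application ouverte. Il suffit donc de prouver que $N:=\overline{\mathrm{Im}\,f}$ égale $\mathrm{Im}\,f$.

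L'espace $N$ est un sous-$L$-espace fermé $G$-stable de $\Pi_2$, donc une représentation de Banach unitaire de $G$ de boule unité $N^+=N\cap\Pi_2^+$, et $f\colon\Pi_1\to N$ est d'image dense par construction. On vérifie sans peine que $\pi_L\Pi_2^+\cap N=\pi_L N^+$, de sorte que la réduction modulo $\pi_L$ induit une injection $N^+/\pi_L\hookrightarrow\Pi_2^+/\pi_L$. Le point crucial, qui est le cœur de la démonstration, est alors que $N^+/\pi_L$ est de type fini sur $\kappa_L[G]$; l'admettant, le Lemme~\ref{2.2.2} appliqué au morphisme d'image dense $f\colon\Pi_1\to N$ montre que $f$ est surjectif sur $N$, d'où $\mathrm{Im}\,f=N$ est fermée, ce qui achève la preuve.

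L'obstacle principal est exactement cette finitude. En effet, un sous-objet d'une représentation lisse de type fini de $\mathrm{GL}_2(\mathbb{Q}_p)$ n'est pas de type fini en général, et il faut exploiter que $N^+/\pi_L$ n'est pas un sous-objet arbitraire mais la réduction d'une sous-représentation fermée. Je l'établirais en invoquant la cohérence de l'algèbre d'Iwasawa augmentée $\Lambda(\mathrm{GL}_2(\mathbb{Q}_p))$ (Théorème~\ref{3.1.1}) et le fait, qui en résulte, que les représentations lisses de présentation finie forment une catégorie abélienne (Corollaire~\ref{3.1.3}): en passant aux modèles entiers, $\Pi_2^+$ est de type fini sur l'algèbre d'Iwasawa augmentée entière, on le présente comme quotient d'un module libre de type fini, et la cohérence permet de contrôler les relations de manière à en déduire que le sous-module $N^+$, découpé dans cette présentation comme image réciproque de la sous-représentation fermée $N$, reste de type fini modulo $\pi_L$. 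Une fois cette finitude acquise, le reste de l'argument est purement formel.
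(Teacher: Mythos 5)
Votre stratégie d'ensemble est exactement celle du texte : se ramener à la fermeture de $\mathrm{Im}\,f$, poser $T:=\overline{\mathrm{Im}\,f}$ et $T^+:=T\cap\Pi_2^+$, remarquer que $T^+/\pi_L$ s'injecte dans $\Pi_2^+/\pi_L$, puis conclure par le Lemme~\ref{2.2.2} appliqué au morphisme d'image dense $\Pi_1\to T$ (la réduction au caractère fermé de l'image via le théorème de l'application ouverte et la normalisation $f(\Pi_1^+)\subseteq\Pi_2^+$ sont correctes). Mais il y a une lacune réelle au point que vous identifiez vous-même comme crucial : la génération finie de $N^+/\pi_L$ sur $\kappa_L[G]$ n'est pas démontrée. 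L'argument que vous esquissez via la cohérence de l'algèbre d'Iwasawa augmentée (Théorème~\ref{3.1.1}, Corollaire~\ref{3.1.3}) ne peut pas suffire : la cohérence garantit seulement qu'un sous-module \emph{de type fini} d'un module de présentation finie est de présentation finie ; elle ne dit rien d'un sous-module quelconque, l'algèbre en jeu n'étant pas noethérienne, de sorte que « contrôler les relations » dans une présentation de $\Pi_2^+$ ne fournit pas la génération finie de $N^+/\pi_L$.

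Le texte règle précisément ce point par une citation : d'après \cite[Theorem 2.4.1]{deg2023localization}, toute sous-représentation de la représentation lisse de type fini $\Pi_2^+/\pi_L$ de $\mathrm{GL}_2(\mathbb{Q}_p)$ est encore de type fini ; en particulier $T^+/\pi_L\subseteq\Pi_2^+/\pi_L$ l'est, et le Lemme~\ref{2.2.2} donne alors $f(\Pi_1)=T$. C'est un théorème de finitude propre à $\mathrm{GL}_2(\mathbb{Q}_p)$ (Dotto--Emerton--Gee), et non une conséquence formelle de la cohérence ; votre remarque selon laquelle un sous-objet d'une représentation lisse de type fini n'est pas de type fini en général va d'ailleurs à l'encontre du résultat utilisé ici, qui affirme exactement le contraire dans le cas pertinent. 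Une fois cette référence (ou une preuve de ce fait) insérée à la place de l'appel à la cohérence, le reste de votre rédaction coïncide avec la preuve du texte.
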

\begin{proof}[Preuve] Il s'agit de prouver que $\mathrm{Im}(f)$ est fermée dans $\Pi_2$. On note $T$ l'adhérence de $\mathrm{Im}(f)$ et $T^+:=T\cap\Pi_2^+$. On a $T^+/\pi_L\subseteq\Pi_2^+/\pi_L$, et donc $T^+/\pi_L$ est de type fini grâce au \cite[Theorem 2.4.1]{deg2023localization}. Le Lemme \ref{2.2.2} implique que $f(\Pi_1)=T$, ce qui permet de conclure.
\end{proof}
\begin{proposition}\label{3.1.6} La catégorie $\mathrm{Ban}_{\mathrm{f.p.}}(G)$ est abélienne.
\end{proposition}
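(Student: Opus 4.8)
The plan is to verify that $\mathrm{Ban}_{\mathrm{f.p.}}(G)$ is an additive category which has kernels and cokernels and for which the canonical arrow $\mathrm{Coim}(f)\to\mathrm{Im}(f)$ is always an isomorphism. Additivity is immediate, since a finite direct sum of residually finitely presented Banach representations is residually finitely presented. The crucial structural input is the proposition just proved: every morphism $f\colon\Pi_1\to\Pi_2$ between objects of $\mathrm{Ban}_{\mathrm{f.p.}}(G)$ is strict, because $\Pi_2^+/\pi_L$ is finitely generated. Hence $\mathrm{Im}(f)$ is a closed $G$-stable subspace of $\Pi_2$ and $\ker(f)$ a closed $G$-stable subspace of $\Pi_1$, so that $\ker(f)$, $\mathrm{Im}(f)$, $\Pi_2/\mathrm{Im}(f)$ and $\Pi_1/\ker(f)$ are all unitary Banach representations for the induced, resp. quotient, norms; the open mapping theorem then shows that the continuous bijection $\Pi_1/\ker(f)\to\mathrm{Im}(f)$ is a topological isomorphism, i.e. $\mathrm{Coim}(f)\cong\mathrm{Im}(f)$. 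Thus the whole problem reduces to checking that $\ker(f)$, $\mathrm{Im}(f)$ and $\Pi_2/\mathrm{Im}(f)$ again lie in $\mathrm{Ban}_{\mathrm{f.p.}}(G)$.

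For that I would reduce modulo $\pi_L$ and appeal to \autoref{3.1.3}. Choosing the invariant lattices of the subquotients compatibly with those of $\Pi_1$ and $\Pi_2$ — for instance $\ker(f)\cap\Pi_1^+\subseteq\Pi_1^+$, the image of $\Pi_1^+$ in $\mathrm{Im}(f)$, and the image of $\Pi_2^+$ in $\Pi_2/\mathrm{Im}(f)$ — strictness of $f$ ensures that the two short exact sequences $0\to\ker(f)\to\Pi_1\to\mathrm{Im}(f)\to0$ and $0\to\mathrm{Im}(f)\to\Pi_2\to\Pi_2/\mathrm{Im}(f)\to0$ remain exact after $\otimes_{\mathscr{O}_L}\kappa_L$ on the level of unit balls: right exactness holds since $\otimes_{\mathscr{O}_L}\kappa_L$ is right exact and the chosen lattices surject, and left exactness holds since $\mathrm{Tor}_1^{\mathscr{O}_L}(-,\kappa_L)$ vanishes on the relevant quotients, which embed into Banach representations and are therefore $\mathscr{O}_L$-torsion free. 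Since $\Pi_1^+/\pi_L$ and $\Pi_2^+/\pi_L$ are finitely presented smooth $\kappa_L$-representations of $G$ and the category of such is abelian by \autoref{3.1.3} — which is where the coherence of the augmented Iwasawa algebra of $\mathrm{GL}_2(\mathbb{Q}_p)$ (\autoref{3.1.1}) enters — the kernels, images and cokernels occurring in these reduced sequences are finitely presented, and this gives $\ker(f),\mathrm{Im}(f),\Pi_2/\mathrm{Im}(f)\in\mathrm{Ban}_{\mathrm{f.p.}}(G)$, completing the argument.

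The step I expect to be delicate is precisely the lattice bookkeeping of the previous paragraph: an $L$-linear $G$-equivariant map carries the unit ball into, but not onto, the unit ball, and the unit ball of a subquotient is only determined up to commensurability, so one must check that \emph{residually of finite presentation} is insensitive to these choices and that the chosen system of lattices makes reduction modulo $\pi_L$ genuinely exact rather than exact only up to bounded torsion; strictness of $f$ — itself a consequence of \autoref{2.2.2} applied with $\Pi_2^+/\pi_L$ finitely generated — is exactly what makes a suitable choice available. A clean way to package all of this is via Schikhof duality: $\mathrm{Ban}_{\mathrm{f.p.}}(G)^{\mathrm{op}}$ identifies with the category of $\pi_L$-torsion free profinite $\Lambda(G)$-modules $N$ with $N/\pi_L N$ finitely presented, in which the image of a morphism is automatically closed and — since every morphism of $\mathrm{Ban}_{\mathrm{f.p.}}(G)$ is strict — has saturated image; the functor $N\mapsto N/\pi_L N$ is then exact and faithful into the abelian category of finitely presented modules over the (coherent) reduced Iwasawa algebra, from which one reads off that $\mathrm{Ban}_{\mathrm{f.p.}}(G)$ is abelian.
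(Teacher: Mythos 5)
Your proposal is correct and follows essentially the same route as the paper: you invoke strictness of morphisms (the proposition preceding \autoref{3.1.6}, itself resting on \autoref{2.2.2}), reduce the unit balls modulo $\pi_L$ using torsion-freeness for exactness, and conclude via the abelianness of the category of finitely presented smooth mod-$p$ representations (\autoref{3.1.3}, i.e.\ coherence of $\Lambda(G)$). The only difference is that you spell out the additivity, the $\mathrm{Coim}\cong\mathrm{Im}$ check and the lattice bookkeeping that the paper's terse proof leaves implicit, which is a welcome amplification rather than a new argument.
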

\begin{proof}[Preuve] Supposons que $\Pi_1\to\Pi_2$ est une flèche dans $\mathrm{Ban}_{\mathrm{f.p.}}(G)$, alors il suffit de montrer que le noyau et le conoyau appartiennent à $\mathrm{Ban}_{\mathrm{f.p.}}(G)$. On note $\Pi_3$ le noyau de $\Pi_1\to\Pi_2$ et $\Pi_1^+$ la boule unité de $\Pi_1$. On note $\Pi_3^+:=\Pi_3\cap\Pi_1^+$ et $\Pi_2^+$ l'image de $\Pi_1^+$ dans $\Pi_2$, alors on a une suite exacte $0\to\Pi_3^+/\pi_L\to\Pi_1^+/\pi_L\to\Pi_2^+/\pi_L$, donc $\Pi_3^+/\pi_L$ est de présentation finie. La preuve pour le conoyau est similaire. Cela permet de conclure.
\end{proof}
\subsection{Vecteurs lisses et localement analytiques de $\widehat{\mathrm{LL}(M)}$}
Dans cette section, on montre quelques propriétés de $\widehat{\mathrm{LL}(M)}$. Notons d'abord le résultat suivant d'Emerton.
\begin{proposition}[{\cite[5.1.18]{emerton2006a}}]\label{3.2.1} Soit $\pi$ une représentation supercuspidale irréductible de $G$, alors le complété unitaire universel $\hat{\pi}$ de $\pi$ n'est pas admissible en tant que représentation de Banach.
\end{proposition}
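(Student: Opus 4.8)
The plan is to combine the existence of a finitely generated integral structure on a supercuspidal representation with the classical non-admissibility of the mod $p$ reduction of a compact induction. Write $\pi=\mathrm{LL}(M)$ for a suitable $M$ as in Section \ref{section2.1} (possible since local Langlands is a bijection onto the supercuspidals); by the classification recalled in Appendix \ref{B} there is a finite-dimensional smooth $L$-representation $\sigma=\sigma_M$ of $KZ$ with $\mathrm{ind}_{KZ}^G\sigma$ equal to $\pi$ or to $\pi\oplus(\pi\otimes\mu_{-1})$, so in either case $\pi$ is a direct summand of $\mathrm{ind}_{KZ}^G\sigma$, and $\mu_{-1}$ is unitary (values $\pm1$). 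Fix a $KZ$-stable $\mathscr{O}_L$-lattice $\sigma^{0}\subseteq\sigma$ and put $\bar\sigma:=\sigma^{0}\otimes_{\mathscr{O}_L}\kappa_L$, a nonzero finite-dimensional smooth $\kappa_L$-representation of $KZ$. The $\mathscr{O}_L[G]$-module $\mathrm{ind}_{KZ}^G\sigma^{0}$ is finitely generated, by the copy of $\sigma^{0}$ supported on the trivial coset; hence by \cite[Proposition 1.17]{emerton2005p-adic} its $\pi_L$-adic completion is the unit ball of $\widehat{\mathrm{ind}_{KZ}^G\sigma}$, and, $\mathrm{ind}_{KZ}^G\sigma^{0}$ being $\mathscr{O}_L$-free, the reduction of this unit ball modulo $\pi_L$ is the smooth compact induction $\mathrm{ind}_{KZ}^G\bar\sigma$. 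Taking universal unitary completions commutes with finite direct sums and with twisting by the unitary character $\mu_{-1}$, so admissibility of $\hat\pi$ would force $\widehat{\mathrm{ind}_{KZ}^G\sigma}$, hence the mod $\pi_L$ reduction $\mathrm{ind}_{KZ}^G\bar\sigma$ of its unit ball, to be admissible. It therefore suffices to prove that $\mathrm{ind}_{KZ}^G\bar\sigma$ is \emph{not} an admissible smooth representation of $G$.

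To this end I would exhibit a compact open subgroup whose space of invariants in $\mathrm{ind}_{KZ}^G\bar\sigma$ is infinite-dimensional. Fix $m$ with $G_m$ acting trivially on $\bar\sigma$. Mackey's decomposition gives
$$\bigl(\mathrm{ind}_{KZ}^G\bar\sigma\bigr)^{G_m}\;\cong\;\bigoplus_{g\in G_m\backslash G/KZ}\bar\sigma^{\,g^{-1}G_mg\,\cap\,KZ}.$$
The index set is infinite: it surjects onto $KZ\backslash G/K$, indexed via the Cartan decomposition by $\mathbb{N}$ with pairwise inequivalent representatives $g_n:=\begin{psmallmatrix}p^{n}&0\\0&1\end{psmallmatrix}$, $n\geq0$. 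A direct computation of $g_n^{-1}G_mg_n\cap KZ$ shows that, for $n$ large, these subgroups all have the same image in $\mathrm{GL}_2(\mathbb{Z}/p^{M})$ for a fixed $M$ (chosen with $G_M$ trivial on $\bar\sigma$), namely a fixed upper-triangular subgroup; this image is a finite $p$-group, hence has a nonzero fixed vector on the nonzero $\kappa_L$-vector space $\bar\sigma$, because $\kappa_L$ has characteristic $p$. Thus the $g_n$-summand is nonzero for all large $n$, so $\bigl(\mathrm{ind}_{KZ}^G\bar\sigma\bigr)^{G_m}$ is infinite-dimensional and $\mathrm{ind}_{KZ}^G\bar\sigma$ is not admissible. (This non-admissibility is classical and also underlies the Barthel--Livné computation \cite{blirreducible1994} of $\mathrm{End}_G(\mathrm{ind}_{KZ}^G\bar\sigma)$ as a polynomial algebra over $\kappa_L$.)

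The step I expect to be the crux is the non-vanishing just used: one must rule out that the infinitely many double-coset contributions eventually die. The mechanism is twofold — first, the conjugated-and-intersected subgroups $g_n^{-1}G_mg_n\cap KZ$ stabilise modulo a congruence subgroup $G_M$ deep enough to already trivialise $\bar\sigma$, so that only their fixed image in $\mathrm{GL}_2(\mathbb{Z}/p^M)$ matters; second, that image being a finite $p$-group, it acts with nonzero invariants on any nonzero $\mathbb{F}_p$-vector space. Everything else is routine bookkeeping: identifying the reduction modulo $\pi_L$ of the unit ball of $\widehat{\mathrm{ind}_{KZ}^G\sigma}$ with $\mathrm{ind}_{KZ}^G\bar\sigma$ via \cite[Proposition 1.17]{emerton2005p-adic} together with $\mathscr{O}_L$-freeness, and handling the harmless unramified twist by $\mu_{-1}$ through the compatibility of universal completion with direct sums and unitary twists.
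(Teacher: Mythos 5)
Your proof is correct, and since the paper offers no argument of its own for this proposition (it simply cites Emerton, 5.1.18), your reconstruction — writing $\pi$ as a direct summand of $\mathrm{ind}_{KZ}^G\sigma$, using \cite[Proposition 1.17]{emerton2005p-adic} plus $\mathscr{O}_L$-freeness to identify the reduction mod $\pi_L$ of the unit ball of the universal completion with $\mathrm{ind}_{KZ}^G\bar\sigma$, and then ruling out admissibility of this smooth compact induction via the Mackey decomposition of $G_m$-invariants together with the fact that the stabilized images $g_n^{-1}G_mg_n\cap KZ \pmod{p^M}$ are finite $p$-groups, hence have nonzero invariants on a nonzero $\kappa_L$-vector space — is exactly the standard argument underlying the cited result. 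The only steps worth making fully explicit are the $\pi_L$-torsion-freeness of $\mathrm{ind}_{KZ}^G\sigma^0$ (so that reduction commutes with $\pi_L$-adic completion) and the observation that $g_n^{-1}G_mg_n\cap KZ=g_n^{-1}G_mg_n\cap K$ (determinant valuation zero), both immediate.
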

\begin{proposition}[{\cite[Proposition 3.1]{cdn2023correspondance}}]\label{3.2.2} On a $$\widehat{\mathrm{LL}(M)}^{\mathrm{lisse}}=\mathrm{LL}(M).$$
\end{proposition}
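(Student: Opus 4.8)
The plan is to reduce the statement to a claim about the $p$-adic completion of a finitely generated lattice in $\mathrm{LL}(M)$, and then to control smooth vectors there by a cohomological computation. By the classification of supercuspidals used in Section~\ref{section2.1}, $\mathrm{LL}(M)$ is either $\mathrm{ind}_{KZ}^G\sigma_M$ or a $G$-equivariant direct summand of it, with $\sigma_M$ a finite-dimensional smooth representation of $KZ$. Choosing a $KZ$-stable $\mathscr{O}_L$-lattice $\sigma^0\subseteq\sigma_M$ produces a finitely generated $G$-stable $\mathscr{O}_L$-lattice $\pi^0$ in $\mathrm{LL}(M)$ (namely $\pi^0=\mathrm{ind}_{KZ}^G\sigma^0$ in the first case). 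By \cite[Proposition 1.17]{emerton2005p-adic} the unit ball of $\widehat{\mathrm{LL}(M)}$ is the $p$-adic completion $\widehat{\pi^0}=\varprojlim_n\pi^0/p^n$. Since a smooth vector of $\widehat{\mathrm{LL}(M)}$, after rescaling by a power of $p$, lies in $\widehat{\pi^0}$ with the same open stabilizer, and since $\mathrm{LL}(M)=\pi^0[\tfrac1p]$, it suffices to prove $(\widehat{\pi^0})^{G_m}=(\pi^0)^{G_m}$ for every $m\geq1$, which then gives $\widehat{\mathrm{LL}(M)}^{\mathrm{lisse}}=\mathrm{LL}(M)$.

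Fix $m$. Because $(-)^{G_m}$ commutes with inverse limits, $(\widehat{\pi^0})^{G_m}=\varprojlim_n(\pi^0/p^n)^{G_m}$. The long exact cohomology sequence of $0\to\pi^0\xrightarrow{p^n}\pi^0\to\pi^0/p^n\to0$ gives, functorially in $n$, short exact sequences $0\to(\pi^0)^{G_m}/p^n\to(\pi^0/p^n)^{G_m}\to H^1(G_m,\pi^0)[p^n]\to0$, in which the left-hand transition maps are the (surjective) reductions $\pi^0/p^{n+1}\to\pi^0/p^n$ and the right-hand ones are multiplication by $p$. Passing to $\varprojlim_n$ (the left system is Mittag--Leffler, so its $R^1\varprojlim$ vanishes) yields $0\to(\pi^0)^{G_m}\to(\widehat{\pi^0})^{G_m}\to T_pH^1(G_m,\pi^0)\to0$, where $T_p$ denotes the $p$-adic Tate module of the abelian group $H^1(G_m,\pi^0)$. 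So everything reduces to showing that the torsion abelian group $H^1(G_m,\pi^0)$ has no nonzero $p$-divisible subgroup.

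To prove this, restrict the compact induction to $G_m$: the Mackey formula gives an isomorphism of $\mathscr{O}_L[G_m]$-modules $\mathrm{ind}_{KZ}^G\sigma^0\cong\bigoplus_{g\in KZ\backslash G/G_m}\mathrm{Ind}_{H_g}^{G_m}({}^g\sigma^0)$, with $H_g=G_m\cap g^{-1}(KZ)g$ of finite index in $G_m$ and ${}^g\sigma^0$ a finite free $\mathscr{O}_L$-module carrying a smooth $H_g$-action. Since $H^1$ of a profinite group commutes with direct sums, and by Shapiro's lemma, $H^1(G_m,\mathrm{ind}_{KZ}^G\sigma^0)\cong\bigoplus_g H^1(H_g,{}^g\sigma^0)$. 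For each $g$ the action of $H_g$ on ${}^g\sigma^0$ factors through a finite quotient $H_g/U_g$; because $\mathrm{Hom}(\text{finite group},{}^g\sigma^0)=0$ (as $\mathscr{O}_L$ is torsion-free), the inflation maps identify $H^1(H_g,{}^g\sigma^0)$ with $H^1(H_g/U_g,{}^g\sigma^0)$, which is a finite group. A direct sum of finite groups has no nonzero $p$-divisible subgroup — such a subgroup would project nontrivially onto some coordinate, giving a nonzero divisible subgroup of a finite group — so $H^1(G_m,\mathrm{ind}_{KZ}^G\sigma^0)$ has none. In the case where $\mathrm{LL}(M)$ is only a summand, $\pi^0$ and $\mathrm{ind}_{KZ}^G\sigma^0$ become commensurable after adjoining a lattice in the complementary summand, so $H^1(G_m,\pi^0)$ is linked to $H^1(G_m,\mathrm{ind}_{KZ}^G\sigma^0)$ by maps with kernel and cokernel of bounded exponent, and "having no $p$-divisible subgroup" is inherited through such maps. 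Hence $T_pH^1(G_m,\pi^0)=0$, so $(\widehat{\pi^0})^{G_m}=(\pi^0)^{G_m}$, and the proposition follows.

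I expect the cohomological step — decomposing $H^1(G_m,\pi^0)$ via Mackey into a genuine direct sum of finite groups (so that reducedness can be checked one coordinate at a time) and handling the summand case cleanly — to be the main obstacle; the rest is formal manipulation of completions and long exact sequences.
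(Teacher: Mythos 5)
Your reduction to the lattice $\pi^0=\mathrm{ind}_{KZ}^G\sigma^0$, the identification of the unit ball of $\widehat{\mathrm{LL}(M)}$ with the $p$-adic completion $\widehat{\pi^0}$, the exact sequences $0\to(\pi^0)^{G_m}/p^n\to(\pi^0/p^n)^{G_m}\to H^1(G_m,\pi^0)[p^n]\to0$, and the Mackey--Shapiro computation giving $T_pH^1(G_m,\pi^0)=0$ are all sound (one cosmetic point: a finite group of order prime to $p$ \emph{is} $p$-divisible, so the statement you actually need and do prove is $T_p=0$, not the absence of $p$-divisible subgroups). The genuine gap is the passage to the limit: $\varprojlim_n(\pi^0)^{G_m}/p^n$ is the $p$-adic \emph{completion} of $(\pi^0)^{G_m}$, not $(\pi^0)^{G_m}$ itself, so what your argument establishes is only $(\widehat{\pi^0})^{G_m}\cong\varprojlim_n(\pi^0)^{G_m}/p^n$. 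As written, nothing in your proof uses that $\sigma_M$ is a cuspidal type, and the same argument applied to $\sigma=1$ would ``prove'' $\widehat{\mathrm{ind}_{KZ}^G 1}^{\mathrm{lisse}}=\mathrm{ind}_{KZ}^G1$, which is false: there $(\pi^0)^{G_m}$ has infinite $\mathscr{O}_L$-rank and its completion (functions on the infinite discrete set $KZ\backslash G/G_m$ tending to $0$) is strictly larger than $(\pi^0)^{G_m}$, while Mackey, Shapiro and $T_pH^1=0$ still hold. Supercuspidality must therefore enter exactly at this step: since $\mathrm{LL}(M)$ is irreducible supercuspidal it is admissible, so $\mathrm{LL}(M)^{G_m}$ is finite-dimensional and $(\pi^0)^{G_m}$ is a bounded, hence finitely generated, hence $p$-adically complete $\mathscr{O}_L$-module (equivalently, cuspidality of $\sigma_M$ forces $(\sigma^0)^{KZ\cap gG_mg^{-1}}=0$ for all but finitely many double cosets). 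With this supplement, and a slightly more careful write-up of the summand case along the lines you sketch, your argument does close.

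For context, the paper does not reprove this statement: it quotes it from Colmez--Dospinescu--Nizio\l{} (Proposition 3.1 of the cited reference), and its own argument for the locally analytic analogue (Proposition \ref{3.2.3}) works with the explicit description of the completion as convergent sums over the double cosets $X_n$ and truncation operators, not with cohomology. Your homological route (vanishing of $T_pH^1$ via Mackey--Shapiro, plus admissibility) is thus a genuinely different approach; once the missing admissibility input is made explicit, it is a clean alternative, whereas the coset-by-coset method has the advantage of isolating exactly where cuspidality of $\sigma_M$ is used.
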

Une preuve similaire nous donne le résultat suivant
\begin{proposition}\label{3.2.3} On a $$\widehat{\mathrm{LL}(M)}^{\mathrm{an}}=\mathrm{LL}(M).$$
\end{proposition}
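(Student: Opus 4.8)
My plan is to prove the stronger statement $\widehat{\mathrm{LL}(M)}^{\mathrm{an}}=\widehat{\mathrm{LL}(M)}^{\mathrm{lisse}}$; since by \autoref{3.2.2} the natural map $\mathrm{LL}(M)\to\widehat{\mathrm{LL}(M)}^{\mathrm{an}}$ is injective with image $\widehat{\mathrm{LL}(M)}^{\mathrm{lisse}}$, this is enough. The content is then that every locally analytic vector of $\widehat{\mathrm{LL}(M)}$ is already smooth, and the input is the concrete model of $\widehat{\mathrm{LL}(M)}$ underlying \autoref{3.2.2}. By the classification of supercuspidal representations recalled above, $\mathrm{LL}(M)$ is either a compact induction $\mathrm{ind}_{KZ}^G\sigma_M$ or a direct summand of one; in particular it carries the $G$-invariant lattice $\mathrm{ind}_{KZ}^G\sigma_M^0$ (for $\sigma_M^0$ a $KZ$-stable lattice in $\sigma_M$), which is finitely generated over $\mathscr{O}_L[G]$. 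By \cite[Proposition 1.17]{emerton2005p-adic} the unit ball of $\widehat{\mathrm{LL}(M)}$ is the $p$-adic completion of this lattice, so $\widehat{\mathrm{LL}(M)}$ embeds $G$-equivariantly and isometrically into the Banach space of $\sigma_M$-valued functions on the discrete set $KZ\backslash G$ (the vertices of the Bruhat--Tits tree) vanishing at infinity, with the supremum norm. In particular, for each $v\in KZ\backslash G$, evaluation $\mathrm{ev}_v$ at $v$ is a continuous linear functional on $\widehat{\mathrm{LL}(M)}$, and the family $(\mathrm{ev}_v)_v$ separates points.

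Now fix $f\in\widehat{\mathrm{LL}(M)}^{\mathrm{an}}$ and $X\in\mathfrak{g}=\mathrm{Lie}(G)$. Since $f$ is locally analytic, $Xf=\tfrac{d}{dt}\big|_{t=0}\exp(tX)\cdot f$ exists in $\widehat{\mathrm{LL}(M)}$, and applying the continuous functional $\mathrm{ev}_v$ gives $\mathrm{ev}_v(Xf)=\tfrac{d}{dt}\big|_{t=0}f\big(v\exp(tX)\big)$. But $KZ\backslash G$ is discrete and $\sigma_M$ is smooth, so the stabilizer in $G$ of the coordinate $\mathrm{ev}_v$ is open; hence $t\mapsto f(v\exp(tX))$ is constant for $t$ near $0$, and $\mathrm{ev}_v(Xf)=0$. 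As $v$ is arbitrary and the $\mathrm{ev}_v$ separate points, $Xf=0$. Thus $\mathfrak{g}$ acts trivially on the locally analytic representation $\widehat{\mathrm{LL}(M)}^{\mathrm{an}}$; and for a vector of a locally analytic representation killed by $\mathfrak{g}$, the orbit map is locally analytic with vanishing differential, hence locally constant, so the vector is smooth. Therefore $\widehat{\mathrm{LL}(M)}^{\mathrm{an}}=\widehat{\mathrm{LL}(M)}^{\mathrm{lisse}}=\mathrm{LL}(M)$.

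The only genuinely delicate step is the first one, namely realizing $\widehat{\mathrm{LL}(M)}$ as a space of functions on the discrete set $KZ\backslash G$: this combines Emerton's finite-generation criterion with the description of $\mathrm{LL}(M)$ as a compact induction, and in the case $\mathrm{ind}_{KZ}^G\sigma_M\cong\mathrm{LL}(M)\oplus(\mathrm{LL}(M)\otimes\mu_{-1})$ one should note that $\widehat{\mathrm{LL}(M)}$ is then a $G$-stable topological direct summand of the ambient function space, so nothing changes. Everything afterwards is formal. Alternatively one can argue in closer parallel with \autoref{3.2.2}: inside a space of functions on a discrete set, a vector on which a congruence subgroup $G_h$ acts analytically is automatically $G_h$-invariant, because a rigid analytic function on the connected polydisc attached to $G_h$ that is locally constant is constant; this too identifies the locally analytic vectors with the smooth ones, and one concludes by \autoref{3.2.2}.
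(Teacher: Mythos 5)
Your argument is correct, and it follows the same overall strategy as the paper: reduce to the explicit model $\widehat{\mathrm{ind}_{KZ}^G\sigma_M}$ of functions on $KZ\backslash G$ vanishing at infinity (handling the case where $\mathrm{ind}_{KZ}^G\sigma_M$ splits as a sum by passing to the direct summand), show that a locally analytic vector is in fact smooth, and conclude by \autoref{3.2.2}. Where you differ is in the mechanism: the paper expands the orbit map $g\mapsto g\cdot w$ in a power series and kills the nonconstant coefficients by applying the truncation operators $R_n$ onto the finite shells $X_n$, on which the action is smooth, and then invoking the identity principle for analytic functions; you instead pair with the vertex evaluations $\mathrm{ev}_v$, which are fixed by an open subgroup (depending on $v$) and separate points, to deduce that $\mathfrak{g}$ kills every locally analytic vector, and then use the general fact that an analytic orbit map with vanishing differential is locally constant. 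The key input is identical in both proofs — the action on each vertex (resp.\ each shell) has an open stabilizer, non-uniformly in the vertex — and local analyticity is what restores the uniformity; your version makes this cleanly visible, and it is worth noting that the non-uniformity of the subgroup fixing $\mathrm{ev}_v$ is exactly why the argument does not overreach and declare every vector of the completion smooth (for a general vector, $Xf$ need not exist). Your closing "alternative" sketch, arguing coordinate by coordinate that an analytic function which is locally constant on a polydisc is constant, is essentially the paper's own proof.
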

\begin{proof}[Preuve] Il suffit de montrer le même résultat pour $\mathrm{ind}_{KZ}^G\sigma_M$. Notons $X_n$ la double classe $KZ\begin{psmallmatrix}
		p^n&0\\
		0&1
	\end{psmallmatrix}KZ$ pour tout $n$, alors on a $$\mathrm{ind}_{KZ}^G\sigma_M=\oplus_n\mathrm{ind}_{KZ}^{X_n}\sigma_M,$$ et le complété unitaire universel $\widehat{\mathrm{ind}_{KZ}^G\sigma_M}$ de $\mathrm{ind}_{KZ}^G\sigma_M$ est l'ensemble des $w=\sum_{n\geq0}w_n$, avec $w_n\in\mathrm{ind}_{KZ}^{X_n}\sigma_M$, et $w_n\to0$ quand $n\to\infty$. Notons $R_n(w):=\sum_{i\leq n}w_i$, alors $w=\lim_nR_n(w)$. 
	
	Soit $w\in\widehat{\mathrm{ind}_{KZ}^G\sigma_M}^{\mathrm{an}}$, alors il existe un sous-groupe ouvert compact $G_w$ de $G$, une bijection $\mathbf{c}=(c_0, c_1, c_2, c_3): G_w\to\z_p^4$ et une suite $(w_\mathbf{k})_{\mathbf{k}\in\mathbb{N}^4}$ dans $\widehat{\mathrm{ind}_{KZ}^G\sigma_M}$ qui tend vers $0$, tels que 
	$$g\cdot w=\sum_{\mathbf{k}\in\mathbb{N}^4}\mathbf{c}(g)^{\mathbf{k}}w_{\mathbf{k}}.$$
	Soit $n\in\mathbb{N}$. Comme $\mathrm{ind}_{KZ}^{X_n}\sigma_M$ est lisse, il existe un sous-groupe ouvert $G^n\subseteq G_w$ tel que
	$$g\cdot R_n(w)=R_n(w)$$ 
	pour tout $g\in G^n$, ce qui implique que la fonction $\sum_{\mathbf{k}\in\mathbb{N}^4}\mathbf{c}(g)^{\mathbf{k}}R_n(w_{\mathbf{k}})$ est constante sur $G^n$. Comme l'application $\mathbf{c}$ est bijective, il s'ensuit que $R_n(w_{\mathbf{k}})=0$ pour tout $\mathbf{k}\neq0$. On en déduit que $w_{\mathbf{k}}=0$ pour tout $\mathbf{k}\neq0$. Donc on a $g\cdot w=w_{\mathbf{0}}$ pour tout $g\in G_w$, en particulier, on a $w=w_{\mathbf{0}}$ et $g\cdot w=w$ pour tout $g\in G_w$. Cela implique que $w$ est lisse. Or, d'après la Proposition \ref{3.2.2}, on a $$\widehat{\mathrm{ind}_{KZ}^G\sigma_M}^{\mathrm{lisse}}=\mathrm{ind}_{KZ}^G\sigma_M,$$ 
	donc $w\in\mathrm{ind}_{KZ}^G\sigma_M$, ce qui implique que $$\widehat{\mathrm{LL}(M)}^{\mathrm{an}}=\mathrm{LL}(M).$$
	Cela permet de conclure.
\end{proof}	
\begin{remark} Notons que le sous-ensemble des vecteurs localement analytiques dans une représentation de Banach admissible de $G$ est dense. Or, en utilisant la Proposition \ref{3.2.3}, on peut montrer que la représentation  $N_{\mathscr{L}, j}$ définie dans la section \ref{section2.2} est une représentation de Banach telle que $N_{\mathscr{L}, j}^{\mathrm{an}}=0$, ce qui implique que la représentation $N_{\mathscr{L}, j}$ est non admissible. En effet, soit $0\neq v\in N_{\mathscr{L}, j}^{\mathrm{an}}$, alors $v\in\widehat{\mathrm{LL}(M)}^{\mathrm{an}}=\mathrm{LL}(M)$. Comme $\mathrm{LL}(M)$ est irréductible, on a $\mathrm{LL}(M)\subseteq N_{\mathscr{L}, j}$. Cependant, $\mathrm{LL}(M)$ est dense dans $\widehat{\mathrm{LL}(M)}$ et le sous-espace $N_{\mathscr{L}, j}$ est fermé dans $\widehat{\mathrm{LL}(M)}$, on a donc $N_{\mathscr{L}, j}=\widehat{\mathrm{LL}(M)}$, une contradiction. On en conclut que $N_{\mathscr{L}, j}^{\mathrm{an}}=0$. Cela donne une autre preuve de la non admissibilité de $\widehat{\mathrm{LL}(M)}$.
\end{remark}
\subsection{Complétions $\mathfrak{B}$-adiques des représentations lisses}\label{3.3}
La référence pour cette section est l'article \cite{cdn2023correspondance}. Soit $\pi$ une représentation lisse de type fini de $G$ à caractère central $\psi$ en caractéristique zéro. Soit $\bar{\rho}: \mathcal{G}_{\mathbb{Q}_p}\to\mathrm{GL}_2(\kappa_L)$ une représentation semi-simple, alors $\bar{\rho}$ détermine un bloc $\mathfrak{B}$. Soit $\Lambda$ un réseau de $\pi$ stable par $G$, alors on peut définir le complété $\mathfrak{B}$-adique $\Lambda_\mathfrak{B}$ de $\Lambda$ comme la limite projective des quotients de longueur finie de $\Lambda$ dont toutes les facteurs de Jordan-Hölder appartiennent à $\mathfrak{B}$, et on pose $\pi_{\mathfrak{B}}:=L\otimes_{\mathscr{O}_L}\Lambda_\mathfrak{B}$. La définition de $\pi_{\mathfrak{B}}$ est indépendante du choix de $\Lambda$.
\begin{lemma}[{\cite[Corollary 2.3]{cdn2023correspondance}}]\label{exact} Soit $\mathcal{C}^\psi$ la catégorie des représentations lisses de type fini de $G$ à caractère central $\psi$. Le foncteur de passage à la complétion $\mathfrak{B}$-adique $X\mapsto X_\mathfrak{B}$ de $\mathcal{C}^\psi$ vers la catégorie des $G$-modules pro-discrets est exact.
\end{lemma}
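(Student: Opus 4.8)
The plan is to combine Pa\v{s}k\={u}nas's block decomposition with an Artin--Rees-type argument, after reducing to finitely generated modules over a Noetherian ring.

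\emph{Reductions.} Since $X\mapsto L\otimes_{\mathscr{O}_L}X$ is exact and $\pi_{\mathfrak{B}}$ does not depend on the chosen lattice, it suffices to prove that $\Lambda\mapsto\Lambda_{\mathfrak{B}}$ is exact on short exact sequences $0\to\Lambda_1\to\Lambda_2\to\Lambda_3\to0$ of finitely generated $G$-stable $\mathscr{O}_L[G]$-lattices with central character $\psi$; starting from a short exact sequence in $\mathcal{C}^\psi$, one builds such a model by choosing finitely generated $\mathscr{O}_L[G]$-lattices $\Lambda_1\subseteq\Lambda_2$ with $\Lambda_i[\tfrac{1}{p}]\cong\pi_i$ and setting $\Lambda_3:=\Lambda_2/\Lambda_1$. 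As every finite-length smooth $\mathscr{O}_L[G]$-module is killed by a power of $\pi_L$, one has $\Lambda_{\mathfrak{B}}=\varprojlim_n(\Lambda/\pi_L^n)_{\mathfrak{B}}$, where for a finitely generated smooth $(\mathscr{O}_L/\pi_L^n)[G]$-module $\tau$ with central character $\psi$ I write $\tau_{\mathfrak{B}}$ for the inverse limit of the finite-length quotients of $\tau$ whose Jordan--Hölder factors lie in $\mathfrak{B}$. So it is enough to prove (a) that $\tau\mapsto\tau_{\mathfrak{B}}$ is exact for each fixed $n$, and (b) that the resulting inverse system over $n$ has vanishing first derived limit.

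\emph{Exactness for fixed $n$.} One factors $(-)_{\mathfrak{B}}$ as the completion along \emph{all} finite-length quotients, which lands in the pro-category of locally admissible smooth representations (locally admissible being the same as locally of finite length here, since a finitely generated admissible smooth representation of $G$ with central character has finite length \cite[2.3.8]{emerton2010ordinary}), followed by the projection onto the block $\mathfrak{B}$. The latter is exact, being the projection onto a direct factor of the product decomposition $\mathrm{Mod}_{G,\psi}^{\mathrm{l.adm}}(\mathscr{O}_L)=\prod_{\mathfrak{B}}\mathrm{Mod}_{G,\psi}^{\mathrm{l.adm}}(\mathscr{O}_L)[\mathfrak{B}]$ of Pa\v{s}k\={u}nas \cite{paskunas2013the}, which extends to pro-objects. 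So everything reduces to the exactness of the finite-length completion on finitely generated objects, after projecting onto a single block $\mathfrak{B}$. Here Pa\v{s}k\={u}nas's theory is used more seriously: the block $\mathfrak{B}$ carries a compact projective generator $\tilde{P}_{\mathfrak{B}}$ with $\tilde{E}_{\mathfrak{B}}:=\mathrm{End}_G(\tilde{P}_{\mathfrak{B}})$ Noetherian, and applying the exact functor $\mathrm{Hom}_G(\tilde{P}_{\mathfrak{B}},-)$ to the Pontryagin dual turns $\tau$ into a finitely generated $\tilde{E}_{\mathfrak{B}}$-module $M(\tau)$ --- finite generation coming from $\tau$ being finitely generated over $\mathscr{O}_L[G]$ together with the block decomposition of $\mathscr{O}_L[[G]]$. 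Under this dictionary the finite-length completion $\tau\mapsto\tau_{\mathfrak{B}}$ corresponds, up to duality, to the $\mathfrak{m}$-adic completion $M\mapsto\varprojlim_k M/\mathfrak{m}^k M$ over $\tilde{E}_{\mathfrak{B}}$; since $\tilde{E}_{\mathfrak{B}}$ is Noetherian this is exact on finitely generated modules by Artin--Rees, and the relevant derived limits vanish because the transition maps $M/\mathfrak{m}^{k+1}M\to M/\mathfrak{m}^kM$ are surjective.

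\emph{Passage to the limit, and the main obstacle.} Applying the exactness of the previous step to $0\to\Lambda_i/\pi_L\xrightarrow{\pi_L^n}\Lambda_i/\pi_L^{n+1}\to\Lambda_i/\pi_L^n\to0$ shows that the transition maps $(\Lambda_i/\pi_L^{n+1})_{\mathfrak{B}}\twoheadrightarrow(\Lambda_i/\pi_L^n)_{\mathfrak{B}}$ are surjective; the inverse system over $n$ is therefore Mittag--Leffler, its first derived limit vanishes, and taking $\varprojlim_n$ of the level-wise exact sequences yields the exactness of $\Lambda\mapsto\Lambda_{\mathfrak{B}}$, hence of $X\mapsto X_{\mathfrak{B}}$ on $\mathcal{C}^\psi$. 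I expect the genuine difficulty to be the single-$n$ step, and within it the passage from being finitely generated over $\mathscr{O}_L[G]$ --- where the group algebra is very far from Noetherian --- to being finitely generated over the Noetherian ring $\tilde{E}_{\mathfrak{B}}$; this is precisely the point where the good finiteness behaviour of smooth representations of $\mathrm{GL}_2(\mathbb{Q}_p)$, as organised by Pa\v{s}k\={u}nas, is indispensable. The reductions and the Mittag--Leffler bookkeeping are, by contrast, routine.
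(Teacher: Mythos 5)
The paper offers no proof of this lemma: it is quoted verbatim from \cite[Corollary 2.3]{cdn2023correspondance}, so your argument can only be compared with that source and judged on its own terms. Your skeleton --- reduce to the $\pi_L^n$-torsion quotients of a lattice, prove exactness at each finite level, then pass to the limit by Mittag--Leffler --- is the natural one, and the limit bookkeeping at the end is essentially fine. The problem is that the step you yourself single out as the heart of the matter is asserted rather than proved, and its stated justification is incorrect.

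Pa\v{s}k\={u}nas's dictionary $m\mapsto\mathrm{Hom}_G(\tilde{P}_{\mathfrak{B}},m)$, the exactness of this functor, and the equivalence with pseudo-compact $\tilde{E}_{\mathfrak{B}}$-modules are established only on the block component of the dual of the category of \emph{locally admissible} (equivalently locally finite) representations. A finitely generated smooth torsion representation $\tau$ such as $\mathrm{ind}_{KZ}^G\sigma/\pi_L$ is not locally admissible (being finitely generated, local admissibility would force admissibility, hence finite length by \cite[2.3.8]{emerton2010ordinary}, which fails); its Pontryagin dual therefore does not lie in the category where $\tilde{P}_{\mathfrak{B}}$ is projective, and neither the exactness of $\mathrm{Hom}_G(\tilde{P}_{\mathfrak{B}},(-)^\vee)$ on your sequences nor the identification of $\tau_{\mathfrak{B}}$ with the $\mathfrak{m}$-adic completion of $M(\tau)$ is available. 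Moreover, the finite generation of $M(\tau)$ over $\tilde{E}_{\mathfrak{B}}$ is attributed to ``the block decomposition of $\mathscr{O}_L[[G]]$'', which does not exist: the decomposition of \cite{paskunas2013the} concerns only locally admissible representations, and $\mathrm{ind}_{KZ}^G\sigma$ has finite-length quotients in infinitely many blocks without splitting accordingly --- this failure is precisely why the lemma is not a formal consequence of the block decomposition. What is missing is exactly the substantive content: a uniform cofinality (Artin--Rees) statement for the finite-length quotients with factors in $\mathfrak{B}$, of the kind supplied for compact inductions by the explicit description of $(\mathrm{ind}_{KZ}^G\sigma)_{\mathfrak{B}}$ as the $P(T)$-adic completion over the Hecke algebra $\kappa_L[T]$ (Propositions 2.5 and 2.6 of \cite{cdn2023correspondance}, quoted later in this paper), from which the general finitely generated case must then be deduced. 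A secondary wrinkle in your ``routine'' reduction: with $\Lambda_3:=\Lambda_2/\Lambda_1$, the sequences $0\to\Lambda_1/\pi_L^n\to\Lambda_2/\pi_L^n\to\Lambda_3/\pi_L^n\to0$ are exact only if $\Lambda_1$ is saturated in $\Lambda_2$, while the saturation $\Lambda_2\cap\pi_1$ is neither known to be finitely generated nor commensurable with a finitely generated lattice (and the independence of $(-)_{\mathfrak{B}}$ of the lattice is only clear for commensurable ones); this point needs an argument rather than a parenthesis.
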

On note $U_{M, \mathfrak{B}}$ l'ensemble des droites $\mathscr{L}\in\mathbf{P}(M_{\mathrm{dR}})$ telles que $\Pi_{M, \mathscr{L}}\in\mathrm{Ban}_{G, \psi}^{\mathrm{adm}}(L)_\mathfrak{B}$. On note $R_{M, \mathfrak{B}}:=\mathrm{End}_G(\mathrm{LL}(M)_\mathfrak{B})$, $X_{M, \mathfrak{B}}:=\mathrm{Spec}R_{M, \mathfrak{B}}$ et $\rho_{M, \mathfrak{B}}:=\mathbf{V}(\mathrm{LL}(M)_\mathfrak{B})$ où $\mathbf{V}$ est le foncteur de Langlands $p$-adique de Colmez. On note $R_{\mathfrak{B}}^{\mathrm{ps}, \delta_M}$ l'anneau des déformations universelles de déterminant $\delta_M\epsilon$ du pseudo-caractère $\mathrm{Tr}\circ\rho_{\mathfrak{B}}$ où $\rho_{\mathfrak{B}}$ est la $\kappa_L$-représentation de $\mathcal{G}_{\mathbb{Q}_p}$ semi-simple de dimension $2$ correspondante à $\mathfrak{B}$. Alors $R_{M, \mathfrak{B}}$ est le quotient de $R_{\mathfrak{B}}^{\mathrm{ps}, \delta_M}[\tfrac{1}{p}]$ paramétrant les représentations de type $M$ (i.e., potentiellement semi-stables, à poids $0$ et $1$, dont le $D_{\mathrm{pst}}$ est isomorphe à $M$) et $\rho_{M, \mathfrak{B}}$ est la $R_{M, \mathfrak{B}}$-représentation de $\mathcal{G}_{\mathbb{Q}_p}$ de dimension $2$ interpolant les représentations de type $M$. Si $x\in X_{M, \mathfrak{B}}$, on note $\mathfrak{m}_x$ l'idéal maximal de $R_{M, \mathfrak{B}}$ qui lui est associé et $L_x$ le corps résiduel $R_{M, \mathfrak{B}}/\mathfrak{m}_x$. D'après \cite[Théorème 0.1]{cdn2023correspondance}, $\rho_{M, \mathfrak{B}}$ est libre de rang $2$ sur $R_{M, \mathfrak{B}}$ et l'application de spécialisation $x\mapsto \rho_x$ induit, pour toute extension finie $L'$ de $L$, une bijection entre $X_{M, \mathfrak{B}}(L')$ et l'ensemble des $L'$-représentations de $\mathcal{G}_{\mathbb{Q}_p}$ de réduction $\bar{\rho}$ et de type $M$.

\subsection{Résultats sur les sous-quotients de $\widehat{\mathrm{LL}(M)}$}  
\begin{lemma}\label{3.4.0} Soient $\{\mathscr{L}_i\}_i$ une famille finie de droites distinctes et $\Pi$ une sous-représentation fermée de $\oplus_{i=1}^n\Pi_{M, \mathscr{L}_i, j_i}$ telle que, pour tout $i$, la composée $\Pi\hookrightarrow\oplus_{i=1}^n\Pi_{M, \mathscr{L}_i, j_i}\twoheadrightarrow\Pi_{M, \mathscr{L}_i, j_i}$ est surjective, alors $\Pi\cong\oplus_{i=1}^n\Pi_{M, \mathscr{L}_i, j_i}$.
\end{lemma}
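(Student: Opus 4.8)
Le plan est de raisonner par récurrence sur $n$ à l'aide du lemme de Goursat, le point clé étant que les représentations irréductibles $\Pi_{M, \mathscr{L}_i}$ sont deux à deux non isomorphes ; l'argument n'utilise que la structure des $\Pi_{M, \mathscr{L}_i, j_i}$ et le caractère abélien de $\mathrm{Ban}_{G, \psi}^{\mathrm{adm}}(L)$. D'abord, je remarquerais que $\Pi$, étant une sous-représentation fermée de la représentation de Banach admissible de longueur finie $\oplus_{i=1}^n\Pi_{M, \mathscr{L}_i, j_i}$, est elle-même admissible et de longueur finie, de sorte que ses facteurs de Jordan-Hölder sont parmi $\{\Pi_{M, \mathscr{L}_i}\}_{i=1}^n$ ; de plus, tout morphisme $G$-équivariant entre objets de $\mathrm{Ban}_{G, \psi}^{\mathrm{adm}}(L)$ est strict, donc d'image fermée.

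Le cas $n=1$ est immédiat, puisque $\Pi\hookrightarrow\Pi_{M, \mathscr{L}_1, j_1}$ surjective force $\Pi=\Pi_{M, \mathscr{L}_1, j_1}$. Pour l'étape de récurrence, j'écrirais le module ambiant sous la forme $A\oplus B$ avec $A=\oplus_{i=1}^{n-1}\Pi_{M, \mathscr{L}_i, j_i}$ et $B=\Pi_{M, \mathscr{L}_n, j_n}$, et je noterais $p_A, p_B$ les projections restreintes à $\Pi$. L'image $p_A(\Pi)$ est une sous-représentation fermée de $A$ qui se surjecte sur chaque $\Pi_{M, \mathscr{L}_i, j_i}$ pour $1\leq i\leq n-1$ (la surjection $\Pi\twoheadrightarrow\Pi_{M, \mathscr{L}_i, j_i}$ se factorisant par $p_A$), donc $p_A(\Pi)=A$ par l'hypothèse de récurrence ; et $p_B$ est surjective par hypothèse. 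On est donc dans la situation de Goursat : en posant $K_A=\Pi\cap A$ et $K_B=\Pi\cap B$ (sous-représentations fermées, d'intersection nulle), on obtient des isomorphismes $A/K_A\cong\Pi/(K_A\oplus K_B)\cong B/K_B$.

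On conclut alors par un argument sur les constituants. L'objet $A/K_A$ est un quotient de $A$, donc tous ses facteurs de Jordan-Hölder sont dans $\{\Pi_{M, \mathscr{L}_i}: 1\leq i\leq n-1\}$, tandis que $B/K_B$ est un quotient de $\Pi_{M, \mathscr{L}_n, j_n}$, donc tous ses facteurs sont isomorphes à $\Pi_{M, \mathscr{L}_n}$. Comme $\Pi_{M, \mathscr{L}_n}\ncong\Pi_{M, \mathscr{L}_i}$ pour $i<n$, deux représentations de longueur finie isomorphes dont les ensembles de constituants irréductibles sont disjoints sont nécessairement nulles ; d'où $K_A=A$ et $K_B=B$, c'est-à-dire $A\oplus 0\subseteq\Pi$ et $0\oplus B\subseteq\Pi$, donc $\Pi=A\oplus B=\oplus_{i=1}^n\Pi_{M, \mathscr{L}_i, j_i}$.

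La seule subtilité réelle — et l'étape que je traiterais avec le plus de soin — est de nature topologique : il faut savoir que $\Pi$ est admissible (pour qu'elle soit de longueur finie et que l'on puisse parler de ses facteurs de Jordan-Hölder) et que les morphismes en jeu sont stricts (pour que $p_A(\Pi)$, $K_A$, $K_B$ soient fermés et que les quotients de Goursat soient de véritables représentations de Banach sur lesquelles l'argument de constituants s'applique) ; ces deux points résultent du fait que $\mathrm{Ban}_{G, \psi}^{\mathrm{adm}}(L)$ est abélienne et de la longueur finie du but. On peut aussi contourner entièrement Goursat : comme des $\mathscr{L}_i$ distincts produisent des représentations situées dans des composantes distinctes de la décomposition de $\mathrm{Ban}_{G, \psi}^{\mathrm{adm}}(L)$ en blocs, puis en parties associées aux idéaux maximaux (pour lesquelles il n'y a ni morphismes ni extensions, \autoref{1.1.4}), la représentation de longueur finie $\Pi$ se décompose en $\oplus_i\Pi_i$ avec $\Pi_i$ supportée sur la $i$-ème composante ; la composée $\Pi_i\hookrightarrow\Pi\hookrightarrow\oplus_k\Pi_{M, \mathscr{L}_k, j_k}$ tue alors chaque facteur $k\neq i$, et le morphisme induit $\Pi_i\to\Pi_{M, \mathscr{L}_i, j_i}$ est à la fois injectif et, par l'hypothèse de surjectivité, surjectif, ce qui donne $\Pi_i\cong\Pi_{M, \mathscr{L}_i, j_i}$ et la conclusion.
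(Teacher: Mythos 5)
Votre preuve est correcte et suit le même squelette que celle de l'article — une récurrence sur $n$ dont le moteur est le fait que $\Pi_{M, \mathscr{L}}\ncong\Pi_{M, \mathscr{L}'}$ pour $\mathscr{L}\neq\mathscr{L}'$ — mais l'organisation de l'étape de récurrence diffère. L'article considère, pour chaque $i$, la projection de $\Pi$ sur la somme privée du $i$-ème facteur, applique l'hypothèse de récurrence à son image pour obtenir une suite exacte courte, identifie $\Pi\cap(0\oplus\cdots\oplus\Pi_{M, \mathscr{L}_i, j_i}\oplus\cdots\oplus0)$ au facteur tout entier (en invoquant de façon assez elliptique la non-isomorphie des $\Pi_{M, \mathscr{L}_i}$), puis conclut par un diagramme commutatif. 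Vous séparez au contraire le dernier facteur, appliquez le lemme de Goursat à $\Pi\subseteq A\oplus B$ et concluez par la disjonction des constituants de Jordan--Hölder de $A/K_A$ et $B/K_B$ : c'est essentiellement la même idée, mais votre argument de constituants rend explicite précisément le point que l'article laisse implicite, et vos précautions topologiques (admissibilité et longueur finie de $\Pi$, morphismes stricts donc images et intersections fermées) sont les bonnes et valent aussi pour la preuve de l'article. Votre seconde route, via la décomposition de la catégorie en blocs puis en composantes associées aux idéaux maximaux, est quant à elle un argument réellement différent et plus conceptuel, qui évite toute récurrence ; pour qu'elle soit complète il faut justifier que des droites distinctes donnent des idéaux maximaux distincts (les $V_{M, \mathscr{L}}$ sont absolument irréductibles, donc déterminées par leur pseudo-caractère) et prendre garde que le \autoref{1.1.4} est énoncé pour des blocs constitués de représentations absolument irréductibles, hypothèse à vérifier ou à contourner par extension des scalaires ; votre argument principal par Goursat n'en a de toute façon pas besoin.
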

\begin{proof}[Preuve] La démonstration se fait par récurrence. Le cas $n=1$ est trivial. Supposons que le résultat est vrai pour $n$. Pour chaque $1\leq i\leq n+1$, on considère la projection $$\mathrm{Proj}_i: \Pi\to\Pi_{M, \mathscr{L}_1, j_1}\oplus\cdots\oplus\widehat{\Pi_{M, \mathscr{L}_i, j_i}}\oplus\cdots\oplus\Pi_{M, \mathscr{L}_{n+1}, j_{n+1}}$$
	où le chapeau désigne que l'on omet le terme $\Pi_{M, \mathscr{L}_i, j_i}$. En utilisant l'hypothèse de récurrence à l'image de $\mathrm{Proj}_i$, on déduit que la projection $\mathrm{Proj}_i$ est surjective, ce qui induit une suite exacte courte
	$$0\to\Pi\cap(0\oplus\cdots\oplus\Pi_{M, \mathscr{L}_i, j_i}\oplus\cdots\oplus0)\to\Pi\to\Pi_{M, \mathscr{L}_1, j_1}\oplus\cdots\oplus\widehat{\Pi_{M, \mathscr{L}_i, j_i}}\oplus\cdots\oplus\Pi_{M, \mathscr{L}_{n+1}, j_{n+1}}\to0$$
	pour tout $i$. 	La représentation $\Pi\cap(0\oplus\cdots\oplus\Pi_{M, \mathscr{L}_i, j_i}\oplus\cdots\oplus0)$ est isomorphe à une sous-représentation fermée de $\Pi_{M, \mathscr{L}_i, j_i}$. Il suit de ce qui précède la Proposition \ref{finalbanach} que $\Pi\cap(0\oplus\cdots\oplus\Pi_{M, \mathscr{L}_i, j_i}\oplus\cdots\oplus0)\cong\Pi_{M, \mathscr{L}_i, t_i}$ pour un certain entier positif $t_i\leq j_i$. On déduit du Théorème \ref{1.1.4} que l'on a un isomorphisme
	$$\Pi\cong\Pi_{M, \mathscr{L}_i, t_i}\oplus(\Pi_{M, \mathscr{L}_1, j_1}\oplus\cdots\oplus\widehat{\Pi_{M, \mathscr{L}_i, j_i}}\oplus\cdots\oplus\Pi_{M, \mathscr{L}_{n+1}, j_{n+1}}).$$
	
	Considérons la surjection $\Pi\twoheadrightarrow\Pi_{M, \mathscr{L}_i, j_i}$, il suit de la Proposition \ref{2.1.1} que ce morphisme se factorise par $\Pi_{M, \mathscr{L}_i, t_i}\to\Pi_{M, \mathscr{L}_i, j_i}$. On en déduit que $t_i=j_i$. Il en résulte un diagramme commutatif à lignes exactes dans la catégorie abélienne des représentations de Banach admissibles
	$$ 
	\xymatrix@R=5mm@C=4mm{
		0\ar[r]&\Pi_{M, \mathscr{L}_i, j_i}\ar@{=}[d] \ar[r]& \Pi\ar[r]\ar@{^{(}->}[d]&\Pi_{M, \mathscr{L}_1, j_1}\oplus\cdots\oplus\widehat{\Pi_{M, \mathscr{L}_i, j_i}}\oplus\cdots\oplus\Pi_{M, \mathscr{L}_{n+1}, j_{n+1}}\ar[r]\ar@{=}[d] &0\\
		0\ar[r]&\Pi_{M, \mathscr{L}_i, j_i}\ar[r]& \oplus_{i=1}^n\Pi_{M, \mathscr{L}_i, j_i}\ar[r]&\Pi_{M, \mathscr{L}_1, j_1}\oplus\cdots\oplus\widehat{\Pi_{M, \mathscr{L}_i, j_i}}\oplus\cdots\oplus\Pi_{M, \mathscr{L}_{n+1}, j_{n+1}}\ar[r]&0.
	}$$
	Ainsi, on déduit que $\Pi=\oplus_{i=1}^n\Pi_{M, \mathscr{L}_i, j_i}$, ce que l'on voulait.
\end{proof}
Le corollaire suivant implique que toute somme directe finie de $\Pi_{M, \mathscr{L}, j}$ pour $\mathscr{L}$ différentes est un quotient de $\widehat{\mathrm{LL}(M)}$. On va montrer, à la fin de cette thèse, ces représentations décrivent exhaustivement tous les quotients propres de $\widehat{\mathrm{LL}(M)}$.
\begin{corollary}\label{3.3.1} Soient $\{\mathscr{L}_i\}_{i\in I}$ une famille finie de droites distinctes et $\{j_i\}_{i\in I}$ une famille d'entiers positifs, alors le morphisme
	$$\widehat{\mathrm{LL}(M)}/\cap N_{\mathscr{L}_i, j_i}\to\oplus_{i=1}^n\Pi_{M, \mathscr{L}_i, j_i}$$
	induit par les surjections $\widehat{\mathrm{LL}(M)}\to\Pi_{M, \mathscr{L}_i, j_i}$ est un isomorphisme.
\end{corollary}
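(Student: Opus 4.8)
\emph{Plan de preuve.} Le plan est d'exhiber le morphisme de l'énoncé comme le morphisme canonique déduit par passage au quotient du morphisme diagonal $\Phi\colon\widehat{\mathrm{LL}(M)}\to\oplus_{i=1}^n\Pi_{M,\mathscr{L}_i,j_i}$ dont les composantes sont les surjections du \autoref{2.2.3}. Je commencerais par observer que le noyau de $\Phi$ est $\bigcap_i N_{\mathscr{L}_i,j_i}$, de sorte que le morphisme induit $\overline{\Phi}\colon\widehat{\mathrm{LL}(M)}/\bigcap_i N_{\mathscr{L}_i,j_i}\to\oplus_{i=1}^n\Pi_{M,\mathscr{L}_i,j_i}$ est injectif et continu, sa source étant bien une représentation de Banach puisque $\bigcap_i N_{\mathscr{L}_i,j_i}$ est fermé. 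Il restera alors uniquement à établir la surjectivité de $\Phi$ (ou, de façon équivalente, de $\overline{\Phi}$): l'assertion topologique en découlera aussitôt, une bijection continue $G$-équivariante entre représentations de Banach étant un isomorphisme topologique par le théorème de l'application ouverte.

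Pour la surjectivité, je procéderais en deux temps. D'abord, en notant $\Pi$ l'adhérence de l'image de $\Phi$ — une sous-représentation fermée de $\oplus_{i=1}^n\Pi_{M,\mathscr{L}_i,j_i}$ —, je vérifierais que pour chaque $i$ la composée $\Pi\hookrightarrow\oplus_{i=1}^n\Pi_{M,\mathscr{L}_i,j_i}\twoheadrightarrow\Pi_{M,\mathscr{L}_i,j_i}$ est surjective: son image contient celle de $\widehat{\mathrm{LL}(M)}$, qui est $\Pi_{M,\mathscr{L}_i,j_i}$ tout entier d'après le \autoref{2.2.3}. Comme les droites $\mathscr{L}_i$ sont deux à deux distinctes, le \autoref{3.4.0} s'appliquera et donnera $\Pi\cong\oplus_{i=1}^n\Pi_{M,\mathscr{L}_i,j_i}$, c'est-à-dire que l'image de $\Phi$ est dense. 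Ensuite, j'utiliserais le fait que chaque $\Pi_{M,\mathscr{L}_i,j_i}$ est résiduellement de type fini sur $\kappa_L[G]$ — propriété déjà mise à profit dans le \autoref{2.2.3} — pour en déduire que la somme directe finie $\oplus_{i=1}^n\Pi_{M,\mathscr{L}_i,j_i}$ l'est aussi, une boule unité en étant $\oplus_i\Pi_{M,\mathscr{L}_i,j_i}^+$; j'appliquerais alors le \autoref{2.2.2} au morphisme $\Phi$, qui est d'image dense, pour conclure qu'il est surjectif.

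\emph{Principale difficulté.} À vrai dire, ce corollaire est une conséquence essentiellement formelle du \autoref{3.4.0}, et il n'y a pas d'étape réellement délicate. Le seul point qui demande de la prudence est le maniement de la topologie: il faut passer à l'adhérence de l'image pour pouvoir invoquer le \autoref{3.4.0} dans le cadre des sous-représentations fermées, puis remonter de la densité à la surjectivité au moyen du \autoref{2.2.2}, qui est la forme forte des arguments de densité (généralisant la \cite[Proposition 3.2]{cdn2023correspondance}) utilisés tout au long de cette section.
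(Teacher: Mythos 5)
Votre preuve est correcte et suit essentiellement la même démarche que celle du texte : densité de l'image via le \autoref{3.4.0} appliqué à l'adhérence (dont les projections sont surjectives grâce au \autoref{2.2.3}), puis passage de la densité à la surjectivité par le \autoref{2.2.2}. Votre rédaction est même un peu plus soignée sur la vérification de l'hypothèse de surjectivité du \autoref{3.4.0} et sur l'identification du noyau avec $\bigcap_i N_{\mathscr{L}_i, j_i}$.
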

\begin{proof}[Preuve] Compte tenu du Lemme \ref{2.2.2}, il suffit de montrer que le morphisme $\mathrm{LL}(M)\to\oplus_{i=1}^n\Pi_{M, \mathscr{L}_i, j_i}$ est d'image dense. Or, l'adhérence $T$ du image de ce morphisme satisfait la condition que toute projection $T\to\Pi_{M, \mathscr{L}_i}$ est non nulle. Il découle du Lemme \ref{3.4.0} que $T=\oplus_{i=1}^n\Pi_{M, \mathscr{L}_i}$, d'où le résultat.
\end{proof}	
Le lemme ci-dessous implique que la représentation $\Pi_{M, \mathscr{L}}^{\oplus n}$ pour $n\geq2$ ne peut pas être un quotient de $\widehat{\mathrm{LL}(M)}$.
\begin{lemma}\label{3.4.2} Soient $\mathscr{L}\in\mathbf{P}^1$ et $\{s_j\}_{j\in J}$ une famille d'entiers positifs où $|J|\geq2$, alors il n'existe pas de surjection $\widehat{\mathrm{LL}(M)}\twoheadrightarrow\oplus_{j\in J}\Pi_{M, \mathscr{L}, j}$.
\end{lemma}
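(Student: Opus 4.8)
The plan is to reduce, in two stages, to a one-line computation. First I would reduce to the case $|J|=2$: composing a surjection $\widehat{\mathrm{LL}(M)}\twoheadrightarrow\bigoplus_{j\in J}\Pi_{M,\mathscr{L},s_j}$ with the projection onto any two of the summands is again a surjection, so it suffices to rule out $\widehat{\mathrm{LL}(M)}\twoheadrightarrow\Pi_{M,\mathscr{L},a}\oplus\Pi_{M,\mathscr{L},b}$ with $a,b\geq1$. Next, since the subrepresentations of $\Pi_{M,\mathscr{L},s}$ are exactly the $T^k\Pi_{M,\mathscr{L},s}=\Pi_{M,\mathscr{L},s-k}$, the representation $\Pi_{M,\mathscr{L},s}$ has a canonical quotient isomorphic to $\Pi_{M,\mathscr{L}}$ (the identity when $s=1$); composing once more with these two quotient maps, and using that a composite of surjections is a surjection, I am reduced to proving that there is no surjection $\widehat{\mathrm{LL}(M)}\twoheadrightarrow\Pi_{M,\mathscr{L}}\oplus\Pi_{M,\mathscr{L}}$.

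The heart of the argument is then the computation $\dim_L\mathrm{Hom}_G(\widehat{\mathrm{LL}(M)},\Pi_{M,\mathscr{L}})=1$. Since $\Pi_{M,\mathscr{L}}$ is a unitary Banach representation, the universal property of the universal unitary completion identifies $\mathrm{Hom}_G(\widehat{\mathrm{LL}(M)},\Pi_{M,\mathscr{L}})$ with $\mathrm{Hom}_G(\mathrm{LL}(M),\Pi_{M,\mathscr{L}})$; as $\mathrm{LL}(M)$ is smooth, the image of any such morphism lies in $\Pi_{M,\mathscr{L}}^{\mathrm{lisse}}$, which equals $\mathrm{LL}(M)$ by definition of $\Pi(M)$ — indeed $\Pi_{M,\mathscr{L}}=\boldsymbol{\Pi}(V_{M,\mathscr{L}})\in\Pi(M)$, cf. Section \ref{section2.1} — and $\mathrm{End}_G(\mathrm{LL}(M))=L$ because $\mathrm{LL}(M)$ is absolutely irreducible. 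I expect this to be where the only real care is needed: one must check that passing between these $\mathrm{Hom}$-spaces loses no continuity (a $G$-equivariant map out of $\mathrm{ind}_{KZ}^G\sigma_M$ is automatically continuous, and Schur scalars are continuous), so that the identification is valid in the relevant topological categories.

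Given this, fix a generator $\pi$ of $\mathrm{Hom}_G(\widehat{\mathrm{LL}(M)},\Pi_{M,\mathscr{L}})$, which is surjective by \autoref{2.2.3}. Any morphism $q'\colon\widehat{\mathrm{LL}(M)}\to\Pi_{M,\mathscr{L}}\oplus\Pi_{M,\mathscr{L}}$ then has the form $w\mapsto(c\,\pi(w),c'\,\pi(w))$ with $c,c'\in L$, so its image is contained in $\{(cv,c'v)\mid v\in\Pi_{M,\mathscr{L}}\}$. This is a proper subspace of $\Pi_{M,\mathscr{L}}\oplus\Pi_{M,\mathscr{L}}$: if $c'\neq0$ then a vector $(v_0,0)$ with $v_0\neq0$ does not belong to it, and if $c'=0$ then neither does $(0,v_0)$ with $v_0\neq0$. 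Hence $q'$ is not surjective, contradicting the reduction above, which proves the lemma.
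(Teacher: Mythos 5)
Votre preuve est correcte et suit essentiellement la même stratégie que celle de l'article : tout repose sur l'identité $\mathrm{Hom}_G(\widehat{\mathrm{LL}(M)}, \Pi_{M, \mathscr{L}})=\mathrm{Hom}_G(\mathrm{LL}(M), \Pi_{M, \mathscr{L}}^{\mathrm{lisse}})=\mathrm{End}_G(\mathrm{LL}(M))=L$, exactement comme dans le texte. La seule différence est de présentation : vous vous ramenez d'abord à deux copies de $\Pi_{M,\mathscr{L}}$ par projection et passage au quotient puis constatez que l'image est une diagonale propre, tandis que l'article applique directement $\mathrm{Hom}_G(-,\Pi_{M,\mathscr{L}})$ à la suite exacte et compare les dimensions ($|J|\geq 2$ contre $1$), ce qui revient au même argument.
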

\begin{proof}[Preuve] Supposons qu'il existe une surjection $\widehat{\mathrm{LL}(M)}\twoheadrightarrow\oplus_{j\in J}\Pi_{M, \mathscr{L}, j}$. On note $T$ le noyau, alors on a la suite exacte suivante
	$$0\to T\to\widehat{\mathrm{LL}(M)}\to\oplus_{j\in J}\Pi_{M, \mathscr{L}, j}\to0.$$
	En appliquant le foncteur $\mathrm{Hom}_G(-, \Pi_{M, \mathscr{L}})$, on obtient une suite exacte
	$$0\to\mathrm{Hom}_G(\oplus_{j\in J}\Pi_{M, \mathscr{L}, j}, \Pi_{M, \mathscr{L}})\to\mathrm{Hom}_G(\widehat{\mathrm{LL}(M)}, \Pi_{M, \mathscr{L}})\to\mathrm{Hom}_G(T, \Pi_{M, \mathscr{L}}).$$
	Comme $$\mathrm{Hom}_G(\widehat{\mathrm{LL}(M)}, \Pi_{M, \mathscr{L}})=\mathrm{Hom}_G(\mathrm{LL}(M), \Pi_{M, \mathscr{L}})=\mathrm{Hom}_G(\mathrm{LL}(M), \Pi_{M, \mathscr{L}}^{\mathrm{lisse}})=\mathrm{End}_G(\mathrm{LL}(M))$$
	est de dimension $1$, on obtient une contradiction. Cela permet de conclure.
\end{proof}
\begin{lemma}\label{3.3.3} Soit $\Pi$ une sous-représentation de $\widehat{\mathrm{LL}(M)}$, alors on a $\mathrm{Hom}_G(\widehat{\mathrm{LL}(M)}/\Pi, \Pi_{M, \mathscr{L}})\neq0$ si et seulement si $\Pi\subseteq N_{\mathscr{L}, 1}$.
\end{lemma}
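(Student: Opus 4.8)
The plan is to reduce everything to the one-dimensionality of $\mathrm{Hom}_G(\widehat{\mathrm{LL}(M)}, \Pi_{M, \mathscr{L}})$, which was already recorded inside the proof of \autoref{3.4.2}. Recall that $\Pi_{M, \mathscr{L}, 1} = \Pi_{M, \mathscr{L}}$ and that, by definition, $N_{\mathscr{L}, 1}$ is the kernel of the canonical surjection $p_{\mathscr{L}} \colon \widehat{\mathrm{LL}(M)} \twoheadrightarrow \Pi_{M, \mathscr{L}}$ constructed in \autoref{section2.2}. I would first record the identification
$$\mathrm{Hom}_G(\widehat{\mathrm{LL}(M)}, \Pi_{M, \mathscr{L}}) = \mathrm{Hom}_G(\mathrm{LL}(M), \Pi_{M, \mathscr{L}}^{\mathrm{lisse}}) = \mathrm{End}_G(\mathrm{LL}(M)) = L,$$
where the first equality uses the universal property of the universal unitary completion together with the smoothness of $\mathrm{LL}(M)$ and \autoref{3.2.2}, and the last equality is the irreducibility of $\mathrm{LL}(M)$. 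In particular, every nonzero element of $\mathrm{Hom}_G(\widehat{\mathrm{LL}(M)}, \Pi_{M, \mathscr{L}})$ is a nonzero scalar multiple of $p_{\mathscr{L}}$, hence has kernel exactly $N_{\mathscr{L}, 1}$. I would take $\Pi$ to be a closed subrepresentation, so that the quotient map $q \colon \widehat{\mathrm{LL}(M)} \twoheadrightarrow \widehat{\mathrm{LL}(M)}/\Pi$ is a morphism of Banach representations.

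For the implication $\Leftarrow$: if $\Pi \subseteq N_{\mathscr{L}, 1} = \ker p_{\mathscr{L}}$, then $p_{\mathscr{L}}$ factors through $q$, say $p_{\mathscr{L}} = \bar{p}_{\mathscr{L}} \circ q$ with $\bar{p}_{\mathscr{L}} \in \mathrm{Hom}_G(\widehat{\mathrm{LL}(M)}/\Pi, \Pi_{M, \mathscr{L}})$; since $p_{\mathscr{L}}$ is surjective, $\bar{p}_{\mathscr{L}} \neq 0$, whence $\mathrm{Hom}_G(\widehat{\mathrm{LL}(M)}/\Pi, \Pi_{M, \mathscr{L}}) \neq 0$.

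For the implication $\Rightarrow$: precomposition with $q$ yields an injection $q^{*} \colon \mathrm{Hom}_G(\widehat{\mathrm{LL}(M)}/\Pi, \Pi_{M, \mathscr{L}}) \hookrightarrow \mathrm{Hom}_G(\widehat{\mathrm{LL}(M)}, \Pi_{M, \mathscr{L}})$, whose image consists precisely of the morphisms vanishing on $\Pi$. If the source is nonzero, choose $0 \neq \phi$ in it; then $q^{*}\phi = \phi \circ q$ is a nonzero element of $\mathrm{Hom}_G(\widehat{\mathrm{LL}(M)}, \Pi_{M, \mathscr{L}})$, hence a nonzero multiple of $p_{\mathscr{L}}$ by the one-dimensionality above, so $\ker(q^{*}\phi) = N_{\mathscr{L}, 1}$. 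Since $q^{*}\phi$ vanishes on $\Pi$, we conclude $\Pi \subseteq N_{\mathscr{L}, 1}$.

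There is no serious obstacle here: the argument is pure bookkeeping with the universal property of $\widehat{\mathrm{LL}(M)}$ and the exact sequence $0 \to \mathrm{Hom}_G(\widehat{\mathrm{LL}(M)}/\Pi, \Pi_{M, \mathscr{L}}) \to \mathrm{Hom}_G(\widehat{\mathrm{LL}(M)}, \Pi_{M, \mathscr{L}}) \to \mathrm{Hom}_G(\Pi, \Pi_{M, \mathscr{L}})$. The only input that is not entirely formal is the computation $\mathrm{Hom}_G(\widehat{\mathrm{LL}(M)}, \Pi_{M, \mathscr{L}}) = L$, but this is already available in the excerpt (it rests on \autoref{3.2.2} and was used in \autoref{3.4.2}), so the proof is short.
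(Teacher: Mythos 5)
Your argument is correct and is essentially the paper's own (much terser) proof: the paper likewise composes a nonzero $f$ with the quotient map, observes that the resulting morphism $\widehat{\mathrm{LL}(M)}\to\Pi_{M, \mathscr{L}}$ is, up to scalar, the canonical one induced by $\mathrm{LL}(M)\hookrightarrow\Pi_{M, \mathscr{L}}$ (so its kernel is $N_{\mathscr{L}, 1}$), and concludes $\Pi\subseteq N_{\mathscr{L}, 1}$, the converse being the same formal factorization you give. One small bookkeeping remark: the identification $\mathrm{Hom}_G(\mathrm{LL}(M), \Pi_{M, \mathscr{L}})=\mathrm{End}_G(\mathrm{LL}(M))$ rests on $\Pi_{M, \mathscr{L}}^{\mathrm{lisse}}=\mathrm{LL}(M)$ (the defining property of $\Pi(M)$, exactly as in the computation inside the proof of \autoref{3.4.2}), not on \autoref{3.2.2}, which concerns $\widehat{\mathrm{LL}(M)}^{\mathrm{lisse}}$.
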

\begin{proof}[Preuve] Si $f\in\mathrm{Hom}_G(\widehat{\mathrm{LL}(M)}/\Pi, \Pi_{M, \mathscr{L}})$ est un morphisme non nul, alors la composée de $f$ et du morphisme naturel $\widehat{\mathrm{LL}(M)}\to\widehat{\mathrm{LL}(M)}/\Pi$ est un morphisme $\widehat{\mathrm{LL}(M)}\to\Pi_{M, \mathscr{L}}$, ce qui est induit par l'inclusion $\mathrm{LL}(M)\to\Pi_{M, \mathscr{L}}$. Cela permet de conclure.
\end{proof}

\begin{lemma}\label{3.3.4} Il existe un nombre fini de blocs $\{\mathfrak{B}_i\}_i$ tels que l'on ait une injection $$\widehat{\mathrm{LL}(M)}\hookrightarrow\prod_i\mathrm{LL}(M)_{\mathfrak{B}_i}.$$
\end{lemma}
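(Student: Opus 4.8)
The strategy is to embed $\widehat{\mathrm{LL}(M)}$ into the $\mathfrak{B}$-adic completions of $\mathrm{LL}(M)$ and then discard all but finitely many of them. Fix an $\mathscr{O}_L$-lattice $\sigma_M^+\subseteq\sigma_M$, put $\Lambda^+:=\mathrm{ind}_{KZ}^G\sigma_M^+$, a finitely generated $G$-stable lattice in $\mathrm{LL}(M)$, and recall from \cite[Proposition 1.17]{emerton2005p-adic} that the unit ball of $\widehat{\mathrm{LL}(M)}$ is the $p$-adic completion $\Theta:=\varprojlim_n\Lambda^+/p^n\Lambda^+$; since $\Lambda^+$ is $p$-torsion free and $\mathrm{ind}_{KZ}^G$ is exact, $\Theta/p\Theta\cong\mathrm{ind}_{KZ}^G(\sigma_M^+/p)$ is a finitely generated smooth $\kappa_L[G]$-module.

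Any finite length $\mathscr{O}_L[G]$-quotient of $\Lambda^+$ is killed by a power of $p$, hence is a quotient of $\Lambda^+/p^n\Lambda^+$ for some $n$, hence of $\Theta$; conversely every finite length quotient of $\Theta$ is one of $\Lambda^+$. So for each block $\mathfrak{B}$ the $\mathfrak{B}$-adic completions of $\Lambda^+$ and of $\Theta$ coincide, and passing to the limit over the finite length quotients of $\Theta$ lying in $\mathfrak{B}$ gives a continuous $G$-map $\Theta\to\Theta_{\mathfrak{B}}=\Lambda^+_{\mathfrak{B}}$; inverting $p$ produces a natural $G$-map $\widehat{\mathrm{LL}(M)}\to\mathrm{LL}(M)_{\mathfrak{B}}$. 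Next I would invoke \cite{cdn2023correspondance} (cf. the discussion preceding the lemma): one has $\mathrm{LL}(M)_{\mathfrak{B}}\ne 0$ exactly when $U_{M,\mathfrak{B}}\ne\varnothing$, and since $\mathbf{P}(M_{\mathrm{dR}})=\bigsqcup_{\mathfrak{B}}U_{M,\mathfrak{B}}$ while the semisimplified reduction $\mathscr{L}\mapsto\overline{V_{M,\mathscr{L}}}$ takes only finitely many values on $\mathbf{P}^1$ (being constant on a Zariski-dense open, whose complement is finite), only finitely many blocks $\mathfrak{B}_1,\dots,\mathfrak{B}_r$ satisfy $\mathrm{LL}(M)_{\mathfrak{B}}\ne 0$. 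It therefore suffices to prove that $\widehat{\mathrm{LL}(M)}\to\prod_{i=1}^r\mathrm{LL}(M)_{\mathfrak{B}_i}$ is injective; as the remaining factors vanish, this is equivalent to injectivity of $\widehat{\mathrm{LL}(M)}\to\prod_{\mathfrak{B}}\mathrm{LL}(M)_{\mathfrak{B}}$.

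By construction the kernel of this last map meets $\Theta$ in $\bigcap_Q\ker(\Theta\to Q)$, where $Q$ runs over all finite length $G$-quotients of $\Theta$. Each such $Q$ is killed by some $p^n$, and $\Theta$ is $p$-adically separated, so it is enough to check that for every $n$ the finite length smooth $G$-quotients of $\Theta/p^n\Theta\cong\mathrm{ind}_{KZ}^G(\sigma_M^+/p^n)$ separate its points. This is the heart of the matter and the main obstacle: $\mathrm{ind}_{KZ}^G(\sigma_M^+/p^n)$ is itself of infinite length, so the block decomposition of locally admissible representations is unavailable. My plan is to use the Hecke operator $T$ of Barthel--Livné over the coefficient ring $\mathscr{O}_L/p^n$: it acts injectively on $\mathrm{ind}_{KZ}^G(\sigma_M^+/p^n)$, and for every nonzero $f\in(\mathscr{O}_L/p^n)[T]$ the quotient $\mathrm{ind}_{KZ}^G(\sigma_M^+/p^n)/f(T)$ has finite length — it is a successive extension of the quotients by $T-\lambda$, each an unramified twist of a principal series or a supersingular-type representation by the analysis of \cite{blirreducible1994,breuilsur2003} — while $\bigcap_f f(T)\cdot\mathrm{ind}_{KZ}^G(\sigma_M^+/p^n)=0$ because $\bigcap_f f(T)(\mathscr{O}_L/p^n)[T]=0$ and the module is $T$-torsion free; when $\sigma_M^+/p^n$ is reducible as a $KZ$-representation, a short dévissage along a Jordan--Hölder filtration reduces to the irreducible case. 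This gives $\bigcap_Q\ker(\Theta\to Q)=0$, and hence the claimed injection.

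Finally, for sanity, one checks using \autoref{3.2.2} and the irreducibility of $\mathrm{LL}(M)$ that each $\mathrm{LL}(M)\to\mathrm{LL}(M)_{\mathfrak{B}_i}$ is nonzero (hence injective), so none of the $\mathfrak{B}_i$ is superfluous; the genuinely hard input in this plan is the point-separation statement for $\mathrm{ind}_{KZ}^G(\sigma_M^+/p^n)$ described above, everything else being formal manipulation of $\mathfrak{B}$-adic completions together with the cited finiteness of reduction types in the family $(V_{M,\mathscr{L}})_{\mathscr{L}\in\mathbf{P}^1}$.
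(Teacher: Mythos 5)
Your plan has two genuine gaps, and they sit precisely where the paper puts its real work. First, your reduction to finitely many blocks rests on the claim that $\mathrm{LL}(M)_{\mathfrak{B}}=0$ for all but finitely many $\mathfrak{B}$, justified by saying the semisimplified reduction of $V_{M,\mathscr{L}}$ is ``constant on a Zariski-dense open whose complement is finite''. That justification is incorrect (reductions in such families are locally constant for the $p$-adic topology, not constant off a finite Zariski-closed set), and the vanishing claim itself is very dubious: $(\mathrm{ind}_{KZ}^G\sigma_M^0)/\pi_L$ surjects onto $\mathrm{ind}_{KZ}^G\sigma_i/P(T_i)$ for \emph{every} irreducible $P\in\kappa_L[T_i]$, of arbitrary degree, so $((\mathrm{ind}_{KZ}^G\sigma_M^0)/\pi_L)_{\mathfrak{B}_P}\neq0$ for infinitely many blocks $\mathfrak{B}_P$; and since the exactness of the $\mathfrak{B}$-adic completion (Lemme \ref{exact}) makes $(\mathrm{ind}_{KZ}^G\sigma_M^0)_{\mathfrak{B}_P}$ $\pi_L$-torsion-free with reduction $((\mathrm{ind}_{KZ}^G\sigma_M^0)/\pi_L)_{\mathfrak{B}_P}$, one gets $\mathrm{LL}(M)_{\mathfrak{B}_P}\neq0$ for infinitely many $\mathfrak{B}$. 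The lemma therefore cannot be obtained by discarding ``zero'' factors from the product over all blocks. The paper does something different: it \emph{chooses} one block per Serre weight $\sigma_i$ occurring in $\bar\sigma_M$, namely the block of $\mathrm{ind}_{KZ}^G\sigma_i/P_i(T_i)$ for a fixed irreducible $P_i$, and proves injectivity into that specific finite product after reduction mod $\pi_L$, using exactness of $(-)_{\mathfrak{B}}$ to dévisser to the $\mathrm{ind}_{KZ}^G\sigma_i$ and then the identification $(\mathrm{ind}_{KZ}^G\sigma_i)_{\mathfrak{B}_i}=\varprojlim_n\mathrm{ind}_{KZ}^G\sigma_i/P_i(T_i)^n$ together with freeness of $\mathrm{ind}_{KZ}^G\sigma_i$ over $\kappa_L[T_i]$.

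Second, your injectivity step is itself incomplete. Identifying $\ker\bigl(\widehat{\mathrm{LL}(M)}\to\prod_{\mathfrak{B}}\mathrm{LL}(M)_{\mathfrak{B}}\bigr)\cap\Theta$ with $\bigcap_Q\ker(\Theta\to Q)$ tacitly assumes the integral completions are $\pi_L$-torsion-free (an element could map to torsion in every $\Lambda^+_{\mathfrak{B}}$ and die after inverting $p$); this is true but only via the exactness lemma, which you never invoke. More seriously, the ``heart of the matter'' you describe is not available in the form you use it: for $n\geq2$, or already for the reducible weight $\sigma_M^+/p$, the Hecke algebra of $\mathrm{ind}_{KZ}^G(\sigma_M^+/p^n)$ is not $(\mathscr{O}_L/p^n)[T]$, and neither $T$-freeness nor finite length of the quotients by $f(T)$ is established there; and the proposed dévissage along a Jordan–Hölder filtration goes the wrong way: for $0\to A\to B\to C\to0$, finite-length quotients of $A$ need not extend to quotients of $B$, so point-separation for the graded pieces does not yield point-separation for $B$. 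This is exactly the difficulty that the exactness of the $\mathfrak{B}$-adic completion is designed to circumvent, and without it (plus a correct source of finitely many blocks) your argument does not close.
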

\begin{proof}[Preuve] On se ramène au même énoncé pour $\mathrm{ind}_{KZ}^G\sigma_M$. On prends un réseau $\sigma_M^0$ de $\sigma_M$. La réduction de $\sigma_M^0$ modulo $p$ est une extension d'un nombre fini de poids de Serre que l'on note $\{\sigma_i\}_i$. Pour tout $i$, on a $\mathrm{End}_G(\sigma_i)=\kappa_L[T_i]$ et on note $\mathfrak{B}_i$ le bloc correspondant à la représentation $(\mathrm{ind}_{KZ}^G\sigma_i)/P_i(T_i)$, où $P_i(T_i)$ est un polynôme irréductible dans $\kappa_L[T_i]$ et $T_i$ est l'opérateur de Barthel-Livné. Pour chaque bloc $\mathfrak{B}_i$, on a une flèche naturelle $f_{\mathfrak{B}_i}:\mathrm{LL}(M)\to\mathrm{LL}(M)_{\mathfrak{B}_i}$. Le réseau $\mathrm{ind}_{KZ}^G\sigma_M^0$ est minimal  à homothétie près, donc le morphisme naturel $\mathrm{ind}_{KZ}^G\sigma_M^0\to(\mathrm{ind}_{KZ}^G\sigma_M^0)_{\mathfrak{B}_i}$ induit un morphisme $\widehat{\mathrm{LL}(M)}\to\mathrm{LL}(M)_{\mathfrak{B}_i}$.
	
	Il nous reste à montrer que la flèche $(\mathrm{ind}_{KZ}^G\sigma_M^0)/\pi_L\to\prod\limits_i((\mathrm{ind}_{KZ}^G\sigma_M^0)/\pi_L)_{\mathfrak{B}_i}$ est injective. Comme le foncteur de complété $\mathfrak{B}$-adique est exact (Lemme \ref{exact}), on est ramené à montrer que la flèche $\mathrm{ind}_{KZ}^G\sigma_i\to(\mathrm{ind}_{KZ}^G\sigma_i)_{\mathfrak{B}_i}$ est injective pour tout $i$. Or, il suit de la \cite[Proposition 2.5]{cdn2023correspondance} (le cas supersingulier) et de la \cite[Proposition 2.6]{cdn2023correspondance} (le cas non supersingulier) que 
	$$(\mathrm{ind}_{KZ}^G\sigma_i)_{\mathfrak{B}_i}=\varprojlim_n\mathrm{ind}_{KZ}^G\sigma_i/P_i(T_i)^n,$$
	donc on a bien une injection $\mathrm{ind}_{KZ}^G\sigma_i\hookrightarrow(\mathrm{ind}_{KZ}^G\sigma_i)_{\mathfrak{B}_i}$ car $\mathrm{ind}_{KZ}^G\sigma_i$ est libre sur $\kappa_L[T_i]$. Cela permet de conclure.
\end{proof}
\begin{lemma}\label{3.3.5} Soit $\mathfrak{B}$ l'un des blocs donnés par le Lemme \ref{3.3.4}. Si $I\subseteq U_{M, \mathfrak{B}}$ est infini, alors on a une injection	$$\mathrm{LL}(M)_\mathfrak{B}\hookrightarrow\prod_{\mathscr{L}\in I}\Pi_{M, \mathscr{L}}.$$
\end{lemma}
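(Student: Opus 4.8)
Le plan est d'exploiter la structure de $\mathrm{LL}(M)_\mathfrak{B}$ comme module topologique sur l'anneau $R := R_{M,\mathfrak{B}}$ et la géométrie de $X_{M,\mathfrak{B}} = \mathrm{Spec}(R)$. D'après la section~\ref{3.3} et \cite[Théorème 0.1]{cdn2023correspondance}, $X_{M,\mathfrak{B}}$ s'identifie à un ouvert de $\mathbf{P}(M_{\mathrm{dR}}) = \mathbf{P}^1$; en particulier $R$ est un anneau de Dedekind (régulier, noethérien, de dimension de Krull $1$), dont les points fermés à corps résiduel $L$ sont exactement les droites $\mathscr{L}\in U_{M,\mathfrak{B}}$, et pour une telle $\mathscr{L}$ la surjection naturelle $G$-équivariante $q_\mathscr{L}\colon \mathrm{LL}(M)_\mathfrak{B}\twoheadrightarrow\Pi_{M,\mathscr{L}}$ réalise $\Pi_{M,\mathscr{L}}\cong\mathrm{LL}(M)_\mathfrak{B}\otimes_R R/\mathfrak{m}_\mathscr{L}$, où $\mathfrak{m}_\mathscr{L}\subseteq R$ est l'idéal maximal associé; autrement dit $\ker q_\mathscr{L} = \mathfrak{m}_\mathscr{L}\mathrm{LL}(M)_\mathfrak{B}$. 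On utilisera aussi, toujours d'après \cite{cdn2023correspondance}, que $\mathrm{LL}(M)_\mathfrak{B}$ est sans torsion sur $R$. Le morphisme de l'énoncé est alors $\Phi\colon v\mapsto(q_\mathscr{L}(v))_{\mathscr{L}\in I}$, manifestement $G$-équivariant et continu, et tout revient à prouver son injectivité.

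L'étape principale sera de montrer que, pour tout $0\neq v\in\mathrm{LL}(M)_\mathfrak{B}$, le lieu des zéros $Z(v) := \{\mathscr{L}\in U_{M,\mathfrak{B}} : q_\mathscr{L}(v) = 0\}$ est fini. Comme $\mathrm{LL}(M)_\mathfrak{B}$ est sans torsion sur l'anneau intègre $R$, l'élément $v$ reste non nul dans $\mathrm{LL}(M)_\mathfrak{B}\otimes_R\mathrm{Frac}(R)$, et je considérerais le sous-$R$-module $\mathfrak{b}_v := \{a\in\mathrm{Frac}(R) : av\in\mathrm{LL}(M)_\mathfrak{B}\}$ de $\mathrm{Frac}(R)$, qui contient $R$. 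En utilisant que $\mathrm{LL}(M)_\mathfrak{B}$ est un complété $\mathfrak{B}$-adique, donc une limite projective de modules de longueur finie — chacun annulé par un idéal de $R$ à quotient artinien, et par suite à support fini dans $X_{M,\mathfrak{B}}$ — on vérifie que $\mathfrak{b}_v$ est de type fini, c'est-à-dire un idéal fractionnaire de l'anneau de Dedekind $R$; un tel idéal est d'ordre nul en tout point fermé à l'exception d'un nombre fini d'entre eux. Comme d'autre part, en localisant en $\mathfrak{m}_\mathscr{L}$, $q_\mathscr{L}(v) = 0$ équivaut à $v\in\mathfrak{m}_\mathscr{L}(\mathrm{LL}(M)_\mathfrak{B})_{\mathfrak{m}_\mathscr{L}}$, donc — $\mathfrak{m}_\mathscr{L}R_{\mathfrak{m}_\mathscr{L}}$ étant principal — à ce que $\mathfrak{b}_v$ ait un pôle en $\mathscr{L}$, on conclut que $Z(v)$ est contenu dans l'ensemble fini des pôles de $\mathfrak{b}_v$.

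L'injectivité de $\Phi$ en découlera immédiatement: si $q_\mathscr{L}(v) = 0$ pour tout $\mathscr{L}\in I$, alors $I\subseteq Z(v)$ serait infini, d'où $v = 0$; on obtient ainsi l'inclusion $\mathrm{LL}(M)_\mathfrak{B}\hookrightarrow\prod_{\mathscr{L}\in I}\Pi_{M,\mathscr{L}}$. Le point le plus délicat ne sera pas l'argument de finitude sur la courbe, classique, mais les ingrédients structuraux qu'il requiert: que $\mathrm{LL}(M)_\mathfrak{B}$ soit sans torsion sur $R$ avec $\ker q_\mathscr{L} = \mathfrak{m}_\mathscr{L}\mathrm{LL}(M)_\mathfrak{B}$, et surtout qu'aucun élément non nul n'y soit infiniment divisible sur $R$ (de façon équivalente, que $\mathfrak{b}_v$ soit de type fini). Établir ce dernier point demandera de naviguer soigneusement entre l'objet intégral $\Lambda_\mathfrak{B}$ et ses quotients de longueur finie — sur lesquels n'agit qu'un $\mathscr{O}_L$-ordre de $R$ — et l'objet rationnel $\mathrm{LL}(M)_\mathfrak{B}$ muni de l'anneau $R = R_{M,\mathfrak{B}}$, en s'appuyant sur \cite{cdn2023correspondance}.
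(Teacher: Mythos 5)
Votre réduction de l'injectivité à l'énoncé « $Z(v)$ est fini pour tout $v\neq0$ » est raisonnable (et, $I$ étant un ensemble infini arbitraire, essentiellement équivalente au lemme), mais le point que vous signalez vous-même comme délicat — la finitude de $\mathfrak{b}_v$, c'est-à-dire qu'aucun vecteur non nul de $\mathrm{LL}(M)_\mathfrak{B}$ n'est divisible en une infinité de points — n'est pas démontré, et c'est précisément tout le contenu du lemme. L'absence de torsion sur un anneau de Dedekind n'empêche nullement la divisibilité infinie : $\mathbb{Q}$ ou $\mathbb{Z}[1/2]$ sont des $\mathbb{Z}$-modules sans torsion pour lesquels $\mathfrak{b}_v$ n'est pas un idéal fractionnaire ; il faut donc une propriété de finitude/intégralité du $R_{M,\mathfrak{B}}$-module $\mathrm{LL}(M)_\mathfrak{B}$ qui ne découle pas des hypothèses invoquées. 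L'appel à la structure pro-(longueur finie) de $\Lambda_\mathfrak{B}$ ne la fournit pas non plus : chaque quotient de longueur finie est supporté en un nombre fini de points, mais ce support grossit le long du système projectif et aucune borne uniforme sur $\mathfrak{b}_v$ n'en résulte. Par ailleurs, plusieurs faits structuraux que vous tenez pour acquis ne sont pas disponibles sous cette forme : $R_{M,\mathfrak{B}}$ n'est connu que réduit de dimension $1$ (un produit fini d'anneaux principaux, cf. \cite[Corollaire 5.5]{cdn2023correspondance}), pas intègre ni « ouvert de $\mathbf{P}^1$ » ; et l'absence de torsion de $\mathrm{LL}(M)_\mathfrak{B}$ ainsi que l'égalité $\ker q_{\mathscr{L}}=\mathfrak{m}_{\mathscr{L}}\mathrm{LL}(M)_\mathfrak{B}$ sont exactement le genre de propriétés que l'on n'obtient que par transport depuis le côté galoisien.

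C'est d'ailleurs ainsi que procède le texte : d'après \cite{cdn2023correspondance}, $\rho_{M,\mathfrak{B}}$ est \emph{libre de rang $2$} sur l'anneau réduit de dimension $1$ $R_{M,\mathfrak{B}}$, d'où immédiatement une injection $\rho_{M,\mathfrak{B}}\hookrightarrow\prod_{\mathscr{L}\in I}\rho_{M,\mathfrak{B}}/\mathfrak{m}_{\mathscr{L}}$, puis on applique le foncteur $\mathbf{\Pi}$ (en utilisant $\mathbf{\Pi}(\rho_{M,\mathfrak{B}})\cong\mathrm{LL}(M)_\mathfrak{B}$ et $\mathbf{\Pi}(\rho_{M,\mathfrak{B}})/\mathfrak{m}_{\mathscr{L}}\cong\Pi_{M,\mathscr{L}}$) pour en déduire l'injection voulue. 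La finitude qui manque à votre argument est exactement ce que fournit cette liberté de rang $2$ du module galoisien ; si vous voulez maintenir une preuve directe du côté automorphe, il vous faudra importer cette liberté (ou un énoncé équivalent de platitude/fidélité de $\mathrm{LL}(M)_\mathfrak{B}$ sur $R_{M,\mathfrak{B}}$), et l'argument se ramène alors à celui du texte.
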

\begin{proof}[Preuve] D'après le \cite[Corollaire 5.5]{cdn2023correspondance}, $R_{M, \mathfrak{B}}$ est réduit et de dimension $1$, et $\rho_{M, \mathfrak{B}}$ est libre sur $R_{M, \mathfrak{B}}$. On en déduit une injection de $R_{M, \mathfrak{B}}$-modules $\rho_{M, \mathfrak{B}}\hookrightarrow\prod\limits_{\mathscr{L}\in I}\rho_{M, \mathfrak{B}}/\mathfrak{m}_{\mathscr{L}}$ où $\mathfrak{m}_{\mathscr{L}}$ est l'idéal maximal de $R_{M, \mathfrak{B}}$ correspondant à $\mathscr{L}$. Cela induit, par fonctorialité, une injection de $R_{M, \mathfrak{B}}$-modules
	$$\mathbf{\Pi}(\rho_{M, \mathfrak{B}})\hookrightarrow\prod\limits_{\mathscr{L}\in I}\mathbf{\Pi}(\rho_{M, \mathfrak{B}})/\mathfrak{m}_{\mathscr{L}}.$$
	Comme $\mathbf{\Pi}(\rho_{M, \mathfrak{B}})\cong\mathrm{LL}(M)_\mathfrak{B}$ et $\mathbf{\Pi}(\rho_{M, \mathfrak{B}})/\mathfrak{m}_{\mathscr{L}}\cong\Pi_{M, \mathscr{L}}$, on obtient une injection
	$$\mathrm{LL}(M)_\mathfrak{B}\hookrightarrow\prod_{\mathscr{L}\in I}\Pi_{M, \mathscr{L}}.$$
	Cela permet de conclure.
\end{proof}
\begin{corollary}\label{3.3.6} Fixons une droite $\mathscr{L}$, alors on a une injection $$\widehat{\mathrm{LL}(M)}\hookrightarrow\prod_{\mathscr{L}'\neq\mathscr{L}}\Pi_{M, \mathscr{L}'}.$$
\end{corollary}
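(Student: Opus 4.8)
Le plan est de combiner directement les \autoref{3.3.4} et \autoref{3.3.5}. D'abord, j'appliquerais le \autoref{3.3.4} pour fixer une famille finie de blocs $\mathfrak{B}_1, \dots, \mathfrak{B}_k$, que l'on peut supposer deux à deux distincts et tels que $\mathrm{LL}(M)_{\mathfrak{B}_i}\neq 0$ pour tout $i$ (il suffit d'omettre les facteurs nuls), munie d'une injection
$$\widehat{\mathrm{LL}(M)}\hookrightarrow\prod_{i=1}^k\mathrm{LL}(M)_{\mathfrak{B}_i}.$$
Il s'agit alors, pour chaque $i$, d'appliquer le \autoref{3.3.5} à une partie infinie de $U_{M, \mathfrak{B}_i}$ qui évite $\mathscr{L}$.

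Je commencerais donc par vérifier que $U_{M, \mathfrak{B}_i}$ est infini. Comme $\mathrm{LL}(M)_{\mathfrak{B}_i}\neq 0$, l'anneau $R_{M, \mathfrak{B}_i}=\mathrm{End}_G(\mathrm{LL}(M)_{\mathfrak{B}_i})$ est non nul; d'après le \cite[Corollaire 5.5]{cdn2023correspondance}, il est réduit et de dimension $1$. Le schéma $X_{M, \mathfrak{B}_i}=\mathrm{Spec}\,R_{M, \mathfrak{B}_i}$ possède donc une infinité de points fermés, c'est-à-dire que $U_{M, \mathfrak{B}_i}$ est infini. Par conséquent, $I_i:=U_{M, \mathfrak{B}_i}\setminus\{\mathscr{L}\}$ reste infini, et le \autoref{3.3.5} fournit une injection $\mathrm{LL}(M)_{\mathfrak{B}_i}\hookrightarrow\prod_{\mathscr{L}'\in I_i}\Pi_{M, \mathscr{L}'}$. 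En composant les injections précédentes, j'obtiendrais
$$\widehat{\mathrm{LL}(M)}\hookrightarrow\prod_{i=1}^k\prod_{\mathscr{L}'\in I_i}\Pi_{M, \mathscr{L}'}.$$

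Il resterait à identifier ce produit à un sous-produit de $\prod_{\mathscr{L}'\neq\mathscr{L}}\Pi_{M, \mathscr{L}'}$. Chaque représentation $\Pi_{M, \mathscr{L}'}$ étant topologiquement irréductible, elle appartient à un unique bloc de $\mathrm{Ban}_{G, \psi}^{\mathrm{adm}}(L)$; ainsi, les ensembles $U_{M, \mathfrak{B}}$ attachés à des blocs distincts sont deux à deux disjoints, et en particulier $I_1, \dots, I_k$ sont des parties deux à deux disjointes de $\mathbf{P}^1\setminus\{\mathscr{L}\}$. On a alors $\prod_{i=1}^k\prod_{\mathscr{L}'\in I_i}\Pi_{M, \mathscr{L}'}=\prod_{\mathscr{L}'\in I_1\sqcup\cdots\sqcup I_k}\Pi_{M, \mathscr{L}'}$, lequel s'injecte dans $\prod_{\mathscr{L}'\neq\mathscr{L}}\Pi_{M, \mathscr{L}'}$ en envoyant sur $0$ les coordonnées restantes; la composée de toutes ces flèches est l'injection cherchée. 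L'essentiel du travail repose sur les \autoref{3.3.4} et \autoref{3.3.5}; une fois ceux-ci acquis, les seuls points à surveiller sont le fait que $U_{M, \mathfrak{B}_i}$ est infini — d'où le recours à la réduction et à la dimension $1$ de $R_{M, \mathfrak{B}}$ — et la disjonction des $I_i$, qui provient de l'irréductibilité topologique des $\Pi_{M, \mathscr{L}'}$ et de la décomposition en blocs.
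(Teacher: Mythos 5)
Votre plan est pour l'essentiel celui du texte : composer l'injection du \autoref{3.3.4} avec celles fournies par le \autoref{3.3.5}. La seule vraie divergence est la façon d'éviter $\mathscr{L}$. Le texte rechoisit les blocs : dans la preuve du \autoref{3.3.4}, chaque $\mathfrak{B}_i$ dépend du choix d'un polynôme irréductible $P_i(T_i)\in\kappa_L[T_i]$, et comme il y a une infinité de tels polynômes on peut imposer $\mathfrak{B}_i\neq\mathfrak{B}$, où $\mathfrak{B}$ est le bloc tel que $\mathscr{L}\in U_{M, \mathfrak{B}}$ ; on a alors automatiquement $\mathscr{L}\notin U_{M, \mathfrak{B}_i}$ et le \autoref{3.3.5} s'applique tel quel. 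Vous gardez au contraire les blocs donnés et retirez le point $\mathscr{L}$ de $U_{M, \mathfrak{B}_i}$, ce qui reporte tout le poids sur l'affirmation que $U_{M, \mathfrak{B}_i}$ est infini. Vos autres vérifications (disjonction des $I_i$ par unicité du bloc d'une représentation irréductible, plongement du sous-produit dans $\prod_{\mathscr{L}'\neq\mathscr{L}}\Pi_{M, \mathscr{L}'}$) sont correctes.

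C'est précisément cette affirmation d'infinitude qui n'est pas justifiée par votre argument. Qu'un anneau soit réduit et de dimension $1$ n'entraîne pas que son spectre ait une infinité de points fermés : un anneau de valuation discrète, par exemple $L[[T]]$, est réduit, de dimension $1$, et n'a qu'un seul point fermé. Et même une infinité de points fermés ne suffirait pas, car $U_{M, \mathfrak{B}_i}$ s'identifie à l'ensemble des points de $X_{M, \mathfrak{B}_i}$ à corps résiduel $L$ (voir la fin de la section \ref{3.3}), et rien dans la réduction ni la dimension n'empêche les corps résiduels des points fermés d'être des extensions strictes de $L$. Cette infinitude est de toute façon nécessaire pour invoquer le \autoref{3.3.5} (la preuve du texte la suppose implicitement elle aussi), mais elle doit être extraite de la description fine de $X_{M, \mathfrak{B}}$ donnée dans \cite{cdn2023correspondance}, et non déduite de la seule assertion de réduction et de dimension $1$. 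Pour réparer, soit vous citez cette description pour garantir que $U_{M, \mathfrak{B}_i}$ est infini, soit vous suivez le texte en choisissant $\mathfrak{B}_i\neq\mathfrak{B}$, ce qui dispense au moins de retirer $\mathscr{L}$, l'infinitude restant requise pour appliquer le \autoref{3.3.5}.
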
	 
\begin{proof}[Preuve] Soit $\mathfrak{B}$ un bloc tel que $\mathscr{L}\in U_{M, \mathfrak{B}}$. D'après la preuve du Lemme \ref{3.3.4}, on peut choisir des blocs $\mathfrak{B}_i\neq\mathfrak{B}$ tels que 
	$$\widehat{\mathrm{LL}(M)}\hookrightarrow\prod_{\mathfrak{B}_i}\mathrm{LL}(M)_{\mathfrak{B}_i}.$$
	Le résultat découle du Lemme \ref{3.3.5}.
\end{proof}
\begin{proposition}\label{3.3.7} Toute sous-représentation fermée non nulle $\Pi$ de $\widehat{\mathrm{LL}(M)}$ est résiduellement de longueur infinie.
\end{proposition}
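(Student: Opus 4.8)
The plan is to argue by contradiction, the mechanism being a comparison of locally analytic vectors with those of the ambient representation. So suppose $\Pi$ is a nonzero closed subrepresentation of $\widehat{\mathrm{LL}(M)}$ that is residually of finite length, and derive a contradiction.

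First I would upgrade ``residually of finite length'' to ``admissible''. Since $\mathrm{LL}(M)$ has central character $\psi$, so does its universal unitary completion $\widehat{\mathrm{LL}(M)}$, hence also $\Pi$; thus, writing $\Pi^+$ for the unit ball of $\Pi$, the smooth $\kappa_L$-representation $\Pi^+/\pi_L$ has central character $\bar\psi$ and, by hypothesis, finite length. Each of its Jordan--Hölder factors is then an irreducible smooth $\kappa_L$-representation of $G=\mathrm{GL}_2(\mathbb{Q}_p)$ admitting a central character, hence admissible by the Barthel--Livné classification; since a finite-length iterated extension of admissible smooth representations is admissible (taking $K'$-invariants is left exact), $\Pi^+/\pi_L$ is admissible, and therefore $\Pi$ is an admissible $L$-Banach representation of $G$.

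Next I would exploit \autoref{3.2.3}. Because $\Pi$ is admissible, its subspace $\Pi^{\mathrm{an}}$ of locally analytic vectors is dense in $\Pi$ (the density recalled in the remark following \autoref{3.2.3}). On the other hand, the inclusion $\Pi\hookrightarrow\widehat{\mathrm{LL}(M)}$ is continuous and $G$-equivariant, so a vector whose orbit map into $\Pi$ is locally analytic has locally analytic orbit map into $\widehat{\mathrm{LL}(M)}$ as well; hence $\Pi^{\mathrm{an}}\subseteq\widehat{\mathrm{LL}(M)}^{\mathrm{an}}=\mathrm{LL}(M)$ by \autoref{3.2.3}. Since $\mathrm{LL}(M)$ is smooth, $\Pi^{\mathrm{an}}$ is a $G$-stable (hence smooth) subspace of the irreducible representation $\mathrm{LL}(M)$, so $\Pi^{\mathrm{an}}=0$ or $\Pi^{\mathrm{an}}=\mathrm{LL}(M)$.

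Finally I would split into the two cases. If $\Pi^{\mathrm{an}}=0$, density gives $\Pi=\overline{\Pi^{\mathrm{an}}}=0$, contradicting $\Pi\neq0$. If $\Pi^{\mathrm{an}}=\mathrm{LL}(M)$, then $\mathrm{LL}(M)\subseteq\Pi$; as $\mathrm{LL}(M)$ is dense in $\widehat{\mathrm{LL}(M)}$ and $\Pi$ is closed, this forces $\Pi=\widehat{\mathrm{LL}(M)}$, which is not admissible by \autoref{3.2.1} and so contradicts the first step. Either way we reach a contradiction, which proves the proposition. The heart of the argument is the rigidity statement $\widehat{\mathrm{LL}(M)}^{\mathrm{an}}=\mathrm{LL}(M)$, together with the irreducibility and density of $\mathrm{LL}(M)$ inside $\widehat{\mathrm{LL}(M)}$ and the non-admissibility of the latter; once these are granted the proof is purely formal. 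I do not anticipate a serious obstacle, the only genuinely non-formal inputs being the classification of irreducible smooth mod $p$ representations of $\mathrm{GL}_2(\mathbb{Q}_p)$ used to pass from ``residually of finite length'' to ``admissible'', and the density of locally analytic vectors in an admissible Banach representation; the one point deserving care is that $\Pi^{\mathrm{an}}\subseteq\widehat{\mathrm{LL}(M)}^{\mathrm{an}}$ holds as an inclusion of abstract $G$-representations, which is immediate from continuity and $G$-equivariance of the embedding.
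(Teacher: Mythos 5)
Your proof is correct, but it follows a genuinely different route from the paper's. The paper argues entirely in characteristic $p$: writing $\Pi^0=\Pi\cap\widehat{\mathrm{LL}(M)}^0$, it observes that $\Pi^0/\pi_L$ injects into $\widehat{\mathrm{LL}(M)}^0/\pi_L$, which is a successive extension of the compact inductions $\mathrm{ind}_{KZ}^G\sigma_i$, and then invokes the result of Hu--Pa\v{s}k\={u}nas (cited as \cite[Lemme 4.2]{hp2019on}) that every nonzero $\kappa_L$-subrepresentation of $\mathrm{ind}_{KZ}^G\sigma_i$ has infinite length; this gives the statement directly, with no contradiction argument and no characteristic-$0$ input. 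You instead argue by contradiction in characteristic $0$: residual finite length plus the central character gives admissibility of $\Pi$, hence density of $\Pi^{\mathrm{an}}$, and then the rigidity $\widehat{\mathrm{LL}(M)}^{\mathrm{an}}=\mathrm{LL}(M)$ (\autoref{3.2.3}), the irreducibility and density of $\mathrm{LL}(M)$ in $\widehat{\mathrm{LL}(M)}$, and Emerton's non-admissibility of $\widehat{\mathrm{LL}(M)}$ (\autoref{3.2.1}) force a contradiction in both cases; there is no circularity, since \autoref{3.2.3} is established before this proposition and independently of it. What each approach buys: the paper's proof is more elementary in its inputs (only the mod-$p$ structure of $\mathrm{ind}_{KZ}^G\sigma$) and yields the infinite-length conclusion constructively, subrepresentation by subrepresentation; yours is slicker and illustrates the strength of the rigidity statement, but it leans on heavier machinery --- the density of locally analytic vectors in admissible Banach representations, and the assertion that every irreducible smooth $\kappa_L$-representation of $G$ with central character is admissible, which over the finite field $\kappa_L$ requires a small descent argument on top of the Barthel--Livn\'e/Breuil classification stated over $\overline{\mathbb{F}}_p$ (alternatively you could sidestep this by noting that the Jordan--H\"older factors in question are subquotients of the $\mathrm{ind}_{KZ}^G\sigma_i$, whose irreducible subquotients are known to be admissible). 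With that point made explicit, your argument is a valid alternative proof.
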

\begin{proof}[Preuve] Notons $\widehat{\mathrm{LL}(M)}^+$ la boule unité de $\widehat{\mathrm{LL}(M)}$ et posons $\Pi^+:=\Pi\cap\widehat{\mathrm{LL}(M)}^+$. Le morphisme naturel $i: \Pi^+/\pi_L\to\widehat{\mathrm{LL}(M)}^+/\pi_L$ est injectif. Comme $\sigma_M^0/\pi_L$ est une extension des $\sigma_i$ pour $1\leq i\leq n$, la représentation $\widehat{\mathrm{LL}(M)}^+/\pi_L$ est une extension des $\mathrm{ind}_{KZ}^G\sigma_i$. On en déduit que $\Pi^+/\pi_L$ est une extension de sous-représentations des $\mathrm{ind}_{KZ}^G\sigma_i$. Il résulte du \cite[Lemme 4.2]{hp2019on} que toutes les sous-$\kappa_L$-représentations de $\mathrm{ind}_{KZ}^G\sigma_i$ sont de longueur infinie, donc $\Pi^+/\pi_L$ est de longueur infinie, ce que l'on voulait.
\end{proof}
\subsection{Vecteurs lisses et localement analytiques de $\widehat{\Omega^1[M]^{*}}$} 
On note $\Omega^1[M]^\mathrm{b}$ le sous-espace de $\Omega^1[M]$ constitué des vecteurs $G$-bornés, alors on a $\widehat{\Omega^1[M]^*}=\Omega^1[M]^{\mathrm{b}, *}$ en vertu du \cite[Lemme 5.3]{cdn2023factorisation}. La restriction $$\Omega^1[M]^*\to\Omega^1[M]^{\mathrm{b}, *}\xrightarrow{\sim}\widehat{\Omega^1[M]^*}$$
nous donne un morphisme $\Omega^1[M]^*\to\widehat{\Omega^1[M]^*}$. La représentation $\Omega^1[M]^*$ est localement analytique, cela induit un morphisme $\Omega^1[M]^*\to(\widehat{\Omega^1[M]^*})^{\mathrm{an}}$. On va montrer que c'est un isomorphisme. Inspiré par \cite{st2003algebras}, on va d'abord montrer que l'on a un isomorphisme $D(G)\hat{\otimes}_{\Lambda(G)}\Omega^1[M]^b  \cong\Omega^1[M]$, voir le Lemme \ref{3.5.2}. Notons que l'on prends le produit tensoriel complété car la représentation $\Omega^1[M]^{b, *}$ n'est pas admissible en tant que représentation de Banach. Il suffit de prouver l'existence d'un isomorphisme $D(G)\hat{\otimes}_{\Lambda(G)}\Omega^{1, b}(\Sigma_n)  \cong\Omega^1(\Sigma_n)$.

D'après le Lemme \ref{1.2.6}, on peut munir $\Omega^{1, b}(\Sigma_n)$, l'ensemble des vecteurs $G$-bornés dans $\Omega^1(\Sigma_n)$, de la topologie induite par $(\varprojlim_i\Omega(\mathfrak{X}_i))[\frac{1}{p}]$. Tout d'abord, on définit le produit tensoriel complété $\Omega^{1, b}(\Sigma_n)\hat{\otimes}_{\Lambda(G)} D(G)$. Le réseau $\Omega^1(\mathfrak{X}_i)$ de $\Omega^1(\mathfrak{X}_i)[\tfrac{1}{p}]$ induit une norme sur $\Omega^1(\mathfrak{X}_i)[\tfrac{1}{p}]$ définie par $$||\omega||:=\inf_{\omega\in \lambda \Omega^1(\mathfrak{X}_i)}|\lambda|.$$
Comme $\Omega^1(\mathfrak{X}_i)[\tfrac{1}{p}]$ et $D_r(G_i)$ sont des espaces de Banach, on peut munir l'espace $D_r(G_i)\otimes_{\Lambda(G_i)} \Omega^1(\mathfrak{X}_i)[\tfrac{1}{p}]$ de la semi-norme projective définie par
$$||z||:=\inf\{\max_k|x_k|\cdot||y_k||\}$$
où l'infimum est pris parmi toutes les expressions $z=\sum_k x_k\otimes y_k$. 

On en obtient un espace normé $$D_r(G_i)\otimes'_{\Lambda(G_i)}\Omega^1(\mathfrak{X}_i)[\tfrac{1}{p}] :=\frac{D_r(G_i)\otimes_{\Lambda(G_i)}\Omega^1(\mathfrak{X}_i)[\tfrac{1}{p}] }{\{z\mid ||z||=0\}}.$$
Et on définit $D_r(G_i)\hat{\otimes}_{\Lambda(G_i)} \Omega^1(\mathfrak{X}_i)[\tfrac{1}{p}]$ comme la complétion de $D_r(G_i)\otimes'_{\Lambda(G_i)}\Omega^1(\mathfrak{X}_i)[\tfrac{1}{p}]$, ce qui fait de $D_r(G_i)\hat{\otimes}_{\Lambda(G_i)}\Omega^1(\mathfrak{X}_i)[\tfrac{1}{p}]$ un Banach. On dispose d'un morphisme naturel $$\alpha: D_r(G_i)\otimes_{\Lambda(G_i)}\Omega^1(\mathfrak{X}_i)[\tfrac{1}{p}]\rightarrow D_r(G_i)\hat{\otimes}_{\Lambda(G_i)}\Omega^1(\mathfrak{X}_i)[\tfrac{1}{p}].$$
D'après la preuve de \cite[Théorème 3.2]{dl2017revetements}, $D_r(G_i)$ agit continûment sur $\Omega^1(\mathfrak{X}_i)$ pour $r$ suffisamment grand, ce qui nous donne une surjection continue 
$$\beta: D_r(G_i)\otimes_{\Lambda(G_i)}\Omega^1(\mathfrak{X}_i)[\tfrac{1}{p}]\to\Omega^1(\mathfrak{X}_i)[\tfrac{1}{p}].$$
Enfin, on définit 
$$D(G)\hat{\otimes}_{\Lambda(G)}\Omega^{1, b}(\Sigma_n) :=\varprojlim_i\varprojlim_r(D_r(G_i)\hat{\otimes}_{\Lambda(G_i)}\Omega^1(\mathfrak{X}_i)[\tfrac{1}{p}] ),$$ 
alors l'espace $D(G)\hat{\otimes}_{\Lambda(G)}\Omega^{1, b}(\Sigma_n) $ est un espace de Fréchet. D'après la propriété universelle du produit tensoriel algébrique, il y a une flèche naturelle $D(G)\otimes_{\Lambda(G)}\Omega^{1, b}(\Sigma_n) \to D(G)\hat{\otimes}_{\Lambda(G)}\Omega^{1, b}(\Sigma_n)$.
\begin{lemma}\label{3.5.1} Soient $R\to S$ un morphisme d'anneaux non forcément commutatifs et $M$ un $S$-module à gauche, alors le noyau de la surjection naturelle $S\otimes_RM\twoheadrightarrow M$ induite par $s\otimes m\mapsto sm$ est engendré par 
	$$\{s_1\otimes s_2m-s_1s_2\otimes m\mid \forall s_1, s_2\in S,\forall m\in M\}.$$
\end{lemma}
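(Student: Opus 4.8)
The plan is to identify the proposed submodule with the kernel by exhibiting an explicit inverse to the induced map on the quotient. Write $\mu\colon S\otimes_R M\to M$ for the surjection $s\otimes m\mapsto sm$, and let $N\subseteq S\otimes_R M$ be the subgroup generated by the elements $s_1\otimes s_2m-s_1s_2\otimes m$; since $s_1\otimes s_2m-s_1s_2\otimes m=s_1\cdot(1\otimes s_2m-s_2\otimes m)$ for the left $S$-module structure $s'\cdot(s\otimes m)=s's\otimes m$ on $S\otimes_R M$, this subgroup is in fact a left $S$-submodule (so the statement does not depend on whether ``engendré'' is read as subgroup or as $S$-submodule). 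First I would check the trivial inclusion $N\subseteq\ker\mu$: applying $\mu$ to a generator yields $s_1s_2m-(s_1s_2)m=0$. As $\mu$ is $S$-linear, it factors as $\mu=\bar\mu\circ p$, with $p\colon S\otimes_R M\to(S\otimes_R M)/N$ the projection and $\bar\mu\colon(S\otimes_R M)/N\to M$ an $S$-linear surjection; it then remains to prove that $\bar\mu$ is injective.

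For this I would construct a two-sided inverse $\nu$ to $\bar\mu$. Set $\nu\colon M\to(S\otimes_R M)/N$, $\nu(m)=p(1\otimes m)$, which is additive. The essential point is that $\nu$ is $S$-linear: for $s\in S$ and $m\in M$, the element $1\otimes sm-s\otimes m$ is the generator of $N$ with $s_1=1$ and $s_2=s$, so $\nu(sm)=p(1\otimes sm)=p(s\otimes m)=s\cdot p(1\otimes m)=s\cdot\nu(m)$. Now $\bar\mu(\nu(m))=\mu(1\otimes m)=m$, hence $\bar\mu\circ\nu=\mathrm{id}_M$. Conversely, the classes $p(s\otimes m)$ generate $(S\otimes_R M)/N$ as an abelian group, and $\nu(\bar\mu(p(s\otimes m)))=\nu(sm)=p(1\otimes sm)=p(s\otimes m)$, again because $1\otimes sm-s\otimes m\in N$; thus $\nu\circ\bar\mu$ is the identity on a generating set, hence on all of $(S\otimes_R M)/N$. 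Therefore $\bar\mu$ is an isomorphism, which is precisely the statement that $\ker\mu=N$.

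The argument is purely formal and I do not expect any real obstacle; the only point deserving a line of care is checking that $m\mapsto p(1\otimes m)$ is well defined and $S$-linear, that is, that the listed relations are exactly those needed to turn it into a section of $\bar\mu$, together with the remark that it suffices to verify $\nu\circ\bar\mu=\mathrm{id}$ on the pure tensors $p(s\otimes m)$. Non-commutativity of $R$ and $S$ plays no role here: $S\otimes_R M$ is formed via the right $R$-module structure on $S$ induced by $R\to S$ and the left $R$-module structure on $M$, and the whole proof stays within the category of left $S$-modules.
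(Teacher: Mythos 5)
Votre preuve est correcte et suit essentiellement la même démarche que celle de l'article : on vérifie l'inclusion triviale des générateurs dans le noyau, puis on construit l'inverse $M\to (S\otimes_R M)/N$ et on vérifie que les deux composées sont l'identité. La seule différence, purement cosmétique, est que l'article obtient cette flèche inverse via la propriété universelle appliquée à $S\times M\to (S\otimes_R M)/N$ et l'identification $M=S\otimes_S M$, tandis que vous définissez directement $\nu(m)=p(1\otimes m)$ et vérifiez la $S$-linéarité à la main.
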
 
\begin{proof}[Preuve] D'une part, il est évident que le sous-module engendré par $\{s_1\otimes s_2m-s_1s_2\otimes m\mid \forall s_1, s_2\in S,\forall m\in M\}$ est contenu dans le noyau. Donc on a un morphisme de $S$-modules à gauche
	$$S\otimes_RM/\langle s_1\otimes s_2m-s_1s_2\otimes m\rangle\to M.$$
	D'autre part, on a une flèche $S\times M\to S\otimes_RM/\langle s_1\otimes s_2m-s_1s_2\otimes m\rangle$, ce qui induit une flèche $$M=S\otimes_SM\to S\otimes_RM/\langle s_1\otimes s_2m-s_1s_2\otimes m\rangle$$ en utilisant la propriété universelle du produit tensoriel.
	
	Les deux flèches sont inverses l'une de l'autre, ce qui permet de conclure.
\end{proof}
\begin{lemma}\label{3.5.2} On a un isomorphisme d'espaces de Fréchet $$D(G)\hat{\otimes}_{\Lambda(G)}\Omega^1[M]^b\cong\Omega^1[M].$$
\end{lemma}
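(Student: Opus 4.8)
The plan is to take the reduction recorded just before the statement at face value: it suffices to exhibit an isomorphism of Fréchet spaces $D(G)\hat{\otimes}_{\Lambda(G)}\Omega^{1,b}(\Sigma_n)\cong\Omega^1(\Sigma_n)$, the passage to $\mathrm{JL}(M)$-isotypic components being harmless (the $\breve{G}$-action on $\Omega^1(\Sigma_n)$ restricts on $\mathscr{O}_D^*$ to the finite group $\mathscr{O}_D^*/(1+p^n\mathscr{O}_D)$, so the $\mathrm{JL}(M)$-part is a closed $G\times\breve{G}$-direct summand, on which the additive functor $D(G)\hat{\otimes}_{\Lambda(G)}(-)$ operates compatibly). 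By the definitions recalled above, both sides are inverse limits $\varprojlim_i\varprojlim_r$ of the Banach spaces $D_r(G_i)\hat{\otimes}_{\Lambda(G_i)}\Omega^1(\mathfrak{X}_i)[\tfrac{1}{p}]$, respectively $\Omega^1(\mathfrak{X}_i)[\tfrac{1}{p}]$ (constant in $r$); moreover every transition map is induced by the canonical arrows $\omega\mapsto 1\otimes\omega$ and $\beta$, so compatibility across $i$ and $r$ will be automatic. Thus everything comes down to proving that, for each $i$ and each $r$ large enough, the continuous surjection $\bar{\beta}_{i,r}\colon D_r(G_i)\hat{\otimes}_{\Lambda(G_i)}\Omega^1(\mathfrak{X}_i)[\tfrac{1}{p}]\to\Omega^1(\mathfrak{X}_i)[\tfrac{1}{p}]$ induced by $\beta$ is a topological isomorphism.

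First I would observe that $\omega\mapsto\overline{1\otimes\omega}$ defines a continuous section $s$ of $\bar{\beta}_{i,r}$ with $\|s(\omega)\|\le\|\omega\|$; hence $s$ is a topological embedding onto a closed subspace and $s\bar{\beta}_{i,r}$ is a continuous projector onto $\mathrm{Im}(s)$, so the only thing left to check is that $\mathrm{Im}(s)$ is the whole space. Since the image of the algebraic tensor product $D_r(G_i)\otimes_{\Lambda(G_i)}\Omega^1(\mathfrak{X}_i)[\tfrac{1}{p}]$ is dense and $\mathrm{Im}(s)$ is closed, it is enough to show $\overline{d\otimes\omega}=\overline{1\otimes d\omega}$ for all $d,\omega$, i.e. that the element $d\otimes\omega-1\otimes d\omega$, which by \autoref{3.5.1} is (up to sign) one of the generators $d_1\otimes d_2m-d_1d_2\otimes m$ of $\ker\beta$, has projective tensor seminorm zero. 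For this I would approximate $d$ by elements $\lambda_m\in\Lambda(G_i)[\tfrac{1}{p}]$ in $\|\cdot\|_r$ — the finite $L$-linear combinations of the standard monomials in $[h]-1$ are dense in $D_r(G_i)$ — so that $\lambda_m\otimes\omega=1\otimes\lambda_m\omega$ and
$$d\otimes\omega-1\otimes d\omega=(d-\lambda_m)\otimes\omega+1\otimes(\lambda_m-d)\omega;$$
the seminorm of the right-hand side is at most $\max\bigl(\|d-\lambda_m\|_r\,\|\omega\|,\ \|(\lambda_m-d)\omega\|\bigr)$, and since $D_r(G_i)$ acts continuously on the Banach space $\Omega^1(\mathfrak{X}_i)[\tfrac{1}{p}]$ by the proof of \cite[Théorème 3.2]{dl2017revetements} — so $\|(\lambda_m-d)\omega\|\le C\,\|\lambda_m-d\|_r\,\|\omega\|$ for some constant $C$ — letting $m\to\infty$ makes both terms vanish. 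Therefore $\mathrm{Im}(s)$ contains a dense subspace, hence equals the whole space, and $s$, $\bar{\beta}_{i,r}$ are mutually inverse topological isomorphisms.

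Taking $\varprojlim_r$ (trivial on the target) and then $\varprojlim_i$ yields $D(G)\hat{\otimes}_{\Lambda(G)}\Omega^{1,b}(\Sigma_n)\cong\Omega^1(\Sigma_n)$, and passing back to $\mathrm{JL}(M)$-isotypic parts gives the lemma. I expect the level-$i$, radius-$r$ step to be the only real obstacle, and within it the claim that the kernel generators of \autoref{3.5.1} have seminorm zero: this is precisely where the soft functional analysis must be fed the two genuine inputs, namely the density of the Iwasawa algebra inside each $D_r(G_i)$ and the continuity of the distribution action on $\Omega^1(\mathfrak{X}_i)$ coming from \cite{dl2017revetements}. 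The remaining bookkeeping — compatibility of the isomorphisms with the two projective systems and the reduction to and from the $\mathrm{JL}(M)$-isotypic component — is routine.
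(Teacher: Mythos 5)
Your argument is correct, and it rests on exactly the same analytic inputs as the paper's proof — the density of $\Lambda(G_i)$ in $D_r(G_i)$ and the continuity of the $D_r(G_i)$-action on $\Omega^1(\mathfrak{X}_i)[\tfrac{1}{p}]$ from \cite{dl2017revetements} — but the way you close the argument is genuinely different. The paper works with the Hausdorff quotient $D_r(G_i)\otimes'_{\Lambda(G_i)}\Omega^1(\mathfrak{X}_i)[\tfrac{1}{p}]$: it uses \autoref{3.5.1} to identify the generators of $\ker\beta$, kills them by the same Iwasawa-approximation trick you use, then proves that the induced map $\gamma$ into $\Omega^1(\mathfrak{X}_i)[\tfrac{1}{p}]$ is an isometry, so that the completed tensor product is realized as a closed subspace of the Banach target, and finally invokes the open mapping theorem. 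You instead exhibit the continuous section $s(\omega)=\overline{1\otimes\omega}$ of $\bar{\beta}_{i,r}$, note that its image is closed, and show it contains the dense image of the algebraic tensor product by proving $\overline{d\otimes\omega}=\overline{1\otimes d\omega}$ — which is the same approximation computation — so that $s$ and $\bar{\beta}_{i,r}$ are mutually inverse. This bypasses \autoref{3.5.1} (you never need the full kernel of $\beta$, only that elementary tensors land in $\mathrm{Im}(s)$), the isometry computation, and the open mapping theorem, so your route is slightly more economical; the paper's isometry statement gives the marginally finer information that the identification preserves norms. One small point of hygiene: the existence of $\bar{\beta}_{i,r}$ on the completed tensor product is itself the factorization claim, and it already requires the bound $\|d\omega\|\le C\|d\|_r\|\omega\|$ that you only quote later — state it up front, since it is the estimate that makes $\beta$ bounded for the projective seminorm. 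The limit over $r$ and $i$ and the passage to and from the $\mathrm{JL}(M)$-isotypic component are handled as in the paper and are indeed routine.
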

\begin{proof}[Preuve] Il suffit de montrer que le morphisme $D(G)\otimes_{\Lambda(G)}\Omega^{1, b}(\Sigma_n)\to\Omega^1(\Sigma_n)$ induit par $\beta$ se factorise par $D(G)\otimes_{\Lambda(G)}\Omega^{1, b}(\Sigma_n) \to D(G)\hat{\otimes}_{\Lambda(G)}\Omega^{1, b}(\Sigma_n)$ et le morphisme 
	$$D(G)\hat{\otimes}_{\Lambda(G)}\Omega^{1, b}(\Sigma_n)\to\Omega^1(\Sigma_n)$$
	est un isomorphisme. 
	
	Comme l'espace $\Omega^1(\mathfrak{X}_i)[\tfrac{1}{p}]$ est un espace de Banach, le morphisme $\beta$ se factorise par $D_r(G_i)\otimes_{\Lambda(G_i)} \Omega^1(\mathfrak{X}_i)[\tfrac{1}{p}]\twoheadrightarrow D_r(G_i)\otimes'_{\Lambda(G_i)}\Omega^1(\mathfrak{X}_i)[\tfrac{1}{p}] \hookrightarrow D_r(G_i)\hat{\otimes}_{\Lambda(G_i)} \Omega^1(\mathfrak{X}_i)[\tfrac{1}{p}]$. On a donc un diagramme commutatif 
	$$\xymatrix@R=5mm@C=4mm{
		D_r(G_i)\otimes_{\Lambda(G_i)} \Omega^1(\mathfrak{X}_i)[\tfrac{1}{p}] \ar[drr]_\beta\ar@{->>}[r]& D_r(G_i)\otimes'_{\Lambda(G_i)}\Omega^1(\mathfrak{X}_i)[\tfrac{1}{p}] \ar@{^{(}->}[r]\ar@{.>}[dr]^\gamma&   D_r(G_i)\hat{\otimes}_{\Lambda(G_i)} \Omega^1(\mathfrak{X}_i)[\tfrac{1}{p}]\ar@{.>}[d]_i \\
		&&      \Omega^1(\mathfrak{X}_i)[\tfrac{1}{p}]
	}.$$
	Il suit du Lemme \ref{3.5.1} que le noyau de $\beta$ est le $D_r(G_i)$-module engendré par $d\omega\otimes d'-\omega\otimes dd'$ où $\omega\in\Omega^1(\mathfrak{X}_i)[\tfrac{1}{p}]$ et $d, d'\in D_r(G_i)$. Comme $\Lambda(G_i)$ est dense dans $D_r(G_i)$, on peut écrire $d$ comme $d=\lim d_i$ où $d_i\in\Lambda(G_i)$, alors dans $D_r(G_i)\otimes'_{\Lambda(G_i)}\Omega^1(\mathfrak{X}_i)[\frac{1}{p}] $, on a 
	$$d\omega\otimes d'-\omega\otimes dd'=\lim(d_i\omega\otimes d'-\omega\otimes d_id')=\lim(\omega\otimes d_id'-\omega\otimes d_id')=0.$$ 
	Cela implique que $\gamma$ est injectif. 
	
	On doit montrer que $\gamma$ est une isométrie. Plus précisément, pour $z\in D_r(G_i)\otimes'_{\Lambda(G_i)}\Omega^1(\mathfrak{X}_i)[\tfrac{1}{p}]$, il s'agit de montrer que $||\gamma(z)||=||z||$.
	
	(i) $||\gamma(z)||\leq||z||$. Fixons une expression $z=\sum_kx_k\otimes y_k$, on doit montrer que $$||\gamma(z)||\leq\max_k|x_k|\cdot||y_k||.$$
	Il nous suffit de montrer que $\lambda=\frac{1}{\max_k|x_k|\cdot||y_k||}$ satisfait la condition que $\gamma(z)=\sum_kx_ky_k\in\lambda\Omega^1(\mathfrak{X}_i)$, ce qui est évident.
	
	(ii)  $||\gamma(z)||\geq||z||$. Fixons $\lambda$ tel que $\gamma(z)\in\lambda\Omega^1(\mathfrak{X}_i)$, on doit montrer que $$|\lambda|\geq\inf\{\max_k|x_k|\cdot||y_k||\}.$$
	Il nous suffit de trouver une expression de $z$ telle que $|\lambda|\geq\max_k|x_k|\cdot||y_k||$. Supposons que $z=\sum_kx_k\otimes y_k$ où $x_k\in D_r(G_i)$ et $y_k\in\Omega^1(\mathfrak{X}_i)[\tfrac{1}{p}]$, alors $z=1\otimes(\sum_kx_ky_k)$. On est ramené à montrer que $|\lambda|\geq||\sum_kx_ky_k||=||\gamma(z)||$, ce qui est immédiat.
	
	L'espace $D_r(G_i)\hat{\otimes}_{\Lambda(G_i)}\Omega^1(\mathfrak{X}_i)[\tfrac{1}{p}]$ peut donc être réalisé comme l'adhérence de $D_r(G_i)\otimes'_{\Lambda(G_i)}\Omega^1(\mathfrak{X}_i)[\tfrac{1}{p}]$ dans $\Omega^1(\mathfrak{X}_i)[\tfrac{1}{p}]$, ce qui implique que le morphisme $i$ est injectif. D'après le théorème de l'application ouverte, le morphisme 
	$$D_r(G_i) \hat{\otimes}_{\Lambda(G_i)}\Omega^1(\mathfrak{X}_i)[\tfrac{1}{p}]\to    \Omega^1(\mathfrak{X}_i)[\tfrac{1}{p}]$$
	est un isomorphisme. On en a obtenu un isomorphisme
	$$D(G)\hat{\otimes}_{\Lambda(G)}\Omega^{1, b}(\Sigma_n) \to\Omega^1(\Sigma_n)$$
	en prenant la limite sur $r$ et $i$. Cela permet de conclure.
\end{proof}
\begin{corollary}\label{3.4.3} Le sous-espace $\Omega^1[M]^b$ est dense dans $\Omega^1[M]$.
\end{corollary}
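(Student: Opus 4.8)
The plan is to read this off from \autoref{3.5.2}. By that lemma there is an isomorphism of Fréchet spaces $D(G)\hat{\otimes}_{\Lambda(G)}\Omega^1[M]^b\cong\Omega^1[M]$; unwinding its proof, it is induced level by level by the action maps $\beta$, so that composing it with the canonical map $\iota\colon\Omega^1[M]^b\to D(G)\hat{\otimes}_{\Lambda(G)}\Omega^1[M]^b$, $v\mapsto 1\otimes v$, recovers the inclusion $\Omega^1[M]^b\hookrightarrow\Omega^1[M]$ (since $\beta(1\otimes v)=v$). Hence it suffices to show that the image of $\iota$ is dense in $D(G)\hat{\otimes}_{\Lambda(G)}\Omega^1[M]^b$.

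For this one uses that $\Lambda(G_i)$ is dense in $D_r(G_i)$ for every $i$ and $r$, and that $\Omega^1[M]^b$ is stable under each $\Lambda(G_i)$ (the formal model $\mathfrak{X}$ being $G$-equivariant, $\Omega^1(\mathfrak{X})$ is a $\Lambda(G_i)$-module, being $p$-adically complete, and $U_i$ is $K$-stable for $i\geq1$). Writing $D(G)\hat{\otimes}_{\Lambda(G)}\Omega^1[M]^b=\varprojlim_{i,r}\bigl(D_r(G_i)\hat{\otimes}_{\Lambda(G_i)}\Omega^1(\mathfrak{X}_i)[\tfrac{1}{p}]\bigr)$ (taking $\mathrm{JL}(M)$-parts throughout) and using that a subspace of a projective limit of locally convex spaces is dense once its image in each term is dense, one reduces to the following statement at each level: the image of $\Omega^1[M]^b$ under $v\mapsto 1\otimes v_i$, where $v_i$ denotes the image of $v$ in $\Omega^1(\mathfrak{X}_i)[\tfrac{1}{p}]$, is dense in $D_r(G_i)\hat{\otimes}_{\Lambda(G_i)}\Omega^1(\mathfrak{X}_i)[\tfrac{1}{p}]$. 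Let $W_i$ be the image of $\Omega^1[M]^b$ in $\Omega^1(\mathfrak{X}_i)[\tfrac{1}{p}]$, a $\Lambda(G_i)$-submodule. If $\delta\in D_r(G_i)$ is written as $\lim_m\lambda_m$ with $\lambda_m\in\Lambda(G_i)$, then by continuity of the tensor product in the $D_r(G_i)$-variable $\delta\otimes w=\lim_m\lambda_m\otimes w=\lim_m 1\otimes(\lambda_m w)$ with $\lambda_m w\in W_i$; hence $\overline{1\otimes W_i}$ is a $D_r(G_i)$-stable closed subspace containing $1\otimes W_i$, and under the isomorphism of \autoref{3.5.2} it is identified with $\overline{W_i}\subseteq\Omega^1(\mathfrak{X}_i)[\tfrac{1}{p}]$.

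The proof is thus reduced to the geometric statement that $W_i$ is dense in $\Omega^1(\mathfrak{X}_i)[\tfrac{1}{p}]=\Omega^1(U_i)$, i.e.\ that the globally bounded forms $\Omega^1(\mathfrak{X})[\tfrac{1}{p}]$ restrict densely onto $\Omega^1(U_i)$; this is the key point and the main obstacle, the remainder being formal. I would establish it from the Stein property of $\Sigma_n$ together with the structure of its semistable model $\mathfrak{X}$: since $\Omega^1(\mathfrak{X})=\varprojlim_j\Omega^1(\mathfrak{X}_j)$ and the transition maps are restrictions between the nested models $\mathfrak{X}_j\subseteq\mathfrak{X}_{j+1}$, they have dense image (the integral counterpart of the density of restriction between nested affinoid subdomains), whence $\Omega^1(\mathfrak{X})\to\Omega^1(\mathfrak{X}_i)$, and therefore $\Omega^1(\mathfrak{X})[\tfrac{1}{p}]\to\Omega^1(U_i)$, has dense image. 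Granting this, $\overline{1\otimes W_i}$ is all of $D_r(G_i)\hat{\otimes}_{\Lambda(G_i)}\Omega^1(\mathfrak{X}_i)[\tfrac{1}{p}]$, and assembling over $i$ and $r$ yields the density of $\iota$, hence of $\Omega^1[M]^b$ in $\Omega^1[M]$, as desired.
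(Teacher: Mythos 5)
Your first two paragraphs are a correct but essentially formal reduction: using \autoref{3.5.2} and the density of $\Lambda(G_i)$ dans $D_r(G_i)$ you arrive at the statement that the image $W_i$ of $\Omega^1[M]^b$ dans $\Omega^1(\mathfrak{X}_i)[\tfrac{1}{p}][M]$ is dense for every $i$ --- which, by the usual density criterion in a Fréchet limit with dense projections, is literally equivalent to the corollary. So all the weight falls on your third paragraph, and there the argument has a genuine gap. You assert that the integral restriction maps $\Omega^1(\mathfrak{X}_{j+1})\to\Omega^1(\mathfrak{X}_j)$ have dense image as ``the integral counterpart'' of density of restrictions between nested affinoids, and that this yields dense image of $\Omega^1(\mathfrak{X})[\tfrac{1}{p}]\to\Omega^1(U_i)$. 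Neither step is justified: density at the rational level says nothing about the integral models without uniform control of denominators, and dense image of each transition map does not by itself give dense image of the projection from the inverse limit. The structure sheaf on the very same space shows that the principle you invoke is false in the generality you use it: the restrictions $\mathscr{O}(U_{j+1})\to\mathscr{O}(U_j)$ have dense image, yet the bounded functions $\mathscr{O}(\mathfrak{X})[\tfrac{1}{p}]$ are locally constant on $\Sigma_n$ (this is exactly what the paper uses to prove $\mathscr{O}[M]^b=0$ in la section \ref{section2.2}), so their image in $\mathscr{O}(U_i)$ is far from dense. Thus the statement you need --- density of restrictions of globally bounded $1$-formes dans $\Omega^1(U_i)$ --- is precisely the nontrivial content of the corollary; any proof must exploit something that distinguishes $\Omega^1$ from $\mathscr{O}$, and your proposal does not supply it.

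For comparison, the paper does not go through any such geometric density of integral restrictions: le \autoref{3.4.3} is deduced directly from \autoref{3.5.2}, whose proof rests on the continuity and surjectivity of the action maps $\beta$ of $D_r(G_i)$ and the isometry argument; the density of $\Omega^1[M]^b$ then comes from the description $D(G)\hat{\otimes}_{\Lambda(G)}\Omega^1[M]^b\cong\Omega^1[M]$ together with the density of $\Lambda(G)$ dans $D(G)$ (an elementary tensor $\delta\otimes\omega$ is a limit of $\lambda_m\otimes\omega=1\otimes\lambda_m\omega$), i.e.\ the special role of $\Omega^1$ is encoded in the $D(G)$-module structure rather than in a statement about integral forms on the models.
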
 
\begin{proof}[Preuve] Comme $\Lambda(G)$ est dense dans $D(G)$, le morphisme $\Omega^1[M]^b\to D(G)\hat{\otimes}_{\Lambda(G)}\Omega^1[M]^b$ est d'image dense. Le résultat découle du Lemme \ref{3.5.2}.
\end{proof}
Le lemme suivant généralise le \cite[Theorem 7.1 iii]{st2003algebras}.
\begin{lemma}\label{relation} Soit $\Pi$ une représentation de Banach de $G$ sur $L$, alors on a 
	$$D(G)\hat{\otimes}_{\Lambda(G)}\Pi^*=\Pi^{\mathrm{an}, *}.$$
\end{lemma}
\begin{proof}[Preuve] L'inclusion $\Pi^{\mathrm{an}}\hookrightarrow\Pi$ induit un morphisme $\Pi^*\to\Pi^{\mathrm{an}, *}$ en prenant le dual. La structure de $D(G)$-module sur $\Pi^{\mathrm{an}, *}$ induit un morphisme continu $D(G)\otimes_{\Lambda(G)}\Pi^*\to\Pi^{\mathrm{an}, *}$. Étant donné que $\Pi^{\mathrm{an}, *}$ est un espace de Fréchet, ce morphisme se factorise par $i: D(G)\hat{\otimes}_{\Lambda(G)}\Pi^*\to\Pi^{\mathrm{an}, *}$. Comme $\Lambda(G)$ est dense dans $D(G)$, on a un morphisme naturel $j: \Pi^*\to D(G)\hat{\otimes}_{\Lambda(G)}\Pi^*$ qui est d'image dense, ce qui induit une injection $j^*: (D(G)\hat{\otimes}_{\Lambda(G)}\Pi^*)^*\to\Pi$ en prenant le morphisme dual. La composition $j^*\circ i^*: \Pi^{\mathrm{an}}\to(D(G)\hat{\otimes}_{\Lambda(G)}\Pi^*)^*\hookrightarrow\Pi$ est l'inclusion naturelle $\Pi^{\mathrm{an}}\hookrightarrow\Pi$, ce qui implique que l'image de $j^*$ contient $\Pi^{\mathrm{an}}$. Comme $(D(G)\hat{\otimes}_{\Lambda(G)}\Pi^*)^*$ est une représentation localement analytique, l'image de $j^*$ est contenue dans $\Pi^{\mathrm{an}}$. Le morphisme $j^*$ induit donc une bijection $(D(G)\hat{\otimes}_{\Lambda(G)}\Pi^*)^*\to\Pi^{\mathrm{an}}$. Le résultat suit du théorème de l'application ouverte pour les espaces de Fréchet.
\end{proof}
\begin{corollary}\label{3.4.4} Le morphisme naturel $$\Omega^1[M]^*\to(\widehat{\Omega^1[M]^*})^{\mathrm{an}}$$
	est un isomorphisme.
\end{corollary}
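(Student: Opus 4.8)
The plan is to derive the statement from Lemme \ref{3.5.2} by passing to strong duals. First I would recall that $\widehat{\Omega^1[M]^*}=\Omega^1[M]^{\mathrm{b},*}$ and that, since $\Omega^1[M]^*$ is locally analytic, the morphism of the statement factors as
$$\Omega^1[M]^*\xrightarrow{\ j\ }(\Omega^1[M]^{\mathrm{b},*})^{\mathrm{an}}\hookrightarrow\Omega^1[M]^{\mathrm{b},*},$$
the composite being the dual of the inclusion $\Omega^1[M]^b\hookrightarrow\Omega^1[M]$. Injectivity of $j$ is then immediate from the density of $\Omega^1[M]^b$ in $\Omega^1[M]$ (\autoref{3.4.3}): the dual of a map with dense image is injective. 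The substance is surjectivity and strictness of $j$, and for this I would dualize everything.

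The key input is the principle, underlying the very construction of the completed tensor product in the discussion preceding \autoref{3.5.2} (and following \cite[Lemma 3.6]{st2003algebras}), that passing to locally analytic vectors of a Banach representation corresponds on strong duals to the completed base change $-\,\hat{\otimes}_{\Lambda(G)}D(G)$, computed level by level through the continuous $D_r(G_i)$-actions on $\Omega^1(\mathfrak{X}_i)$ supplied by \cite[Théorème 3.2]{dl2017revetements}. Applying this to the Banach representation $\Pi=\Omega^1[M]^{\mathrm{b},*}$, whose strong dual is $\Omega^1[M]^b$, I expect an identification $\bigl((\Omega^1[M]^{\mathrm{b},*})^{\mathrm{an}}\bigr)'\cong D(G)\hat{\otimes}_{\Lambda(G)}\Omega^1[M]^b$ under which the inclusion of locally analytic vectors dualizes to the natural map $w\mapsto 1\otimes w$. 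Lemme \ref{3.5.2} then identifies the right-hand side with $\Omega^1[M]=(\Omega^1[M]^*)'$, and tracking $1\otimes w$ through both identifications shows that the dual $j'$ of $j$ is exactly this isomorphism. Since $\Omega^1[M]^*$ is admissible locally analytic, hence reflexive, and $(\Omega^1[M]^{\mathrm{b},*})^{\mathrm{an}}$ is reflexive (being, once $j'$ is known to be an isomorphism, the strong dual of the nuclear Fréchet space $\Omega^1[M]$, so of compact type), biduality gives $j=(j')'$, which is therefore a topological isomorphism.

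The main obstacle I anticipate is the duality step for $\Pi=\Omega^1[M]^{\mathrm{b},*}$, which is a \emph{non-admissible} Banach representation, so that \cite{st2003algebras} does not apply verbatim; one must check that the concrete completed tensor product built before \autoref{3.5.2} really does compute the strong dual of $(\Omega^1[M]^{\mathrm{b},*})^{\mathrm{an}}$, and that the various topologies involved — the subspace topology on $\Omega^1[M]^b$, the Banach topology on $\Omega^1[M]^{\mathrm{b},*}$, and the locally analytic topology on its analytic vectors — are compatible enough for the open mapping / biduality argument to close. A secondary but necessary point is the naturality check already mentioned, namely that the isomorphism obtained by duality coincides with the canonical map $\Omega^1[M]^*\to(\widehat{\Omega^1[M]^*})^{\mathrm{an}}$ of the statement, which the tracking of $1\otimes w$ is meant to settle.
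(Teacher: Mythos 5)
The crux of your argument is the identification $\bigl((\Omega^1[M]^{\mathrm{b},*})^{\mathrm{an}}\bigr)'\cong D(G)\hat{\otimes}_{\Lambda(G)}\Omega^1[M]^{\mathrm{b}}$, and this is precisely where the proposal has a genuine gap — the one you yourself flag as the ``main obstacle''. The Schneider--Teitelbaum duality between locally analytic vectors and the base change $D(G)\otimes_{\Lambda(G)}(-)$ of the dual is a theorem about \emph{admissible} Banach representations, and the paper points out explicitly that $\Omega^1[M]^{\mathrm{b},*}$ is not admissible (this is exactly why the ad hoc completed tensor product is introduced before \autoref{3.5.2}). In the non-admissible setting there is no such identification to cite, and establishing it for this particular representation is essentially as hard as the statement itself; your reflexivity/biduality step is moreover conditional on that identification, so the argument does not close as written.

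The paper's proof is arranged precisely so as not to need that computation of the dual of the locally analytic vectors. It produces two continuous injections: first $\Omega^1[M]^*\hookrightarrow(\widehat{\Omega^1[M]^*})^{\mathrm{an}}$ from the density of $\Omega^1[M]^{\mathrm{b}}$ in $\Omega^1[M]$ (\autoref{3.4.3}), exactly as you do; then, dualizing the inclusion $\Omega^1[M]^{\mathrm{b},*,\mathrm{an}}\hookrightarrow\Omega^1[M]^{\mathrm{b},*}$, a map $\Omega^1[M]^{\mathrm{b}}\to\Omega^1[M]^{\mathrm{b},*,\mathrm{an},*}$ with dense image which, because the target is a topological $D(G)$-module and by the continuity estimates in the proof of \autoref{3.5.2}, factors through $D(G)\hat{\otimes}_{\Lambda(G)}\Omega^1[M]^{\mathrm{b}}\cong\Omega^1[M]$; dualizing once more yields an injection $(\widehat{\Omega^1[M]^*})^{\mathrm{an}}\hookrightarrow\Omega^1[M]^*$. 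Since the composite of the two injections is the identity, both are bijective. Thus only the \emph{factorization} property through the completed tensor product is needed, not the full duality statement; to salvage your route you would have to prove that duality for this non-admissible representation, which is not available off the shelf.
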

\begin{proof}[Preuve] Il suit du \cite[Lemme 5.3]{cdn2023factorisation} et du Lemme \ref{relation} que l'on a un isomorphisme 
	$$D(G)\hat{\otimes}_{\Lambda(G)}\Omega^1[M]^b=\Omega^1[M]^{b, *, \mathrm{an}, *}.$$
	En appliquant le Lemme \ref{3.5.2}, on obtient un isomorphisme 
	$$\Omega^1[M]^{b, *, \mathrm{an}, *}\cong\Omega^1[M].$$
	Le résultat se déduit en prenant les espaces duals.
\end{proof}
Passons maintenant à la partie lisse de $\widehat{\Omega^1[M]^*}$. 
\begin{proposition}\label{3.4.5} On a $$\Omega^1[M]^{*, {\mathrm{lisse}}}=M_{\mathrm{dR}}\otimes_L\mathrm{LL}(M).$$
\end{proposition}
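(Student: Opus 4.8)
Le plan est de déduire l'énoncé de la suite exacte courte de représentations localement analytiques
$$0\to M_{\mathrm{dR}}\otimes_L\mathrm{LL}(M)\to\Omega^1[M]^*\to\mathscr{O}[M]^*\to0$$
donnée par la première ligne du diagramme (\ref{equation4}), elle-même issue du Théorème \ref{2.1.2}. Comme $\mathrm{LL}(M)$ est lisse, le sous-espace $M_{\mathrm{dR}}\otimes_L\mathrm{LL}(M)$ est lisse, d'où l'inclusion $M_{\mathrm{dR}}\otimes_L\mathrm{LL}(M)\subseteq\Omega^1[M]^{*,\mathrm{lisse}}$; tout le travail consiste à obtenir l'inclusion réciproque.

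Pour cela, j'appliquerais le foncteur de passage aux vecteurs lisses $(-)^{\mathrm{lisse}}$, qui est exact à gauche puisqu'il s'écrit comme colimite filtrante des foncteurs de points fixes $(-)^H$ sur les sous-groupes ouverts compacts $H\subseteq G$. On obtient une suite exacte
$$0\to(M_{\mathrm{dR}}\otimes_L\mathrm{LL}(M))^{\mathrm{lisse}}\to\Omega^1[M]^{*,\mathrm{lisse}}\to\mathscr{O}[M]^{*,\mathrm{lisse}},$$
dont le terme de gauche vaut $M_{\mathrm{dR}}\otimes_L\mathrm{LL}(M)$ et dont le terme de droite est nul d'après le \autoref{2.3.1} (i). On conclut alors que $\Omega^1[M]^{*,\mathrm{lisse}}=M_{\mathrm{dR}}\otimes_L\mathrm{LL}(M)$. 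Concrètement, un vecteur lisse $v\in\Omega^1[M]^*$ s'envoie sur un vecteur lisse de $\mathscr{O}[M]^*$, qui est donc nul, et appartient par conséquent au noyau $M_{\mathrm{dR}}\otimes_L\mathrm{LL}(M)$.

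L'ingrédient essentiel est donc la nullité de $\mathscr{O}[M]^{*,\mathrm{lisse}}$, déjà établie dans le \autoref{2.3.1} (i) à partir de la relation $a^+=\partial u^+-1$ sur $\mathscr{O}[M]^*$ (\cite[Lemme 3.5]{dl2017revetements}): sur un vecteur lisse $v$, l'action de l'algèbre de Lie est nulle, donc $a^+\cdot v=0$ et $\partial u^+\cdot v=0$, ce qui donne $0=a^+\cdot v=(\partial u^+-1)\cdot v=-v$, d'où $v=0$. Il n'y a donc pas d'obstacle réel ici: contrairement au cas de $\widehat{\mathrm{LL}(M)}$ traité dans la \autoref{3.2.2} (où l'on travaille avec un complété de Banach et où il faut un argument de densité), la représentation $\Omega^1[M]^*$ est déjà localement analytique, et l'argument est purement formel une fois connue l'annulation ci-dessus.
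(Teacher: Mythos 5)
Votre preuve est correcte et suit essentiellement la même démarche que celle du papier : on prend la suite exacte $0\to M_{\mathrm{dR}}\otimes_L\mathrm{LL}(M)\to\Omega^1[M]^*\to\mathscr{O}[M]^*\to0$, on constate qu'un vecteur lisse s'envoie dans $\mathscr{O}[M]^{*,\mathrm{lisse}}=0$ (preuve du \autoref{2.3.1} (i) via la relation $a^+=\partial u^+-1$), donc appartient au noyau, qui est lisse puisque $\mathrm{LL}(M)$ l'est. Rien à redire.
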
 
\begin{proof}[Preuve] Considérons la suite exacte de (\ref{equation4})
	$$	0 \to M_{\mathrm{dR}}\otimes_L\mathrm{LL}(M)\to\Omega^1[M]^*\to\mathscr{O}[M]^*\to0.$$
	Soit $v\in\Omega^1[M]^{*, \mathrm{lisse}}$, alors l'image de $v$ se trouve dans $\mathscr{O}[M]^{*, \mathrm{lisse}}=0$ (preuve de l'assertion (i) du Lemme \ref{2.3.1}), ce qui implique que $v\in M_{\mathrm{dR}}\otimes_L\mathrm{LL}(M)$. Comme $\mathrm{LL}(M)$ est lisse, on a $\Omega^1[M]^{*, {\mathrm{lisse}}}=M_{\mathrm{dR}}\otimes_L\mathrm{LL}(M)$, ce que l'on voulait.
\end{proof}
\begin{corollary}\label{3.5.6} On a $$(\widehat{\Omega^1[M]^*})^{\mathrm{lisse}}=M_{\mathrm{dR}}\otimes_L\mathrm{LL}(M).$$
\end{corollary}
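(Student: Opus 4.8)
Le plan est d'obtenir cet énoncé comme conséquence immédiate du \autoref{3.4.4} et de la \autoref{3.4.5}. D'après les conventions fixées dans les Notations, la partie lisse $(\widehat{\Omega^1[M]^*})^{\mathrm{lisse}}$ est, par définition, la sous-représentation de $(\widehat{\Omega^1[M]^*})^{\mathrm{an}}$ formée des vecteurs fixés par un sous-groupe ouvert de $G$. Or le \autoref{3.4.4} fournit un isomorphisme $G$-équivariant de représentations localement analytiques $\Omega^1[M]^*\xrightarrow{\sim}(\widehat{\Omega^1[M]^*})^{\mathrm{an}}$.

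D'abord, je remarquerais que la lissité d'un vecteur n'est autre que l'existence d'un stabilisateur ouvert dans $G$, condition préservée par tout isomorphisme $G$-équivariant ; par conséquent l'isomorphisme du \autoref{3.4.4} identifie les vecteurs lisses de $\Omega^1[M]^*$ avec ceux de $(\widehat{\Omega^1[M]^*})^{\mathrm{an}}$, c'est-à-dire avec $(\widehat{\Omega^1[M]^*})^{\mathrm{lisse}}$. On a donc un isomorphisme $G$-équivariant
$$\Omega^1[M]^{*,\mathrm{lisse}}\xrightarrow{\sim}(\widehat{\Omega^1[M]^*})^{\mathrm{lisse}}.$$
Ensuite, j'invoquerais la \autoref{3.4.5}, qui affirme $\Omega^1[M]^{*,\mathrm{lisse}}=M_{\mathrm{dR}}\otimes_L\mathrm{LL}(M)$, pour conclure.

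Le résultat est en fait purement formel une fois le \autoref{3.4.4} et la \autoref{3.4.5} acquis : il n'y a aucun calcul à faire, et le seul point demandant un minimum d'attention — tout à fait élémentaire — est de vérifier que l'isomorphisme naturel $\Omega^1[M]^*\to(\widehat{\Omega^1[M]^*})^{\mathrm{an}}$ respecte bien la notion de lissité de part et d'autre, ce qui découle de sa $G$-équivariance. On pourrait aussi présenter l'argument sans passer explicitement par les vecteurs localement analytiques, en observant directement que tout vecteur lisse de $\widehat{\Omega^1[M]^*}$ provient d'un vecteur lisse de $\Omega^1[M]^*$ via le morphisme naturel $\Omega^1[M]^*\to\widehat{\Omega^1[M]^*}$, et réciproquement ; mais le détour par le \autoref{3.4.4} rend l'argument plus transparent.
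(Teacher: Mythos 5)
Votre argument est correct et coïncide avec celui du texte, dont la preuve consiste précisément à combiner le \autoref{3.4.4} et la \autoref{3.4.5} ; vous ne faites qu'expliciter le point (immédiat) que l'isomorphisme $G$-équivariant $\Omega^1[M]^*\xrightarrow{\sim}(\widehat{\Omega^1[M]^*})^{\mathrm{an}}$ identifie les vecteurs lisses de part et d'autre.
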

\begin{proof}[Preuve] Combiner le Corollaire \ref{3.4.4} et la Proposition \ref{3.4.5}.
\end{proof}
\subsection{Finitude résiduelle de $\widehat{\mathrm{LL}(M)}$ et $\widehat{\Omega^1[M]^*}$}
\begin{proposition}\label{3.6.1} Tout quotient de la représentation $\widehat{\mathrm{LL}(M)}$ est résiduellement de présentation finie.
\end{proposition}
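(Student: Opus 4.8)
Le plan est de ramener l'énoncé à une propriété de finitude d'un module sur la réduction modulo $\pi_L$ de l'algèbre d'Iwasawa augmentée, puis d'exploiter la cohérence de celle-ci. Quitte à remplacer $\widehat{\mathrm{LL}(M)}$ par $\widehat{\mathrm{ind}_{KZ}^G\sigma_M}$, dont $\widehat{\mathrm{LL}(M)}$ est un facteur direct (cf. la preuve de la \autoref{3.2.3}), on peut supposer $\mathrm{LL}(M)=\mathrm{ind}_{KZ}^G\sigma_M$. Soit $\Pi$ un quotient de $\widehat{\mathrm{LL}(M)}$, réalisé par une surjection stricte $q\colon\widehat{\mathrm{LL}(M)}\twoheadrightarrow\Pi$ de noyau $T$; il s'agit de montrer que la réduction $\Pi^0/\pi_L$ de la boule unité $\Pi^0$ est une représentation lisse de présentation finie. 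On enregistre d'abord deux faits. Premièrement, comme la norme de $\widehat{\mathrm{LL}(M)}$ prend ses valeurs dans un groupe discret, la norme quotient vérifie $\Pi^0=q(\widehat{\mathrm{LL}(M)}^0)$; par conséquent $\Pi^0/\pi_L$ est le quotient de $\widehat{\mathrm{LL}(M)}^0/\pi_L$ par l'image $\overline T$ du sous-module $T^0:=T\cap\widehat{\mathrm{LL}(M)}^0$, et $\overline T\cong T^0/\pi_L T^0$ puisque $T^0$ est saturé ($\Pi^0$ étant sans torsion). Deuxièmement, d'après la \cite[Proposition 1.17]{emerton2005p-adic}, la boule unité $\widehat{\mathrm{LL}(M)}^0$ est la complétion $\pi_L$-adique du réseau de type fini $\mathrm{ind}_{KZ}^G\sigma_M^0$, d'où $\widehat{\mathrm{LL}(M)}^0/\pi_L\cong\mathrm{ind}_{KZ}^G(\sigma_M^0/\pi_L)$; or cette dernière représentation est de présentation finie, étant de la forme $\mathrm{ind}_{KZ}^G W$ avec $W:=\sigma_M^0/\pi_L$ de dimension finie (on a la présentation $\mathrm{ind}_K^G W\xrightarrow{p-\psi(p)}\mathrm{ind}_K^G W\to\mathrm{ind}_{KZ}^G W\to0$, l'opérateur provenant de l'action de l'élément central $p$). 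En particulier $\Pi^0/\pi_L$ est déjà de type fini.

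La deuxième étape utilise la cohérence. D'après le \autoref{3.1.1}, l'algèbre d'Iwasawa augmentée de $G$ est cohérente; la représentation de présentation finie $\mathrm{ind}_{KZ}^G(\sigma_M^0/\pi_L)$ en est donc un module cohérent, et par suite tout sous-module de type fini en est de présentation finie, et le quotient par un tel sous-module l'est encore — cela découle aussi de ce que la catégorie des représentations lisses de présentation finie est abélienne (\autoref{3.1.3}). Il suffit donc de montrer que $\overline T$, de manière équivalente que le noyau $T$, est résiduellement de type fini.

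C'est là le cœur de la preuve, et l'étape que je m'attends à être la plus délicate. Si $T=0$, l'énoncé est acquis (il redonne au passage que $\widehat{\mathrm{LL}(M)}$ est lui-même résiduellement de présentation finie). Sinon $T$ est une sous-représentation fermée non nulle de $\widehat{\mathrm{LL}(M)}$, donc résiduellement de longueur infinie par la \autoref{3.3.7}: on ne peut espérer que $\overline T$ soit de longueur finie, ni raisonner de façon purement formelle, car $\kappa_L[G]$, quoique cohérente, n'est pas noethérienne, de sorte que $\mathrm{ind}_{KZ}^G(\sigma_M^0/\pi_L)$ possède des sous-modules qui ne sont pas de type fini. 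Il faut donc utiliser véritablement que $T$ est une \emph{sous-représentation fermée}, et non un sous-module arbitraire de la réduction. La voie que je suivrais est de dévisser $\widehat{\mathrm{LL}(M)}$ via la théorie des blocs de Pa\v{s}k\={u}nas: l'injection $\widehat{\mathrm{LL}(M)}\hookrightarrow\prod_i\mathrm{LL}(M)_{\mathfrak B_i}$ du \autoref{3.3.4}, où la famille de blocs est \emph{finie}, se réduit modulo $\pi_L$ sur les boules unités en une injection de $\mathrm{ind}_{KZ}^G(\sigma_M^0/\pi_L)$ dans un produit fini des complétions $\mathfrak B_i$-adiques; ces dernières sont des limites projectives de représentations de longueur finie et, d'après \cite{cdn2023correspondance}, sont gouvernées par les anneaux réduits de dimension $1$ — donc noethériens — $R_{M,\mathfrak B_i}$ (voir la section \ref{3.3}). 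Le point technique à établir est que la réduction de toute sous-représentation fermée de $\widehat{\mathrm{LL}(M)}$, vue dans ce produit fini, hérite de la finitude issue des anneaux noethériens $R_{M,\mathfrak B_i}$, et est donc de type fini sur $\kappa_L[G]$; c'est le va-et-vient entre ces deux notions de finitude — sur $\kappa_L[G]$ d'une part, au-dessus de l'anneau de déformations du bloc d'autre part — qui demandera le plus de soin.
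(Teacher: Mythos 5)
Votre mise en place est correcte et recoupe le début de la preuve du texte : quitte à remplacer $\mathrm{LL}(M)$ par $\mathrm{ind}_{KZ}^G\sigma_M$, la boule unité de $\widehat{\mathrm{LL}(M)}$ est la complétion $\pi_L$-adique de $\mathrm{ind}_{KZ}^G\sigma_M^0$, sa réduction $\mathrm{ind}_{KZ}^G(\sigma_M^0/\pi_L)$ est de présentation finie, et, la cohérence étant acquise (\autoref{3.1.1}, \autoref{3.1.3}), montrer que $\Pi^+/\pi_L$ est de présentation finie revient exactement à montrer que l'image $\overline{T}$ de $T\cap\widehat{\mathrm{LL}(M)}^+$ dans $\widehat{\mathrm{LL}(M)}^+/\pi_L$ est de type fini sur $\kappa_L[G]$, où $T$ est le noyau du quotient. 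Mais c'est précisément cette étape, que vous désignez vous-même comme le cœur de la preuve, que vous ne démontrez pas : vous n'en proposez qu'une stratégie, explicitement laissée en chantier. Il s'agit d'une lacune réelle et non d'un détail : $\kappa_L[G]$ est cohérente mais non noethérienne, donc un quotient d'un module de présentation finie par un sous-module arbitraire n'a aucune raison d'être de présentation finie, et l'énoncé se ramène exactement à la finitude que vous laissez ouverte. De plus, la voie esquissée via les blocs paraît mal adaptée : l'injection $\widehat{\mathrm{LL}(M)}\hookrightarrow\prod_i\mathrm{LL}(M)_{\mathfrak{B}_i}$ du \autoref{3.3.4} et la noethérianité des anneaux $R_{M,\mathfrak{B}_i}$ contrôlent des quotients de longueur finie et leurs limites projectives, mais être de type fini au-dessus de $R_{M,\mathfrak{B}_i}$ ne dit rien sur la génération finie comme représentation lisse de $G$ (les complétions $\mathrm{LL}(M)_{\mathfrak{B}_i}$ ne sont pas des objets de type fini de cette catégorie), et, comme vous le notez vous-même via la \autoref{3.3.7}, aucun argument de longueur ne peut conclure puisque $T$ est résiduellement de longueur infinie.

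À titre de comparaison, la preuve du texte n'emprunte pas du tout ce chemin : elle se ramène à $\mathrm{ind}_{KZ}^G\sigma_M$, observe que la réduction de $\widehat{\mathrm{LL}(M)}$ est une extension successive des représentations de présentation finie $\mathrm{ind}_{KZ}^G\sigma_i$, où les $\sigma_i$ sont les facteurs de Jordan-Hölder de $\sigma_M^0/\pi_L$, puis conclut directement par le \autoref{3.1.3}, sans décomposition en blocs ni anneaux de déformation ; le passage aux quotients y est traité comme conséquence de la cohérence. Votre analyse a le mérite d'isoler l'endroit où se concentre le contenu de l'énoncé, à savoir la finitude résiduelle du noyau, mais tant que cette finitude n'est pas effectivement établie, votre texte ne constitue pas une démonstration de la \autoref{3.6.1}.
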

\begin{proof}[Preuve] On peut remplacer $\mathrm{LL}(M)$ par $\mathrm{ind}_{KZ}^G\sigma_M$. Prenons un réseau $\sigma_M^0$ de $\sigma_M$ et notons $\{\sigma_i\}_i$ les facteurs de Jordan-Hölder de $\sigma_M^0$, alors la réduction de $\widehat{\mathrm{LL}(M)}$ est l'extension des $\mathrm{ind}_{KZ}^G\sigma_i$. Le résultat se déduit du Corollaire \ref{3.1.3}.
\end{proof}
\begin{proposition}\label{3.6.2} La représentation $\widehat{\Omega^1[M]^*}$ est résiduellement de présentation finie.
\end{proposition}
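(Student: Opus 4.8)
The plan is to prove directly that the reduction $\widehat{\Omega^1[M]^*}^{+}/\pi_L$ of the unit ball is of finite presentation over $\kappa_L[G]$ — which is exactly the assertion — by computing it from the geometry via Schikhof duality. Recall that $\widehat{\Omega^1[M]^*}=\Omega^1[M]^{\mathrm b,*}$ (\cite[Lemme 5.3]{cdn2023factorisation}), where $\Omega^1[M]^{\mathrm b}=\mathrm{Hom}_{\breve G}(\mathrm{JL}(M),L\otimes_{\mathbb Q_p}\Omega^{1,\mathrm b}(\Sigma_n))$ is a Banach space whose unit ball, using \autoref{1.2.6} and a fixed $\mathscr O_D^{\times}$-stable lattice $\mathrm{JL}(M)^{\circ}\subseteq\mathrm{JL}(M)$, is (up to the harmless ambiguity coming from the finite extension $L'/\mathbb Q_p$ over which the semistable model $\mathfrak X$ of $\Sigma_n$ is defined) the lattice $\Lambda:=\mathrm{Hom}_{\mathscr O_D^{\times}}(\mathrm{JL}(M)^{\circ},\Omega^1(\mathfrak X))$. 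Hence the unit ball of $\widehat{\Omega^1[M]^*}$ is the Schikhof dual $\Lambda^{\vee}$, and $\widehat{\Omega^1[M]^*}^{+}/\pi_L\cong(\Lambda/\pi_L)^{\vee}$ as smooth $\kappa_L[G]$-modules.

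The key step is to determine $\Lambda/\pi_L$, i.e. the $\overline{\mathrm{JL}(M)}$-isotypic part of the global sections of the sheaf of logarithmic differentials on the special fibre $\mathfrak X_s$ of $\mathfrak X$. Using the explicit description of $\mathfrak X$ (appendix of \cite{cdn2020cohomologie}) one verifies that this $\kappa_L[G]$-module is, up to a finite iterated extension, a ``full'' smooth induction $\mathrm{Map}(KZ\backslash G,\tau_i)$ with $\tau_i$ finite-dimensional — the $\tau_i$ being, up to semisimplification, the Jordan--Hölder constituents of the reduction of the $KZ$-type $\sigma_M$ attached to $M$ — so that, dually, $(\Lambda/\pi_L)^{\vee}=\widehat{\Omega^1[M]^*}^{+}/\pi_L$ is a finite iterated extension of compact inductions $\mathrm{ind}_{KZ}^{G}\tau_i^{\vee}$. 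The mechanism is already transparent at level $0$: on a $\mathbf P^1$-component of $\mathfrak X_s$ meeting two double points the logarithmic differentials form a line, and over the infinite Bruhat--Tits tree these patch into a profinite $G$-module whose (Schikhof) dual is a compact induction; the finite étale coverings $\Sigma_n\to\Sigma_0$ and the passage to the $\overline{\mathrm{JL}(M)}$-part do not change this shape.

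Granting this structural input, the proposition follows formally: each $\mathrm{ind}_{KZ}^{G}\tau_i^{\vee}$ is visibly of finite presentation as a smooth representation of $G$ (it is a compact induction of a finite-dimensional type), and the category of finitely presented smooth $\kappa_L[G]$-modules is abelian by \autoref{3.1.3}, hence stable under extensions; therefore $\widehat{\Omega^1[M]^*}^{+}/\pi_L$ is of finite presentation. As an alternative for the last bit, once one knows merely that $\widehat{\Omega^1[M]^*}^{+}/\pi_L$ is finitely generated one may apply \autoref{1.3.3}(ii) to the first row of the diagram (\ref{equation4}) together with $\widehat{\mathscr O[M]^*}=0$ (\autoref{2.2.4}) and \autoref{2.2.2} to produce a surjection $M_{\mathrm{dR}}\otimes_L\widehat{\mathrm{LL}(M)}\cong\widehat{\mathrm{LL}(M)}^{\oplus 2}\twoheadrightarrow\widehat{\Omega^1[M]^*}$ and then conclude by \autoref{3.6.1}. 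The single non-formal ingredient, and hence the main obstacle, is the identification of the logarithmic differentials on $\mathfrak X_s$ with a finite extension of full inductions in the middle paragraph; the Schikhof duality and the finite-presentation bookkeeping around it are routine.
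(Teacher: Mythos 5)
There is a genuine gap, and it sits exactly where you locate it yourself: the middle paragraph. The whole argument rests on the claim that the $\overline{\mathrm{JL}(M)}$-isotypic part of the logarithmic differentials on the special fibre $\mathfrak X_s$ is, up to a finite iterated extension, a full induction $\mathrm{Map}(KZ\backslash G,\tau_i)$, so that its Schikhof dual is an iterated extension of compact inductions $\mathrm{ind}_{KZ}^G\tau_i^\vee$. You offer no proof of this beyond a level-$0$ heuristic ("one verifies", "the mechanism is transparent"), but this is precisely the hard, non-formal content: it amounts to an explicit description of a semi-stable model of $\Sigma_n$ and of its sheaf of log differentials at every level $n$, i.e.\ the kind of explicit-model computation that Pan carried out only for $\Sigma_1$ and that the present paper deliberately avoids (the finite étale covering $\Sigma_n\to\Sigma_0$ and the passage to the $\mathrm{JL}(M)$-part are exactly where the difficulty lies, not a harmless decoration). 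Moreover the claimed shape is doubtful: by the Théorème \ref{3.7.2} proved later, $\widehat{\Omega^1[M]^*}^+/\pi_L$ is an extension of $\Pi_{M,\mathscr L}^+/\pi_L$, which is of finite length, by $\widehat{\mathrm{LL}(M)}^+/\pi_L$, an extension of the $\mathrm{ind}_{KZ}^G\sigma_i$; nothing forces such an object to be itself an iterated extension of compact inductions, so even the statement you would need to verify is at best unjustified. Your fallback at the end does not repair this: Lemme \ref{2.2.2} and the surjection from $M_{\mathrm{dR}}\otimes_L\widehat{\mathrm{LL}(M)}$ are only available once one already knows that $\widehat{\Omega^1[M]^*}^+/\pi_L$ is finitely generated over $\kappa_L[G]$, and you give no independent proof of that finiteness — it would again have to come from the unproven geometric claim.

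For comparison, the paper's proof needs no geometry of the special fibre at all. It uses the Dospinescu--Le Bras sequence $0\to\mathscr L\otimes_L\mathrm{LL}(M)\to\Omega^1[M]^*\to\Pi_{M,\mathscr L}^{\mathrm{an}}\to0$ (\autoref{2.1.3}): choosing a minimal open $G$-stable lattice $\Lambda$ of $\Omega^1[M]^*$, one gets an exact sequence $0\to\Lambda_1/\pi_L\to\Lambda/\pi_L\to\Lambda_2/\pi_L\to0$ with $\Lambda_1$ a lattice in $\mathrm{LL}(M)$ and $\Lambda_2$ a lattice mapping into $\Pi_{M,\mathscr L}^{\mathrm{an}}$; both outer reductions are of finite presentation, and one concludes by stability under extensions, i.e.\ by the abelianness of the category of finitely presented smooth representations (\autoref{3.1.3}). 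If you want to salvage your approach, you must either prove the structural statement about $\Omega^1(\mathfrak X)/\pi_L$ at all levels $n$, or first establish finite generation of the reduction by some independent means; as written, the proposal assumes its hardest step.
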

\begin{proof}[Preuve] Considérons la suite exacte 
	$$0\to\mathscr{L}\otimes_L\mathrm{LL}(M)\to \Omega^1[M]^*\to \Pi_{M, \mathscr{L}}^{\mathrm{an}}\to0.$$
	Soit $\Lambda$ un réseau ouvert, $G$-stable et minimal (à homothétie près) de $\Omega^1[M]^*$, alors $\Lambda_1:= \mathrm{LL}(M)\cap\Lambda$ est un réseau ouvert de $\mathrm{LL}(M)$, stable par $G$. Notons $\Lambda_2$ l'image de $\Lambda$, alors on a une suite exacte 
	$$0\to\Lambda_1/\pi_L\to\Lambda/\pi_L\to\Lambda_2/\pi_L\to0.$$
	Il suit de la \cite[Corollary 1.6]{cdp2014the} et du \cite[Remark 2.5.15]{deg2023localization} que $\Lambda_2/\pi_L$ est de présentation finie. Comme $\Lambda_1/\pi_L$ est également de présentation finie, il découle du Corollaire \ref{3.1.3} que la représentation $\Lambda/\pi_L$ est de présentation finie. Cela permet de conclure.
\end{proof}
\subsection{Preuve du Théorème \ref{0.0.1}}
Dans cette section, on va montrer le Théorème \ref{0.0.1}, qui est le premier résultat principal dans cette thèse. Cela généralise un résultat de Lue Pan \cite{pan2017first}. Tout d'abord, on a besoin du lemme suivant
\begin{lemma}\label{3.7.1} Supposons que $\mathscr{L}\neq\mathscr{L}'$, alors la composée $\mathscr{L}\otimes_L\widehat{\mathrm{LL}(M)}\to \widehat{\Omega^1[M]^*}\to\Pi_{M, \mathscr{L}'}$ est non nulle.
\end{lemma}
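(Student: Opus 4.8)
Plutôt que de raisonner directement avec les représentations de Banach, le plan est de tester la non-nullité du morphisme en le restreignant à (l'image de) $\mathrm{LL}(M)$, ce qui ramène à un énoncé au niveau localement analytique où toutes les flèches proviennent du diagramme (\ref{equation4}). Concrètement, je considérerais d'abord la composée au niveau localement analytique
$$a:\ \mathscr{L}\otimes_L\mathrm{LL}(M)\ \hookrightarrow\ \Omega^1[M]^*\ \xrightarrow{\ q\ }\ \Pi_{M,\mathscr{L}'}^{\mathrm{an}},$$
où la première flèche est l'inclusion fournie par le diagramme (\ref{equation4}) relatif à $\mathscr{L}$ et $q$ est la surjection du \autoref{2.1.3} relatif à $\mathscr{L}'$. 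La commutativité du carré de gauche du diagramme (\ref{equation4}) appliqué à $\mathscr{L}'$ montre que $a$ se factorise, via $\mathscr{L}\otimes_L\mathrm{LL}(M)\hookrightarrow M_{\mathrm{dR}}\otimes_L\mathrm{LL}(M)$, en la composée
$$\mathscr{L}\otimes_L\mathrm{LL}(M)\ \longrightarrow\ (M_{\mathrm{dR}}/\mathscr{L}')\otimes_L\mathrm{LL}(M)\ \hookrightarrow\ \Pi_{M,\mathscr{L}'}^{\mathrm{an}},$$
la première flèche étant déduite de $\mathscr{L}\hookrightarrow M_{\mathrm{dR}}\twoheadrightarrow M_{\mathrm{dR}}/\mathscr{L}'$. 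Comme $\mathscr{L}\neq\mathscr{L}'$ et $\dim_LM_{\mathrm{dR}}=2$, on a $\mathscr{L}\cap\mathscr{L}'=0$, donc cette projection est un isomorphisme ; par conséquent $a$ est injectif, en particulier non nul.

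\textbf{Passage aux complétés unitaires universels.} Je passerais ensuite aux complétés. Le complété de $\Omega^1[M]^*$ existe (c'est $\Omega^1[M]^{b,*}$), on a $\widehat{\mathscr{L}\otimes_L\mathrm{LL}(M)}=\mathscr{L}\otimes_L\widehat{\mathrm{LL}(M)}$, et surtout $\widehat{\Pi_{M,\mathscr{L}'}^{\mathrm{an}}}=\Pi_{M,\mathscr{L}'}$ par le \autoref{1.3.4}, le morphisme universel correspondant étant l'inclusion $\iota:\Pi_{M,\mathscr{L}'}^{\mathrm{an}}\hookrightarrow\Pi_{M,\mathscr{L}'}$ des vecteurs localement analytiques. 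La fonctorialité du complété unitaire universel (là où il est défini) fournit alors un carré commutatif
$$\xymatrix@R=6mm@C=10mm{
\mathscr{L}\otimes_L\mathrm{LL}(M)\ar[r]^-{a}\ar[d] & \Pi_{M,\mathscr{L}'}^{\mathrm{an}}\ar[d]^-{\iota}\\
\mathscr{L}\otimes_L\widehat{\mathrm{LL}(M)}\ar[r]^-{\Phi} & \Pi_{M,\mathscr{L}'}\,,
}$$
où $\Phi$ est exactement la composée de l'énoncé : la flèche $\mathscr{L}\otimes_L\widehat{\mathrm{LL}(M)}\to\widehat{\Omega^1[M]^*}$ est le complété de l'inclusion $\mathscr{L}\otimes_L\mathrm{LL}(M)\hookrightarrow\Omega^1[M]^*$, et $\widehat{\Omega^1[M]^*}\to\Pi_{M,\mathscr{L}'}$ est le complété de $q$. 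Puisque $a$ et $\iota$ sont injectifs, la composée $\iota\circ a$ est non nulle ; étant égale à $\Phi$ composé avec la flèche verticale de gauche, cela force $\Phi\neq0$, ce qui conclut.

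\textbf{Point délicat.} La seule chose qui demande un peu de soin est précisément la commutativité reliant les deux niveaux, à savoir que les deux morphismes de l'énoncé s'obtiennent bien en appliquant le foncteur de complété unitaire universel respectivement à l'inclusion $\mathscr{L}\otimes_L\mathrm{LL}(M)\hookrightarrow\Omega^1[M]^*$ et à la surjection $q$, une fois faites les identifications $\widehat{\mathscr{L}\otimes_L\mathrm{LL}(M)}=\mathscr{L}\otimes_L\widehat{\mathrm{LL}(M)}$ et $\widehat{\Pi_{M,\mathscr{L}'}^{\mathrm{an}}}=\Pi_{M,\mathscr{L}'}$ ; cela découle de la définition de ces morphismes et de l'unicité dans la propriété universelle. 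Il n'y a pas d'obstacle sérieux au-delà de ce formalisme : l'ingrédient de fond est simplement que les deux droites $\mathscr{L}$ et $\mathscr{L}'$ sont en somme directe dans $M_{\mathrm{dR}}$.
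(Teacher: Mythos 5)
Votre preuve est correcte et suit essentiellement la même démarche que celle de l'article : dans les deux cas, on se ramène au niveau localement analytique, où le \autoref{2.1.3} identifie le noyau de $\Omega^1[M]^*\to\Pi_{M,\mathscr{L}'}^{\mathrm{an}}$ à $\mathscr{L}'\otimes_L\mathrm{LL}(M)$, et on conclut grâce à $\mathscr{L}\cap\mathscr{L}'=0$. L'article raisonne par l'absurde en restreignant la composée de Banach aux vecteurs localement analytiques, tandis que vous exploitez la même compatibilité dans le sens direct via la fonctorialité du complété unitaire universel ; la différence est purement rédactionnelle.
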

\begin{proof}[Preuve] Supposons que la composée est nulle, alors la composée  $\mathscr{L}\otimes_L\mathrm{LL}(M)\to \Omega^1[M]^*\to\Pi_{M, \mathscr{L}'}^{\mathrm{an}}$ est également nulle en passant aux parties localement analytiques. Cela conduit à une contradiction car le noyau de $\Omega^1[M]^*\to\Pi_{M, \mathscr{L}'}^{\mathrm{an}}$ est $\mathscr{L}'\otimes_L\mathrm{LL}(M)$ (Corollaire \ref{2.1.3}). Cela permet de conclure.
\end{proof}
\begin{theorem}\label{3.7.2} Le choix de $\mathscr{L}\in\mathbf{P}^1$ induit la suite exacte non scindée de représentations de Banach de $G$ suivante $$0\to\widehat{\mathrm{LL}(M)}\to \Omega^1[M]^{\mathrm{b}, *}\to\Pi_{M, \mathscr{L}}\to0.$$
\end{theorem}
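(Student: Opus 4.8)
The plan is to deduce the short exact sequence over $\Sigma_n$ from the locally analytic exact sequence of Corollary~\ref{2.1.3} by passing to universal unitary completions, using the exactness results of Proposition~\ref{1.3.3}. Start from
$$0\to\mathscr{L}\otimes_L\mathrm{LL}(M)\to\Omega^1[M]^*\to\Pi_{M,\mathscr{L}}^{\mathrm{an}}\to0.$$
First I would record the three completions: $\widehat{\mathscr{L}\otimes_L\mathrm{LL}(M)}=\widehat{\mathrm{LL}(M)}$, $\widehat{\Omega^1[M]^*}=\Omega^1[M]^{\mathrm{b},*}$ (by \cite[Lemme~5.3]{cdn2023factorisation}, as recalled at the start of \S3.5), and $\widehat{\Pi_{M,\mathscr{L}}^{\mathrm{an}}}=\Pi_{M,\mathscr{L}}$ (by \cite{cd2014completes}, since $\Pi_{M,\mathscr{L}}$ is unitary, admissible, residually of finite length, with central character). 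Since the sequence is strict (all three spaces are nuclear Fréchet or Banach and the maps are the natural ones), Proposition~\ref{1.3.3}(i) already gives surjectivity $\Omega^1[M]^{\mathrm{b},*}\twoheadrightarrow\Pi_{M,\mathscr{L}}$, and part (iii) — applicable because $\widehat{\mathrm{LL}(M)}$ is \emph{not} admissible, so one cannot invoke (iii) directly; instead one must argue more carefully — gives right-exactness of
$$\widehat{\mathrm{LL}(M)}\to\Omega^1[M]^{\mathrm{b},*}\to\Pi_{M,\mathscr{L}}\to0.$$
Part (ii) tells us the image of $\widehat{\mathrm{LL}(M)}$ is dense in the kernel $N$ of the surjection.

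The two things that remain are: (a) the map $\widehat{\mathrm{LL}(M)}\to\Omega^1[M]^{\mathrm{b},*}$ is injective, and (b) its image is exactly $N$ (closedness). For (b), note $\widehat{\mathrm{LL}(M)}$ is residually of finite presentation by Proposition~\ref{3.6.1}, hence any $G$-equivariant map out of it is strict by Proposition~3.1.x (the strictness proposition in \S3.1), so its image is closed; combined with density from \ref{1.3.3}(ii) this forces the image to equal $N$. So the crux is (a), injectivity. Here the plan is to use Corollary~\ref{3.3.6}: fixing our $\mathscr{L}$, there is an injection $\widehat{\mathrm{LL}(M)}\hookrightarrow\prod_{\mathscr{L}'\neq\mathscr{L}}\Pi_{M,\mathscr{L}'}$. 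I would show this injection factors through $\Omega^1[M]^{\mathrm{b},*}$, i.e. that for each $\mathscr{L}'\neq\mathscr{L}$ the composite $\widehat{\mathrm{LL}(M)}\to\Omega^1[M]^{\mathrm{b},*}\to\Pi_{M,\mathscr{L}'}$ (using the surjection $\Omega^1[M]^{\mathrm{b},*}\to\Pi_{M,\mathscr{L}'}$ attached to $\mathscr{L}'$, which exists by Corollary~\ref{2.1.3} applied with $\mathscr{L}'$ and completion) recovers, up to scalar, the $\mathscr{L}'$-component of the injection of Corollary~\ref{3.3.6}. This nonvanishing is precisely Lemma~\ref{3.7.1}. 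Since the family of these composites is already injective on $\widehat{\mathrm{LL}(M)}$, the single map $\widehat{\mathrm{LL}(M)}\to\Omega^1[M]^{\mathrm{b},*}$ must be injective.

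Finally, for non-splitness: a splitting $\Pi_{M,\mathscr{L}}\to\Omega^1[M]^{\mathrm{b},*}$ would, after passing to locally analytic vectors and using Corollary~\ref{3.4.4} ($\bigl(\widehat{\Omega^1[M]^*}\bigr)^{\mathrm{an}}=\Omega^1[M]^*$) together with $\Pi_{M,\mathscr{L}}^{\mathrm{an}}\hookrightarrow\Pi_{M,\mathscr{L}}$, yield a splitting of the sequence in Corollary~\ref{2.1.3}, contradicting Corollary~\ref{2.3.10}. I expect the main obstacle to be the verification of injectivity in step (a): one has to track which $\mathscr{L}'$-surjection of $\Omega^1[M]^{\mathrm{b},*}$ to use and match it against the map of Corollary~\ref{3.3.6}, and confirm compatibility of the various natural maps (the interplay between completion, the density of $\Omega^1[M]^b$ in $\Omega^1[M]$ from Corollary~\ref{3.4.3}, and passage to locally analytic vectors) is what makes Lemma~\ref{3.7.1} applicable in exactly the form needed.
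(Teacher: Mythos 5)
Your overall route is the paper's: complete the locally analytic sequence of \autoref{2.1.3}, use la \autoref{1.3.3} to get surjectivity onto $\Pi_{M,\mathscr{L}}$ and density of the image of $\widehat{\mathrm{LL}(M)}$ in $\mathrm{Ker}f$, prove injectivity by composing with the surjections onto the $\Pi_{M,\mathscr{L}'}$, $\mathscr{L}'\neq\mathscr{L}$ (Lemme \ref{3.7.1}) and invoking the injection of \autoref{3.3.6} (together with the one-dimensionality of $\mathrm{Hom}_G(\widehat{\mathrm{LL}(M)},\Pi_{M,\mathscr{L}'})$ to match the composites with the canonical maps, which you rightly single out), and conclude non-scindage by passing to locally analytic vectors via \autoref{3.4.4}; your non-splitting argument goes through the section side and \autoref{2.3.10} rather than the paper's retraction side ($\widehat{\mathrm{LL}(M)}^{\mathrm{an}}=\mathrm{LL}(M)$ plus $\mathrm{Hom}_G(\Omega^1[M]^*,\mathrm{LL}(M))=0$), but the two are interchangeable.

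The one step that does not work as you wrote it is (b). You claim that, because $\widehat{\mathrm{LL}(M)}$ is residually of finite presentation (\autoref{3.6.1}), \emph{any} $G$-equivariant map out of it is strict, hence has closed image. The strictness criterion of la section 3.1 (and Lemme \ref{2.2.2} behind it) has its finiteness hypothesis on the \emph{target}: one needs $\Pi_2^+/\pi_L$ of finite type, and residual finiteness of the source gives nothing of the sort. The paper's actual argument is precisely designed to supply this: from the exact sequence of unit balls of $0\to\mathrm{Ker}f\to\widehat{\Omega^1[M]^*}\to\Pi_{M,\mathscr{L}}\to0$, the residual finite presentation of $\widehat{\Omega^1[M]^*}$ (\autoref{3.6.2}) and of $\Pi_{M,\mathscr{L}}$, together with the abelianness of the category of mod $p$ representations of finite presentation (\autoref{3.1.3}, resting on Timmins' coherence theorem), one deduces that $\mathrm{Ker}f$ is residually of finite presentation, and only then does Lemme \ref{2.2.2} upgrade the dense image of $\widehat{\mathrm{LL}(M)}$ to all of $\mathrm{Ker}f$. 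Your step can be repaired along the same lines (e.g.\ apply the strictness proposition with target $\Omega^1[M]^{\mathrm{b},*}$, which is residually of finite presentation by \autoref{3.6.2}, so the image is closed and, being dense in the closed subspace $\mathrm{Ker}f$, equals it), but as stated the justification is the wrong one, and this closedness/finiteness point is exactly the crux of the middle exactness.
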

\begin{proof}[Preuve] Considérons tout d'abord la suite exacte suivante
	$$0\to\mathscr{L}\otimes_L\mathrm{LL}(M)\to \Omega^1[M]^*\to \Pi_{M, \mathscr{L}}^{\mathrm{an}}\to0.$$
	D'après la Proposition \ref{1.3.3}, l'image de $\widehat{\mathrm{LL}(M)}\to \Omega^1[M]^{\mathrm{b}, *}$ est dense dans le noyau $\mathrm{Ker}f$ de $f: \Omega^1[M]^{\mathrm{b}, *}\to\Pi_{M, \mathscr{L}}$, et le morphisme $\widehat{\Omega^1[M]^*}\to\Pi_{M, \mathscr{L}}$ est surjectif. 
	
	On montre ensuite que le morphisme $\mathscr{L}\otimes_L\widehat{\mathrm{LL}(M)}\to \Omega^1[M]^{\mathrm{b}, *}$ est injectif. Or, il suit du Lemme \ref{3.7.1} que la composée $$\mathscr{L}\otimes_L\widehat{\mathrm{LL}(M)}\to \Omega^1[M]^{\mathrm{b}, *}\to\prod\limits_{\mathscr{L}'\neq\mathscr{L}}\Pi_{M, \mathscr{L}'}$$
	est non nulle sur chaque composante, donc est injective en vertu du Corollaire \ref{3.3.6}, ce qui implique l'injectivité de $\widehat{\mathrm{LL}(M)}\to \Omega^1[M]^{\mathrm{b}, *}$.
	
	On montre maintenant que l'image de $\widehat{\mathrm{LL}(M)}\to \Omega^1[M]^{\mathrm{b}, *}$ est exactement $\mathrm{Ker}f$. D'après ce qui précède, $\widehat{\mathrm{LL}(M)}$ est dense dans $\mathrm{Ker}f$, donc la composée $\mathrm{LL}(M)\hookrightarrow \widehat{\mathrm{LL}(M)}\hookrightarrow\mathrm{Ker}f$ est aussi d'image dense. La suite exacte 
	$$0\to\mathrm{Ker}f\to\widehat{\Omega^1[M]^*}\to\Pi_{M, \mathscr{L}}\to0$$
	induit une suite exacte
	$$0\to\mathrm{Ker}f^+/\pi_L\to\widehat{\Omega^1[M]^*}^+/\pi_L\to\Pi_{M, \mathscr{L}}^+/\pi_L\to0$$	
	où $+$ désigne la boule unité.
	Il suit de la Proposition \ref{3.6.1} et de la Proposition \ref{3.6.2} que les représentations $\widehat{\Omega^1[M]^*}^+/\pi_L$ et $\Pi_{M, \mathscr{L}}^+/\pi_L$ sont toutes de présentation finie, donc le noyau $\mathrm{Ker}f$ est résiduellement de présentation finie en vertu du Corollaire \ref{3.1.3}. Le Lemme \ref{2.2.2} implique que la suite ci-dessous est exacte
	$$\mathscr{L}\otimes_L\widehat{\mathrm{LL}(M)}\to \Omega^1[M]^{\mathrm{b}, *}\to\Pi_{M, \mathscr{L}}\to0.$$
	
	Il nous reste à montrer que cette suite est non scindée. Supposons qu'il y a une section $\Omega^1[M]^{\mathrm{b}, *}\to\widehat{\mathrm{LL}(M)}$. Comme $\Omega^1[M]^{\mathrm{b}, *}=\widehat{\Omega^1[M]^*}$, un morphisme non nul $\Omega^1[M]^{\mathrm{b}, *}\to\widehat{\mathrm{LL}(M)}$ est induit par un morphisme non nul $\Omega^1[M]^*\to\mathrm{LL}(M)$ car $\widehat{\mathrm{LL}(M)}^{\mathrm{an}}=\mathrm{LL}(M)$, ce qui contredit le Lemme \ref{2.3.4} (ii). Cela permet de conclure.
\end{proof}
\begin{corollary}\label{2.6.3} Les $N_{\mathscr{L}, 1}$ sont tous isomorphes à $H^1(\mathfrak{X}, \mathscr{O})[\tfrac{1}{p}]^*[M]$. En particulier, on a une suite exacte
	$$0\to H^1(\mathfrak{X}, \mathscr{O})[\tfrac{1}{p}]^*[M]\to\widehat{\mathrm{LL}(M)}\to\Pi_{M, \mathscr{L}} \to0$$
	pour toute $\mathscr{L}$.
\end{corollary}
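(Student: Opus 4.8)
Le plan est de ramener l'énoncé au \autoref{3.7.2} au moyen de la suite de de Rham du modèle formel $\mathfrak{X}$. Par définition, $N_{\mathscr{L}, 1}$ est le noyau de la surjection canonique $\widehat{\mathrm{LL}(M)}\twoheadrightarrow\Pi_{M, \mathscr{L}, 1}=\Pi_{M, \mathscr{L}}$ du \autoref{2.2.3} ; la suite exacte courte de l'énoncé n'est donc que la réécriture de $0\to N_{\mathscr{L}, 1}\to\widehat{\mathrm{LL}(M)}\to\Pi_{M, \mathscr{L}}\to0$ une fois acquise l'identification $N_{\mathscr{L}, 1}\cong H^1(\mathfrak{X}, \mathscr{O})[\tfrac{1}{p}][M]^*$, laquelle fournira en particulier l'indépendance en $\mathscr{L}$. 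Tout revient donc à construire cet isomorphisme.

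On établira d'abord une version bornée de la suite de de Rham. En partant de $0\to\mathscr{O}[M]\to\Omega^1[M]\to H^1_{\mathrm{dR}}[M]\to0$ et de son analogue sur le modèle formel, en utilisant $\mathscr{O}(\mathfrak{X})[\tfrac{1}{p}][M]=\mathscr{O}[M]^{\mathrm{b}}=0$ (\autoref{2.2.4}), $H^i(\mathfrak{X}, -)=0$ pour $i\geq2$, et $H^1(\mathfrak{X}, \Omega^1)[\tfrac{1}{p}][M]=0$ (ce qui relève de la géométrie du modèle formel, cf. \cite{cdn2020cohomologie}), la suite spectrale de Hodge--de Rham du complexe $\mathscr{O}_{\mathfrak{X}}\to\Omega^1_{\mathfrak{X}}$ donnera, après inversion de $p$ et passage aux composantes $\mathrm{JL}(M)$-isotypiques, une suite exacte $G\times\breve{G}$-équivariante $0\to\Omega^1[M]^{\mathrm{b}}\to H^1_{\mathrm{dR}}(\mathfrak{X})[\tfrac{1}{p}][M]\to H^1(\mathfrak{X}, \mathscr{O})[\tfrac{1}{p}][M]\to0$, le terme médian étant le sous-espace des vecteurs $G$-bornés de $H^1_{\mathrm{dR}}[M]$ (\autoref{1.2.5}). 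En dualisant cette suite stricte et en la complétant — en identifiant, grâce à la première ligne du diagramme (\ref{equation4}) et à \cite[Lemme 5.3]{cdn2023factorisation}, les duaux complétés de $H^1_{\mathrm{dR}}[M]$ et $\Omega^1[M]$ aux $M_{\mathrm{dR}}\otimes_L\widehat{\mathrm{LL}(M)}$ et $\Omega^1[M]^{\mathrm{b}, *}$ respectivement, et le morphisme dual de $\Omega^1[M]\to H^1_{\mathrm{dR}}[M]$ à l'inclusion $M_{\mathrm{dR}}\otimes_L\mathrm{LL}(M)\hookrightarrow\Omega^1[M]^*$ — on obtiendra une suite exacte de représentations de Banach $0\to H^1(\mathfrak{X}, \mathscr{O})[\tfrac{1}{p}][M]^*\to M_{\mathrm{dR}}\otimes_L\widehat{\mathrm{LL}(M)}\xrightarrow{\phi}\Omega^1[M]^{\mathrm{b}, *}\to0$, la surjectivité de $\phi$ résultant de la \autoref{1.3.3} (ii), du \autoref{2.2.2} et de la \autoref{3.6.2}.

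On comparera ensuite avec le \autoref{3.7.2}. Pour chaque droite $\mathscr{L}$, la restriction de $\phi$ à $\mathscr{L}\otimes_L\widehat{\mathrm{LL}(M)}$ est le complété de l'inclusion $\mathscr{L}\otimes_L\mathrm{LL}(M)\hookrightarrow\Omega^1[M]^*$ du \autoref{2.1.3} ; d'après le \autoref{3.7.2}, elle est donc injective, d'image $\widehat{\mathrm{LL}(M)}\subseteq\Omega^1[M]^{\mathrm{b}, *}$ et de conoyau $\Pi_{M, \mathscr{L}}$, d'où en particulier $\ker\phi\cap(\mathscr{L}\otimes_L\widehat{\mathrm{LL}(M)})=0$. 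En notant $f_{\mathscr{L}}\colon\Omega^1[M]^{\mathrm{b}, *}\twoheadrightarrow\Pi_{M, \mathscr{L}}$ la surjection du \autoref{3.7.2}, le composé $f_{\mathscr{L}}\circ\phi$ tue $\mathscr{L}\otimes_L\widehat{\mathrm{LL}(M)}=\ker f_{\mathscr{L}}$, donc se factorise par une surjection $(M_{\mathrm{dR}}/\mathscr{L})\otimes_L\widehat{\mathrm{LL}(M)}\cong\widehat{\mathrm{LL}(M)}\twoheadrightarrow\Pi_{M, \mathscr{L}}$ ; comme $\mathrm{Hom}_G(\widehat{\mathrm{LL}(M)}, \Pi_{M, \mathscr{L}})=\mathrm{End}_G(\mathrm{LL}(M))=L$ (cf. preuve du \autoref{3.4.2}), cette surjection est un multiple non nul de la surjection canonique, et a donc pour noyau $N_{\mathscr{L}, 1}$. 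Une chasse au diagramme donne alors $\ker(f_{\mathscr{L}}\circ\phi)=\ker\phi\oplus(\mathscr{L}\otimes_L\widehat{\mathrm{LL}(M)})$ : d'une part $\phi$ induit un isomorphisme $\ker(f_{\mathscr{L}}\circ\phi)/\ker\phi\cong\widehat{\mathrm{LL}(M)}$ auquel $\mathscr{L}\otimes_L\widehat{\mathrm{LL}(M)}$ s'envoie surjectivement, d'autre part $\ker(f_{\mathscr{L}}\circ\phi)/(\mathscr{L}\otimes_L\widehat{\mathrm{LL}(M)})\cong N_{\mathscr{L}, 1}$ ; le théorème de l'application ouverte donne alors $N_{\mathscr{L}, 1}\cong\ker\phi=H^1(\mathfrak{X}, \mathscr{O})[\tfrac{1}{p}][M]^*$, indépendamment de $\mathscr{L}$, et la suite exacte annoncée s'en déduit.

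L'obstacle principal est la première suite exacte du deuxième paragraphe : il s'agit d'une part de l'annulation $H^1(\mathfrak{X}, \Omega^1)[\tfrac{1}{p}][M]=0$, d'autre part de la vérification que la suite spectrale de Hodge--de Rham du modèle formel est compatible aux actions de $G\times\breve{G}$ et au passage aux composantes $\mathrm{JL}(M)$-isotypiques. Une fois cette suite disponible, le reste n'est qu'algèbre homologique reposant sur le \autoref{3.7.2}, le \autoref{3.4.2} et la \autoref{1.3.3}.
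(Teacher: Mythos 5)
Votre démonstration est correcte et suit pour l'essentiel la même voie que l'article : le point clé est la comparaison, via le Théorème \ref{3.7.2}, de la suite exacte $0\to H^1(\mathfrak{X}, \mathscr{O})[\tfrac{1}{p}][M]^*\to M_{\mathrm{dR}}\otimes_L\widehat{\mathrm{LL}(M)}\to\Omega^1[M]^{\mathrm{b}, *}\to0$ avec la surjection canonique $\widehat{\mathrm{LL}(M)}\twoheadrightarrow\Pi_{M, \mathscr{L}}$, votre chasse au diagramme finale (décomposition de $\ker(f_{\mathscr{L}}\circ\phi)$ et unicité de la surjection grâce à $\mathrm{Hom}_G(\widehat{\mathrm{LL}(M)}, \Pi_{M, \mathscr{L}})=L$) étant équivalente au diagramme commutatif et au lemme du serpent utilisés dans la preuve de l'article. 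Le seul écart est que l'obstacle principal que vous signalez — la suite de Hodge--de Rham bornée du modèle formel et l'annulation de $H^1(\mathfrak{X}, \Omega^1)$ — est précisément le contenu du \autoref{A.0.7} et du \autoref{1.3.6} établis dans l'annexe A, que l'article se contente d'invoquer, de sorte qu'il n'y a pas de lacune réelle mais seulement une redérivation (esquissée) de cet ingrédient.
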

\begin{proof}[Preuve] On déduit du Théorème \ref{3.7.2} un diagramme commutatif à lignes exactes
	$$	\xymatrix@R=5mm@C=4mm{
		0 \ar[r] & \mathscr{L} \otimes \widehat{\mathrm{LL}(M)} \ar[r] \ar@{=}[d] & M_{\mathrm{dR}} \otimes \widehat{\mathrm{LL}(M)} \ar[r] \ar[d] & (M_{\mathrm{dR}}/\mathscr{L})\otimes \widehat{\mathrm{LL}(M)} \ar[r] \ar@{->>}[d] & 0 \\
		0 \ar[r] &  \mathscr{L} \otimes\widehat{\mathrm{LL}(M)} \ar[r] & \Omega^1[M]^{\mathrm{b}, *} \ar[r]  & \Pi_{M, \mathscr{L}} \ar[r] & 0.
	}$$	
	Cela implique que le noyau de la flèche $M_{\mathrm{dR}} \otimes \widehat{\mathrm{LL}(M)}\to\Omega^1[M]^{\mathrm{b}, *}$ est isomorphe à celui de la flèche $(M_{\mathrm{dR}}/\mathscr{L})\otimes \widehat{\mathrm{LL}(M)} \to\Pi_{M, \mathscr{L}} $. Or, d'après le Corollaire \ref{1.3.6}, on dispose d'une suite exacte
	$$0\to H^1(\mathfrak{X}, \mathscr{O})[\tfrac{1}{ p}]^*[M]\to \widehat{H_{\mathrm{dR}}^1(\Sigma_n)^*}[M]\to \widehat{\Omega^1(\Sigma_n)^*}[M]\to0,$$
	ce qui implique que le noyau de $M_{\mathrm{dR}} \otimes \widehat{\mathrm{LL}(M)}\to\Omega^1[M]^{\mathrm{b}, *}$ est isomorphe à $H^1(\mathfrak{X}, \mathscr{O})[\tfrac{1}{p}]^*[M]$. Le résultat découle directement de la définition de $N_{\mathscr{L}, 1}$.
\end{proof}
\subsection{Entrelacements entre $\widehat{\mathrm{LL}(M)}$, $\Pi_{M, \mathscr{L}}$ et $\widehat{\Omega^1[M]^*}$} 
Rappelons que l'on a montré que la catégorie des représentations de Banach résiduellement de présentation finie est abélienne dans le Corollaire \ref{3.1.3}. Comme les représentations de Banach $\widehat{\mathrm{LL}(M)}$, $\Pi_{M, \mathscr{L}}$ et $\widehat{\Omega^1[M]^*}$ sont toutes résiduellement de présentation finie, on va calculer les entrelacements entre elles dans cette catégorie. Fixons un caractère central $\psi$ et notons $\mathrm{Ext}^1_{G, \psi}$ le groupe d'extensions à caractère central $\psi$ fixé.
\subsubsection{Homomorphismes}
\begin{proposition}\label{3.8.1} On a 
	
	(i) $\mathrm{End}_G(\widehat{\mathrm{LL}(M)})=L$.
	
	(ii) $\mathrm{Hom}_G(\widehat{\mathrm{LL}(M)}, \widehat{\Omega^1[M]^*})=M_{\mathrm{dR}}$.
\end{proposition}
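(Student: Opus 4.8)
The plan is to settle both parts by transferring the computation to the locally analytic vectors — where the relevant $\mathrm{Hom}$-spaces have already been determined — and then propagating the result back using the density of $\mathrm{LL}(M)$ in $\widehat{\mathrm{LL}(M)}$. For (i), any $f\in\mathrm{End}_G(\widehat{\mathrm{LL}(M)})$ is continuous and $G$-equivariant, so it carries locally analytic vectors to locally analytic vectors; by \autoref{3.2.3} it therefore restricts to $f^{\mathrm{an}}\in\mathrm{End}_G(\widehat{\mathrm{LL}(M)}^{\mathrm{an}})=\mathrm{End}_G(\mathrm{LL}(M))$. Since $\mathrm{LL}(M)$ is absolutely irreducible, $\mathrm{End}_G(\mathrm{LL}(M))=L$, so $f^{\mathrm{an}}=\lambda\,\mathrm{id}$ for some $\lambda\in L$; then $f-\lambda\,\mathrm{id}$ vanishes on the dense subspace $\mathrm{LL}(M)$, hence $f=\lambda\,\mathrm{id}$, which gives $\mathrm{End}_G(\widehat{\mathrm{LL}(M)})=L$.

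For (ii), the same restriction procedure — using \autoref{3.2.3} together with \autoref{3.4.4}, which identifies $(\widehat{\Omega^1[M]^*})^{\mathrm{an}}$ with $\Omega^1[M]^*$ — yields an $L$-linear map
\[
\mathrm{Hom}_G(\widehat{\mathrm{LL}(M)},\widehat{\Omega^1[M]^*})\longrightarrow\mathrm{Hom}_G(\mathrm{LL}(M),\Omega^1[M]^*),
\]
whose target is $M_{\mathrm{dR}}$ by \autoref{2.3.4}(iii), and this map is injective by density of $\mathrm{LL}(M)$ in $\widehat{\mathrm{LL}(M)}$. For surjectivity I would argue in the reverse direction: starting from $\phi\in\mathrm{Hom}_G(\mathrm{LL}(M),\Omega^1[M]^*)$, compose with the continuous injection $\Omega^1[M]^*=(\widehat{\Omega^1[M]^*})^{\mathrm{an}}\hookrightarrow\widehat{\Omega^1[M]^*}$ supplied by \autoref{3.4.4}; since $\widehat{\Omega^1[M]^*}=\Omega^1[M]^{b,*}$ is a unitary Banach representation, the universal property of the universal unitary completion extends this uniquely to a morphism $\widehat{\mathrm{LL}(M)}\to\widehat{\Omega^1[M]^*}$ whose restriction to $\mathrm{LL}(M)$ is again $\phi$. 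The restriction map above is therefore an isomorphism onto $M_{\mathrm{dR}}$.

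As an internal consistency check one can instead apply $\mathrm{Hom}_G(\widehat{\mathrm{LL}(M)},-)$ to the short exact sequence $0\to\widehat{\mathrm{LL}(M)}\to\Omega^1[M]^{b,*}\to\Pi_{M,\mathscr{L}}\to0$ of \autoref{3.7.2}: by (i) the left-hand term contributes $L$, and $\mathrm{Hom}_G(\widehat{\mathrm{LL}(M)},\Pi_{M,\mathscr{L}})=\mathrm{Hom}_G(\mathrm{LL}(M),\Pi_{M,\mathscr{L}})=\mathrm{Hom}_G(\mathrm{LL}(M),\Pi_{M,\mathscr{L}}^{\mathrm{lisse}})=\mathrm{End}_G(\mathrm{LL}(M))=L$, using the universal property, the smoothness of $\mathrm{LL}(M)$, and $\Pi_{M,\mathscr{L}}^{\mathrm{lisse}}=\mathrm{LL}(M)$; this forces $\dim_L\mathrm{Hom}_G(\widehat{\mathrm{LL}(M)},\widehat{\Omega^1[M]^*})\le 2$, matching the count from the first argument. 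The only step demanding genuine care is checking that the morphism produced from $\phi$ by the universal property indeed has the correct analytic part, but this is automatic from the standard fact that continuous $G$-morphisms of Banach representations preserve locally analytic vectors; so there is no real obstacle beyond organizing the inputs cited above.
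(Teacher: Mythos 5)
Your argument is correct, but it takes a different (though closely related) route from the paper. The paper's proof is a two-line adjunction computation: by the universal property of the universal unitary completion, restriction along $\mathrm{LL}(M)\to\widehat{\mathrm{LL}(M)}$ gives a bijection $\mathrm{Hom}_G(\widehat{\mathrm{LL}(M)},\Pi)\cong\mathrm{Hom}_G(\mathrm{LL}(M),\Pi)$ for any unitary Banach target $\Pi$, and since $\mathrm{LL}(M)$ is smooth any such morphism lands in $\Pi^{\mathrm{lisse}}$; one then only needs the smooth-vector computations $\widehat{\mathrm{LL}(M)}^{\mathrm{lisse}}=\mathrm{LL}(M)$ (\autoref{3.2.2}) and $\widehat{\Omega^1[M]^*}^{\mathrm{lisse}}=M_{\mathrm{dR}}\otimes_L\mathrm{LL}(M)$ (\autoref{3.5.6}), so injectivity and surjectivity never have to be checked separately. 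You instead restrict to locally analytic vectors, which forces you to invoke the stronger inputs $\widehat{\mathrm{LL}(M)}^{\mathrm{an}}=\mathrm{LL}(M)$ (\autoref{3.2.3}) and $(\widehat{\Omega^1[M]^*})^{\mathrm{an}}=\Omega^1[M]^*$ (\autoref{3.4.4}) together with $\mathrm{Hom}_G(\mathrm{LL}(M),\Omega^1[M]^*)=M_{\mathrm{dR}}$ (\autoref{2.3.4}~(iii)), and to supply a density argument for injectivity plus the universal property for surjectivity; note that this last step is exactly the paper's mechanism, so your detour through analytic vectors is not needed — once you use the universal property you could just as well land in the smooth vectors, which is the cheaper computation. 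What your version buys is a uniform "pass to $(-)^{\mathrm{an}}$" viewpoint consistent with how the rest of the paper computes intertwinings of the locally analytic objects, and your final consistency check via the sequence of \autoref{3.7.2} is a legitimate (if redundant) upper bound; but be aware that for part~(i) you are using a main theorem of the paper where the weaker statement on smooth vectors suffices.
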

\begin{proof}[Preuve] (i) Comme la représentation $\mathrm{LL}(M)$ est lisse, il suit de la propriété universelle de $\widehat{\mathrm{LL}(M)}$ et de la Proposition \ref{3.2.2} que l'on a
	$$\mathrm{End}_G(\widehat{\mathrm{LL}(M)})=\mathrm{Hom}_G(\mathrm{LL}(M), \widehat{\mathrm{LL}(M)})=\mathrm{Hom}_G(\mathrm{LL}(M), \widehat{\mathrm{LL}(M)}^{\mathrm{lisse}})=\mathrm{End}_G(\mathrm{LL}(M)).$$
	
	(ii) Comme la représentation $\mathrm{LL}(M)$ est lisse, il suit de la propriété universelle de $\widehat{\mathrm{LL}(M)}$ et du Corollaire \ref{3.5.6} que l'on a
	\begin{align*}\mathrm{Hom}_G(\widehat{\mathrm{LL}(M)}, \widehat{\Omega^1[M]^*})&=\mathrm{Hom}_G(\mathrm{LL}(M), \widehat{\Omega^1[M]^*})=\mathrm{Hom}_G(\mathrm{LL}(M), \widehat{\Omega^1[M]^*}^{\mathrm{lisse}})\\&=\mathrm{Hom}_G(\mathrm{LL}(M), \mathrm{LL}(M)\otimes M_{\mathrm{dR}})=M_{\mathrm{dR}}.\end{align*}	
	Cela permet de conclure.	
\end{proof}
\begin{corollary}\label{cor2} Soit $f\in \mathrm{Hom}_G(\widehat{\mathrm{LL}(M)}, \widehat{\Omega^1[M]^*})$ un morphisme non nul, alors le conoyau de $f$ est isomorphe à $\Pi_{M, \mathscr{L}}$ pour une certaine $\mathscr{L}$.
\end{corollary}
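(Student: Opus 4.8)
Le plan est d'identifier explicitement $f$ avec le plongement canonique fourni par le Théorème \ref{3.7.2}. On commencerait par observer que, d'après la \autoref{3.8.1} (ii), on a $\mathrm{Hom}_G(\widehat{\mathrm{LL}(M)},\widehat{\Omega^1[M]^*})=M_{\mathrm{dR}}$ ; le cas $f=0$ est à écarter (alors $\mathrm{Coker}(f)=\widehat{\Omega^1[M]^*}$, qui se surjecte sur $\Pi_{M,\mathscr{L}}$ avec noyau non nul d'après le Théorème \ref{3.7.2}, donc n'est pas de la forme annoncée), et l'énoncé est à comprendre pour $f\neq0$. Supposant $f\neq0$, on note $v\in M_{\mathrm{dR}}\setminus\{0\}$ l'élément correspondant et on pose $\mathscr{L}:=Lv$.

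Ensuite, on suivrait la chaîne d'isomorphismes de la preuve de la \autoref{3.8.1} (ii), combinée au \autoref{3.5.6} (qui identifie $(\widehat{\Omega^1[M]^*})^{\mathrm{lisse}}=M_{\mathrm{dR}}\otimes_L\mathrm{LL}(M)=\Omega^1[M]^{*,\mathrm{lisse}}$), pour montrer que la restriction de $f$ au sous-espace dense $\mathrm{LL}(M)\subseteq\widehat{\mathrm{LL}(M)}$ est exactement $x\mapsto v\otimes x$, à valeurs dans $\Omega^1[M]^{*,\mathrm{lisse}}\subseteq\widehat{\Omega^1[M]^*}$. D'autre part, le plongement $\iota_{\mathscr{L}}\colon\widehat{\mathrm{LL}(M)}\hookrightarrow\Omega^1[M]^{\mathrm{b},*}$ du Théorème \ref{3.7.2} relatif à $\mathscr{L}$, une fois normalisé par l'identification $\widehat{\mathrm{LL}(M)}\xrightarrow{\sim}\mathscr{L}\otimes_L\widehat{\mathrm{LL}(M)}$, $w\mapsto v\otimes w$, se restreint lui aussi sur $\mathrm{LL}(M)$ en $x\mapsto v\otimes x$ : en effet, le diagramme (\ref{equation4}) (ou la \autoref{2.1.3}) montre que le noyau de $\Omega^1[M]^*\to\Pi_{M,\mathscr{L}}^{\mathrm{an}}$ est $\mathscr{L}\otimes_L\mathrm{LL}(M)$, plongé dans $\Omega^1[M]^*$ via $\mathscr{L}\otimes_L\mathrm{LL}(M)\hookrightarrow M_{\mathrm{dR}}\otimes_L\mathrm{LL}(M)=\Omega^1[M]^{*,\mathrm{lisse}}$. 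Comme $f$ et $\iota_{\mathscr{L}}$ coïncident sur le sous-espace dense $\mathrm{LL}(M)$ et que $\widehat{\Omega^1[M]^*}$ est séparé, on en déduirait $f=\iota_{\mathscr{L}}$, puis, via la suite exacte du Théorème \ref{3.7.2} dans la catégorie abélienne $\mathrm{Ban}_{\mathrm{f.p.}}(G)$, que $\mathrm{Coker}(f)\cong\Pi_{M,\mathscr{L}}$.

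La principale difficulté sera d'ordre comptable : s'assurer que la droite $\mathscr{L}=Lv$ construite à partir de $f$ est bien celle pour laquelle le plongement du Théorème \ref{3.7.2} a la bonne restriction à $\mathrm{LL}(M)$, c'est-à-dire suivre de manière cohérente toutes les normalisations et torsions à travers la \autoref{3.8.1} (ii), le \autoref{3.5.6} et le diagramme (\ref{equation4}). Une fois cet appariement établi, le reste n'est qu'un argument de densité. On pourrait alternativement invoquer directement le diagramme commutatif figurant dans la preuve du \autoref{2.6.3}, dans lequel le morphisme $M_{\mathrm{dR}}\otimes_L\widehat{\mathrm{LL}(M)}\to\Omega^1[M]^{\mathrm{b},*}$ est le morphisme d'évaluation attaché à $\mathrm{Hom}_G(\widehat{\mathrm{LL}(M)},\widehat{\Omega^1[M]^*})=M_{\mathrm{dR}}$ et dont la restriction à $\mathscr{L}\otimes_L\widehat{\mathrm{LL}(M)}$ est le plongement du Théorème \ref{3.7.2}, de conoyau $\Pi_{M,\mathscr{L}}$ ; notre $f$ est précisément cette restriction pour $\mathscr{L}=Lv$.
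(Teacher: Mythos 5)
Votre démonstration est correcte et suit essentiellement la même voie que celle du texte : on identifie $f$, via $\mathrm{Hom}_G(\widehat{\mathrm{LL}(M)},\widehat{\Omega^1[M]^*})=M_{\mathrm{dR}}$ (\autoref{3.8.1} (ii)), au plongement $\mathscr{L}\otimes_L\widehat{\mathrm{LL}(M)}\hookrightarrow\widehat{\Omega^1[M]^*}$ pour $\mathscr{L}=Lv$, puis on conclut par la suite exacte du Théorème \ref{3.7.2}. Vous explicitez simplement, par densité de $\mathrm{LL}(M)$ et suivi des normalisations, ce que le texte laisse implicite, et votre remarque que le cas $f=0$ doit être exclu (l'énoncé le sous-entend) est juste.
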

\begin{proof}[Preuve] La Proposition \ref{3.8.1} (ii) implique que $f$ est donné par l'inclusion $\mathscr{L}\otimes_L\widehat{\mathrm{LL}(M)}\hookrightarrow\widehat{\Omega^1[M]^*}$, alors le résultat suit du Théorème \ref{3.7.2}.
\end{proof}
\begin{proposition} On a 
	
	(i) $\mathrm{Hom}_G(\Pi_{M, \mathscr{L}}, \widehat{\mathrm{LL}(M)})=0$.
	
	(ii) $\mathrm{Hom}_G(\Pi_{M, \mathscr{L}}, \widehat{\Omega^1[M]^*})=0$.
\end{proposition}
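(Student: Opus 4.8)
Le plan est de ramener les deux espaces de Hom, bien qu'ils fassent intervenir les complétés de Banach non admissibles $\widehat{\mathrm{LL}(M)}$ et $\widehat{\Omega^1[M]^*}$, à des calculs dans la catégorie des représentations localement analytiques, où les annulations voulues ont déjà été établies dans la Section \ref{section2.1} et sa suite. Je m'appuierais sur trois réductions formelles : (a) la représentation $\Pi_{M,\mathscr{L}}$ est elle-même un complété unitaire universel, $\Pi_{M,\mathscr{L}}=\widehat{\Pi_{M,\mathscr{L}}^{\mathrm{an}}}$ (comme utilisé dans le \autoref{2.3.4}, d'après \cite{cd2014completes}, puisque $\Pi_{M,\mathscr{L}}$ est unitaire, admissible --- donc résiduellement de longueur finie --- et de caractère central $\psi$) ; (b) pour toute représentation de Banach unitaire $\Pi$, la propriété universelle du complété donne $\mathrm{Hom}_G(\widehat{\Pi_{M,\mathscr{L}}^{\mathrm{an}}},\Pi)=\mathrm{Hom}_G(\Pi_{M,\mathscr{L}}^{\mathrm{an}},\Pi)$ ; (c) pour $\pi$ localement analytique et $\Pi$ de Banach, tout morphisme $G$-équivariant continu $\pi\to\Pi$ est à valeurs dans $\Pi^{\mathrm{an}}$ et continu pour sa topologie plus fine, d'où $\mathrm{Hom}_G(\pi,\Pi)=\mathrm{Hom}_G(\pi,\Pi^{\mathrm{an}})$.

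Pour (i), en enchaînant (a), (b) et (c) j'écrirais
\[
\mathrm{Hom}_G(\Pi_{M,\mathscr{L}},\widehat{\mathrm{LL}(M)})
=\mathrm{Hom}_G(\Pi_{M,\mathscr{L}}^{\mathrm{an}},\widehat{\mathrm{LL}(M)})
=\mathrm{Hom}_G(\Pi_{M,\mathscr{L}}^{\mathrm{an}},\widehat{\mathrm{LL}(M)}^{\mathrm{an}}),
\]
puis j'invoquerais la \autoref{3.2.3}, qui identifie $\widehat{\mathrm{LL}(M)}^{\mathrm{an}}$ à $\mathrm{LL}(M)$ ; l'espace $\mathrm{Hom}_G(\Pi_{M,\mathscr{L}}^{\mathrm{an}},\mathrm{LL}(M))$ ainsi obtenu est nul par le \autoref{2.3.4}(iv). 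Pour (ii), les mêmes réductions conduisent à
\[
\mathrm{Hom}_G(\Pi_{M,\mathscr{L}},\widehat{\Omega^1[M]^*})
=\mathrm{Hom}_G(\Pi_{M,\mathscr{L}}^{\mathrm{an}},(\widehat{\Omega^1[M]^*})^{\mathrm{an}}),
\]
et le \autoref{3.4.4} identifie $(\widehat{\Omega^1[M]^*})^{\mathrm{an}}$ à $\Omega^1[M]^*$, de sorte que l'espace $\mathrm{Hom}_G(\Pi_{M,\mathscr{L}}^{\mathrm{an}},\Omega^1[M]^*)$ obtenu est nul par la \autoref{2.3.6}.

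Le seul point non purement formel, et donc le principal obstacle, est l'étape (c) : il faudra vérifier qu'un morphisme $G$-équivariant continu $\Pi_{M,\mathscr{L}}^{\mathrm{an}}\to\Pi$ vers une représentation de Banach se factorise continûment par $\Pi^{\mathrm{an}}$. C'est la fonctorialité du foncteur des vecteurs localement analytiques : l'application orbite de l'image d'un vecteur est la composée de l'application orbite (localement analytique) du vecteur et d'une application linéaire continue, donc est localement analytique, et le morphisme induit vers $C^{\mathrm{an}}(G,\Pi)$ est continu. Ceci est standard et c'est le seul endroit où la topologie intervient réellement ; tout le reste est hérité des calculs de $\widehat{\mathrm{LL}(M)}^{\mathrm{an}}$, de $(\widehat{\Omega^1[M]^*})^{\mathrm{an}}$ et des entrelacements localement analytiques déjà effectués, si bien que la présente proposition n'apporte aucune difficulté supplémentaire par rapport à la \autoref{3.2.3}, au \autoref{3.4.4}, au \autoref{2.3.4}(iv) et à la \autoref{2.3.6}.
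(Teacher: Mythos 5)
Votre preuve est correcte et suit essentiellement la même démarche que celle de l'article : on se ramène aux calculs localement analytiques via $\widehat{\mathrm{LL}(M)}^{\mathrm{an}}=\mathrm{LL}(M)$ (\autoref{3.2.3}), $(\widehat{\Omega^1[M]^*})^{\mathrm{an}}=\Omega^1[M]^*$ (\autoref{3.4.4}) et les annulations du \autoref{2.3.4} (iv) et de la \autoref{2.3.6}, la fonctorialité des vecteurs localement analytiques jouant le même rôle dans les deux cas. La seule différence est d'emballage : l'article restreint un morphisme donné aux vecteurs localement analytiques puis conclut par densité de $\Pi_{M,\mathscr{L}}^{\mathrm{an}}$ dans $\Pi_{M,\mathscr{L}}$ (conséquence de l'admissibilité), tandis que vous invoquez $\Pi_{M,\mathscr{L}}=\widehat{\Pi_{M,\mathscr{L}}^{\mathrm{an}}}$ (\autoref{1.3.4}) et la propriété universelle du complété unitaire universel, un ingrédient un peu plus lourd pour le même effet.
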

\begin{proof}[Preuve] (i) Soit $f$ un morphisme $\Pi_{M, \mathscr{L}}\to\widehat{\mathrm{LL}(M)}$, il suit de la Proposition \ref{3.2.3} que $f$ induit un morphisme $f^{\mathrm{an}}: \Pi_{M, \mathscr{L}}^{\mathrm{an}}\to\widehat{\mathrm{LL}(M)}^{\mathrm{an}}=\mathrm{LL}(M)$. Le Lemme \ref{2.3.4} (iv) implique que $f^{\mathrm{an}}=0$. Comme $f$ est continu et $\Pi_{M, \mathscr{L}}^{\mathrm{an}}$ est dense dans $\Pi_{M, \mathscr{L}}$, on déduit que $f=0$. 
	
	(ii) La preuve est similaire à celle de (i), sauf qu'ici nous utilisons le Corollaire \ref{3.4.4} et la Proposition \ref{2.3.6}.
\end{proof}
\begin{proposition} On a $$\mathrm{End}_G(\widehat{\Omega^1[M]^*})=L.$$
\end{proposition}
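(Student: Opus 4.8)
Le plan est de reproduire la preuve de la \autoref{2.3.8}: on applique le foncteur $\mathrm{Hom}_G(\widehat{\Omega^1[M]^*}, -)$ à la suite exacte courte de représentations de Banach fournie par le Théorème \ref{3.7.2},
$$0\to\widehat{\mathrm{LL}(M)}\to \widehat{\Omega^1[M]^*}\to\Pi_{M, \mathscr{L}}\to0$$
(en rappelant que $\Omega^1[M]^{\mathrm{b}, *}=\widehat{\Omega^1[M]^*}$), ce qui donne une suite exacte à gauche
$$0\to\mathrm{Hom}_G(\widehat{\Omega^1[M]^*}, \widehat{\mathrm{LL}(M)})\to\mathrm{End}_G(\widehat{\Omega^1[M]^*})\to\mathrm{Hom}_G(\widehat{\Omega^1[M]^*}, \Pi_{M, \mathscr{L}}).$$
Il suffira alors de montrer que le premier terme est nul et que le troisième est de dimension $\leq 1$, puisque $\mathrm{End}_G(\widehat{\Omega^1[M]^*})$ contient l'identité.

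Les représentations $\widehat{\mathrm{LL}(M)}$ et $\Pi_{M, \mathscr{L}}$ sont de Banach unitaires, et $\widehat{\Omega^1[M]^*}$ est le complété unitaire universel de $\Omega^1[M]^*$. La propriété universelle du complété unitaire universel permettra donc de ramener le calcul de ces deux espaces à des morphismes issus de $\Omega^1[M]^*$, à savoir $\mathrm{Hom}_G(\Omega^1[M]^*, \widehat{\mathrm{LL}(M)})$ et $\mathrm{Hom}_G(\Omega^1[M]^*, \Pi_{M, \mathscr{L}})$. Comme $\Omega^1[M]^*$ est localement analytique, tout morphisme continu $G$-équivariant de $\Omega^1[M]^*$ vers une représentation de Banach se factorise par le sous-espace des vecteurs localement analytiques (les applications orbitales restent localement analytiques après composition avec une application linéaire continue). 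En utilisant la \autoref{3.2.3}, le premier espace vaut $\mathrm{Hom}_G(\Omega^1[M]^*, \widehat{\mathrm{LL}(M)})=\mathrm{Hom}_G(\Omega^1[M]^*, \mathrm{LL}(M))=0$ par le \autoref{2.3.4} (ii); le troisième vaut $\mathrm{Hom}_G(\Omega^1[M]^*, \Pi_{M, \mathscr{L}}^{\mathrm{an}})=L$ par la \autoref{2.3.7}. On en déduit une injection $\mathrm{End}_G(\widehat{\Omega^1[M]^*})\hookrightarrow L$, d'où l'égalité voulue.

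Je ne m'attends pas à une réelle difficulté ici: le mécanisme est exactement celui, utilisé à de nombreuses reprises dans les sections 2 et 3, consistant à passer du complété unitaire universel à sa source puis aux vecteurs localement analytiques, et tous les ingrédients ($\widehat{\Omega^1[M]^*}=\Omega^1[M]^{\mathrm{b}, *}$ est le complété unitaire universel de $\Omega^1[M]^*$, la suite exacte du Théorème \ref{3.7.2}, et les calculs d'entrelacements pour les versions localement analytiques) sont déjà établis. Le seul point demandant un peu d'attention est le fait standard — mais non énoncé explicitement — qu'un morphisme $G$-équivariant continu d'une représentation localement analytique vers une représentation de Banach a son image dans les vecteurs localement analytiques, ce qui autorise à remplacer $\widehat{\mathrm{LL}(M)}$ et $\Pi_{M, \mathscr{L}}$ par leurs parties localement analytiques au membre de droite.
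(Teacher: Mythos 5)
Votre démonstration est correcte, mais elle suit un chemin réellement différent de celui du texte, quoique fondé sur les mêmes calculs de fond. Le texte identifie d'un seul coup $\mathrm{End}_G(\widehat{\Omega^1[M]^*})$ à $\mathrm{End}_G(\Omega^1[M]^*)$ grâce à la propriété universelle et au \autoref{3.4.4} (qui donne $\widehat{\Omega^1[M]^*}^{\mathrm{an}}=\Omega^1[M]^*$), puis invoque la \autoref{2.3.8}. Vous travaillez au contraire au niveau des représentations de Banach: vous appliquez $\mathrm{Hom}_G(\widehat{\Omega^1[M]^*}, -)$ à la suite du Théorème \ref{3.7.2}, puis ramenez chacun des deux termes extrêmes à un calcul localement analytique via la propriété universelle du complété unitaire universel et le passage aux vecteurs localement analytiques, en utilisant la \autoref{3.2.3} pour obtenir $\mathrm{Hom}_G(\widehat{\Omega^1[M]^*}, \widehat{\mathrm{LL}(M)})=\mathrm{Hom}_G(\Omega^1[M]^*, \mathrm{LL}(M))=0$ (\autoref{2.3.4} (ii)) et la \autoref{2.3.7} pour obtenir $\mathrm{Hom}_G(\widehat{\Omega^1[M]^*}, \Pi_{M, \mathscr{L}})=L$; l'identité force alors l'égalité. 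C'est en substance la preuve de la \autoref{2.3.8} refaite au niveau des complétés: les ingrédients essentiels (le \autoref{2.3.4} (ii) et la \autoref{2.3.7}) sont les mêmes, mais vous remplacez le \autoref{3.4.4} par le Théorème \ref{3.7.2} et la \autoref{3.2.3}. Les deux arguments sont disponibles à ce stade du texte; celui du texte est plus court une fois le \autoref{3.4.4} acquis, le vôtre évite ce corollaire (et donc le \autoref{3.5.2} qui le sous-tend) au prix d'invoquer la suite exacte du Théorème \ref{3.7.2}, déjà démontrée. Enfin, le fait que vous signalez — l'image d'un morphisme continu $G$-équivariant d'une représentation localement analytique vers un Banach est contenue dans les vecteurs localement analytiques — est exactement celui que le texte utilise à plusieurs reprises (par exemple dans la preuve du \autoref{2.3.4} (i) et dans celle de la présente proposition); ce n'est donc pas une lacune.
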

\begin{proof}[Preuve] Comme la représentation $\Omega^1[M]^*$ est localement analytique, il suit de la propriété universelle de $\widehat{\Omega^1[M]^*}$ et du Corollaire \ref{3.4.4} que l'on a
	$$\mathrm{End}_G(\widehat{\Omega^1[M]^*})=\mathrm{Hom}_G(\Omega^1[M]^*, \widehat{\Omega^1[M]^*})=\mathrm{Hom}_G(\Omega^1[M]^*, \widehat{\Omega^1[M]^*}^{\mathrm{an}})=\mathrm{End}_G(\Omega^1[M]^*).$$
	Ainsi, le résultat se déduit de la Proposition \ref{2.3.8}. Cela permet de conclure.
\end{proof}
\begin{lemma}\label{3.7.7} On a $$\mathrm{Hom}_G(H^1(\mathfrak{X}, \mathscr{O})[\tfrac{1}{p}]^*[M], \Pi_{M, \mathscr{L}})=L.$$
\end{lemma}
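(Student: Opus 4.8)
The plan is to reduce the statement to a Hom computation for $\widehat{\mathrm{LL}(M)}$ by exploiting the short exact sequences of \autoref{2.6.3}. That corollary identifies $H^1(\mathfrak{X}, \mathscr{O})[\tfrac{1}{p}][M]^*$ with the kernel $N_{\mathscr{L}', 1}$ of $\widehat{\mathrm{LL}(M)}\twoheadrightarrow\Pi_{M, \mathscr{L}'}$ for \emph{any} line $\mathscr{L}'$; I would pick $\mathscr{L}'\neq\mathscr{L}$, so that we have a short exact sequence
$$0\to H^1(\mathfrak{X}, \mathscr{O})[\tfrac{1}{p}][M]^*\to\widehat{\mathrm{LL}(M)}\to\Pi_{M, \mathscr{L}'}\to 0.$$
First I would observe that all three terms lie in the abelian category $\mathrm{Ban}_{\mathrm{f.p.}}(G)$ (using \autoref{3.6.1}, \autoref{3.1.6}, and that $\Pi_{M,\mathscr{L}'}$ is a quotient of $\widehat{\mathrm{LL}(M)}$), so that applying $\mathrm{Hom}_G(-,\Pi_{M,\mathscr{L}})$ yields a long exact sequence; since all objects involved carry the central character $\psi$, its connecting term is $\mathrm{Ext}^1_{G,\psi}(\Pi_{M,\mathscr{L}'},\Pi_{M,\mathscr{L}})$.

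Next I would identify the relevant terms. One has $\mathrm{Hom}_G(\Pi_{M,\mathscr{L}'},\Pi_{M,\mathscr{L}})=0$, since these are non-isomorphic topologically irreducible Banach representations (\autoref{2.1.1}). One has $\mathrm{Hom}_G(\widehat{\mathrm{LL}(M)},\Pi_{M,\mathscr{L}})=L$: by the universal property of the universal unitary completion, smoothness of $\mathrm{LL}(M)$, and $\Pi_{M,\mathscr{L}}^{\mathrm{lisse}}=\mathrm{LL}(M)$, this group equals $\mathrm{Hom}_G(\mathrm{LL}(M),\Pi_{M,\mathscr{L}}^{\mathrm{lisse}})=\mathrm{End}_G(\mathrm{LL}(M))=L$ (the computation already used in the proof of \autoref{3.4.2}). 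Granting the vanishing of $\mathrm{Ext}^1_{G,\psi}(\Pi_{M,\mathscr{L}'},\Pi_{M,\mathscr{L}})$ for $\mathscr{L}'\neq\mathscr{L}$, the long exact sequence then forces the restriction map $\mathrm{Hom}_G(\widehat{\mathrm{LL}(M)},\Pi_{M,\mathscr{L}})\to\mathrm{Hom}_G(N_{\mathscr{L}',1},\Pi_{M,\mathscr{L}})$ to be an isomorphism, and transporting along $N_{\mathscr{L}',1}\cong H^1(\mathfrak{X}, \mathscr{O})[\tfrac{1}{p}][M]^*$ gives the claim.

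The only real obstacle is thus the vanishing $\mathrm{Ext}^1_{G,\psi}(\Pi_{M,\mathscr{L}'},\Pi_{M,\mathscr{L}})=0$ for $\mathscr{L}'\neq\mathscr{L}$. Here I would argue that an extension of $\Pi_{M,\mathscr{L}'}$ by $\Pi_{M,\mathscr{L}}$ is admissible of finite length, so the $\mathrm{Ext}^1$ agrees with the one computed in $\mathrm{Ban}^{\mathrm{adm}}_{G,\psi}(L)$. If the reductions $\overline{V_{M,\mathscr{L}}}$ and $\overline{V_{M,\mathscr{L}'}}$ determine different blocks, the vanishing is immediate from the block decomposition of $\mathrm{Ban}^{\mathrm{adm}}_{G,\psi}(L)$. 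Otherwise they lie in a common block $\mathfrak{B}$, and since $\Pi_{M,\mathscr{L}}$ and $\Pi_{M,\mathscr{L}'}$ are non-ordinary absolutely irreducible and de Rham of type $M$, they correspond to two maximal ideals of $R^{\mathrm{ps},\psi\omega}_{\mathrm{tr}\bar\rho_{\mathfrak{B}}}[\tfrac{1}{p}]$, which are distinct because the absolutely irreducible Galois representations $V_{M,\mathscr{L}}$ and $V_{M,\mathscr{L}'}$ are non-isomorphic (they are pinned down by their traces, and $\mathscr{L}\mapsto V_{M,\mathscr{L}}$ is injective by \autoref{2.1.1}). The vanishing then follows from \autoref{1.1.4}. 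This block-theoretic input is the substantive point; everything else is a diagram chase.
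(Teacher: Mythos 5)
Your argument is correct and is essentially the paper's own proof: apply $\mathrm{Hom}_G(-,\Pi_{M,\mathscr{L}})$ to the sequence of \autoref{2.6.3} with $\mathscr{L}'\neq\mathscr{L}$, use $\mathrm{Hom}_G(\widehat{\mathrm{LL}(M)},\Pi_{M,\mathscr{L}})=L$, the non-isomorphy from \autoref{2.1.1}, and the Ext-vanishing of \autoref{1.1.4}. Your extra paragraph justifying why \autoref{1.1.4} applies (distinct maximal ideals attached to $V_{M,\mathscr{L}}\ncong V_{M,\mathscr{L}'}$, or distinct blocks) is a more detailed unpacking of the citation the paper makes implicitly, not a different route.
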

\begin{proof}[Preuve] En appliquant le foncteur $\mathrm{Hom}_G(-, \Pi_{M, \mathscr{L}})$ à la suite exacte
	$$0\to H^1(\mathfrak{X}, \mathscr{O})[\tfrac{1}{p}]^*[M]\to\widehat{\mathrm{LL}(M)}\to\Pi_{M, \mathscr{L}'}\to0$$
	où $\mathscr{L}'\neq\mathscr{L}$, on a une suite exacte
	\begin{align*}0&\to\mathrm{Hom}_G(\Pi_{M, \mathscr{L}'}, \Pi_{M, \mathscr{L}})\to\mathrm{Hom}_G(\widehat{\mathrm{LL}(M)}, \Pi_{M, \mathscr{L}})\to\mathrm{Hom}_G(H^1(\mathfrak{X}, \mathscr{O})[\tfrac{1}{p}]^*[M], \Pi_{M, \mathscr{L}})\\& \to\mathrm{Ext}^1_G(\Pi_{M, \mathscr{L}'}, \Pi_{M, \mathscr{L}}).	
	\end{align*}
	Le résultat découle du fait que $\mathrm{Hom}_G(\widehat{\mathrm{LL}(M)}, \Pi_{M, \mathscr{L}})=L$, du Théorème \ref{1.1.4} et de la Proposition \ref{2.1.1}.
\end{proof}
\subsubsection{Extensions}
Dans le lemme suivant, on suppose que $G$ est un groupe quelconque.
\begin{lemma}\label{3.7.4} Soient $H\supseteq H'$ deux sous-groupes d'un groupe $G$, $W$ une représentation de $H$ et $\gamma\in G$. 
	
	(i) On a un isomorphisme naturel de représentations de $H$
	$$\mathrm{ind}_H^{H\gamma H}W\cong\mathrm{ind}_{H^\gamma\cap H}^HW^\gamma,$$
	où $H^\gamma=\gamma H\gamma^{-1}$, et $W^\gamma$ est la représentation de $H^\gamma\cap H'$ obtenue en faisant agir $g$ sur $W$ par 
	$$g\cdot_\gamma v=\gamma^{-1}g\gamma\cdot v.$$
	
	(ii)(Formule de Mackey) On a un isomorphisme naturel de représentations de $H'$
	$$\mathrm{ind}_H^{H\gamma H}W\cong\bigoplus_{s\in H'\backslash H/(H^\gamma\cap H)}\mathrm{ind}_{H^{s\gamma}\cap H'}^{H'}W^{s\gamma},$$
	où $H^{s\gamma}=s\gamma H\gamma^{-1}s^{-1}$, et $W^{s\gamma}$ est la représentation de $H^{s\gamma}\cap H'$ obtenue en faisant agir $g$ sur $W$ par 
	$$g\cdot_{s\gamma}v=\gamma^{-1}s^{-1}gs\gamma\cdot v.$$
	
	(iii)(Lemme de Shapiro) On a $H^1(H', \mathrm{ind}_{H^{s\gamma}\cap H'}^{H'}W^{s\gamma})=H^1(H^{s\gamma}\cap H', W^{s\gamma})$.
\end{lemma}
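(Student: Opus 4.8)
The plan is to recognise (i) and (ii) as the classical Mackey decomposition of an induced representation and (iii) as Shapiro's lemma; all three are formal, the only work being to keep track of the conjugation twists and of the finiteness needed to identify compact induction with ordinary induction.

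For (i) I would use the standard realisation of $\mathrm{ind}_H^{H\gamma H}W$ by $W$-valued functions on the double coset $H\gamma H$ and the ``evaluation at the basepoint $\gamma$'' map. A short computation, using that $k\gamma=\gamma\cdot(\gamma^{-1}k\gamma)$ with $\gamma^{-1}k\gamma\in H$ whenever $k\in H^\gamma\cap H$, shows that this map identifies $\mathrm{ind}_H^{H\gamma H}W$ with $\mathrm{ind}_{H^\gamma\cap H}^HW^\gamma$ as representations of $H$, the twisted action $k\cdot_\gamma v=\gamma^{-1}k\gamma\cdot v$ appearing precisely from this manipulation; bijectivity comes from writing a general element of $H\gamma H$ as $h\gamma h'$ and observing that an equality $h\gamma h'=h_1\gamma h_1'$ forces $h_1^{-1}h\in H^\gamma\cap H$, so that the inverse $g\mapsto\bigl(h\gamma h'\mapsto h'^{-1}\cdot g(h)\bigr)$ is well defined. (I read the ``$H'$'' in the statement of (i) as a typo for ``$H$'', since (i) does not involve $H'$.) Conceptually this is just the fact that the representation induced from a transitive action is canonically induced from the stabiliser of any point, here the point $\gamma H$ of the transitive $H$-set $H\gamma H/H$, whose stabiliser is $H^\gamma\cap H$.

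For (ii) I would restrict the isomorphism of (i) along $H'\subseteq H$ and invoke the standard Mackey formula for the restriction of an induced representation, $\mathrm{Res}_{H'}\mathrm{ind}_{H^\gamma\cap H}^HW^\gamma\cong\bigoplus_{s\in H'\backslash H/(H^\gamma\cap H)}\mathrm{ind}_{H'\cap s(H^\gamma\cap H)s^{-1}}^{H'}{}^{s}(W^\gamma)$. Since $s\in H$ and $H'\subseteq H$, one has $H'\cap s(H^\gamma\cap H)s^{-1}=H'\cap sH^\gamma s^{-1}=H'\cap H^{s\gamma}$, and the iterated twist ${}^{s}(W^\gamma)$ unwinds to $W^{s\gamma}$ with $g\cdot_{s\gamma}v=\gamma^{-1}s^{-1}gs\gamma\cdot v$; this is (ii). Equivalently, one may prove (i) and (ii) in one stroke by decomposing the transitive $H$-set $H\gamma H/H\cong H/(H^\gamma\cap H)$ into $H'$-orbits and using that the representation induced from a disjoint union of transitive actions is the direct sum of the ones induced from the point-stabilisers.

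Finally (iii) is Shapiro's lemma applied to the subgroup $K:=H^{s\gamma}\cap H'$ of $H'$ and the $K$-module $V:=W^{s\gamma}$, giving $H^i(H',\mathrm{Ind}_K^{H'}V)\cong H^i(K,V)$ for all $i$, in particular for $i=1$. The one point to check is that in the intended setting, where $H$ and $H'$ are compact open subgroups of $G=\mathrm{GL}_2(\mathbb{Q}_p)$, the compact induction $\mathrm{ind}_K^{H'}$ coincides with the ordinary induced module $\mathrm{Ind}_K^{H'}$, which holds because $H^\gamma\cap H$ has finite index in $H$, hence $K=H^{s\gamma}\cap H'$ has finite index in $H'$. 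I do not expect a genuine obstacle anywhere: the lemma is pure bookkeeping within Mackey theory, and the only delicate point is notational, namely making sure the successive twists $W\rightsquigarrow W^\gamma\rightsquigarrow{}^{s}(W^\gamma)=W^{s\gamma}$ are tracked correctly and that the finite-index hypothesis is explicitly in force when Shapiro's lemma is invoked.
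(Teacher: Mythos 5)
Your proof is correct and follows essentially the same route as the paper: the same evaluation-at-$\gamma$ isomorphism with inverse $h_1\gamma h_2\mapsto h_2^{-1}\cdot\psi(h_1)$ for (i), restriction along $H'\subseteq H$ combined with the classical Mackey formula and the identification $s(H^\gamma\cap H)s^{-1}\cap H'=H^{s\gamma}\cap H'$ for (ii), and Shapiro's lemma for (iii), which the paper simply cites. Your side remarks — that the ``$H'$'' in the statement of (i) should read $H$, and that the finite index of $H^{s\gamma}\cap H'$ in $H'$ (true in the intended application to open subgroups of $\mathrm{SL}_2(\mathbb{Z}_p)$) is what lets compact induction play the role of coinduction in Shapiro — are accurate and consistent with how the lemma is used in the paper.
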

\begin{proof}[Preuve] (i) Soit $\phi\in\mathrm{ind}_H^{H\gamma H}W$, on définit une flèche $\phi_\gamma: H\to W$ par $\phi_\gamma(x)=\phi(x\gamma)$, alors, si $h\in H^\gamma\cap H$ (en particulier, $\gamma^{-1}h\gamma\in H$) et $x\in H$, on a 
	$$\phi_\gamma(xh^{-1})=\phi(xh^{-1}\gamma)=\phi(x\gamma\gamma^{-1}h^{-1}\gamma)=\gamma^{-1}h\gamma\cdot\phi(x\gamma)=\gamma^{-1}h\gamma\cdot\phi_\gamma(x),$$
	ce qui prouve que $\phi_\gamma\in\mathrm{ind}_{H^\gamma\cap H}^HW^\gamma$.
	
	Soit $\psi\in\mathrm{ind}_{H^\gamma\cap H}^HW^\gamma$, on définit une flèche $\psi_\gamma: H\gamma H\to W$ par $\psi_\gamma(h_1\gamma h_2)= h_2^{-1}\cdot\psi(h_1)$, alors on a 
	$$\psi_r(h_1\gamma h_2h_3^{-1})= h_3h_2^{-1}\cdot\psi(h_1) =h_3\cdot\psi_\gamma(h_1\gamma h_2),$$
	ce qui prouve que $\psi_r\in\mathrm{ind}_H^{H\gamma H}W$.
	
	Maintenant on vérifie sans mal que les deux flèches sont inverses l'une de l'autre, ce qui permet de conclure l'assertion (i).
	
	(ii) On déduit de l'assertion (i) et la formule de Mackey \cite[Section 7.3]{serre1977linear} les isomorphismes suivants de représentations de $H'$
	$$\mathrm{ind}_H^{H\gamma H}W\cong\mathrm{Res}_{H'}^H\mathrm{ind}_{H^\gamma\cap H}^HW^\gamma\cong\bigoplus_{s\in H'\backslash H/(H^\gamma\cap H)}\mathrm{ind}_{(H^\gamma\cap H)^s\cap H'}^{H'}(W^{\gamma})^{s}\cong\bigoplus_{s\in H'\backslash H/(H^\gamma\cap H)}\mathrm{ind}_{H^{s\gamma}\cap H'}^{H'}(W^{\gamma})^{s}$$
	où $(W^\gamma)^s$ est la représentation de $(H^\gamma\cap H)^s\cap H'$ obtenue en faisant agir $g$ sur $W^\gamma$ par $g\cdot_sv=s^{-1}gs\cdot_\gamma v$. On vérifie facile que $(W^\gamma)^s=W^{s\gamma}$, ce qui permet de conclure l'assertion (ii).
	
	(iii) Voir la \cite[Proposition 8]{stix2010trading}.
\end{proof}
\begin{lemma}\label{ouvert} (i) Soient $t\in G=\mathrm{GL}_2(\mathbb{Q}_p)$ et $H=\mathrm{SL}_2(\z_p)$, alors $H^t$ contient un sous-groupe de congruence principal de $H$.
	
	(ii)  Supposons de plus que $H'$ est un sous-groupe ouvert de $H$, alors $H'^t\cap H'$ est ouvert dans $H$.
\end{lemma}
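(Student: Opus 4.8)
The plan is to derive both parts from a single soft observation: conjugation by any $t \in \mathrm{GL}_2(\mathbb{Q}_p)$ preserves the determinant, so it restricts to an automorphism of the topological group $\mathrm{SL}_2(\mathbb{Q}_p)$ — its inverse being conjugation by $t^{-1}$ — and in particular it is a homeomorphism of $\mathrm{SL}_2(\mathbb{Q}_p)$ onto itself. I will combine this with two elementary facts about $H = \mathrm{SL}_2(\z_p)$: that $H$ is an open (compact) subgroup of $\mathrm{SL}_2(\mathbb{Q}_p)$, and that the principal congruence subgroups $\Gamma_n := H \cap (1 + p^n M_2(\z_p))$ form a fundamental system of neighbourhoods of $1$ in $H$, so that every open subgroup of $H$ contains some $\Gamma_n$.

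For (i): since $H$ is open in $\mathrm{SL}_2(\mathbb{Q}_p)$ and conjugation by $t$ is a homeomorphism, the conjugate $H^t = tHt^{-1}$ is again open in $\mathrm{SL}_2(\mathbb{Q}_p)$; hence $H^t \cap H$ is an open subgroup of $H$, so it contains some $\Gamma_n$, and a fortiori $\Gamma_n \subseteq H^t$, which is the assertion. For (ii): $H'$ is open in $H$, which is open in $\mathrm{SL}_2(\mathbb{Q}_p)$, so $H'$ is open in $\mathrm{SL}_2(\mathbb{Q}_p)$; applying the homeomorphism once more, $H'^t$ is open in $\mathrm{SL}_2(\mathbb{Q}_p)$, so $H'^t \cap H'$ is open in $\mathrm{SL}_2(\mathbb{Q}_p)$, and since it is contained in $H' \subseteq H$ it is open in $H$. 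I do not expect any genuine obstacle here; the only point to keep straight is that conjugation by $t$ need not stabilise $H$ or $H'$ themselves, only the ambient group $\mathrm{SL}_2(\mathbb{Q}_p)$ — which is precisely why one argues in $\mathrm{SL}_2(\mathbb{Q}_p)$ and then intersects back.

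If an effective statement is wanted, I would instead invoke the Cartan decomposition $t = k_1\,\mathrm{diag}(p^a, p^b)\,k_2$ with $k_1, k_2 \in \mathrm{GL}_2(\z_p)$. Since $\mathrm{GL}_2(\z_p)$ normalises $H$ and each $\Gamma_n$, conjugating by $k_1$ and $k_2$ changes neither $H$ nor the $\Gamma_n$, and the central scalar $p^{\min(a,b)}$ plays no role in conjugation, so one reduces to $t = \mathrm{diag}(1, p^s)$ with $s = |a - b| \ge 0$. A one-line computation then gives, for $h = \begin{psmallmatrix} \alpha & \beta \\ \gamma & \delta \end{psmallmatrix}$, that $t^{-1} h t = \begin{psmallmatrix} \alpha & p^s \beta \\ p^{-s}\gamma & \delta \end{psmallmatrix}$, which lies in $H$ as soon as $v_p(\gamma) \ge s$; hence $\Gamma_n \subseteq H^t$ whenever $n \ge s$, reproving (i), and the same computation applied to a principal congruence subgroup $\Gamma_m \subseteq H'$ gives $\Gamma_{m+s} \subseteq H'^t \cap H'$, reproving (ii). In this second approach the only slightly delicate point is the bookkeeping in (ii): under the Cartan reduction $H'$ gets replaced by its $k_i$-conjugates, but these are again open subgroups of $H$, so the argument goes through unchanged.
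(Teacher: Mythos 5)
Your proof is correct, and its main line of argument is genuinely softer than the paper's. The paper proceeds by a direct computation with an explicit congruence level: it picks an even $d$ with $p^{d/2}t$ and $p^{d/2}t^{-1}$ integral, writes $t^{-1}(I+p^dk)t=I+p^d\,t^{-1}kt$, and checks integrality and determinant $1$ by hand, obtaining $\Gamma_d\subseteq H^t$; part (ii) is then deduced by observing that $H'^t\cap H$ is open in $H^t\cap H$, which contains $\Gamma_d$. Your primary argument instead exploits only that conjugation by $t$ is a homeomorphism of $\mathrm{SL}_2(\mathbb{Q}_p)$ and that $H$, $H'$ are open there, so $H^t$ and $H'^t$ are open in $\mathrm{SL}_2(\mathbb{Q}_p)$ and one concludes by the fact that the $\Gamma_n$ form a neighbourhood basis of $1$ in $H$; this is shorter, avoids all matrix computation, and even handles (ii) in one line without reducing it to (i), at the price of being non-effective — which is harmless, since the application in the proof of la Proposition \ref{3.7.5} only uses openness qualitatively (the quantitative input there is the index $[H:H']$, not the congruence level). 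Your second, effective variant via the Cartan decomposition $t=k_1\,\mathrm{diag}(p^a,p^b)\,k_2$ is closer in spirit to the paper's computation but organizes it differently: the paper scales $t$ into $M_2(\z_p)$ and takes a level roughly $d$, whereas you normalize by $\mathrm{GL}_2(\z_p)$ (which preserves $H$ and the $\Gamma_n$) and get the sharper level $s=|a-b|$; your parenthetical care about replacing $H'$ by its $k_i$-conjugates in (ii) is exactly the right bookkeeping, since these conjugates are again open in $H$ and conjugating back preserves the $\Gamma_n$.
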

\begin{proof}[Preuve] (i) Soit $d$ un nombre pair suffisamment grand tel que $p^{\frac{d}{2}}t, p^{\frac{d}{2}}t^{-1}\in M_2(\z_p)$. On note $\Gamma_d:=(I+p^dM_2(\z_p))\cap H$ qui est aussi le noyau de la morphisme de réduction $H\to\mathrm{SL}_2(\z_p/p^d\z_p)$, alors on a $t^{-1}\Gamma_dt\subseteq H$. En effet, soit $I+p^dk\in\Gamma_d$ avec $k\in M_2(\z_p)$, alors on a  $$t^{-1}(I+p^dk)t=I+p^dt^{-1} kt\in M_2(\z_p)$$
	d'après notre choix de $d$. De plus, on a 
	$$|t^{-1}(I+p^dk)t|=|I+p^dk|=1$$
	car $I+p^dk\in H$. On a donc $t^{-1}(I+p^dk)t\in M_2(\z_p)\cap\mathrm{SL}_2(\mathbb{Q}_p)=H$, ce qui implique que $\Gamma_d\subseteq H^t$.

	(ii) On a besoin de montrer que $H'^t\cap H'=H'^t\cap H\cap H'$ est ouvert dans $H$, il suffit donc de montrer que $H'^t\cap H$ est ouvert dans $H$. Comme $H'$ est ouvert dans $H$, $H'^t\cap H$ est ouvert dans $H^t\cap H$, on est donc ramené à montrer que $H^t\cap H$ est ouvert dans $H$. D'après l'assertion (i), le sous-groupe $H^t\cap H$ de $H$ contient $\Gamma_d$, qui est ouvert dans $H$, donc $H^t\cap H$ est ouvert dans $H$. Cela permet de conclure.
\end{proof}
\begin{proposition}\label{3.7.5} Soit $W$ une $\mathscr{O}_L$-représentation lisse de $KZ$, libre de rang fini et de caractère central $\psi$, alors $\mathrm{Ext}^1_{G, \psi}(\mathrm{Ind}_{KZ}^GW, \mathrm{Ind}_{KZ}^GW)$ est tué par une puissance de $p$.
\end{proposition}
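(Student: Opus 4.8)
The plan is to compute $\mathrm{Ext}^1_{G,\psi}(\mathrm{Ind}_{KZ}^G W,\mathrm{Ind}_{KZ}^G W)$ by Frobenius reciprocity and Mackey's formula, reducing it to a direct sum of continuous cohomology groups of compact-mod-centre subgroups of $G$ with coefficients in $\mathscr{O}_L$-free modules; since such $H^1$ groups are torsion of bounded $p$-power exponent, the only genuine issue will be a uniformity statement inside the Mackey decomposition. (We treat $\mathrm{Ind}_{KZ}^G$ as compact, i.e. finitely supported, induction; the argument for the full smooth induction is identical, replacing direct sums by products throughout, since products of injectives are injective and $\mathrm{Ext}$ commutes with products in the second variable.)

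First I would note that $\mathrm{Ind}_{KZ}^G$ is left adjoint to restriction, which is exact, so restriction preserves injectives; taking an injective resolution of $\mathrm{Ind}_{KZ}^G W$ in smooth $\mathscr{O}_L[G]$-modules with central character $\psi$ and restricting to $KZ$ gives
$$\mathrm{Ext}^1_{G,\psi}(\mathrm{Ind}_{KZ}^G W,\mathrm{Ind}_{KZ}^G W)=\mathrm{Ext}^1_{KZ,\psi}\bigl(W,(\mathrm{Ind}_{KZ}^G W)|_{KZ}\bigr).$$
By the Cartan decomposition $G=\bigsqcup_{n\ge 0}KZ\,\gamma_n\,KZ$ with $\gamma_n=\begin{psmallmatrix}p^n&0\\0&1\end{psmallmatrix}$, \autoref{3.7.4}~(ii) yields an isomorphism of $KZ$-representations $(\mathrm{Ind}_{KZ}^G W)|_{KZ}\cong\bigoplus_{n\ge0}\mathrm{Ind}_{U_n}^{KZ}(W^{\gamma_n})$, where $U_n:=\gamma_n KZ\gamma_n^{-1}\cap KZ$ is open of finite index in $KZ$ (two compact-open-mod-centre subgroups being commensurable). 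Since $W$ is finite free over $\mathscr{O}_L$ it is a Noetherian, hence finitely generated, object of the locally Noetherian Grothendieck category of smooth $\mathscr{O}_L[KZ]$-modules with central character $\psi$; therefore arbitrary direct sums of injectives there are injective and $\mathrm{Hom}_{KZ,\psi}(W,-)$ commutes with direct sums, so $\mathrm{Ext}^1$ commutes with the direct sum. Combining this with Shapiro's lemma for $\mathrm{Ext}^1$ (valid because $\mathrm{Ind}_{U_n}^{KZ}$, $U_n$ being of finite index, is exact and preserves injectives, by the argument of \autoref{3.7.4}~(iii)), I obtain
$$\mathrm{Ext}^1_{G,\psi}(\mathrm{Ind}_{KZ}^G W,\mathrm{Ind}_{KZ}^G W)\cong\bigoplus_{n\ge0}\mathrm{Ext}^1_{U_n,\psi}\bigl(W|_{U_n},W^{\gamma_n}\bigr).$$
Because $W$ is finite free over $\mathscr{O}_L$, the $n$-th term equals $H^1(U_n/Z,M_n)$ with $M_n:=\mathrm{Hom}_{\mathscr{O}_L}(W,W^{\gamma_n})$ (a smooth representation of the profinite group $U_n/Z$, free over $\mathscr{O}_L$, with trivial central character), which factors through some finite quotient $Q_n$. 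Inflation–restriction against the normal open kernel reduces $H^1(U_n/Z,M_n)$ to $H^1(Q_n,M_n)$, since the kernel is profinite and acts trivially on the torsion-free module $M_n$, so its $H^1$ (continuous homomorphisms into a discrete torsion-free group) vanishes. Finally $H^1(Q_n,M_n)$ is killed by $|Q_n|$, and the prime-to-$p$ part of $|Q_n|$ is invertible in $\mathscr{O}_L$, hence acts invertibly on $M_n$ and on $H^1(Q_n,M_n)$; therefore $p^{v_p(|Q_n|)}$ annihilates $H^1(Q_n,M_n)$.

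It remains to bound $v_p(|Q_n|)$ uniformly in $n$, which is the one point requiring real work. Fix a principal congruence subgroup $K_m\trianglelefteq K$ acting trivially on $W$, so that $m$ depends only on $W$; then $V_n:=U_n\cap K_m\cap\gamma_n K_m\gamma_n^{-1}$ acts trivially on $W$ and on $W^{\gamma_n}$, hence on $M_n$, so $Q_n$ is a quotient of $U_n/(ZV_n)$ and $v_p(|Q_n|)\le v_p\bigl([\,U_n\cap K:V_n\,]\bigr)$. A direct matrix computation gives $U_n\cap K=\{h\in GL_2(\z_p):h_{12}\in p^n\z_p\}$, and shows that for $n\ge m$ both $U_n\cap K\cap K_m$ and $U_n\cap K\cap\gamma_n K_m\gamma_n^{-1}$ differ from $U_n\cap K$ only by congruence conditions at level $p^m$ on the entries other than $h_{12}$: the level-$p^m$ condition on $h_{12}$ coming from $K_m$ is absorbed by the stronger level-$p^n$ condition, while conjugation by $\gamma_n$ merely tightens the $h_{12}$-level from $n$ to $n+m$ and leaves the $h_{21}$-entry unconstrained. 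Consequently $[\,U_n\cap K:V_n\,]$ is, for all $n\ge m$, equal to a fixed finite value depending only on $m$; together with the finitely many remaining $n<m$ this produces a single $N=N(W)$ with $p^{N}H^1(Q_n,M_n)=0$ for every $n$. Hence $p^{N}$ annihilates the whole direct sum, i.e. $\mathrm{Ext}^1_{G,\psi}(\mathrm{Ind}_{KZ}^G W,\mathrm{Ind}_{KZ}^G W)$, as desired. The main obstacle is precisely this uniform control of the $p$-valuation of $|Q_n|$ as the Mackey double coset index $n$ grows; everything else is formal.
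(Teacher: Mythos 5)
Your proof is correct, and its skeleton is the one the paper uses: Frobenius reciprocity to move the induction into one variable, Mackey decomposition along the Cartan double cosets $KZ\gamma_nKZ$, Shapiro's lemma, and a reduction to group cohomology that must be annihilated by a power of $p$ uniformly in $n$. Where you genuinely diverge is in the two key points. The paper first applies inflation--restriction along a fixed open normal subgroup $H'\trianglelefteq H=\mathrm{SL}_2(\mathbb{Z}_p)$ acting trivially on $W$, and only then Mackey--Shapiro for the restriction to $H'$; the deep terms $H^1(H'\cap H'^{\gamma_n^{-1}s^{-1}},W^*)$ are killed using torsion-freeness together with the FAb property of open subgroups of $\mathrm{SL}_2(\mathbb{Z}_p)$ (\autoref{ouvert}, \autoref{C.0.7}), and the uniformity then comes for free, since what survives is cohomology of a subgroup of $H/H'$ and is therefore killed by $[H:H']$ — no matrix computation is needed. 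You instead do Mackey at the level of $KZ$, reduce to $H^1(U_n/Z,\mathrm{Hom}_{\mathscr{O}_L}(W,W^{\gamma_n}))$, dispose of the kernel of the action by the simpler observation that smooth homomorphisms from a profinite group into a torsion-free discrete module vanish (no FAb input), and must then bound the finite quotient $Q_n$ by hand; your explicit computation that $[\,U_n\cap K:K_m\cap\gamma_nK_m\gamma_n^{-1}\,]$ stabilizes for $n\ge m$ is correct (with $\gamma_n$-conjugation tightening the $h_{12}$-level to $n+m$ and freeing $h_{21}$, exactly as you say), and it supplies precisely the uniformity that the paper's normal-subgroup trick is designed to obtain structurally. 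Each route buys something: yours is more elementary and self-contained (it makes the appendix on FAb groups unnecessary for this proposition), while the paper's is softer and would transport more easily to situations where an explicit congruence computation is awkward. A further small difference in your favour: you justify commuting $\mathrm{Ext}^1$ with the infinite Mackey direct sum via local noetherianity, a point the paper leaves implicit in its completed direct sum $\widehat{\bigoplus}$; on the other hand your parenthetical claim that the full smooth induction case is identical \emph{mutatis mutandis} with products is a little glib (the restriction of a full smooth induction is the smooth part of the product of the double-coset pieces, not the product itself), but this does not affect the compact-induction argument you actually give, whose substantive content — uniform annihilation of each double-coset term — is complete.
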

\begin{proof}[Preuve] Par la réciprocité de Frobenius, on a 
	$$\mathrm{Ext}^1_{G, \psi}(\mathrm{Ind}_{KZ}^GW, \mathrm{Ind}_{KZ}^GW)\cong\mathrm{Ext}^1_{KZ, \psi}(\mathrm{Ind}_{KZ}^GW, W)\cong\mathrm{Ext}^1_{KZ, \psi}(W^*, \mathrm{ind}_{KZ}^GW^*),$$
	où $W^*$ est le $\mathscr{O}_L$-dual de $W$. De plus, si $\gamma_n=\begin{psmallmatrix}
		p^n&0\\
		0&1
	\end{psmallmatrix}$, alors on a un isomorphisme de $KZ$-module
	$$\mathrm{ind}_{KZ}^GW^*=\widehat{\bigoplus}_{n\geq0}\mathrm{ind}_{KZ}^{KZ\gamma_nKZ}W^*,$$
	donc il suffit de montrer que $\mathrm{Ext}^1_{KZ, \delta}(W^*, \mathrm{ind}_{KZ}^{KZ\gamma_nKZ}W^*)$ est tué par une puissance de $p$ indépendante de $n$. Comme on a fixé le caractère central, on peut remplacer $KZ$ par $H=\mathrm{SL}_2(\mathbb{Z}_p)$. Comme $W$ est lisse, quelque sous-groupe ouvert $H'$ de $H$ agit trivialement sur $W$. Comme $H$ est compact, $H'$ est d'indice fini et on peut supposer que $H'$ est distingué dans $H$.
	
	Comme $H/H'$ est fini, le groupe $H^1(H/H', (W\otimes\mathrm{ind}_H^{H\gamma_nH}W^*)^{H'})$ est tué par $[H:H']$. En utilisant la suite d'inflation-restriction
	$$0\to H^1(H/H', (W\otimes\mathrm{ind}_H^{H\gamma_nH}W^*)^{H'})\to H^1(H, W\otimes\mathrm{ind}_H^{H\gamma_nH}W^*)\to H^1(H', W\otimes\mathrm{ind}_H^{H\gamma_nH}W^*)^{H/H'}\to\cdot\cdot\cdot,$$
	on est ramené à montrer que $H^1(H', \mathrm{ind}_H^{H\gamma_nH}W^*)$ est tué par une puissance de $p$ indépendante de $n$ (notons que $H'$ agit trivialement sur $W$, donc $W\otimes\mathrm{ind}_H^{H\gamma_nH}W^*$ est isomorphe à une somme directe d'un nombre fini de copies de $\mathrm{ind}_H^{H\gamma_nH}W^*$ en tant que $H'$-représentation). D'après les points (ii) et (iii) du Lemme \ref{3.7.4}, on a 
	\begin{align*}
		H^1(H', \mathrm{ind}_H^{H\gamma_n H}W^*)&=H^1(H', \bigoplus_{s\in H'\backslash H/(H^{\gamma_n}\cap H)}\mathrm{ind}_{H^{s\gamma_n}\cap H'}^{H'}W^{*s\gamma_n})=\bigoplus_{s\in H'\backslash H/(H^{\gamma_n}\cap H)}H^1(H', \mathrm{ind}_{H^{s\gamma_n}\cap H'}^{H'}W^{*s\gamma_n})\\&=\bigoplus_{s\in H'\backslash H/(H^{\gamma_n}\cap H)}H^1(H^{s\gamma_n}\cap H', W^{*s\gamma_n})=\bigoplus_{s\in H'\backslash H/(H^{\gamma_n}\cap H)}H^1(H\cap H'^{\gamma_n^{-1}s^{-1}}, W^*).
	\end{align*}
	Comme $H'$ agit trivialement sur $W$, on a $H^1(H'\cap H'^{\gamma_n^{-1}s^{-1}}, W^*)=\mathrm{Hom}_{\mathrm{cont}}(H'\cap H'^{\gamma_n^{-1}s^{-1}}, W^*)$. 
	
	D'après le Lemme \ref{ouvert} (ii), le groupe $H'\cap H'^{\gamma_n^{-1}s^{-1}}$ est ouvert dans $H$. Puisque $W^*$ est sans torsion, et qu'il résulte du Corollaire \ref{C.0.7} que l'abélianisé de tout sous-groupe ouvert de $\mathrm{SL}_2(\mathbb{Z}_p)$ est fini, on a $$H^1(H'\cap H'^{\gamma_n^{-1}s^{-1}}, W^*)=0.$$
	
	En utilisant à nouveau la suite d'inflation-restriction
	\begin{align*}
		0&\to H^1((H\cap H'^{\gamma_n^{-1}s^{-1}})/(H'\cap H'^{\gamma_n^{-1}s^{-1}}), W^*)\to H^1(H\cap H'^{\gamma_n^{-1}s^{-1}}, W^*)\\&\to H^1(H'\cap H'^{\gamma_n^{-1}s^{-1}}, W^*)^{(H\cap H'^{\gamma_n^{-1}s^{-1}})/(H'\cap H'^{\gamma_n^{-1}s^{-1}})},
	\end{align*}
	on déduit un isomorphisme
	$$H^1(H\cap H'^{\gamma_n^{-1}s^{-1}}, W^*)\cong H^1((H\cap H'^{\gamma_n^{-1}s^{-1}})/(H'\cap H'^{\gamma_n^{-1}s^{-1}}), W^*).$$
	Cela implique que $H^1(H\cap H'^{\gamma_n^{-1}s^{-1}}, W^*)$ est tué par $[(H\cap H'^{\gamma_n^{-1}s^{-1}}):(H'\cap H'^{\gamma_n^{-1}s^{-1}})]$, et donc aussi par $[H: H']$, ce qui achève la preuve.
\end{proof}
\begin{remark} La même preuve que celle de la Proposition \ref{3.7.5} implique que $\mathrm{Ext}^i_{G, \psi}(\mathrm{Ind}_{KZ}^GW, \mathrm{Ind}_{KZ}^GW)$ est tué par une puissance de $p$ pour tout $i\geq1$.
\end{remark}
\begin{corollary}\label{3.7.6} On a $\mathrm{Ext}^1_{G, \psi}(\widehat{\mathrm{LL}(M)}^*, \widehat{\mathrm{LL}(M)}^*)=0$ et $\mathrm{Ext}^1_{G, \psi}(\widehat{\mathrm{LL}(M)}, \widehat{\mathrm{LL}(M)})=0$.
\end{corollary}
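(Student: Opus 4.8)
L'idée est de ramener les deux annulations à la \autoref{3.7.5}. Rappelons que $\mathrm{LL}(M)=\mathrm{ind}_{KZ}^G\sigma_M$, ou bien $\mathrm{ind}_{KZ}^G\sigma_M=\mathrm{LL}(M)\oplus(\mathrm{LL}(M)\otimes\mu_{-1})$ (section~\ref{section2.1}); dans le second cas on raisonne sur $\widehat{\mathrm{ind}_{KZ}^G\sigma_M}$, dont $\widehat{\mathrm{LL}(M)}$ est un facteur direct. Fixons un réseau $G$-stable $\sigma_M^0\subseteq\sigma_M$, minimal à homothétie près, et posons $W:=\mathrm{Hom}_{\mathscr{O}_L}(\sigma_M^0,\mathscr{O}_L)$, une $\mathscr{O}_L$-représentation libre de rang fini de $KZ$. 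La boule unité de $\widehat{\mathrm{LL}(M)}$ est alors $\widehat{\mathrm{LL}(M)}^+=\widehat{\mathrm{ind}_{KZ}^G\sigma_M^0}$, complétion $\pi_L$-adique de l'induite compacte (un $\mathscr{O}_L$-module orthonormalisable), avec $\widehat{\mathrm{LL}(M)}=\widehat{\mathrm{LL}(M)}^+[\tfrac1p]$; et la boule unité du dual, que l'on note $\widehat{\mathrm{LL}(M)}^{*,+}$, est le dual de Schikhof $\mathrm{Hom}_{\mathscr{O}_L}(\widehat{\mathrm{LL}(M)}^+,\mathscr{O}_L)$, que l'on identifie à l'induite continue complète $\mathrm{Ind}_{KZ}^G W$ (un $\mathscr{O}_L[[G]]$-module profini plat).

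On montre ensuite que les groupes $\mathrm{Ext}^1$ calculés sur $\mathscr{O}_L[[G]]$ (à caractère central fixé) de ces modèles entiers avec eux-mêmes sont tués par une puissance de $\pi_L$. C'est précisément le contenu de la preuve de la \autoref{3.7.5} appliquée à ce $W$: d'une part cette preuve borne $\mathrm{Ext}^1_{\mathscr{O}_L[[G]]}(\mathrm{Ind}_{KZ}^G W,\mathrm{Ind}_{KZ}^G W)$ par $[\mathrm{SL}_2(\z_p):H']$, donc par une puissance de $p$ (réciprocité de Frobenius, décomposition de Mackey de la \autoref{3.7.4}, lemme de Shapiro, et finitude des abélianisés des sous-groupes ouverts de $\mathrm{SL}_2(\z_p)$, \autoref{C.0.7}); d'autre part, la réciprocité de Frobenius pour l'induite compacte complétée (qui préserve la projectivité) donne $\mathrm{Ext}^1_{\mathscr{O}_L[[G]]}(\widehat{\mathrm{ind}_{KZ}^G\sigma_M^0},\widehat{\mathrm{ind}_{KZ}^G\sigma_M^0})=\mathrm{Ext}^1_{KZ}(W^*,\widehat{\mathrm{ind}_{KZ}^G\sigma_M^0})$, qui est exactement le terme borné (uniformément en le niveau) dans cette même preuve. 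De façon équivalente, l'une des deux bornes se déduit de l'autre par l'exactitude de la dualité de Schikhof échangeant $\widehat{\mathrm{LL}(M)}^+$ et $\widehat{\mathrm{LL}(M)}^{*,+}$.

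Enfin, comme $\widehat{\mathrm{LL}(M)}^+/\pi_L$ et ses réductions sont de présentation finie sur $\kappa_L[[G]]$ (\autoref{3.1.3}), tout morphisme entre les représentations de Banach en jeu est strict (\autoref{2.2.2} et la proposition qui précède la \autoref{3.1.6}); par suite toute classe d'extension de $L$-représentations de Banach considérée est représentée par une suite exacte stricte admettant un modèle par des réseaux sur $\mathscr{O}_L[[G]]$, et les applications naturelles $\mathrm{Ext}^1_{\mathscr{O}_L[[G]]}(\widehat{\mathrm{LL}(M)}^+,\widehat{\mathrm{LL}(M)}^+)\to\mathrm{Ext}^1_{G,\psi}(\widehat{\mathrm{LL}(M)},\widehat{\mathrm{LL}(M)})$ et $\mathrm{Ext}^1_{\mathscr{O}_L[[G]]}(\widehat{\mathrm{LL}(M)}^{*,+},\widehat{\mathrm{LL}(M)}^{*,+})\to\mathrm{Ext}^1_{G,\psi}(\widehat{\mathrm{LL}(M)}^*,\widehat{\mathrm{LL}(M)}^*)$ sont surjectives de noyau $\pi_L$-divisible. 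Les sources étant de $\pi_L^N$-torsion, les buts sont nuls. Le point délicat n'est aucun calcul isolé mais bien cette dernière étape: il faut vérifier que les extensions des représentations de Banach \emph{non admissibles} $\widehat{\mathrm{LL}(M)}$ et $\widehat{\mathrm{LL}(M)}^*$ descendent effectivement en extensions de réseaux sur $\mathscr{O}_L[[G]]$ et que les applications de comparaison se comportent comme annoncé — cela utilise de manière essentielle l'hypothèse de présentation finie résiduelle (donc la cohérence de l'algèbre d'Iwasawa augmentée de $\mathrm{GL}_2$, \autoref{3.1.1}) et l'uniformité en le niveau de troncature de la borne de la \autoref{3.7.5}, qui est précisément ce qui permet à cette borne de survivre aux limites projectives définissant les modèles entiers.
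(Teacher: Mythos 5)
Votre démarche est essentiellement celle de l'article : on identifie $\widehat{\mathrm{LL}(M)}^*$ à un facteur direct de $L\otimes_{\mathscr{O}_L}\mathrm{Ind}_{KZ}^G\sigma_M^{0,*}$, la \autoref{3.7.5} montre que l'Ext entier correspondant est tué par une puissance de $p$, et le passage au rationnel (l'Ext des représentations de Banach étant un $L$-espace vectoriel atteint par des classes entières de torsion) donne les deux annulations, celle pour $\widehat{\mathrm{LL}(M)}$ s'en déduisant par dualité, exactement comme dans le texte. Deux réserves mineures : la réciprocité de Frobenius que vous invoquez pour l'induite compacte complétée n'est pas justifiée telle quelle (la complétion $p$-adique n'est plus une induite compacte), mais elle est superflue puisque votre argument de dualité de Schikhof --- qui est aussi celui, implicite, de l'article --- suffit ; de même, l'appel à la cohérence de l'algèbre d'Iwasawa augmentée dans la dernière étape est inutilement lourd, le choix de réseaux ouverts bornés $G$-invariants dans le terme médian d'une extension suffisant à produire la classe entière tuée par $p^N$.
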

\begin{proof}[Preuve] D'après le Corollaire \ref{classsuper}, la représentation $\widehat{\mathrm{LL}(M)}^*=\mathrm{LL}(M)^*$ est soit isomorphe à, soit un facteur direct de $L\otimes_{\mathscr{O}_L}\mathrm{Ind}_{KZ}^G\sigma_M^{0, *}$, et il résulte de la Proposition \ref{3.7.5} que $L\otimes_{\mathscr{O}_L}\mathrm{Ext}^1_{G, \psi}(\mathrm{Ind}_{KZ}^G\sigma_M^{0, *}, \mathrm{Ind}_{KZ}^G\sigma_M^{0, *})=0$. On en déduit le résultat.
\end{proof} 
\begin{lemma}\label{3.7.8} Soit $0\to A\to B\to C\to0$ une suite exacte courte de $\mathscr{O}_L$-modules. Supposons que $A$ est $p$-adiquement séparé et $C$ est de $\pi_L^N$-torsion, alors $B$ est de $\pi_L$-adiquement séparé.
\end{lemma}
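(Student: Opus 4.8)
The plan is to work directly from the definition of $\pi_L$-adic separatedness, namely to show that $\bigcap_{n\geq 0}\pi_L^n B=0$. I will first record the harmless remark that, since $\pi_L$ is a uniformisante of $\mathscr{O}_L$, the $\pi_L$-adic and $p$-adic topologies on any $\mathscr{O}_L$-module agree; in particular the hypothesis that $A$ is $p$-adiquement séparé is the same as $\bigcap_{m\geq 0}\pi_L^m A=0$, and the conclusion sought for $B$ is separatedness for either topology.

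The key point is to turn the torsion hypothesis on $C$ into a containment. Identifying $A$ with its image in $B$, we have $B/A\cong C$, and $C$ being de $\pi_L^N$-torsion means $\pi_L^N C=0$, i.e. $\pi_L^N B\subseteq A$. Now I would take an arbitrary $x\in\bigcap_{n\geq 0}\pi_L^n B$ and argue that it lies in an arbitrarily deep power of $\pi_L$ inside $A$: for every integer $n\geq N$ one has $\pi_L^n B=\pi_L^{\,n-N}\bigl(\pi_L^N B\bigr)\subseteq\pi_L^{\,n-N}A$, hence $x\in\pi_L^{\,n-N}A$. Letting $n$ run over all integers $\geq N$ gives $x\in\bigcap_{m\geq 0}\pi_L^m A$, which is $0$ because $A$ is $p$-adiquement séparé. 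Therefore $x=0$, which is exactly the statement that $B$ is $\pi_L$-adiquement séparé.

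I do not anticipate any real obstacle: the argument is essentially a one-line diagram chase, the only things to keep straight being the index shift $\pi_L^n B\subseteq\pi_L^{\,n-N}A$ valid for $n\geq N$, and the trivial identification of the $p$-adic and $\pi_L$-adic topologies over $\mathscr{O}_L$. (It is worth noting that no hypothesis on $B$ itself beyond the exact sequence is used, and that $N$ being finite is essential — the containment $\pi_L^N B\subseteq A$ is the whole content.)
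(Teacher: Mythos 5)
Your proof is correct and follows essentially the same elementary element chase as the paper: both arguments exploit the containment $\pi_L^N B\subseteq A$ (the paper phrases it as $\pi_L^N b_i\in A$ for $b=\pi_L^i b_i$) together with the $\pi_L$-adic separatedness of $A$. The only difference is that you shift the exponent into $A$ and conclude directly that $b\in\bigcap_{m\geq0}\pi_L^m A=0$, whereas the paper first deduces $\pi_L^N b=0$ and then disposes of $\bigcap_i\pi_L^i B$ in a final step; your variant is, if anything, slightly more direct.
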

\begin{proof}[Preuve] Soit $b\in\cap_{i\geq1}\pi_L^iB$, alors on peut écrire $b=\pi_L^ib_i$ avec $b_i\in B$. Comme $C$ est de $\pi_L^N$-torsion, on a $\pi_L^Nb=\pi_L^{i+N}b_i=\pi_L^i(\pi_L^Nb_i)\in A$. Comme $\pi_L^Nb_i\in A$, on en déduit que $\pi_L^Nb\in\cap_{i\geq1}\pi_L^iA=\{0\}$. On a donc $\cap_{i\geq1}\pi_L^iB=\pi_L^N(\cap_{i\geq1}\pi_L^iB)=\{0\}$, ce qui achève de démontrer le lemme.
\end{proof}
\begin{lemma}\label{3.7.9} Soient $\Pi_1, \Pi_2$ deux représentations de Banach de $G$ dont les boules unité sont $\Pi_1^+$ et $\Pi_2^+$. Supposons que
	\begin{enumerate}[label=(\roman*)]
		\item l'espace $\mathrm{Ext}_{G, \psi}^1(\overline{\Pi}_1, \overline{\Pi}_2)$ est de dimension finie.
		
		\item $\mathrm{Ext}_{G, \psi}^1(\Pi_1^+, \Pi_2^+)$ est $\pi_L$-adiquement séparé.
	\end{enumerate}
	où $\overline{\Pi}_1:=\Pi_1^+/\pi_L$ et $\overline{\Pi}_2:=\Pi_2^+/\pi_L$, alors l'espace $\mathrm{Ext}_{G, \psi}^1(\Pi_1, \Pi_2)$ est de dimension finie.
\end{lemma}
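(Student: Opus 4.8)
The plan is to descend to integral coefficients and reduce the statement to a Nakayama-type finiteness for a single $\mathscr{O}_L$-module. First I would recall that an extension $0\to\Pi_2\to E\to\Pi_1\to0$ of unitary Banach representations of $G$ with central character $\psi$ is strict by the open mapping theorem, so that, choosing a continuous linear section $s\colon\Pi_1\to E$ and letting $E^{+}$ be the bounded, $G$-stable lattice of $E$ generated by $\Pi_2^{+}+s(\Pi_1^{+})$, one obtains a short exact sequence $0\to\Pi_2'\to E^{+}\to\Pi_1^{+}\to0$ of $\pi_L$-adically complete, $\mathscr{O}_L$-torsion-free $G$-modules with central character $\psi$, where $\Pi_2'$ is a $G$-stable lattice of $\Pi_2$ commensurable with $\Pi_2^{+}$; inverting $p$ recovers $E$. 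Since conversely every integral extension of $\Pi_1^{+}$ by $\Pi_2^{+}$ yields a Banach extension after inverting $p$, the natural map $\mathrm{Ext}^1_{G,\psi}(\Pi_1^{+},\Pi_2^{+})[\tfrac{1}{p}]\to\mathrm{Ext}^1_{G,\psi}(\Pi_1,\Pi_2)$ is surjective, and it therefore suffices to prove that $M:=\mathrm{Ext}^1_{G,\psi}(\Pi_1^{+},\Pi_2^{+})$ is finitely generated over $\mathscr{O}_L$.

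Next I would control $M/\pi_L M$. Applying $\mathrm{Ext}^{\bullet}_{G,\psi}(\Pi_1^{+},-)$ to the exact sequence $0\to\Pi_2^{+}\xrightarrow{\pi_L}\Pi_2^{+}\to\overline{\Pi}_2\to0$ produces a long exact sequence, from which one extracts an injection $M/\pi_L M\hookrightarrow\mathrm{Ext}^1_{G,\psi}(\Pi_1^{+},\overline{\Pi}_2)$. Since $\Pi_1^{+}$ is flat over $\mathscr{O}_L$ and $\overline{\Pi}_2$ is a $\kappa_L$-module, the change-of-rings spectral sequence along the reduction $\mathscr{O}_L\to\kappa_L$ degenerates and gives $\mathrm{Ext}^{i}_{G,\psi}(\Pi_1^{+},\overline{\Pi}_2)\cong\mathrm{Ext}^{i}_{G,\psi}(\Pi_1^{+}\otimes_{\mathscr{O}_L}\kappa_L,\overline{\Pi}_2)=\mathrm{Ext}^{i}_{G,\psi}(\overline{\Pi}_1,\overline{\Pi}_2)$ for all $i$. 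By hypothesis (i) the group on the right is finite-dimensional over $\kappa_L$ when $i=1$, so $M/\pi_L M$ is a finite-dimensional $\kappa_L$-vector space.

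Finally I would run the elementary argument where hypothesis (ii) enters. Lift a $\kappa_L$-basis of $M/\pi_L M$ to elements of $M$ and let $M_0\subseteq M$ be the $\mathscr{O}_L$-submodule they span; then $M=M_0+\pi_L M$, hence $M=M_0+\pi_L^{n}M$ for every $n$. Given $x\in M$, repeatedly applying this yields $x=\sum_{k=1}^{n}\pi_L^{k-1}x_k+\pi_L^{n}y_n$ with $x_k\in M_0$ and $y_n\in M$; the partial sums $s_n:=\sum_{k=1}^{n}\pi_L^{k-1}x_k$ satisfy $s_{n+1}-s_n\in\pi_L^{n}M_0$, so, $M_0$ being finitely generated over the complete discrete valuation ring $\mathscr{O}_L$ and hence $\pi_L$-adically complete, they converge to some $s_\infty\in M_0$ with $s_\infty-s_n\in\pi_L^{n}M_0\subseteq\pi_L^{n}M$. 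Then $x-s_\infty=(x-s_n)-(s_\infty-s_n)\in\pi_L^{n}M$ for all $n$, so $x-s_\infty\in\bigcap_n\pi_L^{n}M=0$ by (ii), whence $x\in M_0$ and $M=M_0$ is finitely generated. Consequently $M[\tfrac{1}{p}]$ is finite-dimensional over $L$, and so is its quotient $\mathrm{Ext}^1_{G,\psi}(\Pi_1,\Pi_2)$.

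I expect the main obstacle to be the first paragraph: comparing $\mathrm{Ext}^1_{G,\psi}(\Pi_1,\Pi_2)$ with $\mathrm{Ext}^1_{G,\psi}(\Pi_1^{+},\Pi_2^{+})[\tfrac{1}{p}]$ requires some care, because an extension of unitary Banach representations need not admit a $G$-stable lattice projecting exactly onto a prescribed lattice of the quotient, so one must pass to the $G$-stable lattice generated by a convenient bounded set and then verify that replacing $\Pi_2^{+}$ by a commensurable lattice does not alter the class after inverting $p$; one should also confirm that the long exact sequence and the change-of-rings isomorphism above are valid in whichever exact category of integral $\mathscr{O}_L[[G]]$-modules with central character $\psi$ the groups $\mathrm{Ext}_{G,\psi}$ are computed in. Everything else is formal.
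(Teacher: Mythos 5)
Your proof is correct and follows essentially the same route as the paper's: reduce to finite generation over $\mathscr{O}_L$ of $M=\mathrm{Ext}^1_{G,\psi}(\Pi_1^+,\Pi_2^+)$, bound its reduction mod $\pi_L$ using hypothesis (i) and the multiplication-by-$\pi_L$ long exact sequences, conclude by hypothesis (ii) together with (a hands-on proof of) the topological Nakayama lemma that the paper cites from Matsumura, and finish with the surjectivity of $M[\tfrac{1}{p}]\to\mathrm{Ext}^1_{G,\psi}(\Pi_1,\Pi_2)$. The only differences are cosmetic: you bound $M/\pi_L M$ by injecting it into $\mathrm{Ext}^1_{G,\psi}(\Pi_1^+,\overline{\Pi}_2)\cong\mathrm{Ext}^1_{G,\psi}(\overline{\Pi}_1,\overline{\Pi}_2)$ via base change along $\mathscr{O}_L\to\kappa_L$ (using that $\Pi_1^+$ is $\pi_L$-torsion-free), where the paper instead first bounds $\mathrm{Ext}^{1}$ and $\mathrm{Ext}^{2}$ of $(\overline{\Pi}_1,\Pi_2^+)$ and then uses the coefficient sequence in the first variable, and you spell out the lattice argument behind the final surjectivity, which the paper simply asserts.
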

\begin{proof}[Preuve] En appliquant le foncteur $\mathrm{Hom}(\overline{\Pi}_1, -)$ à la suite
	$$0\to \Pi_2^+\xrightarrow{\times \pi_L}\Pi_2^+\to\overline{\Pi}_2\to0,$$
	on obtient une suite exacte
	$$0\to\mathrm{Ext}_{G, \psi}^1(\overline{\Pi}_1, \Pi_2^+)\to\mathrm{Ext}_{G, \psi}^1(\overline{\Pi}_1, \overline{\Pi}_2)\to\mathrm{Ext}_{G, \psi}^2(\overline{\Pi}_1, \Pi_2^+)\to0,$$
	donc les espaces $\mathrm{Ext}_{G, \psi}^i(\overline{\Pi}_1, \Pi_2^+)$ sont de dimension finie pour $i=1, 2$.
	
	En appliquant le foncteur $\mathrm{Hom}_G(-, \Pi_2^+)$ à la suite
	$$0\to \Pi_1^+\xrightarrow{\times \pi_L}\Pi_1^+\to\overline{\Pi}_1\to0,$$
	on obtient une suite exacte
	$$\mathrm{Ext}_{G, \psi}^1(\overline{\Pi}_1, \Pi_2^+)\to\mathrm{Ext}_{G, \psi}^1(\Pi_1^+, \Pi_2^+)\to\mathrm{Ext}_{G, \psi}^1(\Pi_1^+, \Pi_2^+)\to\mathrm{Ext}_{G, \psi}^2(\overline{\Pi}_1, \Pi_2^+),$$
	l'espace $\kappa_L\otimes_{\mathscr{O}_L}\mathrm{Ext}_{G, \psi}^1(\Pi_1^+, \Pi_2^+)$ est donc de dimension finie. Comme $\mathrm{Ext}_{G, \psi}^1(\Pi_1^+, \Pi_2^+)$ est $\pi_L$-adiquement séparé, il suit d'une version du lemme de Nakayama \cite[Theorem 8.4]{matsumura1989commutative} que l'espace $\mathrm{Ext}_{G, \psi}^1(\Pi_1^+, \Pi_2^+)$ est de type fini. Comme l'application $L$-linéaire $$L\otimes_{\mathscr{O}_L} \mathrm{Ext}_{G, \psi}^1(\Pi_1^+, \Pi_2^+)\to\mathrm{Ext}_{G, \psi}^1(\Pi_1, \Pi_2)$$ est surjective, on déduit que l'espace $\mathrm{Ext}_{G, \psi}^1(\Pi_1, \Pi_2)$ est de dimension finie, comme on voulait.
\end{proof}
\begin{corollary}\label{3.8.10} L'espace $\mathrm{Ext}^1_{G, \psi}(\Pi_ {M, \mathscr{L}, j}, \widehat{\mathrm{LL}(M)})$ est de dimension finie sur $L$.
\end{corollary}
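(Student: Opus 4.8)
The plan is to invoke \autoref{3.7.9} with $\Pi_1=\Pi_{M,\mathscr{L},j}$ and $\Pi_2=\widehat{\mathrm{LL}(M)}$. Both representations are residually of finite presentation: $\widehat{\mathrm{LL}(M)}$ by \autoref{3.6.1}, and $\Pi_{M,\mathscr{L},j}$ because it is admissible of finite length, so that the relevant $\mathrm{Ext}^1$ is the one computed in the abelian category $\mathrm{Ban}_{\mathrm{f.p.}}(G)$. It then remains to verify the two hypotheses of \autoref{3.7.9}, namely that $\mathrm{Ext}^1_{G,\psi}(\overline{\Pi}_{M,\mathscr{L},j},\overline{\widehat{\mathrm{LL}(M)}})$ is finite-dimensional over $\kappa_L$ and that $\mathrm{Ext}^1_{G,\psi}(\Pi_{M,\mathscr{L},j}^+,\widehat{\mathrm{LL}(M)}^+)$ is $\pi_L$-adiquement séparé.

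For the separatedness condition I would argue as follows. By \autoref{2.2.3} we have a surjection $\widehat{\mathrm{LL}(M)}\twoheadrightarrow\Pi_{M,\mathscr{L},j}$ with kernel $N_{\mathscr{L},j}$; replacing $\Pi_{M,\mathscr{L},j}^+$ by the image of $\widehat{\mathrm{LL}(M)}^+$ (which is harmless, any two lattices being commensurables) produces an exact sequence $0\to N_{\mathscr{L},j}^+\to\widehat{\mathrm{LL}(M)}^+\to\Pi_{M,\mathscr{L},j}^+\to0$ of $\mathscr{O}_L[G]$-modules. Applying $\mathrm{Hom}_{G,\psi}(-,\widehat{\mathrm{LL}(M)}^+)$ gives an exact sequence
$$\mathrm{Hom}_{G,\psi}(N_{\mathscr{L},j}^+,\widehat{\mathrm{LL}(M)}^+)\to\mathrm{Ext}^1_{G,\psi}(\Pi_{M,\mathscr{L},j}^+,\widehat{\mathrm{LL}(M)}^+)\to\mathrm{Ext}^1_{G,\psi}(\widehat{\mathrm{LL}(M)}^+,\widehat{\mathrm{LL}(M)}^+).$$
The right-hand term is, by duality, un facteur direct de $\mathrm{Ext}^1_{G,\psi}(\mathrm{Ind}_{KZ}^G\sigma_M^{0,*},\mathrm{Ind}_{KZ}^G\sigma_M^{0,*})$ (since $\widehat{\mathrm{LL}(M)}^{+,*}$ est un facteur direct de $\mathrm{Ind}_{KZ}^G\sigma_M^{0,*}$, comme dans la preuve du \autoref{3.7.6}), hence killed by a power of $\pi_L$ by \autoref{3.7.5}. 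On the other hand, since $\widehat{\mathrm{LL}(M)}^+$ est $\pi_L$-adiquement séparé, so is $\mathrm{Hom}_{G,\psi}(N_{\mathscr{L},j}^+,\widehat{\mathrm{LL}(M)}^+)$, and using $\mathrm{End}_G(\widehat{\mathrm{LL}(M)})=L$ (\autoref{3.8.1}) the image of this Hom in the Ext group is its quotient by the line $\mathscr{O}_L\cdot(N_{\mathscr{L},j}^+\hookrightarrow\widehat{\mathrm{LL}(M)}^+)$, which is again séparé because the inclusion is a primitive vector. We thus obtain $0\to A\to\mathrm{Ext}^1_{G,\psi}(\Pi_{M,\mathscr{L},j}^+,\widehat{\mathrm{LL}(M)}^+)\to C\to0$ with $A$ séparé and $C$ de $\pi_L^N$-torsion, and \autoref{3.7.8} finishes this step.

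The remaining point, which I expect to be the crux, is the finiteness of $\mathrm{Ext}^1_{G,\psi}(\overline{\Pi}_{M,\mathscr{L},j},\overline{\widehat{\mathrm{LL}(M)}})$. Here $\overline{\Pi}_{M,\mathscr{L},j}$ is a smooth admissible representation of longueur finie (its facteurs de Jordan--Hölder are subquotients of the reduction of $\Pi_{M,\mathscr{L}}$), whereas $\overline{\widehat{\mathrm{LL}(M)}}\cong\mathrm{ind}_{KZ}^G(\sigma_M^0/\pi_L)$ is a successive extension des $\mathrm{ind}_{KZ}^G\bar\sigma_i$ attached to the finitely many weights $\bar\sigma_i$ occurring in $\sigma_M^0/\pi_L$. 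By the long exact sequence for $\mathrm{Ext}^1$ applied to both variables it suffices to bound $\mathrm{Ext}^1_{G,\psi}(\bar\pi,\mathrm{ind}_{KZ}^G\bar\sigma_i)$ for $\bar\pi$ an irreducible subquotient of $\overline{\Pi}_{M,\mathscr{L},j}$ and each $i$; this group vanishes unless $\bar\pi$ appears in the block $\mathfrak{B}_i$, and in that case I would control it via Pa\v{s}k\={u}nas's theory, using that the $\mathfrak{B}_i$-adic completion $(\mathrm{ind}_{KZ}^G\bar\sigma_i)_{\mathfrak{B}_i}$ (which contains $\mathrm{ind}_{KZ}^G\bar\sigma_i$, cf. the proof of \autoref{3.3.4}) corresponds under the anti-équivalence of \cite{paskunas2013the} to a module de type fini sur l'anneau Noethérien $\mathrm{End}_G(\widetilde P_{\mathfrak{B}_i})$, so that $\mathrm{Ext}^i$ of a simple object into it is finite-dimensional, and then bounding the defect between $\mathrm{Ext}^1_{G,\psi}(\bar\pi,\mathrm{ind}_{KZ}^G\bar\sigma_i)$ and $\mathrm{Ext}^1_{G,\psi}(\bar\pi,(\mathrm{ind}_{KZ}^G\bar\sigma_i)_{\mathfrak{B}_i})$ by the same $\mathfrak{B}$-adic completion techniques as in \S\ref{3.3}. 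Granting this, \autoref{3.7.9} yields that $\mathrm{Ext}^1_{G,\psi}(\Pi_{M,\mathscr{L},j},\widehat{\mathrm{LL}(M)})$ est de dimension finie sur $L$.
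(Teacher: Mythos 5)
Your overall strategy is the same as the paper's: apply \autoref{3.7.9} to $\Pi_1=\Pi_{M,\mathscr{L},j}$ and $\Pi_2=\widehat{\mathrm{LL}(M)}$ and verify its two hypotheses. Your treatment of the separatedness hypothesis (ii) coincides with the paper's proof: the same exact sequence $0\to N_{\mathscr{L},j}^+\to\widehat{\mathrm{LL}(M)}^+\to\Pi_{M,\mathscr{L},j}^+\to0$, the same application of $\mathrm{Hom}_G(-,\widehat{\mathrm{LL}(M)}^+)$, the fact (via \autoref{3.7.5} and \autoref{3.7.6}) that $\mathrm{Ext}^1_{G,\psi}(\widehat{\mathrm{LL}(M)}^+,\widehat{\mathrm{LL}(M)}^+)$ is killed by a power of $p$, and \autoref{3.7.8} to conclude; you even take care of the small point (the subobject of the Ext is only a \emph{quotient} of $\mathrm{Hom}_G(N_{\mathscr{L},j}^+,\widehat{\mathrm{LL}(M)}^+)$) which the paper passes over quickly.

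The genuine gap is at what you yourself call the crux: the finiteness of $\mathrm{Ext}^1_{G,\psi}(\overline{\Pi}_{M,\mathscr{L},j},\overline{\widehat{\mathrm{LL}(M)}})$, i.e.\ condition (i) of \autoref{3.7.9}. The paper settles this by invoking \cite[Proposition 4.2.2]{deg2023localization}; you instead sketch an argument and explicitly write \emph{Granting this}, so the key input is assumed rather than proven. Moreover the sketch does not go through as stated: the target $\mathrm{ind}_{KZ}^G\bar\sigma_i$ is finitely generated but not locally admissible, so the block decomposition of $\mathrm{Mod}^{\mathrm{l.adm}}_{G,\psi}(\mathscr{O}_L)$ and Pa\v{s}k\={u}nas's anti-equivalence do not apply directly to it, and the claimed vanishing of $\mathrm{Ext}^1_{G,\psi}(\bar\pi,\mathrm{ind}_{KZ}^G\bar\sigma_i)$ when $\bar\pi$ lies outside the block $\mathfrak{B}_i$ is unjustified in the category of all smooth representations. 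Likewise, nothing in the section \ref{3.3} controls the ``defect'' between $\mathrm{Ext}^1_{G,\psi}(\bar\pi,\mathrm{ind}_{KZ}^G\bar\sigma_i)$ and $\mathrm{Ext}^1_{G,\psi}(\bar\pi,(\mathrm{ind}_{KZ}^G\bar\sigma_i)_{\mathfrak{B}_i})$: the exactness of the $\mathfrak{B}$-adic completion (\autoref{exact}) is a statement about finitely generated representations and yields no comparison of Ext groups with values in the non-admissible completion, whose quotient by $\mathrm{ind}_{KZ}^G\bar\sigma_i$ is large and uncontrolled here. To repair the proof you should either give a genuine argument for this mod-$p$ finiteness or cite the result the paper relies on.
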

\begin{proof}[Preuve] Il suffit de vérifier les conditions (i) et (ii) du Lemme \ref{3.7.9} pour $\Pi_1:=\Pi_{M, \mathscr{L}, j}$ et $\Pi_2:=\widehat{\mathrm{LL}(M)}$. La condition (i) est garanti par la \cite[Proposition 4.2.2]{deg2023localization}, il nous reste donc à montrer (ii). La suite exacte $0\to N_{\mathscr{L}, j}\to\widehat{\mathrm{LL}(M)}\to\Pi_{M, \mathscr{L}, j}\to0$ nous donne une suite exacte de leurs boules unité.
	$$0\to N_{\mathscr{L}, j}^+\to\widehat{\mathrm{LL}(M)}^+\to\Pi_{M, \mathscr{L}, j}^+\to0.$$
	En appliquant le foncteur $\mathrm{Hom}_G(-, \widehat{\mathrm{LL}(M)}^+)$, on obtient
	\begin{align*}0\to&\mathrm{Hom}_G(\Pi_{M, \mathscr{L}, j}^+, \widehat{\mathrm{LL}(M)}^+)\to\mathrm{Hom}_G(\widehat{\mathrm{LL}(M)}^+, \widehat{\mathrm{LL}(M)}^+)\to\mathrm{Hom}_G(N_{\mathscr{L}, j}^+, \widehat{\mathrm{LL}(M)}^+)\\ \to&\mathrm{Ext}_{G, \psi}^1(\Pi_{M, \mathscr{L}, j}^+, \widehat{\mathrm{LL}(M)}^+)\to\mathrm{Ext}_{G, \psi}^1(\widehat{\mathrm{LL}(M)}^+, \widehat{\mathrm{LL}(M)}^+). 
	\end{align*}
	Comme $\mathrm{Ext}_{G, \psi}^1(\widehat{\mathrm{LL}(M)}^+, \widehat{\mathrm{LL}(M)}^+)$ est de $p^N$-torsion et $\mathrm{Hom}_G(N_{\mathscr{L}, j}^+, \widehat{\mathrm{LL}(M)}^+)$ est $\pi_L$-adiquement séparé, il suit du Lemme \ref{3.7.8} que l'espace $\mathrm{Ext}_{G, \psi}^1(\Pi_{M, \mathscr{L}, j}^+, \widehat{\mathrm{LL}(M)}^+)$ est $\pi_L$-adiquement séparé, ce que l'on voulait.
\end{proof}
 			\section{Foncteurs de Langlands catégoriques}  
Désormais, on va utiliser le produit tensoriel complété $\hat{\otimes}$, comme défini dans l'annexe \ref{D}. Remarquons que l'espace sous-jacent à une représentation localement analytique de $G$ est de type compact, et un tel espace est en particulier un espace LB. De plus, son dual fort est un espace de Fréchet.
\subsection{Foncteur localement analytique} 
\subsubsection{Construction} \label{section4.1.1}
Dans cette section, on va définir un "ersatz" du foncteur de Langlands catégorique en version localement analytique. Le choix d'une base $e_1, e_2$ de $M_{\mathrm{dR}}$ fournit un isomorphisme $\mathbf{P}(M_{\mathrm{dR}})\cong\mathbf{P}^1$. La droite correspondante à $z\neq\infty$ est $\mathscr{L}(z):=L\cdot(e_1+ze_2)$. La droite correspondante à $z=\infty$ est $\mathscr{L}(\infty):=Le_2$. En vertu du Lemme \ref{2.3.4} (iii), on a $\mathrm{Hom}_G(\mathrm{LL}(M), \Omega^1[M]^*)=M_{\mathrm{dR}}$, donc $e_1, e_2$ déterminent deux morphismes $\mathrm{LL}(M)\to\Omega^1[M]^*$ que l'on note abusivement encore $e_1, e_2$. La Proposition \ref{3.8.1} (ii) nous permet de définir les morphismes $e_1, e_2$ dans le cas de Banach.

\begin{definition} On définit un faisceau $\underline{\Omega}$ sur $\mathbf{P}^1$. Soit $U\subseteq\mathbf{P}^1$ un ouvert affinoïde, on note $\underline{\Omega}(U)$ le sous-$\mathscr{O}_{\mathbf{P}^1}(U)$-module de $\mathscr{O}_{\mathbf{P}^1}(U)\hat{\otimes}_L\Omega^1[M]$ défini par 
	$$\{\omega\in\mathscr{O}_{\mathbf{P}^1}(U)\hat{\otimes}_L\Omega^1[M]\mid\pi_{\mathrm{dR}}(\omega(z))\in \mathscr{L}(z)^\perp\hat{\otimes}_L\mathrm{LL}(M)^*,\ \forall z\in U\},$$
	où $\pi_{\mathrm{dR}}$ est le morphisme $\Omega^1[M]\to H^1_{\mathrm{dR}}[M]\cong M_{\mathrm{dR}}^*\otimes_L\mathrm{LL}(M)^*$. 
\end{definition}
\begin{remark} Dans la définition précédente, on peut interpréter un élément $\omega$ de $\mathscr{O}_{\mathbf{P}^1}(U)\hat{\otimes}_L\Omega^1[M]$ comme une fonction définie sur $U$ à valeur dans $\Omega^1[M]$. L'espace $\mathscr{O}_{\mathbf{P}^1}(U)\hat{\otimes}_L\Omega^1[M]$ est muni d'une action de $G$ induite par l'action de $G$ sur $\Omega^1[M]$, et on vérifie sans mal que $\underline{\Omega}(U)$ est une sous-$G$-représentation de $\mathscr{O}_{\mathbf{P}^1}(U)\hat{\otimes}_L\Omega^1[M]$.
\end{remark}
Si $U$ est un ouvert affinoïde de $\mathbf{P}^1$, on peut choisir la base $e_1, e_2$ de telle sorte que l'image de $U$ dans $\mathbf{P}^1$ ne contienne pas $\infty$. Si $\mathscr{L}(U)$ est le sous-$\mathscr{O}_{\mathbf{P}^1}(U)$-module de $\mathscr{O}_{\mathbf{P}^1}(U)\otimes_L M_{\mathrm{dR}}$ engendré par $e_1+ze_2$, alors le quotient $(\mathscr{O}_{\mathbf{P}^1}(U)\otimes_L M_{\mathrm{dR}})/\mathscr{L}(U)$ est isomorphe à $\mathscr{O}_{\mathbf{P}^1}(U)$ par $\lambda e_2\mapsto\lambda$. De plus, on note $\mathscr{L}(U)^\perp$ le sous-$\mathscr{O}_{\mathbf{P}^1}(U)$-module de $\mathscr{O}_{\mathbf{P}^1}(U)\otimes_L M_{\mathrm{dR}}$ engendré par $(e_1+ze_2)^\perp$.

Par définition, on a un diagramme commutatif à lignes exactes
\begin{equation*} \xymatrix@R=5mm@C=4mm{
		0\ar[r]& \underline{\Omega}(U)\ar[r]\ar[d]&\mathscr{O}_{\mathbf{P}^1}(U)\hat{\otimes}_L\Omega^1[M]\ar[r]\ar[d]& (\mathscr{O}_{\mathbf{P}^1}(U)\otimes_L M_{\mathrm{dR}}^*/\mathscr{L}(U)^\perp)\hat{\otimes}_L\mathrm{LL}(M)^*\ar[r]\ar@{=}[d]&0\\
		0 \ar[r] & \mathscr{L}(U)^\perp\hat{\otimes}_L\mathrm{LL}(M)^*\ar[r]  &(\mathscr{O}_{\mathbf{P}^1}(U)\otimes_L M_{\mathrm{dR}}^*)\hat{\otimes}_L\mathrm{LL}(M)^*\ar[r] &(\mathscr{O}_{\mathbf{P}^1}(U)\otimes_L M_{\mathrm{dR}}^*/\mathscr{L}(U)^\perp)\hat{\otimes}_L\mathrm{LL}(M)^*\ar[r] &0 .
	}
\end{equation*}
Soit $\pi$ une représentation de Fréchet de $G$ sur $L$, alors on a les isomorphismes
\begin{align*}\mathrm{Hom}_{\mathscr{O}_{\mathbf{P}^1}(U)}(\mathscr{O}_{\mathbf{P}^1}(U)\hat{\otimes}_L\pi, \mathscr{O}_{\mathbf{P}^1}(U))&=\mathrm{Hom}_{\mathscr{O}_{\mathbf{P}^1}(U)}(\mathscr{O}_{\mathbf{P}^1}(U), \mathrm{Hom}_L(\pi, \mathscr{O}_{\mathbf{P}^1}(U)))\\&=\mathrm{Hom}_L(\pi, \mathscr{O}_{\mathbf{P}^1}(U))\\&=\mathscr{O}_{\mathbf{P}^1}(U)\hat{\otimes}_L\pi^*.
\end{align*}
Il en résulte que, en appliquant $\mathrm{Hom}_{\mathscr{O}_{\mathbf{P}^1}(U)}(-, \mathscr{O}_{\mathbf{P}^1}(U))$ au diagramme commutatif ci-dessus, on obtient le diagramme commutatif à lignes exactes suivant
\begin{equation}\label{equation1}  \xymatrix@R=5mm@C=4mm{
		0& \underline{\Omega}(U)^\diamond\ar[l]&\mathscr{O}_{\mathbf{P}^1}(U)\hat{\otimes}_L\Omega^1[M]^*\ar[l]& \mathscr{L}(U)\hat{\otimes}_L\mathrm{LL}(M)\ar[l]&0\ar[l]\\
		0  &(\mathscr{O}_{\mathbf{P}^1}(U)\otimes_L M_{\mathrm{dR}}/\mathscr{L}(U))\hat{\otimes}_L\mathrm{LL}(M)\ar[l] \ar[u] &(\mathscr{O}_{\mathbf{P}^1}(U)\otimes_L M_{\mathrm{dR}})\hat{\otimes}_L\mathrm{LL}(M)\ar[l]\ar[u] &\mathscr{L}(U)\hat{\otimes}_L\mathrm{LL}(M)\ar[l] \ar@{=}[u]&0. \ar[l]   
	} 
\end{equation}
En utilisant le diagramme commutatif (\ref{equation4}), le diagramme commutatif ci-dessus induit un diagramme commutatif à lignes exactes
\begin{equation}\label{equation10}  \xymatrix@R=5mm@C=4mm{
		0\ar[r]& (\mathscr{O}_{\mathbf{P}^1}(U)\otimes_LM_{\mathrm{dR}})\hat{\otimes}_L\mathrm{LL}(M)\ar[r]\ar[d]&\mathscr{O}_{\mathbf{P}^1}(U)\hat{\otimes}_L\Omega^1[M]^*\ar[r]\ar[d]& \mathscr{O}_{\mathbf{P}^1}(U)\hat{\otimes}_L\mathscr{O}[M]^*\ar[r]\ar@{=}[d]&0\\
		0 \ar[r] & ((\mathscr{O}_{\mathbf{P}^1}(U)\otimes_LM_{\mathrm{dR}})/\mathscr{L}(U))\hat{\otimes}_L\mathrm{LL}(M)\ar[r]  &\underline{\Omega}(U)^\diamond\ar[r] &\mathscr{O}_{\mathbf{P}^1}(U)\hat{\otimes}_L\mathscr{O}[M]^*\ar[r] &0,
	}
\end{equation}
où $\underline{\Omega}(U)^\diamond$ est le $\mathscr{O}_{\mathbf{P}^1}(U)$-dual de $\underline{\Omega}(U)$. 

Fixons $\mathscr{L}\in U$. En tensorisant la première ligne du diagramme (\ref{equation1}) avec $\mathscr{O}_{\mathbf{P}^1}(U)/(z-z(\mathscr{L}))$ où $z(\mathscr{L})$ est la constante associée à $\mathscr{L}$, on obtient une suite exacte
$$0\to\mathscr{L}\hat{\otimes}_L\mathrm{LL}(M)\to\Omega^1[M]^*\to\mathscr{O}_{\mathbf{P}^1}(U)/(z-z(\mathscr{L}))\hat{\otimes}_L\underline{\Omega}(U)^\diamond\to0,$$
ce qui nous donne une suite exacte  
$$0\to\underline{\Omega}(U)^\diamond\xrightarrow{z-z(\mathscr{L})}\underline{\Omega}(U)^\diamond\to\Pi_{M, \mathscr{L}}^{\mathrm{an}}\to0.$$
En passant aux complétés unitaires universels, on obtient une surjection
$\widehat{\underline{\Omega}(U)^\diamond}\to\Pi_{M, \mathscr{L}}\to0$. En particulier, on a $\widehat{\underline{\Omega}(U)^\diamond}\neq0$. 
\begin{lemma}\label{torsionfree1} Si $U$ est un ouvert affinoïde de $\mathbf{P}^1$, alors le $\mathscr{O}_{\mathbf{P}^1}(U)$-module $\underline{\Omega}(U)^\diamond$ est sans torsion.
\end{lemma}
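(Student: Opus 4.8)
The plan is to reduce the torsion-freeness of $\underline{\Omega}(U)^\diamond$ over $\mathscr{O}_{\mathbf{P}^1}(U)$ to a statement about the non-vanishing of suitable morphisms out of $\underline{\Omega}(U)^\diamond$, using the exact sequences assembled just above the statement. Recall that $\mathscr{O}_{\mathbf{P}^1}(U)$ is (after shrinking $U$ so that $\infty\notin U$) an affinoid algebra, in fact — when $U$ is a connected affinoid subdomain of $\mathbf{A}^1$ — a principal ideal domain in the sense that it is a one-dimensional regular ring whose prime ideals of height one are generated by the elements $z - z(\mathscr{L})$ for $\mathscr{L}\in U$, together with possibly a few more if $U$ is not a disc; in any case, since $\mathscr{O}_{\mathbf{P}^1}(U)$ is a Dedekind domain, to prove that a module $N$ is torsion-free it suffices to show that for every closed point $\mathscr{L}\in U$ the multiplication map $z - z(\mathscr{L}) \colon N \to N$ is \emph{injective}.

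First I would recall, from the displayed exact sequence immediately preceding the lemma, that tensoring the first row of diagram (\ref{equation1}) with $\mathscr{O}_{\mathbf{P}^1}(U)/(z - z(\mathscr{L}))$ yields
$$0\to\underline{\Omega}(U)^\diamond\xrightarrow{z-z(\mathscr{L})}\underline{\Omega}(U)^\diamond\to\Pi_{M, \mathscr{L}}^{\mathrm{an}}\to0;$$
but a priori the construction only tells us that the cokernel of multiplication by $z - z(\mathscr{L})$ is $\Pi_{M,\mathscr{L}}^{\mathrm{an}}$ — we still need the injectivity of the first arrow, which is exactly the torsion-freeness at the point $\mathscr{L}$. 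So the genuine content is to prove this injectivity. For this I would argue as follows. The kernel $K_{\mathscr{L}}$ of $z - z(\mathscr{L})$ on $\underline{\Omega}(U)^\diamond$ is itself a module over $\mathscr{O}_{\mathbf{P}^1}(U)/(z - z(\mathscr{L})) = L_{\mathscr{L}}$ (a finite extension of $L$), and it carries a $G$-action commuting with everything; it is the $(z - z(\mathscr{L}))$-torsion submodule. Dualizing the definition of $\underline{\Omega}(U)$: since $\underline{\Omega}(U)$ is by construction a $\mathscr{O}_{\mathbf{P}^1}(U)$-submodule of the torsion-free — indeed topologically free — module $\mathscr{O}_{\mathbf{P}^1}(U)\hat{\otimes}_L\Omega^1[M]$, and since $\Omega^1[M]$ is torsion-free over $L$, the module $\underline{\Omega}(U)$ is torsion-free over $\mathscr{O}_{\mathbf{P}^1}(U)$. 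The key point is then that its $\mathscr{O}_{\mathbf{P}^1}(U)$-dual $\underline{\Omega}(U)^\diamond = \mathrm{Hom}_{\mathscr{O}_{\mathbf{P}^1}(U)}(\underline{\Omega}(U), \mathscr{O}_{\mathbf{P}^1}(U))$ is automatically torsion-free, because any $\mathscr{O}_{\mathbf{P}^1}(U)$-linear map into $\mathscr{O}_{\mathbf{P}^1}(U)$ that is killed by $z - z(\mathscr{L})$ must have image in the $(z-z(\mathscr{L}))$-torsion of $\mathscr{O}_{\mathbf{P}^1}(U)$, which is zero since $\mathscr{O}_{\mathbf{P}^1}(U)$ is a domain; hence $K_{\mathscr{L}} = 0$.

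Concretely, then, the proof runs: (i) observe $\mathscr{O}_{\mathbf{P}^1}(U)$ is a domain (an integral affinoid algebra on a connected affinoid), so $\mathrm{Hom}_{\mathscr{O}_{\mathbf{P}^1}(U)}(-, \mathscr{O}_{\mathbf{P}^1}(U))$ always lands in torsion-free modules; (ii) $\underline{\Omega}(U)^\diamond$ is by its very definition such a Hom-module, whence it is torsion-free; (iii) conclude. The only subtlety — and the step I expect to require the most care — is making sure the functional-analytic dual $\underline{\Omega}(U)^\diamond$ (which involves completed tensor products and strong duals of Fréchet/LB spaces) really does compute the \emph{algebraic} $\mathscr{O}_{\mathbf{P}^1}(U)$-linear dual in a way compatible with the $\mathscr{O}_{\mathbf{P}^1}(U)$-module structure, i.e. that a topological dual of a topological $\mathscr{O}_{\mathbf{P}^1}(U)$-module is an $\mathscr{O}_{\mathbf{P}^1}(U)$-module for which the torsion-free statement can be read off as above; but since $\mathscr{O}_{\mathbf{P}^1}(U)$ acts through a commutative Banach algebra and the $\diamond$-dual is taken $\mathscr{O}_{\mathbf{P}^1}(U)$-linearly, the action of $z - z(\mathscr{L})$ on a functional $\phi$ is $(\,(z-z(\mathscr{L}))\cdot\phi\,)(\omega) = (z - z(\mathscr{L}))\cdot\phi(\omega)$, and vanishing of the left side forces $\phi \equiv 0$ since $\mathscr{O}_{\mathbf{P}^1}(U)$ has no zero divisors. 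This gives the claim.
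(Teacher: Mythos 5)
Your proof is correct, but it takes a genuinely different route from the paper's. You exploit the fact that $\underline{\Omega}(U)^\diamond$ is by definition the $\mathscr{O}_{\mathbf{P}^1}(U)$-linear dual $\mathrm{Hom}_{\mathscr{O}_{\mathbf{P}^1}(U)}(\underline{\Omega}(U), \mathscr{O}_{\mathbf{P}^1}(U))$: a homomorphism into a ring in which $z-z(\mathscr{L})$ is a non-zero-divisor cannot be torsion, so torsion-freeness is formal. The paper instead works with the presentation of $\underline{\Omega}(U)^\diamond$ as the cokernel of $ze_1-e_2\colon \mathscr{L}(U)\hat{\otimes}_L\mathrm{LL}(M)\to\mathscr{O}_{\mathbf{P}^1}(U)\hat{\otimes}_L\Omega^1[M]^*$ furnished by the first row of (\ref{equation1}), and checks by hand that this cokernel has no $(z-\lambda)$-torsion: one writes $x=x_0+(z-\lambda)x_1$, evaluates at $z=\lambda$, and invokes the injectivity of $\lambda e_1-e_2\colon\mathrm{LL}(M)\to\Omega^1[M]^*$ from \autoref{2.1.3}. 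Your argument is shorter, but the paper's computation buys something you should notice: it uses only the cokernel presentation and this pointwise injectivity, and is therefore exactly the argument that transfers to \autoref{torsionfree2}, where the module is $\widehat{\underline{\Omega}(U)^\diamond}$ — a universal unitary completion for which no description as an $\mathscr{O}_{\mathbf{P}^1}(U)$-dual is available — and the required injectivity comes from the Théorème \ref{3.7.2}; your formal argument does not extend to that setting. Two minor points on your write-up: the Dedekind/PID preamble and the reduction to closed points are superfluous (and would need care if $U$ is disconnected, in which case $\mathscr{O}_{\mathbf{P}^1}(U)$ is a finite product of domains rather than a domain; but $z-z(\mathscr{L})$ remains a non-zero-divisor, which is all your argument actually uses), and the compatibility you flag between the functional-analytic $\diamond$-dual and the module used in the exact sequences is precisely the exactness of the first row of (\ref{equation1}), which the paper asserts when constructing the diagram — so within that framework your argument is complete.
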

\begin{proof}[Preuve] La première ligne du diagramme (\ref{equation1}) implique que $\underline{\Omega}(U)^\diamond$ est le conoyau du morphisme $ze_1-e_2: \mathscr{L}(U)\hat{\otimes}_L\mathrm{LL}(M)\to\mathscr{O}_{\mathbf{P}^1}(U)\hat{\otimes}_L\Omega^1[M]^*$. Il s'agit donc de prouver que si $\lambda\in\bar{L}$, et si $x\in\mathscr{L}(U)\hat{\otimes}_L\mathrm{LL}(M)$ tel qu'il existe $v\in\mathscr{O}_{\mathbf{P}^1}(U)\hat{\otimes}_L\Omega^1[M]^*$ avec $(z-\lambda)v=(ze_1-e_2)x$, alors il existe $x'\in\mathscr{L}(U)\hat{\otimes}_L\mathrm{LL}(M)$ tel que $x=(z-\lambda)x'$, ce qui implique que $v=(ze_1-e_2)x'\in\mathscr{L}(U)\hat{\otimes}_L\mathrm{LL}(M)$.
	
	On peut écrire $x$ sous la forme $x_0+(z-\lambda)x_1$ avec $x_0, x_1\in\mathrm{LL}(M)$, et on peut supposer que $x=x_0\in\mathrm{LL}(M)$. Dans ce cas, on a $(z-\lambda)v=(ze_1-e_2)x_0$. On évalue $z$ en $\lambda$, alors on a $(\lambda e_1-e_2)x_0=0$. Comme le morphisme $\lambda e_1-e_2: \mathrm{LL}(M)\to\Omega^1[M]^*$ est injectif (Corollaire \ref{2.1.3}), on déduit que $x_0=0$. Cela permet de conclure.
\end{proof}
En tensorisant la deuxième ligne du diagramme (\ref{equation10}) avec $\mathscr{O}_{\mathbf{P}^1}(U)^*$ au-dessus de $\mathscr{O}_{\mathbf{P}^1}(U)$ et en utilisant l'isomorphisme
$$\mathscr{O}_{\mathbf{P}^1}(U)^*\hat{\otimes}_{\mathscr{O}_{\mathbf{P}^1}(U)}\underline{\Omega}(U)^\diamond\cong\underline{\Omega}(U)^*$$ 
donné par $f\otimes g\mapsto g\circ f$ où $g\in \mathscr{O}_{\mathbf{P}^1}(U)^*$ et $f\in\underline{\Omega}(U)^\diamond$, on déduit une suite exacte 
\begin{equation}	0\to\mathscr{O}_{\mathbf{P}^1}(U)^*\otimes_{\mathscr{O}_{\mathbf{P}^1}(U)}((\mathscr{O}_{\mathbf{P}^1}(U)\otimes_LM_{\mathrm{dR}})/\mathscr{L}(U))\hat{\otimes}_L\mathrm{LL}(M)\to\underline{\Omega}(U)^*\to\mathscr{O}_{\mathbf{P}^1}(U)^*\hat{\otimes}_L\mathscr{O}[M]^*\to0. \label{equation2}
\end{equation}

Soit $\pi$ un $D(G)$-module topologique. Pour tout $i\geq0$, on définit le faisceau $\mathbf{m}^i(\pi)$ sur $\mathbf{P}^1$ par $$\mathbf{m}^i(\pi)(U):=\mathrm{Ext}_{G, \psi}^i(\pi, \underline{\Omega}(U)^*)^*,$$
où $U$ est un ouvert affinoïde dans $\mathbf{P}^1$. Notons que, pour tout $i$, $\mathbf{m}^i(\pi)$ est complètement décrit par les restrictions au cercle unité et aux boules unité centrées en $0$ et en $\infty$.
\begin{lemma} \label{nouveau2} Si $U$ est un ouvert affinoïde de $\mathbf{P}^1$ et $\mathscr{L}\in U$, alors on a une suite exacte
	\begin{equation*}
		0\to\Pi_{M, \mathscr{L}}^{\mathrm{an}}\to\underline{\Omega}(U)^*\xrightarrow{z-z(\mathscr{L})}\underline{\Omega}(U)^*\to0. 
	\end{equation*}
\end{lemma}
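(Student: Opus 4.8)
Le plan est de présenter $\underline{\Omega}(U)^*$ par une suite exacte courte à laquelle on applique le lemme du serpent pour la multiplication par $z-z(\mathscr{L})$, puis d'identifier le noyau obtenu grâce au \autoref{2.1.3}. Je partirais de la première ligne du diagramme (\ref{equation1}), c'est-à-dire de la suite exacte courte de $\mathscr{O}_{\mathbf{P}^1}(U)$-modules
$$0\to\mathscr{L}(U)\hat{\otimes}_L\mathrm{LL}(M)\xrightarrow{\iota}\mathscr{O}_{\mathbf{P}^1}(U)\hat{\otimes}_L\Omega^1[M]^*\to\underline{\Omega}(U)^\diamond\to0,$$
où $\iota$ est l'unique prolongement $\mathscr{O}_{\mathbf{P}^1}(U)$-linéaire de $v\mapsto(e_1+ze_2)\otimes v$. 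En appliquant $\mathscr{O}_{\mathbf{P}^1}(U)^*\hat{\otimes}_{\mathscr{O}_{\mathbf{P}^1}(U)}(-)$, en utilisant l'isomorphisme $\mathscr{O}_{\mathbf{P}^1}(U)^*\hat{\otimes}_{\mathscr{O}_{\mathbf{P}^1}(U)}\underline{\Omega}(U)^\diamond\cong\underline{\Omega}(U)^*$ et la liberté de rang $1$ de $\mathscr{L}(U)$ sur $\mathscr{O}_{\mathbf{P}^1}(U)$, on obtient un complexe
$$\mathscr{O}_{\mathbf{P}^1}(U)^*\hat{\otimes}_L\mathrm{LL}(M)\xrightarrow{\iota^*}\mathscr{O}_{\mathbf{P}^1}(U)^*\hat{\otimes}_L\Omega^1[M]^*\to\underline{\Omega}(U)^*\to0,$$
exact à droite; il est de plus exact à gauche, car $\underline{\Omega}(U)^\diamond$ est sans torsion sur $\mathscr{O}_{\mathbf{P}^1}(U)$ d'après le \autoref{torsionfree1}, donc plat ($\mathscr{O}_{\mathbf{P}^1}(U)$ étant un anneau de Dedekind), de sorte que $\iota^*$ reste injectif. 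C'est, à l'identification $(\mathscr{O}_{\mathbf{P}^1}(U)^*\otimes_LM_{\mathrm{dR}})/\mathscr{L}(U)\cong\mathscr{O}_{\mathbf{P}^1}(U)^*$ près, la suite (\ref{equation2}).

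Comme $\mathscr{L}\in U$, l'élément $z-z(\mathscr{L})$ engendre l'idéal maximal de $\mathscr{O}_{\mathbf{P}^1}(U)$ en $\mathscr{L}$ et n'est pas diviseur de zéro, d'où une suite exacte stricte d'espaces de Banach $0\to\mathscr{O}_{\mathbf{P}^1}(U)\xrightarrow{z-z(\mathscr{L})}\mathscr{O}_{\mathbf{P}^1}(U)\xrightarrow{\mathrm{ev}_{\mathscr{L}}}L\to0$; en dualisant on obtient $0\to L\delta_{\mathscr{L}}\to\mathscr{O}_{\mathbf{P}^1}(U)^*\xrightarrow{z-z(\mathscr{L})}\mathscr{O}_{\mathbf{P}^1}(U)^*\to0$, où $\delta_{\mathscr{L}}=\mathrm{ev}_{\mathscr{L}}$ est la distribution de Dirac en $\mathscr{L}$, sur laquelle $z$ agit par la scalaire $z(\mathscr{L})$. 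En tensorisant sur $L$ par $\mathrm{LL}(M)$ puis par $\Omega^1[M]^*$, on en déduit que $z-z(\mathscr{L})$ est surjectif sur $\mathscr{O}_{\mathbf{P}^1}(U)^*\hat{\otimes}_L\mathrm{LL}(M)$ et sur $\mathscr{O}_{\mathbf{P}^1}(U)^*\hat{\otimes}_L\Omega^1[M]^*$, de noyaux respectifs $L\delta_{\mathscr{L}}\hat{\otimes}_L\mathrm{LL}(M)\cong\mathrm{LL}(M)$ et $L\delta_{\mathscr{L}}\hat{\otimes}_L\Omega^1[M]^*\cong\Omega^1[M]^*$.

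J'appliquerais ensuite le lemme du serpent à l'endomorphisme $z-z(\mathscr{L})$ de la suite exacte courte $0\to\mathscr{O}_{\mathbf{P}^1}(U)^*\hat{\otimes}_L\mathrm{LL}(M)\xrightarrow{\iota^*}\mathscr{O}_{\mathbf{P}^1}(U)^*\hat{\otimes}_L\Omega^1[M]^*\to\underline{\Omega}(U)^*\to0$. Les conoyaux de $z-z(\mathscr{L})$ sur les deux termes extrêmes étant nuls, on obtient $\mathrm{Coker}(z-z(\mathscr{L})\mid\underline{\Omega}(U)^*)=0$, c'est-à-dire la surjectivité, ainsi qu'un isomorphisme $\mathrm{Ker}(z-z(\mathscr{L})\mid\underline{\Omega}(U)^*)\cong\mathrm{Coker}\big(\mathrm{LL}(M)\xrightarrow{\bar\iota}\Omega^1[M]^*\big)$, où $\bar\iota$ est la restriction de $\iota^*$ aux noyaux. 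Il reste à identifier $\bar\iota$: pour $v\in\mathrm{LL}(M)$ on a $\iota^*(\delta_{\mathscr{L}}\otimes v)=\delta_{\mathscr{L}}\otimes(e_1\cdot v)+(z\delta_{\mathscr{L}})\otimes(e_2\cdot v)=\delta_{\mathscr{L}}\otimes\big((e_1+z(\mathscr{L})e_2)\cdot v\big)$, donc, via les identifications précédentes, $\bar\iota$ s'identifie à l'inclusion $\mathscr{L}\otimes_L\mathrm{LL}(M)\hookrightarrow\Omega^1[M]^*$ du \autoref{2.1.3} (rappelons que $\mathscr{L}=L\cdot(e_1+z(\mathscr{L})e_2)$). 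Par conséquent $\mathrm{Ker}(z-z(\mathscr{L})\mid\underline{\Omega}(U)^*)\cong\Omega^1[M]^*/(\mathscr{L}\otimes_L\mathrm{LL}(M))\cong\Pi_{M,\mathscr{L}}^{\mathrm{an}}$, ce qui donne la suite exacte annoncée.

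Le principal obstacle est la partie analytique-fonctionnelle du premier paragraphe: il faut s'assurer que $\mathscr{O}_{\mathbf{P}^1}(U)^*\hat{\otimes}_{\mathscr{O}_{\mathbf{P}^1}(U)}(-)$ préserve l'exactitude de la première ligne de (\ref{equation1}) — autrement dit que la platitude du module sans torsion $\underline{\Omega}(U)^\diamond$ survit au produit tensoriel complété — et que la tensorisation sur $L$ des suites strictes d'espaces de Banach ci-dessus par les espaces de type compact $\mathrm{LL}(M)$ et $\Omega^1[M]^*$ préserve l'exactitude et la fermeture des images, de sorte que le lemme du serpent soit licite dans la catégorie considérée; on utilisera pour cela que tous les espaces en jeu sont de type compact (ou de Fréchet) et que les morphismes sont stricts, les représentations concernées étant admissibles. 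Ces points acquis, le reste n'est que la chasse au diagramme élémentaire esquissée ci-dessus.
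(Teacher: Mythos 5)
Votre preuve est correcte et repose sur les mêmes ingrédients que celle du texte (lemme du serpent pour la multiplication par $z-z(\mathscr{L})$, identification du noyau via le Théorème \ref{2.1.2} sous la forme du \autoref{2.1.3}), mais vous travaillez du côté dual : vous tensorisez la présentation $0\to\mathscr{L}(U)\hat{\otimes}_L\mathrm{LL}(M)\to\mathscr{O}_{\mathbf{P}^1}(U)\hat{\otimes}_L\Omega^1[M]^*\to\underline{\Omega}(U)^\diamond\to0$ par $\mathscr{O}_{\mathbf{P}^1}(U)^*$ puis appliquez le serpent, alors que le texte applique le serpent directement sur le pré-dual $\underline{\Omega}(U)$ (où $z-z(\mathscr{L})$ est injectif d'image fermée), identifie $\underline{\Omega}(U)/\mathfrak{m}_{\mathscr{L}}$ avec $\Pi_{M,\mathscr{L}}^{\mathrm{an},*}$ par le Théorème \ref{2.1.2}, et ne dualise qu'à la fin. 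Votre variante est exactement la stratégie que le texte emploie pour l'analogue de Banach (\autoref{4.2.1}, \autoref{4.2.2} et \autoref{nouveau1}), transportée au cadre localement analytique; elle a l'avantage de produire directement l'énoncé sur $\underline{\Omega}(U)^*$, au prix des points analytiques que vous signalez vous-même (exactitude du $\hat{\otimes}$ sur $\mathscr{O}_{\mathbf{P}^1}(U)$, via la platitude de $\underline{\Omega}(U)^\diamond$ donnée par le \autoref{torsionfree1}, et stricte exactitude des suites duales), points que l'argument du texte contourne en ne dualisant que la suite courte finale. Deux remarques mineures : votre parenthèse affirmant que votre présentation « est » la suite (\ref{equation2}) est inexacte — (\ref{equation2}) réalise $\underline{\Omega}(U)^*$ comme extension de $\mathscr{O}_{\mathbf{P}^1}(U)^*\hat{\otimes}_L\mathscr{O}[M]^*$, tandis que votre suite le présente comme quotient de $\mathscr{O}_{\mathbf{P}^1}(U)^*\hat{\otimes}_L\Omega^1[M]^*$ — mais cela n'intervient pas dans l'argument; et votre normalisation $e_1+ze_2$ du générateur de $\mathscr{L}(U)$ diffère de l'écriture $ze_1-e_2$ utilisée dans la preuve du \autoref{torsionfree1}, sans conséquence puisque seule compte l'injectivité de la spécialisation $\mathrm{LL}(M)\to\Omega^1[M]^*$ d'image $\mathscr{L}\otimes_L\mathrm{LL}(M)$ et de conoyau $\Pi_{M,\mathscr{L}}^{\mathrm{an}}$.
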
 
\begin{proof}[Preuve] La suite exacte (\ref{equation2}) induit, en passant aux duaux, un diagramme commutatif à lignes exactes
	\begin{equation*} \xymatrix@R=5mm@C=4mm{
			0\ar[r]& \mathscr{O}_{\mathbf{P}^1}(U)\hat{\otimes}_L\mathscr{O}[M]\ar[r]\ar[d]_{z-z(\mathscr{L})}&\underline{\Omega}(U)\ar[r]\ar[d]_{z-z(\mathscr{L})}& \mathscr{L}(U)^\perp\hat{\otimes}_L\mathrm{LL}(M)^*\ar[r]\ar[d]_{z-z(\mathscr{L})}&0\\
			0 \ar[r] & \mathscr{O}_{\mathbf{P}^1}(U)\hat{\otimes}_L\mathscr{O}[M]\ar[r]  &\underline{\Omega}(U)\ar[r] &\mathscr{L}(U)^\perp\hat{\otimes}_L\mathrm{LL}(M)^*\ar[r] &0 .
		}
	\end{equation*}
	Comme le morphisme $z-z(\mathscr{L})$ est injectif et d'image fermée sur $\mathscr{O}_{\mathbf{P}^1}(U)$ et que l'on a $$\mathscr{O}_{\mathbf{P}^1}(U)/(z-z(\mathscr{L}))=L,$$ 
	il suit du lemme de serpent et du Théorème \ref{2.1.2} que l'on a 
	$$(\underline{\Omega}(U)^*[\mathfrak{m}_{\mathscr{L}}])^*=\underline{\Omega}(U)/\mathfrak{m}_{\mathscr{L}}=\Pi_{M, \mathscr{L}}^{\mathrm{an}, *},$$
	d'où le résultat. 	 
\end{proof}
Pour tout $j$, on a une suite exacte $$0\to\underline{\Omega}(U)/\mathfrak{m}_{\mathscr{L}}^j\xrightarrow{z-z(\mathscr{L})}\underline{\Omega}(U)/\mathfrak{m}_{\mathscr{L}}^{j+1}\to\underline{\Omega}(U)/\mathfrak{m}_{\mathscr{L}}\to0.$$ 
On en déduit que, pour tout $j$, $\underline{\Omega}(U)^*[\mathfrak{m}_{\mathscr{L}}^j]$ est une extension successive de $\Pi_{M, \mathscr{L}}^{\mathrm{an}}$ de longueur $j$.
\begin{lemma}\label{split} Si $U$ est un ouvert affinoïde de $\mathbf{P}^1$ et $\mathscr{L}\in U$, alors la représentation $\underline{\Omega}(U)/\mathfrak{m}_{\mathscr{L}}^2$ est une extension non scindée de $\Pi_{M, \mathscr{L}}^{\mathrm{an}, *}$ par $\Pi_{M, \mathscr{L}}^{\mathrm{an}, *}$.
\end{lemma}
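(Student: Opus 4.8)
The statement to prove is that $\underline{\Omega}(U)/\mathfrak{m}_{\mathscr{L}}^2$ is a \emph{non-split} self-extension of $\Pi_{M, \mathscr{L}}^{\mathrm{an}, *}$. I already know from \autoref{nouveau2} together with the exact sequence
$$0\to\underline{\Omega}(U)/\mathfrak{m}_{\mathscr{L}}\xrightarrow{z-z(\mathscr{L})}\underline{\Omega}(U)/\mathfrak{m}_{\mathscr{L}}^{2}\to\underline{\Omega}(U)/\mathfrak{m}_{\mathscr{L}}\to0$$
that there \emph{is} such an extension; the content is non-splitness. The plan is to argue by contradiction: suppose the sequence splits. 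Then $\underline{\Omega}(U)^*[\mathfrak{m}_{\mathscr{L}}^2]\cong\Pi_{M,\mathscr{L}}^{\mathrm{an}}\oplus\Pi_{M,\mathscr{L}}^{\mathrm{an}}$ as $G$-representations, and I want to derive a contradiction from the module structure over $\mathscr{O}_{\mathbf{P}^1}(U)$, more precisely from the fact that multiplication by $z-z(\mathscr{L})$ is nilpotent of exact order $2$ on $\underline{\Omega}(U)/\mathfrak{m}_{\mathscr{L}}^2$.

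The key mechanism is this. The operator $t:=z-z(\mathscr{L})$ acts $G$-equivariantly on $\underline{\Omega}(U)/\mathfrak{m}_{\mathscr{L}}^2$ (it commutes with the $G$-action since the $\mathscr{O}_{\mathbf{P}^1}(U)$-module structure is built from the $\mathscr{O}_{\mathbf{P}^1}$-coefficients, which $G$ leaves pointwise fixed); it has image $\underline{\Omega}(U)/\mathfrak{m}_{\mathscr{L}}\cong\Pi_{M,\mathscr{L}}^{\mathrm{an},*}$ and kernel also equal to that same submodule, by \autoref{nouveau2}. So $t$ is a nonzero element of $\mathrm{Hom}_G(\Pi_{M,\mathscr{L}}^{\mathrm{an},*}\oplus\Pi_{M,\mathscr{L}}^{\mathrm{an},*},\,\Pi_{M,\mathscr{L}}^{\mathrm{an},*}\oplus\Pi_{M,\mathscr{L}}^{\mathrm{an},*})$ whose square is zero and whose image is precisely one of the diagonal copies $\{0\}\oplus\Pi_{M,\mathscr{L}}^{\mathrm{an},*}$ inside the (hypothetically split) direct sum. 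But if $\underline{\Omega}(U)^*[\mathfrak{m}_{\mathscr{L}}^2]$ were a direct sum of two copies of $\Pi_{M,\mathscr{L}}^{\mathrm{an}}$, then — since $\Pi_{M,\mathscr{L}}^{\mathrm{an}}$ is topologically (absolutely) irreducible with $\mathrm{End}_G(\Pi_{M,\mathscr{L}}^{\mathrm{an}})=L$ (this follows from $\widehat{\Pi_{M,\mathscr{L}}^{\mathrm{an}}}=\Pi_{M,\mathscr{L}}$ and topological irreducibility of the latter, plus \autoref{1.1.7}) — the endomorphism ring would be $M_2(L)$, and the condition "$t^2=0$, $\ker t=\operatorname{im} t$" forces $t$, up to change of basis in the two summands, to be the nilpotent Jordan block $\begin{psmallmatrix}0&1\\0&0\end{psmallmatrix}$. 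The contradiction must therefore come not from the abstract $M_2(L)$ structure alone but from combining it with the splitting of the \emph{original} sheaf-theoretic data.

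Concretely, I would proceed as follows. Dualizing, a splitting of $0\to\Pi_{M,\mathscr{L}}^{\mathrm{an},*}\to\underline{\Omega}(U)/\mathfrak{m}_{\mathscr{L}}^2\to\Pi_{M,\mathscr{L}}^{\mathrm{an},*}\to0$ gives a $G$-equivariant section $s:\Pi_{M,\mathscr{L}}^{\mathrm{an},*}\to\underline{\Omega}(U)/\mathfrak{m}_{\mathscr{L}}^2$ of the quotient map $q$. Now lift this to the level of $\underline{\Omega}(U)$ itself: compose $\underline{\Omega}(U)\to\underline{\Omega}(U)/\mathfrak{m}_{\mathscr{L}}^2$ and recall from the displayed construction before \autoref{torsionfree1} that $\underline{\Omega}(U)^\diamond$ (hence $\underline{\Omega}(U)^*$, hence $\underline{\Omega}(U)$ after the appropriate duality) sits in exact sequences relating it to $\Omega^1[M]^*$ and $\Omega^1[M]$ via the maps $ze_1-e_2$. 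A splitting at level $\mathfrak{m}_{\mathscr{L}}^2$ would produce, by the universal property of the completed unitary completion applied after passing to Banach completions — or more directly by tracing the section back through the exact sequence $0\to\mathscr{L}\hat\otimes_L\mathrm{LL}(M)\to\Omega^1[M]^*\to\underline{\Omega}(U)^\diamond/(z-z(\mathscr{L}))\to0$ — a $G$-equivariant splitting of the sequence $0\to\mathscr{L}\otimes_L\mathrm{LL}(M)\to\Omega^1[M]^*\to\Pi_{M,\mathscr{L}}^{\mathrm{an}}\to0$ of \autoref{2.1.3}. This contradicts \autoref{2.3.10}, which asserts precisely that that sequence is non-split. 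The main obstacle I anticipate is the bookkeeping in this last step: making rigorous the claim that a section at the "length-$2$ infinitesimal" level of the sheaf $\underline{\Omega}$ descends to a section of the length-$1$ sequence — this requires carefully identifying $\underline{\Omega}(U)^*[\mathfrak{m}_{\mathscr{L}}]$ and the connecting maps, and checking that the section one extracts is genuinely $G$-equivariant and not merely $\mathscr{O}_{\mathbf{P}^1}(U)$-linear; the functional-analytic point that all the modules in sight are of compact type (so that the duals behave well and the completed tensor products are exact in the relevant places) will need to be invoked to justify passing between $\underline{\Omega}(U)$, $\underline{\Omega}(U)^\diamond$, and $\underline{\Omega}(U)^*$.
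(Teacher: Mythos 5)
Your overall strategy — assume the extension splits and derive a contradiction with the non-splitness of the sequence of \autoref{2.1.3} (i.e. with \autoref{2.3.10}) — is viable, but the implication on which everything rests, namely that a $G$-equivariant splitting of $0\to\Pi_{M,\mathscr{L}}^{\mathrm{an},*}\to\underline{\Omega}(U)/\mathfrak{m}_{\mathscr{L}}^2\to\Pi_{M,\mathscr{L}}^{\mathrm{an},*}\to0$ produces a splitting of $0\to\mathscr{L}\otimes_L\mathrm{LL}(M)\to\Omega^1[M]^*\to\Pi_{M,\mathscr{L}}^{\mathrm{an}}\to0$, is asserted rather than proved, and neither of your two suggested justifications supplies it. The universal property of the universal unitary completion is beside the point (it governs maps to unitary Banach representations, not sections of locally analytic extensions), and ``tracing the section back'' through the mod-$\mathfrak{m}_{\mathscr{L}}$ sequence is precisely the missing mathematics, not bookkeeping. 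What actually has to be shown is that, in the coordinates $x_1,x_2$ of $\pi_{\mathrm{dR}}$, the self-extension $\underline{\Omega}(U)/\mathfrak{m}_{\mathscr{L}}^2$ is the pullback of the dual of the \autoref{2.1.3} sequence, $0\to\Pi_{M,\mathscr{L}}^{\mathrm{an},*}\to\Omega^1[M]\to\mathrm{LL}(M)^*\to0$, along $x_2|_{\Pi_{M,\mathscr{L}}^{\mathrm{an},*}}$, and then that this pullback map on $\mathrm{Ext}^1_{G,\psi}$ does not kill the class; the latter needs an extra input, e.g. that $\mathrm{Hom}_G(\Pi_{M,\mathscr{L}}^{\mathrm{an}},\mathscr{O}[M]^*)$ is one-dimensional, spanned by the projection (which uses \autoref{2.3.1} (i) and $\mathrm{End}_G(\mathscr{O}[M]^*)=L$), together with the observation that the projection lifts to the identity so the relevant connecting map vanishes. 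None of this is in your proposal, so as written there is a genuine gap. A minor point: your parenthetical claim that $\Pi_{M,\mathscr{L}}^{\mathrm{an}}$ is topologically irreducible is false (it is an extension of $\mathscr{O}[M]^*$ by $\mathrm{LL}(M)$), although the equality $\mathrm{End}_G(\Pi_{M,\mathscr{L}}^{\mathrm{an}})=L$ you extract from it does hold via the completion argument.

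For comparison, the paper's proof avoids the $\mathrm{Ext}$ bookkeeping entirely: it writes $\underline{\Omega}(U)/\mathfrak{m}_{\mathscr{L}}^2$ explicitly as the kernel of $\omega\mapsto x_1(\omega)-z(\mathscr{L})x_2(\omega)-\epsilon x_2(\omega)$ inside $(L[\epsilon]/\epsilon^2)\otimes_L\Omega^1[M]$, uses $\dim_L\mathrm{Hom}_G(\Omega^1[M]^*,\Pi_{M,\mathscr{L}}^{\mathrm{an}})=1$ (\autoref{2.3.7}) to force any split copy to lie in $(L[\epsilon]/\epsilon^2)\otimes_L\Pi_{M,\mathscr{L}}^{\mathrm{an},*}$ for the canonical copy $\Pi_{M,\mathscr{L}}^{\mathrm{an},*}\subset\Omega^1[M]$, and then concludes that $x_2$ would vanish identically on $\Pi_{M,\mathscr{L}}^{\mathrm{an},*}$, giving $\Pi_{M,\mathscr{L}}^{\mathrm{an},*}\subseteq\mathscr{O}[M]$, which is absurd. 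Your route can be completed along the lines indicated above, but it requires \autoref{2.3.10} \emph{plus} the pullback identification and the injectivity statement — strictly more than what you wrote.
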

\begin{proof}[Preuve] Soit $\epsilon=z-z(\mathscr{L})$ le paramètre local en $\mathscr{L}$ associé à la base $e_1, e_2$ de $M_{\mathrm{dR}}$. Notons $x_1, x_2: \Omega^1[M]\to\mathrm{LL}(M)^*$ les coordonnées de $\pi_{\mathrm{dR}}(\omega)$ dans la base $e_1, e_2$, alors $\underline{\Omega}(U)/\mathfrak{m}_{\mathscr{L}}^2$ est le noyau du morphisme $L[\epsilon]/\epsilon^2\otimes_L\Omega^1[M]\to(L[\epsilon]/\epsilon^2)\otimes_L\mathrm{LL}(M)^*$ donné par $$\omega\mapsto x_1(\omega)-z(\mathscr{L})x_2(\omega)-\epsilon x_2(\omega).$$
	Supposons que l'extension soit scindée, alors $\underline{\Omega}(U)/\mathfrak{m}_{\mathscr{L}}^2$ est le sous-espace $(L[\epsilon]/\epsilon^2)\otimes_L\Pi_{{M, \mathscr{L}}}^{\mathrm{an}, *}$ puisque $\mathrm{Hom}_G(\Pi_{{M, \mathscr{L}}}^{\mathrm{an}, *}, \Omega^1[M])$ est de dimension $1$ (Proposition \ref{2.3.7}). Comme $\Pi_{{M, \mathscr{L}}}^{\mathrm{an}, *}\subseteq\Omega^1[M]$ est le noyau de $\omega\mapsto x_1(\omega)-z(\mathscr{L})x_2(\omega)$, on a $(x_1-zx_2)(\omega)=-\epsilon x_2(\omega)$. Pour obtenir une contradiction, il nous suffit donc de prouver que $x_2(\omega)$ n'est pas identiquement nul sur $\Pi_{{M, \mathscr{L}}}^{\mathrm{an}, *}$, ce qui est clair car, sinon $x_1(\omega)=x_2(\omega)=0$, et on aurait $\Pi_{{M, \mathscr{L}}}^{\mathrm{an}, *}\subseteq\mathscr{O}[M]$, ce qui n'est pas. Cela permet de conclure.
\end{proof}
\begin{corollary}\label{analytique} Si $U$ est la boule unité de centre $0$ ou $\infty$ ou le cercle unité et $\mathscr{L}\in U$, alors on a une suite exacte $$0\to\Pi_{M, \mathscr{L}, j}^{\mathrm{an}}\to\underline{\Omega}(U)^*\xrightarrow{(z-z(\mathscr{L}))^j}\underline{\Omega}(U)^*\to0.$$
\end{corollary}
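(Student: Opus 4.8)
The plan is to argue by induction on $j$, the case $j=1$ being exactly \autoref{nouveau2} since $\Pi_{M,\mathscr{L},1}=\Pi_{M,\mathscr{L}}$. First I would record that when $U$ is the unit circle or one of the unit balls centered at $0$ or $\infty$, the ring $\mathscr{O}_{\mathbf{P}^1}(U)$ is a Tate algebra in one variable, hence a principal ideal domain, so that $\mathfrak{m}_{\mathscr{L}}=(z-z(\mathscr{L}))$ and $\mathscr{O}_{\mathbf{P}^1}(U)/\mathfrak{m}_{\mathscr{L}}^{j}\cong L[\epsilon]/\epsilon^{j}$ with $\epsilon=z-z(\mathscr{L})$. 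Writing $(z-z(\mathscr{L}))^{j}$ as the $j$-fold composite of $z-z(\mathscr{L})$ and iterating \autoref{nouveau2}, this endomorphism of $\underline{\Omega}(U)^{*}$ is then surjective, and its kernel is $\Pi_{j}:=\underline{\Omega}(U)^{*}[\mathfrak{m}_{\mathscr{L}}^{j}]$, which by the discussion just before \autoref{split} is a successive self-extension of $\Pi_{M,\mathscr{L}}^{\mathrm{an}}$ of length $j$ carrying a $G$-equivariant action of $L[\epsilon]/\epsilon^{j}$ with $\epsilon^{j-1}\neq 0$. So the whole statement reduces to producing an isomorphism $\Pi_{j}\cong\Pi_{M,\mathscr{L},j}^{\mathrm{an}}$.

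To get this, the plan is to pass to universal unitary completions. One notes that $\Pi_{j}$ is admissible locally analytic (a finite-length extension of the admissible $\Pi_{M,\mathscr{L}}^{\mathrm{an}}$) with central character $\psi$, and that by \autoref{split} together with the faithfulness of the $L[\epsilon]/\epsilon^{j}$-action the extension is maximally non-split at every step. Applying \autoref{1.3.3} repeatedly, with $\widehat{\Pi_{M,\mathscr{L}}^{\mathrm{an}}}=\Pi_{M,\mathscr{L}}$ admissible by \autoref{1.3.4}, one finds that $\widehat{\Pi_{j}}$ is a successive self-extension of $\Pi_{M,\mathscr{L}}$ of length $\leq j$ with central character $\psi$; since $\Pi_{j}$ has dense image in $\widehat{\Pi_{j}}$ and $\epsilon$ acts continuously, $\epsilon^{j-1}$ still does not vanish on $\widehat{\Pi_{j}}$, and as the filtration by powers of $\epsilon$ is then strictly decreasing this forces $\widehat{\Pi_{j}}$ to have length exactly $j$ and to be non-split.

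The crucial remaining input is that $\widehat{\Pi_{j}}$ is de Rham, and this is where the definition of $\underline{\Omega}$ through the condition $\pi_{\mathrm{dR}}(\omega(z))\in\mathscr{L}(z)\hat{\otimes}_{L}\mathrm{LL}(M)^{*}$ must be used: the line $\mathscr{L}(z)=L\cdot(e_{1}+ze_{2})$ deforms the Hodge filtration algebraically in $\epsilon$, so the $j$-th infinitesimal fibre at $\mathscr{L}$ is, after completion, $\mathbf{\Pi}$ applied to the tautological de Rham $L[\epsilon]/\epsilon^{j}$-deformation of $V_{M,\mathscr{L}}$ attached to this varying filtration (equivalently, to $\rho_{M,\mathfrak{B}}\otimes_{R_{M,\mathfrak{B}}}R_{M,\mathfrak{B}}/\mathfrak{m}_{\mathscr{L}}^{j}$, using that $R_{M,\mathfrak{B}}$ is reduced of dimension $1$ so that $R_{M,\mathfrak{B}}/\mathfrak{m}_{\mathscr{L}}^{j}\cong L[\epsilon]/\epsilon^{j}$ as well, by \cite[Corollaire 5.5]{cdn2023correspondance}). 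Granting this, the uniqueness statement in \cite[Section 4.2]{cdn2023correspondance} identifies $\widehat{\Pi_{j}}$ with $\Pi_{M,\mathscr{L},j}$.

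It then remains to descend from $\widehat{\Pi_{j}}\cong\Pi_{M,\mathscr{L},j}$ to $\Pi_{j}\cong\Pi_{M,\mathscr{L},j}^{\mathrm{an}}$. The natural map $\Pi_{j}\to\widehat{\Pi_{j}}=\Pi_{M,\mathscr{L},j}$ is injective --- by induction, a diagram chase, topological irreducibility of the socle $\Pi_{M,\mathscr{L}}$, and the fact that $\widehat{\Pi_{j}}$ has length $j$ --- and since $\Pi_{j}$ is locally analytic it factors as $\Pi_{j}\hookrightarrow\Pi_{M,\mathscr{L},j}^{\mathrm{an}}$. Because $\Pi_{M,\mathscr{L},j}$ is admissible of finite length, the functor of locally analytic vectors is exact on it, so $\Pi_{M,\mathscr{L},j}^{\mathrm{an}}$ has the same Jordan--Hölder factors as $\Pi_{j}$ (namely $j$ copies each of $\mathrm{LL}(M)$ and $\mathscr{O}[M]^{*}$) and in particular the same finite length; hence the injection $\Pi_{j}\hookrightarrow\Pi_{M,\mathscr{L},j}^{\mathrm{an}}$ is an isomorphism, giving the desired exact sequence. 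The hard part will be the de Rham property of $\widehat{\Pi_{j}}$: this is the single place where the particular shape of the sheaf $\underline{\Omega}$ has to be turned into $p$-adic Hodge theory of the completed infinitesimal family, and thus where one genuinely has to land on $\Pi_{M,\mathscr{L},j}$ rather than on some other non-split self-extension of $\Pi_{M,\mathscr{L}}$; the rest is bookkeeping with exact sequences and lengths in the abelian categories of admissible Banach and locally analytic representations.
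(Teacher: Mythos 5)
Your first half matches the paper's own setup: iterating \autoref{nouveau2} does show that $(z-z(\mathscr{L}))^j$ is surjective on $\underline{\Omega}(U)^*$ with kernel $\Pi_j:=\underline{\Omega}(U)^*[\mathfrak{m}_{\mathscr{L}}^j]$ a successive self-extension of $\Pi_{M,\mathscr{L}}^{\mathrm{an}}$ of length $j$ (this is exactly the discussion preceding \autoref{split}). But the heart of the statement is the identification $\Pi_j\cong\Pi_{M,\mathscr{L},j}^{\mathrm{an}}$, i.e.\ that one lands on the \emph{de Rham} non-split self-extension, and this is precisely what your proposal does not prove: you write ``granting this'' at the one step where the shape of $\underline{\Omega}$ has to be converted into the de Rham property of the completed infinitesimal fibre. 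That is not bookkeeping; it is the content of the result, and the paper does not obtain it by a direct Hodge-filtration-deformation argument. Instead it first proves the Banach analogue (\autoref{Banach}): using \autoref{4.2.1}--\autoref{4.2.2} and the sequence of the Théorème \ref{3.7.2}, the torsion module $(\mathscr{O}_{\mathbf{P}^1}(U)^*\hat{\otimes}_{\mathscr{O}_{\mathbf{P}^1}(U)}\widehat{\underline{\Omega}(U)^\diamond})[\mathfrak{m}_{\mathscr{L}}^j]$ is exhibited as a quotient of $\mathscr{O}_{\mathbf{P}^1}(U)^*[\mathfrak{m}_{\mathscr{L}}^j]\otimes_L\widehat{\mathrm{LL}(M)}$, so that the classification of \cite[Théorème 4.1]{cdn2023correspondance} applies and says it is a finite direct sum of $\Pi_{M,\mathscr{L},k}$; an induction using \autoref{split} then rules out any splitting. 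The corollary you are asked to prove is deduced afterwards by injecting $\underline{\Omega}(U)^*[\mathfrak{m}_{\mathscr{L}}^j]$ into the locally analytic vectors of that Banach module (the diagram after \autoref{4.2.2}, which uses \autoref{3.2.3} and \autoref{3.4.4}) and comparing lengths.

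Beyond the deferred de Rham input, the route you propose through $\widehat{\Pi_j}$ has concrete weak points. You never establish that the universal unitary completion of $\Pi_j$ exists, and \autoref{1.3.3} only gives right-exactness statements when the completion of the middle term is known to exist; completion is neither left nor right exact and can kill subobjects (cf.\ $\widehat{\mathscr{O}[M]^*}=0$, \autoref{2.2.4}). In particular your claim that $\epsilon^{j-1}$ ``still does not vanish on $\widehat{\Pi_j}$'' presupposes the injectivity of $\Pi_j\to\widehat{\Pi_j}$, which you only assert later and by a sketchy dévissage; without it, neither the length $j$ nor the non-splitness of $\widehat{\Pi_j}$ follows. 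Likewise, \autoref{split} only treats $j=2$, so ``maximally non-split at every step'' for general $j$ needs the kind of inductive argument the paper carries out inside the proof of \autoref{Banach}. If you want to salvage your strategy, the efficient fix is to do what the paper does: prove the Banach statement for $\widehat{\underline{\Omega}(U)^\diamond}$ first (where the Théorème \ref{3.7.2} and the quotient description make \cite[Théorème 4.1]{cdn2023correspondance} available), and only then descend to $\underline{\Omega}(U)^*$ by taking locally analytic vectors; your final length-comparison step is then correct and is exactly how the paper concludes.
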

\begin{proof}[Preuve] La discussion sous le Corollaire \ref{4.2.2} ci-dessous montre qu'il existe une injection
	$$\underline{\Omega}(U)^*[\mathfrak{m}_{\mathscr{L}}^j]\hookrightarrow(\mathscr{O}_{\mathbf{P}^1}(U)^*\hat{\otimes}_{\mathscr{O}_{\mathbf{P}^1}(U)}\widehat{\underline{\Omega}(U)^\diamond})[\mathfrak{m}_{\mathscr{L}}^j]^{\mathrm{an}}.$$
	Le Corollaire \ref{Banach} ci-dessous implique que $(\mathscr{O}_{\mathbf{P}^1}(U)^*\hat{\otimes}_{\mathscr{O}_{\mathbf{P}^1}(U)}\widehat{\underline{\Omega}(U)^\diamond})[\mathfrak{m}_{\mathscr{L}}^j]^{\mathrm{an}}=\Pi_{M, \mathscr{L}, j}^{\mathrm{an}}$. Comme les deux côtés de l'injection ont la même longueur, on a en fait un isomorphisme
	$$\underline{\Omega}(U)^*[\mathfrak{m}_{\mathscr{L}}^j]=\Pi_{M, \mathscr{L}, j}^{\mathrm{an}}.$$
	Cela permet de conclure.
\end{proof}
\begin{corollary}\label{gratte1} Le faisceau $\mathbf{m}^i(\Pi_{M, \mathscr{L}, j}^{\mathrm{an}})$ est supporté en $\mathscr{L}$ pour tout $i\geq0$ et $j\geq1$.
\end{corollary}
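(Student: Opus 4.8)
The statement to prove is Corollaire \ref{gratte1}: the sheaf $\mathbf{m}^i(\Pi_{M, \mathscr{L}, j}^{\mathrm{an}})$ is supported at the point $\mathscr{L}$, for all $i\ge 0$ and $j\ge 1$. By definition $\mathbf{m}^i(\pi)(U) = \mathrm{Ext}_{G,\psi}^i(\pi, \underline{\Omega}(U)^*)^*$, so the claim amounts to showing that for every affinoid open $U\subseteq\mathbf{P}^1$ with $\mathscr{L}\notin U$, one has $\mathrm{Ext}_{G,\psi}^i(\Pi_{M, \mathscr{L}, j}^{\mathrm{an}}, \underline{\Omega}(U)^*) = 0$ for all $i$. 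The key structural input is the short exact sequence established just above (via \autoref{nouveau2} and the remark that follows, together with \autoref{analytique}): for $U$ an affinoid not containing $\mathscr{L}$, multiplication by $z - z(\mathscr{L})$ is an \emph{automorphism} of $\mathscr{O}_{\mathbf{P}^1}(U)$, hence of $\underline{\Omega}(U)^*$ (which is built functorially from $\mathscr{O}_{\mathbf{P}^1}(U)$-modules).

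\textbf{Main steps.} First I would record that $\epsilon := z - z(\mathscr{L})$ acts invertibly on $\mathscr{O}_{\mathbf{P}^1}(U)$ whenever $\mathscr{L}\notin U$, since $z(\mathscr{L})$ is then not in the affinoid spectrum; consequently $\epsilon$ acts invertibly on $\mathscr{O}_{\mathbf{P}^1}(U)^*$ and, via the constructions of Section \ref{section4.1.1} (the exact sequence (\ref{equation2}) and the various $\hat\otimes_L$-terms, all of which are $\mathscr{O}_{\mathbf{P}^1}(U)$-linear), on $\underline{\Omega}(U)^*$ as well. Second, on the side of $\Pi_{M, \mathscr{L}, j}^{\mathrm{an}}$: by the discussion below \autoref{analytique} (or directly from \cite[Section 4.2]{cdn2023correspondance} lifted to the locally analytic level via \autoref{2.1.3} and \autoref{3.2.3}), the representation $\Pi_{M, \mathscr{L}, j}^{\mathrm{an}}$ is a module over $L[\epsilon]/\epsilon^j$ — i.e.\ $\epsilon$ acts \emph{nilpotently}, killed by $\epsilon^j$. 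Third, I would compare the two actions of $\epsilon$ on the functorial bivariant group $\mathrm{Ext}_{G,\psi}^i(\Pi_{M, \mathscr{L}, j}^{\mathrm{an}}, \underline{\Omega}(U)^*)$: the action induced from the source is nilpotent (killed by $\epsilon^j$), while the action induced from the target is invertible; but on an $\mathrm{Ext}$-group these two actions coincide (both are the Yoneda action of the scalar $\epsilon\in\mathrm{End}_G$ of either argument, which centralises everything). An element simultaneously annihilated by $\epsilon^j$ and on which $\epsilon$ is invertible is zero, so the whole group vanishes. Hence $\mathbf{m}^i(\Pi_{M, \mathscr{L}, j}^{\mathrm{an}})(U) = 0$ for all such $U$, which is exactly the assertion that the sheaf is supported at $\mathscr{L}$.

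\textbf{Expected obstacle.} The routine parts — the bookkeeping showing $\epsilon$ acts invertibly on $\underline{\Omega}(U)^*$ and the elementary "nilpotent $=$ invertible $\Rightarrow 0$" argument — are straightforward. The one point requiring care is the \emph{compatibility of the two $\epsilon$-actions on the Ext-group} in the setting of locally analytic (non-necessarily admissible) representations: one must check that the Yoneda $\mathrm{Ext}^i_{G,\psi}$ computed in the relevant abelian category carries a well-defined bimodule structure over $\mathrm{End}_G(\Pi_{M, \mathscr{L}, j}^{\mathrm{an}})$ and $\mathrm{End}_G(\underline{\Omega}(U)^*)$ and that the scalar $\epsilon$, viewed on either side, induces the same operator. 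This is formal once one knows $\epsilon$ lies in the centre of the relevant endomorphism rings — which follows since $\epsilon$ acts through the $\mathscr{O}_{\mathbf{P}^1}(U)$-module structure that is $G$-equivariant by construction — so I do not expect a genuine difficulty, merely the need to state the functoriality cleanly. An alternative, perhaps cleaner, route would be to run the long exact sequence in $\mathrm{Ext}_{G,\psi}^\bullet(-, \underline{\Omega}(U)^*)$ attached to $0\to\underline{\Omega}(U)^*\xrightarrow{\epsilon}\underline{\Omega}(U)^*\to 0$ being exact (since $\epsilon$ is invertible, this "sequence" just says $\epsilon$ is an isomorphism), deducing $\epsilon$ acts invertibly on each $\mathrm{Ext}^i$, then invoking $\epsilon^j = 0$ on the source.
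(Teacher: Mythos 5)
Your reduction of the statement to the vanishing of $\mathrm{Ext}^i_{G,\psi}(\Pi^{\mathrm{an}}_{M,\mathscr{L},j},\underline{\Omega}(U)^*)$ for affinoïdes $U$ ne contenant pas $\mathscr{L}$ is a reasonable reading of ``supporté en $\mathscr{L}$'', but the step you dismiss as formal is exactly where the argument breaks. The operator induced on this Ext-group by the nilpotent endomorphism $T$ of the source and the operator induced by the invertible endomorphism $z-z(\mathscr{L})$ of the target do commute, but nothing forces them to be \emph{equal}: equality of the two Yoneda actions holds when both are induced by one and the same element of a ring over which both objects (and the Ext-functor) are linear, and no such common element exists here. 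The identification of the abstract $T$-action on $\Pi_{M,\mathscr{L},j}$ (its $L[T]/T^j$-structure) with multiplication by $z-z(\mathscr{L})$ is obtained in the paper only through the isomorphism $\Pi^{\mathrm{an}}_{M,\mathscr{L},j}\cong\underline{\Omega}(U')^*[\mathfrak{m}^j_{\mathscr{L}}]$ of \autoref{analytique}, which requires $\mathscr{L}\in U'$; when $\mathscr{L}\notin U$ the source carries no $\mathscr{O}_{\mathbf{P}^1}(U)$-structure, so ``the scalar $\epsilon$ viewed on either side'' has no meaning. The pattern ``nilpotent on the source, invertible on the target, donc $\mathrm{Ext}=0$'' is false in general: over $R=k[x]$ take $A=k[x]/(x-b)$, $B=k[x]_{(x-b)}$ and $a\neq b$; then $x-a$ is invertible on $B$, the zero endomorphism of $A$ is nilpotent, yet $\mathrm{Ext}^1_R(A,B)=B/(x-b)B\neq0$. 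Your alternative route in the last sentence has the same gap, since it again needs the invertible target-action to coincide with the nilpotent source-action. (A minor separate point: if $z(\mathscr{L})=\infty$ you must change charts before writing $z-z(\mathscr{L})$.)

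The paper proceeds differently and never needs the vanishing you aim for: it applies $\mathbf{m}^\bullet$ to the exact sequence $0\to\Pi^{\mathrm{an}}_{M,\mathscr{L},j}\to\underline{\Omega}(U)^*\xrightarrow{(z-z(\mathscr{L}))^j}\underline{\Omega}(U)^*\to0$ of \autoref{analytique}, valid for $U$ one of the standard opens with $\mathscr{L}\in U$, which yields
$$0\to\mathbf{m}^{i+1}(\underline{\Omega}(U)^*)/(z-z(\mathscr{L}))^j\to\mathbf{m}^i(\Pi_{M,\mathscr{L},j}^{\mathrm{an}})\to\mathbf{m}^i(\underline{\Omega}(U)^*)[(z-z(\mathscr{L}))^j]\to0,$$
and concludes because the two outer terms are killed by $(z-z(\mathscr{L}))^j$, hence the middle term is $\mathfrak{m}_{\mathscr{L}}$-power-torsion. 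Note that this torsion statement over opens containing $\mathscr{L}$ is the form of the support assertion actually used later (e.g.\ in \autoref{4.1.4}), whereas your formulation would additionally require a non-formal vanishing over opens avoiding $\mathscr{L}$; the sequence of \autoref{analytique} is precisely the input that ties the $L[T]/T^j$-structure to the geometry of $\mathbf{P}^1$, and your proposal does not supply a substitute for it.
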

\begin{proof}[Preuve] Il suit du Corollaire \ref{analytique} que l'on a une suite exacte
	$$0\to\mathbf{m}^{i+1}(\underline{\Omega}(U)^*)/(z-z(\mathscr{L}))^j\to\mathbf{m}^i(\Pi_{M, \mathscr{L}, j}^{\mathrm{an}})\to\mathbf{m}^i(\underline{\Omega}(U)^*)[(z-z(\mathscr{L}))^j]\to0.$$
	Cela permet de conclure car les deux termes à droite et à gauche sont supportés en $\mathscr{L}$. 
\end{proof}
\subsubsection{Faisceaux associés à $\Pi_ {M, \mathscr{L},j}^{\mathrm{an}}$, $\Omega^1[M]^*$, $\mathrm{LL}(M)$ et $\mathscr{O}[M]^*$}
\begin{proposition}\label{4.1.4} Pour tout $j$, le faisceau $\mathbf{m}^0(\Pi_ {M, \mathscr{L}, j}^{\mathrm{an}})$ est isomorphe au faisceau gratte-ciel $\mathscr{O}_{\mathbf{P}^1, \mathscr{L}}/\mathfrak{m}_{\mathscr{L}}^j$ concentré en le point correspondant à $\mathscr{L}$.
\end{proposition}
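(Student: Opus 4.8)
The plan is to read off the sections of $\mathbf{m}^0(\Pi_{M, \mathscr{L}, j}^{\mathrm{an}})$ over the three distinguished affinoids (the unit ball centred at $0$, the unit ball centred at $\infty$, the unit circle), which by the remark following the definition of $\mathbf{m}^i$ determine the sheaf. If $U$ is one of them and $\mathscr{L}\notin U$, then $\mathbf{m}^0(\Pi_{M, \mathscr{L}, j}^{\mathrm{an}})(U)=0$, since by \autoref{gratte1} the stalk vanishes at every point of $U$ and a sheaf is separated. If $\mathscr{L}\in U$ I will show that the $\mathscr{O}_{\mathbf{P}^1}(U)$-module $N_j:=\mathrm{Hom}_{G}(\Pi_{M, \mathscr{L}, j}^{\mathrm{an}}, \underline{\Omega}(U)^*)$ is isomorphic to $\mathscr{O}_{\mathbf{P}^1}(U)/\mathfrak{m}_{\mathscr{L}}^{j}\cong\mathscr{O}_{\mathbf{P}^1, \mathscr{L}}/\mathfrak{m}_{\mathscr{L}}^{j}$; passing to the $L$-dual (which fixes $\mathscr{O}_{\mathbf{P}^1, \mathscr{L}}/\mathfrak{m}_{\mathscr{L}}^{j}$, by Matlis duality over the discrete valuation ring $\mathscr{O}_{\mathbf{P}^1, \mathscr{L}}$) and gluing yields the skyscraper sheaf $\mathscr{O}_{\mathbf{P}^1, \mathscr{L}}/\mathfrak{m}_{\mathscr{L}}^{j}$.

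To compute $N_j$ I would apply $\mathrm{Hom}_{G}(\Pi_{M, \mathscr{L}, j}^{\mathrm{an}}, -)$ to the short exact sequence of \autoref{analytique}, $0\to\Pi_{M, \mathscr{L}, j}^{\mathrm{an}}\to\underline{\Omega}(U)^*\xrightarrow{(z-z(\mathscr{L}))^{j}}\underline{\Omega}(U)^*\to0$ (so that $\Pi_{M, \mathscr{L}, j}^{\mathrm{an}}=\underline{\Omega}(U)^*[\mathfrak{m}_{\mathscr{L}}^{j}]$). The long exact sequence identifies the submodule of $N_j$ killed by $(z-z(\mathscr{L}))^{j}$ with $\mathrm{End}_{G}(\Pi_{M, \mathscr{L}, j}^{\mathrm{an}})=L[T]/T^{j}$ (\autoref{2.3.4}(v)), the operator $z-z(\mathscr{L})$ acting on $\underline{\Omega}(U)^*$ restricting to $T$ on this submodule. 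Hence $\dim_{L}N_j\ge j$, and it remains to prove $\dim_{L}N_j\le j$. Applying $\mathrm{Hom}_{G}(-, \underline{\Omega}(U)^*)$ to $0\to\Pi_{M, \mathscr{L}, j-1}^{\mathrm{an}}\to\Pi_{M, \mathscr{L}, j}^{\mathrm{an}}\to\Pi_{M, \mathscr{L}}^{\mathrm{an}}\to0$ gives $\dim_{L}N_j\le\dim_{L}N_1+\dim_{L}N_{j-1}$, so everything reduces, by induction, to the case $j=1$:
$$N_1=\mathrm{Hom}_{G}(\Pi_{M, \mathscr{L}}^{\mathrm{an}}, \underline{\Omega}(U)^*)=L.$$

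For $j=1$ the plan is to feed the exact sequence~(\ref{equation2}), namely $0\to((\mathscr{O}_{\mathbf{P}^1}(U)^*\otimes_{L}M_{\mathrm{dR}})/\mathscr{L}(U))\hat{\otimes}_{L}\mathrm{LL}(M)\to\underline{\Omega}(U)^*\to\mathscr{O}_{\mathbf{P}^1}(U)^*\hat{\otimes}_{L}\mathscr{O}[M]^*\to0$, into $\mathrm{Hom}_{G}(\Pi_{M, \mathscr{L}}^{\mathrm{an}}, -)$. The leftmost term vanishes because $\mathrm{Hom}_{G}(\Pi_{M, \mathscr{L}}^{\mathrm{an}}, \mathrm{LL}(M))=0$ (\autoref{2.3.4}(iv)); the rightmost term is $\mathscr{O}_{\mathbf{P}^1}(U)^*\hat{\otimes}_{L}\mathrm{Hom}_{G}(\Pi_{M, \mathscr{L}}^{\mathrm{an}}, \mathscr{O}[M]^*)$ with $\mathrm{Hom}_{G}(\Pi_{M, \mathscr{L}}^{\mathrm{an}}, \mathscr{O}[M]^*)=L$ (apply $\mathrm{Hom}_{G}(-, \mathscr{O}[M]^*)$ to $0\to\mathrm{LL}(M)\to\Pi_{M, \mathscr{L}}^{\mathrm{an}}\to\mathscr{O}[M]^*\to0$, using \autoref{2.3.1}(i) and $\mathrm{End}_{G}(\mathscr{O}[M]^*)=L$). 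Therefore $N_1=\ker\delta$, where $\delta\colon\mathscr{O}_{\mathbf{P}^1}(U)^*\to\mathrm{Ext}^1_{G, \psi}(\Pi_{M, \mathscr{L}}^{\mathrm{an}}, ((\mathscr{O}_{\mathbf{P}^1}(U)^*\otimes_{L}M_{\mathrm{dR}})/\mathscr{L}(U))\hat{\otimes}_{L}\mathrm{LL}(M))$ is the connecting homomorphism, which is $\mathscr{O}_{\mathbf{P}^1}(U)$-linear; since $\mathrm{Ext}^1_{G, \psi}(\Pi_{M, \mathscr{L}}^{\mathrm{an}}, \mathrm{LL}(M))=L$ (\autoref{2.3.5}(ii)), its target is $(\mathscr{O}_{\mathbf{P}^1}(U)^*\otimes_{L}M_{\mathrm{dR}})/\mathscr{L}(U)\cong\mathscr{O}_{\mathbf{P}^1}(U)^*$.

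\textbf{The hard part} is to show that, under these identifications, $\delta$ is multiplication by $z-z(\mathscr{L})$, so that $N_1=\ker\delta=(\mathscr{O}_{\mathbf{P}^1}(U)/\mathfrak{m}_{\mathscr{L}})^*=L$. This is where the construction of $\underline{\Omega}$ — whose fibre at $z$ is $\Pi_{M, \mathscr{L}(z)}^{\mathrm{an}, *}$ — has to be combined with \autoref{2.1.2} and Ding's theorem \autoref{2.3.3}: the class of $\Pi_{M, \mathscr{L}(z)}^{\mathrm{an}}$ in $\mathrm{Ext}^1_{G, \psi}(\mathscr{O}[M]^*, \mathrm{LL}(M))\cong M_{\mathrm{dR}}^*$ is $\mathscr{L}(z)^{\perp}$, so the family of extension classes carried by $\underline{\Omega}(U)$ degenerates over $U$ exactly at $\mathscr{L}$, and \autoref{split} (non-splitness of $\underline{\Omega}(U)/\mathfrak{m}_{\mathscr{L}}^{2}$) pins the order of vanishing of $\delta$ at $\mathscr{L}$ to be $1$. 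Once $N_1=L$ is in hand, the induction above gives $N_j=L[T]/T^{j}\cong\mathscr{O}_{\mathbf{P}^1}(U)/\mathfrak{m}_{\mathscr{L}}^{j}$ and the proposition follows; all the remaining steps are formal dévissage in the abelian category of admissible locally analytic representations with central character $\psi$.
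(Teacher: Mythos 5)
Votre squelette général (support en $\mathscr{L}$ via \autoref{gratte1}, minoration $\dim_LN_j\ge j$ tirée de \autoref{analytique} et de \autoref{2.3.4} (v), dévissage ramenant la majoration à $j=1$) est raisonnable, mais l'étape décisive manque. Tout repose sur $N_1=\mathrm{Hom}_G(\Pi_{M,\mathscr{L}}^{\mathrm{an}},\underline{\Omega}(U)^*)=L$, et pour cela vous affirmez seulement --- en le qualifiant vous-même de \og hard part \fg --- que le morphisme de connexion $\delta$ issu de (\ref{equation2}) est, après identifications, la multiplication par $z-z(\mathscr{L})$. Deux points y restent non démontrés : d'abord l'identification du but $\mathrm{Ext}^1_{G,\psi}\bigl(\Pi_{M,\mathscr{L}}^{\mathrm{an}},((\mathscr{O}_{\mathbf{P}^1}(U)^*\otimes_LM_{\mathrm{dR}})/\mathscr{L}(U))\hat{\otimes}_L\mathrm{LL}(M)\bigr)$ avec $\mathscr{O}_{\mathbf{P}^1}(U)^*\hat{\otimes}_L\mathrm{Ext}^1_{G,\psi}(\Pi_{M,\mathscr{L}}^{\mathrm{an}},\mathrm{LL}(M))$ : faire commuter $\mathrm{Ext}^1$ au produit tensoriel complété par le gros coefficient $\mathscr{O}_{\mathbf{P}^1}(U)^*$ (non admissible) n'est pas formel, alors que la commutation au niveau des $\mathrm{Hom}$ est la seule utilisée dans le texte ; ensuite, même en admettant que $\delta$ soit donné par un élément de $\mathscr{O}_{\mathbf{P}^1}(U)$, il vous faut que son diviseur soit exactement $(\mathscr{L})$ avec multiplicité $1$ (le noyau de $(z-z(\mathscr{L}))^m$ sur $\mathscr{O}_{\mathbf{P}^1}(U)^*$ est de dimension $m$), et le passage du comportement fibre à fibre (scindage en $\mathscr{L}$, non-scindage ailleurs par \autoref{2.3.4} (i)) et de \autoref{split} à cet énoncé sur $\delta$ n'est qu'esquissé. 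En l'état, la majoration $\dim_LN_1\le1$ n'est pas établie.

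La preuve du texte évite entièrement ce calcul : en appliquant le foncteur dualisé $\mathrm{Hom}_G(\cdot,-)^*$, la suite (\ref{equation2}) n'est utilisée que par sa conséquence exacte à droite $\mathscr{O}_{\mathbf{P}^1}(U)\twoheadrightarrow\mathbf{m}^0(\Pi_{M,\mathscr{L}}^{\mathrm{an}})(U)$ (le terme en $\mathrm{LL}(M)$ disparaît par \autoref{2.3.4} (iv)), puis la longueur est fixée du côté des conoyaux : \autoref{nouveau2} donne $\mathbf{m}^0(\Pi_{M,\mathscr{L},j}^{\mathrm{an}})/(z-z(\mathscr{L}))\cong L$, d'où la cyclicité $\mathbf{m}^0\cong\mathscr{O}_{\mathbf{P}^1,\mathscr{L}}/\mathfrak{m}_{\mathscr{L}}^k$, et \autoref{analytique} joint à \autoref{2.3.4} (v) donne $\mathbf{m}^0/(z-z(\mathscr{L}))^n\cong L[T]/T^{\min(n,j)}$, ce qui force $k=j$. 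Vous pouvez d'ailleurs réparer votre propre rédaction avec ces mêmes outils, sans aucun morphisme de connexion : en appliquant $\mathrm{Hom}_G(\Pi_{M,\mathscr{L},j}^{\mathrm{an}},-)$ à la suite de \autoref{nouveau2}, la $(z-z(\mathscr{L}))$-torsion de $N_j$ est $\mathrm{Hom}_G(\Pi_{M,\mathscr{L},j}^{\mathrm{an}},\Pi_{M,\mathscr{L}}^{\mathrm{an}})=L$ ; dès que l'on sait $N_j$ de $\mathfrak{m}_{\mathscr{L}}$-torsion et de type fini (ce qui demande la finitude issue de la surjection $\mathscr{O}_{\mathbf{P}^1}(U)\twoheadrightarrow\mathbf{m}^0(U)$ combinée à \autoref{gratte1}, et pas seulement le support), le socle simple entraîne $N_j\cong\mathscr{O}_{\mathbf{P}^1,\mathscr{L}}/\mathfrak{m}_{\mathscr{L}}^k$, et $N_j[\mathfrak{m}_{\mathscr{L}}^n]=L[T]/T^{\min(n,j)}$ donne alors $k=j$.
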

\begin{proof}[Preuve] En appliquant le foncteur $\mathrm{Hom}_G(\Pi_ {M, \mathscr{L}}^{\mathrm{an}},-)^*$ à la suite (\ref{equation2}), on obtient une suite exacte 
	\begin{align*}&\mathrm{Hom}_G(\Pi_ {M, \mathscr{L}}^{\mathrm{an}},\mathscr{O}_{\mathbf{P}^1}(U)^*\hat{\otimes}_L\mathscr{O}[M]^*)^*\to\mathbf{m}^0(\Pi_ {M, \mathscr{L}}^{\mathrm{an}})(U)\to
		\\ &\mathrm{Hom}_G(\Pi_ {M, \mathscr{L}}^{\mathrm{an}}, \mathscr{O}_{\mathbf{P}^1}(U)^*\otimes_{\mathscr{O}_{\mathbf{P}^1}(U)} (\mathscr{O}_{\mathbf{P}^1}(U)\otimes_LM_{\mathrm{dR}})/\mathscr{L}(U))\hat{\otimes}_L\mathrm{LL}(M))^*\to0.
	\end{align*}
	Il découle du l'isomorphisme $\mathrm{Hom}_G(\Pi_ {M, \mathscr{L}}^{\mathrm{an}}, \mathscr{O}[M]^*)=L$ et du Lemme \ref{2.3.4} (iv) que l'on a une suite exacte	$$\mathscr{O}_{\mathbf{P}^1}(U)\to\mathbf{m}^0(\Pi_ {M, \mathscr{L}}^{\mathrm{an}})(U)\to0,$$
	ce qui implique que $\mathbf{m}^0(\Pi_ {M, \mathscr{L}}^{\mathrm{an}})(U)$ est un quotient de $\mathscr{O}_{\mathbf{P}^1}(U)$. Notons que l'on peut extraire $\mathscr{O}_{\mathbf{P}^1}(U)^*\otimes_{\mathscr{O}_{\mathbf{P}^1}(U)}((\mathscr{O}_{\mathbf{P}^1}(U)\otimes_LM_{\mathrm{dR}})/\mathscr{L}(U))$ et $\mathscr{O}_{\mathbf{P}^1}(U)^*$ de $\mathrm{Hom}_G(-, -)$ car l'action de $G$ est triviale sur ces espaces. D'après le Corollaire \ref{gratte1}, le faisceau $\mathbf{m}^0(\Pi_ {M, \mathscr{L}}^{\mathrm{an}})$ est tué par $\mathfrak{m}_{\mathscr{L}}^s$ pour $s\gg0$. Comme $\Pi_ {M, \mathscr{L}, j}^{\mathrm{an}}$ est une extension successive de $\Pi_ {M, \mathscr{L}}^{\mathrm{an}}$, le faisceau $\mathbf{m}^0(\Pi_ {M, \mathscr{L}, j}^{\mathrm{an}})$ est une extension successive de $\mathscr{O}_{\mathbf{P}^1, \mathscr{L}}/\mathfrak{m}_{\mathscr{L}}$. D'une part, en appliquant $\mathrm{Hom}_G(\Pi_{M, \mathscr{L}, j}^{\mathrm{an}}, -)^*$ à la suite du Lemme \ref{nouveau2}, on obtient une suite exacte
	$$\mathbf{m}^0(\Pi_ {M, \mathscr{L}, j}^{\mathrm{an}})\xrightarrow{z-z(\mathscr{L})}\mathbf{m}^0(\Pi_ {M, \mathscr{L}, j}^{\mathrm{an}})\to L\to0,$$ 
	ce qui montre que $\mathbf{m}^0(\Pi_ {M, \mathscr{L}, j}^{\mathrm{an}})\cong\mathscr{O}_{\mathbf{P}^1, \mathscr{L}}/\mathfrak{m}_{\mathscr{L}}^k$ pour un certain $k$. D'autre part, en appliquant $\mathrm{Hom}_G(\Pi_{M, \mathscr{L}, j}^{\mathrm{an}}, -)^*$ à la suite du Corollaire \ref{analytique} et en utilisant le Lemme \ref{2.3.4} (v), on a 
	$$\mathbf{m}^0(\Pi_ {M, \mathscr{L}, j}^{\mathrm{an}})\xrightarrow{(z-z(\mathscr{L}))^n}\mathbf{m}^0(\Pi_ {M, \mathscr{L}, j}^{\mathrm{an}})\to L[T]/T^{\min(n, j)}\to0$$
	pour tout $n\geq1$. On en conclut que $k=j$, ce qui achève la démonstration.
\end{proof}    
\begin{proposition}\label{4.1.5} On a $\mathbf{m}^0(\Omega^1[M]^*)=\mathscr{O}_{\mathbf{P}^1}$.
\end{proposition}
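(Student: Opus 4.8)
The plan is to read off each section $\mathbf{m}^0(\Omega^1[M]^*)(U)=\mathrm{Hom}_G(\Omega^1[M]^*,\underline{\Omega}(U)^*)^*$, for $U$ a member of the standard affinoid cover of $\mathbf{P}^1$ (the two unit discs and the unit circle), from the exact sequence (\ref{equation2})
$$0\to((\mathscr{O}_{\mathbf{P}^1}(U)^*\otimes_LM_{\mathrm{dR}})/\mathscr{L}(U))\hat{\otimes}_L\mathrm{LL}(M)\to\underline{\Omega}(U)^*\to\mathscr{O}_{\mathbf{P}^1}(U)^*\hat{\otimes}_L\mathscr{O}[M]^*\to0$$
by applying the functor $\mathrm{Hom}_{G,\psi}(\Omega^1[M]^*,-)$. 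Write $A=\mathscr{O}_{\mathbf{P}^1}(U)$, and choose the basis $e_1,e_2$ of $M_{\mathrm{dR}}$ so that $U$ avoids the corresponding point at infinity; then $\mathscr{L}(U)$ is a direct summand of $A\otimes_LM_{\mathrm{dR}}$ with quotient free of rank one, so $(A^*\otimes_LM_{\mathrm{dR}})/\mathscr{L}(U)\cong A^*$ as $A^*$-modules with trivial $G$-action. Since $A^*$ carries the trivial action, the long exact sequence obtained from (\ref{equation2}) reads
\begin{align*}
0\to A^*\hat{\otimes}_L\mathrm{Hom}_G(\Omega^1[M]^*,\mathrm{LL}(M))&\to\mathrm{Hom}_G(\Omega^1[M]^*,\underline{\Omega}(U)^*)\to A^*\hat{\otimes}_L\mathrm{Hom}_G(\Omega^1[M]^*,\mathscr{O}[M]^*)\\
&\to A^*\hat{\otimes}_L\mathrm{Ext}^1_{G,\psi}(\Omega^1[M]^*,\mathrm{LL}(M))\to\cdots
\end{align*}

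Now I would plug in the three facts already available: $\mathrm{Hom}_G(\Omega^1[M]^*,\mathrm{LL}(M))=0$ by \autoref{2.3.4} (ii), $\mathrm{Hom}_G(\Omega^1[M]^*,\mathscr{O}[M]^*)=L$ by \autoref{2.3.1} (v), and $\mathrm{Ext}^1_{G,\psi}(\Omega^1[M]^*,\mathrm{LL}(M))=0$ by \autoref{2.3.5} (i). The sequence then collapses to an isomorphism $\mathrm{Hom}_G(\Omega^1[M]^*,\underline{\Omega}(U)^*)\xrightarrow{\sim}A^*$, which is $A$-linear and functorial in $U$ because it is induced by the natural $A$-linear surjection $\underline{\Omega}(U)^*\twoheadrightarrow A^*\hat{\otimes}_L\mathscr{O}[M]^*$ of (\ref{equation2}) followed by the generator of $\mathrm{Hom}_G(\Omega^1[M]^*,\mathscr{O}[M]^*)$. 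Dualizing and using that the Tate algebra $A$ is reflexive ($A$ is a $p$-adic Banach space over the spherically complete field $L$), this gives $\mathbf{m}^0(\Omega^1[M]^*)(U)=\mathrm{Hom}_G(\Omega^1[M]^*,\underline{\Omega}(U)^*)^*\cong(A^*)^*=A$ as $\mathscr{O}_{\mathbf{P}^1}(U)$-modules, compatibly with restriction; since this holds on the standard cover, one obtains the asserted isomorphism of sheaves $\mathbf{m}^0(\Omega^1[M]^*)\cong\mathscr{O}_{\mathbf{P}^1}$.

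So the algebraic content is just the three displayed vanishing/computation statements together with a diagram chase, and the real work is in the functional-analytic set-up. One must check that $\underline{\Omega}(U)^*$ and the two outer terms of (\ref{equation2}) are objects of the exact category of locally analytic $G$-representations with central character $\psi$ in which $\mathrm{Ext}^\bullet_{G,\psi}$ and its long exact sequences live — this follows from $\underline{\Omega}(U)$ being a closed subspace of the nuclear Fréchet space $A\hat{\otimes}_L\Omega^1[M]$, so that $\underline{\Omega}(U)^*$ is of compact type and (\ref{equation2}) is strict exact — and, crucially, that $\mathrm{Hom}_{G,\psi}(\Omega^1[M]^*,-)$ and $\mathrm{Ext}^1_{G,\psi}(\Omega^1[M]^*,-)$ commute with $A^*\hat{\otimes}_L(-)$ for the trivial-action coefficient space $A^*$. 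I expect to settle this last point via the coadmissible $D(G)$-module picture: as $\Omega^1[M]^*$ is admissible, $\mathrm{RHom}_{D(G),\psi}(\Omega^1[M]^*,-)$ is computed by a complex that depends $\hat{\otimes}$-functorially on its argument and, by admissibility together with \autoref{2.3.1}, \autoref{2.3.4} and \autoref{2.3.5}, has finite-dimensional cohomology in degrees $\le 1$; exactness of $A^*\hat{\otimes}_L(-)$ on spaces of compact type then yields the required Künneth-type identities. This commutation is the main obstacle; with it in hand the rest of the proof is formal, and an analogous template will treat the Banach-side statement as well.
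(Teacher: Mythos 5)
Your proposal is correct and follows essentially the same route as the paper: apply $\mathrm{Hom}_G(\Omega^1[M]^*,-)$ (dualized) to the sequence (\ref{equation2}) and collapse the resulting long exact sequence using \autoref{2.3.1} (v), \autoref{2.3.4} (ii) and \autoref{2.3.5} (i), yielding $\mathscr{O}_{\mathbf{P}^1}(U)$ on each member of the standard cover. The commutation of $\mathrm{Hom}/\mathrm{Ext}$ with $\mathscr{O}_{\mathbf{P}^1}(U)^*\hat{\otimes}_L(-)$ that you single out as the main obstacle is simply left implicit in the paper's proof, so your extra discussion is a refinement rather than a different argument.
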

\begin{proof}[Preuve] En utilisant le foncteur $\mathrm{Hom}_G(\Omega^1[M]^*,-)^*$ à la suite (\ref{equation2}), on obtient une suite exacte
	\begin{align*}	&\mathrm{Ext}_{G, \psi}^1(\Omega^1[M]^*,  \mathscr{O}_{\mathbf{P}^1}(U)^*\otimes_{\mathscr{O}_{\mathbf{P}^1}(U)}(\mathscr{O}_{\mathbf{P}^1}(U)\otimes_LM_{\mathrm{dR}}/\mathscr{L}(U))\hat{\otimes}_L\mathrm{LL}(M))^*\to\mathrm{Hom}_G(\Omega^1[M]^*, \mathscr{O}_{\mathbf{P}^1}(U)^*\hat{\otimes}_L\mathscr{O}[M]^*)^* \\&\to\mathbf{m}^0(\Omega^1[M]^*) \to\mathrm{Hom}_G(\Omega^1[M]^*, \mathscr{O}_{\mathbf{P}^1}(U)^*\otimes_{\mathscr{O}_{\mathbf{P}^1}(U)} (\mathscr{O}_{\mathbf{P}^1}(U)\otimes_LM_{\mathrm{dR}}/\mathscr{L}(U))\hat{\otimes}_L\mathrm{LL}(M))^*\to0.
	\end{align*}
	D'après le Lemme \ref{2.3.1} (v), le Lemme \ref{2.3.4} (ii) et le Corollaire \ref{2.3.5}, on obtient une suite exacte
	$$0\to\mathscr{O}_{\mathbf{P}^1}(U)\to\mathbf{m}^0(\Omega^1[M]^*)\to0\to0.$$
	On en déduit que $\mathbf{m}^0(\Omega^1[M]^*)=\mathscr{O}_{\mathbf{P}^1}$, ce qui permet de conclure.
\end{proof}
\begin{proposition}\label{4.1.6} On a $\mathbf{m}^0(\mathrm{LL}(M))=\mathscr{O}_{\mathbf{P}^1}(-1)$.
\end{proposition}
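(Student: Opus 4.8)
The plan is to mimic the proofs of \autoref{4.1.4} and \autoref{4.1.5}. Fix one of the basic affinoids $U$ (a unit disc centred at $0$ or $\infty$, or the unit circle) and apply $\mathrm{Hom}_G(\mathrm{LL}(M),-)$ to the short exact sequence~(\ref{equation2})
$$0\to\big((\mathscr{O}_{\mathbf{P}^1}(U)^*\otimes_LM_{\mathrm{dR}})/\mathscr{L}(U)\big)\hat{\otimes}_L\mathrm{LL}(M)\to\underline{\Omega}(U)^*\to\mathscr{O}_{\mathbf{P}^1}(U)^*\hat{\otimes}_L\mathscr{O}[M]^*\to0.$$
Since $\mathrm{LL}(M)$ is a finitely generated smooth representation, $\mathrm{Hom}_G(\mathrm{LL}(M),-)$ commutes with $\hat{\otimes}_L$ against a coefficient space; the last term thus contributes $\mathscr{O}_{\mathbf{P}^1}(U)^*\hat{\otimes}_L\mathrm{Hom}_G(\mathrm{LL}(M),\mathscr{O}[M]^*)=0$ by \autoref{2.3.1}~(i), while the first term contributes $\big((\mathscr{O}_{\mathbf{P}^1}(U)^*\otimes_LM_{\mathrm{dR}})/\mathscr{L}(U)\big)\hat{\otimes}_L\mathrm{End}_G(\mathrm{LL}(M))=(\mathscr{O}_{\mathbf{P}^1}(U)^*\otimes_LM_{\mathrm{dR}})/\mathscr{L}(U)$, using $\mathrm{End}_G(\mathrm{LL}(M))=L$ (absolute irreducibility of $\mathrm{LL}(M)$). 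The long exact sequence therefore gives $\mathrm{Hom}_G(\mathrm{LL}(M),\underline{\Omega}(U)^*)\cong(\mathscr{O}_{\mathbf{P}^1}(U)^*\otimes_LM_{\mathrm{dR}})/\mathscr{L}(U)$, and dualising,
$$\mathbf{m}^0(\mathrm{LL}(M))(U)=\mathrm{Hom}_G(\mathrm{LL}(M),\underline{\Omega}(U)^*)^*\cong\big((\mathscr{O}_{\mathbf{P}^1}(U)^*\otimes_LM_{\mathrm{dR}})/\mathscr{L}(U)\big)^*.$$
Note that, in contrast with the proof of \autoref{4.1.5}, no $\mathrm{Ext}^1$-computation enters here, precisely because $\mathrm{Hom}_G(\mathrm{LL}(M),\mathscr{O}[M]^*)$ already vanishes.

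It remains to identify the right-hand side, functorially in $U$, with $\mathscr{O}_{\mathbf{P}^1}(-1)(U)$. The geometric input is that the subsheaf $\mathscr{L}\subseteq M_{\mathrm{dR}}\otimes_L\mathscr{O}_{\mathbf{P}^1}$ with fibre $\mathscr{L}(z)$ at $z\in\mathbf{P}^1=\mathbf{P}(M_{\mathrm{dR}})$ is exactly the tautological line subbundle, so $\mathscr{L}\cong\mathscr{O}_{\mathbf{P}^1}(-1)$; by the Euler sequence on $\mathbf{P}^1$ the quotient $Q:=(M_{\mathrm{dR}}\otimes_L\mathscr{O}_{\mathbf{P}^1})/\mathscr{L}$ is the line bundle $\mathscr{O}_{\mathbf{P}^1}(1)$. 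On each basic affinoid $U$ the module $Q(U)=(\mathscr{O}_{\mathbf{P}^1}(U)\otimes_LM_{\mathrm{dR}})/\mathscr{L}(U)$ is free of rank one over $\mathscr{O}_{\mathbf{P}^1}(U)$, and by construction $(\mathscr{O}_{\mathbf{P}^1}(U)^*\otimes_LM_{\mathrm{dR}})/\mathscr{L}(U)=\mathscr{O}_{\mathbf{P}^1}(U)^*\hat{\otimes}_{\mathscr{O}_{\mathbf{P}^1}(U)}Q(U)$. Taking the continuous $L$-linear dual and using the adjunction $\mathrm{Hom}_L(\mathscr{O}_{\mathbf{P}^1}(U)^*\hat{\otimes}_{\mathscr{O}_{\mathbf{P}^1}(U)}Q(U),L)\cong\mathrm{Hom}_{\mathscr{O}_{\mathbf{P}^1}(U)}(Q(U),\mathscr{O}_{\mathbf{P}^1}(U))$, one gets $\mathbf{m}^0(\mathrm{LL}(M))(U)\cong\mathrm{Hom}_{\mathscr{O}_{\mathbf{P}^1}(U)}(Q(U),\mathscr{O}_{\mathbf{P}^1}(U))=\mathscr{O}_{\mathbf{P}^1}(-1)(U)$. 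Checking that these isomorphisms are compatible with the restriction maps of the covering by the basic opens, they glue to $\mathbf{m}^0(\mathrm{LL}(M))\cong\mathscr{O}_{\mathbf{P}^1}(-1)$.

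The one step requiring genuine care is this last identification: namely that the composite of the two dualising operations — completed tensor product with $\mathscr{O}_{\mathbf{P}^1}(U)^*$ over $\mathscr{O}_{\mathbf{P}^1}(U)$, followed by the strong $L$-dual — is naturally the $\mathscr{O}_{\mathbf{P}^1}(U)$-linear dual, and that on a degree-one line bundle this produces $\mathscr{O}_{\mathbf{P}^1}(-1)$ compatibly with gluing. This reduces to reflexivity of the Tate algebras attached to the basic opens together with the freeness of $Q(U)$ over them, in the paper's functional-analytic framework; the remainder is bookkeeping with the long exact sequence and the vanishing statements of \autoref{2.3.1} and \autoref{3.8.1}.
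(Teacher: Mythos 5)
Your argument is essentially the paper's: it applies $\mathrm{Hom}_G(\mathrm{LL}(M),-)^*$ to the sequence (\ref{equation2}), kills the $\mathscr{O}[M]^*$-term via \autoref{2.3.1} (i), identifies the remaining term using $\mathrm{End}_G(\mathrm{LL}(M))=L$, and then recognizes the resulting rank-one module as $\mathscr{O}_{\mathbf{P}^1}(-1)$. The only (cosmetic) difference is the last identification, where you invoke the tautological subbundle and the Euler sequence together with a duality/reflexivity step, while the paper writes $\mathbf{m}^0(\mathrm{LL}(M))(U)=\mathscr{L}(U)^\perp$ and computes the transition function $e_1+ze_2=z(z^{-1}e_1+e_2)$ on $U_0\cap U_\infty$ explicitly; both amount to the same computation.
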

\begin{proof}[Preuve] En utilisant le foncteur $\mathrm{Hom}_G(\mathrm{LL}(M),-)^*$ à la suite (\ref{equation2}), on obtient une suite exacte
	\begin{align*}&	\mathrm{Hom}_G(\mathrm{LL}(M),\mathscr{O}_{\mathbf{P}^1}(U)^*\hat{\otimes}_L\mathscr{O}[M]^*)^*\to\mathbf{m}^0(\mathrm{LL}(M)) \to\\&  \mathrm{Hom}_G(\mathrm{LL}(M),\mathscr{O}_{\mathbf{P}^1}(U)^*\otimes_{\mathscr{O}_{\mathbf{P}^1}(U)}(\mathscr{O}_{\mathbf{P}^1}(U)\otimes_LM_{\mathrm{dR}}/\mathscr{L}(U))\hat{\otimes}_L\mathrm{LL}(M))^*\to0.
	\end{align*}
	D'après le Lemme \ref{2.3.1} (i), on obtient une suite exacte
	$$0\to\mathbf{m}^0(\mathrm{LL}(M))\to\mathscr{L}(U)^\perp\to0.$$
	Donc $\mathbf{m}^0(\mathrm{LL}(M))=\mathscr{L}(U)^\perp$.  
	
	On note $U_0\subseteq\mathbf{P}^1$ la boule unité de centre $0$ et $U_\infty\subseteq\mathbf{P}^1$ la boule unité de centre $\infty$. Alors on a $\mathscr{L}(U_0)=\mathscr{O}_{\mathbf{P}^1}(U_0)(e_1+ze_2)$ et $\mathscr{L}(U_\infty)=\mathscr{O}_{\mathbf{P}^1}(U_0)(z^{-1}e_1+e_2)$. Sur $U_0\cap U_\infty$, on a $e_1+ze_2=z(z^{-1}e_1+e_2)$ qui a un pôle simple en $\infty$. De plus, le faisceau tordu de Serre $\mathscr{O}_{\mathbf{P}^1}(-1)$ est défini en recollant $U_0$ et $U_\infty$ le long de $U_0\cap U_\infty$ via le morphisme $f\mapsto \frac{f}{z}$. On en déduit que $\mathbf{m}^0(\mathrm{LL}(M))=\mathscr{O}_{\mathbf{P}^1}(-1)$, c.q.f.d.
\end{proof}
\begin{proposition}\label{4.1.7} On a $\mathbf{m}^0(\mathscr{O}[M]^*)=0$.
\end{proposition}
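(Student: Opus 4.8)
The plan is to follow the pattern of the proofs of \autoref{4.1.5} and \autoref{4.1.6}. Fix an affinoid open $U\subseteq\mathbf{P}^1$; after a change of basis $e_1,e_2$ of $M_{\mathrm{dR}}$ we may assume $\infty\notin U$, so that $(\mathscr{O}_{\mathbf{P}^1}(U)\otimes_LM_{\mathrm{dR}})/\mathscr{L}(U)\xrightarrow{\sim}\mathscr{O}_{\mathbf{P}^1}(U)$ via $\lambda e_2\mapsto\lambda$. It suffices to show $\mathrm{Hom}_G(\mathscr{O}[M]^*,\underline{\Omega}(U)^*)=0$, and for this I would apply $\mathrm{Hom}_G(\mathscr{O}[M]^*,-)$ to the fundamental exact sequence (\ref{equation2}). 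Two of the three terms are under control: since $\mathscr{O}[M]^*$ is topologically absolutely irreducible (Colmez) and has no nonzero smooth vectors (proof of \autoref{2.3.1}(i)), any nonzero $G$-morphism out of $\mathscr{O}[M]^*$ is injective; hence $\mathrm{Hom}_G(\mathscr{O}[M]^*,W\hat{\otimes}_L\mathrm{LL}(M))=0$ for every locally convex $W$ with trivial $G$-action (such a morphism would exhibit $\mathscr{O}[M]^*$ as a subrepresentation of a smooth one, contradicting $\mathscr{O}[M]^{*,\mathrm{lisse}}=0$), and $\mathrm{End}_G(\mathscr{O}[M]^*)=L$ by \autoref{1.1.7}, so $\mathrm{Hom}_G(\mathscr{O}[M]^*,\mathscr{O}_{\mathbf{P}^1}(U)^*\hat{\otimes}_L\mathscr{O}[M]^*)=\mathscr{O}_{\mathbf{P}^1}(U)^*$. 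The long exact sequence then identifies $\mathrm{Hom}_G(\mathscr{O}[M]^*,\underline{\Omega}(U)^*)$ with the kernel of the connecting map $\delta\colon\mathscr{O}_{\mathbf{P}^1}(U)^*\to\mathrm{Ext}^1_{G,\psi}(\mathscr{O}[M]^*,((\mathscr{O}_{\mathbf{P}^1}(U)^*\otimes_LM_{\mathrm{dR}})/\mathscr{L}(U))\hat{\otimes}_L\mathrm{LL}(M))$, so the whole problem reduces to showing that $\delta$ is injective.

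To do this I would identify $\delta$ explicitly. By Ding's theorem \autoref{2.3.3} (together with the freeness of $\mathscr{O}_{\mathbf{P}^1}(U)$, so that the relevant $\mathrm{Ext}^1$ commutes with the coefficients), the target of $\delta$ is $((\mathscr{O}_{\mathbf{P}^1}(U)^*\otimes_LM_{\mathrm{dR}})/\mathscr{L}(U))\hat{\otimes}_LM_{\mathrm{dR}}^*$, and $\delta$ is multiplication by the class $c$ of the extension (\ref{equation2}). Now (\ref{equation2}) is deduced by the base change $\mathscr{O}_{\mathbf{P}^1}(U)^*\hat{\otimes}_{\mathscr{O}_{\mathbf{P}^1}(U)}(-)$ from the tautological extension $\underline{\Omega}(U)^\diamond$ of $\mathscr{O}_{\mathbf{P}^1}(U)\hat{\otimes}_L\mathscr{O}[M]^*$ by $((\mathscr{O}_{\mathbf{P}^1}(U)\otimes_LM_{\mathrm{dR}})/\mathscr{L}(U))\hat{\otimes}_L\mathrm{LL}(M)$ occurring in the diagram following (\ref{equation1}); that extension is in turn obtained from $\mathscr{O}_{\mathbf{P}^1}(U)\hat{\otimes}_L\Omega^1[M]^*$ by pushing out along $\mathscr{O}_{\mathbf{P}^1}(U)\otimes_LM_{\mathrm{dR}}\twoheadrightarrow(\mathscr{O}_{\mathbf{P}^1}(U)\otimes_LM_{\mathrm{dR}})/\mathscr{L}(U)$, and $\Omega^1[M]^*$ is the universal extension of $M_{\mathrm{dR}}\otimes_L\mathrm{LL}(M)$ by $\mathscr{O}[M]^*$ (Remarque following \autoref{2.3.3}), i.e. its class corresponds to $\mathrm{id}_{M_{\mathrm{dR}}}\in M_{\mathrm{dR}}\otimes_LM_{\mathrm{dR}}^*$. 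Consequently the class of $\underline{\Omega}(U)^\diamond$ corresponds, in the trivialization $\lambda e_2\mapsto\lambda$ above and using $e_1\equiv-ze_2\bmod\mathscr{L}(U)$, to the element $-z\,e_1^*+e_2^*\in\mathscr{O}_{\mathbf{P}^1}(U)\otimes_LM_{\mathrm{dR}}^*$, whose $e_2^*$-coordinate is the constant function $1$. Hence, for $\mu\in\mathscr{O}_{\mathbf{P}^1}(U)^*$, the $e_2^*$-coordinate of $\delta(\mu)$ is $\mu$, so $\delta(\mu)=0$ forces $\mu=0$. Thus $\delta$ is injective, $\mathrm{Hom}_G(\mathscr{O}[M]^*,\underline{\Omega}(U)^*)=0$ for every $U$, and $\mathbf{m}^0(\mathscr{O}[M]^*)=0$.

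The main obstacle is precisely this last step: pinning down the connecting map $\delta$ as multiplication by the tautological class and extracting its non-vanishing. This is essentially a bookkeeping problem — one must track the $\mathscr{O}_{\mathbf{P}^1}(U)$-module structures through the various completed tensor products and through the base change $\mathscr{O}_{\mathbf{P}^1}(U)^*\hat{\otimes}_{\mathscr{O}_{\mathbf{P}^1}(U)}(-)$, and use the Künneth-type compatibility of $\mathrm{Hom}_G$ and $\mathrm{Ext}^1_{G,\psi}$ with these tensor products together with Ding's isomorphism \autoref{2.3.3} read in families over $U$. As a consistency check one can observe, applying $\mathrm{Hom}_G(\mathscr{O}[M]^*,-)$ to the sequence of \autoref{nouveau2} and using $\mathrm{Hom}_G(\mathscr{O}[M]^*,\Pi_{M,\mathscr{L}}^{\mathrm{an}})=0$ (\autoref{2.3.1}(iii)), that $z-z(\mathscr{L})$ acts injectively on $\mathrm{Hom}_G(\mathscr{O}[M]^*,\underline{\Omega}(U)^*)$ for every $\mathscr{L}\in U$; this fits the picture that $\delta$ reduced modulo $\mathfrak{m}_{\mathscr{L}}$ sends $1$ to the nonzero class $[\Pi_{M,\mathscr{L}}^{\mathrm{an}}]$ (non-split by \autoref{2.3.10}), but it does not by itself suffice, since a $\mathscr{O}_{\mathbf{P}^1}(U)$-submodule of $\mathscr{O}_{\mathbf{P}^1}(U)^*$ on which every $z-z(\mathscr{L})$ acts injectively need not vanish — which is exactly why the explicit computation of the tautological class above is needed.
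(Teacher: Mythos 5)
Your strategy is genuinely different from the paper's, and it stalls exactly at the step you dismiss as bookkeeping. The paper never touches the connecting map of the sequence (\ref{equation2}): it applies the right exactness of $\mathbf{m}^0$ to $0\to(M_{\mathrm{dR}}/\mathscr{L})\otimes_L\mathrm{LL}(M)\to\Pi^{\mathrm{an}}_{M,\mathscr{L}}\to\mathscr{O}[M]^*\to0$, which gives $\mathbf{m}^0(\mathrm{LL}(M))\to\mathbf{m}^0(\Pi^{\mathrm{an}}_{M,\mathscr{L}})\to\mathbf{m}^0(\mathscr{O}[M]^*)\to0$, then invokes \autoref{4.1.4} and \autoref{4.1.6}: the first arrow goes from $\mathscr{O}_{\mathbf{P}^1}(-1)$ to the length-one skyscraper at $\mathscr{L}$, hence is either surjective or zero; if it were zero for some $\mathscr{L}$, then $\mathbf{m}^0(\mathscr{O}[M]^*)$, which does not depend on $\mathscr{L}$, would equal the skyscraper at $\mathscr{L}$ while also being a quotient of the skyscraper at any other $\mathscr{L}'$, which is absurd; so the arrow is surjective and $\mathbf{m}^0(\mathscr{O}[M]^*)=0$. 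This costs nothing beyond results already established and, in particular, requires no control whatsoever of $\mathrm{Ext}^1$ in families.

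Your route, by contrast, hinges on identifying the connecting map $\delta$ with cup product against the tautological class $e_2^*-ze_1^*$ inside an identification $\mathrm{Ext}^1_{G,\psi}(\mathscr{O}[M]^*,\mathscr{O}_{\mathbf{P}^1}(U)^*\hat{\otimes}_L\mathrm{LL}(M))\cong\mathscr{O}_{\mathbf{P}^1}(U)^*\hat{\otimes}_LM_{\mathrm{dR}}^*$. That is the genuine gap: it is not justified by ``freeness of $\mathscr{O}_{\mathbf{P}^1}(U)$''. The coefficient is an infinite-dimensional Banach space entering through a completed tensor product, the representation $\mathscr{O}_{\mathbf{P}^1}(U)^*\hat{\otimes}_L\mathrm{LL}(M)$ is not admissible, and Ding's theorem (\autoref{2.3.3}) only computes the fiberwise $\mathrm{Ext}^1$; you would have to prove that the topological $\mathrm{Ext}^1_{G,\psi}(\mathscr{O}[M]^*,-)$ commutes with $\mathscr{O}_{\mathbf{P}^1}(U)^*\hat{\otimes}_L(-)$ and that extension classes base change as expected, statements that appear nowhere in the paper and do not follow formally. (Your Hom-level inputs are fine by the paper's own standards: $\mathrm{Hom}_G(\mathscr{O}[M]^*,W\hat{\otimes}_L\mathrm{LL}(M))=0$ does hold, since with the paper's definition of $\hat{\otimes}$ this target is smooth and a nonzero map from the topologically irreducible $\mathscr{O}[M]^*$ would force it to be smooth, and the identification $\mathrm{Hom}_G(\mathscr{O}[M]^*,\mathscr{O}_{\mathbf{P}^1}(U)^*\hat{\otimes}_L\mathscr{O}[M]^*)=\mathscr{O}_{\mathbf{P}^1}(U)^*$ is of the same kind as those used in \autoref{4.1.5} and \autoref{4.1.6}; you are also right that the fiberwise check via \autoref{nouveau2} cannot substitute for the global computation, since a nonzero $\mathscr{O}_{\mathbf{P}^1}(U)$-submodule of $\mathscr{O}_{\mathbf{P}^1}(U)^*$ can avoid every line killed by $z-z(\mathscr{L})$.) So, granting a proved family base-change statement for $\mathrm{Ext}^1$, your coordinate computation would indeed give injectivity of $\delta$ and hence the result, but as written the argument is incomplete at its central step, whereas the soft argument above closes the proof without it.
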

\begin{proof}[Preuve] L'exactitude à droite de $\mathbf{m}^0$ nous donne une suite exacte
	$$\mathbf{m}^0(\mathrm{LL}(M))\to\mathbf{m}^0(\Pi^{\mathrm{an}}_{M, \mathscr{L}})\to\mathbf{m}^0(\mathscr{O}[M]^*)\to0.$$ 
	D'après la Proposition \ref{4.1.4} et la Proposition \ref{4.1.6}, le morphisme $\mathbf{m}^0(\mathrm{LL}(M))\to\mathbf{m}^0(\Pi^{\mathrm{an}}_{M, \mathscr{L}})$ est soit surjectif, soit nul. Supposons que ce morphisme soit nul, alors on a $\mathbf{m}^0(\mathscr{O}[M]^*)=\mathscr{O}_{\mathbf{P}^1, \mathscr{L}}/\mathfrak{m}_{\mathscr{L}}$ pour toute $\mathscr{L}$, ce qui est absurde. On en déduit que le morphisme $\mathbf{m}^0(\mathrm{LL}(M))\to\mathbf{m}^0(\Pi^{\mathrm{an}}_{M, \mathscr{L}})$ est surjectif, d'où le résultat cherché.
\end{proof}
\subsection{Foncteur de Banach}
\subsubsection{Construction}
Dans cette section, on va définir un "ersatz" du foncteur de Langlands catégorique en version de Banach. Soit $\Pi$ un $\Lambda(G)$-module topologique. Pour tout $i\geq0$, on définit le faisceau $\hat{\mathbf{m}}(\Pi)$ sur $\mathbf{P}^1$ par $$\hat{\mathbf{m}}^i(\Pi)(U):=\mathrm{Ext}_{G, \psi}^i(\Pi, \mathscr{O}_{\mathbf{P}^1}(U)^*\hat{\otimes}_{\mathscr{O}_{\mathbf{P}^1}(U)}\widehat{\underline{\Omega}(U)^\diamond})^*,$$
où $U$ est un ouvert affinoïde dans $\mathbf{P}^1$. Notons que, pour tout $i$, $\hat{\mathbf{m}}^i(\Pi)$ est complètement décrit par les restrictions au cercle unité et aux boules unité centrées en $0$ et en $\infty$.

Le diagramme commutatif (\ref{equation1}) induit, en prenant les complétés unitaires universels, un diagramme commutatif à lignes exactes
\begin{equation*} \xymatrix@R=5mm@C=4mm{
		0& \widehat{\underline{\Omega}(U)^\diamond}\ar[l]&\mathscr{O}_{\mathbf{P}^1}(U)\hat{\otimes}_L\widehat{\Omega^1[M]^*}\ar[l]& Z_1(U)\ar[l]&0\ar[l]\\
		0  & (\mathscr{O}_{\mathbf{P}^1}(U)\otimes_L M_{\mathrm{dR}}/\mathscr{L}(U))\hat{\otimes}_L\widehat{\mathrm{LL}(M)}\ar[l] \ar[u] &(\mathscr{O}_{\mathbf{P}^1}(U)\otimes_L M_{\mathrm{dR}})\hat{\otimes}_L\widehat{\mathrm{LL}(M)}\ar[l]\ar[u] &\mathscr{L}(U)\hat{\otimes}_L\widehat{\mathrm{LL}(M)}\ar[l] \ar[u] &0\ar[l]
	}
\end{equation*}
où on note $Z_1(U)$ le noyau de la flèche $\mathscr{O}_{\mathbf{P}^1}(U)\hat{\otimes}\widehat{\Omega^1[M]^*}\to\widehat{\underline{\Omega}(U)^\diamond}$. Il suit de la Proposition \ref{1.3.3} que la flèche $\mathscr{L}(U)\hat{\otimes}_L\widehat{\mathrm{LL}(M)}\to Z_1(U)$ est d'image dense.
\begin{lemma}\label{torsionfree2} Si $U$ est la boule unité de centre $0$ ou $\infty$ ou le cercle unité, alors le $\mathscr{O}_{\mathbf{P}^1}(U)$-module $\widehat{\underline{\Omega}(U)^\diamond}$ est sans torsion.
\end{lemma}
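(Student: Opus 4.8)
L'idée est de réaliser $\widehat{\underline{\Omega}(U)^\diamond}$ comme quotient d'un $\mathscr{O}_{\mathbf{P}^1}(U)$-module visiblement sans torsion et de montrer que le noyau de la projection est saturé. D'après le diagramme qui précède l'énoncé, $\widehat{\underline{\Omega}(U)^\diamond}=P/Z_1(U)$ où $P:=\mathscr{O}_{\mathbf{P}^1}(U)\hat{\otimes}_L\widehat{\Omega^1[M]^*}$, où $Z_1(U)=\ker(P\twoheadrightarrow\widehat{\underline{\Omega}(U)^\diamond})$, et où $Z_1(U)$ coïncide avec l'adhérence de l'image de la flèche $\mathscr{O}_{\mathbf{P}^1}(U)$-linéaire $j\colon\mathscr{L}(U)\hat{\otimes}_L\widehat{\mathrm{LL}(M)}\to P$ induite par $e_1+ze_2$. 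Comme $U$ est une boule (de centre $0$ ou $\infty$) ou le cercle unité, $\mathscr{O}_{\mathbf{P}^1}(U)$ est principal et, quitte à remplacer $L$ par une extension finie — avec laquelle toute la construction est compatible —, tout élément non nul y devient, à une unité près, un produit de facteurs $z-\lambda$ avec $\mathscr{L}(\lambda)\in U$. Il suffit donc de vérifier que la multiplication par $z-\lambda$ est injective sur $\widehat{\underline{\Omega}(U)^\diamond}$, c'est-à-dire que $(z-\lambda)v\in Z_1(U)$ avec $v\in P$ entraîne $v\in Z_1(U)$.

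On s'appuie sur deux observations élémentaires. D'abord, $P$ est sans torsion sur $\mathscr{O}_{\mathbf{P}^1}(U)$ : en écrivant un de ses éléments comme une série (de Laurent) $\sum_n b_nz^n$ à coefficients $b_n$ dans l'espace de Banach $\widehat{\Omega^1[M]^*}$ tendant vers $0$, la relation $(z-\lambda)\sum_n b_nz^n=0$ impose $b_{n-1}=\lambda b_n$ pour tout $n$, ce qui, joint à $\|b_n\|\to 0$ et à $|\lambda|\leq 1$ (resp. $|\lambda|=1$ sur le cercle), force $b_n=0$. En particulier $z-\lambda$ agit sur $P$ par une application bornée injective d'espaces de Banach, d'image le fermé $\ker(\mathrm{ev}_\lambda)$, où $\mathrm{ev}_\lambda\colon P\to\widehat{\Omega^1[M]^*}$ est l'évaluation en $\lambda$ ; c'est donc un plongement topologique par le théorème de l'application ouverte. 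Ensuite, pour toute droite $\mathscr{L}\in\mathbf{P}^1$, l'inclusion $\mathscr{L}\otimes_L\widehat{\mathrm{LL}(M)}\hookrightarrow\widehat{\Omega^1[M]^*}$ du Théorème \ref{3.7.2} est un plongement fermé (son conoyau $\Pi_{M,\mathscr{L}}$ est séparé) ; en identifiant $\mathrm{Hom}_G(\widehat{\mathrm{LL}(M)},\widehat{\Omega^1[M]^*})$ à $M_{\mathrm{dR}}$ via la \autoref{3.8.1}, les morphismes $\phi_{e_1},\phi_{e_2}$ associés à la base vérifient $\phi_{e_1}+\lambda\phi_{e_2}=\phi_{\mathscr{L}(\lambda)}$, qui est donc encore un plongement fermé.

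Le cœur de la preuve reprend l'argument du \autoref{torsionfree1}, mais mené modulo les adhérences. Soit $v\in P$ avec $(z-\lambda)v\in Z_1(U)$ ; choisissons une suite $f_k\in\mathscr{L}(U)\hat{\otimes}_L\widehat{\mathrm{LL}(M)}$ telle que $j(f_k)\to(z-\lambda)v$ dans $P$. Comme $j(f_k)|_{z=\lambda}=\phi_{\mathscr{L}(\lambda)}(f_k(\lambda))$ et que $(z-\lambda)v$ s'annule en $z=\lambda$, la continuité de $\mathrm{ev}_\lambda$ donne $\phi_{\mathscr{L}(\lambda)}(f_k(\lambda))\to 0$, d'où $f_k(\lambda)\to 0$ dans $\widehat{\mathrm{LL}(M)}$ puisque $\phi_{\mathscr{L}(\lambda)}$ est un plongement fermé. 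Par suite $\tilde{f}_k:=f_k-f_k(\lambda)$ — où l'on retranche la fonction constante de valeur $f_k(\lambda)$ — s'annule en $z=\lambda$, donc est divisible par $z-\lambda$, disons $\tilde{f}_k=(z-\lambda)h_k$ ; la $\mathscr{O}_{\mathbf{P}^1}(U)$-linéarité de $j$ fournit alors $(z-\lambda)j(h_k)=j(f_k)-j(f_k(\lambda))\to(z-\lambda)v$, le terme $j(f_k(\lambda))$ tendant vers $0$ par continuité. Puisque $z-\lambda$ est un plongement topologique sur $P$, on en tire $j(h_k)\to v$, c'est-à-dire $v\in\overline{\mathrm{im}(j)}=Z_1(U)$, comme voulu.

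La principale difficulté réside précisément dans l'écart entre $\mathrm{im}(j)$ et son adhérence $Z_1(U)$ : contrairement au cas localement analytique (\autoref{torsionfree1}), la flèche $j$ n'a pas d'image fermée en général, car $\widehat{\mathrm{LL}(M)}$ n'est pas admissible, et l'on ne peut donc pas se contenter de diviser par $z-\lambda$ un antécédent pris dans $\mathrm{im}(j)$. Ce qui fait fonctionner l'argument d'approximation, c'est que la division par $z-\lambda$ est déjà disponible au niveau de $\mathscr{L}(U)\hat{\otimes}_L\widehat{\mathrm{LL}(M)}$ une fois annulée la valeur en $\lambda$, et que $z-\lambda$ est un plongement topologique sur l'espace de Banach $P$ ; il conviendra de s'assurer que la correction par une fonction constante reste dans $\mathscr{O}_{\mathbf{P}^1}(U)$ (ce qui est vrai pour une boule ou le cercle unité, avec la constante $1$) et que $\mathrm{ev}_\lambda$ et $\phi_{\mathscr{L}(\lambda)}$ se comportent bien après l'extension finie des scalaires servant à factoriser un élément général de $\mathscr{O}_{\mathbf{P}^1}(U)$.
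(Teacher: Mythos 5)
Votre preuve est correcte (au même niveau d'implicite que l'article pour l'extension des scalaires à $L(\lambda)$, $\lambda\in\bar L$), mais elle suit une route réellement différente de celle du texte. L'article déduit le \autoref{torsionfree2} du \autoref{4.2.1} (cité « ci-dessous ») : on y montre d'abord, par un argument d'isométrie au niveau des réseaux modulo $\pi_L$, que $ze_1-e_2$ est d'image fermée, de sorte que $\widehat{\underline{\Omega}(U)^\diamond}$ est le conoyau de $\mathscr{L}(U)\hat{\otimes}_L\widehat{\mathrm{LL}(M)}\to\mathscr{O}_{\mathbf{P}^1}(U)\hat{\otimes}_L\widehat{\Omega^1[M]^*}$, puis on recopie mot pour mot le calcul algébrique du \autoref{torsionfree1} (réduction à $x=x_0$ constant, évaluation en $\lambda$, injectivité de $\lambda e_1-e_2$ donnée par le Théorème \ref{3.7.2}). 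Vous, au contraire, ne faites pas appel au \autoref{4.2.1} : vous travaillez directement avec $Z_1(U)=\overline{\mathrm{im}(j)}$ (densité fournie par la \autoref{1.3.3}) et vous remplacez la fermeture de l'image par un argument d'approximation, en combinant la stricte exactitude de la suite du Théorème \ref{3.7.2} (d'où $\phi_{\mathscr{L}(\lambda)}$ est un plongement fermé, via le théorème de l'application ouverte), le fait que $z-\lambda$ est un plongement topologique de $P$ d'image $\ker(\mathrm{ev}_\lambda)$, et la division par $z-\lambda$ après soustraction de la valeur en $\lambda$. Le gain de votre approche est de rendre le \autoref{torsionfree2} indépendant du \autoref{4.2.1} (supprimant la référence en avant et l'argument de réseaux mod $\pi_L$), au prix d'une analyse fonctionnelle un peu plus délicate ; l'approche de l'article, elle, obtient en prime la présentation exacte $0\to\mathscr{L}(U)\hat{\otimes}_L\widehat{\mathrm{LL}(M)}\to\mathscr{O}_{\mathbf{P}^1}(U)\hat{\otimes}_L\widehat{\Omega^1[M]^*}\to\widehat{\underline{\Omega}(U)^\diamond}\to0$, dont le texte a de toute façon besoin ensuite, ce qui explique son choix. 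Notez seulement que votre identification des flèches $\phi_{e_1},\phi_{e_2}$ via la \autoref{3.8.1} et la compatibilité de tout le dispositif au changement de base fini méritent une phrase de justification, exactement comme dans la preuve de l'article qui évalue en $\lambda\in\bar L$ sans plus de précaution.
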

\begin{proof}[Preuve] La démonstration est pareille à celle du Lemme \ref{torsionfree1}. D'après le Lemme \ref{4.2.1} ci-dessous, $\widehat{\underline{\Omega}(U)^\diamond}$ est le conoyau du morphisme $ze_1-e_2: \mathscr{L}(U)\hat{\otimes}_L\widehat{\mathrm{LL}(M)} \to\mathscr{O}_{\mathbf{P}^1}(U)\hat{\otimes}_L\widehat{\Omega^1[M]^*}$. Il s'agit donc de prouver que si $\lambda\in\bar{L}$, et si $x\in\mathscr{L}(U)\hat{\otimes}_L\widehat{\mathrm{LL}(M)}$ tel qu'il existe $v\in\mathscr{O}_{\mathbf{P}^1}(U)\hat{\otimes}_L\widehat{\Omega^1[M]^*}$ avec $(z-\lambda)v=(ze_1-e_2)x$, alors il existe $x'\in\mathscr{L}(U)\hat{\otimes}_L\widehat{\mathrm{LL}(M)}$ tel que $x=(z-\lambda)x'$, ce qui implique que $v=(ze_1-e_2)x'\in\mathscr{L}(U)\hat{\otimes}_L\widehat{\mathrm{LL}(M)}$.
	
	On peut écrire $x$ sous la forme $x_0+(z-\lambda)x_1$ avec $x_0, x_1\in\widehat{\mathrm{LL}(M)}$, et on peut supposer que $x=x_0\in\widehat{\mathrm{LL}(M)}$. Dans ce cas, on a $(z-\lambda)v=(ze_1-e_2)x_0$. On évalue $z$ en $\lambda$, alors on a $(\lambda e_1-e_2)x_0=0$. Comme le morphisme $\lambda e_1-e_2: \widehat{\mathrm{LL}(M)}\to\widehat{\Omega^1[M]^*}$ est injectif (Théorème \ref{3.7.2}), on en déduit que $x_0=0$. Cela permet de conclure.
\end{proof}
\begin{lemma}\label{4.2.1} Si $U$ est la boule unité de centre $0$ ou $\infty$ ou le cercle unité, la flèche $\mathscr{L}(U)\hat{\otimes}_L\widehat{\mathrm{LL}(M)}\to Z_1(U)$ est un isomorphisme. En particulier, on a la suite exacte suivante
	$$0\to\mathscr{L}(U)\hat{\otimes}_L\widehat{\mathrm{LL}(M)}\to\mathscr{O}_{\mathbf{P}^1}(U)\hat{\otimes}_L\widehat{\Omega^1[M]^*}\to\widehat{\underline{\Omega}(U)^\diamond}\to0.$$
\end{lemma}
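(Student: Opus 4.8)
The plan is to transpose the proof of Theorem~\ref{3.7.2} to coefficients in the affinoid algebra $\mathscr{O}_{\mathbf{P}^1}(U)$. I would first observe that for $U$ one of the three distinguished opens — the unit ball $U_{0}$ around $0$, the unit ball $U_{\infty}$ around $\infty$, or the unit circle $U_{1}$ — the module $\mathscr{L}(U)$ is free of rank one over $\mathscr{O}_{\mathbf{P}^1}(U)$, generated by $e_{1}+ze_{2}$ on $U_{0}$ and $U_{1}$ and by $z^{-1}e_{1}+e_{2}$ on $U_{\infty}$, and is a direct summand of $\mathscr{O}_{\mathbf{P}^1}(U)\otimes_{L}M_{\mathrm{dR}}$; in particular the bottom row of the commutative diagram obtained from~(\ref{equation1}) by passing to completed universal completions is a split short exact sequence of Banach representations, hence exact. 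The density of the image of $\gamma\colon\mathscr{L}(U)\hat{\otimes}_{L}\widehat{\mathrm{LL}(M)}\to Z_{1}(U)$ has already been recorded via Proposition~\ref{1.3.3}(ii); it therefore remains to prove that $\gamma$ is injective and that its image is closed — equivalently, by the open mapping theorem, that $\gamma$ is onto $Z_{1}(U)$.

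For injectivity it suffices to check that the composite $\mathscr{L}(U)\hat{\otimes}_{L}\widehat{\mathrm{LL}(M)}\to\mathscr{O}_{\mathbf{P}^1}(U)\hat{\otimes}_{L}\widehat{\Omega^{1}[M]^{*}}$ is injective. Using the freeness of $\mathscr{L}(U)$ this is an $\mathscr{O}_{\mathbf{P}^1}(U)$-linear map, and its specialization at a point $\lambda\in U(L)$ (so $\lambda\in\mathscr{O}_{L}$ on $U_{0}$, $\lambda\in\mathscr{O}_{L}^{\times}$ on $U_{1}$, and similarly on $U_{\infty}$ in the coordinate at $\infty$) is precisely the map $\mathscr{L}(\lambda)\otimes_{L}\widehat{\mathrm{LL}(M)}\to\widehat{\Omega^{1}[M]^{*}}$, which is injective by Theorem~\ref{3.7.2}. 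Writing an element $x$ of the source as a (Laurent) power series $\sum_{k}v_{k}z^{k}$ with $v_{k}\to0$ in $\widehat{\mathrm{LL}(M)}$, the hypothesis $\gamma(x)=0$ forces $x(\lambda)=0$ for all such $\lambda$; pairing with continuous functionals on $\widehat{\mathrm{LL}(M)}$ and invoking Strassmann's theorem (the $L$-points used are Zariski dense) together with Hahn--Banach gives $v_{k}=0$ for all $k$, hence $x=0$.

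For the closedness of the image I would run the residual-finiteness argument from the end of the proof of Theorem~\ref{3.7.2}, now with the variable $z$ adjoined to the coefficients. By Proposition~\ref{1.3.3}(i) applied to the top row of~(\ref{equation1}), the map $\mathscr{O}_{\mathbf{P}^1}(U)\hat{\otimes}_{L}\widehat{\Omega^{1}[M]^{*}}\to\widehat{\underline{\Omega}(U)^{\diamond}}$ is surjective, so $Z_{1}(U)$ is the kernel of a strict surjection of Banach representations. Reducing unit balls modulo $\pi_{L}$ and using Propositions~\ref{3.6.1} and~\ref{3.6.2}, together with the fact that $\mathscr{O}_{\mathbf{P}^1}(U)^{+}/\pi_{L}$ is a polynomial (resp.\ Laurent polynomial) ring over $\kappa_{L}$ for our $U$, one checks that $\bigl(\mathscr{O}_{\mathbf{P}^1}(U)\hat{\otimes}_{L}\widehat{\Omega^{1}[M]^{*}}\bigr)^{+}/\pi_{L}$ and $\widehat{\underline{\Omega}(U)^{\diamond}}{}^{+}/\pi_{L}$ are finitely presented over $(\mathscr{O}_{\mathbf{P}^1}(U)^{+}/\pi_{L})[G]$; this ring is coherent, because the augmented Iwasawa algebra of $G=\mathrm{GL}_{2}(\mathbb{Q}_{p})$ is coherent (Theorem~\ref{3.1.1}) and coherence is preserved under adjoining a (Laurent) polynomial variable. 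The coefficient analogue of Corollary~\ref{3.1.3} then yields that $Z_{1}(U)$ is residually finitely presented, in particular residually of finite type, and a coefficient version of Lemma~\ref{2.2.2} upgrades the density of $\gamma$ to surjectivity. Together with injectivity this shows that $\gamma$ is an isomorphism; substituting $\mathscr{L}(U)\hat{\otimes}_{L}\widehat{\mathrm{LL}(M)}$ for $Z_{1}(U)$ in the (now) exact sequence $0\to Z_{1}(U)\to\mathscr{O}_{\mathbf{P}^1}(U)\hat{\otimes}_{L}\widehat{\Omega^{1}[M]^{*}}\to\widehat{\underline{\Omega}(U)^{\diamond}}\to0$ gives the asserted short exact sequence.

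The main difficulty will be the functional-analytic bookkeeping around $\mathscr{O}_{\mathbf{P}^1}(U)\hat{\otimes}_{L}(-)$: one must verify that this operation is exact on the Banach and Fréchet representations at hand, that it commutes with passage to the completed universal completion (so that $\widehat{\mathscr{O}_{\mathbf{P}^1}(U)\hat{\otimes}_{L}\Omega^{1}[M]^{*}}\cong\mathscr{O}_{\mathbf{P}^1}(U)\hat{\otimes}_{L}\widehat{\Omega^{1}[M]^{*}}$, and likewise with $\mathrm{LL}(M)$ in place of $\Omega^{1}[M]^{*}$), and that the exact sequences produced are strict, so that the abelian-category formalism for residually finitely presented Banach representations and Proposition~\ref{1.3.3} apply as in the case of a point. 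On the algebraic side, the two inputs used above — the coefficient version of Lemma~\ref{2.2.2} and the $z$-linear analogue of Corollary~\ref{3.1.3} (equivalently, the residual finite presentation of $\widehat{\underline{\Omega}(U)^{\diamond}}$) — should be isolated as preliminary lemmas before carrying out the argument.
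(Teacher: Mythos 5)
Your injectivity step (specialization at the $L$-points of $U$, then Strassmann plus Hahn--Banach, using the injectivity from the Théorème \ref{3.7.2} at each point) is sound, but the second half of your argument — closedness of the image of $\mathscr{L}(U)\hat{\otimes}_L\widehat{\mathrm{LL}(M)}$ in $Z_1(U)$ via a coefficient version of the machinery of présentation finie résiduelle — has two genuine gaps. First, you assert that coherence is preserved when one adjoins a (Laurent) polynomial variable to the augmented Iwasawa algebra; this is false for general coherent rings (coherence, unlike noetherianity, does not pass to $R[x]$ — Soublin's example), and the Théorème \ref{3.1.1} gives you no such stability, so the ring $(\mathscr{O}_{\mathbf{P}^1}(U)^+/\pi_L)[G]$ over which you want to work is not known to be coherent. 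Second, and more seriously, your route needs $\widehat{\underline{\Omega}(U)^\diamond}{}^{+}/\pi_L$ to be of présentation finie over that ring in order to conclude that the kernel $Z_1(U)$ is residually of type fini; but a priori this reduction is only a quotient of the (finitely presented) reduction of $\mathscr{O}_{\mathbf{P}^1}(U)\hat{\otimes}_L\widehat{\Omega^1[M]^*}$ by the image of $Z_1(U)$ itself, so its finite presentation is essentially equivalent to the finiteness of $Z_1(U)$ you are trying to prove — the argument is circular as written, and "one checks" hides exactly the missing content. The strictness of the reduced sequence of unit balls, which you also need, is likewise unjustified at this stage.

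The paper avoids all of this with a much more elementary device: after rescaling $e_2$ so that $e_2(\widehat{\mathrm{LL}(M)}^+)\subseteq\widehat{\Omega^1[M]^*}^+$ (where $\widehat{\mathrm{LL}(M)}^+$ is the preimage of the unit ball under $e_1$), one shows that $ze_1-e_2:\mathscr{L}(U)\hat{\otimes}_L\widehat{\mathrm{LL}(M)}\to\mathscr{O}_{\mathbf{P}^1}(U)\hat{\otimes}_L\widehat{\Omega^1[M]^*}$ is an isometry; this reduces modulo $\pi_L$ to the injectivity of $z\bar e_1-\bar e_2$ on $(\widehat{\mathrm{LL}(M)}^+/\pi_L)[z]$ (resp.\ $[T^{\pm1}]$), which is a two-line comparison of coefficients using only the injectivity of $\bar e_1$. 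An isometry has complete, hence closed, image; combined with the density already furnished by the Proposition \ref{1.3.3}, this gives at once the injectivity and the identification of the image with $Z_1(U)$, with no appeal to coherence, to finite presentation of $\widehat{\underline{\Omega}(U)^\diamond}$, or to a coefficient analogue of the Lemme \ref{2.2.2}. If you want to salvage your approach you would have to prove those two missing inputs independently; the isometry argument is the simpler and safer route.
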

\begin{proof}[Preuve] Il suffit de montrer que la flèche $$ze_1-e_2: \mathscr{L}(U)\hat{\otimes}_L\widehat{\mathrm{LL}(M)} \to\mathscr{O}_{\mathbf{P}^1}(U)\hat{\otimes}_L\widehat{\Omega^1[M]^*}$$ est une isométrie car cela prouve que l'image est fermée. On note $\widehat{\mathrm{LL}(M)}^+$ la préimage de la boule unité $\widehat{\Omega^1[M]^*}^+$ de $\widehat{\Omega^1[M]^*}$ par $e_1$, alors la réduction $\bar{e}_1: \widehat{\mathrm{LL}(M)}^+/\pi_L\to\widehat{\Omega^1[M]^*}^+/\pi_L$ est injective. On peut multiplier $e_2$ par $\pi^n$ pour que l'image de $\widehat{\mathrm{LL}(M)}^+$ par $e_2$ soit contenue dans $\widehat{\Omega^1[M]^*}^+$.
	
	On est ramené à montrer que la flèche $z\bar{e}_1-\bar{e}_2$ est injective sur $$\kappa_L[z]\otimes_{\kappa_L} \widehat{\mathrm{LL}(M)}^+/\pi_L\cong(\widehat{\mathrm{LL}(M)}^+/\pi_L)[z]$$
	pour $U$ la boule unité de centre $0$ (pour celle de centre $\infty$, il suffit d'échanger $e_1$ et $e_2$)
	ou $$\kappa_L[T^{\pm1}]\otimes_{\kappa_L} \widehat{\mathrm{LL}(M)}^+/\pi_L\cong(\widehat{\mathrm{LL}(M)}^+/\pi_L)[T^{\pm1}]$$
	pour le cercle unité. On procède en supposant que $U$ est la boule unité de centre $0$, les preuves pour les autres cas étant similaires. Supposons que $\sum_{i=0}^k m_iz^i\in (\widehat{\mathrm{LL}(M)}^+/\pi_L)[z]$ tel que $(z\bar{e}_1-\bar{e}_2)(\sum_{i=0}^k m_iz^i)=0$, alors il nous faut montrer que $m_i=0$ pour tout $i\geq0$. Puisque
	$$\sum_{i=0}^k(\bar{e}_1(m_i)z-\bar{e}_2(m_i))z^i=0,$$
	on a 
	$$\sum_{i=0}^k\bar{e}_1(m_i)z^{i+1}=\sum_{i=0}^k\bar{e}_2(m_i)z^i.$$
	On en déduit que $\bar{e}_1(m_k)=0$ et que $\bar{e}_1(m_i)=\bar{e}_2(m_{i+1})$ pour tout $0\leq i\leq k-1$. Comme le morphisme $\bar{e}_1$ est injectif, on en déduit que $m_i=0$ pour tout $i$, ce que l'on voulait.
\end{proof}
\begin{corollary}\label{4.2.2} Si $U$ est la boule unité de centre $0$ ou $\infty$ ou le cercle unité, alors on a une suite exacte 
	\begin{equation}0\to\mathscr{O}_{\mathbf{P}^1}(U)^*\otimes_{\mathscr{O}_{\mathbf{P}^1}(U)}\mathscr{L}(U)\hat{\otimes}_L\widehat{\mathrm{LL}(M)}\to\mathscr{O}_{\mathbf{P}^1}(U)^*\hat{\otimes}_L\widehat{\Omega^1[M]^*}\to\mathscr{O}_{\mathbf{P}^1}(U)^*\hat{\otimes}_{\mathscr{O}_{\mathbf{P}^1}(U)}\widehat{\underline{\Omega}(U)^\diamond}\to0. \label{equation3}
	\end{equation}
\end{corollary}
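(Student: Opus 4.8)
The plan is to derive the exact sequence (\ref{equation3}) by applying the functor $\mathscr{O}_{\mathbf{P}^1}(U)^*\hat{\otimes}_{\mathscr{O}_{\mathbf{P}^1}(U)}(-)$ to the short exact sequence of \autoref{4.2.1}, namely $0\to\mathscr{L}(U)\hat{\otimes}_L\widehat{\mathrm{LL}(M)}\to\mathscr{O}_{\mathbf{P}^1}(U)\hat{\otimes}_L\widehat{\Omega^1[M]^*}\to\widehat{\underline{\Omega}(U)^\diamond}\to0$, and then to check that tensoring keeps it short exact. This is the exact analogue, on the Banach side, of the passage from the first row of the diagram (\ref{equation1}) to the sequence (\ref{equation2}) in the locally analytic case, and the proof should proceed along the same lines.

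First I would identify the three terms of the tensored sequence. Associativity of the completed tensor product gives $\mathscr{O}_{\mathbf{P}^1}(U)^*\hat{\otimes}_{\mathscr{O}_{\mathbf{P}^1}(U)}(\mathscr{O}_{\mathbf{P}^1}(U)\hat{\otimes}_L\widehat{\Omega^1[M]^*})\cong\mathscr{O}_{\mathbf{P}^1}(U)^*\hat{\otimes}_L\widehat{\Omega^1[M]^*}$ for the middle term. Since $\mathscr{L}(U)=\mathscr{O}_{\mathbf{P}^1}(U)\cdot(e_1+ze_2)$ is free of rank one over $\mathscr{O}_{\mathbf{P}^1}(U)$, the leftmost term is $\mathscr{O}_{\mathbf{P}^1}(U)^*\hat{\otimes}_{\mathscr{O}_{\mathbf{P}^1}(U)}(\mathscr{L}(U)\hat{\otimes}_L\widehat{\mathrm{LL}(M)})\cong(\mathscr{O}_{\mathbf{P}^1}(U)^*\otimes_{\mathscr{O}_{\mathbf{P}^1}(U)}\mathscr{L}(U))\hat{\otimes}_L\widehat{\mathrm{LL}(M)}$, the inner tensor product by a free rank-one module requiring no completion; the rightmost term is exactly the one in the statement. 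Right-exactness of $\mathscr{O}_{\mathbf{P}^1}(U)^*\hat{\otimes}_{\mathscr{O}_{\mathbf{P}^1}(U)}(-)$ on strict short exact sequences of Banach (resp. Fréchet) modules, combined with the open mapping theorem exactly as in the proof of \autoref{3.5.2}, then gives a strict exact sequence at the two right-hand spots, so the only remaining point is the injectivity of the first arrow.

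For that I would use that $\mathscr{O}_{\mathbf{P}^1}(U)$, being the ring of analytic functions on a closed disc (when $U$ is centred at $0$ or $\infty$) or on the unit annulus (when $U$ is the unit circle) over $L$, is a principal ideal domain, and that $\widehat{\underline{\Omega}(U)^\diamond}$ is torsion-free over it by \autoref{torsionfree2}. A torsion-free module over a principal ideal domain is flat, hence $\mathrm{Tor}_1^{\mathscr{O}_{\mathbf{P}^1}(U)}(\mathscr{O}_{\mathbf{P}^1}(U)^*,\widehat{\underline{\Omega}(U)^\diamond})=0$, so the functor $\mathscr{O}_{\mathbf{P}^1}(U)^*\hat{\otimes}_{\mathscr{O}_{\mathbf{P}^1}(U)}(-)$ also preserves exactness on the left; this yields (\ref{equation3}).

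The hard part will be the interface between the algebraic vanishing of $\mathrm{Tor}_1$ and the topological exactness asserted in (\ref{equation3}): one must check that the completed tensor product is exact in the appropriate strict sense on the relevant category of topological $\mathscr{O}_{\mathbf{P}^1}(U)$-modules, which is again where the open mapping theorem enters, as in \autoref{3.5.2}. Should this prove delicate, a more hands-on alternative that avoids the flatness input is to reproduce the proof of \autoref{4.2.1} verbatim: the kernel of the first map of (\ref{equation3}) is controlled by the injectivity of the operator $ze_1-e_2$ on a polynomial (resp. Laurent-polynomial) module over $\widehat{\mathrm{LL}(M)}^+/\pi_L$, which was established there.
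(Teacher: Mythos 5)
Your main route --- applying $\mathscr{O}_{\mathbf{P}^1}(U)^*\hat{\otimes}_{\mathscr{O}_{\mathbf{P}^1}(U)}(-)$ to the exact sequence of \autoref{4.2.1}, identifying the three terms via freeness of $\mathscr{L}(U)$, and invoking the torsion-freeness of $\widehat{\underline{\Omega}(U)^\diamond}$ over the principal ring $\mathscr{O}_{\mathbf{P}^1}(U)$ (\autoref{torsionfree2}) to keep exactness on the left --- is essentially the paper's own derivation: the corollary is stated there without further argument as an immediate consequence of \autoref{4.2.1}, and \autoref{torsionfree2} is precisely the input that makes the left-exactness work. Only your fallback should be dropped: the proof of \autoref{4.2.1} does not transfer ``verbatim'', because after tensoring with $\mathscr{O}_{\mathbf{P}^1}(U)^*$ the reduction modulo $\pi_L$ lands in the $\kappa_L$-dual of a polynomial (resp.\ Laurent-polynomial) module, a product-type module with no leading coefficient, so the isometry/leading-term argument used there has no direct analogue and the flatness route is the one to retain.
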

\begin{proof}[Preuve] On déduit du Lemme \ref{torsionfree2} que $\widehat{\underline{\Omega}(U)^\diamond}$ est plat sur $\mathscr{O}_{\mathbf{P}^1}(U)$, alors le résultat suit du Lemme \ref{4.2.1}.
\end{proof}
Si $U$ est la boule unité de centre $0$ ou $\infty$ ou le cercle unité, alors on déduit du Corollaire \ref{4.2.2} un diagramme commutatif
\begin{equation*} \xymatrix@R=5mm@C=4mm{
		0\ar[r]& \mathscr{O}_{\mathbf{P}^1}(U)^*\otimes_{\mathscr{O}_{\mathbf{P}^1}(U)}\mathscr{L}(U)\hat{\otimes}_L\mathrm{LL}(M)\ar[r]\ar[d] &\mathscr{O}_{\mathbf{P}^1}(U)^*\hat{\otimes}_L\Omega^1[M]^*\ar[r]\ar[d] & \mathscr{O}_{\mathbf{P}^1}(U)^*\hat{\otimes}_{\mathscr{O}_{\mathbf{P}^1}(U)}\underline{\Omega}(U)^\diamond\ar[r]\ar[d] &0\\
		0 \ar[r] & \mathscr{O}_{\mathbf{P}^1}(U)^*\otimes_{\mathscr{O}_{\mathbf{P}^1}(U)}\mathscr{L}(U)\hat{\otimes}_L\widehat{\mathrm{LL}(M)}^{\mathrm{an}}\ar[r]  &\mathscr{O}_{\mathbf{P}^1}(U)^*\hat{\otimes}_L\widehat{\Omega^1[M]^*}^{\mathrm{an}}\ar[r] &\mathscr{O}_{\mathbf{P}^1}(U)^*\hat{\otimes}_{\mathscr{O}_{\mathbf{P}^1}(U)}\widehat{\underline{\Omega}(U)^\diamond}^{\mathrm{an}}.&
	}
\end{equation*}
Il suit de la Proposition \ref{3.2.3} et du Corollaire \ref{3.4.4} que les deux flèches verticales de gauche sont des isomorphismes, on a donc une injection $$\underline{\Omega}(U)^*=\mathscr{O}_{\mathbf{P}^1}(U)^*\hat{\otimes}_{\mathscr{O}_{\mathbf{P}^1}(U)}\underline{\Omega}(U)^\diamond\hookrightarrow\mathscr{O}_{\mathbf{P}^1}(U)^*\hat{\otimes}_{\mathscr{O}_{\mathbf{P}^1}(U)}\widehat{\underline{\Omega}(U)^\diamond}^{\mathrm{an}}.$$
\begin{lemma}\label{nouveau1} Si $U$ est la boule unité de centre $0$ ou $\infty$ ou le cercle unité et $\mathscr{L}\in U$, alors on a une suite exacte
	\begin{equation*}0\to\Pi_{M, \mathscr{L}}\to\mathscr{O}_{\mathbf{P}^1}(U)^*\hat{\otimes}_{\mathscr{O}_{\mathbf{P}^1}(U)}\widehat{\underline{\Omega}(U)^\diamond}\xrightarrow{z-z(\mathscr{L})}\mathscr{O}_{\mathbf{P}^1}(U)^*\hat{\otimes}_{\mathscr{O}_{\mathbf{P}^1}(U)}\widehat{\underline{\Omega}(U)^\diamond}\to0. \end{equation*}
\end{lemma}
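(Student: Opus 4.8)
The plan is to carry out, mutatis mutandis, the proof of \autoref{nouveau2}, replacing the locally analytic objects by their universal unitary completions and invoking \autoref{3.7.2} in place of Theorem \ref{2.1.2}. Write $R:=\mathscr{O}_{\mathbf{P}^1}(U)$; by the hypothesis on $U$ this is one of the Tate algebras $L\langle z\rangle$, $L\langle z^{-1}\rangle$, $L\langle z,z^{-1}\rangle$, hence a Banach $L$-algebra which is a domain, $R^*$ is an $L$-Banach space, and $\mathscr{L}(U)$ is a free $R$-module of rank one. By \autoref{4.2.1} and \autoref{4.2.2} we have the strict exact sequence (\ref{equation3}) of $G$-representations, which, using the freeness of $\mathscr{L}(U)$, we rewrite as
\[0\to R^*\hat{\otimes}_L\widehat{\mathrm{LL}(M)}\xrightarrow{\iota} R^*\hat{\otimes}_L\widehat{\Omega^1[M]^*}\to W\to0,\]
where $W:=\mathscr{O}_{\mathbf{P}^1}(U)^*\hat{\otimes}_{\mathscr{O}_{\mathbf{P}^1}(U)}\widehat{\underline{\Omega}(U)^\diamond}$, every term carries the $R$-module structure coming from the action of $z$ on $R^*$ by $(z\cdot\phi)(h)=\phi(zh)$, and this action commutes with $G$. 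I will then apply the snake lemma to multiplication by $f:=z-z(\mathscr{L})$ (a generator of $\mathfrak{m}_{\mathscr{L}}$ in $R$) on this sequence.

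The one computation specific to $R$ is the behaviour of $f$ on $R^*$: since $\mathscr{L}\in U$ one has $|z(\mathscr{L})|\le1$, and an elementary check with the bounded coefficient sequences describing $R^*$ shows that $f$ is surjective on $R^*$ with one-dimensional kernel $L\cdot\mathrm{ev}_{z(\mathscr{L})}$, the line spanned by evaluation at the point $z(\mathscr{L})$; equivalently, $0\to L\to R^*\xrightarrow{f}R^*\to0$ is exact. Completed tensor product over $L$ with a fixed $L$-Banach space is exact on strict short exact sequences of $L$-Banach spaces, so $f$ is surjective on the first two terms of the displayed sequence, with vanishing cokernels, and with kernels $\mathscr{L}\hat{\otimes}_L\widehat{\mathrm{LL}(M)}$ and $\widehat{\Omega^1[M]^*}=\Omega^1[M]^{\mathrm{b},*}$ respectively. (For the first term one uses, after transporting $z$ across the tensor product over $R$, that $(R^*\otimes_R\mathscr{L}(U))/f$ is the fibre $\mathscr{L}$ of $\mathscr{L}(U)$ at the point $z(\mathscr{L})$.)

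Applying the snake lemma, the cokernel part yields $\mathrm{coker}(f\mid W)=0$, which is the surjectivity claimed in the statement, while the kernel part yields a short exact sequence
\[0\to\mathscr{L}\hat{\otimes}_L\widehat{\mathrm{LL}(M)}\xrightarrow{\bar\iota}\Omega^1[M]^{\mathrm{b},*}\to\ker(f\mid W)\to0.\]
It remains to identify $\bar\iota$, which is the restriction of $\iota$ to the two kernels, i.e. the specialisation at $z=z(\mathscr{L})$ of the $R$-linear family $\iota$. Unwinding the construction of $\iota$ from \autoref{4.2.1} and the diagram preceding it, together with \autoref{3.8.1} (ii), which identifies $\mathrm{Hom}_G(\widehat{\mathrm{LL}(M)},\widehat{\Omega^1[M]^*})$ with $M_{\mathrm{dR}}$, one sees that $\bar\iota$ is precisely the embedding $\mathscr{L}\hat{\otimes}_L\widehat{\mathrm{LL}(M)}\hookrightarrow\Omega^1[M]^{\mathrm{b},*}$ appearing in \autoref{2.6.3} (equivalently in \autoref{3.7.2}). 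Hence $\ker(f\mid W)\cong\mathrm{coker}(\bar\iota)\cong\Pi_{M,\mathscr{L}}$, which completes the proof.

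The routine but genuine work is the topological bookkeeping in the second paragraph: the exactness of $(-)\hat{\otimes}_L\widehat{\mathrm{LL}(M)}$ and $(-)\hat{\otimes}_L\widehat{\Omega^1[M]^*}$ on strict exact sequences, and the compatibility of the various completed-tensor identifications with the $R$-action and with $f$, which together legitimate the snake-lemma computation. The only non-formal input is \autoref{3.7.2}, which enters exactly as Theorem \ref{2.1.2} does in the proof of \autoref{nouveau2}; and, just as for \autoref{4.2.1} and \autoref{torsionfree2}, the assumption that $U$ be a unit ball or the unit circle is used both because those statements are available only in that case and because surjectivity of $f$ on $R^*$ relies on the explicit form of $R$ together with $|z(\mathscr{L})|\le1$.
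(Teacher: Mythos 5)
Your argument is correct and, in its core, is the paper's argument: both proofs apply the snake lemma to multiplication by $z-z(\mathscr{L})$ on the sequence (\ref{equation3}) of \autoref{4.2.2}, using that $z-z(\mathscr{L})$ is surjective on $\mathscr{O}_{\mathbf{P}^1}(U)^*$ with one-dimensional kernel $(\mathscr{O}_{\mathbf{P}^1}(U)/(z-z(\mathscr{L})))^*=L$, so that the kernel on the right-hand term sits in an extension of the form $0\to\mathscr{L}\otimes_L\widehat{\mathrm{LL}(M)}\to\widehat{\Omega^1[M]^*}\to\ker(z-z(\mathscr{L}))\to0$. Where you diverge is in the last step, the identification of this kernel with $\Pi_{M,\mathscr{L}}$ for the \emph{correct} line: you do it directly, by observing that the restriction $\bar\iota$ of the $R$-linear family to the kernels is the specialisation at $z=z(\mathscr{L})$, whose image line in $M_{\mathrm{dR}}=\mathrm{Hom}_G(\widehat{\mathrm{LL}(M)},\widehat{\Omega^1[M]^*})$ is the fibre of $\mathscr{L}(U)$ at that point, i.e. $\mathscr{L}$, so that \autoref{3.7.2} (equivalently \autoref{2.6.3}) gives cokernel $\Pi_{M,\mathscr{L}}$. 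The paper instead argues indirectly: by \autoref{cor2} the kernel is $\Pi_{M,\mathscr{L}'}$ for \emph{some} $\mathscr{L}'$, and $\mathscr{L}'=\mathscr{L}$ is then forced by the injection $\underline{\Omega}(U)^*[\mathfrak{m}_{\mathscr{L}}]\hookrightarrow(\mathscr{O}_{\mathbf{P}^1}(U)^*\hat{\otimes}_{\mathscr{O}_{\mathbf{P}^1}(U)}\widehat{\underline{\Omega}(U)^\diamond})[\mathfrak{m}_{\mathscr{L}}]$, the locally analytic \autoref{nouveau2}, and the vanishing $\mathrm{Hom}_G(\Pi_{M,\mathscr{L}}^{\mathrm{an}},\Pi_{M,\mathscr{L}'}^{\mathrm{an}})=0$ of \autoref{2.3.4} (i). Your direct identification is legitimate, but the single sentence ``unwinding the construction \ldots one sees that'' is carrying real weight: one has to track the conventions relating $\mathscr{L}(U)$, $\mathscr{L}(U)^\perp$ and the generator $ze_1-e_2$ through the dualisations leading to (\ref{equation1}) and its completed analogue, which is precisely the bookkeeping the paper's comparison with \autoref{nouveau2} is designed to bypass; conversely, your route has the merit of not invoking the locally analytic statement at this point. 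Either way the proof is sound.
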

\begin{proof}[Preuve] La suite exacte (\ref{equation3}) induit le diagramme commutatif
	\begin{equation*} \xymatrix@R=5mm@C=4mm{
			0\ar[r]& \mathscr{O}_{\mathbf{P}^1}(U)^*\otimes_{\mathscr{O}_{\mathbf{P}^1}(U)}\mathscr{L}(U)\hat{\otimes}_L\widehat{\mathrm{LL}(M)}\ar[r]\ar[d]_{z-z(\mathscr{L})}&\mathscr{O}_{\mathbf{P}^1}(U)^*\hat{\otimes}_L\widehat{\Omega^1[M]^*}\ar[r]\ar[d]_{z-z(\mathscr{L})}& \mathscr{O}_{\mathbf{P}^1}(U)^*\hat{\otimes}_{\mathscr{O}_{\mathbf{P}^1}(U)}\widehat{\underline{\Omega}(U)^\diamond}\ar[r]\ar[d]_{z-z(\mathscr{L})}&0\\
			0 \ar[r] & \mathscr{O}_{\mathbf{P}^1}(U)^*\otimes_{\mathscr{O}_{\mathbf{P}^1}(U)}\mathscr{L}(U)\hat{\otimes}_L\widehat{\mathrm{LL}(M)}\ar[r]  &\mathscr{O}_{\mathbf{P}^1}(U)^*\hat{\otimes}_L\widehat{\Omega^1[M]^*}\ar[r] &\mathscr{O}_{\mathbf{P}^1}(U)^*\hat{\otimes}_{\mathscr{O}_{\mathbf{P}^1}(U)}\widehat{\underline{\Omega}(U)^\diamond}\ar[r] &0.
		}
	\end{equation*}
	Le morphisme $z-z(\mathscr{L})$ est injectif et d'image fermée sur $\mathscr{O}_{\mathbf{P}^1}(U)$, donc $z-z(\mathscr{L})$ est surjectif sur $\mathscr{O}_{\mathbf{P}^1}(U)^*$. Comme on a $$\mathscr{O}_{\mathbf{P}^1}(U)^*[z-z(\mathscr{L})]=(\mathscr{O}_{\mathbf{P}^1}(U)/(z-z(\mathscr{L})))^*=L,$$
	le lemme du serpent nous donne une suite exacte
	$$0\to\mathscr{L}\otimes_L\widehat{\mathrm{LL}(M)}\to\widehat{\Omega^1[M]^*}\to(\mathscr{O}_{\mathbf{P}^1}(U)^*\hat{\otimes}_{\mathscr{O}_{\mathbf{P}^1}(U)}\widehat{\underline{\Omega}(U)^\diamond})[\mathfrak{m}_{\mathscr{L}}]\to0.$$
	Le Corollaire \ref{cor2} implique que $(\mathscr{O}_{\mathbf{P}^1}(U)^*\hat{\otimes}_{\mathscr{O}_{\mathbf{P}^1}(U)}\widehat{\underline{\Omega}(U)^\diamond})[\mathfrak{m}_{\mathscr{L}}]=\Pi_{M, \mathscr{L}'}$ pour une certaine $\mathscr{L}'$. Compte tenu de l'injection $\underline{\Omega}(U)^*[\mathfrak{m}_{\mathscr{L}}]\hookrightarrow(\mathscr{O}_{\mathbf{P}^1}(U)^*\hat{\otimes}_{\mathscr{O}_{\mathbf{P}^1}(U)}\widehat{\underline{\Omega}(U)^\diamond})[\mathfrak{m}_{\mathscr{L}}]$, le Lemme \ref{nouveau2} et le Lemme \ref{2.3.4} (i) impliquent que $\mathscr{L}=\mathscr{L}'$. Cela permet de conclure. 
\end{proof}
\begin{corollary}\label{Banach} Soient $U$ la boule unité de centre $0$ ou $\infty$ ou le cercle unité et $\mathscr{L}\in U$, alors on a une suite exacte $$0\to\Pi_{M, \mathscr{L}, j}\to\mathscr{O}_{\mathbf{P}^1}(U)^*\hat{\otimes}_{\mathscr{O}_{\mathbf{P}^1}(U)}\widehat{\underline{\Omega}(U)^\diamond}\xrightarrow{(z-z(\mathscr{L}))^j}\mathscr{O}_{\mathbf{P}^1}(U)^*\hat{\otimes}_{\mathscr{O}_{\mathbf{P}^1}(U)}\widehat{\underline{\Omega}(U)^\diamond}\to0.$$
\end{corollary}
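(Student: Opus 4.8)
The plan is to argue by induction on $j$, the case $j=1$ being precisely \autoref{nouveau1}. Throughout, write $N_U := \mathscr{O}_{\mathbf{P}^1}(U)^*\hat{\otimes}_{\mathscr{O}_{\mathbf{P}^1}(U)}\widehat{\underline{\Omega}(U)^\diamond}$ and $\epsilon := z - z(\mathscr{L})$. For $U$ a unit ball or the unit circle and $\mathscr{L}\in U$ an $L$-rational point, $\epsilon$ is a generator of the maximal ideal $\mathfrak{m}_{\mathscr{L}}$ of the regular one-dimensional affinoid algebra $\mathscr{O}_{\mathbf{P}^1}(U)$, so that $\mathfrak{m}_{\mathscr{L}}^k=(\epsilon^k)$ and $\mathscr{O}_{\mathbf{P}^1}(U)/\mathfrak{m}_{\mathscr{L}}^j\cong L[\epsilon]/\epsilon^j$; moreover $\epsilon$ is a global scalar, hence multiplication by $\epsilon$ on $N_U$ is $G$-equivariant, and \autoref{nouveau1} says it is surjective with kernel $\Pi_{M,\mathscr{L}}$. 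The claim to be proved is exactly that $\ker(\epsilon^j : N_U\to N_U)=N_U[\mathfrak{m}_{\mathscr{L}}^j]\cong\Pi_{M,\mathscr{L},j}$ and that $\epsilon^j$ is surjective.

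Surjectivity of $\epsilon^j=\epsilon\circ\epsilon^{j-1}$ is immediate from \autoref{nouveau1} and the inductive hypothesis. For the kernel, I would note that $N_U[\mathfrak{m}_{\mathscr{L}}^j]=\epsilon^{-1}\bigl(N_U[\mathfrak{m}_{\mathscr{L}}^{j-1}]\bigr)$; since $\epsilon$ is onto $N_U$ with kernel $\Pi_{M,\mathscr{L}}$ and $N_U[\mathfrak{m}_{\mathscr{L}}^{j-1}]\cong\Pi_{M,\mathscr{L},j-1}$ by induction, restricting $\epsilon$ to $N_U[\mathfrak{m}_{\mathscr{L}}^j]$ yields a short exact sequence
$$0\to\Pi_{M,\mathscr{L}}\to N_U[\mathfrak{m}_{\mathscr{L}}^j]\xrightarrow{\ \epsilon\ }\Pi_{M,\mathscr{L},j-1}\to 0 .$$
Hence $N_U[\mathfrak{m}_{\mathscr{L}}^j]$ is an admissible Banach representation with central character $\psi$, of length $j$, all of whose Jordan–Hölder factors are isomorphic to $\Pi_{M,\mathscr{L}}$; it is a module over $\mathscr{O}_{\mathbf{P}^1}(U)/\mathfrak{m}_{\mathscr{L}}^j\cong L[\epsilon]/\epsilon^j$ with $\epsilon^k N_U[\mathfrak{m}_{\mathscr{L}}^j]=N_U[\mathfrak{m}_{\mathscr{L}}^{j-k}]\cong\Pi_{M,\mathscr{L},j-k}$, mirroring exactly the structure of $\Pi_{M,\mathscr{L},j}$ recalled after \autoref{2.2.3}. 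Since $\Pi_{M,\mathscr{L}}$ is absolutely irreducible (\autoref{1.1.7}), $\epsilon$ acts on it as a scalar, which on any simple subrepresentation of $N_U[\mathfrak{m}_{\mathscr{L}}^j]$ must be $0$ (an invertible scalar is impossible inside $\ker\epsilon^j$); thus every simple subrepresentation lies in $\ker\epsilon=\Pi_{M,\mathscr{L}}$, the socle is the simple object $\Pi_{M,\mathscr{L}}$, and $N_U[\mathfrak{m}_{\mathscr{L}}^j]$ is therefore indecomposable, i.e. \emph{non scindée}.

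It remains to identify this indecomposable length-$j$ extension with $\Pi_{M,\mathscr{L},j}$, and by the uniqueness statement of \cite[Section 4.2]{cdn2023correspondance} it suffices to check that $N_U[\mathfrak{m}_{\mathscr{L}}^j]$ is \emph{de Rham}. This is the one genuinely non-formal point, since the space of self-extensions of $\Pi_{M,\mathscr{L}}$ is $3$-dimensional (\autoref{1.1.6}) and only a line of them is de Rham. The extension above is not arbitrary: by \autoref{4.2.1} the family $\widehat{\underline{\Omega}(U)^\diamond}$ is cut out inside $\mathscr{O}_{\mathbf{P}^1}(U)\hat{\otimes}_L\widehat{\Omega^1[M]^*}$ by the $\mathscr{O}_{\mathbf{P}^1}(U)$-linear condition defining the tautological line $\mathscr{L}(U)\subseteq\mathscr{O}_{\mathbf{P}^1}(U)\otimes_L M_{\mathrm{dR}}$, so $N_U[\mathfrak{m}_{\mathscr{L}}^j]$ is the $j$-th infinitesimal neighbourhood at $\mathscr{L}$ of this family; equivalently it corresponds under Colmez's functor $\mathbf{\Pi}$ to the $j$-jet at $\mathscr{L}$ of the de Rham family $\{V_{M,\mathscr{L}'}\}_{\mathscr{L}'\in U}$ of potentially semi-stable representations with $D_{\mathrm{pst}}=M$ (de Rham because the Hodge filtration $\mathrm{Fil}_{\mathscr{L}'}$ varies algebraically over $U$), hence is de Rham. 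The uniqueness of \cite{cdn2023correspondance} then gives $N_U[\mathfrak{m}_{\mathscr{L}}^j]\cong\Pi_{M,\mathscr{L},j}$, completing the induction. I expect the main obstacle to be exactly this last paragraph: turning "indecomposable iterated self-extension of geometric origin" into "de Rham" in a way that legitimately triggers the uniqueness theorem — concretely, making the compatibility of $\widehat{\underline{\Omega}(U)^\diamond}$ with $\mathbf{\Pi}$ applied to the universal de Rham deformation along $\mathbf{P}(M_{\mathrm{dR}})$ precise, or, alternatively, exhibiting a dense $G$-map $\mathrm{LL}(M)\hookrightarrow N_U[\mathfrak{m}_{\mathscr{L}}^j]$ and invoking \autoref{2.2.2} and \autoref{2.2.3} to realize $N_U[\mathfrak{m}_{\mathscr{L}}^j]$ as a quotient of $\widehat{\mathrm{LL}(M)}$, which by the classification of such quotients forces it to be $\Pi_{M,\mathscr{L},j}$.
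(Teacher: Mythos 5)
Your reduction of the statement to proving $N_U[\mathfrak{m}_{\mathscr{L}}^j]\cong\Pi_{M,\mathscr{L},j}$ (with $N_U:=\mathscr{O}_{\mathbf{P}^1}(U)^*\hat{\otimes}_{\mathscr{O}_{\mathbf{P}^1}(U)}\widehat{\underline{\Omega}(U)^\diamond}$), and the observation that $N_U[\mathfrak{m}_{\mathscr{L}}^j]$ is a length-$j$ successive self-extension of $\Pi_{M,\mathscr{L}}$, agree with the paper; but both steps meant to pin down the isomorphism have genuine gaps. First, the socle argument is not valid: $\epsilon=z-z(\mathscr{L})$ sends $N_U[\mathfrak{m}_{\mathscr{L}}^j]$ into $N_U[\mathfrak{m}_{\mathscr{L}}^{j-1}]$ but has no reason to preserve a given simple closed subrepresentation $S$, so ``$\epsilon$ acts on $S$ as a scalar'' is unfounded. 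In fact $\Pi_{M,\mathscr{L}}\otimes_L L[\epsilon]/\epsilon^j$ satisfies every structural property you use (an $L[\epsilon]/\epsilon^j$-module with $\epsilon$ surjecting the $\epsilon^k$-torsion onto the $\epsilon^{k-1}$-torsion and all graded pieces $\Pi_{M,\mathscr{L}}$), yet it is split, its $G$-socle is not simple, and it contains simple subrepresentations (e.g. a diagonal copy) not contained in $\ker\epsilon$; so indecomposability is not established this way. Second --- the point you yourself flag --- even granting indecomposability one must exclude the non-split self-extensions that are not de Rham (recall $\dim_L\mathrm{Ext}^1_{G,\psi}(\Pi_{M,\mathscr{L}},\Pi_{M,\mathscr{L}})=3$ by \autoref{1.1.6}), and the paragraph ``$N_U[\mathfrak{m}_{\mathscr{L}}^j]$ is the $j$-jet of the de Rham family, hence de Rham'' is a heuristic, not a proof: no compatibility of $\widehat{\underline{\Omega}(U)^\diamond}$ with $\mathbf{\Pi}$ applied to a family of Galois representations is established anywhere. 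Your fallback --- realizing $N_U[\mathfrak{m}_{\mathscr{L}}^j]$ as a quotient of $\widehat{\mathrm{LL}(M)}$ and invoking ``the classification of such quotients'' --- would be circular if the classification means \autoref{5.2.8}, since that result is proved later using precisely the present corollary via $\hat{\mathbf{m}}^0$.

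The paper's proof avoids de Rham-ness entirely and supplies the missing non-splitness from the locally analytic side. Applying the snake lemma to multiplication by $(z-z(\mathscr{L}))^j$ on the sequence of \autoref{4.2.2} gives a surjection $f$ of $\mathscr{O}_{\mathbf{P}^1}(U)^*[\mathfrak{m}_{\mathscr{L}}^j]\otimes_L\widehat{\Omega^1[M]^*}$ onto $N_U[\mathfrak{m}_{\mathscr{L}}^j]$; comparing with the sequence of \autoref{3.7.2} taken for an auxiliary line $\mathscr{L}'\neq\mathscr{L}$, and using that all Jordan--Hölder factors of $N_U[\mathfrak{m}_{\mathscr{L}}^j]$ are $\Pi_{M,\mathscr{L}}$ (so no nonzero quotient can also be a quotient of an iterated extension of $\Pi_{M,\mathscr{L}'}$), it upgrades this to a surjection from $\mathscr{O}_{\mathbf{P}^1}(U)^*[\mathfrak{m}_{\mathscr{L}}^j]\otimes_L\widehat{\mathrm{LL}(M)}\cong\widehat{\mathrm{LL}(M)}^{\oplus j}$. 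The external input is then \cite[Théorème 4.1]{cdn2023correspondance}, which forces $N_U[\mathfrak{m}_{\mathscr{L}}^j]$ to be a finite direct sum of representations $\Pi_{M,\mathscr{L},k}$; finally a nontrivial decomposition is excluded by induction on $j$, the key case $j=2$ being settled by passing to locally analytic vectors, where $\underline{\Omega}(U)^*[\mathfrak{m}_{\mathscr{L}}^2]$ embeds, and invoking the non-splitness of $\underline{\Omega}(U)/\mathfrak{m}_{\mathscr{L}}^2$ proved in \autoref{split}. That lemma is the genuine non-splitness ingredient, and it is exactly what your proposal lacks.
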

\begin{proof}[Preuve] Considérons le diagramme commutatif
	\begin{equation*} \xymatrix@R=5mm@C=4mm{
			0\ar[r]& \mathscr{O}_{\mathbf{P}^1}(U)^*\otimes_{\mathscr{O}_{\mathbf{P}^1}(U)}\mathscr{L}(U)\hat{\otimes}_L\widehat{\mathrm{LL}(M)}\ar[r]\ar[d]_{(z-z(\mathscr{L}))^j}&\mathscr{O}_{\mathbf{P}^1}(U)^*\hat{\otimes}_L\widehat{\Omega^1[M]^*}\ar[r]\ar[d]_{(z-z(\mathscr{L}))^j}& \mathscr{O}_{\mathbf{P}^1}(U)^*\hat{\otimes}_{\mathscr{O}_{\mathbf{P}^1}(U)}\widehat{\underline{\Omega}(U)^\diamond}\ar[r]\ar[d]_{(z-z(\mathscr{L}))^j}&0\\
			0 \ar[r] & \mathscr{O}_{\mathbf{P}^1}(U)^*\otimes_{\mathscr{O}_{\mathbf{P}^1}(U)}\mathscr{L}(U)\hat{\otimes}_L\widehat{\mathrm{LL}(M)}\ar[r]  &\mathscr{O}_{\mathbf{P}^1}(U)^*\hat{\otimes}_L\widehat{\Omega^1[M]^*}\ar[r] &\mathscr{O}_{\mathbf{P}^1}(U)^*\hat{\otimes}_{\mathscr{O}_{\mathbf{P}^1}(U)}\widehat{\underline{\Omega}(U)^\diamond}\ar[r] &0.
		}
	\end{equation*}
	On déduit du lemme de serpent une suite exacte
	\begin{align*}0&\to(\mathscr{O}_{\mathbf{P}^1}(U)^*\otimes_{\mathscr{O}_{\mathbf{P}^1}(U)}\mathscr{L}(U))[\mathfrak{m}_{\mathscr{L}}^j]\otimes_L\widehat{\mathrm{LL}(M)}\to\mathscr{O}_{\mathbf{P}^1}(U)^*[\mathfrak{m}_{\mathscr{L}}^j]\otimes_L\widehat{\Omega^1[M]^*}\\&\xrightarrow{f}(\mathscr{O}_{\mathbf{P}^1}(U)^*\hat{\otimes}_{\mathscr{O}_{\mathbf{P}^1}(U)}\widehat{\underline{\Omega}(U)^\diamond})[\mathfrak{m}_{\mathscr{L}}^j]\to0.
	\end{align*}
	Or, il suit du Théorème \ref{3.7.2} que, pour toute autre droite $\mathscr{L}'$, on a une suite exacte
	$$0\to\mathscr{O}_{\mathbf{P}^1}(U)^*[\mathfrak{m}_{\mathscr{L}}^j]\otimes_L\widehat{\mathrm{LL}(M)}\xrightarrow{g} \mathscr{O}_{\mathbf{P}^1}(U)^*[\mathfrak{m}_{\mathscr{L}}^j]\otimes_L\Omega^1[M]^{\mathrm{b}, *}\to\mathscr{O}_{\mathbf{P}^1}(U)^*[\mathfrak{m}_{\mathscr{L}}^j]\otimes_L\Pi_{M, \mathscr{L}'}\to0.$$
	La surjection $f$ induit une surjection 
	$$\mathscr{O}_{\mathbf{P}^1}(U)^*[\mathfrak{m}_{\mathscr{L}}^j]\otimes_L\Pi_{M, \mathscr{L}'}=(\mathscr{O}_{\mathbf{P}^1}(U)^*[\mathfrak{m}_{\mathscr{L}}^j]\otimes_L\widehat{\Omega^1[M]^*})/\mathrm{Im}(g)\overset{\bar{f}}{\twoheadrightarrow}(\mathscr{O}_{\mathbf{P}^1}(U)^*\hat{\otimes}_{\mathscr{O}_{\mathbf{P}^1}(U)}\widehat{\underline{\Omega}(U)^\diamond})[\mathfrak{m}_{\mathscr{L}}^j]/\mathrm{Im}(f\circ g).$$
	Or, il suit du Lemme \ref{nouveau1} que $(\mathscr{O}_{\mathbf{P}^1}(U)^*\hat{\otimes}_{\mathscr{O}_{\mathbf{P}^1}(U)}\widehat{\underline{\Omega}(U)^\diamond})[\mathfrak{m}_{\mathscr{L}}^j]$ est une extension successive de $\Pi_{M, \mathscr{L}}$ de longueur $j$, ce qui implique que $(\mathscr{O}_{\mathbf{P}^1}(U)^*\hat{\otimes}_{\mathscr{O}_{\mathbf{P}^1}(U)}\widehat{\underline{\Omega}(U)^\diamond})[\mathfrak{m}_{\mathscr{L}}^j]/\mathrm{Im}(f\circ g)=0$. En d'autre termes, on a une surjection 
	$$f\circ g: \mathscr{O}_{\mathbf{P}^1}(U)^*[\mathfrak{m}_{\mathscr{L}}^j]\otimes_L\widehat{\mathrm{LL}(M)}\twoheadrightarrow(\mathscr{O}_{\mathbf{P}^1}(U)^*\hat{\otimes}_{\mathscr{O}_{\mathbf{P}^1}(U)}\widehat{\underline{\Omega}(U)^\diamond})[\mathfrak{m}_{\mathscr{L}}^j].$$
	Il suit du \cite[Théorème 4.1]{cdn2023correspondance} que $(\mathscr{O}_{\mathbf{P}^1}(U)^*\hat{\otimes}_{\mathscr{O}_{\mathbf{P}^1}(U)}\widehat{\underline{\Omega}(U)^\diamond})[\mathfrak{m}_{\mathscr{L}}^j]$ est isomorphe à une somme directe finie de $\Pi_{M, \mathscr{L}, k}$. 
	
	Montrons que $(\mathscr{O}_{\mathbf{P}^1}(U)^*\hat{\otimes}_{\mathscr{O}_{\mathbf{P}^1}(U)}\widehat{\underline{\Omega}(U)^\diamond})[\mathfrak{m}_{\mathscr{L}}^j]=\Pi_{M, \mathscr{L}, j}$ pour tout $j$ par récurrence. Pour $j=2$, supposons que $(\mathscr{O}_{\mathbf{P}^1}(U)^*\hat{\otimes}_{\mathscr{O}_{\mathbf{P}^1}(U)}\widehat{\underline{\Omega}(U)^\diamond})[\mathfrak{m}_{\mathscr{L}}^2]=\Pi_{M, \mathscr{L}}\oplus\Pi_{M, \mathscr{L}}$. Comme on a une injection
	$$\underline{\Omega}(U)^*=\mathscr{O}_{\mathbf{P}^1}(U)^*\hat{\otimes}_{\mathscr{O}_{\mathbf{P}^1}(U)}\underline{\Omega}(U)^\diamond\hookrightarrow\mathscr{O}_{\mathbf{P}^1}(U)^*\hat{\otimes}_{\mathscr{O}_{\mathbf{P}^1}(U)}\widehat{\underline{\Omega}(U)^\diamond}^{\mathrm{an}},$$
	on en déduit que $\underline{\Omega}(U)^*[\mathfrak{m}_{\mathscr{L}}^2]\hookrightarrow\Pi_{M, \mathscr{L}}\oplus\Pi_{M, \mathscr{L}}$. D'après le Lemme \ref{nouveau2}, $\underline{\Omega}(U)^*[\mathfrak{m}_{\mathscr{L}}^2]$ est une extension de $\Pi_{M, \mathscr{L}}^{\mathrm{an}}$ par $\Pi_{M, \mathscr{L}}^{\mathrm{an}}$, on a donc $\underline{\Omega}(U)^*[\mathfrak{m}_{\mathscr{L}}^2]=\Pi_{M, \mathscr{L}}^{\mathrm{an}}\oplus\Pi_{M, \mathscr{L}}^{\mathrm{an}}$, ce qui contredit le Lemme \ref{split}. Il en résulte que $(\mathscr{O}_{\mathbf{P}^1}(U)^*\hat{\otimes}_{\mathscr{O}_{\mathbf{P}^1}(U)}\widehat{\underline{\Omega}(U)^\diamond})[\mathfrak{m}_{\mathscr{L}}^2]=\Pi_{M, \mathscr{L}, 2}$.
	
	Supposons que l'isomorphisme est vrai pour un certain $j\geq2$. Comme on a une inclusion $$(\mathscr{O}_{\mathbf{P}^1}(U)^*\hat{\otimes}_{\mathscr{O}_{\mathbf{P}^1}(U)}\widehat{\underline{\Omega}(U)^\diamond})[\mathfrak{m}_{\mathscr{L}}^j]\hookrightarrow(\mathscr{O}_{\mathbf{P}^1}(U)^*\hat{\otimes}_{\mathscr{O}_{\mathbf{P}^1}(U)}\widehat{\underline{\Omega}(U)^\diamond})[\mathfrak{m}_{\mathscr{L}}^{j+1}],$$
	on en déduit que $(\mathscr{O}_{\mathbf{P}^1}(U)^*\hat{\otimes}_{\mathscr{O}_{\mathbf{P}^1}(U)}\widehat{\underline{\Omega}(U)^\diamond})[\mathfrak{m}_{\mathscr{L}}^{j+1}]=\Pi_{M, \mathscr{L}, j+1}$ ou $(\mathscr{O}_{\mathbf{P}^1}(U)^*\hat{\otimes}_{\mathscr{O}_{\mathbf{P}^1}(U)}\widehat{\underline{\Omega}(U)^\diamond})[\mathfrak{m}_{\mathscr{L}}^{j+1}]=\Pi_{M, \mathscr{L}, j}\oplus\Pi_{M, \mathscr{L}}$. Cependant, la composée $$(\mathscr{O}_{\mathbf{P}^1}(U)^*\hat{\otimes}_{\mathscr{O}_{\mathbf{P}^1}(U)}\widehat{\underline{\Omega}(U)^\diamond})[\mathfrak{m}_{\mathscr{L}}^j]\hookrightarrow(\mathscr{O}_{\mathbf{P}^1}(U)^*\hat{\otimes}_{\mathscr{O}_{\mathbf{P}^1}(U)}\widehat{\underline{\Omega}(U)^\diamond})[\mathfrak{m}_{\mathscr{L}}^{j+1}]\twoheadrightarrow\frac{\mathscr{O}_{\mathbf{P}^1}(U)^*\hat{\otimes}_{\mathscr{O}_{\mathbf{P}^1}(U)}\widehat{\underline{\Omega}(U)^\diamond})[\mathfrak{m}_{\mathscr{L}}^{j+1}]}{\mathscr{O}_{\mathbf{P}^1}(U)^*\hat{\otimes}_{\mathscr{O}_{\mathbf{P}^1}(U)}\widehat{\underline{\Omega}(U)^\diamond})[\mathfrak{m}_{\mathscr{L}}]}$$ n'est ni injective ni surjective, donc on a $$(\mathscr{O}_{\mathbf{P}^1}(U)^*\hat{\otimes}_{\mathscr{O}_{\mathbf{P}^1}(U)}\widehat{\underline{\Omega}(U)^\diamond})[\mathfrak{m}_{\mathscr{L}}^{j+1}]=\Pi_{M, \mathscr{L}, j+1}$$
	en remarquant que $\frac{\mathscr{O}_{\mathbf{P}^1}(U)^*\hat{\otimes}_{\mathscr{O}_{\mathbf{P}^1}(U)}\widehat{\underline{\Omega}(U)^\diamond})[\mathfrak{m}_{\mathscr{L}}^{j+1}]}{\mathscr{O}_{\mathbf{P}^1}(U)^*\hat{\otimes}_{\mathscr{O}_{\mathbf{P}^1}(U)}\widehat{\underline{\Omega}(U)^\diamond})[\mathfrak{m}_{\mathscr{L}}]}\cong(\mathscr{O}_{\mathbf{P}^1}(U)^*\hat{\otimes}_{\mathscr{O}_{\mathbf{P}^1}(U)}\widehat{\underline{\Omega}(U)^\diamond})[\mathfrak{m}_{\mathscr{L}}^j]$. Cela permet de conclure.
\end{proof}
\begin{corollary}\label{supporte2} Le faisceau $\hat{\mathbf{m}}^i(\Pi_{M, \mathscr{L}, j})$ est supporté en $\mathscr{L}$ pour tout $i\geq0$ et $j\geq1$.
\end{corollary}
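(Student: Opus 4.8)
The plan is to run, in the Banach setting, exactly the argument behind \autoref{gratte1}, feeding the exact sequence of \autoref{Banach} into the functor $\hat{\mathbf{m}}^\bullet(-)$. First recall that, for each affinoid open $V\subseteq\mathbf{P}^1$, the assignment $\pi\mapsto\hat{\mathbf{m}}^\bullet(\pi)(V)=\mathrm{Ext}^\bullet_{G,\psi}(\pi,\mathscr{O}_{\mathbf{P}^1}(V)^*\hat{\otimes}_{\mathscr{O}_{\mathbf{P}^1}(V)}\widehat{\underline{\Omega}(V)^\diamond})^*$ is a homological $\delta$-functor in the Banach representation $\pi$: a strict short exact sequence $0\to\pi_1\to\pi_2\to\pi_3\to0$ gives, via the long exact $\mathrm{Ext}$-sequence and the exactness of the strong-dual functor on the resulting compact-type spaces, a long exact sequence
$$\cdots\to\hat{\mathbf{m}}^{i+1}(\pi_3)\to\hat{\mathbf{m}}^i(\pi_1)\to\hat{\mathbf{m}}^i(\pi_2)\to\hat{\mathbf{m}}^i(\pi_3)\to\hat{\mathbf{m}}^{i-1}(\pi_1)\to\cdots;$$
moreover this is compatible with the $\mathscr{O}_{\mathbf{P}^1}$-module structures, so that $\hat{\mathbf{m}}^\bullet$ carries multiplication by a local section $f$ of $\mathscr{O}_{\mathbf{P}^1}$ on $\pi$ to multiplication by $f$ on the associated sheaf.

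Now fix $\mathscr{L}\in\mathbf{P}^1$ and choose one of the three standard affinoids $U$ (the unit ball at $0$, the unit ball at $\infty$, or the unit circle) that contains $\mathscr{L}$, and write $\mathscr{F}:=\mathscr{O}_{\mathbf{P}^1}(U)^*\hat{\otimes}_{\mathscr{O}_{\mathbf{P}^1}(U)}\widehat{\underline{\Omega}(U)^\diamond}$. By \autoref{Banach} there is a strict short exact sequence of Banach representations
$$0\to\Pi_{M,\mathscr{L},j}\to\mathscr{F}\xrightarrow{(z-z(\mathscr{L}))^j}\mathscr{F}\to0.$$
Applying $\hat{\mathbf{m}}^\bullet(-)$ — the middle arrow becoming multiplication by $(z-z(\mathscr{L}))^j$ on $\hat{\mathbf{m}}^\bullet(\mathscr{F})$ — the long exact sequence above yields, for every $i\geq0$, a short exact sequence of $\mathscr{O}_{\mathbf{P}^1}$-modules
$$0\to\hat{\mathbf{m}}^{i+1}(\mathscr{F})/(z-z(\mathscr{L}))^j\to\hat{\mathbf{m}}^i(\Pi_{M,\mathscr{L},j})\to\hat{\mathbf{m}}^i(\mathscr{F})[(z-z(\mathscr{L}))^j]\to0.$$
Both outer terms are annihilated by $(z-z(\mathscr{L}))^j$, hence supported at $\mathscr{L}$; therefore $\hat{\mathbf{m}}^i(\Pi_{M,\mathscr{L},j})$ is annihilated by $(z-z(\mathscr{L}))^{2j}$ and supported at $\mathscr{L}$, for all $i\geq0$ and $j\geq1$. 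As in \autoref{gratte1}, no hypothesis on the opens where the sheaves are evaluated is needed: the single choice of $U\ni\mathscr{L}$ produces the above exact sequence of representations, after which one only applies $\hat{\mathbf{m}}^\bullet$.

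I expect the only delicate points to be on the input side. One must check that the sequence of \autoref{Banach} is genuinely strict exact, that $\Pi_{M,\mathscr{L},j}$ and $\mathscr{F}$ lie in a category of Banach $G$-representations on which $\mathrm{Ext}^\bullet_{G,\psi}$ is a well-behaved $\delta$-functor, and that strong duality is exact on the Ext-modules occurring (these being of compact type, as duals of the Fréchet spaces produced by the Stein and affinoid constructions above); the strictness and exactness are already used in the proofs of \autoref{torsionfree2}, \autoref{4.2.1} and \autoref{Banach}, so this is mostly bookkeeping. The remaining point to record is the compatibility asserted in the first paragraph: the internal $\mathscr{O}_{\mathbf{P}^1}(U)$-action on $\mathscr{F}$, through which $(z-z(\mathscr{L}))^j$ acts in the displayed sequence, induces under $\hat{\mathbf{m}}^\bullet$ the same endomorphism as the $\mathscr{O}_{\mathbf{P}^1}$-module structure carried by $\hat{\mathbf{m}}^\bullet(\mathscr{F})$, both being multiplication by the function $(z-z(\mathscr{L}))^j$ on $\mathbf{P}^1$.
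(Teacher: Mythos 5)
Your proposal is correct and is essentially the paper's own argument: one applies $\hat{\mathbf{m}}^\bullet$ to the exact sequence of \autoref{Banach}, extracts for each $i$ the three-term sequence $0\to\hat{\mathbf{m}}^{i+1}(\mathscr{F})/\mathfrak{m}_{\mathscr{L}}^j\to\hat{\mathbf{m}}^i(\Pi_{M,\mathscr{L},j})\to\hat{\mathbf{m}}^i(\mathscr{F})[\mathfrak{m}_{\mathscr{L}}^j]\to0$, and concludes since both outer terms are killed by $(z-z(\mathscr{L}))^j$, exactly as in the locally analytic case (\autoref{gratte1}). Your additional remarks on strictness, the $\delta$-functor structure and the compatibility of the $\mathscr{O}_{\mathbf{P}^1}(U)$-action are the same implicit bookkeeping the paper relies on.
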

\begin{proof}[Preuve] Il suit du Corollaire \ref{Banach} que l'on a une suite exacte
	$$0\to\hat{\mathbf{m}}^{i+1}(\mathscr{O}_{\mathbf{P}^1}(U)^*\hat{\otimes}_{\mathscr{O}_{\mathbf{P}^1}(U)}\widehat{\underline{\Omega}(U)^\diamond})/\mathfrak{m}_{\mathscr{L}}^j\to\hat{\mathbf{m}}^i(\Pi_{M, \mathscr{L}, j})\to\hat{\mathbf{m}}^i(\mathscr{O}_{\mathbf{P}^1}(U)^*\hat{\otimes}_{\mathscr{O}_{\mathbf{P}^1}(U)}\widehat{\underline{\Omega}(U)^\diamond})[\mathfrak{m}_{\mathscr{L}}^j]\to0.$$
	Le résultat se déduit du fait que les deux termes à droite et à gauche sont supportés en $\mathscr{L}$.
\end{proof}
\subsubsection{Faisceaux associés à $\widehat{\mathrm{LL}(M)}$, $\Pi_{M, \mathscr{L}, j}$ et $\widehat{\Omega^1[M]^*}$}
\begin{proposition}\label{4.2.4} On a $\hat{\mathbf{m}}^0(\widehat{\mathrm{LL}(M)})=\mathscr{O}_{\mathbf{P}^1}(-1)$.
\end{proposition}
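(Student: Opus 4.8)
\emph{Plan de la preuve.} Le faisceau $\hat{\mathbf{m}}^0(\widehat{\mathrm{LL}(M)})$ étant entièrement déterminé par ses valeurs sur la boule unité de centre $0$, la boule unité de centre $\infty$ et le cercle unité, on fixe $U$ l'un de ces trois ouverts affinoïdes et l'on calcule $\hat{\mathbf{m}}^0(\widehat{\mathrm{LL}(M)})(U)$ en appliquant le foncteur $\mathrm{Hom}_G(\widehat{\mathrm{LL}(M)},-)$ à la suite exacte (\ref{equation3}) du \autoref{4.2.2}, puis en dualisant. Pour les deux premiers termes de cette suite, on utilise la propriété universelle du complété unitaire universel, la \autoref{3.2.2} et le \autoref{3.5.6} (prise des parties lisses), ainsi que le fait que $\mathrm{LL}(M)$ est lisse, admissible et absolument irréductible, donc $\mathrm{End}_G(\mathrm{LL}(M))=L$ : pour un espace de coefficients $Y$ convenable on a $\mathrm{Hom}_G(\widehat{\mathrm{LL}(M)},Y\hat{\otimes}_L\widehat{\mathrm{LL}(M)})=Y$ et $\mathrm{Hom}_G(\widehat{\mathrm{LL}(M)},Y\hat{\otimes}_L\widehat{\Omega^1[M]^*})=Y\hat{\otimes}_L M_{\mathrm{dR}}$ (\autoref{3.8.1}), de manière compatible aux structures de $\mathscr{O}_{\mathbf{P}^1}(U)$-modules. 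Le morphisme ainsi obtenu $\mathscr{O}_{\mathbf{P}^1}(U)^*\otimes_{\mathscr{O}_{\mathbf{P}^1}(U)}\mathscr{L}(U)\to\mathscr{O}_{\mathbf{P}^1}(U)^*\hat{\otimes}_L M_{\mathrm{dR}}$ est alors celui induit par l'inclusion $\mathscr{L}(U)\hookrightarrow\mathscr{O}_{\mathbf{P}^1}(U)\otimes_L M_{\mathrm{dR}}$.

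L'étape décisive, et la principale difficulté, sera de montrer que $\mathrm{Ext}^1_{G,\psi}(\widehat{\mathrm{LL}(M)},\mathscr{O}_{\mathbf{P}^1}(U)^*\otimes_{\mathscr{O}_{\mathbf{P}^1}(U)}\mathscr{L}(U)\hat{\otimes}_L\widehat{\mathrm{LL}(M)})=0$, c'est-à-dire d'étendre le \autoref{3.7.6} à des coefficients dans l'espace de Banach $\mathscr{O}_{\mathbf{P}^1}(U)^*\otimes_{\mathscr{O}_{\mathbf{P}^1}(U)}\mathscr{L}(U)$. Pour cela, on reprend la preuve de la \autoref{3.7.5} : son ingrédient crucial — l'annulation de $\mathrm{Hom}_{\mathrm{cont}}(H'\cap H'^{\gamma_n^{-1}s^{-1}},W^*)$ pour $W^*$ sans torsion, qui ne repose que sur la finitude de l'abélianisé de tout sous-groupe ouvert de $\mathrm{SL}_2(\z_p)$ (\autoref{C.0.7}) — reste valable pour un $\mathscr{O}_L$-module sans torsion de rang arbitraire. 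En remplaçant $\sigma_M^{0,*}$ par $\sigma_M^{0,*}\otimes_{\mathscr{O}_L}\Lambda$, où $\Lambda$ est un réseau ouvert borné de $\mathscr{O}_{\mathbf{P}^1}(U)^*$ (le niveau de lissité ne dépendant que de $\sigma_M$), on obtient que $\mathrm{Ext}^1_{G,\psi}(\mathrm{Ind}_{KZ}^G(\sigma_M^{0,*}\otimes_{\mathscr{O}_L}\Lambda),\mathrm{Ind}_{KZ}^G(\sigma_M^{0,*}\otimes_{\mathscr{O}_L}\Lambda))$ est tué par une puissance fixe de $p$, donc s'annule après inversion de $p$ ; comme $\widehat{\mathrm{LL}(M)}^*\hat{\otimes}_L\mathscr{O}_{\mathbf{P}^1}(U)$ en est facteur direct, la nullité cherchée s'ensuit. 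Il faudra prendre garde ici à la compatibilité du produit tensoriel complété avec le calcul des groupes d'extensions.

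Ces annulations réduisent la suite exacte longue à l'isomorphisme $\mathrm{Hom}_G(\widehat{\mathrm{LL}(M)},\mathscr{O}_{\mathbf{P}^1}(U)^*\hat{\otimes}_{\mathscr{O}_{\mathbf{P}^1}(U)}\widehat{\underline{\Omega}(U)^\diamond})=\mathrm{coker}\big(\mathscr{O}_{\mathbf{P}^1}(U)^*\otimes_{\mathscr{O}_{\mathbf{P}^1}(U)}\mathscr{L}(U)\to\mathscr{O}_{\mathbf{P}^1}(U)^*\hat{\otimes}_L M_{\mathrm{dR}}\big)$. En dualisant — les espaces en jeu étant réflexifs, $M_{\mathrm{dR}}$ de dimension finie, et $\mathscr{L}(U)$ étant facteur direct du $\mathscr{O}_{\mathbf{P}^1}(U)$-module libre $\mathscr{O}_{\mathbf{P}^1}(U)\otimes_L M_{\mathrm{dR}}$ —, on obtiendra une suite exacte $$0\to\hat{\mathbf{m}}^0(\widehat{\mathrm{LL}(M)})(U)\to\mathscr{O}_{\mathbf{P}^1}(U)\otimes_L M_{\mathrm{dR}}^*\to\mathrm{Hom}_{\mathscr{O}_{\mathbf{P}^1}(U)}(\mathscr{L}(U),\mathscr{O}_{\mathbf{P}^1}(U))\to0,$$ où la flèche de droite est la restriction à $\mathscr{L}(U)$. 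Par conséquent $\hat{\mathbf{m}}^0(\widehat{\mathrm{LL}(M)})(U)=\mathscr{L}(U)^\perp$, et l'on conclura exactement comme dans la preuve de la \autoref{4.1.6} : le recollement de $\mathscr{L}(U_0)^\perp$ et $\mathscr{L}(U_\infty)^\perp$ le long de $U_0\cap U_\infty$ identifie ce faisceau au faisceau tordu de Serre $\mathscr{O}_{\mathbf{P}^1}(-1)$.
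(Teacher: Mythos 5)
Votre démonstration est correcte et suit essentiellement la même démarche que l'article : application du foncteur $\mathrm{Hom}_G(\widehat{\mathrm{LL}(M)},-)$ à la suite (\ref{equation3}), identification des deux premiers termes via la \autoref{3.8.1}, annulation du terme d'extension, dualisation donnant $\hat{\mathbf{m}}^0(\widehat{\mathrm{LL}(M)})(U)=\mathscr{L}(U)^\perp$, puis recollement comme dans la preuve de la \autoref{4.1.6}. Vous êtes même plus explicite que l'article sur le point délicat — l'annulation de $\mathrm{Ext}^1_{G, \psi}$ à coefficients dans l'espace de Banach $\mathscr{O}_{\mathbf{P}^1}(U)^*$, que l'article règle en invoquant directement le \autoref{3.7.6} — en reprenant la preuve de la \autoref{3.7.5} avec un réseau de coefficients sans torsion, ce qui est légitime puisque l'argument clé ne repose que sur la finitude de l'abélianisé des sous-groupes ouverts de $\mathrm{SL}_2(\z_p)$.
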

\begin{proof}[Preuve] En combinant le Corollaire \ref{3.7.6} avec l'application du foncteur $\mathrm{Hom}_G(\widehat{\mathrm{LL}(M)},-)$ à la suite (\ref{equation3}) pour $U=U_0, U_\infty$ ou $U_0\cap U_\infty$, on obtient une suite exacte  
	$$0\to\mathscr{O}_{\mathbf{P}^1}(U)^*\hat{\otimes}_{\mathscr{O}_{\mathbf{P}^1}(U)}\mathscr{L}(U)\to\mathscr{O}_{\mathbf{P}^1}(U)^*\otimes_L M_{\mathrm{dR}}\to(\hat{\mathbf{m}}^0(\widehat{\mathrm{LL}(M)})(U))^*\to0.$$	 
	En dualisant, cela fournit la suite exacte
	$$0\to\hat{\mathbf{m}}^0(\widehat{\mathrm{LL}(M)}(U))\to\mathscr{O}_{\mathbf{P}^1}(U)\otimes_L M_{\mathrm{dR}}^*\to(\mathscr{O}_{\mathbf{P}^1}(U)\otimes_L M_{\mathrm{dR}}^*)/\mathscr{L}(U)^\perp\to0.$$
	On en déduit que $\hat{\mathbf{m}}^0(\widehat{\mathrm{LL}(M)}(U))=\mathscr{L}(U)^\perp$, ce qui implique que $\hat{\mathbf{m}}^0(\widehat{\mathrm{LL}(M)})=\mathscr{O}_{\mathbf{P}^1}(-1)$ d'après la preuve de la Proposition \ref{4.1.6}. Cela permet de conclure.
\end{proof}
\begin{proposition}\label{4.2.5} Pour tout $j$, le faisceau $\hat{\mathbf{m}}^0(\Pi_{M, \mathscr{L}, j})$ est isomorphe au faisceau gratte-ciel $\mathscr{O}_{\mathbf{P}^1, \mathscr{L}}/\mathfrak{m}_{\mathscr{L}}^j$ qui est concentré en le point correspondant à $\mathscr{L}$.
\end{proposition}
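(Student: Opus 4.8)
\emph{Proof strategy.} The plan is to mimic the proof of the locally analytic analogue \autoref{4.1.4}, now using the Banach exact sequences of \autoref{nouveau1} and \autoref{Banach}, the support statement \autoref{supporte2}, and the computation of the Hom spaces between the $\Pi_{M,\mathscr{L},k}$ in \autoref{finalbanach}. Write $C(U):=\mathscr{O}_{\mathbf{P}^1}(U)^{*}\hat{\otimes}_{\mathscr{O}_{\mathbf{P}^1}(U)}\widehat{\underline{\Omega}(U)^{\diamond}}$, so that $\hat{\mathbf{m}}^{i}(\Pi)(U)=\mathrm{Ext}^{i}_{G,\psi}(\Pi,C(U))^{*}$. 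Since $\hat{\mathbf{m}}^{0}$ is determined by its restrictions to the unit circle and to the unit balls of centres $0$ and $\infty$, and since \autoref{supporte2} says $\hat{\mathbf{m}}^{0}(\Pi_{M,\mathscr{L},j})$ is supported at the point $\mathscr{L}$, it suffices to identify, for $U$ one of these three standard affinoids containing $\mathscr{L}$ (which we may arrange by choosing the basis $e_{1},e_{2}$ of $M_{\mathrm{dR}}$ so that $z(\mathscr{L})=0$), the stalk $\hat{\mathbf{m}}^{0}(\Pi_{M,\mathscr{L},j})(U)$ together with its structure of module over the discrete valuation ring $\mathscr{O}_{\mathbf{P}^1,\mathscr{L}}$.

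First I would show $\hat{\mathbf{m}}^{0}(\Pi_{M,\mathscr{L},j})(U)$ is finitely generated over $\mathscr{O}_{\mathbf{P}^1}(U)$. Applying $\mathrm{Hom}_{G}(\Pi_{M,\mathscr{L},j},-)$ to the exact sequence of \autoref{4.2.2} and using that $\mathrm{Hom}_{G}(\Pi_{M,\mathscr{L},j},\widehat{\Omega^{1}[M]^{*}})=\mathrm{Hom}_{G}(\Pi_{M,\mathscr{L},j},\widehat{\mathrm{LL}(M)})=0$ — which follows from the vanishings for $\Pi_{M,\mathscr{L}}$ proved above, since every nonzero quotient of $\Pi_{M,\mathscr{L},j}$ contains a copy of $\Pi_{M,\mathscr{L}}$, together with the compatibility of these vanishings with $\mathscr{O}_{\mathbf{P}^1}(U)^{*}\hat{\otimes}_{L}(-)$ — one identifies $\mathrm{Hom}_{G}(\Pi_{M,\mathscr{L},j},C(U))$ with a closed subspace of $\mathrm{Ext}^{1}_{G,\psi}(\Pi_{M,\mathscr{L},j},\mathscr{O}_{\mathbf{P}^1}(U)^{*}\hat{\otimes}_{L}\widehat{\mathrm{LL}(M)})\cong\mathscr{O}_{\mathbf{P}^1}(U)^{*}\hat{\otimes}_{L}\mathrm{Ext}^{1}_{G,\psi}(\Pi_{M,\mathscr{L},j},\widehat{\mathrm{LL}(M)})$. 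As the latter Ext group is finite dimensional by \autoref{3.8.10}, passing to strong duals exhibits $\hat{\mathbf{m}}^{0}(\Pi_{M,\mathscr{L},j})(U)$ as a quotient of a free $\mathscr{O}_{\mathbf{P}^1}(U)$-module of finite rank. Combined with \autoref{supporte2}, this makes $\hat{\mathbf{m}}^{0}(\Pi_{M,\mathscr{L},j})(U)$ a finitely generated torsion module over the discrete valuation ring $\mathscr{O}_{\mathbf{P}^1,\mathscr{L}}$, hence isomorphic to $\bigoplus_{l}\mathscr{O}_{\mathbf{P}^1,\mathscr{L}}/\mathfrak{m}_{\mathscr{L}}^{k_{l}}$ for a finite family of integers $k_{l}\geq 1$.

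To pin down this family, apply $\mathrm{Hom}_{G}(\Pi_{M,\mathscr{L},j},-)$ to the exact sequence $0\to\Pi_{M,\mathscr{L}}\to C(U)\xrightarrow{z-z(\mathscr{L})}C(U)\to 0$ of \autoref{nouveau1} and dualise: using $\mathrm{Hom}_{G}(\Pi_{M,\mathscr{L},j},\Pi_{M,\mathscr{L}})=L$ (\autoref{finalbanach} with $n=1$) one obtains an exact sequence $\hat{\mathbf{m}}^{0}(\Pi_{M,\mathscr{L},j})(U)\xrightarrow{z-z(\mathscr{L})}\hat{\mathbf{m}}^{0}(\Pi_{M,\mathscr{L},j})(U)\to L\to 0$, so the cokernel of $z-z(\mathscr{L})$ is one-dimensional, which forces the family $\{k_{l}\}$ to reduce to a single integer $k$, i.e. $\hat{\mathbf{m}}^{0}(\Pi_{M,\mathscr{L},j})(U)\cong\mathscr{O}_{\mathbf{P}^1,\mathscr{L}}/\mathfrak{m}_{\mathscr{L}}^{k}$. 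Doing the same with the sequence $0\to\Pi_{M,\mathscr{L},n}\to C(U)\xrightarrow{(z-z(\mathscr{L}))^{n}}C(U)\to 0$ of \autoref{Banach} and $\mathrm{Hom}_{G}(\Pi_{M,\mathscr{L},j},\Pi_{M,\mathscr{L},n})=L[T]/T^{\min(n,j)}$ (\autoref{finalbanach}) shows that the cokernel of multiplication by $(z-z(\mathscr{L}))^{n}$ on $\hat{\mathbf{m}}^{0}(\Pi_{M,\mathscr{L},j})(U)$ has dimension $\min(n,j)$ for every $n\geq 1$; comparing this with the value $\min(n,k)$ computed directly on $\mathscr{O}_{\mathbf{P}^1,\mathscr{L}}/\mathfrak{m}_{\mathscr{L}}^{k}$ and taking $n$ large forces $k=j$. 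Since $\hat{\mathbf{m}}^{0}(\Pi_{M,\mathscr{L},j})$ is supported at $\mathscr{L}$ with stalk $\mathscr{O}_{\mathbf{P}^1,\mathscr{L}}/\mathfrak{m}_{\mathscr{L}}^{j}$ there, it is the skyscraper sheaf $\mathscr{O}_{\mathbf{P}^1,\mathscr{L}}/\mathfrak{m}_{\mathscr{L}}^{j}$ concentrated at $\mathscr{L}$.

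The conceptually substantive input is the finiteness of $\mathrm{Ext}^{1}_{G,\psi}(\Pi_{M,\mathscr{L},j},\widehat{\mathrm{LL}(M)})$ (\autoref{3.8.10}), which is precisely what makes the structure theorem for modules over a discrete valuation ring available; everything else is bookkeeping with the exact sequences already established. The delicate but routine point, as throughout this section, is the functional-analytic input: that $\mathrm{Hom}_{G}(\Pi_{M,\mathscr{L},j},-)$ and $\mathrm{Ext}^{1}_{G,\psi}(\Pi_{M,\mathscr{L},j},-)$ commute with $\mathscr{O}_{\mathbf{P}^1}(U)^{*}\hat{\otimes}_{L}(-)$ — which rests on the admissibility of $\Pi_{M,\mathscr{L},j}$ and the flatness properties of the completed tensor product — and that the relevant $\mathrm{Hom}$ and $\mathrm{Ext}$ spaces are Hausdorff, so that dualising the displayed exact sequences is legitimate.
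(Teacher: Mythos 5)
Votre démonstration est correcte et suit essentiellement la même voie que celle du texte : mêmes ingrédients (la suite du \autoref{4.2.2}, la finitude du \autoref{3.8.10}, le support donné par le \autoref{supporte2}, puis les suites du \autoref{nouveau1} et du \autoref{Banach} combinées à la \autoref{finalbanach} pour forcer $k=j$). La seule différence est cosmétique : vous appliquez l'étape de finitude directement à $\Pi_{M,\mathscr{L},j}$ et invoquez explicitement le théorème de structure sur l'anneau de valuation discrète $\mathscr{O}_{\mathbf{P}^1,\mathscr{L}}$, là où le texte traite d'abord $\Pi_{M,\mathscr{L}}$ puis passe à $\Pi_{M,\mathscr{L},j}$ par extensions successives.
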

\begin{proof}[Preuve] En utilisant le foncteur $\mathrm{Hom}_G(\Pi_{M, \mathscr{L}},-)$ à la suite (\ref{equation3}) pour $U=U_0, U_\infty$ ou $U_0\cap U_\infty$, on obtient une suite exacte
	\begin{align*} 0&\to\hat{\mathbf{m}}^0(\Pi_{M, \mathscr{L}})(U)^*\to\mathscr{O}_{\mathbf{P}^1}(U)^*\otimes_{\mathscr{O}_{\mathbf{P}^1}(U)}\mathscr{L}(U)\otimes_L\mathrm{Ext}_{G, \psi}^1(\Pi_{M, \mathscr{L}}, \widehat{\mathrm{LL}(M)})\\&\to\mathscr{O}_{\mathbf{P}^1}(U)^*\otimes_L\mathrm{Ext}_{G, \psi}^1(\Pi_{M, \mathscr{L}}, \widehat{\Omega^1[M]^*}).
	\end{align*}
	Il suit du Corollaire \ref{3.8.10} que l'espace $\mathrm{Ext}_{G, \psi}^1(\Pi_{M, \mathscr{L}}, \widehat{\mathrm{LL}(M)})$ est de dimension finie, alors $\hat{\mathbf{m}}^0(\Pi_{M, \mathscr{L}})(U)$ est un quotient de $\mathscr{O}_{\mathbf{P}^1}(U)^{\oplus d}$. Il est donc supporté en un nombre fini de points et de degré fini, ou bien il est supporté sur $U$ tout entier.
	
	D'après le Corollaire \ref{supporte2}, le faisceau $\hat{\mathbf{m}}^0(\Pi_ {M, \mathscr{L}})$ est tué par $\mathfrak{m}_{\mathscr{L}}^s$ pour $s\gg0$. 
	Comme $\Pi_ {M, \mathscr{L}, j}$ est une extension successive de $\Pi_ {M, \mathscr{L}}$, le faisceau $\hat{\mathbf{m}}^0(\Pi_ {M, \mathscr{L}, j})$ est une extension successive de $\mathscr{O}_{\mathbf{P}^1, \mathscr{L}}/\mathfrak{m}_{\mathscr{L}}$. D'une part,en appliquant $\mathrm{Hom}_G(\Pi_{M, \mathscr{L}, j}^{\mathrm{an}}, -)^*$ à la suite du Lemme \ref{nouveau1}, on a 
	$$\hat{\mathbf{m}}^0(\Pi_ {M, \mathscr{L}, j})\xrightarrow{z-z(\mathscr{L})}\hat{\mathbf{m}}^0(\Pi_ {M, \mathscr{L}, j})\to L\to0,$$ ce qui implique que $\hat{\mathbf{m}}^0(\Pi_ {M, \mathscr{L}, j})=\mathscr{O}_{\mathbf{P}^1, \mathscr{L}}/\mathfrak{m}_{\mathscr{L}}^k$ pour un certain $k$. D'autre part, en appliquant $\mathrm{Hom}_G(\Pi_{M, \mathscr{L}, j}, -)^*$ à la suite du Corollaire \ref{Banach} et en utilisant la Proposition \ref{finalbanach}, on obtient
	$$\hat{\mathbf{m}}^0(\Pi_ {M, \mathscr{L}, j})\xrightarrow{(z-z(\mathscr{L}))^n}\hat{\mathbf{m}}^0(\Pi_ {M, \mathscr{L}, j})\to L[T]/T^{\min(n, j)}\to0$$
	pour tout $n\geq1$, ce qui entraîne que $k=j$, d'où le résultat.
\end{proof}
\begin{corollary}\label{4.2.6} On a  
	
	(i) $\hat{\mathbf{m}}^0(\widehat{\Omega^1[M]^*})=\mathscr{O}_{\mathbf{P}^1}$. 
	
	(ii) $\hat{\mathbf{m}}^0(H^1(\mathfrak{X}, \mathscr{O})[\tfrac{1}{p}]^*[M])=\mathscr{O}_{\mathbf{P}^1}(-2)$.
\end{corollary}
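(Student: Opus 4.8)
The plan is to establish (ii) first — it does not use (i) — and to deduce (i) from it. Throughout, write $N:=H^1(\mathfrak{X},\mathscr{O})[\tfrac1p][M]^*$ and, for $U$ one of the standard affinoids (the unit balls at $0$ and $\infty$, the unit circle), $T_U:=\mathscr{O}_{\mathbf{P}^1}(U)^*\hat{\otimes}_{\mathscr{O}_{\mathbf{P}^1}(U)}\widehat{\underline{\Omega}(U)^\diamond}$, so that $\hat{\mathbf{m}}^i(-)(U)=\mathrm{Ext}^i_{G,\psi}(-,T_U)^*$, and recall that $\hat{\mathbf{m}}^0$ is right exact with a connecting morphism to $\hat{\mathbf{m}}^1$ of the sub. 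For (ii): by \autoref{2.6.3} every $\mathscr{L}\in\mathbf{P}^1$ gives a short exact sequence $0\to N\to\widehat{\mathrm{LL}(M)}\to\Pi_{M,\mathscr{L}}\to0$ (with $N\cong N_{\mathscr{L},1}$, as that reference shows). Applying $\mathrm{Hom}_{G,\psi}(-,T_U)$ and then the strong dual yields an exact sequence of sheaves
\[
\hat{\mathbf{m}}^1(\Pi_{M,\mathscr{L}})\to\hat{\mathbf{m}}^0(N)\to\hat{\mathbf{m}}^0(\widehat{\mathrm{LL}(M)})\to\hat{\mathbf{m}}^0(\Pi_{M,\mathscr{L}})\to0.
\]
By \autoref{4.2.4} the third term is $\mathscr{O}_{\mathbf{P}^1}(-1)$; by \autoref{4.2.5} with $j=1$ the last is $\mathscr{O}_{\mathbf{P}^1,\mathscr{L}}/\mathfrak{m}_{\mathscr{L}}$, and the surjection $\mathscr{O}_{\mathbf{P}^1}(-1)\twoheadrightarrow\mathscr{O}_{\mathbf{P}^1,\mathscr{L}}/\mathfrak{m}_{\mathscr{L}}$ has kernel $\mathscr{I}_{\mathscr{L}}\otimes\mathscr{O}_{\mathbf{P}^1}(-1)\cong\mathscr{O}_{\mathbf{P}^1}(-2)$; by \autoref{supporte2} with $j=1$ the first term is supported at $\mathscr{L}$. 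Hence $\hat{\mathbf{m}}^0(N)\to\mathscr{O}_{\mathbf{P}^1}(-1)$ has image $\mathscr{O}_{\mathbf{P}^1}(-2)$ and kernel a torsion sheaf supported at $\{\mathscr{L}\}$.

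Now comes the one genuinely non-formal step of (ii): $N$ does not depend on $\mathscr{L}$, so $\mathscr{F}:=\hat{\mathbf{m}}^0(N)$ is a single sheaf on $\mathbf{P}^1$. Its torsion subsheaf $\mathscr{F}_{\mathrm{tors}}$ maps to zero in the torsion-free sheaf $\mathscr{O}_{\mathbf{P}^1}(-2)$, so $\mathscr{F}_{\mathrm{tors}}\subseteq\ker(\mathscr{F}\to\mathscr{O}_{\mathbf{P}^1}(-1))$, which is supported at $\{\mathscr{L}\}$; as this holds for every $\mathscr{L}\in\mathbf{P}^1$ and $\mathbf{P}^1$ has more than one point, $\mathscr{F}_{\mathrm{tors}}=0$. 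Then the torsion kernel above is itself $0$, so $\mathscr{F}\xrightarrow{\ \sim\ }\mathscr{O}_{\mathbf{P}^1}(-2)$, which is (ii).

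For (i): the diagram in the proof of \autoref{2.6.3} together with the snake lemma shows that the universal map $M_{\mathrm{dR}}\otimes_L\widehat{\mathrm{LL}(M)}\to\widehat{\Omega^1[M]^*}$ is surjective with kernel $N$, and that the composite $N\hookrightarrow M_{\mathrm{dR}}\otimes_L\widehat{\mathrm{LL}(M)}\to (M_{\mathrm{dR}}/\mathscr{L})\otimes_L\widehat{\mathrm{LL}(M)}\cong\widehat{\mathrm{LL}(M)}$ is the standard inclusion $N_{\mathscr{L},1}\hookrightarrow\widehat{\mathrm{LL}(M)}$. Applying the right-exact functor $\hat{\mathbf{m}}^0$ to $0\to N\to M_{\mathrm{dR}}\otimes_L\widehat{\mathrm{LL}(M)}\to\widehat{\Omega^1[M]^*}\to0$ and using \autoref{4.2.4} and (ii) gives
\[
\mathscr{O}_{\mathbf{P}^1}(-2)=\hat{\mathbf{m}}^0(N)\xrightarrow{\ \varphi\ }M_{\mathrm{dR}}\otimes_L\mathscr{O}_{\mathbf{P}^1}(-1)\to\hat{\mathbf{m}}^0(\widehat{\Omega^1[M]^*})\to0;
\]
composing $\varphi$ with $M_{\mathrm{dR}}\otimes_L\mathscr{O}_{\mathbf{P}^1}(-1)\to(M_{\mathrm{dR}}/\mathscr{L})\otimes_L\mathscr{O}_{\mathbf{P}^1}(-1)$ recovers $\hat{\mathbf{m}}^0$ of the standard inclusion, whose image is $\mathscr{O}_{\mathbf{P}^1}(-2)\neq0$ by the proof of (ii); hence $\varphi\neq0$, its image is the line bundle $\mathscr{O}_{\mathbf{P}^1}(-2)$, and $\hat{\mathbf{m}}^0(\widehat{\Omega^1[M]^*})$ is a rank-one, degree-zero quotient of $M_{\mathrm{dR}}\otimes_L\mathscr{O}_{\mathbf{P}^1}(-1)\cong\mathscr{O}_{\mathbf{P}^1}(-1)^{\oplus2}$. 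On the other hand, applying $\mathrm{Hom}_{G,\psi}(\widehat{\Omega^1[M]^*},-)$ to the sequence (\ref{equation3}) and using $\mathrm{Hom}_G(\widehat{\Omega^1[M]^*},\widehat{\mathrm{LL}(M)})=0$ — deduced from the non-split sequence of \autoref{3.7.2} and $\mathrm{End}_G(\widehat{\mathrm{LL}(M)})=L$ (\autoref{3.8.1}) since the connecting map $\mathrm{End}_G(\widehat{\mathrm{LL}(M)})\to\mathrm{Ext}^1_{G,\psi}(\Pi_{M,\mathscr{L}},\widehat{\mathrm{LL}(M)})$ is then injective — together with $\mathrm{End}_G(\widehat{\Omega^1[M]^*})=L$ (proved in \S3.8), one gets an injection $\mathscr{O}_{\mathbf{P}^1}(U)^*\hookrightarrow\mathrm{Hom}_G(\widehat{\Omega^1[M]^*},T_U)$, i.e. a surjection of sheaves $\hat{\mathbf{m}}^0(\widehat{\Omega^1[M]^*})\twoheadrightarrow\mathscr{O}_{\mathbf{P}^1}$. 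A rank-one, degree-zero quotient of $\mathscr{O}_{\mathbf{P}^1}(-1)^{\oplus2}$ that surjects onto $\mathscr{O}_{\mathbf{P}^1}$ must be $\mathscr{O}_{\mathbf{P}^1}$: its torsion-free part is a line-bundle quotient $\mathscr{O}_{\mathbf{P}^1}(a)$ with $a\geq-1$, hence $a\in\{-1,0\}$ by the degree constraint, and the case $a=-1$ (where the quotient is $\mathscr{O}_{\mathbf{P}^1}(-1)\oplus(\text{length-one torsion})$) admits no surjection onto $\mathscr{O}_{\mathbf{P}^1}$. This proves (i).

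The main obstacle is not the homological bookkeeping but pinning down $\hat{\mathbf{m}}^0(\widehat{\Omega^1[M]^*})$ precisely: the right-exact sequence only exhibits it a priori as a quotient of $\mathscr{O}_{\mathbf{P}^1}(-1)^{\oplus2}$, and one must simultaneously rule out spurious torsion and rank $2$. The two ingredients achieving this are (a) the non-vanishing of $\varphi$, which forces one to unwind the diagram of \autoref{2.6.3} and identify $\hat{\mathbf{m}}^0$ of the abstract inclusion $N\hookrightarrow M_{\mathrm{dR}}\otimes_L\widehat{\mathrm{LL}(M)}$ with the standard one, and (b) the surjection onto $\mathscr{O}_{\mathbf{P}^1}$, which rests on the vanishing $\mathrm{Hom}_G(\widehat{\Omega^1[M]^*},\widehat{\mathrm{LL}(M)})=0$, on $\mathrm{End}_G(\widehat{\Omega^1[M]^*})=L$, and on the compatibility of $\mathrm{Hom}_G$ with the completed tensor product by $\mathscr{O}_{\mathbf{P}^1}(U)^*$ used throughout \S4 (together with the strictness needed to dualize the relevant exact sequences). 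For (ii) the only input beyond formal manipulation is the $\mathscr{L}$-independence of $H^1(\mathfrak{X},\mathscr{O})[\tfrac1p][M]^*$ combined with the support statement \autoref{supporte2}.
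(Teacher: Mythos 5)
Your argument is correct at the level of rigor the paper itself adopts, but only part (ii) follows the paper's own route; part (i) is genuinely different, and you run the two parts in the opposite order. For (ii) both you and the paper apply $\hat{\mathbf{m}}$ to the sequence of \autoref{2.6.3}, feed in \autoref{4.2.4}, \autoref{4.2.5} and \autoref{supporte2}, and use the independence of $H^1(\mathfrak{X},\mathscr{O})[\tfrac1p][M]^*$ from $\mathscr{L}$ to kill the connecting map; your torsion-subsheaf formulation is in fact a cleaner phrasing of the paper's ``somme directe'' step. For (i) the paper argues directly from the sequence of \autoref{3.7.2}: right exactness gives $\hat{\mathbf{m}}^0(\widehat{\mathrm{LL}(M)})\to\hat{\mathbf{m}}^0(\widehat{\Omega^1[M]^*})\to\hat{\mathbf{m}}^0(\Pi_{M,\mathscr{L}})\to0$, the surjections onto skyscrapers at infinitely many $\mathscr{L}$ force injectivity of $\mathscr{O}_{\mathbf{P}^1}(-1)\to\hat{\mathbf{m}}^0(\widehat{\Omega^1[M]^*})$, and varying $\mathscr{L}$ forces the extension by $\mathscr{O}_{\mathbf{P}^1,\mathscr{L}}/\mathfrak{m}_{\mathscr{L}}$ to be non-split for some $\mathscr{L}$, whence $\mathscr{O}_{\mathbf{P}^1}$. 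You instead push $\hat{\mathbf{m}}^0$ through the Euler-type sequence $0\to N\to M_{\mathrm{dR}}\otimes_L\widehat{\mathrm{LL}(M)}\to\widehat{\Omega^1[M]^*}\to0$ (which the paper only records in the remark after the corollary) and then pin down the cokernel via an auxiliary surjection onto $\mathscr{O}_{\mathbf{P}^1}$. The paper's route is more economical — it needs neither $\mathrm{Hom}_G(\widehat{\Omega^1[M]^*},\widehat{\mathrm{LL}(M)})=0$ nor $\mathrm{End}_G(\widehat{\Omega^1[M]^*})=L$ — while yours exhibits $\hat{\mathbf{m}}^0(\widehat{\Omega^1[M]^*})$ as an explicit cokernel of coherent sheaves (so coherence is automatic) and makes the analogy with the classical Euler sequence, noted in the paper's remark, into the actual mechanism of the proof.

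The one step you should tighten is the passage from the injection $\mathscr{O}_{\mathbf{P}^1}(U)^*\hookrightarrow\mathrm{Hom}_G(\widehat{\Omega^1[M]^*},\mathscr{O}_{\mathbf{P}^1}(U)^*\hat{\otimes}_{\mathscr{O}_{\mathbf{P}^1}(U)}\widehat{\underline{\Omega}(U)^\diamond})$ to a surjection of sheaves $\hat{\mathbf{m}}^0(\widehat{\Omega^1[M]^*})\twoheadrightarrow\mathscr{O}_{\mathbf{P}^1}$: dualizing an injection gives (by Hahn--Banach over the spherically complete field $L$) a surjection onto the double dual $(\mathscr{O}_{\mathbf{P}^1}(U)^*)^*$, not onto $\mathscr{O}_{\mathbf{P}^1}(U)$, so you must either identify the sub-Hom exactly as $\mathscr{O}_{\mathbf{P}^1}(U)^*$ — this is where $\mathrm{End}_G(\widehat{\Omega^1[M]^*})=L$ and the compatibility of $\mathrm{Hom}_G$ with $\mathscr{O}_{\mathbf{P}^1}(U)^*\hat{\otimes}(-)$ really enter, together with strictness of the maps being dualized — or invoke the formal duality conventions of the appendix, exactly as the paper does when it dualizes in the proof of \autoref{4.2.4}. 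You should also note that these maps are compatible with restriction to the three standard affinoids, so that they assemble into a morphism of sheaves. With these points made explicit, your proof is on the same footing as the paper's; otherwise everything (the vanishing $\mathrm{Hom}_G(\widehat{\Omega^1[M]^*},\widehat{\mathrm{LL}(M)})=0$ via the non-split sequence, the identification of $\varphi$ through the snake-lemma diagram of \autoref{2.6.3}, and the final rank/degree argument on $\mathbf{P}^1$) is sound.
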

\begin{proof}[Preuve] (i) En appliquant le foncteur $\hat{\mathbf{m}}$ à la suite du Théorème \ref{3.7.2}, on obtient une suite exacte
	$$\hat{\mathbf{m}}^0(\widehat{\mathrm{LL}(M)})\to\hat{\mathbf{m}}^0(\widehat{\Omega^1[M]^*})\to\hat{\mathbf{m}}^0(\Pi_{M, \mathscr{L}})\to0.$$
	Comme le faisceau $\hat{\mathbf{m}}^0(\widehat{\Omega^1[M]^*})$ se surjecte sur $\hat{\mathbf{m}}^0(\Pi_{M, \mathscr{L}})$ pour un nombre infini de $\mathscr{L}$, on en déduit que $\hat{\mathbf{m}}^0(\widehat{\Omega^1[M]^*})$ est supporté en un nombre infini de points (Proposition \ref{4.2.5}). La Proposition \ref{4.2.4} nous permet donc d'obtenir la suite exacte suivante pour toute $\mathscr{L}$
	$$0\to\hat{\mathbf{m}}^0(\widehat{\mathrm{LL}(M)})\to\hat{\mathbf{m}}^0(\widehat{\Omega^1[M]^*})\to\hat{\mathbf{m}}^0(\Pi_{M, \mathscr{L}})\to0.$$
	Il existe au moins une droite $\mathscr{L}$ telle que la suite ci-dessus est non scindée, d'où le résultat.
	
	(ii) On déduit du Corollaire \ref{2.6.3} une suite exacte pour toute $\mathscr{L}$
	$$\hat{\mathbf{m}}^1(\Pi_{M, \mathscr{L}})\to\hat{\mathbf{m}}^0(H^1(\mathfrak{X}, \mathscr{O})[\tfrac{1}{p}]^*[M])\to\hat{\mathbf{m}}^0(\widehat{\mathrm{LL}(M)})\to\hat{\mathbf{m}}^0(\Pi_{M, \mathscr{L}})\to0.$$
	Supposons que le morphisme $$\hat{\mathbf{m}}^1(\Pi_{M, \mathscr{L}})\to\hat{\mathbf{m}}^0(H^1(\mathfrak{X}, \mathscr{O})[\tfrac{1}{p}]^*[M])$$ est non nul, alors le faisceau $\hat{\mathbf{m}}^0(H^1(\mathfrak{X}, \mathscr{O})[\tfrac{1}{p}]^*[M])$ est une somme directe d'un faisceau supporté en $\mathscr{L}$ et un fibré en droites (Proposition \ref{4.2.4}, Proposition \ref{4.2.5}). Comme ceci est vrai pour toute $\mathscr{L}$, on obtient une contradiction, ce qui implique que le morphisme $\hat{\mathbf{m}}^1(\Pi_{M, \mathscr{L}})\to\hat{\mathbf{m}}^0(H^1(\mathfrak{X}, \mathscr{O})[\tfrac{1}{p}]^*[M])$ est nul. Cela permet de conclure.
\end{proof}
\begin{remark} La preuve du Corollaire \ref{2.6.3} nous donne une suite exacte
	$$0\to H^1(\mathfrak{X}, \mathscr{O})[\tfrac{1}{p}]^*[M]\to M_{\mathrm{dR}} \otimes \widehat{\mathrm{LL}(M)}\to\Omega^1[M]^{\mathrm{b}, *} \to0.$$
	En appliquant $\hat{\mathbf{m}}^0$, on obtient la suite exacte classique
	$$0\to\mathscr{O}_{\mathbf{P}^1}(-2)\to\mathscr{O}_{\mathbf{P}^1}(-1)\oplus\mathscr{O}_{\mathbf{P}^1}(-1)\to\mathscr{O}_{\mathbf{P}^1}\to0.$$
\end{remark}
Pour une extension successive $\Pi$ de $\Pi_{M, \mathscr{L}}$ qui n'est pas de de Rham, on peut aussi calculer $\hat{\mathbf{m}}^0(\Pi)$. Pour simplifier, on suppose que $\Pi$ est de longueur $2$, alors on a le résultat suivant
\begin{proposition} Soit $\Pi$ une extension de $\Pi_{M, \mathscr{L}}$ par $\Pi_{M, \mathscr{L}}$ qui n'est pas de de Rham, alors on a $\hat{\mathbf{m}}^0(\Pi)=\mathscr{O}_{\mathbf{P}^1, \mathscr{L}}/\mathfrak{m}_{\mathscr{L}}$.
\end{proposition}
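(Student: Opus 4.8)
Le plan est de calculer $\hat{\mathbf{m}}^0(\Pi)$ à partir de la définition, en se restreignant aux ouverts affinoïdes standards $U$ — les boules unité de centre $0$ et $\infty$ et le cercle unité — sur lesquels le \autoref{nouveau1} et le \autoref{Banach} s'appliquent, et en supposant $\mathscr{L}\in U$; cela suffit puisque $\hat{\mathbf{m}}^i$ est entièrement décrit par ces restrictions. Notons $0\to\Pi_{M, \mathscr{L}}\xrightarrow{\alpha}\Pi\xrightarrow{\beta}\Pi_{M, \mathscr{L}}\to0$ l'extension donnée, de classe $e\in\mathrm{Ext}^1_{G, \psi}(\Pi_{M, \mathscr{L}}, \Pi_{M, \mathscr{L}})$, et posons $E_U:=\mathscr{O}_{\mathbf{P}^1}(U)^*\hat{\otimes}_{\mathscr{O}_{\mathbf{P}^1}(U)}\widehat{\underline{\Omega}(U)^\diamond}$. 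En appliquant $\mathrm{Hom}_{G, \psi}(-, E_U)$ à cette suite puis en dualisant, on obtiendra une suite exacte
$$\mathrm{Ext}^1_{G, \psi}(\Pi_{M, \mathscr{L}}, E_U)^*\to\hat{\mathbf{m}}^0(\Pi_{M, \mathscr{L}})(U)\xrightarrow{a}\hat{\mathbf{m}}^0(\Pi)(U)\xrightarrow{b}\hat{\mathbf{m}}^0(\Pi_{M, \mathscr{L}})(U)\to0.$$
Comme $\hat{\mathbf{m}}^0(\Pi_{M, \mathscr{L}})=\mathscr{O}_{\mathbf{P}^1, \mathscr{L}}/\mathfrak{m}_{\mathscr{L}}$ (\autoref{4.2.5}), ceci montre déjà que $\hat{\mathbf{m}}^0(\Pi)$ est supporté en $\mathscr{L}$ et de longueur $\leq2$; tout le travail consistera à établir que $a=0$, de sorte que $b$ soit un isomorphisme de faisceaux. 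Or $a$ est le dual de $\alpha^*\colon\mathrm{Hom}_{G, \psi}(\Pi, E_U)\to\mathrm{Hom}_{G, \psi}(\Pi_{M, \mathscr{L}}, E_U)$, et par exactitude $a=0$ équivaut à l'injectivité du morphisme de connexion $\partial\colon\mathrm{Hom}_{G, \psi}(\Pi_{M, \mathscr{L}}, E_U)\to\mathrm{Ext}^1_{G, \psi}(\Pi_{M, \mathscr{L}}, E_U)$, $f\mapsto f_*e$.

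On commencera par observer que $\mathrm{Hom}_{G, \psi}(\Pi_{M, \mathscr{L}}, E_U)$ est de dimension $1$ sur $L$ — c'est le dual de $\hat{\mathbf{m}}^0(\Pi_{M, \mathscr{L}})(U)$, qui vaut $L$ par la \autoref{4.2.5} avec $j=1$ — donc engendré par l'inclusion naturelle $\iota\colon\Pi_{M, \mathscr{L}}\hookrightarrow E_U$ du \autoref{nouveau1}, dont l'image est $\mathrm{Ker}(z-z(\mathscr{L}))$. L'injectivité de $\partial$ se ramène alors à l'assertion suivante: $\iota$ ne s'étend pas en un morphisme $G$-équivariant $\tilde{\iota}\colon\Pi\to E_U$ vérifiant $\tilde{\iota}\alpha=\iota$.

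Cette assertion se prouvera par l'absurde. Supposons $\tilde{\iota}$ donné. Le morphisme $(z-z(\mathscr{L}))\tilde{\iota}$ annule $\alpha(\Pi_{M, \mathscr{L}})=\mathrm{Im}(\iota)$, donc se factorise par $\beta$; comme $\mathrm{Hom}_{G, \psi}(\Pi_{M, \mathscr{L}}, E_U)=L\iota$, il existe $\lambda\in L$ avec $(z-z(\mathscr{L}))\tilde{\iota}=\lambda\,\iota\beta$. Si $\lambda=0$, alors $\mathrm{Im}(\tilde{\iota})\subseteq\mathrm{Ker}(z-z(\mathscr{L}))=\mathrm{Im}(\iota)$, et $\tilde{\iota}$ fournit une rétraction de $\alpha$, ce qui scinderait la suite exacte, absurde. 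Si $\lambda\neq0$, alors $(z-z(\mathscr{L}))^2\tilde{\iota}=\lambda(z-z(\mathscr{L}))\iota\beta=0$ puisque $(z-z(\mathscr{L}))\iota=0$; d'après le \autoref{Banach} avec $j=2$, $\mathrm{Ker}((z-z(\mathscr{L}))^2)=\Pi_{M, \mathscr{L}, 2}$ dans $E_U$, de sorte que $\tilde{\iota}$ se factorise en $\Pi\xrightarrow{\tau}\Pi_{M, \mathscr{L}, 2}\hookrightarrow E_U$ avec $\tau\alpha=\iota\neq0$. La représentation $\Pi_{M, \mathscr{L}, 2}$ étant unisérielle de socle $\Pi_{M, \mathscr{L}}$ (\cite[Section 4.2]{cdn2023correspondance}), $\tau\alpha$ identifie $\Pi_{M, \mathscr{L}}$ à ce socle, et le lemme du serpent appliqué aux deux suites exactes de longueur $2$ forcera soit $\tau$ à être une rétraction de $\alpha$ (ce qui scinderait la suite, absurde), soit $\tau$ à être un isomorphisme $\Pi\xrightarrow{\sim}\Pi_{M, \mathscr{L}, 2}$, impossible car $\Pi$ n'est pas de de Rham tandis que $\Pi_{M, \mathscr{L}, 2}$ l'est. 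On obtient une contradiction dans tous les cas, d'où l'assertion, et finalement $\hat{\mathbf{m}}^0(\Pi)=\mathscr{O}_{\mathbf{P}^1, \mathscr{L}}/\mathfrak{m}_{\mathscr{L}}$.

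La difficulté principale sera cette disjonction de cas, où il faut exploiter simultanément les deux hypothèses sur $\Pi$ — non scindée et non de de Rham — et contrôler précisément l'action de $z-z(\mathscr{L})$ sur $\mathrm{Im}(\tilde{\iota})\subseteq\Pi_{M, \mathscr{L}, 2}\subseteq E_U$. Il faudra par ailleurs s'assurer que les $\mathrm{Hom}$ et $\mathrm{Ext}$ sont calculés dans une catégorie abélienne où la suite exacte longue et les diverses factorisations sont licites et continues, puis recoller les isomorphismes obtenus sur le cercle unité et sur les deux boules unité en un isomorphisme de faisceaux.
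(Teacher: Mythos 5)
Votre argument est correct, mais il est organisé différemment de celui du texte. La preuve du texte part de la preuve de la \autoref{4.2.5} pour réduire $\hat{\mathbf{m}}^0(\Pi)$ à trois candidats ($\mathscr{O}_{\mathbf{P}^1, \mathscr{L}}/\mathfrak{m}_{\mathscr{L}}$, $\mathscr{O}_{\mathbf{P}^1, \mathscr{L}}/\mathfrak{m}_{\mathscr{L}}^2$ ou la somme directe de deux copies de $\mathscr{O}_{\mathbf{P}^1, \mathscr{L}}/\mathfrak{m}_{\mathscr{L}}$), puis applique $\mathrm{Hom}_G(\Pi, -)^*$ aux suites du \autoref{nouveau1} et du \autoref{Banach} ($j=2$) : les conoyaux de $z-z(\mathscr{L})$ et de $(z-z(\mathscr{L}))^2$ sur $\hat{\mathbf{m}}^0(\Pi)$ valent tous deux $L$, car $\mathrm{Hom}_G(\Pi, \Pi_{M, \mathscr{L}})=L$ (non-scindage) et $\mathrm{Hom}_G(\Pi, \Pi_{M, \mathscr{L}, 2})=L$ (non-scindage plus non-de Rham), ce qui ne laisse que $\mathscr{O}_{\mathbf{P}^1, \mathscr{L}}/\mathfrak{m}_{\mathscr{L}}$. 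Vous faites le dual de cela : vous appliquez $\hat{\mathbf{m}}^0$ à la suite définissant $\Pi$ et ramenez tout à la nullité de $\hat{\mathbf{m}}^0(\alpha)$, c'est-à-dire au fait que l'inclusion $\iota: \Pi_{M, \mathscr{L}}\hookrightarrow E_U$ ne se prolonge pas à $\Pi$ ; votre dichotomie $\lambda=0$ / $\lambda\neq0$ est exactement le contenu des deux calculs de $\mathrm{Hom}$ implicites dans le texte (une rétraction contredit le non-scindage ; un prolongement à image dans $E_U[\mathfrak{m}_{\mathscr{L}}^2]=\Pi_{M, \mathscr{L}, 2}$ forcerait, par le lemme des cinq, $\Pi\cong\Pi_{M, \mathscr{L}, 2}$, contredisant le non-de Rham). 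Les ingrédients sont donc les mêmes (\autoref{4.2.5}, \autoref{nouveau1}, \autoref{Banach}, unisérialité et de Rham-itude de $\Pi_{M, \mathscr{L}, 2}$), mais votre route évite l'énumération des trois formes possibles du faisceau au prix d'une analyse de prolongement ; elle a aussi l'avantage que $a$ et $b$ étant des morphismes de faisceaux, le recollement final est automatique. Deux points à préciser dans la rédaction : la dualisation de la suite exacte longue et la finitude de $\mathrm{Hom}_G(\Pi_{M, \mathscr{L}}, E_U)$ (via la \autoref{4.2.5} et Hahn--Banach sur $L$ sphériquement complet) doivent être justifiées comme dans le texte, et dans le cas $\lambda\neq0$ ce n'est pas $\tau$ lui-même mais $\tau$ composé avec l'identification du socle qui fournirait la rétraction lorsque l'application induite sur les quotients est nulle.
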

\begin{proof}[Preuve] D'après la preuve de la Proposition \ref{4.2.5}, $\hat{\mathbf{m}}^0(\Pi)$ est isomorphe à $\mathscr{O}_{\mathbf{P}^1, \mathscr{L}}/\mathfrak{m}_{\mathscr{L}}$ ou $\mathscr{O}_{\mathbf{P}^1, \mathscr{L}}/\mathfrak{m}_{\mathscr{L}}^2$ ou $\mathscr{O}_{\mathbf{P}^1, \mathscr{L}}/\mathfrak{m}_{\mathscr{L}}\oplus\mathscr{O}_{\mathbf{P}^1, \mathscr{L}}/\mathfrak{m}_{\mathscr{L}}$. D'une part, en appliquant $\mathrm{Hom}_G(\Pi, -)^*$ à la suite du Lemme \ref{nouveau1}, on a 
	$$\hat{\mathbf{m}}^0(\Pi)\xrightarrow{z-z(\mathscr{L})}\hat{\mathbf{m}}^0(\Pi)\to L\to0,$$ ce qui implique que $\hat{\mathbf{m}}^0(\Pi_ {M, \mathscr{L}, j})=\mathscr{O}_{\mathbf{P}^1, \mathscr{L}}/\mathfrak{m}_{\mathscr{L}}$ ou $\mathscr{O}_{\mathbf{P}^1, \mathscr{L}}/\mathfrak{m}_{\mathscr{L}}^2$. D'autre part, en appliquant $\mathrm{Hom}_G(\Pi, -)^*$ à la suite du Corollaire \ref{Banach}, on a 
	$$\hat{\mathbf{m}}^0(\Pi)\xrightarrow{(z-z(\mathscr{L}))^2}\hat{\mathbf{m}}^0(\Pi)\to L\to0,$$
	ce qui implique que $\hat{\mathbf{m}}^0(\Pi)=\mathscr{O}_{\mathbf{P}^1, \mathscr{L}}/\mathfrak{m}_{\mathscr{L}}$, d'où le résultat.
	\end{proof}
\section{Application}
\subsection{Complétions $\mathfrak{B}$-adiques des quotients de $\widehat{\mathrm{LL}(M)}$}
Soient $\mathfrak{B}$ un bloc et $\Pi_2$ un quotient de $\widehat{\mathrm{LL}(M)}$ par une sous-représentation propre fermée $\Pi_1$. Supposons que $\mathrm{LL}(M)=\mathrm{ind}_{KZ}^G\sigma_M$. On note $\sigma_M^0$ un réseau de $\sigma_M$. Comme $\mathrm{ind}_{KZ}^G\sigma_M^0$ est un réseau de type fini de $\mathrm{LL}(M)$, la complétion p-adique de $\mathrm{ind}_{KZ}^G\sigma_M^0$ est un réseau de $\widehat{\mathrm{LL}(M)}$. On note $\Pi_1^+$ la préimage et $\Pi_2^+$ l'image de la complétion p-adique de $\mathrm{ind}_{KZ}^G\sigma_M^0$ dans $\Pi_1$ et $\Pi_2$ respectivement, alors $\Pi_1^+, \Pi_2^+$ sont les boules unités de $\Pi_1$ et $\Pi_2$. Pour tout $k$, $\Pi_1^+/\pi_L^k$ et $\Pi_2^+/\pi_L^k$ sont des représentations lisses de type fini (voir \cite[Theorem 2.4.1]{deg2023localization} pour $\Pi_1^+/\pi_L^k$). D'après la section \ref{3.3}, on dispose des complétions $\mathfrak{B}$-adiques $(\Pi_1^+/\pi_L^k)_\mathfrak{B}$ et $(\Pi_2^+/\pi_L^k)_\mathfrak{B}$, et on définit la complétion $\mathfrak{B}$-adique de $\Pi_i$ par 
$$\Pi_{i, \mathfrak{B}}:=(\varprojlim_k(\Pi_i^+/\pi_L^k)_\mathfrak{B})[\tfrac{1}{\pi_L}]$$
pour $i=1,2$. Le Lemme \ref{exact} nous donne une suite exacte 
$$0\to\Pi_{1, \mathfrak{B}}\to\mathrm{LL}(M)_\mathfrak{B}\to\Pi_{2, \mathfrak{B}}\to0.$$
\begin{lemma}\label{quotient complétion} Soient $\sigma$ un poids de Serre et $W$ un quotient de $\mathrm{ind}_{KZ}^G\sigma$, alors $W$ est un $\kappa_L[T]$-module où $T$ est l'opérateur de Barthel-Livné. De plus, si $\mathfrak{B}$ est un bloc correspondant à un polynôme irréductible $P(T)\in\kappa_L[T]$, alors la complétion $\mathfrak{B}$-adique $W_\mathfrak{B}$ de $W$ est $\varprojlim\limits_iW/P(T)^i$. En particulier, le morphisme naturel $W\to W_\mathfrak{B}$ est injectif.
\end{lemma}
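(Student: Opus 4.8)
The plan is to bootstrap from the case $W=\mathrm{ind}_{KZ}^G\sigma$, for which the statement is exactly what is recorded in the proof of \autoref{3.3.4} (namely $(\mathrm{ind}_{KZ}^G\sigma)_{\mathfrak{B}}=\varprojlim_i\mathrm{ind}_{KZ}^G\sigma/P(T)^i$, due to \cite{cdn2023correspondance}, together with the freeness of $\mathrm{ind}_{KZ}^G\sigma$ over $\kappa_L[T]$), and to transport it along a surjection $\pi:=\mathrm{ind}_{KZ}^G\sigma\twoheadrightarrow W$ using the exactness of the $\mathfrak{B}$-adic completion functor (\autoref{exact}). First I would write $N:=\ker(\pi\twoheadrightarrow W)$ and check that $N$ is stable under the Barthel--Livné operator $T\in\mathrm{End}_G(\pi)$: realizing $\pi=\kappa_L[G]\otimes_{\kappa_L[KZ]}\sigma$ and using the explicit Barthel--Livné formula, $T(1\otimes v)=\sum_i g_i\otimes\phi_i(v)$ is a finite sum, so for $n=\sum_j g_j\otimes v_j\in N$ one has $Tn=\sum_j g_j\cdot T(1\otimes v_j)$, and one verifies term by term that this stays in the $\kappa_L[G]$-submodule generated by the $G$-translates of $n$, hence in $N$. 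Thus $T$ descends to an operator $T_W$ on $W$, so $W$ is a $\kappa_L[T]$-module and $\pi\twoheadrightarrow W$ is $\kappa_L[G][T]$-equivariant; this is the first assertion.

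Next, fix the block $\mathfrak{B}$ with its irreducible polynomial $P=P(T)$. Applying $(-)_\mathfrak{B}$ to $0\to N\to\pi\to W\to 0$ (exact by \autoref{exact}, the terms being of finite type over $\kappa_L[G]$) gives $0\to N_\mathfrak{B}\to\pi_\mathfrak{B}\to W_\mathfrak{B}\to 0$. On the other hand, by the third isomorphism theorem $W/P^iW=\pi/(N+P^i\pi)$ for every $i$, compatibly with the transition maps, so $\varprojlim_iW/P^iW=\varprojlim_i\pi/(N+P^i\pi)$. Using $\pi_\mathfrak{B}=\varprojlim_i\pi/P^i\pi$ from \cite{cdn2023correspondance}, the heart of the argument is to identify the image of $N_\mathfrak{B}$ in $\pi_\mathfrak{B}$ with $\varprojlim_i(N+P^i\pi)/P^i\pi$; this in turn comes from applying $(-)_\mathfrak{B}$ to $N$ itself, the point being that every finite length quotient of $N$ with Jordan--H\"older factors in $\mathfrak{B}$ is killed by a power of $P$, since $P$ is a maximal ideal of $\kappa_L[T]$ cutting out precisely $\mathfrak{B}$ (so $\mathrm{Hom}_{KZ}(\sigma,-)$ of such a quotient is supported at $(P)$). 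Granting this, $W_\mathfrak{B}=\pi_\mathfrak{B}/N_\mathfrak{B}=\varprojlim_i\pi/(N+P^i\pi)=\varprojlim_iW/P^iW$, which is the second assertion.

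For the last assertion, the kernel of $W\to W_\mathfrak{B}=\varprojlim_iW/P^iW$ is $\bigcap_iP^iW$; lifting an element to $x\in\pi$ one gets $x\in\bigcap_i(N+P^i\pi)$, i.e. the class of $x$ in $\pi_\mathfrak{B}$ lies in the image of $N_\mathfrak{B}$. Since $\pi$ is free over the principal ideal domain $\kappa_L[T]$ and $P$ is a non-zero-divisor, $\bigcap_iP^i\pi=0$, so $\pi\hookrightarrow\pi_\mathfrak{B}$; and the identification above shows that the image of $N_\mathfrak{B}$ meets $\pi$ exactly in $N$, whence $x\in N$ and the kernel is $0$. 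The two steps I expect to cost the most work are precisely the $T$-compatibility points: that $N$ is genuinely $T$-stable (which forces one to use the explicit Hecke action rather than an abstract argument), and that the image of $N_\mathfrak{B}$ in $\pi_\mathfrak{B}$ is the $P$-adic closure of $N$ — it is this identification, resting on the finite length / block formalism of \cite{cdn2023correspondance}, that yields simultaneously the formula for $W_\mathfrak{B}$ and the injectivity of $W\to W_\mathfrak{B}$.
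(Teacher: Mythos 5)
The decisive gap is at the very first step: you need the kernel $N$ of $\pi:=\mathrm{ind}_{KZ}^G\sigma\twoheadrightarrow W$ to be stable under $T$, and your « term by term » verification does not prove it. Membership in $N$ is a condition on the whole sum $\sum_j g_j\otimes v_j$, not on its individual terms, and the terms $g_j\cdot T(1\otimes v_j)$ have no reason to lie in $N$ or in the $G$-submodule generated by the translates of $n$; more fundamentally, a $G$-stable subspace is in general not preserved by a $G$-equivariant endomorphism (a swap of the two factors of $V\oplus V$ already fails), so no formal manipulation of the Barthel--Livné formula can yield $T(N)\subseteq N$. This is precisely the non-formal input: the paper obtains the first assertion by invoking \cite[Corollary 2.1.4]{deg2023localization}. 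The same missing input propagates into your treatment of $N_\mathfrak{B}$: your key claim that every finite-length quotient of $N$ with facteurs de Jordan--Hölder dans $\mathfrak{B}$ is killed by a power of $P$ presupposes a $T$-action on such quotients and is not justified by the remark on $\mathrm{Hom}_{KZ}(\sigma,-)$; note also that quotients of $N$ are not quotients of $\pi$, so the control the paper actually uses (every finite-length quotient of $W$ is a quotient of $\mathrm{ind}_{KZ}^G\sigma/f(T)$, again by \cite[Corollary 2.1.4]{deg2023localization}, after which one runs the arguments of \cite[Propositions 2.5 et 2.6]{cdn2023correspondance}) is unavailable for $N$. Finally, applying \autoref{exact} to $0\to N\to\pi\to W\to0$ requires $N$ to be de type fini, which you assert but do not justify.

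The injectivity part is moreover circular. You reduce it to $\bigcap_i(N+P^i\pi)=N$ and then appeal to « the image of $N_\mathfrak{B}$ meets $\pi$ exactly in $N$ »; but with your identification $\mathrm{im}(N_\mathfrak{B})=\varprojlim_i(N+P^i\pi)/P^i\pi$, intersecting with $\pi$ gives exactly $\bigcap_i(N+P^i\pi)$, i.e.\ the statement to be proved, and nothing in your argument shows that this equals $N$ (equivalently, that $W$ is $P$-adically separated). Freeness of $\pi$ over $\kappa_L[T]$ does not help here: an arbitrary quotient of a free $\kappa_L[T]$-module, such as $\kappa_L(T)$, fails to be $P$-adically separated, so the separatedness of $W$ is a genuinely representation-theoretic fact. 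It is exactly what comes out of the route followed in the paper --- control of the finite-length quotients of $W$ via \cite[Corollary 2.1.4]{deg2023localization} and then the arguments of \cite[Propositions 2.5 et 2.6]{cdn2023correspondance} --- and it cannot be extracted from the module-theoretic bookkeeping in your proposal.
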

\begin{proof}[Preuve] Le premier énoncé suit du \cite[Corollary 2.1.4]{deg2023localization}. En remarquant que tout quotient de longueur finie de $W$ est un quotient de $\mathrm{ind}_{KZ}^G\sigma/f(T)$ pour un certain polynôme $f(T)\in\kappa_L[T]$ (\cite[Corollary 2.1.4]{deg2023localization}), la preuve reste est similaire à celle de \cite[Proposition 2.5]{cdn2023correspondance} dans le cas supersingulier et à celle de \cite[Proposition 2.6]{cdn2023correspondance} dans le cas non supersingulier.
\end{proof}
\begin{lemma}\label{5.1.2} (i) Soit $\Pi$ un quotient propre de $\widehat{\mathrm{LL}(M)}$, alors on a $\Pi\hookrightarrow\prod\limits_i\Pi_{\mathfrak{B}_i}$ pour un nombre fini de blocs $\mathfrak{B}_i$.
	
	(ii) Soient $\Pi$ un quotient propre de $\widehat{\mathrm{LL}(M)}$ et $\mathfrak{B}$ un bloc donné dans (i), alors on a $\Pi_\mathfrak{B} \hookrightarrow \prod\limits_{i, j} \Pi_{M, \mathscr{L}_i, s(i, j)}$ où $0\leq s(i, j)\leq j$.
\end{lemma}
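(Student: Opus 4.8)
The plan is to treat the two assertions in sequence, bootstrapping (ii) from (i) and from the block-decomposition machinery recalled in Section \ref{3.3}. For part (i): let $\Pi$ be a proper quotient of $\widehat{\mathrm{LL}(M)}$, say $\Pi = \widehat{\mathrm{LL}(M)}/\Pi_1$ with $\Pi_1 \ne 0$ closed. First I would invoke \autoref{3.3.4}, which gives finitely many blocks $\mathfrak{B}_i$ and an injection $\widehat{\mathrm{LL}(M)} \hookrightarrow \prod_i \mathrm{LL}(M)_{\mathfrak{B}_i}$. The natural map $\widehat{\mathrm{LL}(M)} \to \prod_i \Pi_{\mathfrak{B}_i}$ (using the $\mathfrak{B}_i$-adic completions of $\Pi$ as set up in the paragraph preceding \autoref{quotient complétion}) factors through $\Pi$, and the key point is that its kernel is exactly $\Pi_1$: by exactness of the $\mathfrak{B}$-adic completion functor (\autoref{exact}), the sequence $0 \to \Pi_{1,\mathfrak{B}_i} \to \mathrm{LL}(M)_{\mathfrak{B}_i} \to \Pi_{\mathfrak{B}_i} \to 0$ is exact for each $i$, so an element of $\widehat{\mathrm{LL}(M)}$ dying in all $\Pi_{\mathfrak{B}_i}$ must lie in $\bigcap_i \Pi_1 \cdot(\text{stuff})$; more precisely one uses that $\widehat{\mathrm{LL}(M)} \hookrightarrow \prod_i \mathrm{LL}(M)_{\mathfrak{B}_i}$ is injective together with the commutative diagram relating the two rows to conclude that $\ker(\widehat{\mathrm{LL}(M)} \to \prod_i \Pi_{\mathfrak{B}_i}) = \Pi_1$. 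This yields the desired injection $\Pi \hookrightarrow \prod_i \Pi_{\mathfrak{B}_i}$.

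For part (ii): fix one of the blocks $\mathfrak{B} = \mathfrak{B}_i$ from (i) and consider $\Pi_{\mathfrak{B}}$, which by the exact sequence $0 \to \Pi_{1,\mathfrak{B}} \to \mathrm{LL}(M)_{\mathfrak{B}} \to \Pi_{\mathfrak{B}} \to 0$ is a quotient of $\mathrm{LL}(M)_{\mathfrak{B}}$. Now I would use \autoref{3.3.5}: since $I := U_{M,\mathfrak{B}}$ (or any infinite subset of it) is infinite, there is an injection $\mathrm{LL}(M)_{\mathfrak{B}} \hookrightarrow \prod_{\mathscr{L} \in I} \Pi_{M,\mathscr{L}}$, coming functorially from the injection of $R_{M,\mathfrak{B}}$-modules $\rho_{M,\mathfrak{B}} \hookrightarrow \prod_{\mathscr{L}} \rho_{M,\mathfrak{B}}/\mathfrak{m}_{\mathscr{L}}$ (using that $R_{M,\mathfrak{B}}$ is reduced of dimension $1$ and $\rho_{M,\mathfrak{B}}$ is free, \cite[Corollaire 5.5]{cdn2023correspondance}). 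The quotient $\Pi_{\mathfrak{B}}$ then sits inside the image of this product, and the remaining task is to identify the component maps: for each $\mathscr{L}_i \in I$ the composite $\Pi_{\mathfrak{B}} \to \Pi_{M,\mathscr{L}_i}$ factors, after passing to the relevant completion, through some $\Pi_{M,\mathscr{L}_i,s(i,j)}$ with $0 \le s(i,j) \le j$ — here one uses that the submodules of $\Pi_{M,\mathscr{L},j}$ are exactly the $T^k \Pi_{M,\mathscr{L},j} = \Pi_{M,\mathscr{L},j-k}$ (Section 4.2 of \cite{cdn2023correspondance}, recalled after \autoref{2.2.3}), so that any quotient of a successive extension of $\Pi_{M,\mathscr{L}}$ of length $\le j$ localized at $\mathscr{L}_i$ is again such an extension of bounded length. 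Assembling these over all $i$ (finitely many blocks) and $j$ (indexing the $\mathscr{L}_i$ in each $I$) gives the injection $\Pi_{\mathfrak{B}} \hookrightarrow \prod_{i,j} \Pi_{M,\mathscr{L}_i,s(i,j)}$.

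The main obstacle I anticipate is the bookkeeping in part (ii): one must pin down precisely why each local factor of $\Pi_{\mathfrak{B}}$ is not merely \emph{some} subquotient but a genuine \emph{quotient-type} successive extension $\Pi_{M,\mathscr{L}_i,s(i,j)}$ of \emph{bounded} length — the bound being inherited from the length of the corresponding factor in $\widehat{\mathrm{LL}(M)}$ (equivalently from $N_{\mathscr{L},j}$ and \autoref{3.3.1}, \autoref{3.4.2}). This requires combining the module-theoretic structure of $\Pi_{M,\mathscr{L},j}$ as an $L[T]/T^j$-module with the fact that $\Pi_1 \ne 0$ forces the completion $\Pi_{1,\mathfrak{B}}$ to be nonzero at infinitely many (indeed all but finitely many) $\mathscr{L}$ by \autoref{3.3.7} (every nonzero closed subrepresentation of $\widehat{\mathrm{LL}(M)}$ is residually of infinite length, hence not contained in only finitely many $N_{\mathscr{L},j}$ with bounded $j$). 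Once this finiteness of the relevant lengths is in hand, the injection is formal; the rest is routine diagram-chasing with the exact block-completion functor.
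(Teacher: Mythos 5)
There is a genuine gap, and it occurs at the same place in both parts: you repeatedly pass injectivity into a product of completions from a big representation to one of its quotients, and this step is not formal. In (i), you argue that since $\widehat{\mathrm{LL}(M)}\hookrightarrow\prod_i\mathrm{LL}(M)_{\mathfrak{B}_i}$ (\autoref{3.3.4}) and the $\mathfrak{B}$-adic completion functor is exact (\autoref{exact}), the kernel of $\widehat{\mathrm{LL}(M)}\to\prod_i\Pi_{\mathfrak{B}_i}$ must be $\Pi_1$. It need not be: an element can die in every $\Pi_{\mathfrak{B}_i}$ because its image in $\mathrm{LL}(M)_{\mathfrak{B}_i}$ lands in the image of $\Pi_{1,\mathfrak{B}_i}$, without the element itself lying in $\Pi_1$. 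A toy model shows the failure: $B=\kappa_L[T]$ injects into $\varprojlim_n B/P^n$, yet the quotient $C=B/Q$ with $Q$ prime to $P$ has $C_{\mathfrak{B}}=0$, so injectivity of $B$ into its completion says nothing about $C$. This is precisely why the paper does not argue by such a diagram chase: it reduces to the mod-$\pi_L^k$ statement, dévisse to $k=1$, writes $\Pi^+/\pi_L$ as a successive extension of quotients $W_i$ of $\mathrm{ind}_{KZ}^G\sigma_i$, and then invokes the dedicated lemma on $\mathfrak{B}$-adic completions of quotients (\autoref{quotient complétion}), which asserts $W_i\hookrightarrow (W_i)_{\mathfrak{B}_i}$; that lemma is exactly the non-formal input your argument is missing.

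In (ii) the same problem recurs and is compounded. From $\mathrm{LL}(M)_{\mathfrak{B}}\hookrightarrow\prod_{\mathscr{L}\in I}\Pi_{M,\mathscr{L}}$ (\autoref{3.3.5}) you cannot conclude that the quotient $\Pi_{\mathfrak{B}}$ "sits inside the image of this product", and there is no natural map $\Pi_{\mathfrak{B}}\to\Pi_{M,\mathscr{L}_i}$ to identify: the maps $\mathrm{LL}(M)_{\mathfrak{B}}\to\Pi_{M,\mathscr{L}_i}$ do not factor through $\Pi_{\mathfrak{B}}$ in general. The paper's mechanism is different: it applies the Colmez functor to $\Pi_{\mathfrak{B}}$ itself, uses that $R_{M,\mathfrak{B}}$ is a finite product of principal ideal rings (\cite[Corollaire 5.5]{cdn2023correspondance}) to get an injection $\mathbf{V}(\Pi_{\mathfrak{B}})\hookrightarrow\prod_{i,j}\mathbf{V}(\Pi_{\mathfrak{B}})/\mathfrak{m}_i^j$, transports this back by functoriality to $\Pi_{\mathfrak{B}}\hookrightarrow\prod_{i,j}\Pi_{\mathfrak{B}}/\mathfrak{m}_i^j$, and only then identifies each factor: $\Pi_{\mathfrak{B}}/\mathfrak{m}_i^j$ is a quotient of $\mathrm{LL}(M)_{\mathfrak{B}}/\mathfrak{m}_i^j\cong\Pi_{M,\mathscr{L}_i,j}$, hence is $\Pi_{M,\mathscr{L}_i,s(i,j)}$ with $s(i,j)\le j$ or zero. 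So the bound on $s(i,j)$ comes from the module structure of $\Pi_{M,\mathscr{L}_i,j}$ over $L[T]/T^j$, not from $N_{\mathscr{L},j}$, \autoref{3.3.1} or \autoref{3.3.7} as you suggest; and without the injection of $\Pi_{\mathfrak{B}}$ into a product of its \emph{own} quotients, the bookkeeping you describe cannot start. To repair your proposal you would need to add exactly these two ingredients: the residual injectivity statement for quotients of $\mathrm{ind}_{KZ}^G\sigma_i$ in (i), and the passage through $\mathbf{V}(\Pi_{\mathfrak{B}})$ over $R_{M,\mathfrak{B}}$ in (ii).
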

\begin{proof}[Preuve] (i) On peut supposer que $\mathrm{LL}(M)=\mathrm{ind}_{KZ}^G\sigma_M$. La réduction modulo $p$ de $\sigma_M$ est une extension d'un nombre fini de poids de Serre que l'on note $\{\sigma_i\}_i$. Pour tout $i$, on a $\mathrm{End}_G(\sigma_i)=\kappa_L[T_i]$ et on note $\mathfrak{B}_i$ le bloc correspondant à la représentation $(\mathrm{ind}_{KZ}^G\sigma_i)/P_i(T_i)$, où $P_i(T_i)$ est un polynôme irréductible dans $\kappa_L[T_i]$. Il nous reste à montrer que toutes les flèches $$\Pi^+/\pi_L^k\to\prod_i(\Pi^+/\pi_L^k)_{\mathfrak{B}_i}$$ 
	sont injectives pour $k\geq1$. Par dévissage, il suffit de traiter le cas $k=1$. Il est clair que $\Pi^+/\pi_L$ est l'extension successive des quotients $W_i$ de $\mathrm{ind}_{KZ}^G\sigma_i$. Comme le foncteur de passage à la complétion $\mathfrak{B}$-adique est exact (Lemme \ref{exact}), on est ramené à montrer que le morphisme $W_i\to(W_i)_{\mathfrak{B}_i}$ est injectif pour tout $i$. Le Lemme \ref{quotient complétion} nous permet de conclure.
	
	(ii) D'après le \cite[Corollaire 5.5]{cdn2023correspondance}, $R_{M, \mathfrak{B}}$ est un produit fini d'anneaux principaux, on a donc une injection de $R_{M, \mathfrak{B}}$-modules $	\mathbf{V}(\Pi_{\mathfrak{B}})\hookrightarrow\prod_i\mathbf{V}(\Pi_{\mathfrak{B}})_{\mathfrak{m}_i}\hookrightarrow\prod_{i, j}	\mathbf{V}(\Pi_{\mathfrak{B}})/\mathfrak{m}_i^{j}$ où $\mathfrak{m}_i$ sont les idéaux maximals de $R_{M, \mathfrak{B}}$. Cela induit une injection de $R_{M, \mathfrak{B}}$-modules par fonctorialité 
	$$\Pi_{\mathfrak{B}}\hookrightarrow\prod_{i, j}\Pi_{\mathfrak{B}}/\mathfrak{m}_i^j.$$
	Comme $\Pi$ est un quotient de $\widehat{\mathrm{LL}(M)}$, on a une surjection $\Pi_{M, \mathscr{L}_i, j}\cong\mathrm{LL}(M)_{\mathfrak{B}}/\mathfrak{m}_i^j\twoheadrightarrow\Pi_{\mathfrak{B}}/\mathfrak{m}_i^j$, ce qui implique que $\Pi_{\mathfrak{B}}/\mathfrak{m}_i^j$ soit isomorphe à $\Pi_{M, \mathscr{L}_i, s(i, j)}$ pour $s(i, j)\leq j$, soit nulle. Cela permet de conclure.
\end{proof}
On déduit directement du Lemme \ref{5.1.2} le résultat suivant.
\begin{corollary}\label{5.1.3} Soit $\Pi$ un quotient propre de $\widehat{\mathrm{LL}(M)}$, alors on a une injection
	$$\Pi\hookrightarrow\prod_{i\in I}\prod_{j\in J(i)}\Pi_{M, \mathscr{L}_i, j}$$
	telle que la composée $\Pi\hookrightarrow\prod_{i\in I}\prod_{j\in J(i)}\Pi_{M, \mathscr{L}_i, j}\twoheadrightarrow \Pi_{M, \mathscr{L}_i, j}$ est surjective pour tout $i, j$. 
\end{corollary}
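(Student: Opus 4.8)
The plan is to deduce the statement formally from the two parts of \autoref{5.1.2}, the only extra work being a small clean-up to arrange that each coordinate projection is onto. First I would invoke \autoref{5.1.2} (i) to produce a finite set of blocks $\{\mathfrak{B}_i\}$ together with an injection $\Pi\hookrightarrow\prod_i\Pi_{\mathfrak{B}_i}$ coming from the natural maps to the $\mathfrak{B}_i$-adic completions. Then, for each of these blocks $\mathfrak{B}_i$, \autoref{5.1.2} (ii) gives an injection $\Pi_{\mathfrak{B}_i}\hookrightarrow\prod_{k,l}\Pi_{M,\mathscr{L}_{i,k},s(i,k,l)}$, where each factor $\Pi_{M,\mathscr{L}_{i,k},s(i,k,l)}$ is either a genuine $\Pi_{M,\mathscr{L},j}$ with $1\le j$ or is zero. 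Discarding the zero factors and composing all these injections yields an injection
$$\Pi\hookrightarrow\prod_{\alpha}\Pi_{M,\mathscr{L}_\alpha,j_\alpha}$$
over a suitable (a priori infinite) index set, which is exactly an embedding into a product of representations of the required shape.

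Next I would upgrade the coordinate maps to surjections. For each $\alpha$, let $T_\alpha$ be the closure inside $\Pi_{M,\mathscr{L}_\alpha,j_\alpha}$ of the image of the composite $\Pi\hookrightarrow\prod_\beta\Pi_{M,\mathscr{L}_\beta,j_\beta}\twoheadrightarrow\Pi_{M,\mathscr{L}_\alpha,j_\alpha}$. Since the closed sub-representations of $\Pi_{M,\mathscr{L},j}$ form the chain $T^k\Pi_{M,\mathscr{L},j}=\Pi_{M,\mathscr{L},j-k}$ recalled after \autoref{finalbanach}, $T_\alpha$ is either zero or isomorphic to $\Pi_{M,\mathscr{L}_\alpha,j'_\alpha}$ for some $1\le j'_\alpha\le j_\alpha$. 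When $T_\alpha\ne0$, the map $\Pi\to T_\alpha$ has dense image by construction, and $T_\alpha^+/\pi_L$ is of finite type over $\kappa_L[G]$ because $T_\alpha$ is an admissible Banach representation of finite length; hence \autoref{2.2.2} shows that $\Pi\to T_\alpha$ is surjective. Replacing each factor $\Pi_{M,\mathscr{L}_\alpha,j_\alpha}$ by $T_\alpha$ and deleting the indices $\alpha$ with $T_\alpha=0$ (on which the projection of $\Pi$ already vanishes) does not destroy injectivity, since each original coordinate map factors through $T_\alpha\hookrightarrow\Pi_{M,\mathscr{L}_\alpha,j_\alpha}$. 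Relabelling the distinct lines that survive by $I$, and for each $i\in I$ indexing by $J(i)$ the lengths $j'_\alpha$ that occur with line $\mathscr{L}_i$, one gets the asserted injection $\Pi\hookrightarrow\prod_{i\in I}\prod_{j\in J(i)}\Pi_{M,\mathscr{L}_i,j}$ with every composite $\Pi\hookrightarrow\prod\twoheadrightarrow\Pi_{M,\mathscr{L}_i,j}$ surjective.

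I do not expect a genuine obstacle in this argument: once \autoref{5.1.2} is available the result is essentially formal, and the two points that need a word of justification are the identification of the closed sub-representations of $\Pi_{M,\mathscr{L},j}$ (so that each $T_\alpha$ is again of the form $\Pi_{M,\mathscr{L}_\alpha,j'_\alpha}$) and the passage from ``dense image'' to ``surjective'' via \autoref{2.2.2}, which is the same mechanism already used to prove \autoref{2.2.3}. Note in particular that no finiteness of $I$ is claimed here; that refinement, which is the substantive statement, is obtained later by combining this corollary with the finiteness of the relevant $\mathrm{Ext}^1$ groups.
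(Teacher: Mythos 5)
Your argument is correct, and its first half --- obtaining the embedding by composing \autoref{5.1.2} (i) with \autoref{5.1.2} (ii) and discarding the zero factors --- is exactly what the paper does. Where you diverge is in how you get surjectivity of the coordinate maps. The paper notes that each coordinate of the embedding is, by construction, the map $\Pi\to\Pi_{\mathfrak{B}}/\mathfrak{m}_i^{j}$, and reads off its surjectivity from the commutative square comparing $\widehat{\mathrm{LL}(M)}\twoheadrightarrow\Pi\to\Pi_{\mathfrak{B}}/\mathfrak{m}_i^{j}$ with $\widehat{\mathrm{LL}(M)}\twoheadrightarrow\mathrm{LL}(M)_{\mathfrak{B}}/\mathfrak{m}_i^{j}\twoheadrightarrow\Pi_{\mathfrak{B}}/\mathfrak{m}_i^{j}$ (the left-hand surjection is in effect \autoref{2.2.3}, the bottom one comes from functoriality and exactness of the $\mathfrak{B}$-adic completion); so the coordinate maps are already onto the original factors and no shrinking is needed. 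You instead work entirely on the target side: you pass to the closure $T_\alpha$ of the image in each factor, identify $T_\alpha$ with some $\Pi_{M,\mathscr{L}_\alpha,j'_\alpha}$ via the chain of closed sous-représentations $T^k\Pi_{M,\mathscr{L},j}$ recalled before \autoref{finalbanach}, and conclude by \autoref{2.2.2} (dense image into a target whose reduction is of finite type), i.e. the same mechanism that underlies \autoref{2.2.3}. Both routes are valid; the paper's is shorter and keeps the factors $\Pi_{\mathfrak{B}}/\mathfrak{m}_i^{j}$ intact, while yours avoids the diagram chase through the completions at the cost of invoking the classification of the subrepresentations of $\Pi_{M,\mathscr{L},j}$ and of one small bookkeeping point in your final relabelling: two indices $\alpha\neq\beta$ may yield the same line and the same length $j'_\alpha=j'_\beta$, whereas the index structure $\prod_{i\in I}\prod_{j\in J(i)}$ allows only one factor per pair $(i,j)$. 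This is harmless here, because for a fixed line your factors arise from the compatible system of quotients $\Pi_{\mathfrak{B}}/\mathfrak{m}_i^{j}$, so the kernels of the coordinate maps are nested and redundant copies can be dropped without losing injectivity --- but it deserves a sentence in a written-up version.
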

\begin{proof}[Preuve] Considérons le diagramme commutative suivant
	\begin{equation*} \xymatrix@R=5mm@C=4mm{
			\widehat{\mathrm{LL}(M)}\ar@{->>}[r]\ar@{->>}[d]&\Pi\ar[d]\\
			\mathrm{LL}(M)_\mathfrak{B}/\mathfrak{m}^i\ar@{->>}[r]  &\Pi_\mathfrak{B}/\mathfrak{m}^i ,
		}
	\end{equation*}
	on en déduit que le morphisme $\Pi\to\Pi_\mathfrak{B}/\mathfrak{m}^i$ est surjectif pour tout $i$.
\end{proof}
\begin{remark}\label{5.1.4} Si $J(i)$ est fini et non vide pour un certain $i$, alors on peut supposer que $|J(i)|=1$ en prenant $\max_{j\in J(i)}j$. 
\end{remark}
\subsection{Finitude des quotients propres de $\widehat{\mathrm{LL}(M)}$}
\begin{proposition}\label{5.2.2} Soit $\Pi_2$ un quotient de $\widehat{\mathrm{LL}(M)}$ par une sous-représentation propre fermée $\Pi_1$.
	
	(i) Supposons que $\Pi_1\neq0$, alors on a $\hat{\mathbf{m}}^0(\Pi_2)\neq\mathscr{O}_{\mathbf{P}^1}(-1)$.
	
	(ii) La représentation $\Pi_2$ admet un quotient $\Pi_{M, \mathscr{L}}$ pour une certaine $\mathscr{L}$. En particulier, on a $\hat{\mathbf{m}}^0(\Pi_2)\neq0$. 
\end{proposition}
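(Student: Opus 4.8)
The two assertions are closely linked: (ii) follows once we know that $\Pi_2$ is a nonzero quotient with $\hat{\mathbf{m}}^0(\Pi_2)\neq0$ admitting some $\Pi_{M,\mathscr{L}}$ as a quotient, and (i) is essentially the statement that removing a nonzero submodule of $\widehat{\mathrm{LL}(M)}$ strictly decreases the line bundle degree $-1$ witnessed in Proposition \ref{4.2.4}. My plan is to prove (ii) first, then deduce (i) by a degree/length argument using the functor $\hat{\mathbf{m}}^0$.

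For (ii): by Corollary \ref{5.1.3} there is an injection $\Pi_2 \hookrightarrow \prod_{i\in I}\prod_{j\in J(i)} \Pi_{M,\mathscr{L}_i,j}$ such that each composed projection $\Pi_2 \twoheadrightarrow \Pi_{M,\mathscr{L}_i,j}$ is surjective. Since $\Pi_2$ is a \emph{proper} quotient, $\Pi_1 = N_{\mathscr{L},j}$-type kernels are involved and the index set is nonempty; pick any $(i_0,j_0)$ in the product. Composing with the further surjection $\Pi_{M,\mathscr{L}_{i_0},j_0} = \Pi_{M,\mathscr{L}_{i_0},j_0} \twoheadrightarrow \Pi_{M,\mathscr{L}_{i_0},1} = \Pi_{M,\mathscr{L}_{i_0}}$ (the unique irreducible quotient, by the $L[T]/T^{j}$-module structure recalled before Proposition \ref{finalbanach}) gives a surjection $\Pi_2 \twoheadrightarrow \Pi_{M,\mathscr{L}_{i_0}}$. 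Applying the right-exact functor $\hat{\mathbf{m}}^0$ to this surjection and using Proposition \ref{4.2.5} (with $j=1$), which gives $\hat{\mathbf{m}}^0(\Pi_{M,\mathscr{L}_{i_0}}) = \mathscr{O}_{\mathbf{P}^1,\mathscr{L}_{i_0}}/\mathfrak{m}_{\mathscr{L}_{i_0}} \neq 0$, forces $\hat{\mathbf{m}}^0(\Pi_2)\twoheadrightarrow \mathscr{O}_{\mathbf{P}^1,\mathscr{L}_{i_0}}/\mathfrak{m}_{\mathscr{L}_{i_0}}$, hence $\hat{\mathbf{m}}^0(\Pi_2)\neq0$.

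For (i): apply $\hat{\mathbf{m}}^0$ to $0\to\Pi_1\to\widehat{\mathrm{LL}(M)}\to\Pi_2\to0$. Right-exactness gives a surjection $\hat{\mathbf{m}}^0(\widehat{\mathrm{LL}(M)})\twoheadrightarrow\hat{\mathbf{m}}^0(\Pi_2)$, i.e.\ $\mathscr{O}_{\mathbf{P}^1}(-1)\twoheadrightarrow\hat{\mathbf{m}}^0(\Pi_2)$ by Proposition \ref{4.2.4}. Suppose for contradiction $\hat{\mathbf{m}}^0(\Pi_2)\cong\mathscr{O}_{\mathbf{P}^1}(-1)$; then this surjection of coherent sheaves on $\mathbf{P}^1$ between line bundles of the same degree is an isomorphism (its kernel is a torsion-free sheaf of rank $0$, hence $0$). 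So $\hat{\mathbf{m}}^0(\Pi_1)$ maps to $0$ in $\mathscr{O}_{\mathbf{P}^1}(-1)$; tracing through the long exact sequence and the injection $\Pi_1\hookrightarrow\prod_i\Pi_{1,\mathfrak{B}_i}$ of Lemma \ref{5.1.2}(i), one shows $\Pi_1$ itself must be a proper quotient-type object whose $\hat{\mathbf{m}}^0$ is supported in finitely many points (via Corollary \ref{supporte2} and Lemma \ref{5.1.2}(ii), writing $\Pi_1$'s $\mathfrak{B}$-adic completion inside $\prod\Pi_{M,\mathscr{L}_i,s(i,j)}$). But a nonzero such $\Pi_1$ has, by the same surjection-onto-$\Pi_{M,\mathscr{L}}$ argument as in part (ii), $\hat{\mathbf{m}}^0(\Pi_1)\neq0$ and torsion; injecting into the exact sequence this contradicts exactness of $0\to\hat{\mathbf{m}}^0(\Pi_1)\to\mathscr{O}_{\mathbf{P}^1}(-1)\to\hat{\mathbf{m}}^0(\Pi_2)\to0$ with the middle-to-right map an isomorphism. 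Hence $\Pi_1=0$, contrary to hypothesis.

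The main obstacle is the exactness bookkeeping around $\hat{\mathbf{m}}^0$: since $\hat{\mathbf{m}}^0$ is only right-exact, turning the surjection $\mathscr{O}_{\mathbf{P}^1}(-1)\twoheadrightarrow\hat{\mathbf{m}}^0(\Pi_2)$ into genuine information about $\hat{\mathbf{m}}^0(\Pi_1)$ requires controlling $\hat{\mathbf{m}}^1(\Pi_2)$ (or at least knowing the connecting map well enough); the cleanest route is to bypass this by the torsion-freeness argument on $\mathbf{P}^1$ as above, comparing degrees of line bundles and invoking that a degree-$(-1)$ line bundle has no proper line-bundle quotient of the same degree. One must also be careful that $\hat{\mathbf{m}}^0(\Pi_1)$ is a coherent sheaf, which follows from the finiteness inputs (Corollary \ref{3.8.10}, Lemma \ref{5.1.2}) already in place.
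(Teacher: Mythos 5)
Your treatment of (ii) is correct and is the paper's argument: \autoref{5.1.3} gives a surjection $\Pi_2\twoheadrightarrow\Pi_{M,\mathscr{L},j}$, hence onto $\Pi_{M,\mathscr{L}}$, and right-exactness of $\hat{\mathbf{m}}^0$ together with \autoref{4.2.5} yields $\hat{\mathbf{m}}^0(\Pi_2)\neq0$.

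For (i) there is a genuine gap. The contradiction you invoke at the end rests on the exactness of $0\to\hat{\mathbf{m}}^0(\Pi_1)\to\mathscr{O}_{\mathbf{P}^1}(-1)\to\hat{\mathbf{m}}^0(\Pi_2)\to0$, but no such sequence exists: $\hat{\mathbf{m}}^0$ is only right exact, the exact sequence continues on the left by $\hat{\mathbf{m}}^1(\Pi_2)$, and nothing forces $\hat{\mathbf{m}}^0(\Pi_1)\to\hat{\mathbf{m}}^0(\widehat{\mathrm{LL}(M)})$ to be injective; if the map $\mathscr{O}_{\mathbf{P}^1}(-1)\to\hat{\mathbf{m}}^0(\Pi_2)$ were an isomorphism, the left-hand map would simply be zero and $\hat{\mathbf{m}}^0(\Pi_1)\neq0$ gives no contradiction --- this is exactly the obstacle you mention, and the torsion-freeness/degree argument does not bypass it. Moreover the properties you want for $\Pi_1$ are not available: \autoref{5.1.2} and \autoref{5.1.3} concern \emph{quotients} of $\widehat{\mathrm{LL}(M)}$, whereas $\Pi_1$ is a closed subrepresentation, which behaves completely differently (by \autoref{3.3.7} it is residually of infinite length, so none of that machinery applies), and the expected conclusion ``$\hat{\mathbf{m}}^0(\Pi_1)$ nonzero, torsion, finitely supported'' is false in general: for $\Pi_1=N_{\mathscr{L},1}\cong H^1(\mathfrak{X},\mathscr{O})[\tfrac{1}{p}][M]^*$ one has $\hat{\mathbf{m}}^0(\Pi_1)=\mathscr{O}_{\mathbf{P}^1}(-2)$ (\autoref{4.2.6}), a line bundle. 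The paper's proof of (i) works instead at the level of Hom spaces: since $\bigcap_{\mathscr{L}}N_{\mathscr{L},1}=0$ (\autoref{3.3.6}), a nonzero $\Pi_1$ satisfies $\Pi_1\not\subseteq N_{\mathscr{L},1}$ for some $\mathscr{L}$; the composite $\widehat{\mathrm{LL}(M)}\twoheadrightarrow\Pi_{M,\mathscr{L}}\hookrightarrow\mathscr{O}_{\mathbf{P}^1}(U)^*\hat{\otimes}_{\mathscr{O}_{\mathbf{P}^1}(U)}\widehat{\underline{\Omega}(U)^\diamond}$ (the inclusion of \autoref{nouveau1}) has kernel exactly $N_{\mathscr{L},1}$, hence does not factor through $\Pi_2$, so $\mathrm{Hom}_G(\Pi_2,\cdot)\subsetneq\mathrm{Hom}_G(\widehat{\mathrm{LL}(M)},\cdot)$ on this $U$; dually $\hat{\mathbf{m}}^0(\Pi_2)(U)$ is a \emph{proper} quotient of the free rank-one module $\mathscr{O}_{\mathbf{P}^1}(-1)(U)$, and therefore $\hat{\mathbf{m}}^0(\Pi_2)\neq\mathscr{O}_{\mathbf{P}^1}(-1)$. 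You would need to replace your step about $\Pi_1$ by this (or an equivalent) use of the family of kernels $N_{\mathscr{L},1}$.
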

\begin{proof}[Preuve] (i) Il suffit de montrer que $\hat{\mathbf{m}}^0(\Pi_2)\neq\hat{\mathbf{m}}^0(\widehat{\mathrm{LL}(M)})$. Par définition, il suffit de montrer que 
	$$\mathrm{Hom}_{G}(\Pi_2, \mathscr{O}_{\mathbf{P}^1}(U)^*\hat{\otimes}_{\mathscr{O}_{\mathbf{P}^1}(U)}\widehat{\underline{\Omega}(U)^\diamond})\neq\mathrm{Hom}_{G}(\widehat{\mathrm{LL}(M)}, \mathscr{O}_{\mathbf{P}^1}(U)^*\hat{\otimes}_{\mathscr{O}_{\mathbf{P}^1}(U)}\widehat{\underline{\Omega}(U)^\diamond})$$
	pour un certain ouvert $U$. On est ramené à montrer qu'il existe un morphisme dans $$\mathrm{Hom}_{G}(\widehat{\mathrm{LL}(M)}, \mathscr{O}_{\mathbf{P}^1}(U)^*\hat{\otimes}_{\mathscr{O}_{\mathbf{P}^1}(U)}\widehat{\underline{\Omega}(U)^\diamond})$$ qui ne se factorise pas par $\widehat{\mathrm{LL}(M)}\twoheadrightarrow\Pi_2$. Comme $\Pi_1\neq0$, il existe une droite $\mathscr{L}\in U_{M, \mathfrak{B}}$ telle que $\Pi_1\nsubseteq N_{\mathscr{L}, 1}$. La composée $\widehat{\mathrm{LL}(M)}\twoheadrightarrow\Pi_{M, \mathscr{L}}\hookrightarrow \mathscr{O}_{\mathbf{P}^1}(U)^*\hat{\otimes}_{\mathscr{O}_{\mathbf{P}^1}(U)}\widehat{\underline{\Omega}(U)^\diamond}$ nous donne le morphisme voulu car le noyau de la composée est $N_{\mathscr{L}, 1}$.
	
	(ii) Il découle du Corollaire \ref{5.1.3} qu'il existe une surjection $\Pi_2\to\Pi_{M, \mathscr{L}, j}$ pour une certaine $\mathscr{L}$ et un certain $j$. La Proposition \ref{4.2.5} et l'exactitude à droite de $\hat{\mathbf{m}}^0$ nous donnent le résultat voulu.
\end{proof}
\begin{corollary}\label{5.2.3} Soit $\Pi$ un quotient propre de $\widehat{\mathrm{LL}(M)}$, alors le faisceau $\hat{\mathbf{m}}^0(\Pi)$ est concentré en un nombre fini de points. 
\end{corollary}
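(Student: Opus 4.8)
L'idée est de ramener l'énoncé à un fait géométrique sur les faisceaux cohérents sur $\mathbf{P}^1$, en combinant le calcul $\hat{\mathbf{m}}^0(\widehat{\mathrm{LL}(M)})=\mathscr{O}_{\mathbf{P}^1}(-1)$ de la \autoref{4.2.4} avec l'exactitude à droite de $\hat{\mathbf{m}}^0$ (déjà utilisée dans la preuve de la \autoref{5.2.2}). Écrivons le quotient propre sous la forme $\Pi=\widehat{\mathrm{LL}(M)}/\Pi_1$ avec $\Pi_1$ une sous-représentation fermée non nulle. En appliquant $\hat{\mathbf{m}}^0$ à la suite exacte $\Pi_1\to\widehat{\mathrm{LL}(M)}\to\Pi\to0$ et en utilisant la \autoref{4.2.4}, on obtient une surjection de faisceaux sur $\mathbf{P}^1$
$$\mathscr{O}_{\mathbf{P}^1}(-1)\twoheadrightarrow\hat{\mathbf{m}}^0(\Pi).$$
En particulier, $\hat{\mathbf{m}}^0(\Pi)$ est un faisceau cohérent, étant un quotient du fibré en droites $\mathscr{O}_{\mathbf{P}^1}(-1)$ sur la courbe rigide propre $\mathbf{P}^1$.

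Ensuite, j'analyserais le noyau $\mathscr{K}$ de cette surjection. Comme $\mathscr{K}$ est un sous-faisceau cohérent du fibré en droites $\mathscr{O}_{\mathbf{P}^1}(-1)$, il est sans torsion ; si $\mathscr{K}=0$, alors $\hat{\mathbf{m}}^0(\Pi)\cong\mathscr{O}_{\mathbf{P}^1}(-1)$, ce qui est exclu par la \autoref{5.2.2} (i) puisque $\Pi_1\neq0$. Donc $\mathscr{K}\neq0$ ; étant un sous-faisceau cohérent sans torsion non nul d'un faisceau de rang $1$ sur la courbe irréductible $\mathbf{P}^1$, il est de rang générique $1$, de sorte que le quotient $\hat{\mathbf{m}}^0(\Pi)=\mathscr{O}_{\mathbf{P}^1}(-1)/\mathscr{K}$ est de rang générique $0$, c'est-à-dire un faisceau cohérent de torsion. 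Le support d'un tel faisceau est un sous-ensemble analytique fermé de dimension $0$ de $\mathbf{P}^1$, donc un ensemble fini de points (en utilisant que $\mathbf{P}^1$ est quasi-compacte) ; concrètement, sur chacun des trois affinoïdes décrivant $\hat{\mathbf{m}}^0(\Pi)$ les sections globales forment un module de torsion sur un anneau intègre, tué par une fonction non nulle, et le support y est contenu dans le lieu des zéros de cette fonction. La \autoref{5.2.2} (ii) assure de plus que cet ensemble fini est non vide, ce qui achève la preuve.

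Les seuls points délicats sont, d'une part, l'exactitude à droite de $\hat{\mathbf{m}}^0$ dans le cadre topologique — elle provient de ce qu'une injection stricte d'espaces de Fréchet (ou de Banach) se dualise en une surjection par Hahn-Banach, exactement comme $\hat{\mathbf{m}}^0$ a été employé dans la \autoref{5.2.2} et le \autoref{4.2.6}, et je me contenterais de l'invoquer — et, d'autre part, la dichotomie « un quotient de $\mathscr{O}_{\mathbf{P}^1}(-1)$ distinct de $\mathscr{O}_{\mathbf{P}^1}(-1)$ est un faisceau de torsion à support fini ». Ce dernier point, qui est le seul véritablement substantiel, découle de ce qu'un sous-faisceau cohérent d'un fibré en droites sur $\mathbf{P}^1$ est sans torsion, donc soit nul, soit de rang $1$ ; c'est de la théorie des faisceaux standard sur une courbe propre et lisse. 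On pourrait aussi, de façon équivalente, partir du \autoref{5.1.3} et argumenter sur le support via les faisceaux gratte-ciel de la \autoref{4.2.5}, mais l'approche par le fibré $\mathscr{O}_{\mathbf{P}^1}(-1)$ me semble la plus directe.
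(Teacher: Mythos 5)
Votre démonstration est correcte et suit essentiellement la même voie que celle de l'article : on utilise l'exactitude à droite de $\hat{\mathbf{m}}^0$ et la \autoref{4.2.4} pour voir $\hat{\mathbf{m}}^0(\Pi)$ comme quotient de $\mathscr{O}_{\mathbf{P}^1}(-1)$, puis la \autoref{5.2.2} (i) pour exclure le cas $\hat{\mathbf{m}}^0(\Pi)=\mathscr{O}_{\mathbf{P}^1}(-1)$, le support d'un quotient propre d'un fibré en droites sur $\mathbf{P}^1$ étant fini. Vous ne faites qu'expliciter, via l'analyse du noyau et la torsion, le fait géométrique que l'article invoque en une phrase.
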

\begin{proof}[Preuve] Comme le support d'un quotient propre de $\mathscr{O}_{\mathbf{P}^1}(-1)$ est un ensemble fini, le résultat se déduit de la Proposition \ref{5.2.2}.
\end{proof}
\begin{corollary}\label{intersection} Fixons $\mathscr{L}\in\mathbf{P}^1$. Supposons que $J(\mathscr{L})\subseteq\mathbb{N}^*$ est un ensemble infini, alors le morphisme naturel 
	$$\widehat{\mathrm{LL}(M)}\to\prod_{j\in J(\mathscr{L})}\Pi_{M, \mathscr{L}, j}$$
	est injectif.
\end{corollary}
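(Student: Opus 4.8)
The map in the statement is by construction the product of the surjections $q_j\colon\widehat{\mathrm{LL}(M)}\twoheadrightarrow\Pi_{M,\mathscr{L},j}$ of \autoref{2.2.3}, so its kernel is $K:=\bigcap_{j\in J(\mathscr{L})}N_{\mathscr{L},j}$, and the plan is to prove $K=0$. First I would record that the surjections are monotone: the reduction $L[T]/T^{j+1}\twoheadrightarrow L[T]/T^{j}$ induces a surjection $\Pi_{M,\mathscr{L},j+1}\twoheadrightarrow\Pi_{M,\mathscr{L},j}$, and since both $\mathrm{LL}(M)\to\Pi_{M,\mathscr{L},j}$ and $\mathrm{LL}(M)\to\Pi_{M,\mathscr{L},j+1}$ are the canonical dense inclusions $\mathrm{LL}(M)=1\otimes\mathrm{LL}(M)\hookrightarrow\Pi_{M,\mathscr{L},\bullet}^{\mathrm{lisse}}\hookrightarrow\Pi_{M,\mathscr{L},\bullet}$, the universal property of $\widehat{\mathrm{LL}(M)}$ gives $q_j=\bigl(\Pi_{M,\mathscr{L},j+1}\twoheadrightarrow\Pi_{M,\mathscr{L},j}\bigr)\circ q_{j+1}$, whence $N_{\mathscr{L},j+1}\subseteq N_{\mathscr{L},j}$. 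As $J(\mathscr{L})\subseteq\mathbb{N}^{*}$ is infinite, it is cofinal, so $K=\bigcap_{j\geq1}N_{\mathscr{L},j}$; it is a closed $G$-subrepresentation of $\widehat{\mathrm{LL}(M)}$ contained in the proper subspace $N_{\mathscr{L},1}$ (proper since $\Pi_{M,\mathscr{L}}\neq0$).

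Assume for contradiction that $K\neq0$, and set $\Pi_2:=\widehat{\mathrm{LL}(M)}/K$, a nonzero proper quotient of $\widehat{\mathrm{LL}(M)}$. Applying $\mathrm{Hom}_G\!\bigl(-,\mathscr{O}_{\mathbf{P}^1}(U)^{*}\hat{\otimes}_{\mathscr{O}_{\mathbf{P}^1}(U)}\widehat{\underline{\Omega}(U)^\diamond}\bigr)$ to the quotient map $\widehat{\mathrm{LL}(M)}\twoheadrightarrow\Pi_2$ and dualizing, one obtains for each of the relevant affinoids $U$ (the unit circle and the unit balls centered at $0$ and $\infty$) a surjection $\mathscr{O}_{\mathbf{P}^1}(-1)(U)=\hat{\mathbf{m}}^0(\widehat{\mathrm{LL}(M)})(U)\twoheadrightarrow\hat{\mathbf{m}}^0(\Pi_2)(U)$ of $\mathscr{O}_{\mathbf{P}^1}(U)$-modules (\autoref{4.2.4}); in particular $\hat{\mathbf{m}}^0(\Pi_2)$ is a coherent quotient sheaf of the line bundle $\mathscr{O}_{\mathbf{P}^1}(-1)$. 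By \autoref{5.2.3} its support is finite, so, being coherent, $\hat{\mathbf{m}}^0(\Pi_2)$ has finite length; fix $N$ with $\mathfrak{m}_{\mathscr{L}}^{N}\,\hat{\mathbf{m}}^0(\Pi_2)_{\mathscr{L}}=0$.

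On the other hand $q_j$ factors through $\Pi_2$ for every $j\in J(\mathscr{L})$, so by right exactness of $\hat{\mathbf{m}}^0$ and \autoref{4.2.5} there is a surjection $\hat{\mathbf{m}}^0(\Pi_2)\twoheadrightarrow\hat{\mathbf{m}}^0(\Pi_{M,\mathscr{L},j})=\mathscr{O}_{\mathbf{P}^1,\mathscr{L}}/\mathfrak{m}_{\mathscr{L}}^{j}$. Choosing $j\in J(\mathscr{L})$ with $j>N$ (possible since $J(\mathscr{L})$ is infinite) and taking stalks at $\mathscr{L}$, the module $\hat{\mathbf{m}}^0(\Pi_2)_{\mathscr{L}}$ is killed by $\mathfrak{m}_{\mathscr{L}}^{N}$ yet surjects onto $\mathscr{O}_{\mathbf{P}^1,\mathscr{L}}/\mathfrak{m}_{\mathscr{L}}^{j}$, which it is not — a contradiction. (Equivalently, the stalk computation together with Krull's intersection theorem forces $\hat{\mathbf{m}}^0(\Pi_2)=\mathscr{O}_{\mathbf{P}^1}(-1)$, contradicting \autoref{5.2.2} (i).) Hence $K=0$ and the map is injective. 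I expect the one delicate point to be the passage through $\hat{\mathbf{m}}^0$ in the second paragraph — namely checking that the quotient map $\widehat{\mathrm{LL}(M)}\twoheadrightarrow\Pi_2$ really induces a surjection of sheaves with finitely generated sections, and that $\hat{\mathbf{m}}^0$ is right exact in the ambient topological category — whereas the reduction to a quotient of $\mathscr{O}_{\mathbf{P}^1}(-1)$ and the commutative algebra at $\mathscr{L}$ are routine.
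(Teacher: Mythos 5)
Your proposal is correct and follows essentially the same route as the paper: identify the kernel $N$, note that each $q_j$ factors through $\widehat{\mathrm{LL}(M)}/N$, apply the right-exact functor $\hat{\mathbf{m}}^0$ together with \autoref{4.2.4} and \autoref{4.2.5} to get surjections $\mathscr{O}_{\mathbf{P}^1}(-1)\twoheadrightarrow\hat{\mathbf{m}}^0(\widehat{\mathrm{LL}(M)}/N)\twoheadrightarrow\mathscr{O}_{\mathbf{P}^1,\mathscr{L}}/\mathfrak{m}_{\mathscr{L}}^j$ for infinitely many $j$, and conclude via \autoref{5.2.2} (i); your parenthetical ``equivalently'' remark is literally the paper's argument, and the preliminary monotonicity of the $N_{\mathscr{L},j}$ is harmless but unnecessary.
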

\begin{proof}[Preuve] On note $N$ le noyau de $\widehat{\mathrm{LL}(M)}\hookrightarrow\prod\limits_{j\in J(\mathscr{L})}\Pi_{M, \mathscr{L}, j}$, alors on dispose des surjections $$\widehat{\mathrm{LL}(M)}\twoheadrightarrow\widehat{\mathrm{LL}(M)}/N\twoheadrightarrow\Pi_{M, \mathscr{L}, j}$$ pour tout $j\in J(\mathscr{L})$. Il suit de la Proposition \ref{4.2.4} et de la Proposition \ref{4.2.5} que l'on a des surjections
	$$\mathscr{O}_{\mathbf{P}^1}(-1)\twoheadrightarrow\hat{\mathbf{m}}^0(\widehat{\mathrm{LL}(M)}/N)\twoheadrightarrow\mathscr{O}_{\mathbf{P}^1, \mathscr{L}}/\mathfrak{m}_{\mathscr{L}}^j$$
	pour tout $j\in  J(\mathscr{L})$.
	Or cela implique que $\hat{\mathbf{m}}^0(\widehat{\mathrm{LL}(M)}/N)=\mathscr{O}_{\mathbf{P}^1}(-1)$. Compte-tenu de la Proposition \ref{5.2.2} (i), on a $N=0$, ce qui permet de conclure.
\end{proof}
\begin{lemma}\label{5.2.4} Soit $\Pi$ un quotient propre de $\widehat{\mathrm{LL}(M)}$. Si l'on a une injection $\Pi\hookrightarrow\prod\limits_{i\in I}\prod\limits_{j\in J(i)}\Pi_{M, \mathscr{L}_i, j}$ où $i\in I$ est un ensemble d'indices fini et $|J(i)|$ est fini pour tout $i\in I$ , alors $\Pi$ est fermée dans $\prod\limits_{i\in I}\prod\limits_{j\in J(i)}\Pi_{M, \mathscr{L}_i, j}$. En particulier, $\Pi$ est de longueur finie.
\end{lemma}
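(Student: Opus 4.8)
The plan is to pin down the closed subrepresentation $\Pi_1:=\ker\big(q\colon\widehat{\mathrm{LL}(M)}\twoheadrightarrow\Pi\big)$ precisely as an intersection of the $N_{\mathscr{L},j}$, and then to recognise $\Pi$ by means of \autoref{3.3.1}.

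First I would put the given embedding into a convenient shape. For a fixed line $\mathscr{L}$ and $j\le j'$ the standard surjection $\widehat{\mathrm{LL}(M)}\to\Pi_{M,\mathscr{L},j'}$ factors through $\widehat{\mathrm{LL}(M)}\to\Pi_{M,\mathscr{L},j}$, so $N_{\mathscr{L},j'}\subseteq N_{\mathscr{L},j}$ and $\bigcap_{j\in J(i)}N_{\mathscr{L}_i,j}=N_{\mathscr{L}_i,\max J(i)}$. Using \autoref{5.1.4} and grouping the (finitely many) indices $i\in I$ according to the distinct lines occurring among the $\mathscr{L}_i$, the composite $\Pi\hookrightarrow\prod_{i,j}\Pi_{M,\mathscr{L}_i,j}$ has the same kernel as a $G$-morphism $f\colon\Pi\to T:=\bigoplus_{k\in K}\Pi_{M,\mathscr{M}_k,n_k}$ with $K$ finite, the lines $\mathscr{M}_k$ pairwise distinct and $n_k\ge1$; moreover $f$ is again injective and each component $f_k\colon\Pi\to\Pi_{M,\mathscr{M}_k,n_k}$ is onto. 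So we may assume the given embedding is of this form.

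Next I would compute $\Pi_1$. Put $g:=f\circ q\colon\widehat{\mathrm{LL}(M)}\to T$; since $f$ is injective, $\ker g=\ker q=\Pi_1$. For each $k$ the component $g_k:=f_k\circ q$ is a continuous surjection $\widehat{\mathrm{LL}(M)}\to\Pi_{M,\mathscr{M}_k,n_k}$, so its restriction to the dense subspace $\mathrm{LL}(M)$ has dense image. As $\mathrm{LL}(M)$ is smooth with $\mathrm{End}_G(\mathrm{LL}(M))=L$ and $\Pi_{M,\mathscr{M}_k,n_k}^{\mathrm{lisse}}=(L[T]/T^{n_k})\otimes_L\mathrm{LL}(M)$, one gets $g_k|_{\mathrm{LL}(M)}\in\mathrm{Hom}_G(\mathrm{LL}(M),\Pi_{M,\mathscr{M}_k,n_k})=\mathrm{Hom}_G(\mathrm{LL}(M),\Pi_{M,\mathscr{M}_k,n_k}^{\mathrm{lisse}})=L[T]/T^{n_k}$, and denseness of the image forces this element to be a unit of $L[T]/T^{n_k}$, i.e.\ an automorphism of $\Pi_{M,\mathscr{M}_k,n_k}$ (this is also clear from the construction in \autoref{5.1.3}, where the component maps factor through $\mathrm{LL}(M)_{\mathfrak{B}}/\mathfrak{m}^{\bullet}$). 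Composing $g_k$ with the inverse automorphism we may assume $g_k$ is the standard surjection, so that $\ker g_k=N_{\mathscr{M}_k,n_k}$ by definition. Hence
\[
\Pi_1=\ker g=\bigcap_{k\in K}\ker g_k=\bigcap_{k\in K}N_{\mathscr{M}_k,n_k},
\]
and \autoref{3.3.1}, applicable since the $\mathscr{M}_k$ are pairwise distinct, yields an isomorphism of Banach representations
\[
\Pi=\widehat{\mathrm{LL}(M)}\big/\bigcap_{k\in K}N_{\mathscr{M}_k,n_k}\;\xrightarrow{\ \sim\ }\;\bigoplus_{k\in K}\Pi_{M,\mathscr{M}_k,n_k}.
\]

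Finally, each $\Pi_{M,\mathscr{M}_k,n_k}$ is admissible of length $n_k$, so $\Pi$ is admissible of length $\sum_{k\in K}n_k<\infty$; the original inclusion $\Pi\hookrightarrow\prod_{i\in I}\prod_{j\in J(i)}\Pi_{M,\mathscr{L}_i,j}$ is then a morphism of admissible Banach representations of $G$ with central character $\psi$, hence has closed image, so $\Pi$ is closed in the product. The only delicate points are the reindexing of the first step — that collapsing repeated lines and redundant exponents keeps $f$ injective with surjective components — and, in the second step, the identification of the components of $g$ with standard surjections of $\widehat{\mathrm{LL}(M)}$; both are essentially bookkeeping given the constructions of Section~\ref{3.3} and of \autoref{5.1.3}.
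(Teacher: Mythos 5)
Your route is genuinely different from the paper's, and its core mechanism is sound, but as written there is a gap in the reduction step. You assert that after regrouping you ``may assume'' each component $f_k\colon\Pi\to\Pi_{M,\mathscr{M}_k,n_k}$ is onto; surjectivity of the components is not among the hypotheses of the lemma (it is only an injection into a finite product), and \autoref{5.1.4} cannot supply it, since that remark concerns the canonical maps produced by the $\mathfrak{B}$-adic completion in \autoref{5.1.2}--\autoref{5.1.3}, not an arbitrary embedding. Since your unit argument uses density of the image, the step as written does not cover, e.g., an embedding whose component lands in a proper subrepresentation $T^a\Pi_{M,\mathscr{L},j}$. The repair is available from your own computation: any continuous $G$-map $g_{i,j}\colon\widehat{\mathrm{LL}(M)}\to\Pi_{M,\mathscr{L}_i,j}$ corresponds, via the universal property and $\mathrm{Hom}_G(\mathrm{LL}(M),\Pi_{M,\mathscr{L}_i,j}^{\mathrm{lisse}})=L[T]/T^{j}$, to some $P(T)=T^au$ (or to $0$), and its kernel is therefore $\widehat{\mathrm{LL}(M)}$ or $N_{\mathscr{L}_i,j-a}$; since the $N_{\mathscr{L},m}$ are nested in $m$, one gets directly $\ker q=\bigcap_k N_{\mathscr{M}_k,n_k}$ over pairwise distinct lines, and \autoref{3.3.1} then applies exactly as you use it. With that fix the rest of your argument (admissibility and finite length of $\bigoplus_k\Pi_{M,\mathscr{M}_k,n_k}$, closedness of the image because morphisms of admissible Banach representations are strict) is correct, and there is no circularity since \autoref{3.3.1} precedes this lemma.

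For comparison, the paper's proof is much shorter and avoids any kernel computation: it takes the closure $T$ of the image of $\Pi$ in the finite product, observes that $T$ is residually of finite length, and applies \autoref{2.2.2} to the composite $\widehat{\mathrm{LL}(M)}\twoheadrightarrow\Pi\to T$, whose image is dense in $T$; surjectivity onto $T$ then says precisely that $\Pi$ is closed, and finite length follows because the ambient product has finite length. Your approach costs more (the Hom computation, \autoref{finalbanach}, \autoref{3.3.1}) but buys more: it identifies $\Pi$ explicitly as a finite direct sum of $\Pi_{M,\mathscr{M}_k,n_k}$, i.e.\ it essentially establishes at this stage the structure result that the paper only reaches in \autoref{5.2.8} after \autoref{5.2.5}--\autoref{5.2.7}.
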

\begin{proof}[Preuve] Il est clair que l'adhérence $T$ de l'image de $\Pi$ dans $\prod\limits_{i\in I}\prod\limits_{j\in J(i)}\Pi_{M, \mathscr{L}_i, j}$ est résiduellement de longueur finie. Comme la composée $\widehat{\mathrm{LL}(M)}\twoheadrightarrow\Pi\to T$ est d'image dense, on déduit du Lemme \ref{2.2.2} que $\Pi$ est une sous-représentation fermée de $\prod\limits_{i\in I}\prod\limits_{j\in J(i)}\Pi_{M, \mathscr{L}_i, j}$. Cela permet de conclure.
\end{proof}
\begin{corollary}\label{5.2.5} Si $\Pi$ est un quotient propre de $\widehat{\mathrm{LL}(M)}$ de longueur infinie, alors $\hat{\mathbf{m}}^0(\Pi)$ est supporté en un nombre infini de points.
\end{corollary}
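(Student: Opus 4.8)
I would prove the contrapositive in the form: if $\Pi$ is a proper quotient of $\widehat{\mathrm{LL}(M)}$ and the sheaf $\hat{\mathbf{m}}^0(\Pi)$ is supported at only finitely many points of $\mathbf{P}^1$, then $\Pi$ has finite length. This contradicts the hypothesis that $\Pi$ has infinite length, so under that hypothesis $\hat{\mathbf{m}}^0(\Pi)$ must be supported at infinitely many points. (Together with \autoref{5.2.3}, which says the support of $\hat{\mathbf{m}}^0$ of a proper quotient is always finite, this yields \autoref{5.2.7}.)

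\textbf{Reduction to a finite product.} Assume $\hat{\mathbf{m}}^0(\Pi)$ has finite support. By \autoref{5.1.3} fix an injection $\Pi\hookrightarrow\prod_{i\in I}\prod_{j\in J(i)}\Pi_{M,\mathscr{L}_i,j}$ with the $\mathscr{L}_i$ pairwise distinct and each projection $\Pi\twoheadrightarrow\Pi_{M,\mathscr{L}_i,j}$ surjective. Applying the right-exact functor $\hat{\mathbf{m}}^0$ to such a surjection and invoking \autoref{4.2.5} gives a nonzero surjection $\hat{\mathbf{m}}^0(\Pi)\twoheadrightarrow\mathscr{O}_{\mathbf{P}^1,\mathscr{L}_i}/\mathfrak{m}_{\mathscr{L}_i}^{j}$, so each $\mathscr{L}_i$ lies in $\operatorname{supp}\hat{\mathbf{m}}^0(\Pi)$; since that support is finite, $I$ is finite. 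Next, suppose $J(i_0)$ were infinite for some $i_0$, and let $N$ be the kernel of the composite $\widehat{\mathrm{LL}(M)}\twoheadrightarrow\Pi\to\prod_{j\in J(i_0)}\Pi_{M,\mathscr{L}_{i_0},j}$. Then $\widehat{\mathrm{LL}(M)}/N$ surjects onto $\Pi_{M,\mathscr{L}_{i_0},j}$ for every $j\in J(i_0)$, so by \autoref{4.2.4} and \autoref{4.2.5} the quotient $\hat{\mathbf{m}}^0(\widehat{\mathrm{LL}(M)}/N)$ of $\mathscr{O}_{\mathbf{P}^1}(-1)$ surjects onto $\mathscr{O}_{\mathbf{P}^1,\mathscr{L}_{i_0}}/\mathfrak{m}_{\mathscr{L}_{i_0}}^{j}$ for arbitrarily large $j$; as a proper quotient of $\mathscr{O}_{\mathbf{P}^1}(-1)$ has finite length, this forces $\hat{\mathbf{m}}^0(\widehat{\mathrm{LL}(M)}/N)=\mathscr{O}_{\mathbf{P}^1}(-1)$, hence $N=0$ by \autoref{5.2.2} (i). (This is exactly the mechanism of \autoref{intersection}; using $\mathrm{Hom}_G(\widehat{\mathrm{LL}(M)},\Pi_{M,\mathscr{L}_{i_0},j})\cong L[T]/T^{j}$ from \autoref{finalbanach} one checks the composite agrees on kernels with the natural map $\widehat{\mathrm{LL}(M)}\to\prod_j\Pi_{M,\mathscr{L}_{i_0},j}$, so \autoref{intersection} applies verbatim.) But $N=0$ means $\widehat{\mathrm{LL}(M)}\twoheadrightarrow\Pi$ is injective, i.e. $\Pi=\widehat{\mathrm{LL}(M)}$, contradicting that $\Pi$ is a \emph{proper} quotient. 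Hence every $J(i)$ is finite.

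\textbf{Conclusion and main difficulty.} Now $\Pi\hookrightarrow\prod_{i\in I}\prod_{j\in J(i)}\Pi_{M,\mathscr{L}_i,j}$ with $I$ finite and all $J(i)$ finite, so \autoref{5.2.4} applies and gives that $\Pi$ is of finite length — the desired contradiction. The essential step is the finiteness of each $J(i)$: this is where one really uses the rigidity of $\hat{\mathbf{m}}^0(-)$ as a quotient of the line bundle $\mathscr{O}_{\mathbf{P}^1}(-1)$ together with \autoref{5.2.2} (i), preventing a single line $\mathscr{L}_{i_0}$ from carrying components $\Pi_{M,\mathscr{L}_{i_0},j}$ of unbounded length $j$; the remainder is bookkeeping with \autoref{5.1.3}, \autoref{4.2.5} and \autoref{5.2.4}.
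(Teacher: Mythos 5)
Votre preuve est correcte et suit pour l'essentiel le même chemin que celle de l'article, simplement réorganisée par contraposée : même injection fournie par le \autoref{5.1.3}, même exclusion d'un $J(i)$ infini via le mécanisme du \autoref{intersection} (que vous réécrivez en ligne à l'aide de la \autoref{4.2.4}, de la \autoref{4.2.5} et de la \autoref{5.2.2}), puis même appel au \autoref{5.2.4} et à la \autoref{4.2.5} avec l'exactitude à droite de $\hat{\mathbf{m}}^0$. Il n'y a pas de différence substantielle avec la démonstration du texte.
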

\begin{proof}[Preuve] Il suit du Corollaire \ref{5.1.3}, du Corollaire \ref{intersection} et du Lemme \ref{5.2.4} qu'il existe un nombre infini de droites $\mathscr{L}_i$ telles que l'on a une injection
	$$\Pi\hookrightarrow\prod_{i\in I}\prod_{j\in J(i)}\Pi_{M, \mathscr{L}_i, j}$$
	et que toute composée $\Pi\to \Pi_{M, \mathscr{L}_i, j}$ est surjective pour tout $i, j$. La Proposition \ref{4.2.5} et l'exactitude à droite de $\hat{\mathbf{m}}^0$ nous donnent le résultat voulu.
\end{proof}
\begin{corollary}\label{5.2.7} Tous les quotients propres de $\widehat{\mathrm{LL}(M)}$ sont de longueur finie.
\end{corollary}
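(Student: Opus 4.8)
The plan is to derive the statement by playing off, against each other, two opposite constraints on the sheaf $\hat{\mathbf{m}}^0$ attached to a proper quotient of $\widehat{\mathrm{LL}(M)}$, both of which are already in hand. Concretely, I would argue by contradiction: suppose $\Pi$ is a proper quotient of $\widehat{\mathrm{LL}(M)}$ which is \emph{not} of finite length. Since $\Pi$ is a proper quotient of $\widehat{\mathrm{LL}(M)}$, \autoref{5.2.3} applies and forces $\hat{\mathbf{m}}^0(\Pi)$ to be concentrated on a finite set of points of $\mathbf{P}^1$. But $\Pi$ is moreover of infinite length, so \autoref{5.2.5} applies as well and tells us that $\hat{\mathbf{m}}^0(\Pi)$ is supported on an infinite set of points. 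These two assertions are incompatible, so no such $\Pi$ exists, i.e. every proper quotient of $\widehat{\mathrm{LL}(M)}$ is of finite length.

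The point is that all the real work has already been done in the preceding lemmas, and the final step is just this two-line contradiction. For transparency I would recall where each of the two inputs comes from. The finiteness statement \autoref{5.2.3} rests on \autoref{5.2.2}~(i): since $\Pi$ is a \emph{proper} quotient, $\hat{\mathbf{m}}^0(\Pi)$ is a proper quotient of $\hat{\mathbf{m}}^0(\widehat{\mathrm{LL}(M)})=\mathscr{O}_{\mathbf{P}^1}(-1)$, whose proper quotients have finite support. The infiniteness statement \autoref{5.2.5} rests on the chain \autoref{5.1.3}, \autoref{intersection}, \autoref{5.2.4}: a proper quotient of infinite length embeds into a product $\prod_{i\in I}\prod_{j\in J(i)}\Pi_{M,\mathscr{L}_i,j}$ with all components surjective and with $I$ necessarily infinite (otherwise \autoref{5.2.4} would already give finite length), and then right-exactness of $\hat{\mathbf{m}}^0$ together with \autoref{4.2.5} shows $\hat{\mathbf{m}}^0(\Pi)$ surjects onto skyscrapers at infinitely many $\mathscr{L}_i$.

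I do not expect any genuine obstacle at this stage; the subtle point — making sure that an infinite-length quotient cannot be squeezed, via \autoref{intersection}, into an embedding with a \emph{finite} index set, which would contradict \autoref{5.2.4} — has already been isolated and handled in \autoref{5.2.5}. Once those statements are granted, the proof of the corollary is immediate and requires no computation: it is exactly the juxtaposition of \autoref{5.2.3} and \autoref{5.2.5}.
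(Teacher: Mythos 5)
Votre preuve est correcte et coïncide exactement avec celle du texte : le corollaire s'obtient en juxtaposant le \autoref{5.2.3} (support fini pour tout quotient propre) et le \autoref{5.2.5} (support infini si la longueur est infinie), la contradiction étant immédiate. Votre rappel de la provenance des deux ingrédients est fidèle à la structure des énoncés précédents et n'appelle aucune correction.
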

\begin{proof}[Preuve] Combiner le Corollaire \ref{5.2.3} et le Corollaire \ref{5.2.5}.
\end{proof}
\begin{corollary}\label{5.2.8} Soit $\Pi$ un quotient propre de $\widehat{\mathrm{LL}(M)}$, alors on a 
	$$\Pi\cong\oplus_{i\in I}\Pi_{M, \mathscr{L}_i, j_i}$$
	où $I$ est un ensemble d'indices fini.
\end{corollary}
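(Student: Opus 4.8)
The plan is to combine the finiteness statement \autoref{5.2.7} with the embedding \autoref{5.1.3} and the rigidity lemmas \autoref{5.2.4} and \autoref{3.4.0}. First I would invoke \autoref{5.2.7} to know that $\Pi$ has finite length, say $\ell$. By \autoref{5.1.3} there is an injection $\Pi\hookrightarrow\prod_{i\in I}\prod_{j\in J(i)}\Pi_{M,\mathscr{L}_i,j}$ all of whose projections $\Pi\to\Pi_{M,\mathscr{L}_i,j}$ are surjective. Since each $\Pi_{M,\mathscr{L}_i,j}$ is then a quotient of $\Pi$ and has length $j$, one gets $j\le\ell$, so every $J(i)$ is contained in $\{1,\dots,\ell\}$ and in particular is finite.

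Next I would apply \autoref{5.1.4} to each index $i$ in order to replace $\prod_{j\in J(i)}\Pi_{M,\mathscr{L}_i,j}$ by the single factor $\Pi_{M,\mathscr{L}_i,j_i}$ with $j_i=\max_{j\in J(i)}j$, keeping the injection and the surjectivity of the projection onto each $\Pi_{M,\mathscr{L}_i,j_i}$; the lines $\mathscr{L}_i$ are pairwise distinct. Because $\Pi_{M,\mathscr{L}_i}$ occurs as a Jordan--Hölder factor of the quotient $\Pi_{M,\mathscr{L}_i,j_i}$ of $\Pi$, and because $\Pi_{M,\mathscr{L}}\ncong\Pi_{M,\mathscr{L}'}$ for $\mathscr{L}\ne\mathscr{L}'$, the set $I$ has at most $\ell$ elements, hence is finite. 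At this stage \autoref{5.2.4} applies to the injection $\Pi\hookrightarrow\bigoplus_{i\in I}\Pi_{M,\mathscr{L}_i,j_i}$ and shows that $\Pi$ is a closed subrepresentation, and \autoref{3.4.0}, applicable since each projection $\Pi\to\Pi_{M,\mathscr{L}_i,j_i}$ is surjective, then gives $\Pi\cong\bigoplus_{i\in I}\Pi_{M,\mathscr{L}_i,j_i}$, as claimed.

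The only delicate point is the reduction from the a priori infinite product of \autoref{5.1.3} to a finite direct sum, and this is entirely controlled by the finite length of $\Pi$: \autoref{5.2.7} simultaneously bounds the levels $j$ and, through the count of Jordan--Hölder factors, bounds the number of distinct lines $\mathscr{L}_i$. Once that is in place the remaining steps are a formal assembly of results already established, so I do not expect any further obstacle.
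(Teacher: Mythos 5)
Your proof is correct and follows essentially the same route as the paper's: the injection with surjective projections from \autoref{5.1.3}, finiteness obtained from \autoref{5.2.7}, the reduction of each $J(i)$ to a single maximal level via \autoref{5.1.4}, then closedness by \autoref{5.2.4} and the identification $\Pi\cong\oplus_{i\in I}\Pi_{M,\mathscr{L}_i,j_i}$ by \autoref{3.4.0}. The only divergence is that you derive the finiteness of each $J(i)$ directly from the finite length $\ell$ of $\Pi$ (since $\Pi_{M,\mathscr{L}_i,j}$ is a quotient of $\Pi$, so $j\le\ell$), and you make explicit the Jordan--Hölder count giving $|I|\le\ell$, whereas the paper gets the finiteness of $J(i)$ from \autoref{intersection} and cites \autoref{5.2.7} for $I$; both arguments are valid, yours being marginally more self-contained at that step.
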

\begin{proof}[Preuve] Il suit du Corollaire \ref{5.1.3} et du Lemme \ref{5.2.4} qu'il existe une injection
	$$\Pi\hookrightarrow\prod\limits_{i\in I}\prod\limits_{j\in J(i)}\Pi_{M, \mathscr{L}_i, j}$$
	et que la composée $\Pi\hookrightarrow\prod\limits_{i\in I}\prod\limits_{j\in J(i)}\Pi_{M, \mathscr{L}_i, j}\twoheadrightarrow \Pi_{M, \mathscr{L}_i, j}$ est surjective pour tout $i, j$. D'après le Corollaire \ref{5.2.7}, $I$ est fini. Il résulte du Corollaire \ref{intersection} que $J(i)$ est fini pour tout $i\in I$. 
	
	Il suit du Lemme \ref{5.2.4} que $\Pi$ est fermée dans $\prod\limits_{i\in I}\prod\limits_{j\in J(i)}\Pi_{M, \mathscr{L}_i, j}$. D'après la Remarque \ref{5.1.4}, $\Pi$ est une sous-représentation fermée de $\prod\limits_{i\in I}\Pi_{M, \mathscr{L}_i, j_i}$. On déduit du Lemme \ref{3.4.0} que l'on a un isomorphisme $\Pi\cong\prod\limits_{i\in I}\Pi_{M, \mathscr{L}_i, j_i}$. Cela permet de conclure.
\end{proof}
Notons que le Corollaire \ref{5.2.8} est compatible avec le Lemme \ref{3.4.2}.
\begin{appendices}
\section{Courbes complètes}
On suppose que $\mathfrak{X}$ est un modèle semi-stable, $G$-équivariant d'une courbe rigide sur l'anneau des entiers $\mathscr{O}_K$ d'une extension finie $K$ de $\mathbb{Q}_p$. On note $\pi$ une uniformisante de $K$ et $\kappa$ son corps résiduel. Quitte à faire une extension totalement ramifiée de degré fini (degré $4$ suffit) de $K$ et éclater les points singuliers le nombre de fois qu'il faut, on peut supposer que les composantes irréductibles de la fibre spéciale $\mathfrak{X}_\kappa$ sont lisses et que deux composantes irréductibles s'intersectent en au plus un point. 

Soit $\Gamma$ le graphe dual de $\mathfrak{X}_\kappa$: l'ensemble $S$ de ses sommets est en bijection $s\mapsto \mathfrak{Y}_s$ avec celui des composantes irréductibles de $\mathfrak{X}_\kappa$, et l'ensemble $A$ de ses arêtes est en bijection $a\mapsto P_a$ avec celui des points singuliers de $\mathfrak{X}_\kappa$, l'arête $a\in A$ joint les sommets $s_1$ et $s_2$ si $P_a=\mathfrak{Y}_{s_1}\cap \mathfrak{Y}_{s_2}$. Pour le graphe $\Gamma$, on dispose des groupe de cohomologie $H^i(\Gamma, \kappa)$ pour $i=0, 1$, et de cohomologie à support compact $H^i_c(\Gamma, \kappa)$ pour $i=0, 1$. Si $X=A, S$, alors on note $\kappa^X$ l'espace des fonctions $\phi: X\to\kappa$ et $\kappa^{(X)}$ le sous-espace des fonctions à support fini. On dispose de deux applications
$\partial: \kappa^{(S)}\to\kappa^{(A)}, \partial^*: \kappa^A\to\kappa^S$. Alors on a $H^1(\Gamma, \kappa)=\mathrm{Coker}(\partial: \kappa^S\to\kappa^A)$ et $H_c^1(\Gamma, \kappa)=\mathrm{Coker}(\partial: \kappa^{(S)}\to\kappa^{(A)})$ et $H_c^1(\Gamma, \kappa)^*=\mathrm{Ker}(\partial^*: \kappa^A\to\kappa^S)$.

On munit $\Gamma$ d'une métrique et d'une orientation en munissant chaque arête d'un homéomorphisme sur $[0, \frac{1}{e}]$, où $e$ est l'indice de ramification absolu de $K$. Cela munit $S$ d'une distance: si $s_1, s_2\in S$, alors $ed(s_1, s_2)$ est le minimum de $n\geq0$ tels qu'il existe une chaîne $s=s_0, s_1,..., s_n=s'$ telle que $\mathfrak{Y}_{s_i}\cap \mathfrak{Y}_{s_{i+1}}\neq\emptyset$ pour tout $i$. Le groupe $G$ agit sur $S, A$ et $\Gamma$ de manière isométrique, et $G\backslash A, G\backslash S$ sont des ensembles finis. L'espace topologique $\Gamma$ est contractile, ce qui est équivalent à ce que $\Gamma$ est un arbre, en particulier, $H^1(\Gamma, \kappa)=0$. Par contre, l'espace $H_c^1(\Gamma, \kappa)^*$ n'est pas nul. Pour plus de détails, le lecteur se reportera à \cite{cdn2022cohomologie}.

Si la fibre générique $\mathfrak{X}_K$ est un affinoïde, on définit une suite décroissante $\mathfrak{X}=\mathfrak{X}^{(0)}\supset \mathfrak{X}^{(1)}\supset...$ d'ouverts de $\mathfrak{X}$ où $\mathfrak{X}^{(i+1)}$ est obtenu en retirant le bord $\partial \mathfrak{X}^{(i)}$ de $\mathfrak{X}^{(i)}$. Le graphe $\Gamma^{(i)}$ de $\mathfrak{X}^{(i)}$ est un sous-graphe de $\Gamma$ et $\Gamma^{(i)}=\{s\in\Gamma\mid d(s, \partial \mathfrak{X})\geq i\}$.
\begin{lemma} Soit $f\in\mathscr{O}(\mathfrak{X})$. Si $f$ s'annule sur chacune des composantes connexes de $\mathfrak{X}^{(i)}_\kappa$, alors $f$ est divisible par $\pi^i$ sur $\mathfrak{X}^{(i)}$.
\end{lemma}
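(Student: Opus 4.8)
\emph{Plan de démonstration.} Le plan est de raisonner par récurrence sur $i$, le cas $i=0$ étant trivial puisque $\pi^{0}\mathscr{O}(\mathfrak{X}^{(0)})=\mathscr{O}(\mathfrak{X})$. Comme l'énoncé est de nature locale le long du graphe dual $\Gamma$, je commencerais par me placer, grâce aux réductions déjà faites ci-dessus (extension totalement ramifiée de $K$, éclatements des points singuliers), dans la situation où $\mathfrak{X}$ se recouvre par les tubes $\mathfrak{Y}_{s}$ de ses composantes irréductibles (indexées par les sommets $s$ de $\Gamma$) et par les couronnes $\mathfrak{C}_{a}$ associées à ses points singuliers (indexées par les arêtes $a$), et où l'on passe de $\mathfrak{X}^{(i)}$ à $\mathfrak{X}^{(i+1)}$ en excisant les couronnes de bord, c'est-à-dire celles ayant un bout qui réduit sur $\partial\mathfrak{X}^{(i)}$.

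\emph{Pas de récurrence.} L'ingrédient clé serait une étude locale de $f$ sur une couronne de bord. Si $\mathfrak{C}_{a}$ relie une composante $\mathfrak{Y}_{s}$ (du côté de $\mathfrak{X}^{(i+1)}$) à une composante plus proche de $\partial\mathfrak{X}$, et si la réduction de $f$ modulo $\pi$ s'annule sur $\mathfrak{Y}_{s}$, alors le développement de Laurent de $f|_{\mathfrak{C}_{a}}$ montre que $f|_{\mathfrak{C}_{a}}\in\pi\,\mathscr{O}(\mathfrak{C}_{a})$, et cette divisibilité se propage vers l'intérieur de la couronne. En itérant le long d'un chemin minimal de $\Gamma$ joignant $s$ à $\partial\mathfrak{X}$ (un tel chemin traverse au moins $i$ couronnes puisque $s\in\Gamma^{(i)}$), on obtiendrait que $f|_{\mathfrak{Y}_{s}}$ est divisible par $\pi^{i}$. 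Le principe du maximum sur chaque couronne, où $\log|f|$ est affine le long du squelette, permettrait alors d'étendre l'estimation d'une composante à la couronne voisine, donc de recoller ces divisibilités locales en la divisibilité de $f$ par $\pi^{i}$ sur tout $\mathfrak{X}^{(i)}$.

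\emph{Principale difficulté.} Le point central de l'argument est l'énoncé local sur une couronne, et surtout la vérification que l'on récupère bien la puissance $\pi^{i}$ et non seulement $\pi$ : il faut suivre avec soin, couronne après couronne le long de chaque chemin du graphe, comment l'annulation de la réduction de $f$ se transmet de composante en composante (une fonction régulière sur une composante propre, nulle en un point, y étant identiquement nulle), ce qui force l'accumulation des facteurs $\pi$. Le fait que $\Gamma$ soit un arbre et que deux composantes irréductibles se coupent en au plus un point devrait assurer que les estimations obtenues sur les différents tubes et couronnes sont compatibles sur leurs intersections, permettant le recollement final.
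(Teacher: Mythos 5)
Le pas qui ne tient pas est votre énoncé local sur une couronne de bord. Si $\mathfrak{C}_{a}$ relie $\mathfrak{Y}_{s}$ à une composante plus proche de $\partial\mathfrak{X}$ et si la réduction de $f$ s'annule sur $\mathfrak{Y}_{s}$, on n'en déduit pas que $f|_{\mathfrak{C}_{a}}\in\pi\,\mathscr{O}(\mathfrak{C}_{a})$ : sur la couronne standard $\{|\pi|\leq|z|\leq1\}$ munie de son modèle à deux composantes, la fonction $f=z$ a une réduction identiquement nulle sur la composante intérieure, alors que $|f|=1$ au bord extérieur ; ce qui est vrai n'est qu'une majoration $|f|\leq|\pi|$ près de l'extrémité de $\mathfrak{C}_{a}$ qui touche $\mathfrak{Y}_{s}$, avec interpolation vers l'autre bout. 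Plus grave, la source des facteurs accumulés $\pi^{i}$ n'est pas identifiée : l'hypothèse ne porte que sur les composantes de $\mathfrak{X}^{(i)}_\kappa$, donc les composantes intermédiaires rencontrées le long de votre chemin minimal de $s$ à $\partial\mathfrak{X}$ ne portent a priori aucune annulation, et la traversée de chaque couronne ne fait rien gagner par elle-même. Pour faire marcher un argument de ce type, il faudrait d'abord propager l'annulation de la réduction à toutes les composantes intérieures (par constance sur les composantes propres, point que vous ne mentionnez qu'en passant), puis contrôler réellement les pentes de $-\log|f|$ sur le squelette (intégralité, concavité sur les arêtes, formule des pentes aux sommets de branchement, comportement à toutes les extrémités de l'arbre et pas seulement le long d'un chemin) ; rien de cela n'est mis en place, et l'affirmation « traverser $i$ couronnes donne $\pi^{i}$ » reste une pétition de principe.

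La preuve du texte suit une autre voie, nettement plus économique. Par récurrence on se ramène au cas $i=1$, en appliquant l'hypothèse de récurrence à $f/\pi$ sur $\mathfrak{X}^{(1)}$ (dont la filtration est celle de $\mathfrak{X}$ décalée d'un cran, l'hypothèse d'annulation persistant pour $f/\pi$). Pour $i=1$, l'adhérence $\bar{\mathfrak{X}}^{(1)}_\kappa$ de $\mathfrak{X}^{(1)}_\kappa$ dans $\mathfrak{X}_\kappa$ est une réunion de composantes irréductibles compactes ; la réduction de $f$ est donc constante sur chacune de ses composantes connexes, et l'hypothèse d'annulation la force à être identiquement nulle, d'où la divisibilité par $\pi$ sur $\mathfrak{X}^{(1)}$. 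La puissance $\pi^{i}$ provient ainsi de l'itération du cas $i=1$ sur les $\mathfrak{X}^{(j)}$ successifs, et non d'un comptage de couronnes le long des chemins de $\Gamma$ ; aucun développement de Laurent ni principe du maximum n'est nécessaire.
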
 
\begin{proof}[Preuve] Par récurrence, on est ramené au cas $i=1$. Soit $\bar{\mathfrak{X}}_\kappa^{(1)}$ l'adhérence de $\mathfrak{X}_\kappa^{(1)}$ dans $\mathfrak{X}_\kappa$. Alors $\bar{\mathfrak{X}}_\kappa^{(1)}$ est une réunion de composantes irréductibles compactes de $\mathfrak{X}_\kappa$. Il s'ensuit que $f\in\mathscr{O}(\mathfrak{X})$ est constante sur chacune des composantes connexes de $\bar{\mathfrak{X}}_\kappa^{(1)}$ et si $f$ s'annule sur chacune des composantes connexes de $\mathfrak{X}^{(1)}_\kappa$, alors $f$ est identiquement nulle sur $\bar{\mathfrak{X}}_\kappa^{(1)}$, et donc est divisible par $\pi$ sur $\mathfrak{X}^{(1)}$.
\end{proof} 
On peut associer à $X$ un graphe, à savoir son squelette adique $\Gamma^{\mathrm{ad}}$. On renvoie le lecteur à \cite[2.3.4]{cdn2022cohomologie} pour la définition d'un squelette adique.
\begin{definition} On dit que $X$ est complète si $\Gamma^{\mathrm{ad}}$ est un espace métrique complet.
\end{definition}
\begin{lemma}\label{1.3.2} Si $\mathfrak{X}$ est complète, alors
	
	(i) $\mathscr{O}(\mathfrak{X}_\kappa)=\kappa$.
	
	(ii) $\mathscr{O}(\mathfrak{X})=\mathscr{O}_K$.
	
	(ii) $H^1(\mathfrak{X}, \mathscr{O})$ est sans $\pi$-torsion.
\end{lemma}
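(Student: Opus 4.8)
The plan is to extract (ii) and (iii) formally from (i), which carries all the geometry. Since $\mathfrak{X}$ is semi-stable it is flat over $\mathscr{O}_K$, so $\mathscr{O}_{\mathfrak{X}}$ has no $\pi$-torsion and there is an exact sequence of sheaves on the common underlying space $|\mathfrak{X}|=|\mathfrak{X}_\kappa|$
\[
0\longrightarrow \mathscr{O}_{\mathfrak{X}}\xrightarrow{\ \pi\ }\mathscr{O}_{\mathfrak{X}}\longrightarrow \mathscr{O}_{\mathfrak{X}_\kappa}\longrightarrow 0 .
\]
I will also use that $\mathscr{O}(\mathfrak{X})=\varprojlim_n\mathscr{O}(\mathfrak{X}\otimes_{\mathscr{O}_K}\mathscr{O}_K/\pi^{n+1})$ is $\pi$-adically separated: if $f\in\bigcap_m\pi^m\mathscr{O}(\mathfrak{X})$, then, since $\mathscr{O}(\mathfrak{X}\otimes_{\mathscr{O}_K}\mathscr{O}_K/\pi^{n+1})$ is killed by $\pi^{n+1}$, the image of $f$ in each of these rings is $0$, hence $f=0$.

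For (i) the key point is that completeness of $\Gamma^{\mathrm{ad}}$ forces every irreducible component $\mathfrak{Y}_s$ ($s\in S$) of $\mathfrak{X}_\kappa$ to be proper over $\kappa$: a non-proper smooth component has a point at infinity, and by the construction of the adic skeleton this produces in $\Gamma^{\mathrm{ad}}$ an open end at the vertex $s$ — a half-open edge of finite length whose missing extremity lies outside $\Gamma^{\mathrm{ad}}$ — so $\Gamma^{\mathrm{ad}}$ would contain a Cauchy sequence with no limit, against completeness. Granting this, each $\mathfrak{Y}_s$ is a smooth proper curve over $\kappa$; it is moreover geometrically connected in the cases of interest (the components of the Drinfeld models being projective lines), so $\mathscr{O}(\mathfrak{Y}_s)=\kappa$. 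A global section $f\in\mathscr{O}(\mathfrak{X}_\kappa)$ thus restricts to a constant $c_s\in\kappa$ on each $\mathfrak{Y}_s$; when $a\in A$ joins $s_1$ and $s_2$ one has $c_{s_1}=f(P_a)=c_{s_2}$, and since $\Gamma$ is connected (it is a tree) all the $c_s$ coincide. Hence $f$ is constant and $\mathscr{O}(\mathfrak{X}_\kappa)=\kappa$.

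Part (ii) then follows by $\pi$-adic successive approximation. Left-exactness of global sections applied to the displayed sequence gives $\ker\!\big(\mathscr{O}(\mathfrak{X})\to\mathscr{O}(\mathfrak{X}_\kappa)\big)=\pi\,\mathscr{O}(\mathfrak{X})$. Given $f\in\mathscr{O}(\mathfrak{X})$, (i) yields $c_0\in\mathscr{O}_K$ reducing to the image of $f$ in $\mathscr{O}(\mathfrak{X}_\kappa)$, whence $f-c_0=\pi f_1$ with $f_1\in\mathscr{O}(\mathfrak{X})$; iterating produces $c_1,c_2,\dots\in\mathscr{O}_K$ with $f-\sum_{i=0}^{n}\pi^i c_i\in\pi^{n+1}\mathscr{O}(\mathfrak{X})$ for every $n$. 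The series $\sum_{i\ge 0}\pi^i c_i$ converges in $\mathscr{O}_K$ to some $c$, and then $f-c\in\bigcap_n\pi^n\mathscr{O}(\mathfrak{X})=0$, so $f=c\in\mathscr{O}_K$. For (iii), the long exact cohomology sequence of the displayed sequence
\[
0\to\mathscr{O}(\mathfrak{X})\xrightarrow{\pi}\mathscr{O}(\mathfrak{X})\to\mathscr{O}(\mathfrak{X}_\kappa)\to H^1(\mathfrak{X},\mathscr{O})\xrightarrow{\pi}H^1(\mathfrak{X},\mathscr{O})\to\cdots
\]
identifies $H^1(\mathfrak{X},\mathscr{O})[\pi]$ with the cokernel of $\mathscr{O}(\mathfrak{X})\to\mathscr{O}(\mathfrak{X}_\kappa)$, which by (i) and (ii) is the surjection $\mathscr{O}_K\twoheadrightarrow\kappa$; hence $H^1(\mathfrak{X},\mathscr{O})$ has no $\pi$-torsion.

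The hard part will be the one geometric input to (i): deducing properness of the components of $\mathfrak{X}_\kappa$ from metric completeness of $\Gamma^{\mathrm{ad}}$, i.e. ruling out open ends of the skeleton. Everything else is formal once (i) is available.
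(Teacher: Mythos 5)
Your proof is correct and takes essentially the same route as the paper's: for (i) constancy on the compact irreducible components of $\mathfrak{X}_\kappa$ plus connectedness of the dual tree $\Gamma$, for (ii) $\pi$-adic successive approximation starting from (i), and for (iii) the long exact sequence of $0\to\mathscr{O}\xrightarrow{\ \pi\ }\mathscr{O}\to\mathscr{O}/\pi\to0$ together with the surjectivity of $\mathscr{O}(\mathfrak{X})=\mathscr{O}_K\twoheadrightarrow\kappa=\mathscr{O}(\mathfrak{X}_\kappa)$. The only differences are expository: you sketch why completeness of $\Gamma^{\mathrm{ad}}$ forces the components to be proper and you justify the $\pi$-adic separatedness of $\mathscr{O}(\mathfrak{X})$, two points the paper's proof simply takes for granted.
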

\begin{proof}[Preuve] (i) Si $\mathfrak{X}$ est complète, les composantes irréductibles de la fibre spéciale sont des courbes compactes. Une fonction sur $\mathfrak{X}_\kappa$ est donc constante sur chacune de ces composantes, et donc constante en vertu de la connectivité de $\Gamma$. 
	
	(ii) Soit $f\in\mathscr{O}(\mathfrak{X})$, alors la réduction $\bar{f}$ de $f$ est une constante d'après (i). Si $c\in\mathscr{O}_K$ est un relèvement de $\bar{f}$, alors on peut appliquer ce qui précède à $\pi^{-1}(f-c)$, et réitérer pour en déduire que $f$ est constante modulo $\pi^n$ pour tout $n$. Comme $\mathscr{O}(\mathfrak{X})$ est complet, on a $\mathscr{O}(\mathfrak{X})=\mathscr{O}_K$.
	
	(iii) La suite exacte courte $$0\to\mathscr{O}\xrightarrow{\times \pi}\mathscr{O}\to\mathscr{O}/\pi\to0$$ induit une suite exacte longue
	$$0\to\mathscr{O}(\mathfrak{X})\to\mathscr{O}(\mathfrak{X})\to\mathscr{O}(\mathfrak{X}_\kappa)\to H^1(\mathfrak{X}, \mathscr{O})\to H^1(\mathfrak{X}, \mathscr{O}).$$
	Compte tenu des assertions (i) et (ii), la flèche $\mathscr{O}(\mathfrak{X})\to\mathscr{O}(\mathfrak{X}_\kappa)$ est surjective, donc $H^1(\mathfrak{X}, \mathscr{O})$ est sans $\pi$-torsion, ce que l'on voulait.
\end{proof} 
\begin{lemma} On a un isomorphisme $$\Omega^1(\mathfrak{X})/\pi^n\xrightarrow{\sim}H^0(\mathfrak{X}, \Omega^1/\pi^n)$$
	pour tout $n\geq1$. En particulier, l'application naturelle $\Omega^1(\mathfrak{X})/\pi\xrightarrow{\sim}\Omega^1(\mathfrak{X}_\kappa)$ est un isomorphisme en prenant $n=1$.
\end{lemma}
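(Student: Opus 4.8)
La stratégie serait de ramener l'énoncé, pour tout $n\geq1$, à la seule propriété que $H^1(\mathfrak{X},\Omega^1)$ est sans $\pi$-torsion. Sur le modèle $\mathfrak{X}$, dont les composantes de la fibre spéciale sont lisses et se coupent transversalement, le faisceau $\Omega^1$ (faisceau dualisant relatif) est inversible, donc plat sur $\mathscr{O}_K$, et la suite $0\to\Omega^1\xrightarrow{\times\pi^n}\Omega^1\to\Omega^1/\pi^n\to0$ est exacte. La suite exacte longue de cohomologie sur $\mathfrak{X}$ donne alors
$$0\to\Omega^1(\mathfrak{X})/\pi^n\to H^0(\mathfrak{X},\Omega^1/\pi^n)\to H^1(\mathfrak{X},\Omega^1)[\pi^n]\to0,$$
de sorte que l'injectivité recherchée est automatique et que le lemme équivaut à $H^1(\mathfrak{X},\Omega^1)[\pi^n]=0$ pour tout $n$, c'est-à-dire à l'absence de $\pi$-torsion dans $H^1(\mathfrak{X},\Omega^1)$.

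Pour ce dernier point, je passerais par la normalisation $\nu\colon\bigsqcup_s\mathfrak{Y}_s'\to\mathfrak{X}$, chaque $\mathfrak{Y}_s'$ étant lisse et propre sur $\mathscr{O}_K$, et je noterais $D_s\subset\mathfrak{Y}_s'$ le diviseur réduit des points au-dessus des points singuliers portés par $\mathfrak{Y}_s$ (quitte à agrandir $K$, supposés rationnels). On dispose de la suite exacte des résidus caractérisant le faisceau dualisant
$$0\to\Omega^1\to\bigoplus_s\nu_{s*}\Omega^1_{\mathfrak{Y}_s'/\mathscr{O}_K}(D_s)\xrightarrow{\ \mathrm{Res}\ }\bigoplus_a\mathscr{O}_{K,P_a}\to0,$$
la flèche de droite envoyant une section sur la somme des deux résidus aux deux branches de chaque point singulier $P_a$. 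Comme $\mathfrak{X}_\kappa$ est connexe, chaque composante porte un point singulier, donc $\deg D_s\geq1$ et la dualité de Serre sur $\mathfrak{Y}_s'$ donne $H^1(\mathfrak{Y}_s',\Omega^1_{\mathfrak{Y}_s'}(D_s))=0$. La suite exacte longue fournit ainsi
$$H^1(\mathfrak{X},\Omega^1)=\mathrm{Coker}\Big(\bigoplus_sH^0(\mathfrak{Y}_s',\Omega^1_{\mathfrak{Y}_s'}(D_s))\xrightarrow{R}\bigoplus_a\mathscr{O}_K\Big).$$
Le théorème des résidus sur chaque $\mathfrak{Y}_s'$ place l'image de $R$ dans le sous-module $\{\sum_ad_a=0\}$ ; et, $\Gamma$ étant un arbre, une récurrence effeuillant les sommets terminaux montre que l'image est exactement ce sous-module, lequel est un facteur direct de $\bigoplus_a\mathscr{O}_K$. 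On en conclurait $H^1(\mathfrak{X},\Omega^1)\cong\mathscr{O}_K$ — ce qui est cohérent avec la dualité $H^1(\mathfrak{X},\Omega^1)\cong H^0(\mathfrak{X},\mathscr{O})^\vee=\mathscr{O}_K$, cf. le \autoref{1.3.2} — et en particulier sans $\pi$-torsion. Le cas où $\mathfrak{X}_\kappa$ est irréductible est immédiat, $\mathfrak{X}$ étant alors lisse et propre sur $\mathscr{O}_K$.

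Pour l'énoncé particulier ($n=1$) : on a $\Omega^1/\pi=\Omega^1\otimes_{\mathscr{O}_K}\kappa=\omega_{\mathfrak{X}_\kappa/\kappa}$, le faisceau dualisant de la courbe semi-stable $\mathfrak{X}_\kappa$, dont les sections globales sont ce que l'on note ici $\Omega^1(\mathfrak{X}_\kappa)$ ; l'isomorphisme pour $n=1$ se lit alors $\Omega^1(\mathfrak{X})/\pi\xrightarrow{\sim}\Omega^1(\mathfrak{X}_\kappa)$.

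Le point délicat serait l'avant-dernier paragraphe, et surtout son adaptation au cas — celui des applications — où $\mathfrak{X}_\kappa$ a une infinité de composantes : les sommes directes ci-dessus deviennent des limites projectives le long de l'épuisement $\mathfrak{X}=\mathfrak{X}^{(0)}\supset\mathfrak{X}^{(1)}\supset\cdots$, et il faudrait vérifier que les systèmes projectifs en jeu sont de Mittag-Leffler, afin que le calcul au niveau fini passe à la limite sans faire apparaître de $R^1\varprojlim$ ; c'est là qu'interviendrait l'hypothèse de complétude de $\Gamma^{\mathrm{ad}}$, exactement comme dans la preuve du \autoref{1.3.2}. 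On pourrait aussi, de façon équivalente, invoquer directement la dualité de Grothendieck pour les courbes complètes ; dans les deux cas, c'est le contrôle de l'image de $R$ par les résidus qui demande le plus de soin.
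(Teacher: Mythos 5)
Votre première étape est exactement celle du texte : la suite $0\to\Omega^1\xrightarrow{\times\pi^n}\Omega^1\to\Omega^1/\pi^n\to0$ et la suite exacte longue ramènent le lemme à l'absence de $\pi$-torsion dans $H^1(\mathfrak{X}, \Omega^1)$. La différence est qu'à ce stade le texte se contente d'invoquer la structure de ce groupe (nul si $\mathfrak{X}_K$ n'est pas compacte, isomorphe à $\mathscr{O}_K$ sinon), alors que vous entreprenez de la démontrer ; dans le cas compact, votre calcul par résidus et dualité est moralement le bon (il redonne $H^1(\mathfrak{X},\Omega^1)\cong\mathscr{O}_K$ par dualité), mais la suite des résidus telle que vous l'écrivez est mal posée sur $\mathscr{O}_K$ : les $\mathfrak{Y}_s$ sont des composantes de la fibre spéciale, donc des courbes sur $\kappa$ et non des schémas « lisses et propres sur $\mathscr{O}_K$ » (la normalisation de $\mathfrak{X}$ lui-même est triviale), et la description par résidus ne vaut telle quelle que pour le faisceau dualisant de $\mathfrak{X}_\kappa$ sur $\kappa$ ; il faudrait donc travailler modulo $\pi$ puis dévisser, ou invoquer directement la dualité relative pour $\mathfrak{X}/\mathscr{O}_K$, comme vous le proposez en alternative.

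Le point plus sérieux est le cas que vous laissez explicitement en suspens, celui d'une fibre spéciale à une infinité de composantes : c'est précisément le cas d'application du lemme, puisque « complète » au sens de cet appendice signifie que $\Gamma^{\mathrm{ad}}$ est un espace métrique complet et non que $\mathfrak{X}_K$ est compacte, et que le modèle pertinent pour $\Sigma_n$ (espace de Stein) est non compact. Pour ce cas, l'argument n'est pas un passage à la limite Mittag-Leffler du calcul compact : c'est la nullité $H^1(\mathfrak{X}, \Omega^1)=0$, établie plus bas dans le \autoref{A.0.7} (i) par un calcul de différentielles logarithmiques sur la fibre spéciale et le fait que $\Gamma$ est un arbre. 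En l'état, votre rédaction ne couvre donc pas le cas pour lequel le lemme est utilisé (par exemple dans la preuve de la surjectivité de $\mathrm{Res}$ et dans le \autoref{1.3.6}), et c'est là qu'il faudrait soit reprendre l'argument du \autoref{A.0.7}, soit mener à bien le contrôle des systèmes projectifs que vous esquissez.
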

\begin{proof}[Preuve] La suite exacte courte
	$$0\to\Omega^1\xrightarrow{\times \pi^n}\Omega^1\to\Omega^1/\pi^n\to0$$
	induit une suite exacte longue
	$$0\to\Omega^1(\mathfrak{X})\to\Omega^1(\mathfrak{X})\to H^0(\mathfrak{X}, \Omega^1/\pi^n)\to H^1(\mathfrak{X}, \Omega^1)\to H^1(\mathfrak{X}, \Omega^1).$$
	Il suffit donc de prouver que $H^1(\mathfrak{X}, \Omega^1)$ n'a pas de $\pi^n$-torsion. Or, ce groupe est nul sauf si $\mathfrak{X}_K$ est compact, auquel cas il est isomorphe à $\mathscr{O}_K$. Cela permet de conclure.
\end{proof}
\begin{proposition} Si $\mathfrak{X}_K$ est une courbe complète, alors $\omega\mapsto\mathrm{Res}(\omega)$ induit une surjection $\Omega^1(\mathfrak{X})\to H_c^1(\Gamma, \mathscr{O}_K)^*$.
\end{proposition}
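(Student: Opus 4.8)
Le plan est de se ramener, par réduction modulo $\pi$, au même énoncé sur la fibre spéciale $\mathfrak{X}_\kappa$, de le résoudre composante irréductible par composante irréductible à l'aide du théorème de Riemann--Roch, puis de relever le résultat par approximations successives $\pi$-adiques. On rappellera d'abord que $H_c^1(\Gamma, \mathscr{O}_K)^* = \mathrm{Ker}(\partial^*\colon \mathscr{O}_K^A \to \mathscr{O}_K^S)$, c'est-à-dire l'espace des fonctions $r\colon A \to \mathscr{O}_K$ telles que, pour tout sommet $s$, la somme signée (selon l'orientation fixée de $\Gamma$) des $r(a)$ sur les arêtes $a$ incidentes à $s$ soit nulle. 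Que $\omega \mapsto \mathrm{Res}(\omega)$ prenne ses valeurs dans ce noyau résulte du théorème des résidus appliqué à chaque composante $\mathfrak{Y}_s$, qui est une courbe propre et lisse sur $\kappa$ d'après le \autoref{1.3.2}: la somme des résidus de $\omega|_{\mathfrak{Y}_s}$ aux points singuliers de $\mathfrak{X}_\kappa$ portés par $\mathfrak{Y}_s$ est nulle, ce qui est précisément l'équation définissant $\mathrm{Ker}(\partial^*)$. Il restera donc à établir la surjectivité.

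La première étape sera la surjectivité modulo $\pi$: via l'isomorphisme $\Omega^1(\mathfrak{X})/\pi \xrightarrow{\sim} \Omega^1(\mathfrak{X}_\kappa)$ établi plus haut, il s'agit de voir que le résidu $\Omega^1(\mathfrak{X}_\kappa) \to \mathrm{Ker}(\partial^*\colon \kappa^A \to \kappa^S)$ est surjectif. Partant de $\bar r$ dans ce noyau, on travaillera sur chaque composante $\mathfrak{Y}_s$ (ou plutôt sur sa normalisée, pour traiter les points singuliers à branches sur une même composante), munie du diviseur réduit $D_s$ des points au-dessus des points singuliers portés par $\mathfrak{Y}_s$. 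La suite exacte $0 \to \Omega^1 \to \Omega^1(D_s) \xrightarrow{\mathrm{Res}} \mathscr{O}_{D_s} \to 0$, jointe au théorème des résidus (qui identifie le morphisme de connexion $\kappa^{D_s} \to H^1(\mathfrak{Y}_s, \Omega^1) = \kappa$ à la somme), montre que $H^0(\mathfrak{Y}_s, \Omega^1(D_s)) \to \kappa^{D_s}$ est surjectif sur l'hyperplan des familles de somme nulle. Comme $\partial^* \bar r(s) = 0$, il existe donc $\bar\omega_s \in H^0(\mathfrak{Y}_s, \Omega^1(D_s))$ dont le résidu en chaque point singulier $P_a$ porté par $\mathfrak{Y}_s$ vaut $\pm\bar r(a)$, le signe étant celui dicté par l'orientation de $a$. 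En un point singulier joignant deux branches les deux résidus obtenus sont alors opposés, de sorte que la famille $(\bar\omega_s)_s$ définit une section globale du faisceau dualisant $\Omega^1/\pi$ de $\mathfrak{X}_\kappa$, de résidu $\bar r$.

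La seconde étape sera le relèvement. Étant donné $r \in \mathrm{Ker}(\partial^*\colon \mathscr{O}_K^A \to \mathscr{O}_K^S)$, on construira par récurrence des suites $(\eta_n)_{n \geq 0}$ dans $\Omega^1(\mathfrak{X})$ et $(r_n)_{n \geq 0}$ dans $\mathrm{Ker}(\partial^*)$, avec $r_0 = r$: connaissant $r_n$, l'étape précédente et l'isomorphisme $\Omega^1(\mathfrak{X})/\pi \xrightarrow{\sim} \Omega^1(\mathfrak{X}_\kappa)$ fournissent $\eta_n \in \Omega^1(\mathfrak{X})$ avec $\mathrm{Res}(\eta_n) \equiv r_n \pmod{\pi}$ (compatibilité du résidu à la réduction); alors $r_n - \mathrm{Res}(\eta_n)$ est encore dans $\mathrm{Ker}(\partial^*)$ et divisible par $\pi$, le noyau de $\partial^*$ étant saturé puisque $\mathscr{O}_K^S$ est sans torsion, et l'on pose $r_{n+1} := \pi^{-1}(r_n - \mathrm{Res}(\eta_n))$. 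Comme $\Omega^1(\mathfrak{X})$ est $\pi$-adiquement complet, la série $\omega := \sum_{n \geq 0} \pi^n \eta_n$ converge, et l'identité télescopique $\mathrm{Res}(\omega) = \sum_n \pi^n(r_n - \pi r_{n+1}) = r$ permettra de conclure.

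Le point que je m'attends à devoir traiter avec le plus de soin est la mise en place des conventions: la normalisation intégrale de $\mathrm{Res}$ (afin qu'il envoie $\Omega^1(\mathfrak{X})$ dans $\mathscr{O}_K^A$ et que sa réduction modulo $\pi$ soit le résidu sur $\mathfrak{X}_\kappa$) et le suivi des signes d'orientation dans l'identification $H_c^1(\Gamma, \mathscr{O}_K)^* = \mathrm{Ker}(\partial^*)$, pour que les résidus produits à l'étape modulo $\pi$ se recollent effectivement le long de chaque point singulier; une fois ces conventions fixées, le reste n'est qu'un assemblage de Riemann--Roch et d'un argument d'approximation $\pi$-adique standard. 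Enfin, lorsque $\Gamma$ est infini, l'étape modulo $\pi$ reste valable sans changement, car elle est locale sur $\Gamma$ et se fait composante par composante, et le relèvement fonctionne à l'identique puisque $\Omega^1(\mathfrak{X}) = \varprojlim_i \Omega^1(\mathfrak{X}_i)$ demeure $\pi$-adiquement complet.
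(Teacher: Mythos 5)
Votre démonstration est correcte et suit essentiellement la même stratégie que celle du texte : réduction modulo $\pi$ (rendue licite par l'isomorphisme $\Omega^1(\mathfrak{X})/\pi\xrightarrow{\sim}\Omega^1(\mathfrak{X}_\kappa)$ et la complétude $\pi$-adique), puis surjectivité sur la fibre spéciale, que le texte déclare « immédiate » et que vous explicitez composante par composante via la suite $0\to\Omega^1\to\Omega^1(D_s)\to\mathscr{O}_{D_s}\to0$ et le théorème des résidus. Il n'y a donc rien à redire, votre rédaction ne fait que détailler les étapes laissées implicites dans la preuve du texte.
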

\begin{proof}[Preuve] Il suffit de prouver le résultat modulo $\pi$, et on est ramené à prouver que $\Omega^1(\mathfrak{X}_\kappa)\to H_c^1(\Gamma, \kappa)^*$ est surjectif, ce qui découle des définitions de l'application résidu et du groupe $H_c^1(\Gamma, \mathscr{O}_K)^*$.
\end{proof}
\begin{lemma}\label{A.0.7} On a 
	
	(i) $H^1(\mathfrak{X}, \Omega^1)=0$.
	
	(ii) La suite $0\to\Omega^1(\mathfrak{X})\to H_{\mathrm{dR}}^1(\mathfrak{X})\to H^1(\mathfrak{X}, \mathscr{O})\to0$ est exacte.
	
	(iii) $H_{\mathrm{dR}}^1(\mathfrak{X})$ est sans $\pi$-torsion.
\end{lemma}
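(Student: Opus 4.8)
The plan is to obtain all three assertions from what has already been proved in this appendix, the only substantive new ingredient being the vanishing statement (i). For (i) I would simply invoke the dichotomy recalled in the proof of the preceding lemma: $H^1(\mathfrak{X},\Omega^1)$ vanishes unless the generic fibre $\mathfrak{X}_K$ is proper, in which case it is free of rank one over $\mathscr{O}_K$. In our situation $\mathfrak{X}_K$ is a (non-quasi-compact) complete curve of Stein type — its adic skeleton $\Gamma^{\mathrm{ad}}$ is an infinite complete tree, so that $H^1_c(\Gamma,\kappa)^*\neq 0$ and $\mathfrak{X}_K$ is certainly not proper — and therefore $H^1(\mathfrak{X},\Omega^1)=0$. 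I would make sure the normalisations arranged at the start of the appendix (semistable reduction, smooth irreducible components of $\mathfrak{X}_\kappa$ meeting transversally) are in force, as they are what the cited computation relies on.

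For (ii) I would run the hypercohomology long exact sequence attached to the short exact sequence of complexes
$$0\to \Omega^1_{\mathfrak{X}}[-1]\to \bigl(\mathscr{O}_{\mathfrak{X}}\xrightarrow{\ d\ }\Omega^1_{\mathfrak{X}}\bigr)\to \mathscr{O}_{\mathfrak{X}}\to 0,$$
where $[-1]$ places $\Omega^1_{\mathfrak{X}}$ in degree $1$. Since $\mathbb{H}^i(\Omega^1_{\mathfrak{X}}[-1])=H^{i-1}(\mathfrak{X},\Omega^1)$, this reads
$$H^0(\mathfrak{X},\mathscr{O})\xrightarrow{\ \delta\ }\Omega^1(\mathfrak{X})\to H^1_{\mathrm{dR}}(\mathfrak{X})\to H^1(\mathfrak{X},\mathscr{O})\xrightarrow{\ \delta\ }H^1(\mathfrak{X},\Omega^1).$$
The first connecting map $\delta$ is, up to sign, the de Rham differential, and it vanishes on $H^0(\mathfrak{X},\mathscr{O})=\mathscr{O}_K$ since these are constant functions (\autoref{1.3.2}(ii)); hence $\Omega^1(\mathfrak{X})\hookrightarrow H^1_{\mathrm{dR}}(\mathfrak{X})$. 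By part (i) the target $H^1(\mathfrak{X},\Omega^1)$ of the second $\delta$ is zero, so $H^1_{\mathrm{dR}}(\mathfrak{X})\twoheadrightarrow H^1(\mathfrak{X},\mathscr{O})$ with kernel exactly $\Omega^1(\mathfrak{X})$, which is the desired short exact sequence.

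For (iii) I would combine (ii) with the torsion-freeness of its two outer terms. The module $H^1(\mathfrak{X},\mathscr{O})$ has no $\pi$-torsion by \autoref{1.3.2}(iii), and $\Omega^1(\mathfrak{X})$ has no $\pi$-torsion because $\Omega^1_{\mathfrak{X}}$ is a line bundle on the $\mathscr{O}_K$-flat formal scheme $\mathfrak{X}$, so multiplication by $\pi$ is injective on $\Omega^1_{\mathfrak{X}}$ and hence on its global sections. An extension of a $\pi$-torsion-free module by a $\pi$-torsion-free module is $\pi$-torsion-free: if $\pi x=0$ in $H^1_{\mathrm{dR}}(\mathfrak{X})$, its image in $H^1(\mathfrak{X},\mathscr{O})$ is killed by $\pi$, hence zero, so $x\in\Omega^1(\mathfrak{X})$, where $\pi x=0$ forces $x=0$. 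Alternatively one can multiply the whole de Rham complex by $\pi$ and use that $\mathscr{O}(\mathfrak{X})\to\mathscr{O}(\mathfrak{X}_\kappa)$ is surjective (\autoref{1.3.2}), which bounds the $\pi$-torsion of $H^1_{\mathrm{dR}}(\mathfrak{X})$ by a quotient of $H^0_{\mathrm{dR}}(\mathfrak{X}_\kappa)=\kappa$ hit by $H^0_{\mathrm{dR}}(\mathfrak{X})=\mathscr{O}_K$, hence zero.

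The main obstacle is genuinely part (i): parts (ii) and (iii) are formal bookkeeping with the Hodge-to-de-Rham sequence and with torsion, whereas (i) rests on the fact that $\mathfrak{X}_K$ is not proper together with the coherent-cohomology vanishing for its non-proper formal model — that is, on the precise meaning of \emph{complète} in this appendix and on the computation recalled from the previous lemma. If that input were unavailable one would have to prove $H^1(\mathfrak{X},\Omega^1)=0$ directly, e.g. from the Stein property of $\mathfrak{X}_K$ and a comparison with the formal model, which is the delicate point.
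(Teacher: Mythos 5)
Your treatment of (ii) and (iii) is correct and coincides with the paper's: (ii) is the Hodge--de Rham long exact sequence together with (i) (the injectivity on the left coming from $\mathscr{O}(\mathfrak{X})=\mathscr{O}_K$, i.e. from \autoref{1.3.2}), and (iii) follows because the two outer terms of that short exact sequence are $\pi$-torsion-free, the case of $H^1(\mathfrak{X},\mathscr{O})$ being \autoref{1.3.2} again. No issue there.

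Part (i), however, is where the real content lies, and your argument for it has a genuine gap: you deduce $H^1(\mathfrak{X},\Omega^1)=0$ from the "dichotomy" mentioned in passing in the proof of the preceding lemma (``ce groupe est nul sauf si $\mathfrak{X}_K$ est compact''). That sentence is asserted there without proof, and in the non-compact case it \emph{is} assertion (i); quoting it back is circular. Nor can it be imported as a general coherent-cohomology principle driven only by non-properness of $\mathfrak{X}_K$: in this very setting $H^1(\mathfrak{X},\mathscr{O})$ is far from zero (its rationalized dual $H^1(\mathfrak{X},\mathscr{O})[\tfrac{1}{p}][M]^*$ is a nontrivial representation used throughout the paper), so the vanishing is specific to the sheaf of (log) differentials and must be proved. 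The paper's actual argument is: by dévissage reduce to $H^1(\mathfrak{X},\Omega^1/\pi)=0$; endow $\mathfrak{X}$ and $\mathfrak{X}_\kappa$ with the log structure defined by the special fibre, and use the exact sequence
$$0\to H^0(\mathfrak{X},\Omega^1/\pi)\to\prod_{s\in S}H^0(Y_s^\times,\Omega^1/\pi)\to\prod_{a\in A}\kappa\to H^1(\mathfrak{X},\Omega^1/\pi)\to\prod_{s\in S}H^1(Y_s^\times,\Omega^1/\pi);$$
since $\Gamma$ is a tree, $H^1(\Gamma,\kappa)=0$ and the last map is injective, while $H^1(Y_s^\times,\Omega^1/\pi)=H^1(U_s,\Omega^1/\pi)=0$ for each component ($U_s$ the locus where the log structure is trivial). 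This combinatorial-plus-componentwise vanishing is exactly the step your proposal leaves out — as your own closing sentence concedes, it is the delicate point, so the proposal does not establish (i).
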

\begin{proof}[Preuve] (i) Par dévissage, on est ramené à montrer que $H^1(\mathfrak{X}, \Omega^1/\pi)=0$. On peut munir $\mathfrak{X}$ avec la structure logarithmique induite par $\mathfrak{X}_\kappa$, ce qui induit une structure logarithmique sur $\mathfrak{X}_\kappa$. De plus, on note $Y_s^\times$ la courbe $Y_s$ munie de la structure logarithmique induite par $\mathfrak{X}_\kappa$. Alors on a une suite exacte
	$$0\to H^0(\mathfrak{X}, \Omega^1/\pi)\to\prod_{s\in S}H^0(Y_s^\times, \Omega^1/\pi)\to\prod_{a\in A}\kappa\to H^1(\mathfrak{X}, \Omega^1/\pi)\to\prod_{s\in S}H^1(Y_s^\times, \Omega^1/\pi).$$ 
	
	D'un coté, le conoyau de la flèche $H^0(\mathfrak{X}, \Omega^1/\pi)\to\prod_{s\in S}H^0(Y_s^\times, \Omega^1/\pi)$ est $\kappa^S$. Comme $\Gamma$ est un arbre, on a $H^1(\Gamma, \kappa)=0$. On en déduit que la flèche $H^1(\mathfrak{X}, \Omega^1/\pi)\to\prod_{s\in S}H^1(Y_s^\times, \Omega^1/\pi)$ est injective.
	
	D'autre coté, on a 
	$$H^1(Y_s^\times, \Omega^1/\pi)=H^1(U_s, \Omega^1/\pi)=0,$$ 
	où $U_s$ est l'ouvert de $Y_s$ sur lequel la log-structure est triviale (l'ouvert de lissité de $\mathfrak{X}$ intersecté avec $Y_s$).
	
	Par conséquent, on a $H^1(\mathfrak{X}, \Omega^1)=0$.
	
	(ii) On a une suite exacte
	$$0\to\Omega^1(\mathfrak{X})\to H_{\mathrm{dR}}^1(\mathfrak{X})\to H^1(\mathfrak{X}, \mathscr{O})\to H^1(\mathfrak{X}, \Omega^1).$$
	Or l'assertion (i) montre que le dernier terme est $0$, ce qui achève la démonstration.
	
	(iii) Au vu de la suite exacte de (ii), il suffit de montrer que $\Omega^1(\mathfrak{X})$ et $H^1(\mathfrak{X}, \mathscr{O})$ sont sans torsion. L'espace $\Omega^1(\mathfrak{X})$ est évidemment sans torsion, et le résultat pour $H^1(\mathfrak{X}, \mathscr{O})$ est montré dans le Lemme \ref{1.3.2}. Cela permet de conclure.
\end{proof}
Maintenant on suppose que $\mathfrak{X}$ est un modèle semi-stable, $G$-équivariant de $\Sigma_n$ sur l'anneau des entiers $\mathscr{O}_K$ d'une extension finie $K$ de $\mathbb{Q}_p$, alors on a le résultat suivant
\begin{corollary}\label{1.3.6} On a une suite exacte courte
	$$0\to H^1(\mathfrak{X}, \mathscr{O})[\tfrac{1}{ p}]^*\to \widehat{H_{\mathrm{dR}}^1(\Sigma_n)^*}\to \widehat{\Omega^1(\Sigma_n)^*}\to0,$$
	où $\widehat{H_{\mathrm{dR}}^1(\Sigma_n)^*}$ (resp. $\widehat{\Omega^1(\Sigma)^*}$) est le complété unitaire universel de $H_{\mathrm{dR}}^1(\Sigma_n)^*$ (resp. $\Omega^1(\Sigma_n)^*$).
\end{corollary}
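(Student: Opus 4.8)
The plan is to descend the de Rham sequence to the integral model and then dualise. First I would record that $\Sigma_n$ carries a semi-stable $G$-equivariant model $\mathfrak{X}$ which is complete in the sense of the appendix (its skeleton, a covering of the Bruhat–Tits tree, is a complete metric space). Then \autoref{A.0.7} applies verbatim to $\mathfrak{X}$, giving a short exact sequence of $\mathscr{O}_K[G]$-modules
$$0\to\Omega^1(\mathfrak{X})\to H_{\mathrm{dR}}^1(\mathfrak{X})\to H^1(\mathfrak{X},\mathscr{O})\to0,$$
in which, by \autoref{A.0.7}~(iii) and \autoref{1.3.2}~(iii), both $H_{\mathrm{dR}}^1(\mathfrak{X})$ and $H^1(\mathfrak{X},\mathscr{O})$ are $\pi$-torsion free. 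Hence $\Omega^1(\mathfrak{X})$ is $\pi$-saturated in $H_{\mathrm{dR}}^1(\mathfrak{X})$ (if $\pi x\in\Omega^1(\mathfrak{X})$ then the image of $x$ in the torsion-free $H^1(\mathfrak{X},\mathscr{O})$ vanishes), so after inverting $p$ we obtain a strict short exact sequence of $K$-Banach representations of $G$
$$0\to\Omega^1(\mathfrak{X})[\tfrac1p]\to H_{\mathrm{dR}}^1(\mathfrak{X})[\tfrac1p]\to H^1(\mathfrak{X},\mathscr{O})[\tfrac1p]\to0,$$
the first two terms being $[\tfrac1p]$ of $\pi$-adically complete torsion-free modules and the third a Banach quotient of the second.

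Next I would dualise. Since $K$ is a finite extension of $\mathbb{Q}_p$, hence spherically complete, Hahn–Banach ensures that the strong dual of a strict short exact sequence of $K$-Banach spaces is again exact; applying it gives a short exact sequence of $G$-representations
$$0\to H^1(\mathfrak{X},\mathscr{O})[\tfrac1p]^*\to H_{\mathrm{dR}}^1(\mathfrak{X})[\tfrac1p]^*\to\Omega^1(\mathfrak{X})[\tfrac1p]^*\to0.$$

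It then remains to identify the two rightmost terms with the universal unitary completions in the statement. For the middle term, \autoref{1.2.5} identifies $H_{\mathrm{dR}}^1(\mathfrak{X})[\tfrac1p]$ with the space of $G$-bounded classes in $H_{\mathrm{dR}}^1(\Sigma_n)$, carrying its natural Banach structure; by the same mechanism as in \cite[Lemme 5.3]{cdn2023factorisation} (used already to prove $\widehat{\Omega^1[M]^*}=\Omega^1[M]^{\mathrm b,*}$) the dual of this Banach space is the universal unitary completion of $H_{\mathrm{dR}}^1(\Sigma_n)^*$, i.e. $\widehat{H_{\mathrm{dR}}^1(\Sigma_n)^*}=H_{\mathrm{dR}}^1(\mathfrak{X})[\tfrac1p]^*$. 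Likewise, by the very definition of bounded forms, $\Omega^1(\mathfrak{X})[\tfrac1p]$ is the Banach space of $G$-bounded forms in $\Omega^1(\Sigma_n)$ (using \autoref{1.2.6}), and the same argument gives $\widehat{\Omega^1(\Sigma_n)^*}=\Omega^1(\mathfrak{X})[\tfrac1p]^*$. Substituting these two identifications into the displayed dual sequence yields the asserted exact sequence.

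The step I expect to be the main obstacle is this last identification: one must verify that $\Omega^1(\Sigma_n)^{\mathrm b}$ and $H_{\mathrm{dR}}^1(\Sigma_n)^{\mathrm b}$ are genuine $K$-Banach spaces whose unit balls are the correct $G$-invariant lattices, and that the natural maps $\Omega^1(\Sigma_n)^*\to\Omega^1(\Sigma_n)^{\mathrm b,*}$ and $H_{\mathrm{dR}}^1(\Sigma_n)^*\to H_{\mathrm{dR}}^1(\Sigma_n)^{\mathrm b,*}$ have dense image and enjoy the universal property of the unitary completion. Granting the cited factorisation lemma these are routine, but they are where the genuine content of passing between $\Sigma_n$ and its model resides; by contrast, exactness after dualising is immediate from spherical completeness of $K$, and strictness of the integral sequence is forced by $\pi$-torsion-freeness.
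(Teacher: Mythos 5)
Your proposal is correct and follows essentially the same route as the paper: invert $p$ in the integral sequence of \autoref{A.0.7}~(ii), dualise, and identify the terms via \autoref{1.2.6} and \autoref{1.2.5} together with the mechanism of \cite[Lemme 5.3]{cdn2023factorisation} relating duals of $G$-bounded vectors to universal unitary completions. The paper's proof is just a terser version of this; your added justifications (strictness via $\pi$-torsion-freeness, exactness of the dual sequence) are exactly the implicit steps.
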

\begin{proof}[Preuve] La suite du Lemme \ref{A.0.7} (ii) induit une suite exacte
	$$0\to H^1(\mathfrak{X}, \mathscr{O})[\tfrac{1}{ p}]^*\to H_{\mathrm{dR}}^1(\mathfrak{X})[\tfrac{1}{p}]^*\to \Omega^1(\mathfrak{X})[\tfrac{1}{p}]^*\to0.$$
	Le résultat se déduit du Lemme \ref{1.2.6} et de la Proposition \ref{1.2.5}.
\end{proof}
  \section{Représentations supercuspidales}\label{B}
Dans cette section, on considère la structure des représentations supercuspidales de $G$. Notons $I=\begin{psmallmatrix}
	\z_p^\times&\z_p\\
	p\z_p&\z_p^\times
\end{psmallmatrix}$ le sous-groupe d'Iwahori de $G$, alors le normalisateur $N(I)$ de $I$ dans $G$ est engendré par $I$ et $w_p=\begin{psmallmatrix}
	0&1\\
	p&0
\end{psmallmatrix}$. Il s'ensuit que le groupe $I$ est d'indice $2$ dans $N(I)$.

Pour les représentations supercuspidales irréductibles de $G$, on a le théorème de structure suivant
\begin{theorem}[{\cite[15.5]{bh2006the}}]\label{B.0.1} Soit $\pi$ une représentation supercuspidale admissible irréductible de $G$, alors il existe un sous groupe compact-modulo-centre $J$ de $G$ et une représentation irréductible de dimension finie $\sigma$ de $J$ tels que
	$$\pi\cong\mathrm{ind}_J^G\sigma.$$ 
\end{theorem}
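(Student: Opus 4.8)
Le plan est de suivre l'approche de Bushnell--Henniart, en exploitant le fait qu'une représentation supercuspidale est, à torsion près par son caractère central $\omega$, un facteur direct de l'espace $C_c^\infty(Z\backslash G)$ des fonctions sur $G$ à support compact modulo le centre $Z$. Je commencerais par rappeler la caractérisation standard: pour $\pi$ supercuspidale admissible irréductible, tout coefficient matriciel $g\mapsto\langle\pi(g)v,\tilde v\rangle$ (avec $v\in\pi$ et $\tilde v$ dans le dual lisse) est à support compact modulo $Z$ --- c'est l'équivalence entre l'annulation de tous les modules de Jacquet et la compacité du support des coefficients, qui repose de façon essentielle sur l'admissibilité. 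En fixant $\tilde v_0\neq0$, l'application $G$-équivariante $v\mapsto\langle\pi(\cdot)v,\tilde v_0\rangle$ est alors non nulle, donc injective par irréductibilité, ce qui réalise $\pi$ à l'intérieur de $C_c^\infty(Z\backslash G,\omega^{-1})$; un argument standard avec les idempotents de convolution renforce ceci en un énoncé de facteur direct, de sorte que $\pi$ se comporte comme un objet projectif-injectif parmi les $\omega$-représentations lisses concentrées sur sa composante inertielle.

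Ensuite, j'extrairais un type. Comme $\pi$ est admissible, on a $\pi^{K_n}\neq0$ pour $n$ assez grand, où $K_n=1+p^nM_2(\mathbb{Z}_p)$, et l'on cherche un sous-groupe $J$ de $G$, compact modulo $Z$ et contenant un $K_n$, ainsi qu'une représentation irréductible de dimension finie $\sigma$ de $J$ telle que $\mathrm{Hom}_J(\sigma,\pi)\neq0$. Par réciprocité de Frobenius, ceci fournit un morphisme $G$-équivariant non nul $\Phi\colon\mathrm{ind}_J^G\sigma\to\pi$; comme $\pi$ est irréductible, $\Phi$ est surjectif, et le point crucial est de choisir $(J,\sigma)$ de sorte que $\Phi$ soit aussi injectif, c'est-à-dire que $\mathrm{ind}_J^G\sigma$ soit irréductible. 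On sélectionne $(J,\sigma)$ maximal pour un ordre partiel convenable (un ``type simple maximal'' au sens de Bushnell--Kutzko): $J$ aussi grand que possible parmi les sous-groupes compacts modulo $Z$ portant un type contenu dans $\pi$, et $\sigma$ de dimension maximale pour ce $J$. Pour $\mathrm{GL}_2$, les candidats pour $J$ sont explicites --- soit $KZ$ avec $K=\mathrm{GL}_2(\mathbb{Z}_p)$ (le cas de profondeur nulle, $\sigma$ inflation d'une représentation cuspidale de $\mathrm{GL}_2(\mathbb{F}_p)$), soit le normalisateur d'un sous-groupe d'Iwahori construit à partir d'un tore quadratique ramifié (le cas de profondeur positive, ``diédral''), plus les sous-groupes exceptionnels lorsque $p=2$ --- de sorte que la sélection par maximalité se ramène à une liste finie et vérifiable.

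Le principal obstacle est l'irréductibilité de $\mathrm{ind}_J^G\sigma$, équivalente --- par le critère d'entrelacement de Mackey combiné à la formule du \autoref{3.7.4} --- à l'annulation $\mathrm{Hom}_{J\cap J^g}(\sigma^g,\sigma)=0$ pour tout $g\in G\smallsetminus J$. Pour un couple $(J,\sigma)$ arbitraire c'est faux; cela vaut précisément parce que le type choisi est maximal, et la preuve demande d'analyser les doubles classes $J\backslash G/J$ et les restrictions de $\sigma$ aux divers sous-groupes de congruence --- le phénomène ``entrelacement implique conjugaison''. Dans le cas de profondeur nulle, cela se ramène à la cuspidalité de la représentation résiduelle de $\mathrm{GL}_2(\mathbb{F}_p)$ (absence d'entrelacement non nul entre une cuspidale et ses conjuguées sous le groupe de Weyl, d'après Harish-Chandra) et à la décomposition de Bruhat--Iwahori de $G$ modulo $KZ$; dans le cas de profondeur positive, cela utilise la structure des strates minimales. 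Une voie entièrement équivalente, qui est en fait celle suivie par Bushnell--Henniart, consiste à renverser la logique: construire tous les couples $(J,\sigma)$ à partir de paires admissibles $(E/\mathbb{Q}_p,\theta)$ (et des données exceptionnelles pour $p=2$), vérifier directement que chaque $\mathrm{ind}_J^G\sigma$ est irréductible supercuspidale et que des données distinctes donnent des représentations non isomorphes, puis constater que la famille obtenue épuise les supercuspidales par un argument de comptage (via la correspondance de Langlands ou la comparaison des degrés formels). Sous l'une ou l'autre forme, la difficulté essentielle est le calcul d'entrelacement; tout le reste est formel une fois acquise la compacité du support des coefficients matriciels.
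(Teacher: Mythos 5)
Le texte ne démontre pas cet énoncé : il est cité tel quel de Bushnell--Henniart \cite[15.5]{bh2006the} et utilisé comme boîte noire (l'annexe \ref{B} n'en tire que le corollaire sur l'induction depuis $KZ$ ou $N(I)Z$). Il n'y a donc pas de « preuve du papier » à laquelle comparer la vôtre ; je juge votre proposition comme tentative de démonstration autonome. Son architecture est correcte et fidèle à la stratégie classique : compacité du support des coefficients matriciels (équivalence avec l'annulation des modules de Jacquet), extraction d'un type $(J,\sigma)$ avec $J$ compact modulo le centre, surjection $\mathrm{ind}_J^G\sigma\twoheadrightarrow\pi$ par réciprocité de Frobenius et irréductibilité, puis réduction de l'injectivité à l'annulation de l'entrelacement $\mathrm{Hom}_{J\cap J^g}(\sigma^g,\sigma)$ hors de $J$. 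La liste des candidats maximaux ($KZ$, $N(I)Z$, cas exceptionnels pour $p=2$) est également la bonne.

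En revanche, en l'état, ce n'est qu'un plan : les deux points qui portent tout le contenu du théorème sont nommés mais non établis. D'une part, le calcul d'entrelacement (« entrelacement implique conjugaison ») pour un type maximal — cuspidalité résiduelle en profondeur nulle, strates minimales et caractères simples en profondeur positive, cas extraordinaires pour $p=2$ — est précisément ce qui occupe les chapitres correspondants de \cite{bh2006the}, et vous ne donnez aucun argument, même esquissé, pour le cas de profondeur positive. D'autre part, l'exhaustion (tout $\pi$ supercuspidal contient bien un tel couple $(J,\sigma)$ dont l'induite redonne $\pi$) n'est pas acquise par la seule « maximalité » : c'est le théorème 15.5 lui-même, que Bushnell--Henniart prouvent par une analyse locale directe des strates contenues dans $\pi$. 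Le raccourci que vous proposez — comptage via la correspondance de Langlands ou les degrés formels — est problématique : dans le cadre de \cite{bh2006the} la correspondance est construite \emph{à partir de} cette classification, donc l'invoquer ici serait circulaire, et la comparaison des degrés formels ne fournit pas à elle seule l'épuisement sans un argument supplémentaire de séparation des classes inertielles. Pour transformer votre plan en preuve, il faudrait soit rédiger ces deux étapes, soit, comme le fait l'article, renvoyer explicitement à \cite{bh2006the}.
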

À conjugaison près, il y a deux sous-groupes maximals compacts-modulo-centre de $G$, l'un, appelé non ramifié, est $KZ$, l'autre est $N(I)Z$, appelé ramifié. Ainsi on obtient le résultat suivant
\begin{corollary}\label{0.4} Toute représentation supercuspidale admissible irréductible de $G$ est l'induite soit de $KZ$, soit de $N(I)Z$.
\end{corollary}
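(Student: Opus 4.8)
The plan is to derive the corollary directly from Theorem~\ref{B.0.1} together with the description of the maximal compact-modulo-center subgroups of $G$ recalled just above it. First I would apply Theorem~\ref{B.0.1} to write $\pi \cong \mathrm{ind}_J^G\sigma$ for some subgroup $J$ of $G$ that is open and compact modulo the center, and some finite-dimensional irreducible representation $\sigma$ of $J$. Since $\pi$ is admissible and irreducible, Schur's lemma gives it a central character $\omega$; extending $\sigma$ to $JZ$ through $\omega$ produces a finite-dimensional $\sigma'$ of $JZ$, and the natural surjection $\mathrm{ind}_J^{JZ}\sigma \twoheadrightarrow \sigma'$ (Frobenius reciprocity) yields a surjection of $\mathrm{ind}_{JZ}^G\sigma'$ onto the irreducible $\pi$, hence an isomorphism $\pi \cong \mathrm{ind}_{JZ}^G\sigma'$. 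So I may assume from the outset that $Z \subseteq J$, with $\sigma$ still finite-dimensional.

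Next I would place $J$ inside one of the two maximal compact-modulo-center subgroups. The image $\bar J$ of $J$ in $G/Z \cong \mathrm{PGL}_2(\mathbb{Q}_p)$ is compact, so by the Bruhat-Tits fixed point theorem it fixes a point $x$ of the geometric realization of the Bruhat-Tits tree of $\mathrm{PGL}_2(\mathbb{Q}_p)$. If $x$ is a vertex, then $\bar J$ lies in a conjugate of that vertex stabilizer, whose preimage in $G$ is a conjugate of $KZ$; if $x$ lies in the interior of an edge, then $\bar J$ stabilizes that edge and hence lies in a conjugate of its stabilizer, whose preimage in $G$ is a conjugate of $N(I)Z$. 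Replacing $J$ by a suitable $G$-conjugate — which only replaces $\sigma$ by its conjugate and does not change the isomorphism class of $\mathrm{ind}_J^G\sigma$ — I obtain $J \subseteq J'$ with $J' \in \{\, KZ,\ N(I)Z \,\}$.

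Finally I would conclude by transitivity of compact induction: $\pi \cong \mathrm{ind}_J^G\sigma \cong \mathrm{ind}_{J'}^G\bigl(\mathrm{ind}_J^{J'}\sigma\bigr)$. As $J$ is open and contains $Z$, while $J'/Z$ is a profinite group, $J/Z$ is an open subgroup of the compact group $J'/Z$, so $[J':J] < \infty$ and $\mathrm{ind}_J^{J'}\sigma = \mathrm{Ind}_J^{J'}\sigma$ is a finite-dimensional representation of $J'$; thus $\pi$ is the induced representation (of a finite-dimensional representation) from $KZ$ or from $N(I)Z$. Everything here is formal once Theorem~\ref{B.0.1} is granted; the only genuinely non-formal ingredient is the identification of the maximal compact-modulo-center subgroups with vertex and edge stabilizers in the tree, which is the fact recalled before the corollary, and the point requiring a little care is the reduction to $Z \subseteq J$, needed so that transitivity of induction lands on a finite-dimensional representation rather than an infinite-dimensional one.
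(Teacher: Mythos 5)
Your argument is correct and is essentially the paper's: Théorème \ref{B.0.1} combined with the fact, recalled just before the corollary, that up to conjugaison $KZ$ and $N(I)Z$ are the only maximal compact-modulo-centre subgroups, followed by transitivity of compact induction (the paper leaves these steps implicit; you merely supply them, proving the containment in a maximal subgroup via the Bruhat--Tits fixed point theorem and adding the harmless reduction to $Z\subseteq J$). One micro-slip: applying $\mathrm{ind}_{JZ}^G$ to the surjection $\mathrm{ind}_J^{JZ}\sigma\twoheadrightarrow\sigma'$ gives a surjection from $\pi$ onto $\mathrm{ind}_{JZ}^G\sigma'$ rather than onto $\pi$, and it is the irreducibility of $\pi$ applied to this map that yields $\pi\cong\mathrm{ind}_{JZ}^G\sigma'$, so the conclusion stands.
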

\begin{remark} Notons que le Corollaire \ref{0.4} est vrai pour tout groupe $\mathrm{GL}_2(F)$, où $F$ est une extension finie de $\mathbb{Q}_p$. Pour la démonstration, voir \cite{kutzko1978on}.
\end{remark}
\begin{corollary}\label{classsuper} Soit $\pi$ une représentation supercuspidale admissible irréductible de $G$, alors il existe une représentation irréductible $\sigma$ de $KZ$ de dimension finie, telle que $$\mathrm{ind}_{KZ}^G\sigma=\pi$$  
	ou bien $$\mathrm{ind}_{KZ}^G\sigma=\pi\oplus(\pi\otimes\mu_{-1}),$$ 
	où $\mu_\lambda$ est le caractère de $\mathbb{Q}_p^*$ défini par $x\mapsto\lambda^{v_p(x)}$. 
\end{corollary}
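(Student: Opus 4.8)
The plan is to deduce the statement from the structural results of the preceding appendix. By Corollary~\ref{0.4} (which rests on Theorem~\ref{B.0.1}), the representation $\pi$ is compactly induced, by a finite-dimensional irreducible representation, either from $KZ$ or from $N(I)Z$. If $\pi\cong\mathrm{ind}_{KZ}^G\sigma$ with $\sigma$ irreducible finite-dimensional, the first alternative holds and there is nothing more to do. So the whole content lies in the case $\pi\cong\mathrm{ind}_{N(I)Z}^G\tau$ with $\tau$ an irreducible finite-dimensional representation of $N(I)Z$, where one must manufacture an irreducible representation $\sigma$ of $KZ$ with $\mathrm{ind}_{KZ}^G\sigma$ isomorphic to $\pi$ or to $\pi\oplus(\pi\otimes\mu_{-1})$.

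First I would push the induction down to the Iwahori level. Since $w_p^2=pI\in Z$, the subgroup $IZ$ is normal of index $2$ in $N(I)Z$, with $N(I)Z/IZ=\{1,\bar w_p\}$, so the projection formula gives
$$\mathrm{ind}_{IZ}^{N(I)Z}\bigl(\mathrm{Res}_{IZ}^{N(I)Z}\tau\bigr)\;\cong\;\tau\otimes\mathrm{ind}_{IZ}^{N(I)Z}\mathbf{1}\;\cong\;\tau\oplus(\tau\otimes\epsilon),$$
where $\epsilon$ is the non-trivial character of $N(I)Z/IZ$. A one-line computation identifies $\epsilon$ with the restriction to $N(I)Z$ of $\mu_{-1}\circ\det$: the determinant has valuation $0$ on $I$ and even valuation on $Z$, while $\det(w_p)=-p$ has valuation $1$, so $\mu_{-1}\circ\det$ is trivial on $IZ$ and equals $-1$ on $w_p$. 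Using transitivity of compact induction, together with the fact that $\mathrm{ind}_{N(I)Z}^G$ commutes with twisting by a character of $G$, one obtains on one hand
$$\mathrm{ind}_{IZ}^G\bigl(\mathrm{Res}_{IZ}^{N(I)Z}\tau\bigr)\;=\;\mathrm{ind}_{N(I)Z}^G\bigl(\tau\oplus(\tau\otimes\epsilon)\bigr)\;=\;\pi\oplus(\pi\otimes\mu_{-1}),$$
and on the other hand, setting $\sigma_0:=\mathrm{ind}_{IZ}^{KZ}\bigl(\mathrm{Res}_{IZ}^{N(I)Z}\tau\bigr)$, a finite-dimensional representation of $KZ$ of dimension $(p+1)\dim\tau$,
$$\mathrm{ind}_{IZ}^G\bigl(\mathrm{Res}_{IZ}^{N(I)Z}\tau\bigr)\;=\;\mathrm{ind}_{KZ}^G\sigma_0.$$
Since $K$ is profinite and every subspace of $\sigma_0$ is stable under the scalar action of $Z$, the representation $\sigma_0$ of $KZ$ is semisimple; write $\sigma_0=\bigoplus_i\sigma_i$ with the $\sigma_i$ irreducible.

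Next I would use that compact induction from the open subgroup $KZ$ is exact and carries non-zero representations to non-zero ones: each $\mathrm{ind}_{KZ}^G\sigma_i$ is then non-zero, and $\bigoplus_i\mathrm{ind}_{KZ}^G\sigma_i=\mathrm{ind}_{KZ}^G\sigma_0=\pi\oplus(\pi\otimes\mu_{-1})$ has length $2$, so $\sigma_0$ has at most two irreducible summands. I would also record that $\mu_{-1}\circ\det$ is trivial on $KZ$ (the same valuation computation), hence $\mathrm{ind}_{KZ}^G\sigma$ is stable under $\otimes\mu_{-1}$ for every $\sigma$. A short case analysis then closes the argument. If $\sigma_0$ is irreducible, take $\sigma:=\sigma_0$: then $\mathrm{ind}_{KZ}^G\sigma=\pi\oplus(\pi\otimes\mu_{-1})$, the second alternative. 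If $\sigma_0=\sigma'\oplus\sigma''$, then $\mathrm{ind}_{KZ}^G\sigma'$ and $\mathrm{ind}_{KZ}^G\sigma''$ are each irreducible direct summands of $\pi\oplus(\pi\otimes\mu_{-1})$, hence each is isomorphic to $\pi$ or to $\pi\otimes\mu_{-1}$; but if $\mathrm{ind}_{KZ}^G\sigma'\cong\pi\otimes\mu_{-1}$, then by the twist-invariance just noted $\pi\otimes\mu_{-1}\cong(\pi\otimes\mu_{-1})\otimes\mu_{-1}\cong\pi$, so in fact $\mathrm{ind}_{KZ}^G\sigma'\cong\pi$ as well; taking $\sigma:=\sigma'$ gives the first alternative.

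The ingredients beyond the appendix — transitivity of induction, the projection formula for the index-$2$ normal subgroup $IZ\subset N(I)Z$, exactness and non-vanishing of compact induction from an open subgroup, semisimplicity of a finite-dimensional smooth representation of a compact-mod-centre group with fixed central character, and the compatibility of induction with character twists — are all standard. The only genuinely delicate point is to guarantee that the induced representation lands on exactly one of the two permitted shapes and not on the spurious possibility $\mathrm{ind}_{KZ}^G\sigma\cong\pi\otimes\mu_{-1}$; this is precisely what the triviality of $\mu_{-1}\circ\det$ on $KZ$ forbids, and it is also the reason the unramified quadratic twist $\mu_{-1}$, rather than any other character, is the one that appears in the statement.
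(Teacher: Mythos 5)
Votre démonstration suit essentiellement la même route que celle de l'article : le Théorème \ref{B.0.1}, les deux classes de sous-groupes maximaux compacts-modulo-centre, puis, dans le cas ramifié, la formule de projection au niveau de l'Iwahori et la transitivité de l'induction compacte, qui donnent $\pi\oplus(\pi\otimes\mu_{-1})\cong\mathrm{ind}_{IZ}^G(\tau|_{IZ})\cong\mathrm{ind}_{KZ}^G\bigl(\mathrm{ind}_{IZ}^{KZ}(\tau|_{IZ})\bigr)$, exactement comme dans la preuve de l'article. La seule différence est que vous explicitez l'étape finale que l'article laisse implicite — semi-simplicité de $\sigma_0=\mathrm{ind}_{IZ}^{KZ}(\tau|_{IZ})$, comptage de longueur (au plus deux facteurs irréductibles), et invariance de $\mathrm{ind}_{KZ}^G$ par torsion par $\mu_{-1}\circ\det$ (trivial sur $KZ$) — pour produire un $\sigma$ irréductible réalisant l'une des deux alternatives de l'énoncé; cette étape est correcte et complète utilement l'argument.
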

\begin{proof}[Preuve] Soit $\pi$ une représentation supercuspidale irréductible de $G$. Il résulte du Théorème \ref{B.0.1} que l'on peut écrire 
	$$\pi=\mathrm{ind}_J^G\sigma.$$
	
	Si $J\subseteq KZ$, alors on a $$\pi=\mathrm{ind}_{KZ}^G\sigma_1$$ où $\sigma_1:=\mathrm{ind}_J^{KZ}\sigma$ est une représentation de dimension finie de $KZ$.
	
	Si $J\subseteq N(I)Z$, alors on a $\pi=\mathrm{ind}_{N(I)Z}^G(\mathrm{ind}_J^{N(I)Z}\sigma)=\mathrm{ind}_{N(I)Z}^G\sigma_2$ où $\sigma_2:=\mathrm{ind}_J^{N(I)Z}\sigma$. Il suit de la formule de projection que l'on a 
	$$\mathrm{ind}_{IZ}^{N(I)Z}(\sigma_2|_{IZ})=\sigma_2\otimes(1\oplus\mu_{-1})=\sigma_2\oplus\sigma_2\otimes\mu_{-1}.$$
	En prenant l'induite compacte $\mathrm{ind}_{N(I)Z}^G$, on déduit que 
	\begin{align*}\pi\oplus(\pi\otimes\mu_{-1})&=(\mathrm{ind}_{N(I)Z}^G\sigma_2)\oplus(\mathrm{ind}_{N(I)Z}^G(\sigma_2\otimes\mu_{-1}))=\mathrm{ind}_{N(I)Z}^G(\mathrm{ind}_{IZ}^{N(I)Z}(\sigma_2|_{IZ}))\\ &=\mathrm{ind}_{IZ}^G(\sigma_2|_{IZ})=\mathrm{ind}_{KZ}^G(\mathrm{ind}_{IZ}^{KZ}(\sigma_2|_{IZ})). 
	\end{align*}
	Cela permet de conclure.
\end{proof}

   \section{Groupes analytiques $p$-adiques compacts FAb}\label{C}
Dans cette section, on montre quelques propriétés sur les groupes analytiques $p$-adique. En utilisant les résultats au groupe $\mathrm{SL}_2(\z_p)$, on peut montrer que l'abélianisé de tout sous-groupe ouvert de $\mathrm{SL}_2(\z_p)$ est fini.

L'un des résultats principaux de Lazard sur la structure des groupes analytiques $p$-adiques est le suivant
\begin{theorem}[{\cite[Theorem 8.32]{dss1999analytic}}]\label{C.0.1} Soit $G$ un groupe topologique, alors $G$ dispose d'une structure d'un groupe analytique $p$-adique si et seulement si $G$ contient un groupe pro-$p$ uniforme ouvert.
\end{theorem}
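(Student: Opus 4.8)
Le plan est de démontrer les deux implications, en suivant Lazard tel que présenté dans \cite{dss1999analytic}.

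Pour l'implication "$G$ contient un sous-groupe ouvert uniforme pro-$p$ $\Rightarrow$ $G$ est analytique $p$-adique", je fixerais d'abord un tel sous-groupe $H$, de dimension $d$, ainsi qu'une famille génératrice topologique $a_1,\dots,a_d$ réalisant la structure uniforme, de sorte que l'application $\mathbb{Z}_p^d\to H$, $(\lambda_1,\dots,\lambda_d)\mapsto a_1^{\lambda_1}\cdots a_d^{\lambda_d}$ (coordonnées de seconde espèce) soit un homéomorphisme. Le cœur de l'argument consiste à vérifier que, lues dans ces coordonnées, la multiplication $H\times H\to H$ et l'inversion $H\to H$ sont données par des $d$-uplets de séries entières sur $\mathbb{Q}_p$ convergeant respectivement sur $\mathbb{Z}_p^{2d}$ et $\mathbb{Z}_p^{d}$; c'est précisément là qu'intervient l'uniformité, via le fait que $H$ est puissant et via les estimations de valuation $p$-adique qui en résultent pour les commutateurs itérés et les puissances $p$-ièmes (un calcul de Campbell--Hausdorff $p$-adique). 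Ceci acquis, $H$ devient une variété analytique $p$-adique à opérations de groupe analytiques. Comme $[G:H]<\infty$, en écrivant $G=\bigsqcup_i g_iH$ et en transportant la carte de $H$ sur chaque translaté $g_iH$, on obtient un atlas sur $G$; les changements de cartes sont analytiques car la conjugaison par $g_i$ envoie analytiquement un sous-groupe ouvert uniforme de $H$ dans $H$ — encore par les mêmes estimations de séries — donc $G$ est analytique $p$-adique.

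Pour la réciproque "$G$ analytique $p$-adique $\Rightarrow$ $G$ contient un sous-groupe ouvert uniforme pro-$p$", je partirais d'une carte en l'identité, c'est-à-dire d'un homéomorphisme analytique d'un voisinage ouvert de $1$ sur une boule ouverte de $\mathbb{Q}_p^d$ dans laquelle la multiplication et l'inversion sont analytiques. En rétrécissant et en redimensionnant les coordonnées, on construit pour $n\gg 0$ un "sous-groupe standard de niveau $n$": la loi de groupe devient une série entière restreinte dont les coefficients sont suffisamment contrôlés pour que $(p^n\mathbb{Z}_p)^d$ soit stable sous les opérations, le sous-groupe ouvert correspondant $H_n$ soit un groupe pro-$p$ de type fini sans torsion, et sa suite $p$-descendante se lise sur la série de façon à fournir $[H_n,H_n]\subseteq H_n^{p}$ (pour $p$ impair; $[H_n,H_n]\subseteq H_n^{4}$ pour $p=2$), c'est-à-dire que $H_n$ est puissant, donc uniforme.

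Le principal obstacle dans les deux directions est le même travail de comptabilité: suivre assez précisément les valuations $p$-adiques pour voir que les séries de type Campbell--Hausdorff régissant les opérations de groupe convergent et portent l'intégralité nécessaire pour reconnaître (ou fabriquer) un sous-groupe uniforme. Il s'agit véritablement du théorème de Lazard; plutôt que de le redémontrer, on se contentera donc d'invoquer \cite[Theorem 8.32]{dss1999analytic}, son rôle dans cet appendice étant uniquement de rendre disponible la théorie de la structure des groupes analytiques $p$-adiques compacts — en particulier de préparer la discussion des groupes FAb et l'application à la finitude de l'abélianisé des sous-groupes ouverts de $\mathrm{SL}_2(\mathbb{Z}_p)$ utilisée dans le texte principal.
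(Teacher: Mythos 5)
Le texte ne démontre pas cet énoncé : c'est le théorème de Lazard, cité tel quel d'après \cite[Theorem 8.32]{dss1999analytic}, et votre proposition, après avoir esquissé les deux implications (coordonnées de seconde espèce et estimations de Campbell--Hausdorff dans un sens, sous-groupes standards puissants dans l'autre), s'en remet finalement à la même référence — l'approche est donc essentiellement identique à celle du papier. Signalons seulement que votre esquisse invoque $[G:H]<\infty$ pour le sous-groupe uniforme ouvert $H$, ce qui est superflu et faux en général puisque $G$ n'est pas supposé compact ; l'ouverture de $H$ suffit pour transporter la carte sur chaque classe $gH$ et obtenir l'atlas analytique.
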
 
Soit $G$ un groupe de Lie $p$-adique, alors il suit du \cite[Theorem 8.32]{dss1999analytic} que $G$ contient un sous-groupe pro-$p$ uniforme ouvert $U$. Donc on a une algèbre de $\z_p$-Lie $L_U:=\log(U)$. On peut définir l'algèbre de Lie $\mathcal{L}(G):=\mathbb{Q}_p\otimes_{\z_p}L_U$ de $G$ dont la définition est indépendante du choix de $U$. Notons que si $G$ est compact, alors $G$ est un groupe profini par définition. 
\begin{definition} Soit $G$ un groupe profini, alors on dit que $G$ est FAb si l'abélianisé de tout sous-groupe ouvert de $G$ est fini.
\end{definition}
\begin{remark} Notons que si $G$ est un groupe profini topologiquement de type fini, alors un sous-groupe $H\subseteq G$ est d'indice fini si et seulement si $H$ est ouvert. Pour la preuve, voir \cite{ns2003finite}.
\end{remark}
\begin{proposition}\label{C.0.4} Soit $U$ un groupe pro-$p$ uniforme, alors $U^{\mathrm{ab}}$ est fini si et seulement si $\mathcal{L}(U)$ est parfait, i.e., si $[\mathcal{L}(U), \mathcal{L}(U)]=\mathcal{L}(U)$. 
\end{proposition}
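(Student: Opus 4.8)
The plan is to run everything through the $\log/\exp$ dictionary between the uniform pro-$p$ group $U$ and its $\mathbb{Z}_p$-Lie lattice $L:=L_U=\log(U)$, a free $\mathbb{Z}_p$-module of finite rank $d=\dim U$ with $\mathcal{L}(U)=\mathbb{Q}_p\otimes_{\mathbb{Z}_p}L$. Since $\mathcal{L}(U)$ is finite-dimensional, the abelian pro-$p$ group $U^{\mathrm{ab}}=U/\overline{[U,U]}$ is topologically finitely generated, hence finite if and only if $\overline{[U,U]}$ is open in $U$, if and only if $\dim\overline{[U,U]}=d$ (using the standard fact, e.g.\ from \cite{dss1999analytic}, that a closed subgroup of a $p$-adic analytic group of full dimension is open). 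So the whole statement reduces to computing $\dim\overline{[U,U]}$ in Lie-theoretic terms.

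The key step is the identification $\overline{[U,U]}=\exp([L,L])$, where $[L,L]$ is the $\mathbb{Z}_p$-submodule of $L$ generated by all brackets. First I would check that $\mathfrak{h}:=[L,L]$ is again a powerful (uniform) Lie lattice: it is a finitely generated, hence closed, $\mathbb{Z}_p$-submodule of the free module $L$, so free of finite rank; it is a Lie ideal of $L$ by the Jacobi identity, since $[x,[y,z]]=[[x,y],z]+[y,[x,z]]\in[L,L]$; and, using $\mathfrak{h}\subseteq pL$ (the powerfulness of $L$), one gets $[\mathfrak{h},\mathfrak{h}]\subseteq[pL,\mathfrak{h}]=p[L,\mathfrak{h}]\subseteq p\mathfrak{h}$. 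Hence $H:=\exp(\mathfrak{h})$ is a closed uniform subgroup of $U$ with $\dim H=\mathrm{rk}_{\mathbb{Z}_p}\mathfrak{h}=\dim_{\mathbb{Q}_p}[\mathcal{L}(U),\mathcal{L}(U)]$, the last equality by flatness of $\mathbb{Q}_p$ over $\mathbb{Z}_p$ ($\mathbb{Q}_p\otimes[L,L]=[\mathcal{L}(U),\mathcal{L}(U)]$). For the inclusion $\overline{[U,U]}\subseteq H$ I would invoke Baker--Campbell--Hausdorff: for $x,y\in L$, $\log([\exp x,\exp y])$ is a $p$-adically convergent (by powerfulness) sum of iterated Lie monomials in $x,y$, each of degree $\geq 2$; an induction on degree, using that $\mathfrak{h}$ is an ideal, shows every such monomial lies in $\mathfrak{h}$, so every commutator lies in the closed set $H$. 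For the reverse inclusion I would refine BCH to $\log([\exp x,\exp y])\equiv[x,y]\pmod{p\mathfrak{h}}$, since each degree-$\geq 3$ monomial is $[m_1,m_2]$ with one factor in $\mathfrak{h}\subseteq pL$, hence in $p[L,L]=p\mathfrak{h}$; therefore the images of the commutators in the Frattini quotient $H/\Phi(H)\cong\mathfrak{h}/p\mathfrak{h}$ span it, because $\{[x,y]:x,y\in L\}$ generates $\mathfrak{h}$. A closed subgroup of the pro-$p$ group $H$ mapping onto $H/\Phi(H)$ equals $H$, whence $\overline{[U,U]}=H$.

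Granting this, the proposition drops out: $U^{\mathrm{ab}}$ is finite $\iff\overline{[U,U]}=H$ is open in $U$ $\iff\dim H=d$ $\iff\mathrm{rk}_{\mathbb{Z}_p}[L,L]=\mathrm{rk}_{\mathbb{Z}_p}L$ $\iff[\mathcal{L}(U),\mathcal{L}(U)]=\mathcal{L}(U)$, i.e.\ $\mathcal{L}(U)$ is perfect. (Equivalently one could package the argument as the functoriality of $\mathcal{L}$: $\mathcal{L}(\overline{[U,U]})=[\mathcal{L}(U),\mathcal{L}(U)]$ and $\mathcal{L}(U/\overline{[U,U]})=\mathcal{L}(U)/[\mathcal{L}(U),\mathcal{L}(U)]$, together with ``$G$ finite $\iff\mathcal{L}(G)=0$''.)

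I expect the main obstacle to be the bookkeeping in the second paragraph: verifying that $[L,L]$ genuinely carries a uniform Lie-lattice structure (so that $\exp$ applies and $H$ is a bona fide closed uniform subgroup), and that the BCH expansions converge and place their terms where claimed --- in $\mathfrak{h}$ for the first inclusion, and modulo $p\mathfrak{h}$ for the Frattini argument giving the second. Everything else is formal once $\overline{[U,U]}=\exp([L,L])$ is established. A cosmetic but unavoidable nuisance is the case $p=2$, where ``uniform'' carries the stronger condition $[L,L]\subseteq 4L$ and the filtration constants shift; the argument is structurally identical, with $4$ in place of $p$ at the relevant places.
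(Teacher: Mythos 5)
Your argument is correct (granting the standard BCH valuation estimates you yourself flag, and with the $p=2$ caveat you note), but it takes a genuinely different route from the paper. The paper never identifies $\overline{[U,U]}$: for the direct implication it extracts from an infinite $U^{\mathrm{ab}}$ a quotient $U/H\cong\mathbb{Z}_p$ and quotes \cite[Prop.\ 4.31]{dss1999analytic} to get $[L_U,L_U]\subseteq L_H$, hence a surjection $L_U/[L_U,L_U]\twoheadrightarrow\mathbb{Z}_p$; for the converse it passes to the saturation $T$ of $[L_U,L_U]$ (the preimage of the torsion of $L_U/[L_U,L_U]$), where the subgroup--sublattice dictionary applies off the shelf (\cite[Prop.\ 7.15, Thm.\ 9.10]{dss1999analytic}), producing $U/T\cong\mathbb{Z}_p^n$ as a quotient of $U^{\mathrm{ab}}$. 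You instead prove the sharper identity $\overline{[U,U]}=\exp([L_U,L_U])$ by hand (BCH estimates plus a Frattini argument inside $H=\exp([L_U,L_U])$) and then conclude by a dimension count. Your route yields a more precise structural statement and keeps the mechanism explicit, at the price of the convergence and denominator bookkeeping in the Hausdorff series --- exactly what the paper sidesteps by working with the isolated ideal $T$ rather than with $[L_U,L_U]$ itself, which need not be isolated in $L_U$ (this is also why you need the Frattini step to recover $\overline{[U,U]}$ on the nose). Both proofs ultimately reduce to the same rank computation on $L_U/[L_U,L_U]$, and your preliminary reduction (finite abelianization iff $\overline{[U,U]}$ open iff of full dimension) is sound.
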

\begin{proof}[Preuve] Supposons que $U^{\mathrm{ab}}$ est infini, alors on a $U^{\mathrm{ab}}\cong (U^{\mathrm{ab}})_{\mathrm{tor}}\oplus\z_p^n$ pour quelque $n\geq1$. Il s'ensuit qu'il existe un sous-groupe distingué fermé $H$ de $U$ tel que $U/H\cong\z_p$. On déduit de la \cite[Proposition 4.31]{dss1999analytic} que le groupe $H$ est uniforme, que l'algèbre de $\z_p$-Lie $L_H$ est un idéal de $L_U$ et que $L_{U/H}\cong L_U/L_H$. Comme l'algèbre de $\z_p$-Lie $L_{U/H}\cong\z_p$ est commutatif, $L_H$ contient $[L_U, L_U]$, donc $L_U/[L_U, L_U]\twoheadrightarrow L_U/L_H\cong\z_p$. Donc $\mathcal{L}(U)$ n'est pas parfait.
	
	Réciproquement, supposons que $\mathcal{L}(U)$ n'est pas parfait, alors le $\z_p$-rang de $L_U/[L_U, L_U]$ est supérieur à $0$. On note $T/[L_U, L_U]$ la partie de torsion de $L_U/[L_U, L_U]$, alors $L_U/T\cong\z_p^n$ pour quelque $n$. Il suit de la \cite[Proposition 7.15]{dss1999analytic} que $T$ est un sous-groupe uniforme distingué fermé de $U$ et que $U/T$ est uniforme. Selon la \cite[Proposition 4.31]{dss1999analytic}, on a $L_{U/T}\cong L_U/L_T\cong\z_p^n$. Compte tenu du \cite[Theorem 9.10]{dss1999analytic}, on a $U/T\cong\z_p^n$. Il s'ensuit que $T\supseteq \overline{[U, U]}$ et que l'on a une surjection $U^{\mathrm{ab}}\twoheadrightarrow U/T\cong\z_p^n$. En particulier, $U^{\mathrm{ab}}$ est infini. Cela permet de conclure.
\end{proof}

\begin{corollary}\label{C.0.5} Soit $U$ un groupe pro-$p$ uniforme, alors $U^{\mathrm{ab}}$ est fini si et seulement si $U$ est FAb.
\end{corollary}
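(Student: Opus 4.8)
Le plan est de déduire cet énoncé de la \autoref{C.0.4}, qui traduit la finitude de l'abélianisé d'un groupe pro-$p$ uniforme par la perfection de son algèbre de Lie, et de propager cette propriété à tous les sous-groupes ouverts de $U$. L'implication « $U$ FAb $\Rightarrow$ $U^{\mathrm{ab}}$ fini » est immédiate, puisque $U$ est lui-même un sous-groupe ouvert de $U$. Toute la substance est donc dans la réciproque : on suppose $U^{\mathrm{ab}}$ fini, d'où $\mathcal{L}(U)$ parfaite par la \autoref{C.0.4}, et l'on veut montrer que $H^{\mathrm{ab}}$ est fini pour tout sous-groupe ouvert $H$ de $U$.

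Pour cela, je commencerais par me ramener au cas uniforme. Comme $H$ est un sous-groupe fermé du groupe pro-$p$ analytique $U$, c'est encore un groupe pro-$p$ analytique, et le \autoref{C.0.1} fournit un sous-groupe pro-$p$ uniforme ouvert $W$ de $H$. Étant ouvert dans $H$, lui-même ouvert dans $U$, le groupe $W$ est un sous-groupe pro-$p$ uniforme ouvert de $U$ ; par la définition même de $\mathcal{L}$ (indépendante du choix d'un tel sous-groupe), on a donc $\mathcal{L}(W) = \mathbb{Q}_p \otimes_{\mathbb{Z}_p} \log(W) = \mathcal{L}(U)$, qui est parfaite. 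La \autoref{C.0.4}, appliquée cette fois à $W$, donne alors que $W^{\mathrm{ab}}$ est fini, c'est-à-dire que $\overline{[W, W]}$ est d'indice fini dans $W$.

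Il resterait à remonter de $W$ à $H$ : comme $U$ est compact, $W$ est d'indice fini dans $H$, donc $\overline{[W, W]}$ est d'indice fini dans $H$ ; et de $[W, W] \subseteq [H, H] \subseteq \overline{[H, H]}$ et de la fermeture de $\overline{[H, H]}$ on tire $\overline{[W, W]} \subseteq \overline{[H, H]}$, de sorte que $H^{\mathrm{ab}} = H/\overline{[H, H]}$ est un quotient du groupe fini $H/\overline{[W, W]}$. Ceci valant pour tout $H$ ouvert, $U$ est FAb. L'unique point délicat — assez modeste ici — est de s'assurer que le passage à un sous-groupe ouvert ne détruit ni le caractère uniforme (garanti par Lazard, \autoref{C.0.1}) ni l'algèbre de Lie $\mathcal{L}$, ce qui permet de réappliquer la \autoref{C.0.4} au niveau de $W$ ; le reste n'est qu'un argument d'indice fini sur les groupes dérivés.
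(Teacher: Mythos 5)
Votre preuve est correcte et suit essentiellement le même chemin que celle de l'article : on applique la \autoref{C.0.4} à $U$ pour obtenir la perfection de $\mathcal{L}(U)$, on extrait via le Théorème \ref{C.0.1} un sous-groupe uniforme ouvert du sous-groupe ouvert donné, on note que son algèbre de Lie coïncide avec $\mathcal{L}(U)$ pour réappliquer la \autoref{C.0.4}, puis on conclut par un argument d'indice fini. Vous explicitez simplement un peu plus le dernier pas (passage de $W$ à $H$ par les groupes dérivés), que l'article résume en une phrase.
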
 
\begin{proof}[Preuve] Une implication est triviale. Réciproquement, soit $K$ un sous-groupe ouvert de $U$, alors $K$ est un groupe de Lie $p$-adique. il suit du Théorème \ref{C.0.1} que $K$ contient un sous-groupe pro-$p$ uniforme ouvert $H$. Comme $H$ est aussi un sous-groupe uniforme ouvert de $U$, on a $\mathcal{L}(U)=\mathcal{L}(H)$. Supposons que $U^{\mathrm{ab}}$ est fini, alors il suit de la Proposition \ref{C.0.4} que $H^{\mathrm{ab}}$ est fini. Comme $[K: H]$ est fini, on déduit que $K^{\mathrm{ab}}$ est fini. Cela permet de conclure.
\end{proof}
\begin{corollary}\label{C.0.6} Soit $G$ un groupe analytique $p$-adique compact, alors $G$ est FAb si et seulement si $\mathcal{L}(G)$ est parfait, i.e., si $[\mathcal{L}(G), \mathcal{L}(G)]=\mathcal{L}(G)$. 
\end{corollary}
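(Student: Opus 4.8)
Le plan est de se ramener au cas d'un groupe pro-$p$ uniforme via le Théorème \ref{C.0.1}. Je commencerais par fixer un sous-groupe pro-$p$ uniforme ouvert $U$ de $G$, dont l'existence est garantie par le Théorème \ref{C.0.1}. Par définition de $\mathcal{L}(G)$ et son indépendance du choix de $U$, on a $\mathcal{L}(G)=\mathcal{L}(U)$, de sorte que la perfection de $\mathcal{L}(G)$ équivaut à celle de $\mathcal{L}(U)$. Tout revient donc à relier la propriété FAb de $G$ à celle de $U$, puis à invoquer les résultats déjà obtenus dans le cas uniforme.

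Pour le sens direct, je supposerais que $G$ est FAb. Comme $U$ est ouvert dans $G$, tout sous-groupe ouvert de $U$ est ouvert dans $G$, donc d'abélianisé fini; ainsi $U$ est lui-même FAb et, en particulier, $U^{\mathrm{ab}}$ est fini. La \autoref{C.0.4} entraîne alors que $\mathcal{L}(U)=\mathcal{L}(G)$ est parfait.

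Pour la réciproque, je partirais de l'hypothèse que $\mathcal{L}(G)=\mathcal{L}(U)$ est parfait. La \autoref{C.0.4} montre que $U^{\mathrm{ab}}$ est fini, puis la \autoref{C.0.5} que $U$ est FAb. Il reste à transférer cette propriété de $U$ à $G$: pour $K$ un sous-groupe ouvert de $G$, le sous-groupe $K\cap U$ est ouvert dans $U$ (puisque $U$ est ouvert dans $G$), donc $(K\cap U)^{\mathrm{ab}}$ est fini; comme $U$ est d'indice fini dans $G$, le sous-groupe $K\cap U$ est d'indice fini dans $K$, et de l'inclusion $[K\cap U, K\cap U]\subseteq[K, K]$ on tire $[K:[K,K]]\leq[K:[K\cap U, K\cap U]]<\infty$, d'où $K^{\mathrm{ab}}$ fini. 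Cela prouve que $G$ est FAb.

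L'essentiel de l'argument — et la seule subtilité réelle — est le passage de $U$ à $G$ dans la réciproque; mais il est purement élémentaire une fois remarqué que les indices en jeu sont finis, et c'est d'ailleurs exactement le type de raisonnement déjà utilisé dans la preuve de la \autoref{C.0.5}. Le reste n'est qu'une combinaison directe de la \autoref{C.0.4} et de la \autoref{C.0.5}, le Théorème \ref{C.0.1} servant uniquement à garantir l'existence du sous-groupe uniforme $U$ et l'égalité $\mathcal{L}(G)=\mathcal{L}(U)$.
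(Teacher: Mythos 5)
Votre preuve est correcte et suit essentiellement la même démarche que celle de l'article : réduction au cas uniforme via le Théorème \ref{C.0.1}, puis combinaison de la \autoref{C.0.4} et du \autoref{C.0.5} avec un argument d'indice fini pour transférer la finitude de l'abélianisé. La seule différence, purement cosmétique, est que vous fixez un seul sous-groupe uniforme $U$ et travaillez avec $K\cap U$, tandis que l'article choisit directement un sous-groupe uniforme ouvert à l'intérieur de chaque sous-groupe ouvert $K$.
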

\begin{proof}[Preuve] Supposons que $\mathcal{L}(G)$ est parfait. Soit $K$ un sous-groupe ouvert de $G$, alors $K$ est un groupe de Lie $p$-adique. il suit du Théorème \ref{C.0.1} que $K$ contient un sous-groupe pro-$p$ uniforme ouvert $U$. Comme $\mathcal{L}(U)=\mathcal{L}(G)$ est parfait, il suit du Corollaire \ref{C.0.5} que $U^{\mathrm{ab}}$ est fini, donc $K^{\mathrm{ab}}$ l'est aussi.
	
	Réciproquement, supposons que $G$ est FAb. Comme $G$ est un groupe de Lie $p$-adique, il suit du Théorème \ref{C.0.1} que $G$ contient un sous-groupe pro-$p$ uniforme ouvert $U$. Comme $G$ est FAb, $U^{\mathrm{ab}}$ est fini. Il suit de la Proposition \ref{C.0.4} que $\mathcal{L}(G)=\mathcal{L}(U)$ est parfait. Cela permet de conclure.
\end{proof}
Comme l'algèbre de Lie de $\mathrm{SL}_2(\z_p)$ est parfait, on en déduit, à partir du Corollaire \ref{C.0.6}, le résultat suivant
\begin{corollary}\label{C.0.7} Le groupe analytique $p$-adique compact $\mathrm{SL}_2(\z_p)$ est FAb.
\end{corollary}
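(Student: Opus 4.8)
The final statement is \autoref{C.0.7}, asserting that $\mathrm{SL}_2(\z_p)$ is FAb. The plan is to deduce this immediately from \autoref{C.0.6}, which characterizes the FAb property for compact $p$-adic analytic groups in terms of the associated $\mathbb{Q}_p$-Lie algebra being perfect. Since $\mathrm{SL}_2(\z_p)$ is visibly a compact $p$-adic analytic group (it is a closed subgroup of $\mathrm{GL}_2(\z_p)$, hence profinite, and it is $p$-adic analytic), the only thing left to check is that its Lie algebra $\mathcal{L}(\mathrm{SL}_2(\z_p)) = \mathfrak{sl}_2(\q_p)$ is perfect, i.e. $[\mathfrak{sl}_2, \mathfrak{sl}_2] = \mathfrak{sl}_2$.

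First I would record that $\mathcal{L}(\mathrm{SL}_2(\z_p)) \cong \mathfrak{sl}_2(\q_p)$; this is standard, since an open uniform pro-$p$ subgroup of $\mathrm{SL}_2(\z_p)$ (for instance a principal congruence subgroup $1 + p^k M_2(\z_p) \cap \mathrm{SL}_2(\z_p)$ for $k$ large, $k \geq 1$ if $p$ is odd) has $\z_p$-Lie algebra $p^k \mathfrak{sl}_2(\z_p)$, whence $\mathbb{Q}_p \otimes_{\z_p} p^k\mathfrak{sl}_2(\z_p) = \mathfrak{sl}_2(\q_p)$. Then I would verify that $\mathfrak{sl}_2$ is perfect by the usual bracket computation: with the standard basis $e = \begin{psmallmatrix} 0 & 1 \\ 0 & 0 \end{psmallmatrix}$, $f = \begin{psmallmatrix} 0 & 0 \\ 1 & 0 \end{psmallmatrix}$, $h = \begin{psmallmatrix} 1 & 0 \\ 0 & -1 \end{psmallmatrix}$, one has $[e,f] = h$, $[h,e] = 2e$, $[h,f] = -2f$, so (as $p$ is odd, or in any case since we work over $\q_p$ and $2$ is invertible) all three basis vectors lie in $[\mathfrak{sl}_2, \mathfrak{sl}_2]$, giving $[\mathfrak{sl}_2, \mathfrak{sl}_2] = \mathfrak{sl}_2$. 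This is exactly the hypothesis of \autoref{C.0.6}.

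Applying \autoref{C.0.6} to $G = \mathrm{SL}_2(\z_p)$ then yields that $\mathrm{SL}_2(\z_p)$ is FAb, which is the desired conclusion; in particular the abelianization of every open subgroup of $\mathrm{SL}_2(\z_p)$ is finite, as used in the proof of \autoref{3.7.5}. There is essentially no obstacle here: the entire content has been front-loaded into \autoref{C.0.6} (and the Lazard-theoretic results \autoref{C.0.1}, \autoref{C.0.4}, \autoref{C.0.5} preceding it), and the only residual point is the elementary verification that $\mathfrak{sl}_2$ is a perfect Lie algebra. If anything merits a word of care, it is the identification $\mathcal{L}(\mathrm{SL}_2(\z_p)) = \mathfrak{sl}_2(\q_p)$ together with the remark that the definition of $\mathcal{L}(G)$ is independent of the choice of open uniform subgroup, both of which are already granted by the setup of the appendix.
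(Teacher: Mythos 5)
Votre preuve est correcte et suit exactement la même démarche que le texte: on identifie $\mathcal{L}(\mathrm{SL}_2(\z_p))=\mathfrak{sl}_2(\mathbb{Q}_p)$, on vérifie que cette algèbre de Lie est parfaite, puis on applique le \autoref{C.0.6}. Le texte se contente d'invoquer la perfection de $\mathfrak{sl}_2$ sans la vérifier explicitement; votre calcul de crochets ne fait qu'expliciter ce point.
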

    \section{Produits tensoriels complétés}\label{D}
Dans cette section, on définit le produit tensoriel complété de manière élémentaire, bien qu'il existe une approche plus sophistiquée faisant appel aux mathématiques condensées.

Soient $V, W$ deux espaces de Banach sur $L$. On note $(e_i)_{i\in\mathbb{N}}$ une base de Banach de $V$ et $(f_j)_{j\in\mathbb{N}}$ une base de Banach de $W$. On définit alors $V_i^+:=\oplus_{i'\leq i}\mathscr{O}_L\cdot e_{i'}$ et $W_j^+:=\oplus_{j'\leq j}\mathscr{O}_L\cdot f_{j'}$.

\begin{definition} On définit
	$$V\hat{\otimes}W:=(\varprojlim_n\varinjlim_i\varinjlim_j(V_i^+/\pi_L^n\otimes_{\mathscr{O}_L/\pi_L^n}W_j^+/\pi_L^n))[\tfrac{1}{\pi_L}],$$
	$$V^*\hat{\otimes}W:=(\varprojlim_n\varprojlim_i\varinjlim_j\mathrm{Hom}_{\mathscr{O}_L/\pi_L^n}(V_i^+/\pi_L^n, W_j^+/\pi_L^n))[\tfrac{1}{\pi_L}],$$
	et
	$$V^*\hat{\otimes}W^*:=(\varprojlim_n\varprojlim_i\varprojlim_j(V_i^{+, *}/\pi_L^n\otimes_{\mathscr{O}_L/\pi_L^n}W_j^{+, *}/\pi_L^n))[\tfrac{1}{\pi_L}].$$
\end{definition}
\begin{remark} On peut aussi définir une autre forme du produit tensoriel complété par
	$$V^*\hat{\otimes}'W:=(\varprojlim_n\varinjlim_j\varprojlim_i\mathrm{Hom}_{\mathscr{O}_L/\pi_L^n}(V_i^+/\pi_L^n, W_j^+/\pi_L^n))[\tfrac{1}{\pi_L}],$$
	alors $V^*\hat{\otimes}'W$ et $V\hat{\otimes}W^*$ sont duaux l'un de l'autre.
\end{remark}
Si l'un des deux espaces est un espace de Fréchet ou un espace LB, on a les définitions suivantes.
\begin{definition} Soient $V$ un espace de Banach et $W=\varprojlim_iW_i$ un espace de Fréchet, limite projective d'espaces de Banach $W_i$. On définit
	$$V\hat{\otimes}W:=\varprojlim_iV\hat{\otimes}W_i,$$
	et
	$$V^*\hat{\otimes}W:=\varprojlim_iV^*\hat{\otimes}W_i.$$
\end{definition}
\begin{definition} Soient $V$ un espace de Banach et $W=\varinjlim_iW_i$ un espace LB, limite inductive d'espaces de Banach $W_i$. On définit
	$$V\hat{\otimes}W:=\varinjlim_iV\hat{\otimes}W_i,$$
	et
	$$V^*\hat{\otimes}W:=\varinjlim_iV^*\hat{\otimes}W_i.$$
\end{definition}
Enfin, on a besoin de la définition suivante, qui nous permet de définir l'espace $\mathscr{O}_{\mathbf{P}^1}(U)^*\hat{\otimes}_{\mathscr{O}_{\mathbf{P}^1}(U)}\underline{\Omega}(U)^\diamond$ dans la section \ref{section4.1.1}.
\begin{definition}
	Soient $U, V$ deux espaces de Banach et $W=\varinjlim_iW_i$ un espace LB, limite inductive d'espaces de Banach $W_i$. On définit
	$$V^*\hat{\otimes}\mathrm{Hom}_L(W, U):=(V^*\hat{\otimes} W^*)\hat{\otimes} U.$$
\end{definition}
\end{appendices}
\newpage
\bibliographystyle{abbrv}
\bibliography{Revetements}
YANG PEI, IMJ-PRG, SORBONNE UNIVERSITÉ, 4 PLACE JUSSIEU, 75005 PARIS, FRANCE
\\
E-mail: yang.pei@imj-prg.fr

	\end{document}